\patchcmd{\subsection}{\bfseries}{\bfseries}{}{}
\patchcmd{\subsection}{-.5em}{.5em}{}{}
\newtheorem{theorem}{Theorem}[section]
\newtheorem{lemma}{Lemma}[section]
\newtheorem{remark}{Remark}[section]
\newcommand{\abs}[1]{|#1|^2}
\newcommand{\osc}{\mathrm{osc}}
\def\XXint#1#2#3{{\setbox0=\hbox{$#1{#2#3}{\int}$ }
\vcenter{\hbox{$#2#3$ }}\kern-.6\wd0}}
\newtheorem{prop}{Proposition}[section]
\newtheorem{defn}{Definition}[section]
\newtheorem{corr}{Corollary}[section]
\newcommand{\ddbar}{\sqrt{-1}\partial\bar\partial}
\newcommand{\ric}{\mathrm{Ric}}
\newcommand{\ddb}{\sqrt{-1}\partial\bar\partial}
\newcommand{\innpro}[1]{\langle#1\rangle}
\newcommand{\bk}[1]{\Big(#1\Big)}
\newcommand{\vol}{\mathrm{Vol}}
\newcommand{\xk}[1]{\big(#1\big)}
\newcommand{\ba}[1]{\big|#1\big|}
\newcommand{\Ba}[1]{\Big|#1\Big|}
\newcommand{\eqsp}[1]{\begin{equation}\begin{split} #1\end{split}\end{equation}}
\newcommand{\detla}{\delta}
\newcommand{\sS}{\mathcal{S}}
\newcommand{\Ss}{\mathcal{S}}
\newcommand{\dsq}{(D')^2}
\newcommand{\ibone}{\frac{1}{\beta_1}}
\newcommand{\ibtwo}{\frac{1}{\beta_2}}
\newcommand{\ms}{\backslash \sS}
\newcommand{\bbeta}{{\boldsymbol\beta}}\newcommand{\bb}{{\boldsymbol\beta}}
\newcommand{\pp}{{\mathcal P}}
\newcommand{\qq}{{\mathcal Q}}
\newcommand{\ep}{{\epsilon}}\newcommand{\epp}{{\epsilon}}
\newcommand{\hoep}{( \frac{\partial}{\partial t} - \Delta_{g_\epsilon}  )}
\newcommand{\na}{\nabla}
\newcommand{\tr}{{\mathrm{tr}}}
\newcommand{\cC}{{\mathcal C}}
\newcommand{\C}{{\mathcal C}}
\newcommand{\newT}{{\mathcal T}}
\renewcommand{\Re}{\mathrm{Re}}
\numberwithin{equation}{section}
\begin{document}
\address{Department of Mathematics, Columbia University, New York, NY 10027}

\email{bguo@math.columbia.edu}

\address{Department of Mathematics, Rutgers University, Piscataway, NJ 08854}

\email{jiansong@math.rutgers.edu}

\thanks{Research supported in
part by National Science Foundation grants DMS-17-11439 and DMS-17-10500.}

\title{Schauder estimates for equations with cone metrics, II}\author{Bin Guo \and Jian Song }\date{}
\maketitle

\begin{abstract}
This is the continuation of our  paper \cite{GS}, to study the linear theory for equations with conical singularities. We derive interior Schauder estimates for linear elliptic and parabolic equations with a background K\"ahler metric of conical singularities along a divisor of simple normal crossings. As an application, we prove the short-time existence of the conical K\"ahler-Ricci flow with conical singularities along a divisor with simple normal crossings.

\end{abstract}

\tableofcontents

\section{Introduction}

Regularity of solutions of Complex Monge-Amp\`ere equations is a central problem in complex geometry.  Complex Monge-Amp\`ere equations with singular and degenerate data can be applied to study compactness and moduli problems of canonical K\"ahler metrics in K\"ahler geometry. In \cite{Y}, Yau has already considered special cases of complex singular Monge-Amp\`ere equations as generalization of his solution to the Calabi conjecture. Conical singularities along complex hypersurfaces of a K\"ahler manifold are among the mildest singularities in K\"ahler geometry and it has been extensively studied, especially in the case of Riemann surfaces \cite{Tr, LT}. The study of such K\"ahler metrics with conical singularities has many geometric applications, for example, the Chern number inequality in various settings \cite{T96, SW}. Recently, Donaldson \cite{D} initiated the program of studying analytic and geometric properties of K\"ahler metrics with conical singularities along a smooth complex hypersurface on a K\"ahler manifold. This is an essential step to the solution of the Yau-Tian-Donaldson conjecture relating existence of K\"ahler-Einstein metrics and algebraic K-stability on Fano manifolds \cite{CDS1, CDS2, CDS3, T12}.  In \cite{D}, the Schauder estimate for linear Laplace equations with conical background metric is established using classical potential theory. This is crucial for the openness of the continuity method to find desirable (conical) K\"ahler-Einstein metric. Donaldson's Schauder estimate is   generalized to the parabolic case \cite{CW} with similar classical approach. There is also an alternative approach for the conical Schauder estimates using microlocal analysis \cite{JMR}. There are also various global and local estimates and regularity derived in the conical setting \cite{Br, E,CW2, Da, DGSW, DS, GP, E, JLZ, M, R, Yi, YZ}.

The Schauder estimates play an important role in the linear PDE theory. Apart from the classical potential theory, various proofs have been established by different analytic techniques. In fact, the blow-up or perturbation techniques developed in \cite{Si, W} (also see \cite{Sa1, Sa2, C1, C2}) are much more flexible and sharper than the classical method.  The authors combined the perturbation method in \cite{GS} and geometric gradient estimates to establish sharp Schauder estimates for Laplace equations and heat equations on $\mathbb{C}^n$ with a background flat K\"ahler metric of conical singularities along the smooth hyperplane $\{ z_1=0\}$ and derived explicit  and optimal dependence on conical parameters. 

In algebraic geometry, one often has to consider pairs $(X, D)$ with $X$ being an algebraic variety of complex dimension $n$ and the boundary divisor $D$ as a complex hypersurface of $X$. After possible log resolution, one can always assume the divisor $D$ is a union of smooth hypersurfaces with simple normal crossings. The suitable category of K\"ahler metrics associated to $(X, D)$ is the family of K\"ahler metrics on $X$ with conical singularities along $D$. In order to study canonical K\"ahler metrics on pairs and related moduli problems, we are obliged to study regularity and asymptotics for complex Monge-Amp\`ere equation with prescribed conical singularities of normal crossings. However, the linear theory is still missing and has been open for a while. The goal of this paper is to extend our result \cite{GS} and establish the sharp Schauder estimates for linear equations with background K\"ahler metric of conical singularities along divisors of simple normal crossings. We can apply and extend many techniques developed in \cite{GS}, however,  new estimates and techniques have to be developed because in case of conical singularities along a single smooth divisor, the difficult estimate in the conical direction can sometimes be bypassed and reduced to estimates in the regular directions, while such treatment does not work in the case of simple normal crossings. One is forced to treat regions near high codimensional singularities directly with new and more delicate estimate beyond the scope of \cite{GS}.

The standard local models for such conical K\"ahler metrics can be described as below.  

\smallskip

Let $\bbeta = (\beta_1,\ldots,\beta_p)\in (0,1)^p$ and $p\le n$ and $\omega_{\bbeta}$ (or $g_\bbeta$) be the standard cone metric on $\mathbb C^p\times \mathbb C^{n-p}$ with cone singularity along $\sS = \cup_{i=1}^p \sS_i$, where $\sS_i = \{z_i = 0\}$, that is, 
\begin{equation}\label{eqn:standard cone metric}\omega_{\bbeta} = \sum_{j=1}^p\beta_j^2 \frac{\sqrt{-1}dz_j\wedge d\bar z_j}{|z_j|^{2(1-\beta_j)}}  + \sum_{j=p+1}^n \sqrt{-1} dz_j \wedge d\bar z_j.\end{equation}
We shall use $s_{2p+1},\ldots s_{2n}$ to denote the real coordinates of $\mathbb C^{n-p} = \mathbb R^{2n-2p}$, such that for $j = p+1,\ldots, n$ 
$$z_j = s_{2j-1} + \sqrt{-1}s_{2j}. $$ 

%\textcolor{red}{change i to $\sqrt{-1}$ to avoid confusion}

In this paper we wills study  the following conical Laplacian equation with the background metric $g_\bbeta$ on $\mathbb C^n$
\begin{equation}\label{eqn:main equation}
\Delta_\bb u = f,\quad \text{in }B_{\bbeta}(0,1)\backslash \sS,
\end{equation}
where  $B_\bbeta(0,1)$ is the unit ball with respect to $g_\bbeta$ centered at $0$. The Laplacian $\Delta_\bbeta$ is defined as
$$\Delta_\bbeta u = \sum_{j,k} g_{\bbeta}^{j\bar k} \frac{\partial^2 u }{\partial z_j \partial \bar z_k} = \sum_{j=1}^p|z_j|^{2(1-\beta_j)} \frac{\partial^2 u}{\partial z_j \partial \bar z_j} +\sum_{j=p+1}^n \frac{\partial^2 u}{\partial z_j  \partial \bar z_j}. $$
We always assume $f\in C^0(B_{\bbeta}(0,1))$ and $u\in C^0(\overline{B_{\bbeta}(0,1)})\cap C^2(B_{\bbeta}(0,1)\backslash \sS)$. %, where the weighted H\"older norms are defined in \cite{} (\textcolor{red}{c.f. ...}).  
Throughout this paper, given a continuous function $f$ we denote
$$\omega(r):=\omega_f(r) = \sup_{z,w\in B_\bbeta(0,1), d_\bbeta(z,w)<r} | f(z) - f(w) |$$ the oscillation of $f$ with respect to $g_\bbeta$ in the ball $B_\bbeta(0,1)$. It is clear that $\omega(2 r)\le 2 \omega(r)$ for any $r<1/2$. We say a continuous function $f$ is {\em Dini continuous} if $\int_0^1 \frac{\omega(r)}{r}dr <\infty$.
\begin{defn}
We will write the (weighted) polar coordinates of $z_j$ for $1\le j\le p$ as 
$$r_j = |z_j|^{\beta_j},\quad \theta_j = \arg z_j.$$ We denote $D'$ to be one of the first order operators $\{\frac{\partial}{\partial s_{2p+1}},\ldots,\frac{\partial }{\partial s_{2n}}\}$,  and $ N_j$ to be one of the operators $\{ \frac{\partial}{\partial r_j}, \frac{\partial }{\beta_j r_j \partial \theta_j}\}$ which as vector fields are transversal to $\sS_j$. 
\end{defn}

Our first main result is the H\"older estimates of the solution $u$ to the equation \eqref{eqn:main equation}. 

\begin{theorem}\label{thm:main 1}
Suppose $\bbeta\in (1/2,1)^p$ and $f\in C^0(B_\bbeta(0,1))$ is Dini continuous with respect to $g_\bbeta$. Let $u\in C^0(\overline{B_\bbeta(0,1)})\cap C^2(B_\bbeta(0,1)\backslash \sS)$ be the solution to the  equation \eqref{eqn:main equation}, then there exists  $C=C(n,\bbeta)>0$ such that for any two points $p,q\in B_\bbeta(0,1/2)\backslash \sS$,  
%
%
%
%\textcolor{red}{number the followingdisplay} 
{\small
\eqsp{\label{eqn:usual estimate new}
& ~| (D')^2 u(p) - (D')^2 u(q)  | + \sum_{j=1}^p \Ba{|z_j|^{2(1-\beta_j)} \frac{\partial^2 u}{\partial z_j\partial\bar z_j}(p) - |z_j|^{2(1-\beta_j)}\frac{\partial^2 u}{\partial z_j\partial\bar z_j}(q)     }\\
\le & ~ C\bk{ d \| u\|_{L^\infty(B_\bbeta(0,1))} + \int_0^d \frac{\omega(r)}{r}dr + d \int_d ^1 \frac{\omega(r)}{r^2} dr  },
}  
}
for any $1\le j\le p$, 
{\small 
\begin{equation}\label{eqn:1.3}
| N_j D' u(p) - N_j D' u(q)  | \le C\bk{ d^{\frac{1}{\beta_j} - 1} \| u\|_{L^\infty(B_\bbeta(0,1))} + \int_0^d \frac{\omega(r)}{r}dr + d^{\frac{1}{\beta_j} - 1} \int_d ^1 \frac{\omega(r)}{r^{1/\beta_j}} dr    },
\end{equation}
}
and for any $1\le j, k\le p$ with $j\neq k$, 
{\small
\begin{equation}\label{eqn:newly added 1}
| N_j N_k u(p) - N_j N_k u(q)  | \le C\bk{ d^{\frac{1}{\beta_{\max}} - 1} \| u\|_{L^\infty(B_\bbeta(0,1))} + \int_0^d \frac{\omega(r)}{r}dr + d^{\frac{1}{\beta_{\max}} - 1} \int_d ^1 \frac{\omega(r)}{r^{1/\beta_{\max}}} dr    },
\end{equation}
}
where $d=d_\bbeta(p,q)>0$ is the $g_\bbeta$-distance of $p$ and $	q$ and $\beta_{\max} = \max\{ \beta_1,\ldots, \beta_p  \}\in (1/2,1)$.
\end{theorem}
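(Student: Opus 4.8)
The plan is to follow the perturbation/blow-up philosophy of \cite{GS}, but organized around the natural scaling of the cone metric $g_\bbeta$. The key observation is that the estimates \eqref{eqn:usual estimate new}--\eqref{eqn:newly added 1} are all of the form: the oscillation of a second-order quantity $Q u$ between $p$ and $q$ at scale $d$ is controlled by $d^{\alpha}\|u\|_{L^\infty} + \int_0^d \frac{\omega(r)}{r}dr + d^\alpha\int_d^1\frac{\omega(r)}{r^{1+\alpha}}dr$, where the exponent $\alpha$ records how the quantity $Qu$ scales under the parabolic/elliptic dilation adapted to the cone. For $Q = (D')^2$ or $Q = |z_j|^{2(1-\beta_j)}\partial_j\bar\partial_j$ one has $\alpha = 1$ (these are scale-$1$ quantities, exactly as in the Euclidean Schauder theory); for $Q = N_j D'$ the relevant homogeneity in the $j$-th conical variable forces $\alpha = \frac1{\beta_j}-1$; and for the pure mixed conical derivative $Q = N_j N_k$ the worst exponent among the two cone directions dominates, giving $\alpha = \frac1{\beta_{\max}}-1$. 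So the real content is to prove, for each such $Q$, a decay-of-oscillation estimate: if $f$ has oscillation $\omega$, then on balls $B_\bbeta(z_0, \rho)$ the quantity $Qu$ can be approximated by a constant (its "average" in an appropriate sense) with error $\lesssim \rho^\alpha$ times the norms on the right-hand side. Summing this geometric decay over dyadic scales from $d$ up to $1$ produces the stated integrals, with the $\int_0^d$ term coming from scales finer than $d$ and the $d^\alpha\int_d^1 r^{-1-\alpha}\omega$ term from the coarser scales.

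Concretely, I would first reduce to the case $\omega(r) \equiv \text{const}$ (or more precisely, to establishing the key inequality for a solution of $\Delta_\bbeta u = f$ with $f$ having a single modulus) by the standard splitting: decompose $f$ dyadically as $f = \sum_k f_k$ where $f_k$ is supported/relevant at scale $2^{-k}$, solve $\Delta_\bbeta v_k = f_k$, and estimate each $v_k$ at the target scale $d$. This is exactly the mechanism that converts a pointwise/constant estimate into the Dini-type integrals. For the building-block estimate I would establish an interior a priori bound of the form $\|Qu\|_{L^\infty(B_\bbeta(0,1/2))} \le C(\|u\|_{L^\infty(B_\bbeta(0,1))} + \|f\|_{C^0})$ together with a Liouville-type rigidity: any global solution of $\Delta_\bbeta u = 0$ on $\mathbb C^n\setminus\sS$ with polynomial growth of the appropriate order has $Qu$ constant. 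Given the $L^\infty$ bound and Liouville rigidity, the decay-of-oscillation at a point $z_0$ follows by the usual compactness/contradiction argument: if the oscillation did not decay, rescale, extract a limit, and contradict the Liouville statement. The separation-of-variables structure of $\Delta_\bbeta$ (it is a sum of one-variable cone Laplacians in $z_1,\dots,z_p$ and the flat Laplacian in $z_{p+1},\dots,z_n$) means the homogeneous solutions are built from products of Bessel-type or power eigenfunctions in each conical factor, so the growth exponents, and hence the $\alpha$'s, can be read off explicitly — $r_j^{1/\beta_j}$ behavior near $\sS_j$ is what gives $N_j$ its cost of $\frac1{\beta_j}-1$ each, and for $N_jN_k$ one must take both costs but the worse single one dominates after optimizing the rescaling.

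The main obstacle, and the place where genuinely new work beyond \cite{GS} is needed, is the behavior near the high-codimension strata $\sS_i \cap \sS_j$ (and deeper intersections). In the single-divisor case one can often route the hard "conical direction" estimate through the regular directions, but at a crossing the conical variables $z_i, z_j$ interact and one must estimate genuinely mixed quantities like $N_i N_j u$ directly. The difficulty is twofold: (i) the homogeneous model near a codimension-$2$ stratum is a product of two cones, and the relevant gradient/Schwarz-Pick-type estimates must be proved on this product — one cannot simply invoke the one-variable theory twice because the balls $B_\bbeta(z_0,\rho)$ near the stratum are anisotropic and the two conical scalings are different when $\beta_i \ne \beta_j$; (ii) pinning down that the correct exponent is $\frac1{\beta_{\max}}-1$ rather than something worse like $\frac1{\beta_i}+\frac1{\beta_j}-2$ requires a careful choice of how far into each conical factor one rescales, balancing the two contributions so that only the dominant one survives. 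I expect this to require a dedicated geometric gradient estimate on the product cone — an interior Bernstein-type bound for $N_jN_k u$ in terms of $\|u\|_{L^\infty}$ and $\|f\|_{C^0}$ on a $g_\bbeta$-ball — which is the technical heart of the argument; once that is in hand, the dyadic summation and the contradiction/compactness scheme go through as in the smooth-divisor case.
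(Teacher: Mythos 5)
Your high-level framework---decay of oscillation at dyadic scales summed to produce the Dini integrals, with costs $\tfrac1{\beta_j}-1$ for $N_j$ and $\tfrac1{\beta_{\max}}-1$ for the mixed conical derivative, and a Bernstein-type interior bound for $N_jN_ku$ near the codimension-two stratum as the key new tool---is the right picture, and you have correctly located where the argument must go beyond the single-divisor case. But the \emph{mechanism} you propose for the decay-of-oscillation is genuinely different from the paper's, and as written it has substantial gaps. You want a Littlewood--Paley-type splitting $f=\sum_k f_k$ together with a compactness/contradiction argument against a Liouville classification of global $g_\bbeta$-harmonic functions. The paper does neither. Instead it works by direct harmonic replacement: it solves a nested sequence of Dirichlet problems $\Delta_\bbeta u_k=f(p)$ (constant right-hand side, not a dyadic piece of $f$) on shrinking balls $\hat B_k(p)$ centered successively at $p$, at the projection $p_1\in\sS_1$, and at $p_{1,2}\in\sS_1\cap\sS_2$, with boundary data $u$; the maximum principle gives $\|u_k-u_{k+1}\|_{L^\infty}\le C\tau^{2k}\omega(\tau^k)$; and then it applies completely explicit interior estimates for $g_\bbeta$-harmonic functions---obtained by Cheng--Yau and Bochner computations for the smooth approximating metrics $g_\epsilon$ of \eqref{eqn:para app}, together with the one-complex-variable slice estimate \eqref{eqn:2.5}---to the differences $h_k=u_{k-1}-u_k$. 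The Dini integrals fall out of the telescoping sum over $k$, not out of a decomposition of $f$ or a blow-up limit. There is no compactness argument, no Liouville theorem, and no contradiction anywhere in the proof.

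Two concrete issues with the route you propose. First, both the Liouville classification on the anisotropic product cone and the compactness of rescaled solutions near a crossing (where the limiting model changes type depending on where you blow up: flat $\mathbb C^n$, a single cone, or the full product cone) are substantial theorems that you assert but do not outline, and they are precisely the difficulties the paper's explicit approach is designed to circumvent. Second, a point-blow-up at the stratum $\sS_i\cap\sS_j$ does not by itself determine the exponent $\tfrac1{\beta_{\max}}-1$: the lowest harmonic monomial at the crossing on which $N_iN_j$ acts nontrivially has degree $\tfrac1{\beta_i}+\tfrac1{\beta_j}-2$, which is \emph{larger} than $\tfrac1{\beta_{\max}}-1$, so a crossing-only Liouville argument would suggest a stronger (false) Hölder gain. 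The exponent $\tfrac1{\beta_{\max}}-1=\min\{\tfrac1{\beta_i}-1,\tfrac1{\beta_j}-1\}$ is forced by pairs of points near a single face $\sS_j$ away from the crossing, and enforcing it for arbitrary $p,q$ requires the two-point geodesic integration with a case analysis on which stratum $\tau^k$ is comparable to (as in Lemmas \ref{lemma 2.12} and \ref{lemma 2.21})---which is another piece the proposal does not address.
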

\begin{remark}
\begin{enumerate}[label=(\arabic*), fullwidth]
\item We remark that the number $\beta_{\max}$  on the RHS of \eqref{eqn:newly added 1} can be replaced by $\max\{\beta_j,\beta_k\}$.

\medskip 

\item In Theorem \ref{thm:main 1} and Theorem \ref{thm:main 2} below, we assume $\bb\in (1/2,1)^p$ just for exposition purposes and cleanness of the statement. When some of angles $\beta_j$ lie in $(0,1/2]$, the pointwise H\"older estimates in Theorem \ref{thm:main 1} are adjusted as follows: if $\beta_j\in (0,1/2]$ in  \eqref{eqn:1.3},  we replace the RHS  by the RHS of \eqref{eqn:usual estimate new}. In \eqref{eqn:newly added 1}, if both $\beta_j\text{ and } \beta_k\in (0,1/2]$, we also replace the RHS  by that of \eqref{eqn:usual estimate new}; if at least one of the $\beta_j,\beta_k$ is bigger than $1/2$, \eqref{eqn:newly added 1} remains unchanged. The inequalities in Theorem \ref{thm:main 2} can be adjusted similarly. The proofs of these estimates are contained in the proof of the case when $\beta_j\in (1/2, 1)$ by using the corresponding estimates in \eqref{eqn:2.5}.  
\end{enumerate}

\end{remark}
An immediate corollary of Theorem \ref{thm:main 1} is a precise form of Schauder estimates for equation \eqref{eqn:main equation}.
 \begin{corr}\label{corr:1.1}
Given $\bb\in (0,1)^p$ and $f\in C^{0,\alpha}_\bbeta(\overline{B_\bbeta(0,1)})$ for some $0<\alpha< \min\{1,\frac{1}{\beta_{\max}} -1\}$, if $u\in C^0(B_\bbeta(0,1))\cap C^2(B_\bbeta(0,1)\backslash \sS)$ solves equation \eqref{eqn:main equation}, then $u\in C^{2,\alpha}_{\bbeta}(B_\bbeta(0,1)  )$. Moreover, for any compact subset $K\Subset B_\bb(0,1)$, there exists a constant $C=C(n, \bb, K)>0$ such that the following estimate holds (see Definition \ref{defn:2.2} for the notations)
\begin{equation}\label{eqn:Holder}
\| u\|_{C^{2,\alpha}_\bbeta ( K  )} \le C   \bk{ \| u\|_{C^0( B_\bbeta(0,1)  )}  + \frac{ \| f\|_{C^{0,\alpha}_\bbeta (B_\bbeta(0,1))  }  }{\alpha ( \min\{\frac{1}{\beta_{\max}} - 1, 1\}  -\alpha)}  }.
\end{equation}

\end{corr}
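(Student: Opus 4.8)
The plan is to deduce the Schauder estimate from the pointwise H\"older estimates of Theorem \ref{thm:main 1} by a standard quantitative argument. First I would note that the hypothesis $f\in C^{0,\alpha}_\bbeta$ means precisely that $\omega(r)=\omega_f(r)\le [f]_{C^{0,\alpha}_\bbeta}\, r^\alpha$, so $f$ is automatically Dini continuous and Theorem \ref{thm:main 1} applies. It remains to estimate the three integral quantities appearing on the right-hand sides of \eqref{eqn:usual estimate new}, \eqref{eqn:1.3}, \eqref{eqn:newly added 1} when $\omega(r)\le M r^\alpha$ with $M=[f]_{C^{0,\alpha}_\bbeta}$. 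A direct computation gives, for $d=d_\bbeta(p,q)$,
\[
\int_0^d \frac{\omega(r)}{r}\,dr \le \frac{M}{\alpha}\, d^\alpha, \qquad d\int_d^1 \frac{\omega(r)}{r^2}\,dr \le \frac{M}{1-\alpha}\, d^\alpha,
\]
using $\alpha<1$, and similarly, since $\alpha < \tfrac{1}{\beta_{\max}}-1 \le \tfrac{1}{\beta_j}-1$,
\[
d^{\frac{1}{\beta_j}-1}\int_d^1 \frac{\omega(r)}{r^{1/\beta_j}}\,dr \le \frac{M}{\frac{1}{\beta_j}-1-\alpha}\, d^\alpha \le \frac{M}{\frac{1}{\beta_{\max}}-1-\alpha}\, d^\alpha.
\]
Feeding these bounds back into \eqref{eqn:usual estimate new}--\eqref{eqn:newly added 1}, and replacing $\|u\|_{L^\infty}$-coefficients $d$, $d^{1/\beta_j-1}$, $d^{1/\beta_{\max}-1}$ by $d^\alpha$ (legitimate after shrinking to $K\Subset B_\bbeta(0,1)$, where $d$ is bounded so these exponents only help), one obtains that each of $(D')^2u$, $|z_j|^{2(1-\beta_j)}\partial_j\partial_{\bar j}u$, $N_jD'u$, $N_jN_ku$ is H\"older-$\alpha$ on $K$ with seminorm controlled by $C(\|u\|_{C^0(B_\bbeta(0,1))} + \tfrac{M}{\alpha(\min\{1/\beta_{\max}-1,1\}-\alpha)})$.

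Next I would supply the missing lower-order terms. Interior gradient-type estimates — which are already established in \cite{GS} and in the earlier sections of the present paper (the same machinery that produces Theorem \ref{thm:main 1}) — bound the $C^0$ norms on $K$ of the first derivatives $D'u$, $N_ju$ and of the second derivatives $(D')^2u$, $|z_j|^{2(1-\beta_j)}\partial_j\partial_{\bar j}u$, $N_jD'u$, $N_jN_ku$ themselves by $C(\|u\|_{C^0(B_\bbeta(0,1))}+\|f\|_{C^0})$; together with the H\"older seminorm bounds just obtained, and recalling that $\|f\|_{C^0}\le \|f\|_{C^{0,\alpha}_\bbeta}$, this assembles into the full weighted $C^{2,\alpha}_\bbeta$ norm of $u$ on $K$ as in Definition \ref{defn:2.2}. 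Finally I would observe that when some $\beta_j\in(0,1/2]$ the modified right-hand sides indicated in Remark part (2) are no larger than \eqref{eqn:usual estimate new} (since then the relevant exponent $\tfrac{1}{\beta_j}-1\ge 1$, so $d^{1/\beta_j-1}\le C d$ on the bounded set $K$ and the integral $\int_d^1 \omega(r) r^{-1/\beta_j}dr$ is dominated by $\int_d^1 \omega(r) r^{-2}dr$), so the same estimate \eqref{eqn:Holder} holds verbatim for all $\bbeta\in(0,1)^p$, with the denominator $\min\{1/\beta_{\max}-1,1\}-\alpha$ accounting uniformly for both regimes.

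The only genuinely substantive input is Theorem \ref{thm:main 1}; given it, the corollary is bookkeeping, and the one place to be slightly careful is the explicit tracking of the constant's dependence on $\alpha$ and on $\min\{1/\beta_{\max}-1,1\}-\alpha$. I expect the mild obstacle to be precisely this: making sure that the $\tfrac{1}{\alpha}$ from $\int_0^d\omega/r$, the $\tfrac{1}{1-\alpha}$ from the regular tail, and the $\tfrac{1}{1/\beta_{\max}-1-\alpha}$ from the conical tail all get absorbed into the single displayed factor $\tfrac{1}{\alpha(\min\{1/\beta_{\max}-1,1\}-\alpha)}$, which holds because $\tfrac{1}{1-\alpha}\le \tfrac{1}{\alpha(1-\alpha)}$ for $\alpha\in(0,1)$ and analogously for the conical term, so each piece is bounded by the claimed product up to a dimensional constant. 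No ellipticity or blow-up argument is needed here — all of that work has already been done upstream in proving Theorem \ref{thm:main 1}.
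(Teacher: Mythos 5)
Your proof is essentially the intended one: the paper presents Corollary~\ref{corr:1.1} as an immediate consequence of Theorem~\ref{thm:main 1}, and the bookkeeping you carry out — noting $\omega(r)\le [f]_{C^{0,\alpha}_\bbeta}r^\alpha$, evaluating the three integrals to get $\tfrac{M}{\alpha}d^\alpha$, $\tfrac{M}{1-\alpha}d^\alpha$, $\tfrac{M}{1/\beta_j-1-\alpha}d^\alpha$, converting the $\|u\|_{L^\infty}$-prefactors $d^{1/\beta_j-1}$ to $d^\alpha$ on a compact $K$ with $d\le 1$, collecting denominators under $\alpha(\min\{\tfrac{1}{\beta_{\max}}-1,1\}-\alpha)$, and supplying the $C^0$ pieces from the interior derivative estimates already proved in Section~3 — is exactly what is meant.

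One small correction to your final paragraph. You assert that when $\beta_j\in(0,1/2]$ the integral $\int_d^1\omega(r)r^{-1/\beta_j}\,dr$ is ``dominated by'' $\int_d^1\omega(r)r^{-2}\,dr$, but this inequality runs the other way: for $\beta_j<1/2$ we have $1/\beta_j>2$, so $r^{-1/\beta_j}\ge r^{-2}$ on $(0,1]$ and the first integral is the larger one (indeed possibly divergent as $d\to 0$ if $\alpha$ were smaller, which is exactly why the theorem restricts to $\alpha<1/\beta_j-1$ only when $\beta_j>1/2$). The claim is in any case unnecessary: Remark~(2) says the \emph{entire} right-hand side of \eqref{eqn:1.3} (prefactor and integral together) is replaced by that of \eqref{eqn:usual estimate new} when $\beta_j\le 1/2$, so you should simply apply your $\tfrac{M}{\alpha}$ and $\tfrac{M}{1-\alpha}$ bounds to those terms directly rather than trying to compare the two integrands. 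With that slip removed, the argument is complete and matches the paper's intent.
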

\begin{remark}
A scaling-invariant version of the Schauder estimate \eqref{eqn:Holder} is that for any $0< r < 1$, there exists a constant $C=C(n,\bbeta, \alpha)>0$ such that (see Definition \ref{defn:2.3} for the notations)
\begin{equation}\label{eqn:rescale Holder}
\| u\|^*_{C^{2,\alpha}_\bbeta ( B_\bbeta(0,r)   )} \le C \bk{ { \| u\|_{C^0( B_\bbeta(0,r)  )}} + \| f\|_{C^{0,\alpha}_\bb ( B_\bb(0,r)  )  }^{(2)}}, %  + \frac{\| f\|_{C^{0} (B_\bbeta(0,R))  }}{(R-r)^\alpha} + [f]_{C^{0,\alpha}_\bbeta(B_\bbeta(0,R))}  },
\end{equation}
which follows from a standard rescaling argument by scaling $r$ to $1$.% and then applying \eqref{eqn:Holder} iteratively.
\end{remark}

Let $g$ be a $C^{0,\alpha}_\bb$-conical K\"ahler metric on $B_\bb(0,1)$ (see Definition \ref{defn:3.1} below). By definition $g$ is equivalent to $g_\bb$. We consider the equation
\begin{equation}\label{eqn:section 1.1}
\Delta_g u = f\text{ in }B_\bb(0,1),\text{ and } u = \varphi\text{ on }\partial B_\bb(0,1),
\end{equation}for some $\varphi\in C^0(\partial B_\bb(0,1))$.
The following theorem is the generalization of Corollary \ref{corr:1.1} for non-flat background conical K\"ahler metrics, which is useful for applications of global geometric complex Monge-Amp\`ere equations. 
\begin{theorem} \label{thm:1.2}%\textcolor{red}{change to theorem}
For any given $\bb\in (0,1)^p$, $f\in C^{0,\alpha}_\bb( \overline{B_\bb(0,1)})$  and $\varphi\in C^0(\partial B_\bb(0,1))$, there is a unique solution $u\in C^{2,\alpha}_{\bb}(B_\bb(0,1))\cap C^0(\overline{B_\bb(0,1)})$ to the equation \eqref{eqn:section 1.1}. Moreover, for any compact subset $K\Subset B_\bb(0,1)$, there exits $C=C(n, \bb, \alpha, g, K)>0$ such that 
\begin{equation*}%\label{eqn:Holder}
\| u\|_{C^{2,\alpha}_\bbeta ( K  )} \le C  \bk{ \| u\|_{C^0( B_\bbeta(0,1)  )}  +{ \| f\|_{C^{0,\alpha}_\bbeta (B_\bbeta(0,1))  }  } }.
\end{equation*}

\end{theorem}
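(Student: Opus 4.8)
The plan is to deduce Theorem \ref{thm:1.2} from the flat-model Schauder estimate in Corollary \ref{corr:1.1} (equivalently its scaling-invariant form \eqref{eqn:rescale Holder}) by a standard perturbation-and-continuity argument, treating $\Delta_g$ as a small perturbation of $\Delta_\bb$ at small scales. First I would establish the a priori estimate. Write $\Delta_g = \Delta_\bb + (\Delta_g - \Delta_\bb)$; since $g$ is a $C^{0,\alpha}_\bb$-conical K\"ahler metric equivalent to $g_\bb$, the coefficients of $\Delta_g - \Delta_\bb$, expressed in the frame adapted to $g_\bb$, are $C^{0,\alpha}_\bb$ functions vanishing (in the appropriate weighted sense) as one shrinks to a point. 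Thus on a ball $B_\bb(x_0, r)$ of small $g_\bb$-radius $r$ one has a bound of the form $\| (\Delta_g - \Delta_\bb) u\|_{C^{0,\alpha}_\bb(B_\bb(x_0,r))}^{(2)} \le \eta(r)\, \| u\|^*_{C^{2,\alpha}_\bb(B_\bb(x_0,r))}$ with $\eta(r)\to 0$ as $r\to 0$. Feeding $f + (\Delta_\bb - \Delta_g)u$ as the right-hand side into \eqref{eqn:rescale Holder}, choosing $r$ small enough that $C\eta(r) < 1/2$, and absorbing, gives the local estimate
\begin{equation*}
\| u\|^*_{C^{2,\alpha}_\bb(B_\bb(x_0,r/2))} \le C\bk{ \| u\|_{C^0(B_\bb(x_0,r))} + \| f\|^{(2)}_{C^{0,\alpha}_\bb(B_\bb(x_0,r))} }.
\end{equation*}
A covering argument over $K\Subset B_\bb(0,1)$, with a chain of such balls and interpolation to control intermediate-order norms, then yields the stated global estimate with $C=C(n,\bb,\alpha,g,K)$.

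Next I would prove existence. With the a priori estimate in hand, apply the method of continuity along the family $\Delta_{g_t} := (1-t)\Delta_\bb + t\,\Delta_g$, $t\in[0,1]$, each $g_t$ being again a $C^{0,\alpha}_\bb$-conical K\"ahler metric equivalent to $g_\bb$ (so the a priori estimate holds uniformly in $t$). The set of $t$ for which \eqref{eqn:section 1.1} with operator $\Delta_{g_t}$ is solvable for all data $(f,\varphi)$ is nonempty (at $t=0$ this is the flat case, solvable by Corollary \ref{corr:1.1} together with solvability of the Dirichlet problem for $\Delta_\bb$, which follows from the maximum principle and Perron's method since $\Delta_\bb$ is a genuine elliptic operator away from $\sS$ with the cone a removable-type singularity under the $C^0$ hypothesis), it is closed by the uniform a priori estimate plus the $C^0$ bound from the maximum principle applied to $\Delta_{g_t}$, and it is open by the a priori estimate plus the implicit function theorem (or a direct Neumann-series argument) in the Banach space $C^{2,\alpha}_\bb$. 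Hence $t=1$ is attained. Uniqueness is immediate from the maximum principle for $\Delta_g$ on $B_\bb(0,1)\setminus\sS$, using that $u\in C^0(\overline{B_\bb(0,1)})$ so $\sS$ carries no mass.

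The main obstacle I anticipate is the perturbation step: making precise that $\Delta_g - \Delta_\bb$ is genuinely small at small scales in the weighted H\"older spaces. This is delicate in the simple-normal-crossings setting because near the high-codimension strata $\sS_i\cap\sS_j$ the weighted coordinates $(r_j,\theta_j)$ interact, and one must check that the difference of the two Laplacians, written in terms of the operators $D'$, $N_j$, and $|z_j|^{2(1-\beta_j)}\partial_j\bar\partial_j$ from the Definition, has coefficients whose $C^{0,\alpha}_\bb$-norms on $B_\bb(x_0,r)$ decay like a positive power of $r$ uniformly in the base point $x_0$ — including base points on or near the singular strata. Granting the definition of $C^{0,\alpha}_\bb$-conical K\"ahler metric (Definition \ref{defn:3.1}) this is a bookkeeping exercise, but it is the crux: it is exactly the place where one needs the "new and more delicate estimates beyond the scope of \cite{GS}" near high-codimension singularities, and where the restriction to the scaling-invariant norm \eqref{eqn:rescale Holder} (rather than the non-scaling-invariant \eqref{eqn:Holder}) matters for the absorption to work.
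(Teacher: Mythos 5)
Your overall architecture — prove a scaling-invariant interior Schauder estimate by perturbation off the flat model \eqref{eqn:rescale Holder}, then run a continuity method along $\Delta_{g_t}=(1-t)\Delta_\bb+t\Delta_g$ — is the same as the paper's (Proposition \ref{prop:invariant}, Corollary \ref{prop 3.5}, and the subsequent lemmas). The a priori estimate part is essentially right and matches the paper's freezing-of-coefficients argument in Proposition \ref{prop:invariant}: near $\sS$ one normalizes $g$ at the projection $\hat x_0\in\sS$ in the frame $\{\epsilon_j,dz_k\}$ so that the cross-terms vanish and the diagonal is the identity, then absorbs via a factor $\mu^\alpha[g]^*$ from the scale parameter. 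You correctly identify the crux (smallness of the perturbation near high-codimension strata).

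The gap is in your existence argument. You propose running the continuity method ``in the Banach space $C^{2,\alpha}_\bb$'' for general data $(f,\varphi)$, but only the \emph{interior} Schauder estimate is available, so $\|u\|_{C^{2,\alpha}_\bb(B_\bb(0,1))}$ is not controlled up to $\partial B_\bb(0,1)$ and the operator $\Delta_{g_t}$ does not act as a bounded bijection between the unweighted spaces. The paper resolves this exactly where you would get stuck: it introduces the weighted norms $\|\cdot\|^{(-\sigma)}_{C^{2,\alpha}_\bb}$ and $\|\cdot\|^{(2-\sigma)}_{C^{0,\alpha}_\bb}$ (Definition \ref{defn:2.3}), proves the weighted a priori estimate (Lemma \ref{lemma 3.22}) and a barrier-based weighted $C^0$ bound (Lemma \ref{lemma 3.23}), runs the continuity method between the Banach spaces $\mathcal B_1,\mathcal B_2$ of functions with finite weighted norms (which forces $u|_{\partial B_\bb}=0$), and only then passes to general $\varphi\in C^0$ by approximating $\varphi$ in $C^0$ by $\varphi_k\in C^{2,\alpha}_\bb$ and using the interior estimate plus the maximum principle to get Cauchy convergence (Corollary \ref{corollary 3.2}). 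Your proposal omits both the weighted-space machinery and the separate approximation step for $\varphi$, and neither is cosmetic.

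A secondary, smaller point: for the base case $t=0$ you invoke ``Perron's method'' with the cone as a removable singularity, but the paper's Proposition \ref{prop:3.2 new} is nontrivial — it goes through the smooth approximating metrics $g_\ep$, a uniform Sobolev inequality, De Giorgi--Nash--Moser to get continuity of the limit across $\sS$, and explicit barrier constructions at $\partial B_\bb(0,1)$ which must account for the non-smoothness of $\partial B_\bb$ at $\sS$. That solvability is itself one of the substantive inputs; citing Perron's method glosses over it. Likewise, for uniqueness the paper uses the explicit barriers $\pm\epsilon(\log|z_1|^2+\log|z_2|^2)$ (Lemma \ref{lemma:MP}) rather than a soft ``$\sS$ carries no mass'' argument; the latter is fine in spirit but needs the barrier to be made precise.
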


Theorem \ref{thm:1.2} can immediately be applied to study complex Monge-Amp\`ere equations with prescribed conical singularities along divisors of simple normal crossings and most of the geometric and analytic results for canonical K\"ahler metrics with conical singularities along a smooth divisor can be generalized to the case of simple normal crossings.

\medskip

We now turn to the parabolic Schauder estimates for the solution $u\in \cC^0(\qq_\bb)\cap \cC^2( \qq_\bb^\#  )$ to the equation 
\begin{equation}\label{eqn:main para}
\frac{\partial u}{\partial t} = \Delta_{g_\bb} u + f,\quad 
\end{equation}
for a Dini continuous function  $f$ in $\qq_\bb$, where for notation convenience we write $\qq_\bb: = B_\bb(0,1)\times (0,1]$ and $\qq_\bb^\# := B_\bb(0,1)\backslash \sS \times (0,1]$. Our second main theorem is the following pointwise estimate. 
\begin{theorem}\label{thm:main 2}
Suppose $\bb\in (1/2, 1)^p$ and $u$ is the solution to \eqref{eqn:main para}. Then there exists a computable constant $C=C(n,\bb)>0$ such that for any $Q_p= (p,t_p)$, $Q_q=(q,t_q)\in B_\bb(0,1/2)\backslash \sS\times (\hat t,1]$ (for some $\hat t\in (0,1)$) such that 
{\small
\begin{align*}
& ~| (D')^2 u(Q_p) - (D')^2 u(Q_q)  | + \sum_{j=1}^p \Ba{|z_j|^{2(1-\beta_j)} \frac{\partial^2 u}{\partial z_j\partial\bar z_j}(Q_p) - |z_j|^{2(1-\beta_j)}\frac{\partial^2 u}{\partial z_j\partial\bar z_j}(Q_q)     }\\
 + &~~  \ba{ \frac{\partial u}{\partial t}(Q_p) - \frac{\partial u}{\partial t}(Q_q)     } \le ~ C\bk{\frac{ d}{\hat t^{3/2}} \| u\|_{L^\infty(B_\bbeta(0,1))} + \hat t^{-1}\int_0^d \frac{\omega(r)}{r}dr + \frac{d}{\hat t^{3/2}} \int_d ^1 \frac{\omega(r)}{r^2} dr  },
\end{align*}  }
and for any $1\le j\le p$
{\small
\begin{equation*}
| N_j D' u(Q_p) - N_j D' u(Q_q)  | \le C\bk{ \frac{d^{\frac{1}{\beta_j} - 1}}{\hat t^{3/2}} \| u\|_{L^\infty(B_\bbeta(0,1))} + \hat t^{-1}\int_0^d \frac{\omega(r)}{r}dr + \frac{d^{\frac{1}{\beta_j} - 1}}{\hat t^{3/2}} \int_d ^1 \frac{\omega(r)}{r^{1/\beta_j}} dr    },
\end{equation*}  }
and for any $1\le j, k\le p$ with $j\neq k$ 
{\small
\begin{equation*}
| N_j N_k u(Q_p) - N_j N_k u(Q_q)  | \le C\bk{ \frac{d^{\frac{1}{\beta_{\max}} - 1}}{\hat t^{3/2}} \| u\|_{L^\infty(B_\bbeta(0,1))} + \hat t^{-1} \int_0^d \frac{\omega(r)}{r}dr +\frac{ d^{\frac{1}{\beta_{\max}} - 1} }{\hat t^{3/2}}\int_d ^1 \frac{\omega(r)}{r^{1/\beta_{\max}}} dr    },
\end{equation*} }
where $d=d_{\pp,\bbeta}(Q_p,Q_q)>0$ is the parabolic $g_\bbeta$-distance of $Q_p$ and $Q_q$,  and $\beta_{\max} = \max\{ \beta_1,\ldots, \beta_p  \}$, and $\omega(r)$ is the oscillation of $f$ in $\qq_\bb$ under the parabolic distance $d_{\pp,\bb}$ (c.f. Section \ref{section parabolic notations}).

\end{theorem}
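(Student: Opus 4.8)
The plan is to establish the parabolic estimates by running the parabolic counterpart of the argument behind Theorem~\ref{thm:main 1}: a Campanato-type iteration built on a decay estimate for the second-order quantities of solutions of the \emph{homogeneous} conical heat equation $\partial_t v = \Delta_{g_\bb} v$, together with a dyadic decomposition of the forcing term $f$ governed by its parabolic oscillation $\omega(r)$, all carried out on the smooth approximations $\partial_t v_\epsilon = \Delta_{g_\epsilon} v_\epsilon$ with $g_\epsilon \to g_\bb$ and passed to the limit away from $\sS$. Fix $Q_p=(p,t_p)$, $Q_q=(q,t_q)$ as in the statement, put $d = d_{\pp,\bb}(Q_p,Q_q)$, and write $\Theta u$ for whichever second-order quantity is being estimated, so that $\Theta$ lies in the ``first group'' $\{(D')^2,\ |z_j|^{2(1-\beta_j)}\tfrac{\partial^2}{\partial z_j\partial\bar z_j},\ \partial_t\}$, or $\Theta = N_j D'$, or $\Theta = N_j N_k$ with $j\ne k$. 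The regime $d \gtrsim \min\{1,\hat t^{1/2}\}$ reduces, via $|\Theta u(Q_p)-\Theta u(Q_q)| \le |\Theta u(Q_p)|+|\Theta u(Q_q)|$, to the pointwise (non-difference) interior bounds for $\Theta u$, so we may assume $d\le\tfrac1{16}\min\{1,\hat t^{1/2}\}$.

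The core ingredient is the following \emph{homogeneous decay estimate}: if $v$ solves $\partial_t v = \Delta_{g_\bb}v$ on a parabolic cylinder $\qq_\bb(0,R)$, then for $0<\rho\le R$ the parabolic oscillation of $\Theta v$ over $\qq_\bb(0,\rho)$ is at most $C(\rho/R)^{\gamma}R^{-2}\|v\|_{L^\infty(\qq_\bb(0,R))}$, where $\gamma=1$ for the first group, $\gamma=\tfrac1{\beta_j}-1$ for $\Theta=N_jD'$, and $\gamma=\tfrac1{\beta_{\max}}-1$ for $\Theta=N_jN_k$ (and $\gamma=1$ whenever the relevant $\beta$'s do not exceed $1/2$). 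This is the homogeneous form of the parabolic model estimates~\eqref{eqn:2.5}, and I would prove it by establishing the corresponding interior bounds for $v_\epsilon$ with constants independent of $\epsilon$ --- via parabolic Bernstein estimates for $|D'v_\epsilon|^2$, $|N_jD'v_\epsilon|^2$, $|N_jN_kv_\epsilon|^2$ and their time derivatives, using that $D'$, $N_j$, $N_k$ commute with $\partial_t-\Delta_{g_\epsilon}$ up to curvature-type terms whose sign dictates the admissible $\gamma$ --- and then letting $\epsilon\to0$.

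Granting this, the inhomogeneous part is treated by splitting $u = u_1 + u_2$ on $[\hat t/2,1]$, where $u_1$ solves the homogeneous heat equation with initial data $u(\cdot,\hat t/2)$ and $u_2$ solves $\partial_t u_2 = \Delta_{g_\bb}u_2 + f$ with zero initial data. For $u_1$, the homogeneous decay estimate applied over the elapsed time $\ge\hat t/2$ and evaluated at $t_p,t_q\ge\hat t$ contributes $\lesssim d^{\gamma}\hat t^{-3/2}\|u\|_{L^\infty}$: each spatial derivative of the regularized flow costs $\hat t^{-1/2}$ against $L^\infty$ data, so the parabolic Lipschitz constant of a second-order quantity is of order $\hat t^{-3/2}$, and $d^\gamma$ (read as $d$ when $\gamma\ge1$) is the gain at parabolic distance $d$. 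For $u_2$, decompose $f = f^{(0)} + \sum_{k\ge1}(f^{(k)}-f^{(k-1)})$ with $f^{(k)}$ a parabolic mollification of $f$ at scale $2^{-k}$, so that $\|f^{(k)}-f^{(k-1)}\|_{L^\infty}\lesssim\omega(2^{-k})$ and $f^{(k)}$ has parabolic Lipschitz constant $\lesssim\omega(2^{-k})2^{k}$; solving each piece by Duhamel against the conical heat kernel and summing against the homogeneous decay produces the near-diagonal term $\hat t^{-1}\int_0^d\frac{\omega(r)}{r}\,dr$ from scales $\le d$ and the tail $d^{\gamma}\hat t^{-3/2}\int_d^1\frac{\omega(r)}{r^{1+\gamma}}\,dr$ from scales between $d$ and $\min\{1,\hat t^{1/2}\}$, the scales beyond $\hat t^{1/2}$ being absorbed through $\|u_2\|_{L^\infty}\lesssim\|f\|_{L^\infty}$; here $r^{-1-\gamma}$ is exactly $r^{-2}$, $r^{-1/\beta_j}$, $r^{-1/\beta_{\max}}$ in the three cases. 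Since $Q_p,Q_q$ both lie in a parabolic cylinder of radius comparable to $d$ about $Q_p$, adding the $u_1$ and $u_2$ bounds gives precisely the three displayed inequalities.

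The main obstacle is the homogeneous decay estimate near the deepest stratum $\sS_j\cap\sS_k$, i.e.\ the bound for $N_jN_kv$ with the sharp exponent $\tfrac1{\beta_{\max}}-1$. In the single-divisor case of \cite{GS} the conical-direction estimates could be reduced to regular-direction ones, but at a normal crossing no such reduction survives: one must prove a genuinely new uniform-in-$\epsilon$ parabolic Bernstein-type estimate for the mixed quantity $N_jN_kv_\epsilon$, controlling its spatial gradient and its time derivative simultaneously, in the presence of the two degenerating weights $|z_j|^{2(1-\beta_j)}$ and $|z_k|^{2(1-\beta_k)}$ at once. The cross terms these weights generate, together with the non-commutation of $N_j,N_k$ with $\Delta_{g_\epsilon}$, are what make the sharp exponent delicate; once the model estimate~\eqref{eqn:2.5} is available, the remainder is the routine parabolic Campanato bookkeeping sketched above.
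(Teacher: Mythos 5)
Your high-level template --- subtract a homogeneous caloric piece, control it by a decay estimate for conical caloric functions, decompose the forcing by scale and sum --- is in the same spirit as the paper, but the paper's actual route is different and your central ingredient has a gap.

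The paper proves Theorem~\ref{thm:main 2} by directly translating Sections~\ref{section 3.2}--\ref{section 3.4} into the parabolic setting: one replaces the elliptic comparison problems $\Delta_\bb u_k = f(p)$ on $\hat B_k(p)$ by the parabolic ones $\partial_t u_k = \Delta_{g_\bb} u_k + f(Q_p)$ on the shrinking cylinders $\hat B_k(p)\times(t_p-\hat t\,\tau^{2k},t_p]$, compares $u_k$ to $u$ via the parabolic maximum principle, and then reruns the entire elliptic machinery of Sections~\ref{section 3.2}--\ref{section 3.4} on the differences $u_k-u_{k+1}$. The role of Lemmas~\ref{lemma 2.1}--\ref{lemma 2.2} is taken by Proposition~\ref{prop 4.1}, built from Li--Yau (Lemma~\ref{lemma LY}) and the parabolic Bochner computations of Lemmas~\ref{lemma 4.2}--\ref{lemma 4.3}. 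Your homogeneous/Duhamel split of $u$ in time and the mollification decomposition of $f$ are reasonable substitutes for this freezing-of-coefficients iteration, and by themselves they are not where the problem lies.

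The genuine gap is in the claimed \emph{homogeneous decay estimate}: you assert that one can prove $\osc_{\qq_\bb(0,\rho)}\Theta v \le C(\rho/R)^\gamma R^{-2}\|v\|_{L^\infty}$ with the sharp $\gamma=\tfrac1{\beta_j}-1$ or $\gamma=\tfrac1{\beta_{\max}}-1$ by a parabolic Bernstein/Bochner argument on $|N_j D' v_\epsilon|^2$, $|N_j N_k v_\epsilon|^2$, ``using that $D'$, $N_j$, $N_k$ commute with $\partial_t-\Delta_{g_\epsilon}$ up to curvature-type terms whose sign dictates the admissible $\gamma$.'' This is not what happens, and I do not believe it can be made to work as stated. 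First, $N_j=\partial_{r_j}$ or $\tfrac1{\beta_j r_j}\partial_{\theta_j}$ does \emph{not} commute with $\Delta_{g_\epsilon}$ up to a benign curvature term: the commutators are themselves singular first- and second-order operators with coefficients of order $r_j^{-1}$, and there is no sign structure that ``dictates $\gamma$.'' Second, the Bernstein/Li--Yau estimates the paper actually proves (Lemmas~\ref{lemma 4.2}, \ref{lemma 4.3}, i.e.\ Proposition~\ref{prop 4.1}) give only the \emph{regular} exponent $\gamma=1$: they bound $|\nabla u|$, $|\Delta_i u|$, $|\nabla_i\nabla_j u|$ and $|\nabla(\text{second-order quantity})|$, all Lipschitz-type statements. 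The sharp conical exponent $\tfrac1{\beta}-1$ is an \emph{elliptic} regularity phenomenon that the paper obtains from the 1D radial model estimate~\eqref{eqn:2.5} --- which records the precise vanishing rate of $\partial_z v$ as $z\to 0$ for a conical harmonic function with $\beta\in(1/2,1)$ --- applied on complex lines transversal to each $\sS_j$, combined with geodesic integration broken into coordinate segments and a detailed case analysis by the scale $k$ relative to $k_{j,p}$ (Lemmas~\ref{lemma 2.9}--\ref{lemma 2.12} and~\ref{lemma 2.16}--\ref{lemma 2.21}). Nothing in a single Bochner identity reproduces this. To fill the gap you would have to observe that once Proposition~\ref{prop 4.1} controls the $L^\infty$ size and the spatial/time gradients of $u_k-u_{k+1}$ and its spatial derivatives on each cylinder, the time-slices are conical harmonic up to a controlled error, so the whole elliptic apparatus of Sections~\ref{section 3.2}--\ref{section 3.4} (including~\eqref{eqn:2.5}) can be re-applied slice by slice --- which is precisely the paper's proof, not a new Bochner estimate.
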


If $f\in \cC_\bb^{\alpha, \frac{\alpha}{2}}(\qq_\bb)$ for some $\alpha\in ( 0, \min( \frac{1}{\beta_{\max}} - 1, 1   )  )$, then we have the following precise estimates as the parabolic analogue of Corollary \ref{corr:1.1}.
\begin{corr}
Suppose $\bb\in (0,1)^p$ and $u\in \cC^0(\qq_\bb)\cap \cC^2( \qq_\bb^\#  )$ satisfies the equation \eqref{eqn:main para}, then there exists a constant $C=C(n,\bb)>0$ such that (see Definition \ref{defn:2.5} for the notations)
$$\|u\|_{\cC^{2+\alpha,\frac{\alpha + 2}{2}   }_{\bb} \big ( B_\bb(0, 1/2) \times (1/2, 1]  \big )     }  \le C \bk{ \| u\|_{\cC^0(\qq_\bb)} + \frac{\| f\|_{\cC^{\alpha,\frac{\alpha}{2}}_\bb( \qq_\bb   )}} {\alpha ( \min\{\frac{1}{\beta_{\max}} - 1, 1\}  -\alpha)}    }.    $$
\end{corr}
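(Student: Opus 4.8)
The plan is to deduce the corollary directly from Theorem~\ref{thm:main 2} by specializing its Dini modulus of continuity $\omega$ to a H\"older power and carrying out the resulting elementary integrations, keeping careful track of the constants. Throughout, one works on the fixed subcylinder $\qq' := B_\bb(0,1/2)\times(1/2,1]$, on which $\hat t\ge 1/2$, so every factor $\hat t^{-3/2}$, $\hat t^{-1}$ appearing on the right-hand sides of Theorem~\ref{thm:main 2} is bounded by an absolute constant. Since $f\in\cC^{\alpha,\alpha/2}_\bb(\qq_\bb)$, for every $r\in(0,1)$ one has $\omega(r)\le [f]_{\cC^{\alpha,\alpha/2}_\bb(\qq_\bb)}\,r^\alpha$; in particular $f$ is Dini continuous and Theorem~\ref{thm:main 2} applies to $u$.

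The computation of the three integrals with $\omega(r)=A\,r^\alpha$, $A:=[f]_{\cC^{\alpha,\alpha/2}_\bb(\qq_\bb)}$, is routine: $\int_0^d \omega(r)/r\,dr = A\,d^\alpha/\alpha$; using $\alpha<1$, $d\int_d^1 \omega(r)/r^2\,dr \le A\,d^\alpha/(1-\alpha)$; and since by hypothesis $\alpha<\tfrac1{\beta_{\max}}-1\le\tfrac1{\beta_j}-1$ so that $\alpha-\tfrac1{\beta_j}+1<0$, one gets $d^{1/\beta_j-1}\int_d^1 \omega(r)/r^{1/\beta_j}\,dr \le A\,d^\alpha/(\tfrac1{\beta_j}-1-\alpha)$, with the analogous bound when $\beta_j$ is replaced by $\beta_{\max}$. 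Absorbing the $\|u\|_{L^\infty}$-terms via $d\le d^\alpha$ for $0<d<1$, each right-hand side of Theorem~\ref{thm:main 2} restricted to $\qq'$ is bounded by
\[
C\Big(\|u\|_{\cC^0(\qq_\bb)} + \tfrac{A}{\alpha(\min\{1/\beta_{\max}-1,\,1\}-\alpha)}\Big)\, d^\alpha ,
\]
because $\tfrac1\alpha+\tfrac1{1-\alpha}=\tfrac1{\alpha(1-\alpha)}$, and $\tfrac1\alpha+\tfrac1{1/\beta_{\max}-1-\alpha}\le \tfrac{C}{\alpha(1/\beta_{\max}-1-\alpha)}$ since $\tfrac1{\beta_{\max}}-1\le 1$, while $\min\{\tfrac1{\beta_{\max}}-1,1\}-\alpha$ is in every case the binding denominator. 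Hence the $\alpha$-H\"older seminorm in the parabolic distance $d_{\pp,\bb}$ — which encodes spatial oscillation together with time oscillation weighted by $|t_p-t_q|^{\alpha/2}$ — of each of $(D')^2u$, $|z_j|^{2(1-\beta_j)}\partial^2_{z_j\bar z_j}u$, $N_jD'u$, $N_jN_ku$ ($j\ne k$) and $\partial_t u$ on $\qq'$ is controlled by the displayed quantity.

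It remains to upgrade these seminorm bounds to the full anisotropic norm $\|u\|_{\cC^{2+\alpha,(\alpha+2)/2}_\bb(\qq')}$. The supremum of each second-order quantity $\Phi$ above follows from its seminorm bound plus a single base-point value $|\Phi(Q_0)|$, and the latter is estimated by averaging $\Phi$ over a slightly larger cylinder and integrating by parts, which reduces it to $\|u\|_{\cC^0}$ and the (first-order) gradient estimates already available; the supremum and the $\cC^{(\alpha+2)/2}$, $\cC^{(\alpha+1)/2}$ time regularity of the lower-order terms $u$ and $N_ju,\,D'u$ then follow by integrating in the good (regular, radial, and angular) directions from the second-order bounds, again using only $\|u\|_{\cC^0(\qq_\bb)}$. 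Since the numerics above place every denominator in the prescribed form, this yields exactly the claimed estimate.

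I do not expect a substantial obstacle in this argument: the entire analytic content is carried by Theorem~\ref{thm:main 2}, and the corollary is a bookkeeping consequence. The only points requiring care are tracking the sharp blow-up rate $1/\big(\alpha(\min\{1/\beta_{\max}-1,1\}-\alpha)\big)$ as $\alpha\downarrow 0$ or as $\alpha\uparrow\min\{1/\beta_{\max}-1,1\}$, and the routine but mildly tedious passage from pointwise parabolic H\"older estimates to the full anisotropic parabolic $\cC^{2+\alpha}$ norm, including the fractional time-regularity of the lower-order terms.
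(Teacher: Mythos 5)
Your proposal is correct, and it carries out exactly the computation that the paper leaves implicit (the paper states this corollary right after Theorem~\ref{thm:main 2} with no separate proof, just as Corollary~\ref{corr:1.1} is stated without proof after Theorem~\ref{thm:main 1}). Substituting the power modulus $\omega(r)\le [f]_{\cC^{\alpha,\alpha/2}_\bb}\,r^\alpha$ into the three pointwise estimates of Theorem~\ref{thm:main 2} on the subcylinder where $\hat t\ge 1/2$, and performing the elementary power integrations, is the intended route; your bookkeeping of the denominators $\alpha$, $1-\alpha$, $\tfrac1{\beta_j}-1-\alpha$ and the reduction to $1/\big(\alpha(\min\{\tfrac1{\beta_{\max}}-1,1\}-\alpha)\big)$ is accurate.

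Two small points. First, Theorem~\ref{thm:main 2} is stated for $\bb\in(1/2,1)^p$, while the corollary allows $\bb\in(0,1)^p$; you should invoke Remark~1.1, which explains that for angles $\beta_j\le 1/2$ the mixed-normal estimates revert to the first type of RHS (with $\int\omega/r^2$), so the denominator is $1-\alpha=\min\{\tfrac1{\beta_{\max}}-1,1\}-\alpha$ in that regime and the stated constant is still correct. Second, your concluding paragraph slightly overstates what needs to be shown: by Definition~\ref{defn:2.5}, $\|u\|_{\cC^{2+\alpha,(\alpha+2)/2}_\bb}$ is just $\|u\|_{\cC^0}+\|\nabla_{g_\bb}u\|_{\cC^0}+\|\newT u\|_{\cC^{\alpha,\alpha/2}_\bb}$; no fractional time regularity of the lower-order terms $u$, $N_j u$, $D'u$ is required. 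The sup of $\newT u$ is bounded by the parabolic analogue of the unnumbered proposition in Section~\ref{section 3.2} (proved from the same telescoping series of approximating solutions $u_k$ that underlies Theorem~\ref{thm:main 2}), while $\|u\|_{\cC^0}$ is trivial and $\|\nabla_{g_\bb}u\|_{\cC^0}$ follows from the gradient estimate \eqref{eqn:para new grad 1}; this is cleaner than the averaging-and-integration-by-parts argument you sketch, though yours would also work.
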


For general non-flat $\cC^{\alpha,\alpha/2}_\bb$-conical K\"ahler metrics $g$, we consider the linear parabolic equation  %(we denote $\qq_\bb = B_\bb(0,1)\times [0,1]$)
\begin{equation}\label{eqn:section 1.2}
\frac{\partial u}{\partial t} = \Delta_g u + f,\text{ in }\qq_\bb, \, \, u = \varphi\text{ on }\partial_\pp\qq_\bb.
\end{equation}  
We then have  the following parabolic Schauder estimates as an analogue of Theorem \ref{thm:1.2}.
\begin{theorem}\label{prop:section 1.1 new} %\textcolor{red}{change to theorem}
Given $\bb\in (0,1)^p$, $f\in \cC^{\alpha,\alpha/2}_\bb(\overline{\qq_\bb} )$ and $\varphi\in \cC^0(\partial_\pp \qq_\bb)$, there exists a unique solution $u\in \cC^{2+\alpha,\frac{\alpha+2}{2}}_\bb\xk{ B_\bb(0,1)\times (0,1] }\cap \cC^0(\overline{\qq_\bb})$ to the Dirichlet boundary value problem \eqref{eqn:section 1.2}. For any compact subset $K\Subset B_\bb(0,1)$ and $\varepsilon_0>0$ there exists $C=C(n,\bb,\alpha,K,\varepsilon_0,g)>0$ such that  the following interior Schauder estimate holds
$$ \| u\|_{ \cC^{2+\alpha,\frac{2+\alpha}{2}}_\bb( K\times [\varepsilon_0, 1]  )  }\le C\xk{ \| u\|_{\cC^0(\qq_\bb)}  + \| f\|_{ \cC^{\alpha,\alpha/2}_\bb(\qq_\bb)  } }.   $$
Furthermore, if we assume $u|_{t= 0 } = u_0\in C^{2,\alpha}_\bb(B_\bb(0,1))$, then $u\in \cC^{2+\alpha,\frac{\alpha+2}{2}}_\bb( B_\bb(0,1)\times [0,1]   )$ and there exists a constant $C=C(n,\bb,\alpha, g,K)>0$ such that 
\begin{equation*}
\| u\|_{\cC^{2+\alpha,\frac{\alpha+2}{2}}_\bb( K\times [0,1]  )  }\le C \xk{ \| u\|_{\cC^0(\qq_\bb)} + \| f\|_{\cC^{\alpha,\alpha/2}_\bb(\qq_\bb)} + \| u_0\|_{C^{2,\alpha}_\bb(B_\bb(0,1))}   }.
\end{equation*}

\end{theorem}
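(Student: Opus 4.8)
The plan is to deduce Theorem \ref{prop:section 1.1 new} from the interior parabolic estimate for the flat model (Theorem \ref{thm:main 2} and its Corollary) by a combination of perturbation/freezing arguments, a continuity method for existence, and the maximum principle for uniqueness. First I would establish \emph{uniqueness}: if $u_1,u_2$ are two solutions of \eqref{eqn:section 1.2} with the same data, then $w=u_1-u_2$ solves $\partial_t w=\Delta_g w$ with $w=0$ on $\partial_\pp\qq_\bb$; since $g$ is equivalent to $g_\bb$ and $w\in\cC^0(\overline{\qq_\bb})$, the parabolic maximum principle on the (metrically complete, bounded) domain $\qq_\bb$ gives $w\equiv 0$. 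One must check that the singular set $\sS$ is removable here — this follows because $\sS$ has parabolic capacity zero for $\Delta_{g_\bb}$ (equivalently, $\mathcal{S}$ has real codimension $2$ and the metric degenerates only mildly, $\beta_j\in(0,1)$), a point already used implicitly in \cite{GS} and in Corollary \ref{corr:1.1}.

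Next, \emph{existence and interior regularity}. I would freeze the metric $g$ at a point and write $\Delta_g = \Delta_{g_\bb} + (\Delta_g - \Delta_{g_\bb})$, viewing the difference as a perturbation with $\cC^{\alpha,\alpha/2}_\bb$-small coefficients on small parabolic balls. On a parabolic ball $Q_\bb(x_0,r)$ one solves the model Dirichlet problem with the frozen operator using the flat theory, then iterates (a Campanato-type or Banach fixed-point scheme, exactly as in the single-divisor case in \cite{GS}): the contribution of the perturbation is absorbed by choosing $r$ small, using that $\alpha<\min(\tfrac{1}{\beta_{\max}}-1,1)$ so that the model $\cC^{2+\alpha}_\bb$ estimate from the Corollary to Theorem \ref{thm:main 2} is available with a uniform constant. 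This yields the a priori interior bound
\[
\| u\|_{\cC^{2+\alpha,\frac{2+\alpha}{2}}_\bb(K\times[\varepsilon_0,1])}\le C\xk{\|u\|_{\cC^0(\qq_\bb)}+\|f\|_{\cC^{\alpha,\alpha/2}_\bb(\qq_\bb)}},
\]
and then existence follows by the continuity method connecting $g$ to $g_\bb$ (the openness step uses precisely this a priori estimate plus the invertibility established for $g_\bb$, and the closedness step uses the estimate together with Arzelà–Ascoli). The boundary data $\varphi\in\cC^0(\partial_\pp\qq_\bb)$ is attained in the $\cC^0$ sense by a standard barrier argument, since $g_\bb$ admits local barriers at every boundary point (including points of $\partial B_\bb(0,1)\cap\sS$, where one builds a barrier from the sum of the one-variable cone barriers $|z_j|^{\beta_j}$-type functions).

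For the final assertion — regularity \emph{up to $t=0$} when $u_0\in C^{2,\alpha}_\bb$ — I would subtract off the solution $\tilde u$ of the model heat equation $\partial_t\tilde u=\Delta_{g_\bb}\tilde u$ with $\tilde u|_{t=0}=u_0$, which lies in $\cC^{2+\alpha,\frac{\alpha+2}{2}}_\bb$ on $B_\bb(0,1)\times[0,1]$ by the flat theory applied with nonzero initial data (or construct $\tilde u$ from a parabolic extension of $u_0$ and solve for the correction with vanishing initial data). The difference $u-\tilde u$ then solves a parabolic equation with zero initial data and $\cC^{\alpha,\alpha/2}_\bb$ right-hand side, for which the global-in-time Schauder estimate with initial data gives the desired bound on $K\times[0,1]$; here the compatibility of $u_0$ with the boundary/source near $t=0$ is what makes the estimate uniform down to $t=0$.

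The main obstacle I expect is the perturbation step near the high-codimension strata of $\sS$ (the loci where several $\sS_j$ meet), because there — as the authors emphasize in the introduction — one cannot reduce the conical-direction estimates to regular-direction ones, so the off-diagonal $N_jN_k$ Hölder bounds of Theorem \ref{thm:main 2} (with the weaker exponent $\tfrac{1}{\beta_{\max}}-1$) must be used carefully and shown to be stable under $\cC^{\alpha,\alpha/2}_\bb$ perturbations of the metric; verifying that the perturbation $\Delta_g-\Delta_{g_\bb}$ maps $\cC^{2+\alpha,\frac{\alpha+2}{2}}_\bb$ into $\cC^{\alpha,\alpha/2}_\bb$ with small norm on small balls, uniformly across all strata, is the delicate point, and it is exactly where the new estimates of this paper (beyond \cite{GS}) are needed.
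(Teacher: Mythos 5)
Your plan is broadly the paper's: uniqueness by the parabolic maximum principle (the paper kills off $\sS$ with the barrier $\epsilon(\log|z_1|^2+\log|z_2|^2)$ as in Lemma~\ref{lemma:MP}, rather than a capacity statement, but the effect is the same), existence and interior Schauder bounds by freezing coefficients in the $\{\epsilon_j,\gamma_j\}$ basis and absorbing the perturbation $\Delta_g-\Delta_{g_\bb}$ on small parabolic cylinders à la Gilbarg--Trudinger (Proposition~\ref{prop 4.2 new}), and a continuity method in weighted $\C^{2+\alpha,\frac{2+\alpha}{2}}_\bb$ spaces (Lemmas~\ref{lemma 4.4}, \ref{lemma 4.5}, Proposition~\ref{prop:4.3}, Corollary~\ref{cor 4.3}) to close the existence argument. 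On this portion your proposal tracks the paper's proof faithfully.

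There is, however, a genuine gap in the ``up to $t=0$'' step, and it is the substantive one. You subtract a model-heat solution $\tilde u$ with $\tilde u|_{t=0}=u_0$ and then claim the flat theory delivers a $\C^{2+\alpha,\frac{2+\alpha}{2}}_\bb(B_\bb\times[0,1])$ bound on $\tilde u$ and on the correction; but the interior estimate of Theorem~\ref{thm:main 2} is not uniform near $t=0$ --- the right-hand side carries the factors $\hat t^{-3/2}$ and $\hat t^{-1}$, which blow up as $\hat t\to 0$. So neither your $\tilde u$ nor your correction can be controlled on $[0,1]$ merely by invoking the interior (positive-time) theorem, and nothing in your sketch supplies the missing uniform-in-$t$ estimate. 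The paper addresses this head-on: it first proves a global-in-time Schauder estimate for the flat model with zero initial data (Caccioppoli inequalities, a Moser iteration to get the sub-mean-value property in Lemma~\ref{lemma Moser iteration}, and a Campanato decay iteration in Lemma~\ref{lemma 4.9}--Lemma~\ref{lemma 4.10}), then transfers it to the non-flat metric by perturbation (Proposition~\ref{prop:4.4}), and finally reduces nonzero initial data $u_0\in C^{2,\alpha}_\bb$ to the zero-initial-data case by the elementary substitution $\hat u = u-u_0$, $\hat f = f-\Delta_g u_0$ (Corollary~\ref{corr:4.7}), not by subtracting a heat-flow evolution of $u_0$. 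Your diagnosis of where the difficulty lives (high-codimension strata of $\sS$, the weaker $\beta_{\max}$ exponent, stability under $\C^{\alpha,\alpha/2}_\bb$ metric perturbations) is correct, but that is already handled by the perturbation argument once the flat model estimate is in hand; the piece you are actually missing is the energy/Campanato machinery that makes the model estimate valid all the way down to $t=0$.
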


As an application of Theorem \ref{prop:section 1.1 new}, we derive the short-time existence of the conical K\"ahler-Ricci flow with background metric being conical along divisors with simple normal crossings.

\smallskip

Let $(X,D)$ be a compact K\"ahler manifold, where $D = \sum_j D_j$ is a finite union of smooth divisors $\{D_j\}$ and $D$ has only simple normal crossings. Let $\omega_0$ be a $C^{0,\alpha'}_\bb(X)$-conical K\"ahler metric with cone angle $2\pi\bb$ along $D$ (see Definition \ref{defn:2.7}) and $\hat \omega_t$ be a family of conical metrics with bounded norm $\|\hat \omega\|_{\cC_\bb^{\alpha',\alpha'/2}  }$ and $\hat \omega_0 = \omega_0$. We consider the complex Monge-Amp\`ere flow
\begin{equation}\label{eqn:MA section1.1}
%\left\{\begin{aligned}
\frac{\partial \varphi}{\partial t} = \log\bk{ \frac{ (\hat\omega_t + \ddb \varphi)^n  }{\omega_0^n}   } + f,
\text{ and }\varphi|_{t= 0} = 0,
%\end{aligned}\right.
\end{equation}
for some $f\in \C^{\alpha',\alpha'/2}_\bb(X\times [0,1]  )$. 
\begin{theorem}\label{thm:1.5}Given $\alpha\in (0,\alpha')$,
there exists $T = T(n,\hat \omega, f,\alpha',\alpha)>0$ such that \eqref{eqn:MA section1.1} admits a unique solution $\varphi \in \cC^{2+\alpha,\frac{2+\alpha}{2}}_\bb(X\times [0,T])$. 
\end{theorem}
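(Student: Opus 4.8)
\textbf{Proof proposal for Theorem \ref{thm:1.5}.}

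The plan is to set up a standard inverse function theorem (or fixed point) argument on a short time interval, using the parabolic Schauder estimate of Theorem \ref{prop:section 1.1 new} as the linearized estimate. First I would linearize the Monge--Amp\`ere flow \eqref{eqn:MA section1.1}: if $\varphi$ is the unknown and we write $\omega_\varphi = \hat\omega_t + \ddb\varphi$, then the equation reads $\partial_t\varphi = \log(\omega_\varphi^n/\omega_0^n) + f$, whose linearization at a reference potential $\psi$ is the heat-type operator $v \mapsto \partial_t v - \Delta_{\omega_\psi} v$. As long as $\omega_\psi$ stays uniformly equivalent to $g_\bb$ with $\cC^{\alpha',\alpha'/2}_\bb$ coefficients, this is precisely an operator covered by Theorem \ref{prop:section 1.1 new}. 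I would define the Banach spaces $\mathcal{B}_1 = \{ \varphi \in \cC^{2+\alpha,\frac{2+\alpha}{2}}_\bb(X\times[0,T]) : \varphi|_{t=0} = 0\}$ and $\mathcal{B}_2 = \cC^{\alpha,\alpha/2}_\bb(X\times[0,T])$, and the nonlinear map $\mathcal{N}(\varphi) = \partial_t\varphi - \log(\omega_\varphi^n/\omega_0^n) - f$ from (a neighborhood of $0$ in) $\mathcal{B}_1$ to $\mathcal{B}_2$; solving \eqref{eqn:MA section1.1} is then solving $\mathcal{N}(\varphi)=0$.

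Next I would verify the hypotheses of the inverse function theorem at $\varphi=0$. The value $\mathcal{N}(0) = -\log(\hat\omega_0^n/\omega_0^n) - f = -f$ (using $\hat\omega_0 = \omega_0$), which is small in $\mathcal{B}_2$-norm when $T$ is small only if $f$ itself vanishes at $t=0$; in general one instead notes $\|\mathcal{N}(0)\|_{\mathcal{B}_2}$ is merely bounded, so rather than making $\mathcal{N}(0)$ small one should solve the \emph{inhomogeneous} linearized problem and iterate. Concretely: the differential $D\mathcal{N}_0(v) = \partial_t v - \Delta_{\hat\omega_0} v = \partial_t v - \Delta_{\omega_0} v$ is, by Theorem \ref{prop:section 1.1 new} applied to the $\cC^{\alpha',\alpha'/2}_\bb$-conical metric $g = \omega_0$ (and $\alpha < \alpha'$), an isomorphism from $\mathcal{B}_1$ onto $\mathcal{B}_2$ with inverse bounded by a constant $C_0$ \emph{independent of $T\le 1$} (the interior estimate with $\varepsilon_0=0$ branch, using the zero initial data). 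Then I would show the nonlinear remainder $\mathcal{R}(\varphi) := \mathcal{N}(\varphi) - \mathcal{N}(0) - D\mathcal{N}_0(\varphi)$ has small Lipschitz constant on a small ball $\{\|\varphi\|_{\mathcal{B}_1}\le\rho\}$ once $T$ is small: indeed $\mathcal{R}$ collects the error between $-\log\det(I + (\omega_0)^{-1}(\hat\omega_t - \omega_0 + \ddb\varphi))$ and its first-order Taylor expansion, which is quadratic in $\ddb\varphi$ plus terms carrying the factor $\hat\omega_t - \hat\omega_0 = O(t^{\alpha'/2})$ from the time-dependence of the background; both kinds of terms are controlled in $\cC^{\alpha,\alpha/2}_\bb$-norm by $(\rho + T^{(\alpha'-\alpha)/2})$ times $\|\varphi\|_{\mathcal{B}_1}$. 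A contraction-mapping argument for the map $\varphi \mapsto -(D\mathcal{N}_0)^{-1}(\mathcal{N}(0) + \mathcal{R}(\varphi))$ then yields a unique fixed point $\varphi\in\mathcal{B}_1$ on $[0,T]$ for $T = T(n,\hat\omega,f,\alpha',\alpha)$ small; one must also check $\omega_\varphi > 0$ and stays equivalent to $g_\bb$ on this ball, which holds because $\ddb\varphi$ is small in $\cC^{\alpha,\alpha/2}_\bb$ and $\hat\omega_t$ is uniformly equivalent to $g_\bb$. Uniqueness in $\cC^{2+\alpha,\frac{2+\alpha}{2}}_\bb$ follows from the contraction property (or, alternatively, from the parabolic maximum principle applied to the difference of two solutions, using that $\log$ is monotone).

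The main obstacle I anticipate is not the abstract scheme but the bookkeeping needed to make the linearized estimate uniform in $T$ with \emph{zero} initial data and to absorb the time-dependence of $\hat\omega_t$: one needs the constant in Theorem \ref{prop:section 1.1 new} to not blow up as $T\to 0$, which is why the $u_0$-version of that theorem (with $u_0 = 0 \in C^{2,\alpha}_\bb$) is the right tool, and one needs the compatibility $\hat\omega_0 = \omega_0$ precisely so that the linearization is taken at the \emph{conical K\"ahler} metric $\omega_0$ for which the hypotheses of Theorem \ref{prop:section 1.1 new} hold, rather than at some degenerate reference. A secondary subtlety is that the nonlinearity $\log\det$ is only defined and smooth where $\omega_\varphi$ is positive and comparable to $g_\bb$; keeping the iterates inside this open set is what forces $\rho$ (and hence $T$) small, and care is needed that the $\ddb$ of a $\cC^{2+\alpha,\frac{2+\alpha}{2}}_\bb$ function lies in $\cC^{\alpha,\alpha/2}_\bb$ with the expected norm bound, which is where the conical H\"older structure of Definition \ref{defn:2.5} enters. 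Once these uniformities are in place, the rest is a routine application of the contraction mapping principle.
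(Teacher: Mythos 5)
Your overall strategy — linearize the Monge--Amp\`ere flow and run a fixed-point / inverse function theorem argument on a short time interval, with Theorem \ref{prop:section 1.1 new} (in particular the $u_0\in C^{2,\alpha}_\bb$ version, Corollary \ref{cor:existence lemma}) supplying the linear Schauder isomorphism — is the right one and matches the paper's. Two small computational slips: since $\hat\omega_t\neq\omega_0$ for $t>0$, $\mathcal{N}(0)=-\log(\hat\omega_t^n/\omega_0^n)-f$, not $-f$, and the Gateaux derivative at $\varphi=0$ is $\partial_t v-\Delta_{\hat\omega_t}v$, not $\partial_t v-\Delta_{\omega_0}v$ (they coincide only at $t=0$). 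Neither of these is fatal.

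The genuine gap is in how you close the contraction. You correctly observe that $\|\mathcal{N}(0)\|_{\mathcal{B}_2}$ is merely bounded, not small, and say you will ``solve the inhomogeneous linearized problem and iterate,'' but then never explain why the iteration stays in a ball of small radius $\rho$. Your own argument requires $\rho$ to be small: the Lipschitz constant of $\mathcal{R}$ on $\{\|\varphi\|_{\mathcal{B}_1}\le\rho\}$ is of size $C(\rho+T^{(\alpha'-\alpha)/2})$, and positivity of $\hat\omega_t+\ddb\varphi$ also needs $\ddb\varphi$ small, which forces $\|\varphi\|_{\mathcal{B}_1}$ small. But the first iterate $u^*:=-(D\mathcal{N}_0)^{-1}(\mathcal{N}(0))$ has $\|u^*\|_{\mathcal{B}_1}\le C_0\|\mathcal{N}(0)\|_{\mathcal{B}_2}$, which does \emph{not} go to zero as $T\to 0$ from the Schauder estimate alone, even with the constant $C_0$ uniform in $T$. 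The missing ingredient is precisely the H\"older-gain estimate that the paper proves in Step 1 of its argument: because $\mathcal{N}(0)\in\C^{\alpha',\alpha'/2}_\bb(X\times[0,1])$, the linear solution $u^*$ is in $\C^{2+\alpha',\frac{2+\alpha'}{2}}_\bb(X\times[0,1])$ and vanishes at $t=0$; restricting to $[0,T]$ and using $\alpha<\alpha'$ one gets $\|u^*\|_{\C^{2+\alpha,\frac{2+\alpha}{2}}_\bb(X\times[0,T])}\le C T^{(\alpha'-\alpha)/2}\to 0$. You use this two-exponent idea to estimate the $\hat\omega_t-\hat\omega_0$ contribution to $\mathcal{R}$, but not where it is most needed — to control the zeroth iterate. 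Without this step, the fixed point can be of order one, the Lipschitz constant of $\mathcal{R}$ on the relevant ball is not small, and positivity of $\omega_\varphi$ is not guaranteed.

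Once that estimate is supplied, your direct contraction at $\varphi=0$ would indeed give a proof, and it is a legitimate simplification of the paper's argument: the paper instead applies the inverse function theorem around the linear solution $u$ (not around $0$) and then uses a time-shift trick, replacing $w=\Psi(u)$ by $\tilde w(\cdot,t)=w(\cdot,t-T_2)\mathbf{1}_{t\ge T_2}$ and showing $\|\tilde w-w\|_{\C^{\alpha,\alpha/2}_\bb}$ is small for small $T_2$, again via the same H\"older gain and the vanishing $w(\cdot,0)=0$. So the two approaches are relying on the identical estimate, just packaged differently: the paper uses it twice (once to get $u$ into a small ball, once in the time shift), while your route, properly completed, needs it once (to make $u^*$ small). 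But as written, your proposal never actually makes the estimate, so the contraction-mapping scheme does not close.
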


An immediate corollary of Theorem \ref{thm:1.5} is the short time existence for the conical K\"ahler-Ricci flow defined as below
%A consequence of the existence of $\varphi$ to \eqref{eqn:MA section1.1} is the short-time existence of the conical K\"ahler-Ricci flow
%
\begin{equation}\label{eqn:KRF}
\frac{\partial \omega}{\partial t} = -\ric(\omega) + \sum_j (1-\beta_j) [D_j],\quad \omega|_{t= 0 } = \omega_0,
\end{equation}
where $\ric(\omega)$ is the unique extension of the Ricci curvature of $\omega$ from $X\setminus D$ to $X$ and $[D_j]$ denotes the current of integration over the component $D_j$. In addition  we assume  $\omega_0$ is a $C^{0,\alpha'}_\bb(X,D)$-conical K\"ahler metric such that 
\begin{equation}\label{eqn:omega 0 assumption}\omega_0^n = \frac{\Omega}{\prod_j (|s_j|_{h_j}^2)^{1-\beta_j}},\end{equation} where $s_j, \, h_j$ are holomorphic sections and hermitian metrics of the line bundle associated to $D_j$, respectively, and $\Omega$ is a smooth volume form.

\begin{corr} \label{corr:1.3}
For any given $\alpha \in (0,\alpha')$, there exists a constant $T = T(n,\omega_0, \alpha, \alpha')>0$ such that the conical K\"ahler-Ricci flow \eqref{eqn:KRF} admits a unique solution $\omega = \omega_t$, such that $\omega\in \C^{\alpha,\alpha/2}_\bb( X\times [0, T]  )$ and for each $t\in [0, T]$, $\omega_t$ is still a conical metric with cone angle $2\pi \bb$ along $D$. 

Furthermore, $\omega$ is smooth in $X\backslash D \times (0, T]$ and the (normalized) Ricci potentials of $\omega$, $\log \xk{ \frac{\omega^n}{\omega_0^n}  }$ is still in $\C^{2+\alpha, \frac{2+\alpha}{2}}_\bb(X\times [0,T])$.

%\textcolor{red}{what exactly the statement for short time existence? as the Ricci curvature is also $C^{\alpha}?$ even though the solution metric is only : possible remedy is $\log \omega_0^n$ is also $C^\alpha$. Also consider $\dot\varphi$ and use the result of mine and Tian that $\varphi$ is always smooth away from $\mathcal{S}$}

\end{corr}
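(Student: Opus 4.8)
The plan is to reduce the conical K\"ahler--Ricci flow \eqref{eqn:KRF} to the scalar complex Monge--Amp\`ere flow \eqref{eqn:MA section1.1} and then invoke Theorem \ref{thm:1.5}. First I would fix a reference cohomology path: since $-\ric(\omega_0) + \sum_j(1-\beta_j)[D_j]$ represents the class $2\pi c_1(X) - \sum_j(1-\beta_j)[D_j] = 2\pi c_1(K_X^{-1} \otimes \mathcal{O}(-\sum_j(1-\beta_j)D_j))$ in the sense of currents, I would choose smooth hermitian metrics $h_j$ on $\mathcal{O}(D_j)$ and a smooth closed $(1,1)$-form $\chi$ in the class $-2\pi c_1(K_X) + \sum_j(1-\beta_j)(2\pi c_1(\mathcal{O}(D_j)) - \Theta_{h_j})$ adjusted so that, setting $\hat\omega_t := \omega_0 + t\chi$, the identity $-\ric(\hat\omega_t + \ddb\varphi) + \sum_j(1-\beta_j)[D_j] = \frac{\partial}{\partial t}(\hat\omega_t + \ddb\varphi)$ becomes equivalent, on $X\setminus D$, to \eqref{eqn:MA section1.1} with $f$ built from $\log(\hat\omega_t^n/\omega_0^n)$, the curvature forms $\Theta_{h_j}$, and the assumption \eqref{eqn:omega 0 assumption}. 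The key point is that \eqref{eqn:omega 0 assumption} forces the apparent singular term $\sum_j(1-\beta_j)\log|s_j|_{h_j}^2$ to cancel against $\log(\omega_0^n)$, so that the resulting $f$ is an honest $\cC^{\alpha',\alpha'/2}_\bb$ function on $X\times[0,1]$ rather than a singular one; this is exactly the place where the hypothesis on $\omega_0^n$ is used, and verifying the cancellation and the $\cC^{\alpha',\alpha'/2}_\bb$-regularity of $f$ (which requires $\hat\omega_t$ to have the right conical behavior, guaranteed by $\omega_0\in C^{0,\alpha'}_\bb$ and $\chi$ smooth) is the first substantive step.

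Next, I would apply Theorem \ref{thm:1.5} with this $f$ and $\alpha'$, obtaining $T = T(n,\omega_0,\alpha,\alpha')>0$ and a unique solution $\varphi\in\cC^{2+\alpha,\frac{2+\alpha}{2}}_\bb(X\times[0,T])$ of \eqref{eqn:MA section1.1}. Then $\omega_t := \hat\omega_t + \ddb\varphi$ is the candidate solution of \eqref{eqn:KRF}. I would check: (i) $\omega_t$ is a genuine conical K\"ahler metric with cone angle $2\pi\bb$ along $D$ for each $t\in[0,T]$ — here I would use that $\varphi\in\cC^{2+\alpha,\frac{2+\alpha}{2}}_\bb$ means $\ddb\varphi$ is bounded in the $g_\bb$-sense, so $\omega_t$ stays uniformly equivalent to $g_\bb$ after possibly shrinking $T$ (openness of positivity), using $\omega_0$ is already conical and $t\chi$ is a small smooth perturbation; (ii) $\omega\in\cC^{\alpha,\alpha/2}_\bb(X\times[0,T])$, which is immediate from $\omega_0,\chi$ having at least this regularity and $\ddb\varphi\in\cC^{\alpha,\alpha/2}_\bb$; (iii) the equation \eqref{eqn:KRF} holds as currents on all of $X$ — on $X\setminus D$ it holds by construction, and the $[D_j]$ terms appear precisely because $\ric$ of a metric with cone angle $2\pi\beta_j$ along $D_j$ differs from the Ricci current of its smooth extension by $(1-\beta_j)[D_j]$ (the Poincar\'e--Lelong contribution of $\log|s_j|^2$), which is standard once the potential-theoretic extension $\ric(\omega)$ is identified.

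For the uniqueness statement, I would argue that any solution $\omega = \omega_t$ of \eqref{eqn:KRF} lying in $\cC^{\alpha,\alpha/2}_\bb$ with the stated conical behavior gives rise, via $\varphi(t) := \int_0^t \log(\omega_s^n/\omega_0^n)\,ds$ (or by solving $\ddb\varphi(t) = \omega_t - \hat\omega_t$ on $X\setminus D$ and checking the potential extends across $D$ using the $\cC^{\alpha,\alpha/2}_\bb$-regularity), to a solution of \eqref{eqn:MA section1.1} in the class covered by Theorem \ref{thm:1.5}; the uniqueness there then forces the two flows to coincide. Finally, for the additional regularity assertions: smoothness of $\omega$ on $X\setminus D\times(0,T]$ follows from interior parabolic bootstrapping away from $D$ (the equation is a uniformly parabolic complex Monge--Amp\`ere equation there with smooth coefficients, so standard Evans--Krylov plus Schauder iteration applies), and the claim that the normalized Ricci potential $\log(\omega^n/\omega_0^n)$ lies in $\cC^{2+\alpha,\frac{2+\alpha}{2}}_\bb(X\times[0,T])$ is just the observation that this quantity equals $\frac{\partial\varphi}{\partial t} - f$, which has that regularity since $\varphi\in\cC^{2+\alpha,\frac{2+\alpha}{2}}_\bb$ implies $\partial_t\varphi\in\cC^{\alpha,\alpha/2}_\bb$ — wait, to get $2+\alpha$ one instead uses that $\partial_t\varphi$ satisfies the linearized (parabolic Laplace-type) equation $\partial_t(\partial_t\varphi) = \Delta_{\omega_t}(\partial_t\varphi) + (\text{lower order})$ with $\cC^{\alpha,\alpha/2}_\bb$ data, so Theorem \ref{prop:section 1.1 new} applied to $\partial_t\varphi$ upgrades it to $\cC^{2+\alpha,\frac{2+\alpha}{2}}_\bb$. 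The main obstacle I anticipate is step (i) together with the careful identification in step (iii): ensuring that the a priori only $\cC^{2+\alpha}_\bb$ potential yields a metric that remains conical of exactly the prescribed angle (not merely bounded), and that the Ricci current identity holds globally with the correct $[D_j]$ coefficients, requires genuine care with the conical function spaces and the Poincar\'e--Lelong formula near the crossings of $D$.
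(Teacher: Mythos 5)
Your proposal is correct and follows essentially the same route as the paper: reduce \eqref{eqn:KRF} to the scalar flow \eqref{eqn:MA section1.1} via a reference path $\hat\omega_t = \omega_0 + t\chi$, invoke Theorem \ref{thm:1.5}, obtain smoothness away from $D$ by interior parabolic bootstrapping, and upgrade the Ricci potential by differentiating \eqref{eqn:MA section1.1} in $t$ and applying the linear Schauder theory (the paper uses Corollary \ref{cor:existence lemma}, the global version of Theorem \ref{prop:section 1.1 new}, to $\dot\varphi$). The only cosmetic difference is that the paper chooses $\chi = \ddb\log\Omega - \sum_j(1-\beta_j)\ddb\log h_j$ precisely so that the Poincar\'e--Lelong cancellation you describe makes $f\equiv 0$ in \eqref{eqn:MA section1.1}, rather than leaving a nonzero but regular $f$; both choices work, the paper's is tidier. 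Your concern in the last paragraph about whether $\varphi\in\C^{2+\alpha,\frac{2+\alpha}{2}}_\bb$ yields a genuine cone metric of the right angle is already resolved by the definition of that space: the $T$-operators bound $\ddb\varphi$ in the $g_\bb$-frame, so for small $T$ the form $\hat\omega_t + \ddb\varphi$ stays uniformly equivalent to $g_\bb$, which is exactly what being conical of angle $2\pi\bb$ means.
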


The short time existence of the conical K\"ahler-Ricci flow with singularities along a smooth divisor is derived in \cite{CW} by adapting the elliptic potential techniques of Donaldson \cite{D}.  Corollary \ref{corr:1.3} treats the general case of conical singularities with simple normal crossings. 
There have been many results in the analytic aspects of the conical Ricci flow \cite{CW, CW2, E, E1, JLZ, MRS, Y}. In \cite{PSSW}, the conical Ricci flow on Riemann surfaces is completely classified with jumping conical structure in the limit. Such phenomena is also expected in higher dimension, but it requires much deeper and delicate technical advances both in analysis and geometry.

%%%%%%%%%%%%%%%%%%%%%
%%%%%%%%%%%%%%%%%%%%%
%%%%%%%%%%%%%%%%%%%%%
%%%%%%%%%%%%%%%%%%%%%
\begin{comment}
A scaling-invariant inequality is that for any $0< r<R<1$, there is a constant $C=C(n,\bb,\alpha)>0$ such that
\eqsp{\label{eqn:scale Schauder}
& [u]_{C^{2+\alpha, \frac{2+\alpha}{2}}_\bb( B_\bb(0, r)\backslash \sS \times (1-r^2, 1)  )}\\
 \le  & C \bk{ \frac{\| u\|_{C^0( B_\bb(0,R)\times (1-R^2, 1)  )}}{(R-r)^{2+\alpha}} + \frac{\| f\|_{ C^0( B_\bb(0,R)\times (1-R^2,1)   )  }}{(R-r)^\alpha}  + [ f  ]_{C^{\alpha,\frac{\alpha}{2}} _{\bb} ( B_\bb(0,R)\backslash \sS\times (1-R^2, 1)   )  }   }.
}

\end{comment}
%%%%%%%%%%%%%%%%%%%%%
%%%%%%%%%%%%%%%%%%%%%
%%%%%%%%%%%%%%%%%%%%%
%%%%%%%%%%%%%%%%%%%%%

%We will study the $g_{\bbeta}$-harmonic functions first.

\section{Preliminaries}

We explain the notations and give some preliminary tools which will be used later in this section.

\subsection{Notations}

To distinguish the elliptic from parabolic norms, we will use the ordinary $C$ to denote the norms in the elliptic case and the script $\C$ to denote the norms in the parabolic case. 

\smallskip

We always assume the H\"older component $\alpha$ appearing in $C^{0,\alpha}_\bb$ or $\C^{\alpha,\alpha/2}_\bb$ (or other H\"older norms) to be in $ \xk{ 0, \min\{\beta_{\max}^{-1} - 1, 1\}   }  $.

\subsubsection{Elliptic case.} We will denote $d_\bbeta(x,y)$ to be the distance of two points $x,y\in\mathbb C^n$ under the metric $g_\bbeta$.  $B_{\bbeta}(x,r)$ will be the metric ball under the metric induced by $g_{\bbeta}$ with radius $r$ and center $x$. It is well-known that $(\mathbb C^n\backslash \sS,g_\bbeta)$ is geodesically convex, i.e. any two points $x,y\in \mathbb C^n\backslash \sS$ can be joined by a $g_\bbeta$-minimal geodesic $\gamma$ which is disjoint with $\sS$.

\begin{defn}\label{defn:2.2}We define the  $g_\bbeta$-H\"older norm of functions $u\in C^0(B_\bbeta(0,r))$ for some $\alpha\in (0,1)$ as 
$$ \| u\|_{C^{0,\alpha}_{\bbeta} (B_\bbeta (0,r)  )  } := \| u\|_{C^0( B_\bbeta(0,r)   )} + [u]_{C^{0,\alpha}_\bb(B_\bb(0,r))}, $$ where the semi-norm is defined as  $[u]_{C^{0,\alpha}_\bb(B_\bb(0,r))}:=     \sup_{x\neq y \in B_\bbeta(0,r)} \frac{|u(x) - u(y)|}{d_\bbeta(x,y)^\alpha}.$ We denote the subspace of all continuous functions $u$ such that $\| u\|_{C^{0,\alpha}_\bbeta}<\infty$ as $C^{0,\alpha}_{\bbeta}(B_\bbeta(0,r))$.
\end{defn}
\begin{defn}
The $C_\bb^{2,\alpha}$ norm of a function $u$ on $B_\bbeta(0,r)=:B_\bbeta$ is defined as:
\begin{align*}
\| u\|_{C^{2,\alpha}_\bbeta(B_\bbeta)}: = & \| u\|_{C^0(B_\bbeta)} + \| \nabla_{g_\bbeta} u\|_{C^0(B_\bbeta, g_\bbeta)} + \sum_{j=1}^p \|N_j D' u  \|_{C^{0,\alpha}_\bbeta(B_\bbeta)} \\
& + \sum_{1\le j\neq k\le p} \| N_j N_k u  \|_{C^{0,\alpha}_{\bbeta}(B_\bbeta)} + \sum_{j=1}^p \Big\| |z_j|^{2(1-\beta_j)}\frac{\partial^2 u}{\partial z_j \partial \bar z_j}  \Big\|_{C^{0,\alpha}_\bbeta(B_\bbeta)}.
\end{align*} 
\end{defn}

For a given set $\Omega\subset B_\bb(0,1)$ we define the following weighted (semi)norms.
\begin{defn}\label{defn:2.3}
Suppose  $\sigma\in\mathbb R$ is a given real number and $u$ is a $C^{2,\alpha}_\bb$-function in $\Omega$. We denote $d_x = d_\bb(x,\partial \Omega)$ for any $x\in \Omega$. We define the weighted (semi)norms 
\begin{equation*}
[u]^{(\sigma)}_{C^{0,\alpha}_\bb(\Omega)} = \sup_{x\neq y\in \Omega} \min(d_x,d_y  )^{\sigma + \alpha} \frac{| u(x) - u(y)  |}{d_\bb(x,y)^\alpha},
\end{equation*} 
\begin{equation*}
\| u\|_{C^0(\Omega)}^{(\sigma)} = \sup_{x\in \Omega} d_x^\sigma |u(x)|,\quad 
[u]^{(\sigma)}_{C^{1}_\bb(\Omega)} = \sup_{x\in \Omega\backslash\sS} d_x ^{\sigma + 1} \big(\sum_j |N_j u|(x) + |D'u|(x)  \big )  %\frac{| u(x) - u(y)  |}{d_\bb(x,y)^\alpha},
\end{equation*} 
\begin{equation*}
[u]^{(\sigma)}_{C^{2}_\bb(\Omega)} = \sup_{x \in \Omega\backslash\sS} d_x^{\sigma+2 }|Tu(x)|,% \frac{| Tu(x) - T u(y)  |}{d_\bb(x,y)^\alpha}
\end{equation*} 
\begin{equation*}
[u]^{(\sigma)}_{C^{2,\alpha}_\bb(\Omega)} = \sup_{x\neq y\in \Omega\backslash\sS} \min(d_x,d_y  )^{\sigma+2 + \alpha} \frac{| Tu(x) - T u(y)  |}{d_\bb(x,y)^\alpha},
\end{equation*} 
and
\begin{equation*}
\| u\|_{C^{2,\alpha}_\bb(\Omega)}^{(\sigma)} = \| u\|_{C^0(\Omega)}^{(\sigma)} + [u]^{(\sigma)}_{C^1_\bb(\Omega)} + [u]^{(\sigma)}_{C^2_\bb(\Omega)} + [u]^{(\sigma)}_{C^{2,\alpha}_\bb(\Omega)},
\end{equation*}
where $T$ denotes the following operators of second order: \begin{equation}\label{eqn:operator T}\Big\{ |z_j|^{2(1-\beta_j)}\frac{\partial^2}{\partial z_j\partial \bar z_j}, N_jN_k~( j\neq k), N_j D', (D')^2   \Big\}.\end{equation}
When $\sigma = 0$, we denote the norms above as $[\cdot]^*$, $\| \cdot\|^*$ for notation simplicity.
\end{defn}

\subsubsection{Parabolic case.} \label{section parabolic notations}

We denote $\mathcal Q_\bb = \mathcal Q_\bb(0,1) = B_\bb(0,1)\times (0,1]$ to be parabolic cylinder and 
$$\partial_{\mathcal P} \mathcal Q_\bb = ( \overline{B_\bb(0,1)}\times\{0\}  ) \cup (\partial B_\bb(0,1) \times (0,1] ) $$
to be the parabolic boundary of the cylinder $\mathcal Q_\bb$. We write $\sS_{\mathcal P} = \sS \times [0,1]$ as the singular set and $\qq_\bb^\# = \qq_\bb\backslash \sS_\pp$ the complement of $\sS_\pp$. For any two space-time points $Q_i = (p_i,t_i)$, we define their parabolic distance $d_{\mathcal P, \bb}(Q_1,Q_2)$ as
$$d_{\mathcal P,\bb}(Q_1, Q_2) = \max\{ \sqrt{|t_1 - t_2|}, d_\bb(p_1, p_2)   \}. $$
%For a continuous function $f\in C^0(\mathcal Q_\bb)$ we denote $$\omega(r) = \sup\big\{ |f(Q_1) - f(Q_2)|~\forall Q_1, Q_2\in \mathcal Q_\bb, ~ d_{\mathcal P,\bb}(Q_1, Q_2)\le r  \big \}$$ to be the oscillation function of $f$ on $\mathcal Q_\bb$ under the distance $d_{\pp,\bb}$. 

\begin{defn}\label{defn:2.4}We define the  $g_\bbeta$-H\"older norm of functions $u\in \C^0(\qq_\bb)$ for some $\alpha\in (0,1)$ as 
$$ \| u\|_{\C^{\alpha,\alpha/2}_{\bbeta} (\qq_\bb )  } := \| u\|_{\C^0( \qq_\bb)} + [u]_{\C^{\alpha,\alpha/2}_\bb(\qq_\bb)}, $$ where the semi-norm is defined to be $[u]_{\C^{\alpha,\alpha/2}_\bb(\qq_\bb)}:=     \sup_{Q_1\neq Q_2 \in \qq_\bb} \frac{|u(Q_1) - u(Q_2)|}{d_{\pp,\bb}(Q_1,Q_2)^\alpha}.$ We denote the subspace of all continuous functions $u$ such that $\| u\|_{\C^{\alpha,\alpha/2}_\bbeta(\qq_\bb)}<\infty$ as $\C^{\alpha,\alpha/2}_{\bbeta}(\qq_\bb)$.
\end{defn}

\begin{defn}\label{defn:2.5}
The $\C_\bb^{2+\alpha,\frac{\alpha+2}{2}}$ norm of a function $u$ on $\qq_\bb$ is defined as:
\begin{align*}
\| u\|_{\C^{2+\alpha,\frac{\alpha+2}{2}}_\bbeta(\qq_\bb)}: = & \| u\|_{\C^0(\qq_\bbeta)} + \| \nabla_{g_\bbeta} u\|_{\C^0(\qq_\bbeta, g_\bbeta)} + \|\newT u \|_{\C^{\alpha,\alpha/2}_\bbeta(\qq_\bbeta)},% +  \big\|\frac{\partial u}{\partial t}  \big\|_{\cC^{\alpha,\alpha/2}_\bb(\qq_\bb)  },
%& + \sum_{1\le j\neq k\le p} \| N_j N_k u  \|_{C^{0,\alpha}_{\bbeta}(B_\bbeta)} + \sum_{j=1}^p \Big\| |z_j|^{2(1-\beta_j)}\frac{\partial^2 u}{\partial z_j \partial \bar z_j}  \Big\|_{C^{0,\alpha}_\bbeta(B_\bbeta)}.
\end{align*} 
where $\newT $ denotes all the second order operators in \eqref{eqn:operator T} and the first order operator $\frac{\partial }{\partial t}$. %For later use, we will also denote $\newT $ the differential operators from $T$ and $\frac{\partial }{\partial t}$.
\end{defn}

For a given set $\Omega\subset \qq_\bb$ we define the following weighted (semi)norms.
\begin{defn}%\label{defn:2.3}
Suppose $\sigma\in\mathbb R$ is a real number and $u$ is a $\C^{2+\alpha,\frac{\alpha+2}{2}}_\bb$-function in $\Omega$. We denote $d_{\pp, Q} = d_{\pp,\bb}(Q,\partial_\pp \Omega)$ for any $Q\in \Omega$. We define the weighted (semi)norms 
\begin{equation*}
[u]^{(\sigma)}_{\C^{\alpha,\alpha/2}_\bb(\Omega)} = \sup_{Q_1\neq Q_2\in \Omega} \min(d_{\pp, Q_1},d_{\pp,Q_2}  )^{\sigma + \alpha} \frac{| u(Q_1) - u(Q_2)  |}{d_{\pp,\bb}(Q_1,Q_2)^\alpha},
\end{equation*} 
\begin{equation*}
\| u\|_{\C^0(\Omega)}^{(\sigma)} = \sup_{Q\in \Omega} d_{\pp,Q}^\sigma |u(Q)|,\quad 
[u]^{(\sigma)}_{\C^{1}_\bb(\Omega)} = \sup_{Q\in \Omega\backslash\sS_\pp} d_{\pp,Q} ^{\sigma + 1} \big(\sum_j |N_j u|(Q) + |D'u|(Q)  \big )  %\frac{| u(x) - u(y)  |}{d_\bb(x,y)^\alpha},
\end{equation*} 
\begin{equation*}
[u]^{(\sigma)}_{\C^{2,1}_\bb(\Omega)} = \sup_{Q \in \Omega\backslash\sS_\pp} d_{\pp,Q}^{\sigma+2 }\;|\newT u(Q)|,% \frac{| Tu(x) - T u(y)  |}{d_\bb(x,y)^\alpha}
\end{equation*} 
\begin{equation*}
[u]^{(\sigma)}_{\C^{2+\alpha,\frac{\alpha+2}{2}}_\bb(\Omega)} = \sup_{Q_1\neq Q_2\in \Omega\backslash\sS_\pp} \min(d_{\pp, Q_1},d_{\pp,Q_2}  )^{\sigma+2 + \alpha} \frac{| \newT u(Q_1) - \newT  u(Q_2)  |}{d_{\pp,\bb}(Q_1,Q_2)^\alpha},
\end{equation*} 
and
\begin{equation*}
\| u\|_{\C^{2+\alpha,\frac{\alpha+2}{2}}_\bb(\Omega)}^{(\sigma)} = \| u\|_{\C^0(\Omega)}^{(\sigma)} + [u]^{(\sigma)}_{\C^1_\bb(\Omega)} + [u]^{(\sigma)}_{\C^{2,1}_\bb(\Omega)} + [u]^{(\sigma)}_{\C^{2+\alpha,\frac{\alpha+2}{2}}_\bb(\Omega)}.
\end{equation*}
%where $T$ is taken over one  of the following operators of second order: \begin{equation}\label{eqn:operator T}\Big\{ |z_j|^{2(1-\beta_j)}\frac{\partial^2}{\partial z_j\partial \bar z_j}, N_jN_k~( j\neq k), N_j D', (D')^2   \Big\}.\end{equation}
When $\sigma = 0$, we denote the norms above as $[\cdot]^*$ or $\| \cdot\|^*$ for simplicity.
\end{defn}

\subsubsection{Compact K\"ahler manifolds}
Let $(X,D)$ be a compact K\"ahler manifold with a divisor $D=\sum_j D_j$ with simple normal crossings, i.e. on an open coordinates chart $(U,z_j)$ of any $x\in D$, $D\cap U$ is given by $\{z_1\cdots z_p=0\}$,  and $D_j\cap U = \{z_j = 0\}$ for any component $D_j$ of $D$. We fix a finite cover $\{U_a, z_{a, j}\}$ of $D$.

\begin{defn}\label{defn:2.7} A (singular) K\"ahler metric $\omega$ is called a conical metric with cone angle $2\pi \bb$ along $D$, if locally on any coordinates chart $U_a$, $\omega$ is equivalent to $\omega_\bb$ under the the coordinates $\{z_{a, j}\}$, where $\omega_\bb$ is the standard cone metric \eqref{eqn:standard cone metric} with cone angle $2\pi \beta_j$ along $\{z_{a,j} = 0\}$, and on $X\backslash \cup_a U_a$ $\omega$ is a smooth K\"ahler metric in the usual sense. 

A conical metric $\omega$ is in $C^{0,\alpha}_\bb(X,D)$ if for each $a$, $\omega$ is $C^{0,\alpha}_\bb(U_a)$ and on $X\backslash \cup_a U_a$ $\omega$ is smooth in the usual sense. Similarly we can define the $\C^{\alpha,\alpha/2}_\bb$-conical K\"ahler metrics on $X\times [0,1]$.
%
%The conical metric $\omega$ is said to be in $C^{0,\alpha}_\bb(X,D)$ if on each coordinates chart $U$ as above, $\omega$ is in $C^{0,\alpha}_\bb(U)$.  We fix a finite cover $\{U_\alpha\}$ of $D$ and we define the $C^{0,\alpha}_\bb(X,D)$-norm of $\$
\end{defn}

\begin{defn}\label{defn:2.8}
A continuous function $u\in C^0(X)$ is said to be in $C^{0,\alpha}_\bb(X,D)$ if locally on each $U_a$, $u$ is in $C^{0,\alpha}_\bb(U_a)$ and on $X\backslash \cup_a U_a$ it is $C^{0,\alpha}$-continuous in the usual sense. We define the $C^{0,\alpha}_\bb(X,D)$-norm of $u$ as 
$$\| u\|_{C^{0,\alpha}_\bb(X,D)}: = \| u\|_{C^{0,\alpha}(X\backslash \cup_a U_a,\; \omega)} + \sum_a \| u\|_{C^{0,\alpha}_\bb(U_a)}. $$ The $C^{0,\alpha}_\bb(X,D)$-norm depends on the choice of finite covers, and another cover yields a different but equivalent norm. 
The space $C^{0,\alpha}_\bb(X,D)$ is clearly independent of the choice of finite covers.

The other spaces and norms like $C^{2,\alpha}_\bb(X,D)$, $\C^{\alpha,\alpha/2}_\bb( X\times [0,1], D )$, etc, can be defined similarly.

\end{defn}

\subsection{A useful lemma}
We will frequently use the following elementary estimates from \cite{GS}.

\begin{lemma}[Lemma 2.2 in \cite{GS}]Given $r\in (0,1]$, suppose $v\in C^0(B_{\mathbb C}(0,r))\cap C^2(B_{\mathbb C}(0,r)\backslash \{0\})$ satisfies  the equation $$|z|^{2(1-\beta_1)} \frac{\partial ^2 v}{\partial z\partial \bar z} = F,\quad \text{in } B_{\mathbb C} (0, r)\backslash \{0\},$$
for some $F\in L^\infty(B_{\mathbb C}(0,r))$, then we have the pointwise estimate that  for any $z\in B_{\mathbb C}(0,9r/10)\backslash \{0\}$
{\small
\begin{equation}\label{eqn:2.5}
\ba{ \frac{\partial v}{\partial z}(z)  }\le C\frac{\| v\|_{L^\infty}}{r} + C\|F\|_{L^\infty} \cdot \left\{
\begin{aligned}
&r^{2\beta_1 - 1},\text{ if }\beta_1\in (1/2,1)\\
&|z|^{2\beta_1 -1},\text{ if }\beta_1\in (0,1/2)\\
&\ba{ \log(|z|/2r)  }, \text{ if }\beta_1 = 1/2,
\end{aligned}\right.
\end{equation}
}
where the $L^\infty$-norms are taken in $B_{\mathbb C}(0,r)$ and $C>0$ is a uniform constant.% It is not hard to see $v=\Delta_2 u$ satisfies the conditions for applying this estimate.
\end{lemma}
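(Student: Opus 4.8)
First I would reduce to $r=1$: the equation is covariant under $z\mapsto rz$ (which sends $F$ to $r^{2\beta_1}F(r\,\cdot)$ and $v$ to $v(r\,\cdot)$), so it suffices to prove the estimate on $B_{\mathbb C}(0,9/10)$ with $r=1$. Then I would pass to flat cone coordinates: writing $z=\rho e^{\sqrt{-1}\theta}$ and setting $\hat\rho=\rho^{\beta_1}\in(0,1]$, $\hat\theta=\beta_1\theta\in[0,2\pi\beta_1)$, the K\"ahler metric $g_{\beta_1}$ becomes the flat cone metric $d\hat\rho^2+\hat\rho^2\,d\hat\theta^2$ of total angle $2\pi\beta_1$; the hypothesis becomes $\Delta_{\mathcal C}v=g$ with $\Delta_{\mathcal C}$ the flat cone Laplacian and $\|g\|_{L^\infty}\le C\|F\|_{L^\infty}$, and the chain rule gives $\big|\frac{\partial v}{\partial z}(z)\big|\le C\,\rho^{\beta_1-1}\,|\nabla_{\mathcal C}v|(z)$. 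The problem is thus reduced to bounding $|\nabla_{\mathcal C}v|$ at cone-distance $\hat\rho=\rho^{\beta_1}\le(9/10)^{\beta_1}$ from the vertex, and checking that the singular prefactor $\rho^{\beta_1-1}$ is absorbed.

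Next I would decompose $v$ into angular Fourier modes, $v=\sum_{m\in\mathbb Z}v_m(\hat\rho)e^{\sqrt{-1}m\theta}$ ($2\pi$-periodicity in $\theta$ forces $m\in\mathbb Z$). Each $v_m$ solves the radial ODE $v_m''+\hat\rho^{-1}v_m'-m^2\beta_1^{-2}\hat\rho^{-2}v_m=g_m$ with $|g_m|\le\|g\|_{L^\infty}$, is bounded at $\hat\rho=0$, and satisfies $|v_m(1)|\le\|v\|_{L^\infty}$. I would write $v_m=a_m\hat\rho^{|m|/\beta_1}+w_m$, where the first term is the bounded homogeneous solution matching the boundary value (so $|a_m|\le\|v\|_{L^\infty}$), and $w_m$ is the particular solution with zero boundary value, bounded at the vertex.

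For the homogeneous part, the key cancellation is $\rho^{\beta_1-1}\hat\rho^{|m|/\beta_1-1}=\rho^{|m|-1}$: the $m=\pm1$ modes give the borderline power $\rho^0$ and the $|m|\ge 2$ modes carry geometric factors $\rho^{|m|-1}\le(9/10)^{|m|-1}$, so summing yields $\rho^{\beta_1-1}\big|\nabla_{\mathcal C}\big(\sum_m a_m\hat\rho^{|m|/\beta_1}e^{\sqrt{-1}m\theta}\big)\big|\le C\|v\|_{L^\infty}$. For $w_m$: the $m=0$ mode obeys $(\hat\rho w_0')'=\hat\rho g_0$, so $|w_0'(\hat\rho)|\le C\hat\rho\|g\|_{L^\infty}$ and $\rho^{\beta_1-1}|w_0'|\le C\rho^{2\beta_1-1}\|g\|_{L^\infty}$; for $m\ne 0$ the radial Green's function (homogeneous solutions $\hat\rho^{\pm|m|/\beta_1}$) gives $w_m$ with leading behavior $\hat\rho^{|m|/\beta_1}$ and coefficient $\le C\|g\|_{L^\infty}$, except in the resonant case $|m|/\beta_1=2$ (only $m=\pm1$, $\beta_1=\frac12$) where the leading behavior is $\hat\rho^2\log(1/\hat\rho)$. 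Multiplying by $\rho^{\beta_1-1}$ and summing as before, $\rho^{\beta_1-1}\big|\nabla_{\mathcal C}\big(\sum_m w_m e^{\sqrt{-1}m\theta}\big)\big|\le C\|g\|_{L^\infty}\big(\rho^{2\beta_1-1}+\text{convergent geometric sum}\big)$, which for $\rho\le 9/10$ is $\le C\|g\|_{L^\infty}$ when $\beta_1>\frac12$, $\le C\|g\|_{L^\infty}\rho^{2\beta_1-1}$ when $\beta_1<\frac12$, and $\le C\|g\|_{L^\infty}(1+|\log\rho|)$ when $\beta_1=\frac12$. Undoing the scaling then produces the three stated cases.

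The main obstacle is the sharp near-vertex behavior of the two pieces: that the gradient of a bounded harmonic function on a cone of angle $2\pi\beta_1$ vanishes at the vertex at exactly the rate $\hat\rho^{1/\beta_1-1}$ (false on the flat plane $\beta_1=1$, and precisely what neutralizes the singular factor $\rho^{\beta_1-1}$), and the detection of the logarithmic borderline through the $|m|/\beta_1=2$ resonance in the radial ODE for the particular solution. Once this structure is in place, summing the series and keeping the constant uniform as $\beta_1$ ranges over a compact subinterval of $(0,1)$ is routine.
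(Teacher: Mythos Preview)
The paper does not prove this lemma here---it is cited from \cite{GS}---so there is no in-paper proof to compare against. That said, your Fourier-mode argument has a real gap in the treatment of the particular solution.

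Your claim that for $m\neq 0$ the particular solution $w_m$ has ``leading behavior $\hat\rho^{|m|/\beta_1}$ with coefficient $\le C\|g\|_{L^\infty}$'' is wrong whenever $\mu_m:=|m|/\beta_1>2$, which is the case for every $|m|\ge 2$ (and also for $|m|=1$ when $\beta_1<1/2$). Variation of parameters with the pair $\hat\rho^{\pm\mu_m}$ gives, for $\mu_m>2$, a particular solution with leading term of order $\hat\rho^{2}$ and coefficient comparable to $\|g_m\|_{L^\infty}/\mu_m^{2}$; the $\hat\rho^{\mu_m}$ piece is subordinate near the vertex. Consequently $|w_m'(\hat\rho)|\sim \|g_m\|_{L^\infty}\hat\rho/\mu_m$ and $|m|\,\hat\rho^{-1}|w_m(\hat\rho)|\sim \|g_m\|_{L^\infty}\hat\rho/|m|$. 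When you multiply by $\rho^{\beta_1-1}$ and try to sum in $\ell^1$ over $m$ to bound $|\partial_z w|$, you are left with $\rho^{2\beta_1-1}\|g\|_{L^\infty}\sum_{m\neq 0}1/|m|$, which diverges. The ``convergent geometric sum'' you invoke is not there: the geometric decay $\rho^{|m|-1}$ came from the homogeneous solutions $\hat\rho^{\mu_m}$, and precisely that structure is absent in the $\hat\rho^2$-part of $w_m$. Since $g\in L^\infty$ only gives $|g_m|\le\|g\|_{L^\infty}$ with no decay, the mode-by-mode $\ell^1$ bound cannot close.

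A route that does work---and is the natural ``elementary'' proof---is to stay in Euclidean coordinates. The equation reads $\partial_z\partial_{\bar z}v=|z|^{2\beta_1-2}F$. Split $v=h+(v-h)$ with $h$ Euclidean-harmonic on a slightly smaller disk and $h=v$ on its boundary; then $|\partial_z h|\le C\|v\|_{L^\infty}/r$ by the interior gradient estimate, while $v-h$ is the Green potential of $|z|^{2\beta_1-2}F$, so
\[
|\partial_z(v-h)(z)|\le C\|F\|_{L^\infty}\int_{B(0,r)}\frac{|\zeta|^{2\beta_1-2}}{|z-\zeta|}\,dA(\zeta).
\]
Splitting this integral into the regions $\{|\zeta-z|\le|z|/2\}$, $\{|\zeta|\le|z|/2\}$, and $\{|\zeta|\ge 2|z|\}$ (with the intermediate annulus handled by comparison) yields exactly the trichotomy $r^{2\beta_1-1}$, $|z|^{2\beta_1-1}$, $|\log(|z|/2r)|$ according to whether $2\beta_1-1$ is positive, negative, or zero. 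Your reduction to $r=1$, the cone-coordinate identity $|\partial_z v|=\tfrac{\beta_1}{2}\rho^{\beta_1-1}|\nabla_{\mathcal C}v|$, and your treatment of the \emph{homogeneous} (cone-harmonic) part are all correct and useful; it is only the inhomogeneous piece where the Fourier summation breaks down and must be replaced by a direct kernel estimate.
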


\medskip

Finally we remark that the idea of the proof of the estimates in Theorems \ref{thm:main 1} and \ref{thm:main 2} is the same for general $2\le p\le n$. To explain the argument clearer we prove the theorems assuming $p = 2$, i.e. the cone metric of $\omega_\bb$ is singular along the two components $\sS_1$ and $\sS_2$.

\section{Elliptic estimates} 

In this section, we will prove Theorems \ref{thm:main 1} and \ref{thm:1.2}, the Schauder estimates for the Laplace equation \eqref{eqn:main equation}. To begin with, we first observe the simple $C^0$-estimate based on maximum principle.

Suppose $u\in C^2(B_{\bbeta}(0,1)\backslash \sS)\cap C^0(\overline{B_{\bbeta}(0,1)})$ satisfies the equation
\begin{equation}\label{eqn:harmonic}
\left\{\begin{aligned}
&\Delta_\bb u = 0,~\text{in }B_{\bbeta}(0,1)\backslash \sS,\\
&u = \varphi,~ \text{on } \partial B_{\bbeta}(0,1)\end{aligned}\right.
\end{equation}
for some $\varphi\in C^0(\partial B_{\bbeta}(0,1))$, then
\begin{lemma}\label{lemma:MP} We have the following maximum principle,
\begin{equation}\label{eqn:MP 1}
\inf_{\partial B_{\bbeta}(0,1)}\varphi\le \inf_{B_{\bbeta}(0,1)} u\le \sup_{B_{\bbeta}(0,1)} u\le \sup_{\partial B_{\bbeta}(0,1)} \varphi.
\end{equation}
\end{lemma}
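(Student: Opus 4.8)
The plan is to establish the maximum principle \eqref{eqn:MP 1} for the conical harmonic function by a barrier/perturbation argument, since the standard maximum principle does not apply directly: the operator $\Delta_\bb$ degenerates along $\sS$, and $u$ is only assumed $C^2$ away from $\sS$, so we must control the behaviour near the singular set. First I would reduce to proving the upper bound $\sup_{B_\bb(0,1)} u \le \sup_{\partial B_\bb(0,1)}\varphi$; the lower bound follows by applying this to $-u$ (note $\Delta_\bb(-u)=0$ as well). Set $M = \sup_{\partial B_\bb(0,1)}\varphi$.

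The key step is to introduce a family of comparison functions that dominate the logarithmic-type behaviour allowed near $\sS$. For $\ep>0$ consider $u_\ep := u - M - \ep\, \psi$, where $\psi$ is a smooth nonnegative function on $\overline{B_\bb(0,1)}\setminus \sS$ with $\psi \to +\infty$ near each $\sS_j$ and $\Delta_\bb \psi \le 0$ (a subsolution barrier). A natural choice is built from $\psi_j = -\log|z_j|$ for $1\le j\le p$, since $|z_j|^{2(1-\beta_j)}\frac{\partial^2}{\partial z_j\partial\bar z_j}(-\log|z_j|) = 0$ away from $z_j=0$, so $\Delta_\bb(-\log|z_j|) = 0$ there; one then takes $\psi = \sum_{j=1}^p (-\log|z_j| + C_0)$ for a constant $C_0$ making $\psi \ge 1$ on $\overline{B_\bb(0,1)}$ (after possibly adding a term like $\sum_j |z_j|^{2\beta_j}$ which has sign-definite conical Laplacian, to ensure $\Delta_\bb\psi \le 0$ while keeping $\psi$ proper). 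Then $\Delta_\bb u_\ep = -\ep\,\Delta_\bb\psi \ge 0$ in $B_\bb(0,1)\setminus\sS$, so $u_\ep$ is a genuine $C^2$ subsolution of the (locally uniformly elliptic, on compact subsets of $B_\bb(0,1)\setminus\sS$) operator $\Delta_\bb$ on the open set $\{u_\ep > 0\}$.

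Next I would run the classical argument: suppose for contradiction that $\sup_{B_\bb(0,1)} u > M$; then for $\ep$ small enough $\sup u_\ep > 0$. The set $\Omega_\ep := \{x\in B_\bb(0,1)\setminus\sS : u_\ep(x) > 0\}$ is open, nonempty, and $\overline{\Omega_\ep}$ stays away from $\sS$ (because $u_\ep \to -\infty$ near $\sS$ since $u$ is bounded, being $C^0$ on the closure, while $\ep\psi\to+\infty$) and away from $\partial B_\bb(0,1)$ (because there $u \le M$ on the boundary by continuity, so $u_\ep \le -\ep\psi < 0$ near $\partial B_\bb(0,1)$). Hence $\Omega_\ep \Subset B_\bb(0,1)\setminus\sS$, where $g_\bb$ is a smooth uniformly elliptic metric, $u_\ep\in C^2(\overline{\Omega_\ep})$, $\Delta_\bb u_\ep \ge 0$ in $\Omega_\ep$, and $u_\ep = 0$ on $\partial\Omega_\ep$. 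The classical maximum principle then forces $u_\ep\le 0$ in $\Omega_\ep$, contradicting $\sup_{\Omega_\ep} u_\ep > 0$. Letting $\ep\to 0$ completes the proof of the upper bound, and applying it to $-u$ gives the lower bound.

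The main obstacle is the construction and verification of the barrier $\psi$: one needs a function that is proper (blows up along all of $\sS$, including the high-codimension intersections $\sS_i\cap\sS_j$) yet is $\Delta_\bb$-superharmonic (or at least $\Delta_\bb\psi\le 0$) on $B_\bb(0,1)\setminus\sS$, and is comparable in magnitude so the cut-off region $\Omega_\ep$ is genuinely compactly contained away from $\sS$. The computation $\Delta_\bb(-\log|z_j|)=0$ handles each component, and summing works because $\Delta_\bb$ is diagonal in these coordinates; the only care is to add lower-order proper terms with the correct sign so that strict superharmonicity or at least the right inequality holds, and to check that $u$'s continuity up to $\overline{B_\bb(0,1)}$ (hence boundedness) really does dominate the barrier's blow-up. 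Everything else is the routine classical Hopf-type argument on the relatively compact, non-degenerate region.
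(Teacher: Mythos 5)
Your proposal is correct and is essentially the same argument as the paper's: perturb $u$ by an $\epsilon$-multiple of $\sum_j \log|z_j|$ (equivalently, $\log|z_j|^2$), which is $\Delta_\bbeta$-harmonic on $B_\bbeta(0,1)\setminus\sS$, and observe that the perturbed function blows down to $-\infty$ near $\sS$ so its supremum is controlled by the boundary via the classical weak maximum principle on a region compactly contained away from $\sS$; then let $\epsilon\to 0$. Your packaging with the set $\Omega_\epsilon=\{u_\epsilon>0\}$ is a harmless rephrasing of the same idea.

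One small slip in an optional aside: you suggest adding $\sum_j |z_j|^{2\beta_j}$ to $\psi$ to "ensure $\Delta_\bbeta\psi\le 0$", but a direct computation gives $\Delta_\bbeta(|z_j|^{2\beta_j}) = \beta_j^2 > 0$, so adding this term with a $+$ sign would flip the inequality the wrong way; you would need to subtract it. Fortunately the unmodified choice $\psi = \sum_j(-\log|z_j|) + C_0$ already satisfies $\Delta_\bbeta\psi = 0$, which is all you need (the weak maximum principle only requires $\Delta_\bbeta u_\epsilon \ge 0$, and equality suffices), so the aside can simply be dropped.
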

\begin{proof}Consider the functions $\tilde u_\epsilon = u \pm \epsilon (\log \abs{z_1} + \log\abs{z_2})$ for any $\epsilon>0$. By the same proof of Lemma 2.1 in \cite{GS}, \eqref{eqn:MP 1} is established.
\end{proof}

\medskip

Next step is to show the equation \eqref{eqn:harmonic} is solvable for suitable boundary values.
\subsection{Conical harmonic functions}

\subsubsection{Smooth approximating metrics}
Let $\epsilon\in (0,1)$ be a given small positive number and we define a smooth  approximating K\"ahler metric on $B_\bb(0,1)$
\begin{equation}\label{eqn:para app}g_\epsilon = \beta_1^2\frac{\sqrt{-1}dz_1\wedge d\bar z_1}{(\abs{z_1} + \epsilon)^{1-\beta_1}} + \beta_2^2 \frac{\sqrt{-1} d z_2\wedge d\bar z_2}{(\abs{z_2} + \epsilon)^{1-\beta_2}}  + \sum_{j=3}^n \sqrt{-1} dz_j\wedge d\bar z_j. \end{equation}
$g_\epsilon$ are product metrics on $\mathbb C\times\mathbb C \times \mathbb C^{n-2}$. It is clear that its Ricci curvature satisfies $$\ric(g_\epsilon) = \ddbar \log \xk{(\abs{z_1} +\epsilon)^{1-\beta_1} (\abs{z_2} + \epsilon)^{1-\beta_2}}\ge 0.$$

Let $u_\epsilon\in C^2(B_{\bbeta}(0,1))$ be the solution to the equation with a given $\varphi\in C^0(\partial B_{\bbeta}(0,1))$
\begin{equation}\label{eqn:app Diri}
%\left\{\begin{aligned}
\Delta_{g_\epsilon} u_\epsilon = 0,~\text{in } B_{\bbeta}(0,1),\text{ and }
u_\epsilon = \varphi,~ \text{on }\partial B_{\bbeta}(0,1).
%\end{aligned} \right.
\end{equation} 
Note that the metric balls $B_{\bbeta}(0,1)$ and $B_{g_\epsilon}(0,1)$ are uniformly close when $\epsilon$ is sufficiently small, so for the following estimates we will work on $B_{\bbeta}(0,1)$.

Let $u_\epsilon$ be the harmonic function for $\Delta_{\epsilon} = \Delta_{g_\epsilon}$ as in \eqref{eqn:app Diri}, which we may assume without of loss of generality is positive by replacing $u_\ep$ by $u_\ep - \inf u_\ep $ if necessary. We will  study the Cheng-Yau-type gradient estimate of $u_\epsilon$ and the estimate of $\Delta_{\epsilon,1} u_\epsilon := (\abs{z_1}+\epsilon)^{1-\beta_1}\frac{\partial^2 u_\epsilon}{\partial z_1\partial \bar z_1}$. Let us recall Cheng-Yau's gradient estimate first.

\smallskip

In Subsections \ref{subsection start} - \ref{subsection end}, for notation convenience, we will omit the subscript $\epsilon$ in $g_\epsilon,\,u_\epsilon$, in the proofs of lemmas.

\subsubsection{Cheng-Yau gradient estimate revisit}\label{subsection start}

 We will also assume $u_\epsilon>0$, otherwise consider the function $u_\epsilon +\delta$, for some $\delta>0$ then letting $\delta\to 0$. We fix a ball metric $B_{g_\ep}(p,R)\subset B_\bb(0,1)$ centered at some point $p\in B_\bb(0,1)$. Since $\ric(g_\ep)\ge 0$, the Cheng-Yau gradient estimate holds for $\Delta_{g_\ep}$-harmonic functions.

\begin{lemma}[\cite{CY}]\label{lemma:CY gradient}
Let $u_\ep \in C^2(B(p,R))$ be a positive $\Delta_{g_\ep}$-harmonic function. There exists a uniform constant $C=C(n)>0$ such that (the metric balls are taken under the metric $g_\ep$)
\begin{equation}\label{eqn:grad final prop}
\sup_{x\in B(p,3R/4)} |\nabla u_\ep|_{g_\ep}(x)\le C(n) \frac{\mathrm{osc}_{B(p,R)} u_\ep}{R}.
\end{equation}
\end{lemma}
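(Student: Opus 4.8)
The final statement is the Cheng--Yau gradient estimate (Lemma~\ref{lemma:CY gradient}) for positive $\Delta_{g_\epsilon}$-harmonic functions on a ball $B(p,R)$ inside $B_\bb(0,1)$, with the metric $g_\epsilon$ being a smooth product metric with $\ric(g_\epsilon)\ge 0$. Since the statement is attributed to \cite{CY}, the plan is essentially to record that the hypotheses of the classical Cheng--Yau theorem are satisfied and to run the standard Bochner-type argument, keeping track of the fact that we only need a \emph{uniform} constant $C(n)$ independent of $\epsilon$.

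First I would set $v=\log u_\epsilon$ (legitimate since $u_\epsilon>0$), so that $\Delta_{g_\epsilon} v = -|\nabla v|^2_{g_\epsilon}$, and let $w=|\nabla v|^2_{g_\epsilon}$. The Bochner formula together with $\ric(g_\epsilon)\ge 0$ on the $n$-dimensional K\"ahler (hence $2n$-real-dimensional Riemannian) manifold gives
\begin{equation*}
\Delta_{g_\epsilon} w \ge \frac{2}{2n}(\Delta_{g_\epsilon} v)^2 + 2\langle \nabla v, \nabla w\rangle_{g_\epsilon} = \frac{1}{n} w^2 + 2\langle \nabla v,\nabla w\rangle_{g_\epsilon},
\end{equation*}
using $\Delta_{g_\epsilon} v = -w$. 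Next I would introduce a cutoff function $\phi = \phi(d_{g_\epsilon}(p,\cdot))$ supported in $B(p,R)$, equal to $1$ on $B(p,3R/4)$, with $|\nabla\phi|_{g_\epsilon}^2/\phi \le C/R^2$ and $|\Delta_{g_\epsilon}\phi|\le C/R^2$; here one invokes the Laplacian comparison theorem, valid precisely because $\ric(g_\epsilon)\ge 0$, to control $\Delta_{g_\epsilon}$ of the distance function from above. Then apply the maximum principle to $\phi w$ at an interior maximum point $x_0$: at $x_0$ one has $\nabla(\phi w)=0$ and $\Delta_{g_\epsilon}(\phi w)\le 0$, which after substituting the Bochner inequality and the cutoff bounds yields a quadratic inequality of the form $\frac{1}{n}(\phi w)^2 \le C_1 R^{-2}(\phi w) + C_2 R^{-2}\,\mathrm{stuff}$; actually the cleanest route is the standard one where the oscillation enters through the bound $v \le \log(\sup_{B(p,R)} u_\epsilon / \inf_{B(p,R)} u_\epsilon) + \log \inf u_\epsilon$, i.e.\ one works with the normalized function and gets $(\phi w)(x_0)\le C(n)(\osc_{B(p,R)} v)^2/R^2$ after absorbing. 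Evaluating on $B(p,3R/4)$ where $\phi=1$, taking square roots, and converting $\osc(\log u_\epsilon)$ back to $\osc u_\epsilon / \inf u_\epsilon$ (and finally noting that the estimate can be rescaled/normalized so the $\inf$ is harmless in the stated oscillation form) gives \eqref{eqn:grad final prop}.

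The only genuinely delicate point is to make sure the constant $C$ depends only on $n$ and not on $\epsilon$ or on $p$: this is automatic because the entire argument uses of $g_\epsilon$ only (i) its real dimension $2n$, (ii) the sign $\ric(g_\epsilon)\ge 0$ established right above via $\ric(g_\epsilon)=\ddbar\log((|z_1|+\epsilon)^{1-\beta_1}(|z_2|+\epsilon)^{1-\beta_2})\ge 0$, and (iii) completeness/geodesic structure on the relevant ball, none of which carry any $\epsilon$-dependent constants. I expect this uniformity bookkeeping to be the main ``obstacle,'' though it is really just a matter of citing the scale-invariant form of Cheng--Yau; the Bochner computation and the cutoff maximum principle are entirely routine. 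In the write-up I would simply state that \eqref{eqn:grad final prop} is the scale-invariant Cheng--Yau estimate applied to $g_\epsilon$, which is legitimate by $\ric(g_\epsilon)\ge 0$, and refer to \cite{CY} for the proof.
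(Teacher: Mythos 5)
Your proposal follows exactly the standard Cheng--Yau argument that the paper itself sketches for Lemma~\ref{lemma:CY gradient}: set $f=\log u_\epsilon$, apply Bochner with $\ric(g_\epsilon)\ge 0$, cut off with a radial function controlled by Laplacian comparison, and run the maximum principle on $\phi^2|\nabla f|^2$ to obtain a quadratic inequality that gives $|\nabla u_\epsilon|\le C(n)\,u_\epsilon/R$, with the same observation that all constants depend only on $n$. The one step you gloss over (and the paper also leaves implicit) is the passage from $|\nabla u_\epsilon|\le C(n)u_\epsilon/R$ to the oscillation form, which is done by replacing $u_\epsilon$ with $u_\epsilon - \inf_{B(p,R)} u_\epsilon + \delta$ and letting $\delta\to 0$; this is routine and the two arguments are essentially identical.
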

 
As we mentioned above, we will omit the $\ep$ in the subscript of $u_\ep$ and $g_\ep$. The proof of the lemma is standard (\cite{CY}).  For completeness and to motivate the proof of Lemmas \ref{lemma 2.1} and \ref{lemma 2.2} below, we provide a sketched proof. 
Define $f= \log u$, and it can be calculated that
\begin{equation}\label{eqn:1st equation}\Delta f = \frac{\Delta u}{u} - \frac{\abs{\nabla u}}{u^2} = -\abs{\nabla f}.\end{equation}
Then by Bochner formula, we have
\eqsp{\label{eqn:Bochner new}\Delta \abs{\nabla f} &= \abs{\nabla \nabla f} + \abs{\nabla \bar \nabla f} + 2\Re\innpro{\nabla f, \bar \nabla \Delta f} + \ric(\nabla f,\bar \nabla f)\\
&\ge \abs{\nabla \bar \nabla f}  - 2 \Re \innpro{ \nabla f,\bar \nabla \abs{\nabla f}  }.}
Let $\phi:[0,1]\to [0,1]$ be a standard cut-off function such that $\phi|_{[0,3/4]} = 1$ and $\phi_{[5/6,1]} = 0$ and between $0,1$ otherwise. let $r(x) = d_{g_\epsilon}(p,x)$ be the distance function to $p$ under the metric $g = g_\epsilon$. By abusing notation we also write $\phi(x) = \phi\bk{\frac{r(x)}{R}}$. It can be calculated by Laplacian comparison and the Bochner formula \eqref{eqn:Bochner new} that at the (positive) maximum point $p_{\max}$ of $H:= \phi^2 \abs{\na f}$ that  
\begin{equation*}
\frac{2}{n}H^2 - \frac{4|\phi'|}{R} H^{3/2} - \frac{8(\phi')^2}{R^2}H +  \frac{2H}{R^2} \bk{(2n-1)\phi\phi' + \phi\phi'' + (\phi')^2   }\le 0,
\end{equation*}
therefore for any $x\in B(p, 3R/4)$
\begin{equation}\label{eqn:final gradient 1}
\frac{\abs{\nabla u}}{u^2}(x)=\abs{\nabla f(x)} = H(x)\le H(p_{\max})\le \frac{C(n)}{R^2}.\end{equation}

\subsubsection{Laplacian estimate in singular directions}\label{section 1.2}
We will prove the estimates of $\Delta_{j,\ep} u_\ep := (|z_j|^{2}+\ep)^{1-\beta_j} \frac{\partial u_\ep}{\partial z_j \partial \bar z_j}$ for a $\Delta_{g_\ep}$-harmonic function $u_\ep$.

%Let's summarize what we proved above: 
\begin{lemma}\label{lemma 2.1}
Under the same assumptions as in Lemma \ref{lemma:CY gradient},
%Let $u_\epsilon$ be a positive $C^2(B(p,R)\backslash \sS)\cap C^0(B(p,R))$ harmonic function relative to $\Delta_{g_\epsilon}$, then there exists a uniform constant $C=C(n)>0$ such that (the metric balls below are under the metric $g_\epsilon$)
%\begin{equation}\label{eqn:grad final prop}
%\sup_{x\in B(p,2R/3)} |\nabla u_\epsilon|_{g_\epsilon}(x)\le C(n) \frac{\mathrm{osc}_{B(p,R)} u_\epsilon}{R}.
%\end{equation}
along the ``bad'' directions $z_1, z_2$, $\Delta_{\epsilon,1} u_\ep$ and $\Delta_{\epsilon,2} u_\ep$ satisfy the estimates
\begin{equation}\label{eqn:lap final prop}
\sup_{x\in B(p,R/2)}\bk{ |\Delta_{\epsilon,1} u_\epsilon|(x) +| \Delta_{\epsilon,2} u_\epsilon|(x)   }\le C(n)\frac{\mathrm{osc}_{B(p,R)}u_\epsilon}{R^2}.
\end{equation}

\end{lemma}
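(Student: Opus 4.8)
The plan is to imitate the Cheng--Yau argument sketched above, but applied to a suitable nonnegative ``Bochner quantity'' built from the second derivative $\Delta_{\epsilon,1}u$ rather than from $|\nabla f|$. Since $g_\epsilon$ is a product metric on $\mathbb C\times\mathbb C\times\mathbb C^{n-2}$, the operator $\Delta_{\epsilon,1}=(|z_1|^2+\epsilon)^{1-\beta_1}\partial_{z_1}\partial_{\bar z_1}$ is (up to the positive weight) one of the commuting factors of $\Delta_{g_\epsilon}$, so $\Delta_{\epsilon,1}u$ should itself satisfy a good elliptic equation. Concretely, I would first compute $\Delta_{g_\epsilon}(\Delta_{\epsilon,1}u)$ for a $\Delta_{g_\epsilon}$-harmonic $u$: differentiating the harmonic equation $\sum_j \Delta_{\epsilon,j}u=0$ and using that the coefficients depend only on $|z_j|$, one gets that $w:=\Delta_{\epsilon,1}u$ satisfies an equation of the form $\Delta_{g_\epsilon}w = (\text{lower-order terms involving }\nabla u\text{ and }w)$, with coefficients controlled uniformly in $\epsilon$ (this uses $\beta_1\in(0,1)$ so the weight $(|z_1|^2+\epsilon)^{1-\beta_1}$ and its first/second derivatives in the $g_\epsilon$-metric are bounded on the relevant ball). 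The key structural point is that the ``dangerous'' coefficients degenerate precisely where the metric does, so in $g_\epsilon$-geometry they are bounded.

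Next, I would combine this with the gradient estimate from Lemma \ref{lemma:CY gradient}: on $B(p,3R/4)$ we already control $|\nabla u|_{g_\epsilon}$ by $C\,\mathrm{osc}_{B(p,R)}u/R$. So, after rescaling $u$ (subtract a constant so $u\geq 0$ and normalize $\mathrm{osc}\le 1$, $R$ fixed), the inhomogeneous term in the equation for $w$ is bounded by $C(1+|w|)/R^2$. Then I would run a localized maximum-principle/Moser-type argument: let $\psi$ be a cutoff supported in $B(p,3R/4)$ with $\psi\equiv 1$ on $B(p,R/2)$, and apply the maximum principle to $\psi^2 w^2$ (or to $\psi^2 w$ together with $\psi^2(-w)$). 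At an interior maximum of $\psi^2 w^2$ one uses $\Delta_{g_\epsilon}(\psi^2 w^2)\le 0$, expands using the equation for $w$, the Bochner-type inequality $\Delta|w|^2\ge 2w\Delta w$ plus gradient cross terms absorbed via Cauchy--Schwarz, the bound $|\nabla\psi|^2/\psi$ bounded by $C/R^2$, and the Laplacian comparison $\Delta_{g_\epsilon}r\le (2n-1)/r$ (valid since $\mathrm{Ric}(g_\epsilon)\ge0$), to conclude $\sup_{B(p,R/2)}|w|\le C(n)/R^2$; undoing the normalization gives \eqref{eqn:lap final prop}. The same argument applies verbatim to $\Delta_{\epsilon,2}u$ by symmetry, and one adds the two bounds.

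The main obstacle, I expect, is establishing the right differential inequality for $w=\Delta_{\epsilon,1}u$ with $\epsilon$-uniform constants: one must check that differentiating the weight $(|z_1|^2+\epsilon)^{1-\beta_1}$ twice does not produce terms that blow up as $\epsilon\to 0$ or as $z_1\to 0$ faster than the geometry can absorb. This is where $\beta_1\in(0,1)$ and the specific product structure of $g_\epsilon$ are essential, and it is plausible that one should work not with $w$ directly but with a cleverly weighted variant (e.g. reintroducing $\partial_{z_1}u/\bar z_1$-type quantities, as in \cite{GS} and the cited Lemma giving \eqref{eqn:2.5}), so that the resulting equation has manifestly bounded coefficients. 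A secondary technical point is the interaction of the cutoff (defined via the $g_\epsilon$-distance function, which is only Lipschitz) with the maximum principle; this is handled in the standard way by the Calabi trick or by noting that the barrier argument only needs the Laplacian comparison in the support sense, exactly as in the sketched proof of Lemma \ref{lemma:CY gradient}.
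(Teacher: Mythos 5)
Your approach has a genuine gap at the maximum-principle step, and it is worth being precise about where it fails. Your observation that $g_\epsilon$ is a product metric is on the right track, but you underuse it: since $\Delta_{\epsilon,1}$ and $\Delta_{g_\epsilon}$ commute exactly, $w:=\Delta_{\epsilon,1}u_\epsilon$ is itself $\Delta_{g_\epsilon}$-harmonic, with \emph{no} lower-order terms at all. Far from producing a tame inhomogeneity, harmonicity of $w$ leaves the maximum principle with nothing to bite on. At an interior maximum of $\psi^2 w^2$ one can only use $\Delta_{g_\epsilon} w^2 = 2|\nabla w|^2 \ge 0$, and after substituting $\nabla(\psi^2 w^2)=0$ one obtains an inequality of the schematic form
\[
0 \ \ge\  2\psi^2|\nabla w|^2 \ -\ C\,w^2\frac{|\nabla\psi|^2}{\psi^2} \ +\ w^2\,\Delta_{g_\epsilon}\psi^2,
\]
which contains no term quadratic in the quantity $\psi^2 w^2$ itself; it is vacuous and gives no bound on $\sup w^2$. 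The gradient estimate from Lemma \ref{lemma:CY gradient} bounds $|\nabla u_\epsilon|$, but that does not control the pure second derivative $\Delta_{\epsilon,1}u_\epsilon$ in any direct way. The ``main obstacle'' you flag (blow-up of coefficients as $\epsilon\to 0$) is a red herring, since the coefficients of the equation for $w$ vanish identically by commutativity. The missing ingredient is coercivity, not uniformity.

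The paper's proof supplies the coercivity exactly where your argument lacks it, by working with $f=\log u$ (for $u>0$) rather than $u$. Then $\Delta_{g_\epsilon} f = -|\nabla f|^2$, so $\Delta_{g_\epsilon}(-\Delta_1 f)=\Delta_1|\nabla f|^2$, and the Bochner-type computation \eqref{eqn:useful 1} gives
\[
\Delta_{g_\epsilon}(-\Delta_1 f)\ \ge\ (\Delta_1 f)^2 + 2\,\Re\langle \nabla f, \bar\nabla \Delta_1 f\rangle.
\]
The crucial term $(\Delta_1 f)^2$ is quadratic and positive, which is precisely the Cheng--Yau coercivity your proposal lacks. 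Feeding $G=\varphi^2(-\Delta_1 f)$ into the maximum principle then yields $G\le C(n)/R^2$ on $B(p,R/2)$. Since $-\Delta_1 f = -\Delta_1 u/u + |\nabla_1 f|^2$ and the second term is nonnegative, this bounds $-\Delta_1 u$ from above by $C u/R^2\le C\,\mathrm{osc}(u)/R^2$; applying the same argument to $\max u - u$ gives the bound on $+\Delta_1 u$, and hence \eqref{eqn:lap final prop}. So the ``cleverly weighted variant of $w$'' you anticipated at the end of your proposal is in fact $\Delta_1\log u_\epsilon$; without passing to the logarithm, the maximum-principle argument you sketch does not close.
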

As in the proof of Cheng-Yau gradient estimates, we will work on the function $f=f_\ep = \log u$ and we only need to prove the estimate for $\Delta_{1,\ep} u_\ep$.  We write $\Delta_{1,\ep} f := (\abs{z_1} + \epsilon)^{1-\beta_1} \frac{\partial^2 f}{\partial z_1 \partial \bar z_1}. $

As above, we will omit the subscript $\ep$ in $\Delta_{1,\ep} f$.
We first observe that \begin{equation}\label{eqn:2}\Delta_1 \Delta_{g_\epsilon} f = \Delta_{g_\epsilon}\Delta_1 f. \end{equation}
\eqref{eqn:2} can be checked from the definitions by the property that $g_\epsilon$ is a product-metric. Indeed
{\small
\begin{align*}
\Delta_1 \Delta_{g_\ep} f& = (\abs{z_1}+\epsilon)^{1-\beta_1} \frac{\partial^2}{\partial z_1\partial \bar z_1}\bk{(\abs{z_1}+\epsilon)^{1-\beta_1} \frac{\partial^2 f}{\partial z_1\partial \bar z_1} + (\abs{z_2}+\epsilon)^{1-\beta_2} \frac{\partial^2 f}{\partial z_2\partial \bar z_2} +\sum_j \frac{\partial ^2 f}{\partial z_j\partial \bar z_j}       }\\
& = (\abs{z_1}+\epsilon)^{1-\beta_1} \frac{\partial^2}{\partial z_1\partial \bar z_1} \Delta_1 f + (\abs{z_2}+\epsilon)^{1-\beta_2} \frac{\partial^2}{\partial z_2\partial \bar z_2}\Delta_1 f + \sum_{j=3}^n \frac{\partial^2 }{\partial z_j\partial \bar z_j} \Delta_1 f\\
& = \Delta_{g_\epsilon} \Delta_1 f.
\end{align*}
}
On the other hand, note that by \eqref{eqn:1st equation} $\Delta_{g_\epsilon} f = \Delta_g f = -\abs{\nabla f}$. By choosing a normal frame $\{e_1,\ldots,e_n\}$ at some point $x$ such that $dg(x) = 0$ and $\Delta_1 f = f_{1\bar 1}$, we calculate
\eqsp{\label{eqn:useful 1}
\Delta_1 \abs{\nabla f}& = (f_j f_{\bar j})_{1\bar 1}\\
& = f_{j1} f_{\bar j\bar 1} + f_{j\bar 1} f_{\bar j 1} + f_{j1\bar 1} f_{\bar j} + f_{j}f_{\bar j 1 \bar 1}\\
&= f_{j1} f_{\bar j\bar 1} + f_{j\bar 1} f_{\bar j 1} +  f_{j}f_{\bar 1 1 \bar j} + f_{\bar j}\xk{f_{1\bar 1 j} + f_m R_{1\bar m j \bar 1}     }\\
& = \abs{\nabla_1 \nabla f} + \abs{\nabla_1\bar \nabla f}  + 2\Re \innpro{\nabla f, \bar \nabla \Delta_1 f} + f_m f_{\bar j} R_{1\bar 1 j\bar m}\\
&\ge (\Delta_1 f)^2 + 2\Re \innpro{\nabla f,\bar \nabla\Delta_1 f}.}
Then it follows that
\begin{equation*}
\Delta \xk{-\Delta_1 f }= -\Delta_1 \Delta f = \Delta_1 \abs{\nabla f} \ge (\Delta_1 f)^2 + 2\Re\innpro{\nabla f, \bar \nabla\Delta_1 f}.
\end{equation*}
Let $\varphi:[0,1]\to [0,1]$ be a standard cut-off function such that $\varphi|_{[0,1/2]} = 1$ and $\varphi|_{[2/3,1]} = 0$. We also define $\varphi(x) = \varphi\xk{ \frac{r(x)}{R} }$. Then consider the function $G := \varphi^2\cdot (-\Delta_1 f)$. We calculate
\eqsp{\label{eqn:grad 3}
\Delta G =& \Delta \xk{ \varphi^2 (-\Delta_1 f)  }\\
= &\varphi^2 \Delta(-\Delta_1 f) + 2 \Re\innpro{\nabla \varphi^2, \bar \nabla (-\Delta_1 f)} + (-\Delta_1 f) \Delta \varphi^2\\
\ge & \varphi^2 \xk{ (\Delta_1 f)^2 + 2 \Re\innpro{\nabla f, \bar \nabla \Delta_1 f} } + 2 \Re\innpro{\nabla \varphi^2, \bar \nabla (-\Delta_1 f)} + (-\Delta_1 f) \Delta \varphi^2.
} 
We want to estimate the upper bound of $G$. If the maximum value of $G = \varphi^2(-\Delta_1 f)$ is negative, we are done. So we assume the maximum of $G$ on $B(p,R)$ is {\bf positive}, which is achieved at some point $p_{\max}\in B(p,2R/3)$. Hence at $p_{\max}$, we have $(-\Delta_1 f)>0$. By Laplacian comparison that $\Delta r\le \frac{2n-1}{r}$,  we get at $p_{\max}$, 
\begin{equation}
\label{eqn:grad 2}
\Delta \varphi^2 \ge \frac{2}{R^2}\bk{ (2n-1) \varphi\varphi' + \varphi\varphi'' +(\varphi')^2   }.
\end{equation}
Thus at $p_{\max}$, the last term on RHS of \eqref{eqn:grad 3} is 
$$\ge (-\Delta _1 f) \frac{2}{R^2}\bk{ (2n-1) \varphi\varphi' + \varphi\varphi'' +(\varphi')^2   }.$$
Substituting this into \eqref{eqn:grad 3}, it follows that at $p_{\max}$, $\Delta G\le 0$ and $\nabla \Delta_1 f = - \frac{2}{\varphi} \Delta_1 f \nabla \varphi$ and 
{\small
\eqsp{ 0& \ge \Delta G\\
&\ge \varphi^2 (\Delta_1 f)^2 + 2 \varphi^2 \Re \innpro{\nabla f,\bar \nabla \Delta_1 f} + 4\varphi \Re \innpro {\nabla \varphi, \bar \nabla (-\Delta_1 f)} \\
&\qquad   +  (-\Delta _1 f) \frac{2}{R^2}\bk{ (2n-1) \varphi\varphi' + \varphi\varphi'' +(\varphi')^2   } \\
&\ge \varphi^2 (\Delta_1 f)^2 - 4 \varphi |\Delta_1 f| |\nabla f| |\nabla \varphi|  + 8 \Delta_1 f \abs{\nabla \varphi} +  (-\Delta _1 f) \frac{2}{R^2}\bk{ (2n-1) \varphi\varphi' + \varphi\varphi'' +(\varphi')^2   }\\
&  =    \frac{G^2}{\varphi^2}  - 4\varphi^{-1}G |\nabla f | |\nabla \varphi|  - 8 G \frac{\abs{\nabla \varphi}}{\varphi^2} + \frac{2 G}{R^2 \varphi^2}\bk{ (2n-1)\varphi\varphi' + \varphi\varphi'' +(\varphi')^2   }\\
&\ge \frac{G^2}{\varphi^2} - 4\frac{|\varphi'| |\nabla f|  }{R \varphi} G - 8\frac{\abs{\varphi'}}{R^2 \varphi^2} G + \frac{2 G}{R^2 \varphi^2}\bk{ (2n-1)\varphi\varphi' + \varphi\varphi'' +(\varphi')^2   }  .  }
}
Therefore at $p_{\max}\in B(p,2R/3)$, it holds that
\begin{equation*}
G^2 - 4 \frac{\varphi|\varphi'\nabla f|}{R} G - 8\frac{|\varphi'|^2 }{R^2} G + \frac{2G}{R^2}\bk{ (2n-1)\varphi\varphi' + \varphi\varphi'' +(\varphi')^2  }\le 0,
\end{equation*}
combining \eqref{eqn:final gradient 1} and the fact that $\varphi, \varphi', \varphi''$ are all uniformly bounded, we can get at $p_{\max}$
$$G^2 \le C(n) R^{-2} G\quad \Rightarrow\quad G(p_{\max})\le \frac{C(n)}{R^2}.$$
Then for any $x\in B(p,R/2)$, where $\varphi =1$, we have
$$ -\Delta_1 f(x)  =G(x)\le G(p_{\max})\le \frac{C(n)}{R^2}.$$
Moreover, recall that $f= \log u$  and $-\Delta_1 f = -\frac{\Delta_1 u}{u} + \abs{\nabla_1 f}$, 
therefore it follows that
\begin{equation}\label{eqn:lap 1}\sup_{x\in B(p,R/2)} \bk{ -\frac{\Delta_1 u}{u}(x)  }\le \frac{C(n)}{R^2}.   \end{equation}
This in particular implies that
\begin{equation}\label{eqn:lap 2}
\sup_{x\in B(p,R/2)} \xk{ -\Delta_1 u(x)  }\le C(n)\frac{\mathrm{osc}_{B(p,R/2)} u}{R^2}\le C(n)\frac{\mathrm{osc}_{B(p,R)} u}{R^2}.
\end{equation}

On the other hand, by considering the function $\hat u = \max_{B(p,R)} u - u$, which is still a positive $g_\epsilon$-harmonic function $\Delta_{g} \hat u = \Delta_{g_\epsilon} \hat u = 0$. Applying \eqref{eqn:lap 1} to the function $\hat u$, we get
\begin{equation}\label{eqn:lap 3}
 \sup_{x\in B(p,R/2)} \bk{ \frac{\Delta_1 u(x)}{\max_{B(p,R)} u - u(x)}   }   =\sup_{x\in B(p,R/2)} \bk{ -\frac{\Delta_1 \hat u}{\hat u}(x)   }\le \frac{C(n)}{R^2}
\end{equation}
which yields that
\begin{equation}\label{eqn:lap 4}
\sup_{x\in B(p,R/2)} \Delta_1 u(x)\le C(n)\frac{\mathrm{osc}_{B(p,R/2)} u}{R^2}.
\end{equation}
Combining  \eqref{eqn:lap 4} and \eqref{eqn:lap 2}, we get
\begin{equation}\label{eqn:lap final}
\sup_{x\in B(p,R/2)} |\Delta_1 u|(x) \le C(n) \frac{\mathrm{osc}_{B(p,R)} u}{R^2}.  
\end{equation}

%%%%%%%%%%%%%%%%%%%
\begin{comment}
Let's summarize what we proved above: 
\begin{lemma}\label{lemma 2.1}
Let $u_\epsilon$ be a positive $C^2(B(p,R)\backslash \sS)\cap C^0(B(p,R))$ harmonic function relative to $\Delta_{g_\epsilon}$, then there exists a uniform constant $C=C(n)>0$ such that (the metric balls below are under the metric $g_\epsilon$)
\begin{equation}\label{eqn:grad final prop}
\sup_{x\in B(p,2R/3)} |\nabla u_\epsilon|_{g_\epsilon}(x)\le C(n) \frac{\mathrm{osc}_{B(p,R)} u_\epsilon}{R}.
\end{equation}
And along the ``bad'' directions $z_1, z_2$, e.g., $\Delta_{\epsilon,1} u = \smash[b]{(\abs{z_1} + \epsilon)^{1-\beta_1}}\frac{\partial ^2 u_\epsilon}{\partial z_1\partial \bar z_1}$ it holds that
\begin{equation}\label{eqn:lap final prop}
\sup_{x\in B(p,R/2)}\bk{ |\Delta_{\epsilon,1} u_\epsilon|(x) +| \Delta_{\epsilon,2} u_\epsilon|(x)   }\le C(n)\frac{\mathrm{osc}_{B(p,R)}u_\epsilon}{R^2}.
\end{equation}

\end{lemma}
\end{comment}
%\begin{remark}
%By applying Cheng-Yau gradient estimates to the functions $D_i u$ (here $i=3,\ldots, n$), we see that
%$$\sup_{x\in B(p,R/2)} \sum_{i,j=3}^n |D^2_{ij} u_\epsilon|(x)\le  C(n)\frac{\osc_{B(p,R)} u_\epsilon}{R^2}.$$
%\end{remark}

\subsubsection{Mixed derivatives estimates} In this subsection, we will estimate the following mixed derivatives 
$$\abs{\nabla_1\nabla_2 f} = \frac{\partial^2 f}{\partial z_1\partial z_2} \overline{  \frac{\partial^2 f}{\partial z_1\partial z_2} } g^{1\bar 1} g^{2\bar 2},\quad \abs{\nabla_1\nabla_{\bar 2} f} = \frac{\partial^2 f}{\partial z_1\partial\bar z_2} \overline{  \frac{\partial^2 f}{\partial z_1\partial\bar  z_2} } g^{1\bar 1} g^{2\bar 2},$$
where  as before $f = \log u$ and $u$ is a positive harmonic function of $\Delta_{g_\epsilon}$. Here for simplicity, we omit the subscript $\epsilon$ in $u_\epsilon$, $f_\epsilon$ and $g_\epsilon$. Observing that since $g_\epsilon = g$ is a product metric with the non-zero components $g_{k\bar k}$ depending {only} on $z_k$, it follows that the curvature tensor
$$R_{i\bar j k\bar l} = -\frac{\partial^2 g_{i\bar j}}{\partial z_k\partial \bar z_l} + g^{p\bar q}\frac{\partial g_{i\bar q}}{\partial z_k} \frac{\partial g_{p\bar j}}{\partial\bar z_l}$$
vanishes {unless $i=j=k=l = 1 \text{ or }2$}, and also $R_{i\bar i i\bar i}\ge 0$ for all $i = 1,\ldots,n$.

\medskip

We fix some notations:  we will write $f_{12} = \nabla_1\nabla_2 f$ (in fact this is just the ordinary derivative of $f$ w.r.t. $g$, since $g$ is a product metric),  $|f_{12}|^2_g = \abs{\nabla _1 \nabla_2 f}_g$, etc. 

Let us first recall  that the equation \eqref{eqn:useful 1} implies
\eqsp{ \label{eqn:useful 2}
\Delta (-\Delta_1 f - \Delta_2 f) & = \sum_{k=1}^n \xk{ g^{1\bar 1} g^{k\bar k} f_{1k} f_{\bar 1 \bar k} +  g^{1\bar 1}g^{k\bar k} f_{1\bar k} f_{\bar 1 k} + g^{2\bar 2} g^{k\bar k} f_{2k}f_{\bar 2 \bar k} + g^{2\bar 2} g^{k\bar k} f_{2\bar k}f_{k\bar 2}}\\
&\quad - 2 \Re \innpro{ \nabla f,\bar \nabla (-\Delta_1 f - \Delta_2 f)  } + f_{1}f_{\bar 1} g^{1\bar 1}g^{1\bar 1} R_{1\bar 1 1\bar 1 } + f_{2}f_{\bar 2} g^{2\bar 2}g^{2\bar 2} R_{2\bar 2 2\bar 2 }\\
& \ge \sum_{k=1}^n \xk{ \abs{\nabla_1\nabla_k f} + \abs{\nabla_1\nabla_{\bar k} f} + \abs{\nabla_2\nabla_ k f} + \abs{\nabla_2\nabla_{\bar k} f}   } \\
&\quad - 2 \Re \innpro{ \nabla f,\bar \nabla (-\Delta_1 f - \Delta_2 f)  }.
} 

\medskip

Next we calculate $\Delta \abs{\nabla_1\nabla_2 f}$. For notation convenience we will write $f^{12} = f_{\bar 1\bar 2}g^{1\bar 1}g^{2\bar 2}$, and hence $\abs{\nabla_1\nabla_2 f} = f_{12} f^{12}$. We calculate
\eqsp{ \label{eqn:first step 1}
\Delta \abs{\nabla_1\nabla_2 f} & = g^{k\bar l} \xk{ f_{12} f^{12}  } _{k\bar l}
 = g^{k\bar k} \xk{ f_{12} f^{12}  } _{k\bar k}\quad \text{(since $g$ is a product metric)}\\
& = g^{k\bar k}\xk{f_{12k\bar k} f^{12} + f_{12k}f^{12}_{\;\; \;,\bar k} + f_{12\bar k}f^{12}_{\;\;\;, k} + f_{12}f^{12}_{\;\;\; ,k\bar k}   }.
}
The first term on the RHS of \eqref{eqn:first step 1} is (by Ricci identities and switching the indices)
\eqsp{& g^{k\bar k}f^{12}\xk{ f_{k1\bar k 2} + g^{m\bar m}f_{m1} R_{k\bar m 2 \bar k} + g^{m\bar m}f_{km}R_{1\bar m 2\bar k} }\\
=& g^{k\bar k}f^{12}\xk{ f_{k\bar k 12} +  g^{m\bar m} f_{m 2} R_{k\bar m 1\bar k} + g^{m\bar m}f_{m} R_{k\bar m 1 \bar k,2} + g^{m\bar m}f_{m1} R_{k\bar m 2 \bar k} + g^{m\bar m}f_{km}R_{1\bar m 2\bar k}  }\\
=& g^{k\bar k}f^{12}\xk{ f_{k\bar k 12} +  g^{m\bar m} f_{m 2} R_{k\bar m 1\bar k}  + g^{m\bar m}f_{m1} R_{k\bar m 2 \bar k}  }\\
=& g^{k\bar k}f^{12} f_{k\bar k 12} + g^{1\bar 1} g^{1\bar 1} f^{12} f_{21} R_{1\bar 1 1\bar 1} + g^{2\bar 2} g^{2\bar 2} f^{12} f_{12} R_{2\bar 2 2\bar 2},
}
and the last term on the RHS of  \eqref{eqn:first step 1} is the conjugate of the first term, hence we get
\eqsp{\label{eqn:first step 2} \Delta \abs{\nabla_1 \nabla_2 f} & = 2 \Re \xk{ f^{12} (\Delta f)_{12}   }  + 2 f^{12} f_{12} \xk{ g^{1\bar 1} g^{1\bar 1}R_{1\bar 1 1\bar 1} + g^{2\bar 2}g^{2\bar 2} R_{2\bar 2 2\bar 2}  }  \\
&\quad + g^{k\bar k} f_{12k} f^{12}_{\;\;\;, \bar k} + g^{k\bar k} f_{12\bar k}f^{12}_{\;\;\;, k}.    } 
Recall from \eqref{eqn:1st equation} we have $\Delta f = -\abs{\nabla f}$, hence the first term on RHS of \eqref{eqn:first step 2} is 
\eqsp{ \label{eqn:first step 3}
2 \Re \xk{ f^{12} (\Delta f)_{12}   } & = 2 \Re\xk{ f^{12}(-\abs{\nabla f})_{12}    }   \\
& = -2 \Re \bk{f^{12} g^{k\bar k} \xk{  f_{k12} f_{\bar k} + f_{k1} f_{\bar k2} + f_{k2} f_{\bar k 1} + f_{k}f_{\bar k 12}      }}\\
& = -2 \Re \bk{ f^{12} g^{k\bar k}\xk{ f_{12 k}f_{\bar k} + f_{k1}f_{\bar k 2}  + f_{k2} f_{\bar k1} + f_k f_{12\bar k} - f_k f_m R_{1\bar m 2k}g^{m\bar m}   }   }\\
&= -4 \Re \innpro{\nabla f,\bar \nabla \abs{\nabla_1\nabla_2 f}} - 2 \Re \bk{f^{12}g^{k\bar k} f_{k1}f_{2\bar k} + f^{12} g^{k\bar k} f_{k2 }f_{\bar k1}   }
}
Combining  \eqref{eqn:first step 3} and \eqref{eqn:first step 2}, we get
\eqsp{ \label{eqn:first step 4}
\Delta \abs{\nabla _1 \nabla_2 f} & \ge - 4 \Re \innpro{ \nabla f, \bar \nabla  \abs{\nabla_1\nabla_2 f}} +\sum_k ( f_{12k}f^{12k} + f_{12\bar k}f^{12\bar k})\\
&\quad  - 2 \sum_k\bk{|\nabla_1\nabla _2 f| |\nabla_1\nabla_k f| |\nabla_2\nabla_{\bar k} f| + |\nabla _1 \nabla_2 f| |\nabla_2\nabla_k f| |\nabla _1\nabla_{\bar k} f|    } .  }
On the other hand we have by Kato's inequality
\eqsp{ \label{eqn:first step 5} 
\Delta\abs{\nabla_1\nabla _2 f} =& 2 |\nabla_1\nabla_2 f| \Delta |\nabla_1 \nabla_2 f| + 2 \big| \nabla |\nabla_1 \nabla_2 f|   \big|^2 \\
\le & 2|\nabla_1 \nabla_2 f| \; \Delta |\nabla_1 \nabla_2 f| + \sum_k \abs{\nabla_k \nabla_1\nabla_2 f} + \abs{\nabla_{\bar k} \nabla_1 \nabla_2 f} \\
= & 2|\nabla_1 \nabla_2 f| \; \Delta |\nabla_1 \nabla_2 f| + \sum_k f_{12k}f^{12k} + f_{12\bar k}f^{12\bar k} .}
Combining \eqref{eqn:first step 4} and \eqref{eqn:first step 5} it follows that
\eqsp{ \label{eqn:first step 6}
\Delta |\nabla_1\nabla_2 f| \ge& - 2 \Re \innpro{\nabla f,\bar \nabla |\nabla_1 \nabla_2 f|} - \sum_k\bk{ |\nabla_1\nabla_k f| |\nabla_2\nabla_{\bar k} f| + |\nabla_2\nabla_k f| |\nabla _1\nabla_{\bar k} f|    }.    }
Combining  \eqref{eqn:useful 2}, \eqref{eqn:first step 6} and applying Cauchy-Schwarz inequality, we have
\eqsp{&\Delta\xk{|\nabla_1\nabla_2 f| + 2(-\Delta_1 f - \Delta_2 f)}\\ \ge& -2 \Re\innpro{\nabla f, \bar\nabla\xk{ |\nabla_1\nabla_2 f| + 2(-\Delta_1 f - \Delta_2 f)  }}\\
&\quad + \sum_{k=1}^n \xk{ \abs{\nabla_1\nabla_k f} + \abs{\nabla_1\nabla_{\bar k} f} + \abs{\nabla_2\nabla_k f} + \abs{\nabla_2\nabla_{\bar k} f}   }.
}
Note that the sum on the RHS of \eqref{eqn:first step 6} is (recall under our notation $\abs{\nabla_1\nabla_{\bar 1} f} = (\Delta_1 f)^2$)
$$\ge \abs{\nabla_1\nabla_2 f} + \abs{-\Delta_1 f} + \abs{-\Delta_2 f}\ge \frac 1{12} \bk{ |\nabla_1 \nabla_2 f| + 2(-\Delta_1 f- \Delta_2 f)  }^2,$$
so we get the following equation
\eqsp{\label{eqn:useful 3}
& \Delta \xk{ |\nabla_1 \nabla_2 f| + 2(-\Delta_1 f - \Delta_2 f) }\\
\ge & -2 \Re\innpro{\nabla f, \bar \nabla \xk{|\nabla_1 \nabla_2 f| + 2(-\Delta_1 f - \Delta_2 f)  }}\\
& + \frac 1 {12} \bk{|\nabla_1 \nabla_2 f| + 2(-\Delta_1 f - \Delta_2 f)}^2.
}
Denote $Q = \eta^2 \xk{|\nabla_1 \nabla_2 f| + 2(-\Delta_1 f - \Delta_2 f)}=: \eta^2 Q_1$, where $\eta(x) = \tilde \eta\xk{r(x)/R}$ and $\tilde \eta$ is a cut-off function such that $\tilde \eta|_{[0,1/3]} = 1$ and $\tilde \eta|_{[1/2,1]} = 0$. The following arguments are similar to the previous two cases. We calculate
\eqsp{ \label{eqn:useful 4}
\Delta  Q & =  \eta^2 \Delta Q_1 + 2\Re \innpro{\nabla \eta^2, \nabla Q_1} + Q_1 \Delta \eta^2\\
&\ge -2 \eta^2 \Re\innpro{\nabla f, \bar \nabla Q_1} + 2\Re \innpro{\nabla \eta^2 ,\nabla Q_1}  + \eta^2\frac{Q_1^2}{12}   + Q_1 \Delta \eta^2.
  }
Apply maximum principle to $Q$ and if the $\max Q\le 0$, we are done. So we may assume that $\max Q>0$ and is attained at $p_{\max}$, thus at $p_{\max}$, $Q_1>0$, $\Delta Q\le 0$, $\nabla Q_1 = - 2\eta^{-1} Q_1 \nabla \eta$ and 
$$Q_1 \Delta\eta^2 \ge Q_1 \frac{2}{R^2} \bk{ (2n-1)\eta\eta' + \eta\eta'' + (\eta')^2  }.$$
So at $p_{\max}$ it holds that 
\eqsp{\label{eqn:middle useful}
0\ge & \Delta Q\\
\ge & 4 \eta Q_1 \Re\innpro{ \nabla f, \bar \nabla \eta } - 8 Q_1 \abs{\nabla \eta} + \eta^2\frac{Q_1^2 }{12} + Q_1 \frac{2}{R^2} \bk{ (2n-1)\eta\eta' + \eta\eta'' + (\eta')^2  }\\
= & \frac{Q^2}{12\eta^2} + 4Q \eta^{-1} \Re \innpro{ \nabla f, \bar \nabla \eta  } - \frac{8Q}{\eta^2} \frac{(\eta')^2}{R^2}  + \frac{2Q}{R^2\eta^2} \xk{ (2n-1)\eta\eta' + \eta\eta'' + (\eta')^2  }\\
\ge & \frac{1}{\eta^2}\bk{ \frac{Q^2}{12} - \frac{40 |\nabla f| }{R} Q - \frac{800}{R^2} Q -  \frac{100n}{R^2} Q }
  }
where we choose $\eta$ such that $|\eta'|, |\eta''|\le 10$, for example. Therefore at $p_{\max}\in B(p,R/2)$ we have
$$\frac{Q^2}{12} - Q \bk{ \frac{40 |\nabla f|}{ R}  + \frac{800}{R^2}  + \frac{100n}{R^2}}\le 0\quad \Rightarrow \quad Q(p_{\max})\le \frac{C(n)}{R^2},$$
since $\sup_{B(p,R/2)}|\nabla f|\le C(n)R^{-1}$ from the previous estimates. Then for any $x\in B(p,R/3)$ we have 
\begin{equation*}{ Q_1(x) = \eta^2(x) Q_1(x)=Q(x) \le Q(p_{\max})\le \frac{C(n)}{R^2} .  }\end{equation*}
Thus it follows that
\begin{equation*}
|\nabla_1 \nabla_2 f|(x)\le Q_1(x) + 2 \big(\Delta_1 f (x) + \Delta_2 f(x)\big)\le \frac{C(n)}{R^2} + 2 \xk{\Delta_1 f (x) + \Delta_2 f(x)  }.
\end{equation*}
On the other hand from $|\nabla_1 \nabla_2 f| = | \frac{\nabla_1 \nabla_2 u}{u} - \frac{\nabla_1 u}{u}\frac{\nabla_2 u}{u}  |$
\eqsp{ \label{eqn:final estimate 1}
|\nabla_1\nabla_2 u|(x) & \le | \nabla_1 \nabla_2 f(x) | u(x)  + u(x) \frac{|\nabla_1 u(x)|}{u} \frac{|\nabla_2 u(x)|}{u}\\
&\le C(n)\frac{u(x)}{R^2} + 2 \Delta_1 u(x) + 2 \Delta_2 u(x) + u(x) \frac{|\nabla_1 u(x)|}{u} \frac{|\nabla_2 u(x)|}{u}\\
& \le C(n) \frac{\mathrm{osc}_{B(p,R)} u}{R^2}.
  }
Therefore we obtain that
\begin{equation}\label{final estimate 2}
\sup_{B(p,R/3)} |\nabla_1 \nabla_2 u|\le C(n)\frac{\mathrm{osc}_{B(p,R)} u}{R^2}.
\end{equation}
By exactly the same argument we can also get similar estimates for $|\nabla_1 \nabla_{\bar 2 } u|$  and $|\nabla_1 \nabla_{k} u| + |\nabla_1 \nabla _{\bar k} u|$ for $k\neq 1$.

Hence we have proved the following lemma:
\begin{lemma}\label{lemma 2.2}
There exists a constant $C(n)>0$ such that  for the solution $u_\epsilon$ to the equation \eqref{eqn:app Diri}
\begin{equation*}
\sup_{B_{g_\ep}(0,R/2)}\bk{ |\nabla_i \nabla_j u_\epsilon|_{g_\epsilon} + |\nabla_i \nabla _{\bar j} u_\epsilon|_{g_\epsilon}   } \le C(n)\frac{\osc_{B_{g_\epsilon}(0,R)} u_\epsilon}{R^2},
\end{equation*}
for all $i,\,j = 1,2,\cdots,n$.

\end{lemma}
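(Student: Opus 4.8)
The plan is to run the same Bochner-plus-maximum-principle scheme already used for the Cheng--Yau gradient estimate (Lemma \ref{lemma:CY gradient}) and for the singular Laplacian estimate (Lemma \ref{lemma 2.1}), now applied to the mixed second derivatives of $f=\log u_\epsilon$. As before I would suppress the subscript $\epsilon$, assume $u>0$ (else replace $u$ by $u+\delta$ and let $\delta\to0$), and fix a metric ball $B(p,R)\subset B_\bb(0,1)$. Two structural features of $g_\epsilon$ make this work and were recorded above: $g_\epsilon$ is a \emph{product} metric, so its curvature tensor vanishes except for the components $R_{i\bar i i\bar i}\ge0$ with $i=1,2$; and $\Delta_{g_\epsilon}$ commutes with each operator $\Delta_j=(|z_j|+\epsilon)^{1-\beta_j}\partial_{z_j}\partial_{\bar z_j}$, as in \eqref{eqn:2}.

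\emph{Step 1: the differential inequality.} Using the Ricci identities to compute $\Delta|\nabla_1\nabla_2 f|$ as in \eqref{eqn:first step 1}--\eqref{eqn:first step 6}, together with $\Delta f=-|\nabla f|^2$ (see \eqref{eqn:1st equation}), Kato's inequality to absorb the third-order terms $f_{12k}f^{12k}$, $f_{12\bar k}f^{12\bar k}$, and the analogous computation \eqref{eqn:useful 1}--\eqref{eqn:useful 2} for $\Delta(-\Delta_1 f-\Delta_2 f)$, I would assemble the auxiliary quantity $Q_1:=|\nabla_1\nabla_2 f|+2(-\Delta_1 f-\Delta_2 f)$ and derive \eqref{eqn:useful 3}, i.e. $\Delta Q_1\ge -2\Re\langle\nabla f,\bar\nabla Q_1\rangle+\tfrac1{12}Q_1^2$. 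The point of adding the multiple $2(-\Delta_1 f-\Delta_2 f)$ is that the sum $\sum_k(|\nabla_1\nabla_k f|+|\nabla_1\nabla_{\bar k}f|+|\nabla_2\nabla_k f|+|\nabla_2\nabla_{\bar k}f|)$ of nonnegative Hessian terms produced by Bochner then dominates a fixed dimensional multiple of $Q_1^2$ by Cauchy--Schwarz; without the correction one only gets an indefinite reaction term.

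\emph{Step 2: maximum principle.} I would apply the maximum principle to $Q=\eta^2 Q_1$ with $\eta(x)=\tilde\eta(r(x)/R)$ a cutoff, $\tilde\eta\equiv1$ on $[0,1/2]$ and $\tilde\eta\equiv0$ on $[2/3,1]$. If $\max Q\le0$ there is nothing to prove; otherwise at an interior maximum $p_{\max}$ one has $Q_1>0$, $\nabla Q_1=-2\eta^{-1}Q_1\nabla\eta$, $\Delta Q\le0$, and, since $\mathrm{Ric}(g_\epsilon)\ge0$, the Laplacian comparison $\Delta r\le(2n-1)/r$. Feeding these into \eqref{eqn:useful 3} and using the gradient bound $|\nabla f|\le C(n)/R$ already established in \eqref{eqn:final gradient 1}, the inequality collapses to $Q^2\le C(n)R^{-2}Q$ at $p_{\max}$, whence $Q_1\le C(n)/R^2$ on $B(p,R/2)$ (or, if one prefers the cutoff of the text, on $B(p,R/3)$ together with a routine covering argument). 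The identical argument with the indices permuted handles $|\nabla_1\nabla_{\bar 2}f|$ and $|\nabla_i\nabla_k f|+|\nabla_i\nabla_{\bar k}f|$ for all $i,k$; the purely regular cases $i,j\ge3$ are the classical interior estimate and are even easier, the corresponding curvature being zero.

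\emph{Step 3: back to $u$.} Finally, from $\nabla_i\nabla_j f=\dfrac{\nabla_i\nabla_j u}{u}-\dfrac{\nabla_i u}{u}\dfrac{\nabla_j u}{u}$ (and the same with a conjugated index) I would write $|\nabla_i\nabla_j u|\le |\nabla_i\nabla_j f|\,u+u\cdot\dfrac{|\nabla_i u|}{u}\cdot\dfrac{|\nabla_j u|}{u}$ and bound each term by $C(n)\,\osc_{B(p,R)}u/R^2$ using the $Q_1$-bound, Lemma \ref{lemma 2.1}, and the Cheng--Yau gradient estimate, exactly as in \eqref{eqn:final estimate 1}; taking $p=0$ yields the statement. \textbf{The main obstacle} is Step 1: one must choose the precise linear combination defining $Q_1$ and then push Kato's inequality and the Cauchy--Schwarz bookkeeping through the Ricci-identity computation so that a genuinely \emph{positive} multiple of $Q_1^2$ survives on the right-hand side; once that reaction term is in hand, Steps 2 and 3 are a verbatim repeat of the two preceding lemmas.
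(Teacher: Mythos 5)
Your proposal is correct and follows the paper's own proof essentially verbatim: the same auxiliary quantity $Q_1=|\nabla_1\nabla_2 f|+2(-\Delta_1 f-\Delta_2 f)$, the same Bochner/Ricci-identity computation with Kato's inequality leading to \eqref{eqn:useful 3}, the same cutoff/maximum-principle step using $\mathrm{Ric}(g_\epsilon)\ge 0$ and the already-established gradient bound, and the same conversion from $f=\log u$ back to $u$ as in \eqref{eqn:final estimate 1}. Your remarks on extending to the other index pairs (including the purely regular $i,j\ge 3$ case via $D'u$) and on reconciling the cutoff radius with the statement's $B(p,R/2)$ are accurate and consistent with the paper.
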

%%%%%%%%%%%%%%%%%%%%%%%%%%%%%%%%%
\subsubsection{Convergence of $u_\epsilon$} \label{subsection end}In this section, we will show the Dirichlet problem \eqref{eqn:harmonic} admits a unique solution for any $\varphi\in C^0(\partial B_\bb(0,1))$.  We will write $B_\bb = B_\bb(0,1)$ for notation simplicity in this subsection.

\begin{prop}\label{prop:existence}
The Dirichlet boundary value problem \eqref{eqn:harmonic} admits a unique solution $u\in C^2(B_{\bbeta}\backslash \sS)\cap C^0(\overline{B_{\bbeta}})$ for any $\varphi\in C^0(\partial B_{\bbeta})$. Moreover, $u$ satisfies the estimates in Lemmas \ref{lemma:CY gradient}, \ref{lemma 2.1} and \ref{lemma 2.2} with $u_\ep$ replaced by $u$ and the metric balls replaced by those under the metric $g_\bb$, which we will refer as ``derivatives estimates'' throughout this section.
\end{prop}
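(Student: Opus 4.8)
The plan is to obtain the solution $u$ as the limit of the approximating solutions $u_\epsilon$ to \eqref{eqn:app Diri} as $\epsilon \to 0$, using the uniform derivative estimates from Lemmas \ref{lemma:CY gradient}, \ref{lemma 2.1} and \ref{lemma 2.2} together with a barrier argument for boundary continuity. First I would note that each $u_\epsilon$ exists by classical elliptic theory, since $g_\epsilon$ is a smooth K\"ahler metric on a neighborhood of $\overline{B_\bb(0,1)}$ and the Dirichlet problem for $\Delta_{g_\epsilon}$ with continuous boundary data $\varphi$ is solvable (one may first solve for a smooth approximation of $\varphi$ and pass to a uniform limit, using the maximum principle for $\Delta_{g_\epsilon}$). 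By the maximum principle, $\|u_\epsilon\|_{L^\infty(B_\bb(0,1))}\le \|\varphi\|_{C^0(\partial B_\bb(0,1))}$ uniformly in $\epsilon$. Since the metric balls $B_{g_\epsilon}(x,r)$ and $B_\bb(x,r)$ are uniformly comparable for small $\epsilon$, the estimates in the three lemmas give, on any fixed compact $K\Subset B_\bb(0,1)\setminus\sS$, uniform bounds on $|\nabla_{g_\epsilon} u_\epsilon|$ and on all second derivatives $|\nabla_i\nabla_j u_\epsilon|$, $|\nabla_i\nabla_{\bar j} u_\epsilon|$ (equivalently on $(D')^2 u_\epsilon$, $N_jD'u_\epsilon$, $N_jN_k u_\epsilon$ and $|z_j|^{2(1-\beta_j)}\partial_{z_j}\partial_{\bar z_j}u_\epsilon$, after rewriting in the singular coordinates), all in terms of $\|\varphi\|_{C^0}$ and $\mathrm{dist}(K,\sS\cup\partial B_\bb(0,1))$.

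Next, away from $\sS$ the operator $\Delta_{g_\epsilon}$ is a uniformly elliptic operator with coefficients converging in $C^\infty_{loc}(B_\bb(0,1)\setminus\sS)$ to those of $\Delta_\bb$; combining the uniform $C^2$ bounds with interior Schauder/elliptic estimates on a slightly larger compact set, I would extract (by Arzel\`a--Ascoli and a diagonal argument over an exhaustion of $B_\bb(0,1)\setminus\sS$) a subsequence $u_{\epsilon_k}\to u$ in $C^{2}_{loc}(B_\bb(0,1)\setminus\sS)$. The limit $u$ then satisfies $\Delta_\bb u = 0$ in $B_\bb(0,1)\setminus\sS$ and inherits all the derivative estimates of Lemmas \ref{lemma:CY gradient}, \ref{lemma 2.1}, \ref{lemma 2.2} with $g_\epsilon$ replaced by $g_\bb$, by passing to the limit in the pointwise inequalities. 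In particular $\nabla_{g_\bb}u$ is locally bounded, so $u$ extends continuously across $\sS$ and $u\in C^0(B_\bb(0,1))\cap C^2(B_\bb(0,1)\setminus\sS)$.

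It remains to show $u$ attains the boundary value $\varphi$ continuously and that the solution is unique; uniqueness is immediate from Lemma \ref{lemma:MP}. For boundary continuity, the main obstacle is to control the behavior near $\partial B_\bb(0,1)$ uniformly in $\epsilon$, since the gradient estimates degenerate there. I would handle this with barriers: given $x_0\in\partial B_\bb(0,1)$ and $\delta>0$, choose a continuous $\varphi^\pm$ with $\varphi^-\le\varphi\le\varphi^+$, $|\varphi^\pm(x_0)-\varphi(x_0)|<\delta$, and with $\varphi^\pm$ smooth enough that one can build explicit super/subsolutions of $\Delta_{g_\epsilon}$ near $x_0$ —for instance, using functions of the $g_\bb$-distance together with the plurisubharmonic functions $\pm\epsilon(\log|z_1|^2+\log|z_2|^2)$ already used in Lemma \ref{lemma:MP} to absorb the singular part. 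Comparison then pins $u_\epsilon$ between these barriers near $x_0$, uniformly in $\epsilon$, yielding $\limsup_{x\to x_0}|u(x)-\varphi(x_0)|\le C\delta$. Letting $\delta\to 0$ gives $u\in C^0(\overline{B_\bb(0,1)})$ with $u|_{\partial B_\bb(0,1)}=\varphi$. Finally, since the limit is unique, the whole family $u_\epsilon$ (not just a subsequence) converges to $u$, completing the proof.
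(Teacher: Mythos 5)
Your overall strategy — approximating by the smooth-metric solutions $u_\epsilon$, using the maximum principle for a uniform $L^\infty$ bound, invoking the derivative estimates of Lemmas \ref{lemma:CY gradient}, \ref{lemma 2.1} and \ref{lemma 2.2} to extract a $C^2_{loc}(B_\bb\setminus\sS)$ limit, observing that the first-derivative bound extends $u$ continuously across $\sS$, and finishing with barriers and the maximum principle — is exactly the paper's strategy, and your handling of the interior part is essentially correct and complete.

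The gap is in the boundary barrier step, which is the only genuinely delicate part. You write that near $x_0\in\partial B_\bb(0,1)$ one can build super/subsolutions ``using functions of the $g_\bb$-distance together with the plurisubharmonic functions $\pm\epsilon(\log|z_1|^2+\log|z_2|^2)$.'' Those logarithmic terms serve a different purpose (absorbing the interior singularity at $\sS$ in the maximum principle of Lemma \ref{lemma:MP}), and they do not address the actual obstruction at the boundary: the hypersurface $\partial B_\bb(0,1)=\{|z_1|^{2\beta_1}+|z_2|^{2\beta_2}+\sum_j s_j^2 = 1\}$ is \emph{not smooth} at points where $z_1=0$ or $z_2=0$, so the standard exterior sphere construction fails exactly along $\partial B_\bb\cap\sS$. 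You need a case analysis. At $q\in\partial B_\bb\cap\sS_1\cap\sS_2$, the paper exploits that $q$ is the unique Euclidean-farthest point from its antipode $q'=-q$ and uses $d_{\mathbb C^n}(\cdot,q')^2-4$ as the barrier. At $q$ with, say, $z_1(q)\neq 0$ but $z_2(q)=0$, the boundary is still non-smooth in the $z_2$-direction; the paper compares $B_\bb(0,1)$ with the single-cone ball $B_{g_{\beta_1}}(0,1)$, which \emph{is} smooth at $q$ and contains $B_\bb(0,1)$ with boundaries tangent at such points, and applies the exterior sphere condition to the larger ball. Only when $z_1(q)\neq 0$ and $z_2(q)\neq 0$ does the boundary become smooth and the ordinary exterior sphere argument apply, where one further needs to check that the Euclidean Green-type function $G(z)=|z-\tilde q|^{-(2n-2)}-r_q^{-(2n-2)}$ has $\Delta_{g_\epsilon}G$ bounded below uniformly in $\epsilon$ so that $A(d_\bb(z,0)^2-1)+G(z)$ is $\Delta_{g_\epsilon}$-subharmonic for large $A$. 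Without spelling out these constructions, the boundary continuity claim is not established; a generic ``barrier built from $d_\bb$ and logs'' does not obviously exist at the corner points of $\partial B_\bb$.
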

%\smallskip
\begin{proof}%Combining the above estimates on $u_\epsilon$ and letting $\epsilon\to 0$, we see that the limit $u$ of $u_\epsilon$ solves the Dirichlet boundary problem \eqref{eqn:harmonic}.
Given the estimates of $u_\epsilon$ as in lemmas \ref{lemma:CY gradient}, \ref{lemma 2.1} and \ref{lemma 2.2}, we can derive the uniform local $C^{2,\alpha}$ estimates of $u_\epsilon$ on any compact subsets of $B_{\bbeta}(0,1)\backslash \sS$.

The $C^0$ estimates of $u_\epsilon$ follow immediately from the maximum principle (see Lemma \ref{lemma:MP}).

Take any compact subsets $K\Subset K'\Subset B_{\bbeta}(0,1)$. By Lemma \ref{lemma 2.1}, we have% (below $s_j,\,s_k,\,s_l$ denotes the real coordinates in tangential directions $\mathbb C^{n-2}$.)
\begin{equation}\label{eqn:2.36}
\sup_{K'}\bk{ |z_1|^{1-\beta_1} \ba{ \frac{\partial u_\epsilon}{\partial z_1}  } +|z_2|^{1-\beta_2} \ba{ \frac{\partial u_\epsilon}{\partial z_2}  } + \ba{\frac{\partial u_\epsilon}{\partial s_j}  } }\le  C(n) \frac{\| u_\epsilon\|_\infty}{ d(K',\partial B_{\bbeta})},
\end{equation}
\begin{equation}\label{eqn:2.37}
\sup_{K'} \bk{|z_1|^{1-\beta_1} \ba{ \frac{\partial^2 u_\epsilon}{\partial s_k\partial z_1}  } + |z_2|^{1-\beta_2} \ba{ \frac{\partial^2u_\epsilon}{\partial s_k\partial z_2}  } + \ba{ \frac{\partial^2 u_\epsilon}{\partial s_k \partial s_j}  }  } \le C(n)\frac{\| u_\epsilon\|_{\infty}}{d(K',\partial B_{\bbeta})^2},
\end{equation}
and the third-order estimates
\begin{equation}\label{eqn:2.38}
\sup_{K'}\bk{ |z_1|^{1-\beta_1} \ba{ \frac{\partial^3 u_\epsilon}{\partial z_1 \partial s_k\partial s_l}  }  +  |z_2|^{1-\beta_2} \ba{ \frac{\partial^3 u_\epsilon}{\partial z_2 \partial s_k\partial s_l}  } + \ba{\frac{\partial^3 u_\epsilon}{\partial s_j\partial s_k\partial s_l}  }  }\le C(n)\frac{\| u_\epsilon\|_\infty}{d(K',\partial B_{\bbeta})^3}.
\end{equation}
Moreover, applying the gradient estimate to the  $\Delta_{g_\epsilon}$-harmonic function $\Delta_{\epsilon,1} u_\epsilon$, we get
\begin{equation}\label{eqn:2.39}
\sup_{K'}\bk{ |z_1|^{1-\beta_1} \ba{ \frac{\partial}{\partial z_1} \Delta_{\epsilon,1}u_\epsilon  } + |z_2|^{1-\beta_2} \ba{\frac{\partial}{\partial z_2} \Delta_{\epsilon,1}u_\epsilon  } + \ba{\frac{\partial }{\partial s_j} \Delta_{\epsilon,1} u_\epsilon  }  }\le C(n)\frac{\| u_\epsilon\|_\infty}{d(K',\partial B_{\bbeta})^3}.
\end{equation} 
From \eqref{eqn:2.36}, \eqref{eqn:2.37} and \eqref{eqn:2.38}, we see that the functions $u_\epsilon$ have uniform $C^3$ estimates in the ``tangential directions'' on any compact subset of $B_{\bbeta}(0,1)$. Moreover, for any fixed small constant $\delta>0$, let $T_\delta(\sS)$ be the tubular neighborhood of $\sS$. We consider the equation 
$$\Delta_\epsilon u_\epsilon = (|z_1|^2 + \epsilon) ^{1-\beta_1} \frac{\partial^2 u_\epsilon}{\partial z_1 \partial \bar z_1} +  (\abs{z_2} + \epsilon)^{1-\beta_2} \frac{\partial^2 u_\epsilon}{\partial z_2\partial \bar z_2} + \sum_{j=5}^{2n} \frac{\partial^2 u_\epsilon}{\partial s_j^2} = 0, \text{ on }K'\backslash T_{\delta/2}(\sS), $$
which is strictly elliptic (with ellipticity depending only on $\delta>0$). Hence by standard elliptic Schauder theory, we also have $C^{2,\alpha}$-estimates of $u_\epsilon$ in the ``transversal directions'' (i.e. normal to $\sS$) and the mixed directions, on the compact subset $K\backslash T_{\delta}(\sS)$. By taking $\delta\to 0$, $K\to B_{\bbeta}$, and a diagonal argument, up to a subsequence $u_\epsilon$ converge in $C^{2,\alpha}_{loc}(B_{\bbeta}\backslash \sS)$ to a function $u\in C^{2,\alpha}(B_{\bbeta}\backslash \Ss)$. Clearly, $u$ satisfies the equation $\Delta_\bb u = 0$ on $B_{\bbeta}\backslash \sS$, and the estimates  \eqref{eqn:2.36}, \eqref{eqn:2.37} and \eqref{eqn:2.38}  hold for $u$ outside $\sS$, which implies that $u$ can be continuously extended through $\sS$ and defines a continuous function in $B_{\bbeta}(0,1)$. It remains to check the boundary value of $u$.

\smallskip

\noindent {\bf Claim}: {$u = \varphi$ on $\partial B_{\bbeta}(0,1)$}

It remains to show the limit function $u$ of $u_\epsilon$ satisfies the boundary condition $u = \varphi$ on $\partial B_{\bbeta}(0,1)$, which will be proved by constructing suitable barriers as we did in \cite{GS}.

The metric ball $B_{\bbeta}(0,1)$ is given by% (below we assume $R< 1$)
$$B_{\bbeta}(0,1) = \{z\in\mathbb C^n|~ d_\bb(0,z)^2 :=  |z_1|^{2\beta_1} + |z_2|^{2\beta_2} + \sum_{j=5}^{2n} s_j^2 < 1\}.$$
$B_{\bbeta}(0,1)\subset B_{\mathbb C^n}(0,1)$ and their boundaries only intersect at $\sS_1\cap \sS_2$, where $z_1 = z_2 = 0$. Fix any point $q\in \partial B_{\bbeta}(0,1)$ and we consider the cases when $q\in\sS_1\cap \sS_2$ or $q\not\in \sS_1\cap \sS_2$. 

\medskip

\noindent{\bf Case 1:} $q\in \sS_1\cap\sS_2$, i.e. $z_1(q) = z_2(q) = 0$. Consider the point $q' = - q\in\partial B_{\bbeta}(0,1)\cap\partial B_{\mathbb C^n}(0,1) $, and $q$ is the {\em unique} farthest  point to $q'$ on $\partial B_{\bbeta}(0,1)$ under the Euclidean distance, hence the function $\Psi_q(z) := d_{\mathbb C^n}(z, q')^2 - 4 $ satisfies $\Psi_q(q) = 0$ and $\Psi_q(z)<0$ for all $z\in\partial B_{\bbeta}(0, 1)\backslash \{q\}$.  By the continuity of $\varphi$ for any $\delta>0$, there is a small neighborhood $V$ of $q$ such that $\varphi(q) -\delta < \varphi(z)< \varphi(q)+\delta$ for all $z\in \partial B_{\bbeta}(0,1)\cap V$ and on $\partial B_{\bbeta}(0,1)\backslash V$, $\Psi_q$ is bounded above by a negative constant. Hence we can make $\varphi_q(z): = \varphi(q) - \delta  + A \Psi_q(z) < \varphi(z)$ for all $z\in\partial B_{\bbeta}(0,1)$ if $A$ is chosen large enough. The function $\varphi_q$ is $\Delta_{g_\epsilon}$-subharmonic hence by maximum principle we have $u_\epsilon(z) \ge \varphi_q(z)$ for all $z\in B_{\bbeta}(0,1)$. Letting $\epsilon\to 0$ we get $u(z)\ge \varphi_q(z)$. Then taking $z\to q$ it follows that $\liminf_{z\to q} u(z) \ge \varphi(q) - \delta$, but $\delta>0$ is arbitrary so we have $\liminf_{z\to q} u(z)\ge \varphi(q)$. \

By considering the barrier function $\varphi(q) +\delta - A \Psi_q(z)$ and similar argument it is not hard to see that $\limsup_{z\to q} u(z)\le \varphi(q)$, hence $\lim_{z\to q} u(z) = \varphi(q)$ and $u$ is continuous up to $q\in\partial B_{\bbeta}(0,1)$.

\medskip

\noindent{\bf Case 2:} $q\in \partial B_{\bbeta}(0,1)\backslash \sS_1\cap\sS_2$. We consider the case when $z_1(q)\neq 0$ and $z_2(q)\neq 0$. The boundary $\partial B_{\bbeta}(0,1)$ is smooth near $q$, hence satisfies the exterior sphere condition. We choose an exterior Euclidean ball $B_{\mathbb C^n}(\tilde q, r_q)$ which is tangential with $\partial B_{\bbeta}(0,1)$ (only) at $q$, i.e. under the Euclidean distance $q$ is the unique closest point to $\tilde q$ on $\partial B_{\bbeta}(0,1)$. So the function $G(z) = \frac{1}{|z - \tilde q|^{2n-2}} -\frac{1}{r_q^{2n-2}}$ satisfies $G(q) = 0$ and $G(z)<0$ for all $z\in \partial B_{\bbeta}(0,1)\backslash \{q\}$. We calculate 
\begin{align*}
\Delta_{g_\epsilon} G & = (|z_1|^2 +\ep)^{-\beta_1 + 1} \frac{\partial ^2 G}{\partial z_1 \partial \bar z_1} + (|z_2|^2 + \ep)^{-\beta_2 + 1} \frac{\partial ^2 G}{\partial z_2 \partial \bar z_2} + \sum_{k=3}^n \frac{\partial ^2G}{\partial z_k \partial \bar z_k}\\
& = ((|z_1|^2 +\ep)^{-\beta_1 + 1} - 1)  \frac{\partial ^2 G}{\partial z_1 \partial \bar z_1} +( (|z_2|^2 + \ep)^{-\beta_2 + 1} - 1) \frac{\partial ^2 G}{\partial z_2 \partial \bar z_2}\\
& = \sum_{k=1}^2(-n+1)  \frac{{(|z_k|^2 + \ep)^{-\beta_k + 1} - 1  }}{|z - \tilde q|^{2n}} \bk{ -\frac{n \abs{z_k  - \tilde q_k}}{\abs{z - \tilde q}} + 1   }\\
& \ge - C(q, r_q).
\end{align*} 
The function $\Psi_q (z) = A ( d_{\bbeta}(z,0)^2  - 1   ) + G(z)$ is $\Delta_{g_\epsilon}$-subharmonic for $A>>1$ and $\Psi_q(q) = 0$, $\Psi_q(z)<0$ for $\forall z\in \partial B_{\bbeta}(0,1)\backslash \{q\}$. We are in the same situation as the {\bf Case 1}, so by the same argument as above, we can show the continuity of $u$ at such boundary point $q$. 

In case $z_1(q) \neq 0$ but $z_2(q) = 0$. The boundary $\partial B_{\bbeta}(0,1)$ is not smooth at $q$ and we cannot apply the exterior sphere condition to construct the barrier. Instead we will use the geometry of the metric ball $B_{\bbeta}(0,1)$. Consider the standard cone metric $g_{\beta_1} = \beta_1^2\frac{dz_1 \otimes d\bar z_1}{|z_1|^{2(1-\beta_1)}} + \sum_{k=2}^n dz_k\otimes d\bar z_k$ with cone singularity only along $\sS _1 = \{z_1 = 0\}$. The metric ball $B_{\bbeta}(0,1)$ is strictly contained in $B_{g_{\beta_1}}(0,1)$, and their boundaries are tangential at the points with vanishing $z_2$-coordinate. Thus $q\in \partial B_{\bbeta}(0,1)\cap \partial B_{g_{\beta_1}}(0,1)$ and $\partial B_{g_{\beta_1}}(0,1)$ is smooth at $q$ so there exists an exterior sphere for $\partial B_{g_{\beta_1}}(0,1)$ at $q$.  We define similar function $G(z)$ as in the last paragraph, and by the strict inclusion of the metric balls $B_{\bbeta}(0,1)\subset B_{g_{\beta_1}}(0,1)$, it follows that $G(q) = 0$ and $G(z)<0$ for all $z\in\partial B_{\bbeta}(0,1)\backslash \{q\}$. The remaining argument is the same as before.  

\end{proof}

\begin{remark}\label{rem:existence}
For any constant $c\in\mathbb R$, the following Dirichlet boundary value problem $$%\left\{\begin{aligned}
\Delta_{g_{\bbeta}} u = c, \text{ in }B_{\bbeta}(0, 1)\backslash \sS,\text{ and }
 u = \varphi, \text{ on }\partial B_{\bbeta}(0,1),
%\end{aligned}\right. 
$$admits a solution $u\in C^2(B_{\bbeta}\backslash \sS)\cap C^0(\overline{B_{\bbeta}})$ for any given $\varphi\in C^0(\partial B_{\bbeta})$.  This follows from the solution $\tilde u$ of \eqref{eqn:harmonic} with the boundary value $\tilde\varphi = \varphi - \frac{c}{2(n-2)} \sum_{j=5}^{2n}s_j^2$. Then the function $u = \tilde u + \frac{c}{2(n-2)}\sum_j s_j^2$ solves the equation above.

\end{remark}

For later application, we prove the existence of solution for a more general RHS of the Laplace equation with the standard background metric. Note that this result is not needed in the proof of Theorem \ref{thm:main 1}.

\begin{prop}\label{prop:3.2 new}
For any given $\varphi\in C^0(\partial B_\bb(0,1))$ and $f\in C^{0,\alpha}_{\bb}(\overline{B_\bb(0,1)})$, the Dirichlet boundary value problem
\begin{equation}\label{eqn:elliptic Diri}
\left\{\begin{aligned}
&\Delta_{g_\bb} v = f,\text{ in }B_\bb(0,1)\backslash \sS,\\
& v = \varphi, \text{ on }\partial B_\bb(0,1)
\end{aligned}\right.
\end{equation}
admits a unique solution $v\in C^2(B_\bb(0,1)\backslash \sS)\cap C^0( \overline{B_\bb(0,1)}  )$.
\end{prop}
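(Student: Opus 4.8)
\emph{Uniqueness and strategy.} If $v_1,v_2$ both solve \eqref{eqn:elliptic Diri}, then $w=v_1-v_2\in C^2(B_\bb(0,1)\backslash\sS)\cap C^0(\overline{B_\bb(0,1)})$ is $\Delta_\bb$-harmonic with $w|_{\partial B_\bb(0,1)}=0$, so $w\equiv0$ by Lemma \ref{lemma:MP}. For existence the plan is to run the smooth-approximation scheme of Proposition \ref{prop:existence}: for $\epsilon\in(0,1)$ solve $\Delta_{g_\epsilon}v_\epsilon=f$ in $B_\bb(0,1)$ with $v_\epsilon=\varphi$ on $\partial B_\bb(0,1)$, where $g_\epsilon$ is the smooth metric \eqref{eqn:para app}; this is solvable by the classical theory, since $g_\epsilon$ is smooth, $f\in C^{0,\alpha}_\bb$ is Hölder continuous in the Euclidean sense (with a possibly smaller exponent), and $B_\bb(0,1)$ admits barriers at every boundary point, cf.\ Proposition \ref{prop:existence}. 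The difference from Proposition \ref{prop:existence} is that $v_\epsilon$ is no longer $g_\epsilon$-harmonic, so the derivative bounds of Lemmas \ref{lemma:CY gradient}--\ref{lemma 2.2} cannot be applied to $v_\epsilon$ itself and must be brought in through a comparison argument.

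\emph{A priori estimates and the limit.} A computation as in Remark \ref{rem:existence} gives $\Delta_{g_\epsilon}\big(d_\bb(\cdot,0)^2\big)\ge c_\bb>0$ uniformly in $\epsilon$ (here $c_\bb$ depends only on $n,\bb$), since $\Delta_{g_\epsilon}\big(|z_j|^{2\beta_j}\big)\ge\beta_j^2$ for $j\le p$ and $\Delta_{g_\epsilon}\big(|z_j|^2\big)=1$ for $j>p$. Comparison with $\pm\|\varphi\|_{L^\infty}\pm c_\bb^{-1}\|f\|_{L^\infty}\big(d_\bb(\cdot,0)^2-1\big)$ then gives $\|v_\epsilon\|_{L^\infty(B_\bb(0,1))}\le\|\varphi\|_{L^\infty}+C(n,\bb)\|f\|_{L^\infty}=:M$. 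On any compact $K\Subset B_\bb(0,1)\backslash\sS$ the operators $\Delta_{g_\epsilon}$ are uniformly elliptic and converge smoothly, so interior Schauder estimates give $\epsilon$-independent $C^{2,\alpha'}(K)$ bounds for $v_\epsilon$; a diagonal argument extracts a subsequence $v_\epsilon\to v$ in $C^2_{loc}(B_\bb(0,1)\backslash\sS)$ with $\Delta_\bb v=f$ in $B_\bb(0,1)\backslash\sS$ and $|v|\le M$. That $v$ attains the boundary value $\varphi$ follows exactly as in Proposition \ref{prop:existence}: the barriers $\Psi_q$ there satisfy $\Delta_{g_\epsilon}\Psi_q\ge0$, $\Psi_q(q)=0$, $\Psi_q<0$ on $\partial B_\bb(0,1)\backslash\{q\}$, and adding $c_\bb^{-1}\|f\|_{L^\infty}\big(d_\bb(\cdot,0)^2-1\big)$ turns them into sub/supersolutions of $\Delta_{g_\epsilon}v=f$, so the same $\epsilon\to0$ and $z\to q$ limits give $\lim_{z\to q}v(z)=\varphi(q)$ for every $q\in\partial B_\bb(0,1)$.

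\emph{Continuity across $\sS$.} It remains to show $v$ extends continuously over $\sS$; I would do this by proving that $\{v_\epsilon\}$ is uniformly equicontinuous near $\sS$, whence $v_\epsilon\to v$ locally uniformly and $v\in C^0(B_\bb(0,1))$. Fix $z^*\in\sS\cap B_\bb(0,3/4)$ and a small $\rho$, and let $h_\epsilon$ be the $g_\epsilon$-harmonic function on $B_{g_\epsilon}(z^*,\rho)$ with $h_\epsilon=v_\epsilon$ on the boundary. Then $v_\epsilon-h_\epsilon$ solves $\Delta_{g_\epsilon}(v_\epsilon-h_\epsilon)=f$ with zero boundary value, and comparison with a barrier comparable to $c_\bb^{-1}\|f\|_{L^\infty}\,d_\bb(\cdot,z^*)^2$ (whose $\Delta_{g_\epsilon}$ is bounded below by a positive constant on small balls around $z^*$, as in Remark \ref{rem:existence}) gives $\|v_\epsilon-h_\epsilon\|_{L^\infty(B_{g_\epsilon}(z^*,\rho))}\le C\|f\|_{L^\infty}\rho^2$. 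Since $\ric(g_\epsilon)\ge0$, the Cheng--Yau estimate (Lemma \ref{lemma:CY gradient}) applied to $h_\epsilon$ gives $\sup_{B_{g_\epsilon}(z^*,3\rho/4)}|\nabla h_\epsilon|_{g_\epsilon}\le C(n)\,\osc_{B_{g_\epsilon}(z^*,\rho)}h_\epsilon/\rho$, and integrating along short $g_\epsilon$-geodesics yields $\osc_{B_{g_\epsilon}(z^*,\tau\rho)}h_\epsilon\le 2C(n)\tau\,\osc_{B_{g_\epsilon}(z^*,\rho)}h_\epsilon$ for $\tau\le1/4$. Choosing $\tau$ so that $2C(n)\tau\le\tfrac12$ and combining, one obtains, with $a(\rho):=\osc_{B_{g_\epsilon}(z^*,\rho)}v_\epsilon$,
\[
a(\tau\rho)\ \le\ \tfrac12\,a(\rho)\ +\ C\|f\|_{L^\infty}\,\rho^2 ,
\]
valid for all small $\rho$, uniformly in $\epsilon$ and $z^*$. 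A standard iteration of this inequality bounds $a(r)$ by $C(M+\|f\|_{L^\infty})\,r^\gamma$ for some $\gamma>0$, which together with the interior bounds of the previous step is the desired equicontinuity. Hence $v\in C^2(B_\bb(0,1)\backslash\sS)\cap C^0(\overline{B_\bb(0,1)})$ solves \eqref{eqn:elliptic Diri}.

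\emph{Main obstacle.} The crux is the continuity across $\sS$, especially near the highest-codimension stratum $\sS_1\cap\sS_2$: because $v_\epsilon$ satisfies no useful derivative estimate of its own, the oscillation decay has to be manufactured by comparison with $g_\epsilon$-harmonic functions, and that comparison is only clean at the level of the smooth metrics $g_\epsilon$ — where Lemma \ref{lemma:CY gradient} holds uniformly precisely because $\ric(g_\epsilon)\ge0$ — so the whole decay argument must be carried out before letting $\epsilon\to0$. One also has to check that the barrier $d_\bb(\cdot,z^*)^2$ behaves uniformly as $z^*$ ranges over the strata of $\sS$, i.e.\ that $\Delta_{g_\epsilon}\big(d_\bb(\cdot,z^*)^2\big)$ stays bounded below whether $z^*$ lies on one or on both of $\sS_1,\sS_2$; this is where the product structure of the standard cone metric \eqref{eqn:standard cone metric} is used.
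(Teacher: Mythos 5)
Your proof is correct and reaches the same conclusion, but the key step---uniform continuity of $v$ across $\sS$---is handled by a genuinely different mechanism than the paper's. The paper first establishes a uniform Sobolev inequality \eqref{eqn:Sobolev} for the approximating metrics $g_\epsilon$ (using $\ric(g_\epsilon)\ge 0$, Bishop--Gromov, and the explicit volume lower bound), and then invokes the classical De Giorgi--Nash--Moser theory to get a uniform Hölder modulus for $v_\epsilon$ at points of $\sS$; you instead perform a harmonic replacement on small $g_\epsilon$-balls centered on $\sS$, bound $v_\epsilon-h_\epsilon$ by a barrier of size $\|f\|_{L^\infty}\rho^2$, apply the Cheng--Yau gradient estimate (Lemma \ref{lemma:CY gradient}, a tool already developed in the paper) to the harmonic part $h_\epsilon$ to get oscillation decay with a small factor, and iterate. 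Both routes yield the required $\epsilon$-uniform Hölder modulus near $\sS$; yours is more elementary (only the maximum principle and the Cheng--Yau bound, not the full DGNM machinery), while the paper's is more robust in that it does not require constructing an explicit barrier whose $\Delta_{g_\epsilon}$ has a uniform positive lower bound. On that last point: you do flag it, but the function $d_\bb(\cdot,z^*)^2$ has a clean uniform lower bound on $\Delta_{g_\epsilon}$ only when $z^*=0$ (where $d_\bb(\cdot,0)^2=\sum_j|z_j|^{2\beta_j}+\sum_{j>p}s_j^2$); for general $z^*\in\sS$ it is simpler to use the squared \emph{Euclidean} distance $|z-z^*|^2$, which satisfies $\Delta_{g_\epsilon}|z-z^*|^2\ge n-p>0$ and is bounded above by a constant times $d_\bb(\cdot,z^*)^2$ on the unit ball (since $g_\epsilon\ge c\,g_{\mathbb C^n}$ there). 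With that substitution your argument closes cleanly. The remaining pieces (uniqueness via Lemma \ref{lemma:MP}, interior $C^{2,\alpha'}$ bounds off $\sS$, the boundary barrier modified to account for the right-hand side $f$) match the paper's proof.
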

%\begin{remark}
By Theorem \ref{thm:main 1}, the solution $v$ to \eqref{eqn:elliptic Diri} belongs to $C^{2,\alpha}_\bb ( B_\bb(0,1)  )\cap C^0(\overline{B_\bb(0,1)  })$.

%\end{remark}

\begin{proof}
The proof is similar to that of Proposition \ref{prop:existence}. As before let $g_\ep$ be the approximating metrics \eqref{eqn:para app} of $g_\bb$ which are smooth metrics on $B_\bb(0,1)$. By standard elliptic theory we can solve the equations
\eqsp{\label{eqn:app general 1}
\left\{\begin{aligned}\Delta_{g_\ep} v_\ep = f,\text{ in }B_\bb(0,1),\\
v_\ep = \varphi, \text{ on }\partial B_\bb(0,1).
\end{aligned}
\right.
   }
For any compact subset $K\Subset B_\bb(0,1)$ and small $\detla>0$, we have uniform $C^{2,\alpha'}$-bound of $v_\ep$ on $K\backslash T_\delta(\Ss)$ for some $\alpha'<\alpha$.  Thus $v_\ep$ converges in $C^{2,\alpha'}$-norm to a function $v$ on $K\backslash T_\delta( \sS)$, as $\ep\to 0$. By a standard diagonal argument, setting $K\to B_\bb(0,1)$ and $\delta\to 0$, we can achieve that 
$$v_\ep \xrightarrow{ C^{2,\alpha'}_{loc}( B_\bb(0,1)\backslash \sS   )   } v\in C^{2,\alpha'}_{loc}( B_\bb(0,1)\backslash \sS   ),\text{ as }\ep \to 0.$$ And clearly $v$ satisfies the equation  \eqref{eqn:elliptic Diri} in $B_\bb(0,1)\backslash \sS$. It only remains to show the boundary value of $v$ coincides with $\varphi$ and $v$ is globally continuous in $B_\bb(0,1)$.

\medskip

\noindent $\bullet$ $v\in C^0(B_\bb(0,1))$. It suffices to show $v$ is continuous at any $p\in \sS\cap B_\bb(0,1)$. Fix such a point $p$ and take $R_0>0$ small so that $B_{\mathbb C^n}(p, 10 R_0)\cap \partial B_\bb(0,1) = \emptyset$. We observe that $\frac 1 2 g_{\mathbb C^n}\le g_\ep\le g_\bb$, so for any $r\in (0,1/2)$
\begin{equation}\label{eqn:ball inclusions}B_{g_\bb}(p,r)\subset B_{g_\ep}(p, r)\subset B_{\mathbb C^n}(p,2r),\end{equation}
in particular, the balls $B_{g_\ep}(p, 5R_0)$ are also disjoint with $\partial B_\bb(0,1)$. 

Since $\ric(g_\ep)\ge 0$ we have the following Sobolev inequality (\cite{Li}): there exists a constant $C=C(n)>0$ such that for any $h\in C^1_0(B_{g_\ep}(p, r))$, it holds that
\begin{equation}\label{eqn:Sobolev 1}
\bk{\int_{B_{g_\ep}(p,r)}  h^{\frac{ 2n }{n-1}}  \omega_\ep^n    }^{\frac{n-1}{n}}\le C \bk{\frac{ r^{2n}  }{ \vol_{g_\ep}( B_{g_\ep}(p,r)  )    }    }^{1/n} \int_{B_{g_\ep}(p,r)} \abs{ \nabla h  }_{g_\ep} \omega_\ep^n.
\end{equation}
It can be checked by straightforward calculations that $\vol_{g_\ep}\xk{ B_{g_\ep}(p, 1)   }\ge c_0(n)>0$ for some constant $c_0$ independent of $\ep$. Then Bishop volume comparison yields that for any $r\in (0,1)$, $$C_1(n) r^{2n}\ge \vol_{g_\ep} \xk{B_{g_\ep}(p,r)   }\ge c_1(n) r^{2n}.$$
Thus the Sobolev inequality \eqref{eqn:Sobolev 1} is reduced to 
\begin{equation}\label{eqn:Sobolev}
\bk{\int_{B_{g_\ep}(p,r)}  h^{\frac{ 2n }{n-1}}  \omega_\ep^n    }^{\frac{n-1}{n}}\le C\int_{B_{g_\ep}(p,r)} \abs{ \nabla h  }_{g_\ep} \omega_\ep^n,\quad \forall ~ h\in C^1_0( B_{g_\ep}(p,r)  ).
\end{equation}
With \eqref{eqn:Sobolev} at hand, we can apply the same proof of the standard De Giorgi-Nash-Moser theory (see the proof of Corollary 4.18 of \cite{HL}) to derive the uniform H\"older continuity of $v_\ep$ at $p$, i.e. there exists a constant $C=C(n,\bb, R_0)>0$ such that $$\osc_{B_\bb(p,r)} v_\ep \le \osc_{B_{g_\ep}(p, r)  } v_\ep \le C r^{\alpha''},\quad \forall r\in (0,R_0)$$  for some $\alpha''=\alpha''(n,\bb,R_0)\in (0,1)$ where in the first inequality we use the relation \eqref{eqn:ball inclusions}. Letting $\ep\to 0$ we see the continuity of $v$ at $p$.

%For convenience we provide some details of this argument. 

\medskip

\noindent $\bullet$ $v = \varphi$ on $\partial B_\bb(0,1)$. The proof is almost identical to  that of Proposition \ref{prop:existence}. For example, the function $\varphi_q(z) = \varphi(q) - \delta + A\Psi_q(z)$ defined in {\bf Case 1} in the proof of Proposition \ref{prop:existence} satisfies $\Delta_{g_\ep} \varphi_q(z)\ge \max_X f$ if $A>0$ is taken large enough. Then from $\Delta_{g_\ep}(\varphi_q - v_\ep)\ge 0$ in $B_\bb$ and $\varphi_q - \varphi\le 0$ on $\partial B_\bb$, applying maximum principle we  get $\varphi_q\le v_\ep$ in $B_\bb(0,1)$. The remaining are the same as in Proposition \ref{prop:existence}. The {\bf Case 2} can be dealt with similarly.

\end{proof}

\begin{remark}
Let $H_0^1(B_\bb(0,1), g_\bb)$ be the completion of the space $C^1_0(B_\bb(0,1))$-functions under the norm $$\| \nabla u\|_{L^2(g_\bb)} = \bk{ \int_{B_\bb(0,1)} |\nabla u|_{g_\bb}^2 \omega_\bb^n  }^{1/2}.$$ For any $h\in C^1_0( B_\bb(0,1)  )$, letting $\epsilon\to 0$ in \eqref{eqn:Sobolev} we get
\begin{equation}\label{eqn:SOB}
\bk{ \int_{B_\bb(p,r)} |h|^{\frac{2n}{n-1}} \omega_\bb^n }^{\frac{n-1}{n}}\le C \int_{B_\bb(p,r)} \abs{\nabla h}_{g_\bb} \omega_\bb^n,
\end{equation}
for the same constant $C$ in \eqref{eqn:Sobolev}. That is, Sobolev inequality also holds for the conical metric $\omega_\bb$.

\end{remark}

%%%%%%%%%%%%%%%%%new subsection
\subsection{Tangential and Laplacian estimates}\label{section 3.2}
In this section, we will prove the H\"older continuity of $\Delta_k u$ for $k = 1,2$  and $(D')^2u $ for the solution $u$ to \eqref{eqn:main equation}. The arguments of \cite{GS} can be adopted here. We recall that we assume $\beta_1,\beta_2\in (\frac 1 2 ,1)$.  We fix some notations first. 

For a given point $p\not\in \sS$, we denote $r_p = d_{g_{\bbeta}}(p, \sS)$, the $g_{\bbeta}$-distance of $p$ to the singular set $\sS$. For notation simplicity we will fix $\tau =  1/ 2$ and an integer $k_p\in\mathbb Z_+$ to be the smallest integer such that $\tau^k <  r_p$, and $k_{i,p}\in\mathbb Z_+$ the smallest integer $k$ such that $\tau^k < d_{\bbeta}(p,\sS_i)$, for $i=1,2$. So $k_p = \max\{k_{1,p}, k_{2,p}\}$   We denote $p_1\in \sS_1$ and $p_2\in \sS_2$ the projections of $p$ to $\sS_1$, $\sS_2$, respectively.

\smallskip

For $j = 1,2$ we will write $\Delta_j u := |z_j|^{2(1-\beta_j)} \frac{\partial^2 u}{\partial z_j \partial \bar z_j}$. 

\smallskip

We will consider a family of conical Laplace equations with different choices of $k\in \mathbb Z^+$. 
\begin{enumerate}[label=(\roman*)]
\item If $k\ge k_p$, the geodesic balls $B_{\bbeta}(p, \tau^k)$ are disjoint with $\sS$ and have smooth boundaries. $g_{\bbeta}$ is smooth on such balls. By standard theory we can solve the Dirichlet problem for $u_k\in C^\infty(B_{\bbeta}(p,\tau^k))\cap C^0(\overline{B_{\bbeta}(p,\tau^k)})$
\begin{equation}\label{eqn:uk 1}
\left\{\begin{aligned}
&\Delta_\bb u_k = f(p),&   \text{in }B_{\bbeta}(p,\tau^k)\\
& u_k = u, &  \text{on }\partial B_{\bbeta}(p,\tau^k)
\end{aligned}\right.
\end{equation}

\item Without loss of generality we assume $d_{\bbeta}(p,\sS_1) \le d_{\bbeta}(p,\sS_2)$, i.e. $k_{1,p}\ge k_{2,p}$. We now solve the Dirichlet problem $u_k\in C^2(B_{\bbeta}(p_1,2\tau^k)\backslash \sS_1)\cap C^0(\overline{B_{\bbeta}(p_1,2\tau^k)})$ for $k_{2,p}+2\le k< k_{1,p}$
\begin{equation}\label{eqn:uk 2}
\left\{\begin{aligned}
& \Delta_\bb u_k = f(p), & \text{in }B_{\bbeta}(p_1, 2 \tau^k)\\
& u_k = u, & \text{on }\partial B_{\bbeta}(p_1,2 \tau^k)
\end{aligned}\right.
\end{equation}
By similar argument as in the proof of Proposition \ref{prop:existence}, such $u_k$ exists. 

\item For $2\le k\le k_{2,p}+1$, let $u_k\in C^2( B_{\bbeta}(p_{1,2},2\tau^k)\backslash \sS  )\cap C^0(\overline{B_{\bbeta}(p_{1,2},2\tau^k)})$ solve the equation
\begin{equation}\label{eqn:uk 3}
\left\{\begin{aligned}
& \Delta_\bb u_k = f(p), & \text{in }B_{\bbeta}(p_{1,2}, 2 \tau^k)\\
& u_k = u, & \text{on }\partial B_{\bbeta}(p_{1,2},2 \tau^k)
\end{aligned}\right.
\end{equation}
whose existence follows from Remark \ref{rem:existence}. Here $p_{1,2} = (0; 0; s(p))\in\sS_1\cap \sS_2$  is the projection of $p_1$ to $\sS_2$.

\end{enumerate}
We remark that we may take $f(p) = 0$ by considering $\tilde u = u - \frac{f(p)}{2(n-2)} |s - s(p)|^2$. If the estimate holds for $\tilde u$, it also holds for $u$.  So from now on we assume $f(p) = 0$. 

\begin{lemma}\label{lemma 2.4}
Let $u_k$ be the solutions to the equations \eqref{eqn:uk 1}, \eqref{eqn:uk 2} and \eqref{eqn:uk 3}. There exists a constant $C= C(n)>0$ such that for all $k\in\mathbb Z_+$, the following estimates  hold
\begin{equation}\label{eqn:c0 1}
\| u_k - u\|_{L^\infty(\hat B_k (p) )} \le C(n ) \tau^{2k} \omega(\tau^k),
\end{equation}
where we denote $\hat B_k(p)$ by  \begin{equation}\label{eqn:hat Bk}
\hat B_k(p) :=\left\{\begin{aligned}
&B_{\bbeta}(p, \tau^k),  \text{ if }k \ge k_p\\
&B_{\bbeta}(p_1, 2\tau^k),  \text{ if } k_{2,p} + 2 \le k< k_{1,p}\\
&B_{\bbeta} (p_{1,2}, 2 \tau^k),  \text{ if } 1\le k\le k_{2,p}+1,
\end{aligned}\right.
\end{equation}in different choices of $k\in\mathbb Z_+$. 
\end{lemma}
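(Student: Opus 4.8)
The plan is to compare $w_k := u_k - u$ with a barrier of size comparable to $\tau^{2k}\omega(\tau^k)$. By \eqref{eqn:uk 1}--\eqref{eqn:uk 3}, \eqref{eqn:main equation} and the normalization $f(p)=0$ made just before the statement, $w_k \in C^0(\overline{\hat B_k(p)})\cap C^2(\hat B_k(p)\setminus\sS)$ and
\[
\Delta_\bb w_k \;=\; f(p)-f \;=\; -f \quad\text{in } \hat B_k(p)\setminus\sS, \qquad w_k = 0 \ \text{ on } \ \partial\hat B_k(p).
\]
Thus \eqref{eqn:c0 1} will follow once we know (i) $\sup_{\hat B_k(p)}|f| \le C(n)\,\omega(\tau^k)$, and (ii) there is a continuous barrier $\Phi_k \ge 0$ on $\hat B_k(p)$ with $\Delta_\bb\Phi_k \le -1$ on $\hat B_k(p)\setminus\sS$ and $\sup_{\hat B_k(p)}\Phi_k \le C(n)\tau^{2k}$.

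For (i), observe $|f(z)| = |f(z)-f(p)| \le \omega(d_\bb(z,p))$, so it suffices to bound $d_\bb(z,p) \le C(n)\tau^k$ for $z \in \hat B_k(p)$ in each of the three regimes of \eqref{eqn:hat Bk}. When $k \ge k_p$ this is the diameter bound $d_\bb(z,p) \le 2\tau^k$; when $k_{2,p}+2 \le k < k_{1,p}$ and when $1 \le k \le k_{2,p}+1$ it comes from the triangle inequality together with $d_\bb(p,p_1)=d_\bb(p,\sS_1)$, $d_\bb(p_1,p_{1,2})=d_\bb(p_1,\sS_2)=d_\bb(p,\sS_2)$ and the defining inequalities $\tau^{k_{i,p}} < d_\bb(p,\sS_i) \le \tau^{k_{i,p}-1}$, which force $d_\bb(p,p_1)$ and $d_\bb(p,p_{1,2})$ to be at most a fixed multiple of $\tau^k$ in the relevant ranges of $k$. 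Iterating $\omega(2r) \le 2\omega(r)$ then gives $\sup_{\hat B_k(p)}|f| \le M_k := C(n)\,\omega(\tau^k)$.

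For (ii), the only inputs are the identities $\Delta_\bb(|z_j|^{2\beta_j}) = \beta_j^2$ ($j=1,2$), $\Delta_\bb(|z_j-a_j|^2)=1$ ($j\ge 3$), and the flatness of $g_\bb$ off $\sS$. By the distance estimates of step~(i), $\hat B_k(p)$ lies in a $g_\bb$-ball of radius $\le C(n)\tau^k$ about a point $c$; in regime (ii) one has $c = p_1 \in \sS_1$ and, since $2\tau^k < d_\bb(p,\sS_2)$, the ball avoids $\sS_2$, while in regime (iii) $c = p_{1,2}\in\sS_1\cap\sS_2$. Accordingly I take $\rho_k^2$ to be $|z_1|^{2\beta_1}+|z_2|^{2\beta_2}+|s-s(c)|^2$ when $c\in\sS_1\cap\sS_2$; the same expression with $|z_2|^{2\beta_2}$ replaced by $|z_2-z_2(c)|^2$ when $c$ lies on $\sS_1$ and the ball avoids $\sS_2$ (symmetrically in $z_1$); and $d_\bb(z,c)^2$ when $c\notin\sS$. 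In each case $\Delta_\bb\rho_k^2$ is bounded below by a positive constant — it equals $\beta_1^2+\beta_2^2+(n-2)$, or is $\ge \beta_1^2 > \tfrac14$ in the mixed case, or equals $4n$ in the flat chart — and comparing the $g_\bb$-distance with the coordinate distances away from $\sS$ shows $\hat B_k(p)\subset\{\rho_k < \Lambda\tau^k\}$ for a constant $\Lambda$. Then $\Phi_k := (\inf_{\hat B_k(p)}\Delta_\bb\rho_k^2)^{-1}(\Lambda^2\tau^{2k}-\rho_k^2)$ has all the required properties.

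Granting (i) and (ii), the maximum principle finishes the proof: since $|f|\le M_k$, the functions $M_k\Phi_k \pm w_k$ satisfy $\Delta_\bb(M_k\Phi_k + w_k) = M_k\Delta_\bb\Phi_k - f \le -M_k - f \le 0$ on $\hat B_k(p)\setminus\sS$ and are $\ge M_k\Phi_k \ge 0$ on $\partial\hat B_k(p)$, so applying the maximum principle through $\sS$ exactly as in Lemma \ref{lemma:MP} (via the $\ep(\log\abs{z_1}+\log\abs{z_2})$ regularization) gives $M_k\Phi_k \pm w_k \ge 0$ on $\hat B_k(p)$, i.e. $|w_k| \le M_k\Phi_k \le C(n)\tau^{2k}\omega(\tau^k)$, which is \eqref{eqn:c0 1}. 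The one delicate point — confined to regime $k\ge k_p$ — is the barrier when $c=p\notin\sS$: because $g_\bb$ is only \emph{locally} Euclidean off $\sS$, the ball $B_\bb(p,\tau^k)$ need not be isometric to a Euclidean ball, as it may wind around a component $\sS_i$ when $\beta_i$ is close to $1$, in which case $d_\bb(z,p)^2$ is not smooth and the lower bound on $\Delta_\bb\rho_k^2$ can fail. This is resolved by noting that winding forces $d_\bb(p,\sS_i)$ to be at most a controlled multiple of $\tau^k$, so one recenters at the projection of $p$ onto $\sS_i$ and falls back on one of the radial choices of $\rho_k^2$ above; organizing this recentering and tracking the radii against $k_{1,p}$ and $k_{2,p}$ is the bookkeeping that needs care.
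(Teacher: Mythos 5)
Your argument is correct and is exactly the detailed version of what the paper asserts; the paper says only that the lemma ``follows straightforwardly from Lemma~\ref{lemma:MP} and the definition of $\omega(r)$'' and omits the proof, and your barrier-plus-maximum-principle computation, together with the diameter bounds $d_\bb(z,p)\le C\tau^k$ on $\hat B_k(p)$ in each of the three regimes, is the natural way to carry that out. One minor point: the ``delicate point'' you flag at the end does not in fact arise under the standing hypothesis $\bb\in(1/2,1)^p$ of this section, since cone angle $2\pi\beta_j>\pi$ forces the ball $B_\bb(p,\tau^k)$ with $\tau^k<d_\bb(p,\sS)$ to be isometric to a Euclidean ball in the $w_j=z_j^{\beta_j}$ coordinates (winding would only threaten for small $\beta_j$, not for $\beta_j$ close to $1$, where $g_\bb$ approaches the flat metric), and the complex Laplacian gives $\Delta_\bb d_\bb^2=n$ rather than $4n$ there, a harmless constant.
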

We will also denote $\lambda \hat B_k(p)$ to the concentric ball with $\hat B_k(p)$ but the radius scaled by $\lambda\in (0,1)$.

This lemma follows straightforwardly  from  Lemma \ref{lemma:MP} and the definition of $\omega(r)$. So we omit the proof. By triangle inequality, we get the following estimates
\begin{equation}\label{eqn:uk comp}
\| u_k - u_{k+1}\|_{L^\infty (\frac  12  \hat B_k)} \le C(n) \tau^{2k} \omega(\tau^k),
\end{equation}
%where we denote $\frac 1 2  \hat B_k$ to be the balls in \eqref{eqn:hat Bk} with the same center, but half of the radius of $\hat B_k$.

 Since $u_k - u_{k+1}$ are $g_{\bbeta}$-harmonic functions on $\frac 12 \hat B_k$, applying the gradient and Laplacian estimates \eqref{eqn:grad final prop} and \eqref{eqn:lap final prop} for harmonic functions, we get:
% imply that there exists a constant $C(n)>0$ such that
\begin{lemma}\label{lemma 2.5}
There exists a constant $C(n)>0$ such that  for all $k\in\mathbb Z_+$ it holds that
\begin{equation}\label{eqn:uk grad}
\| D'u_k - D'u_{k+1}\|_{L^\infty (\frac 1 3 \hat B_k)}\le C(n)\tau^k \omega(\tau^k),
\end{equation}and 
\begin{equation}\label{eqn:uk 2nd}
\sup_{\frac{1}{3} \hat B_k\backslash \sS}\bk{\sum_{i=1}^2 \ba{\Delta_i ( u_k - u_{k+1}   )} +\ba{ (D')^2 u_k - (D')^2 u_{k+1}   }  }  \le C(n) \omega(\tau^k),
\end{equation}
where we recall that $D'$ denotes the first order operators $\frac{\partial}{\partial {s_i}}$ for $i=5,\ldots, 2n$.
\end{lemma}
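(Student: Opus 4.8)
The plan is to exploit that $w_k := u_k - u_{k+1}$ is a $g_\bb$-harmonic function on $\tfrac12\hat B_k(p)$: both $u_k$ and $u_{k+1}$ solve $\Delta_\bb\,\cdot = f(p) = 0$ on their respective domains, and by the nesting of the balls in \eqref{eqn:hat Bk} one has $\tfrac12\hat B_k(p)\subset \hat B_{k+1}(p)$ (in regime (i) in fact $\hat B_{k+1}(p)=\tfrac12\hat B_k(p)$ since $\tau=1/2$), so $w_k$ is defined and harmonic there. Since $w_k$ solves a Dirichlet problem of the form \eqref{eqn:harmonic} with zero right-hand side on $\tfrac12\hat B_k$ (with boundary datum $w_k|_{\partial}$), Proposition \ref{prop:existence} shows it satisfies the ``derivatives estimates'', i.e.\ the interior estimates of Lemmas \ref{lemma:CY gradient}, \ref{lemma 2.1}, \ref{lemma 2.2} with the metric balls taken under $g_\bb$ and centered at an arbitrary point — including points of $\sS_1$ or $\sS_1\cap\sS_2$, as needed in regimes (ii) and (iii) of \eqref{eqn:hat Bk}.

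Granting this, I would first apply \eqref{eqn:grad final prop} on $\tfrac12\hat B_k$, whose radius is comparable to $\tau^k$ in all three regimes, to get (after the harmless standard adjustment of the ratio $3R/4$, keeping the ball $\tfrac13\hat B_k$)
\[
\sup_{\frac13\hat B_k}|\nabla_{g_\bb} w_k|_{g_\bb}\;\le\; C(n)\,\frac{\osc_{\frac12\hat B_k} w_k}{\tau^k}\;\le\; C(n)\,\tau^k\,\omega(\tau^k),
\]
using $\osc_{\frac12\hat B_k} w_k\le 2\|w_k\|_{L^\infty(\frac12\hat B_k)}$ together with \eqref{eqn:uk comp}. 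Since the metric $g_\bb$ is Euclidean in the $z_j$-directions ($j\ge 3$), $|D'w_k|\le |\nabla_{g_\bb} w_k|_{g_\bb}$, which gives \eqref{eqn:uk grad}. Likewise, \eqref{eqn:lap final prop} applied to $w_k$ (again with a harmless adjustment of the ratio $R/2$) yields
\[
\sup_{\frac13\hat B_k\backslash\sS}\bigl(|\Delta_1 w_k| + |\Delta_2 w_k|\bigr)\;\le\; C(n)\,\frac{\osc_{\frac12\hat B_k} w_k}{\tau^{2k}}\;\le\; C(n)\,\omega(\tau^k),
\]
which is the $\Delta_i$-part of \eqref{eqn:uk 2nd}.

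For the $(D')^2$-term I would use that the coefficients of $\Delta_\bb$ depend only on $z_1,z_2$, so each operator $D'\in\{\partial/\partial s_5,\dots,\partial/\partial s_{2n}\}$ commutes with $\Delta_\bb$; hence $D'w_k$ is again $g_\bb$-harmonic on $\tfrac13\hat B_k$, and a second application of the gradient estimate \eqref{eqn:grad final prop} gives
\[
\sup_{\frac13\hat B_k}|(D')^2 w_k|\;\le\; C(n)\,\frac{\osc_{\frac13\hat B_k}(D'w_k)}{\tau^k}\;\le\; C(n)\,\frac{\tau^k\,\omega(\tau^k)}{\tau^k}\;=\;C(n)\,\omega(\tau^k),
\]
using the bound on $|D'w_k|$ just obtained (and, as above, harmlessly renaming the slightly smaller ball $\tfrac13\hat B_k$). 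Combining the three displays gives \eqref{eqn:uk grad} and \eqref{eqn:uk 2nd}. I expect the only point requiring genuine care to be the bookkeeping across the three regimes of $k$: one must check that in each case the relevant ball has radius comparable to $\tau^k$, that $\tfrac12\hat B_k\subset\hat B_{k+1}$ so that $w_k$ is harmonic there, and that the interior estimates of Proposition \ref{prop:existence} apply on balls centered on $\sS_1$ or on $\sS_1\cap\sS_2$ — all of which follow from the construction of the $u_k$ (via Proposition \ref{prop:existence} and Remark \ref{rem:existence}) and the scale-invariance of those estimates. This is geometric bookkeeping rather than new analytic input, so the lemma itself is essentially a corollary of the harmonic derivative estimates established earlier.
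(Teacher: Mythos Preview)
Your proposal is correct and follows the same approach as the paper: the paper simply states that since $u_k-u_{k+1}$ is $g_\bb$-harmonic on $\tfrac12\hat B_k$, one applies the gradient and Laplacian estimates \eqref{eqn:grad final prop} and \eqref{eqn:lap final prop} together with the $L^\infty$-bound \eqref{eqn:uk comp}. Your treatment of the $(D')^2$-term via the commutation $[D',\Delta_\bb]=0$ and a second application of the gradient estimate is exactly the mechanism implicit in the paper's ``derivatives estimates'' (alternatively one could quote Lemma~\ref{lemma 2.2} directly for the tangential second derivatives); the regime-by-regime bookkeeping you flag is indeed routine and the paper leaves it implicit.
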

The following lemma can be proved by looking at the Taylor expansion of $u_k$ at $p$ for $k>>1$ as in Lemma 2.8 of \cite{GS}.
\begin{lemma}\label{lemma 2.6}
The following limits hold:
\begin{equation}\label{eqn:uk limit}
\lim_{k\to \infty} D' u_k(p) = D'u(p),\, \lim_{k\to \infty} (D')^2 u_k(p) = (D')^2 u (p), \, \lim_{k\to \infty} \Delta_i u_k(p) = \Delta_i u(p),
\end{equation}
where $i = 1,\, 2$.
\end{lemma}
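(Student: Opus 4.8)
The statement to prove is Lemma 2.6 (labeled \texttt{lemma 2.6}): that as $k\to\infty$, the approximating solutions $u_k$ satisfy $D'u_k(p)\to D'u(p)$, $(D')^2u_k(p)\to (D')^2u(p)$, and $\Delta_i u_k(p)\to \Delta_i u(p)$ for $i=1,2$.

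\medskip

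\textbf{Plan of proof.} For $k\ge k_p$, the ball $\hat B_k(p)=B_\bb(p,\tau^k)$ is disjoint from $\sS$, so $g_\bb$ is a smooth (indeed real-analytic, being the Euclidean metric after the coordinate change $w_j=z_j^{\beta_j}$ in the transversal slices) Riemannian metric on a neighborhood shrinking to $p$, and $u_k$ solves the uniformly elliptic equation $\Delta_\bb u_k = f(p)=0$ there with boundary data $u$. The first step is to note that on $\hat B_k(p)$ the operator $\Delta_\bb$ is, in the coordinates $(s_{2p+1},\dots,s_{2n})$ and the weighted polar coordinates $r_j,\theta_j$ of the singular directions, a constant-coefficient-up-to-smooth-perturbation Laplace-type operator whose ellipticity constants on $B_\bb(p,\tau^k)$ are controlled by $d_\bb(p,\sS)=r_p$, hence \emph{uniform} for all $k\ge k_p$ once we rescale the ball $B_\bb(p,\tau^k)$ to unit size. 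Therefore, rescaling $B_\bb(p,\tau^k)$ to the unit ball, interior elliptic estimates (Schauder, or just $W^{2,q}$ + Sobolev) give, for any fixed $0<\rho<1$ and any multi-index $|\alpha|\le 2$,
\begin{equation}\label{eqn:lemma26 rescaled}
\sup_{B_\bb(p,\rho\tau^k)} \big| \partial^\alpha (u_k - u) \big| \cdot \tau^{k|\alpha|} \le C\, \| u_k - u\|_{L^\infty(B_\bb(p,\tau^k))},
\end{equation}
where $\partial^\alpha$ is taken with respect to the rescaled coordinates, i.e. $\tau^{k|\alpha|}\partial^\alpha$ becomes the natural $g_\bb$-derivative. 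More precisely, applying this to $D'$, $(D')^2$ and $\Delta_i=|z_i|^{2(1-\beta_i)}\partial_{z_i}\partial_{\bar z_i}$ (each of which is a natural operator of the rescaled geometry, since $D'$ is a unit vector field for $g_\bb$, and $\Delta_i$ scales like a $g_\bb$-Laplacian component), one obtains
\begin{equation}\label{eqn:lemma26 diff}
\big| D' u_k(p) - D' u(p)\big| + \big| (D')^2 u_k(p) - (D')^2 u(p)\big| + \sum_{i=1}^2 \big| \Delta_i u_k(p) - \Delta_i u(p)\big| \le C\, \tau^{-2k}\, \| u_k - u\|_{L^\infty(B_\bb(p,\tau^k))}.
\end{equation}
Here I use that $u$ itself solves $\Delta_\bb u = f$ with $f(p)=0$ in $B_\bb(p,\tau^k)$, so $u_k - u$ solves a homogeneous equation $\Delta_\bb(u_k-u) = f(p) - f = -f$ whose right-hand side has $L^\infty$ norm $\le \omega(\tau^k)$ on $B_\bb(p,\tau^k)$; absorbing the inhomogeneous contribution costs another $\tau^{2k}\omega(\tau^k)$, which is lower order.

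\medskip

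The second step is the quantitative input: by Lemma \ref{lemma 2.4}, for $k\ge k_p$ we have $\| u_k - u\|_{L^\infty(B_\bb(p,\tau^k))} \le C\tau^{2k}\omega(\tau^k)$. Plugging this into \eqref{eqn:lemma26 diff} gives
\begin{equation}\label{eqn:lemma26 final}
\big| D' u_k(p) - D' u(p)\big| + \big| (D')^2 u_k(p) - (D')^2 u(p)\big| + \sum_{i=1}^2 \big| \Delta_i u_k(p) - \Delta_i u(p)\big| \le C\,\omega(\tau^k) + C\tau^{2k}\|u\|_{L^\infty} \cdot \tau^{-2k}\cdot \tau^{2k},
\end{equation}
and since $\omega$ is the modulus of continuity of the continuous function $f$ on $\overline{B_\bb(0,1)}$ we have $\omega(\tau^k)\to 0$ as $k\to\infty$. (The term coming from $\|u\|_{L^\infty}$ must be handled with slightly more care: the $C^0$-bound in Lemma \ref{lemma 2.4} already incorporates it, so \eqref{eqn:lemma26 final} genuinely reads $\le C\omega(\tau^k)\to 0$.) This proves all three limits in \eqref{eqn:uk limit}.

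\medskip

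\textbf{Main obstacle.} The only genuine subtlety is justifying that the constant $C$ in \eqref{eqn:lemma26 diff}--\eqref{eqn:lemma26 final} can be taken independent of $k$. This is exactly the scaling-invariance of the interior elliptic estimate for $\Delta_\bb$ on balls $B_\bb(p,\tau^k)$ that are \emph{uniformly away from $\sS$ after rescaling}: after dilating $B_\bb(p,\tau^k)$ to $B_\bb(\cdot,1)$, the rescaled metrics form a precompact family in $C^\infty_{loc}$ of smooth metrics on the unit ball with uniform ellipticity (their distance to the singular locus, rescaled, is $\ge c>0$ since $\tau^{k}\le r_p<\tau^{k-1}$ forces $d_\bb(p,\sS)/\tau^k \in (1,\tau^{-1})$), so the standard interior Schauder/$L^p$ constants apply uniformly. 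An alternative, matching the approach of \cite{GS} referenced in the excerpt, is to directly examine the Taylor expansion of $u_k$ at $p$ for large $k$: write $u_k = u_k(p) + $ (first-order terms) $+$ (second-order terms) $+ o(\cdot)$ in the rescaled coordinates, observe that the $g_\bb$-norms of these Taylor coefficients are bounded by the derivative estimates of Proposition \ref{prop:existence} applied to $u_k$ (which hold with $u_\ep$ replaced by $u_k$), and compare with the corresponding expansion of $u$, using \eqref{eqn:c0 1} to control the difference; this is the route indicated by the phrase "by looking at the Taylor expansion of $u_k$ at $p$ for $k\gg1$ as in Lemma 2.8 of \cite{GS}." Either way, once the uniform constant is in hand the conclusion is immediate from $\omega(\tau^k)\to 0$.
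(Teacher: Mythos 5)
Your overall strategy matches the paper's intent (the paper proves this by "looking at the Taylor expansion of $u_k$ at $p$", deferred to Lemma 2.8 of \cite{GS}), but your key displayed inequality is not valid as stated. The bound
\[
\sup_{B_\bb(p,\rho\tau^k)} |\partial^\alpha(u_k-u)|\cdot \tau^{k|\alpha|} \le C\|u_k - u\|_{L^\infty(B_\bb(p,\tau^k))}
\]
for $|\alpha|=2$ is an interior Hessian estimate from $L^\infty$ data alone, which is false even for the flat Laplacian: $u_k - u$ solves $\Delta_\bb(u_k-u) = -f$, and an $L^\infty$ bound on the right-hand side does not control $\|D^2(u_k-u)\|_{L^\infty}$ (the classical counterexample $(x_1^2-x_2^2)\log|x|$ shows this). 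Your remark that the inhomogeneous term "costs another $\tau^{2k}\omega(\tau^k)$, which is lower order" does not repair this: a term $\|F\|_{L^\infty}$ cannot appear on the right of a pointwise $D^2$ estimate; what can appear (for Dini $F$) is $|F(p)| + \int_0^{\tau^k}\omega_F(s)\,s^{-1}\,ds$, and the Dini integral, though it tends to $0$, is in general not $\le C\omega(\tau^k)$. You could salvage the argument by invoking the classical pointwise Schauder estimate for Dini right-hand sides (in the smooth coordinates $w_j = z_j^{\beta_j}$, the operator is the standard Laplacian on a ball away from $\sS$), but that is a heavier tool than you claim to be using, and you would also need a different route to $R_k(z)$ being uniformly negligible, since the crude third-derivative bound $|D^3 u_k(p)|\le C\tau^{-3k}$ makes the naive Taylor-remainder comparison at scale $\tau^k$ fail.

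The clean version of the Taylor-expansion argument, which avoids both issues, is to compare $u_k$ not with $u$ but with the second-order Taylor polynomial $P$ of $u$ at $p$ (which exists because $u\in C^2(B_\bb(0,1)\backslash\sS)$ by hypothesis and $p\notin\sS$). In the smooth coordinates $w_j=z_j^{\beta_j}$ near $p$, $\Delta_\bb$ is the Euclidean Laplacian, and $\Delta_\bb P = \Delta_\bb u(p) = f(p) = 0$, so $u_k - P$ is $\Delta_\bb$-harmonic on $B_\bb(p,\tau^k)$. Since $u_k = u$ on $\partial B_\bb(p,\tau^k)$ and $u(z) - P(z) = o(|z-p|^2)$, the maximum principle gives $\|u_k - P\|_{L^\infty(B_\bb(p,\tau^k))} = o(\tau^{2k})$, and then the harmonic-function derivative estimates of Lemmas \ref{lemma 2.1}--\ref{lemma 2.2} yield $|T(u_k - P)(p)| \le C\tau^{-2k}\cdot o(\tau^{2k}) = o(1)$. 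Hence $Tu_k(p) \to TP(p) = Tu(p)$ for each of the operators $D'$, $(D')^2$, $\Delta_i$. This is the argument the paper has in mind, and it is the crucial modification your proposal is missing: replacing $u$ by its Taylor polynomial makes the difference harmonic, so that the harmonic derivative estimates apply cleanly without any regularity assumption on the inhomogeneous term.
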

Combining Lemmas \ref{lemma 2.5} and \ref{lemma 2.6}, we obtain the following estimates on the $2$nd-order  (tangential) derivatives.
\begin{prop}
There exists a constant $C=C(n, \beta)>0$ such that
\begin{equation}\label{eqn:2nd order}
\sup_{B_{\bbeta}(0,1/2)\backslash \sS}| (D')^2 u| + |\Delta_ i u| \le C\Big( \| u\|_{L^\infty(B_{\bbeta}(0,1))} + \int_0^1 \frac{\omega(r)}{r}dr + |f(0)|   \Big).
\end{equation}
\end{prop}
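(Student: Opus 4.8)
The strategy is the standard telescoping/summation-of-dyadic-scales argument that goes back to \cite{GS}, now carried out with the three-regime family of comparison functions $u_k$ introduced in \eqref{eqn:uk 1}--\eqref{eqn:uk 3}. Fix a point $p\in B_\bbeta(0,1/2)\setminus\sS$ and recall $k_p$ is the smallest integer with $\tau^{k_p}<r_p=d_\bbeta(p,\sS)$. First I would write, for each second-order operator $T\in\{(D')^2,\Delta_1,\Delta_2\}$ and each large $K$, the telescoping identity
\begin{equation*}
T u_K(p) - T u_{k_0}(p) = \sum_{k=k_0}^{K-1}\big( T u_{k+1}(p) - T u_k(p) \big),
\end{equation*}
where $k_0$ is a fixed small index (say $k_0=1$ or $2$). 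Since $p\in\frac13\hat B_k(p)$ for every admissible $k$ — this is the point of taking the enlarged balls $B_\bbeta(p_1,2\tau^k)$ and $B_\bbeta(p_{1,2},2\tau^k)$ in the low regimes — Lemma~\ref{lemma 2.5}, specifically \eqref{eqn:uk 2nd}, gives $|T u_{k+1}(p)-T u_k(p)|\le C\omega(\tau^k)$. Letting $K\to\infty$ and using Lemma~\ref{lemma 2.6} to identify the limit as $Tu(p)$, I obtain
\begin{equation*}
|Tu(p)| \le |Tu_{k_0}(p)| + C\sum_{k=k_0}^\infty \omega(\tau^k).
\end{equation*}

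**Controlling the two terms.** For the infinite sum, the standard comparison $\sum_{k\ge 1}\omega(\tau^k)\le C\int_0^1 \frac{\omega(r)}{r}\,dr$ (using $\tau=1/2$ and monotonicity of $\omega$, together with $\omega(2r)\le 2\omega(r)$) converts it to the Dini integral on the right-hand side of \eqref{eqn:2nd order}. For the base term $|Tu_{k_0}(p)|$: since $u_{k_0}$ solves $\Delta_\bbeta u_{k_0}=f(p)=0$ on a ball of fixed size $\hat B_{k_0}(p)$ comparable to $B_\bbeta(0,1)$ with boundary data $u$, the interior derivative estimates of Proposition~\ref{prop:existence} (Lemmas~\ref{lemma:CY gradient}, \ref{lemma 2.1}, \ref{lemma 2.2}) applied to $u_{k_0}$ on a fixed fraction of that ball give $|Tu_{k_0}(p)|\le C\,\mathrm{osc}\, u_{k_0}/(\text{radius})^2\le C\|u\|_{L^\infty(B_\bbeta(0,1))}$, after invoking the maximum principle Lemma~\ref{lemma:MP} to bound $\|u_{k_0}\|_{L^\infty}$ by $\|u\|_{L^\infty}$ on the boundary. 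Reinstating the term $\frac{f(p)}{2(n-2)}|s-s(p)|^2$ that was subtracted off produces the $|f(0)|$ contribution (note $f(p)$ and $f(0)$ differ by at most $\mathrm{osc}\,f\le\|f\|_{C^0}$, and one can also just absorb it; here at $p$ ranging over $B_\bbeta(0,1/2)$ one reduces to $f(0)$ by a further harmless subtraction of a global quadratic, as in the paragraph preceding Lemma~\ref{lemma 2.4}). Taking the supremum over $p\in B_\bbeta(0,1/2)\setminus\sS$ yields \eqref{eqn:2nd order}.

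**Main obstacle.** The only genuinely delicate point is the \emph{uniformity of the constant across the three regimes} of $\hat B_k(p)$: one must check that $p$ (and, in the low regimes, the relevant shrunken concentric ball around $p$) sits inside $\frac13\hat B_k(p)$ for every $k$ in the stated range, so that the pointwise harmonic derivative estimates \eqref{eqn:uk grad}--\eqref{eqn:uk 2nd} apply at $p$ with an absolute constant. This is exactly why the comparison balls are centered at the projections $p_1$, $p_{1,2}$ with radius $2\tau^k$ rather than at $p$ with radius $\tau^k$: when $\tau^k\ge d_\bbeta(p,\sS_1)$ a ball of radius $\tau^k$ centered at $p$ would not even avoid $\sS_1$, whereas $B_\bbeta(p_1,2\tau^k)$ contains $B_\bbeta(p,\tau^k)$ and has the singular set passing through its center in a controlled way. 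Verifying these inclusions and that the solvability (via Proposition~\ref{prop:existence}, its obvious extension, and Remark~\ref{rem:existence}) holds in each regime is the crux; once that is in place the summation is routine and identical to \cite{GS}. I would also note that $|f(0)|$ rather than the full oscillation appears because the reduction to $f(p)=0$ only costs a quadratic whose second derivatives are bounded by $|f(p)|$, and $f(p)$ is controlled by $|f(0)|+\omega(1)$ with $\omega(1)$ already absorbed into the Dini integral.
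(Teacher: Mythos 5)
Your proposal is correct and matches the paper's own proof: the paper likewise telescopes $Tu$ along the $u_k$'s, bounds each increment $|Tu_{k+1}(p)-Tu_k(p)|\le C\omega(\tau^k)$ via \eqref{eqn:uk 2nd}, passes to the limit via Lemma~\ref{lemma 2.6}, converts $\sum_k\omega(\tau^k)$ to the Dini integral, and controls the base term by the harmonic derivative estimates applied to $u_{k_0}$ on a fixed-size ball (plus the maximum principle). The only cosmetic difference is that you flag the inclusion $p\in\frac13\hat B_k(p)$ and the $f(p)$-vs-$f(0)$ normalization explicitly, which the paper treats implicitly.
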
 
\begin{proof}
From triangle inequality we have for any given $z\in B_{\bbeta}(0,1/2)\backslash \sS$
\begin{align*}
|(D')^2 u (z)| & \le \sum_{k=0}^\infty |(D')^2 u_k(z) - (D')^2 u_{k+1}(z)| + |(D')^2 u_1(z)|\\
&\le C(n)\sum_{k=1}^\infty  \omega(\tau^k) + C(n) \osc_{B_{\bbeta}(0,1)} u_0\\
&\le C(n,\bbeta)\Big(\| u\|_{L^\infty} + \int_0^1 \frac{\omega(r)}{r}dr + |f(0)|  \Big).
\end{align*}
The estimates for $\Delta_i u$ can be proved similarly.

\end{proof}

For any other given point $q\in B_{\bbeta}(0,1/2)\backslash \sS$, we can solve Dirichlet boundary problems as $u_k$ with the metric balls centered at $q$, and we obtain a family of functions $v_k$ such that 
\begin{equation}\label{eqn:vk 1}
\Delta_\bb v_k = f(q),\quad \text{in }\tilde B_k(q), \quad v_k = u\text{ on }\partial \tilde B_k(q),
\end{equation}
where $\tilde B_k(q)$ are metrics balls centered at $q$ given by
\begin{equation*}
\tilde B_k(q) = \tilde B_k: = \left\{\begin{aligned}
&B_{\bbeta}(q, \tau^k),\text{ if }k\ge k_q,\\
&B_{\bbeta}(q_i, 2\tau^k),\text{ if } k_{j, q} +2 \le k< k_q, \text{ here $k_{i,q} = \max(k_{1,q}, k_{2,q}  )$ and $j\neq i$}\\
&B_{\bbeta}(q_{i,j}, 2\tau^k),\text{ if } k\le k_{j,q} + 1.
\end{aligned} \right.
\end{equation*}
Similar estimates as in Lemmas \ref{lemma 2.4}, \ref{lemma 2.5} and \ref{lemma 2.5} also hold for $v_k$ within the balls $\tilde B_k(q)$.

We are now ready to state the main result in this subsection on the continuity of second order derivatives. 
\begin{prop}\label{prop 2.2}
Let $d = d_{\bbeta}(p,q)$. There exists a constant $C=C(n)>0$ such that if $u$ solves the conical Laplace equation \eqref{eqn:main equation}, then the following holds for $i=1,2$:
$$ | \Delta_i u(p) - \Delta_i u(q)   | +    | (D')^2 u(p) - (D')^2 u(q)   | \le C \Big( d \| u\|_{L^\infty(B_{\bbeta}(0,1))} + \int_0^d \frac{\omega(r)}{r}dr + d \int_d^1 \frac{\omega(r)}{r^2 }dr   \Big).   $$
\end{prop}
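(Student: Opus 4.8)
The plan is to compare the second-order (tangential) derivatives of $u$ at $p$ and $q$ by routing both through the approximating families $u_k$ centered at $p$ and $v_k$ centered at $q$, telescoping each back to $u$ and controlling the overlap. Let $m\in\mathbb{Z}_+$ be the integer with $\tau^{m}\le d<\tau^{m-1}$, so that $d\approx\tau^m$. First I would split the estimate into a "far" range $k\ge m$ (roughly, scales smaller than $d$) and a "near" range $k<m$ (scales larger than $d$). For the near range, the key observation is that once $\tau^k\gtrsim d$, the balls $\hat B_k(p)$ and $\tilde B_k(q)$ (or the corresponding projected balls) are comparable: after enlarging radii by a fixed factor one contains the other, because the centers $p,q$ (and hence their projections to $\sS_1$, $\sS_2$, $\sS_1\cap\sS_2$) are within distance $d\lesssim\tau^k$ of each other. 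Hence $u_k-v_k$ is $g_\bbeta$-harmonic on a common ball of radius $\sim\tau^k$, and $\|u_k-v_k\|_{L^\infty}\lesssim\tau^{2k}\omega(\tau^k)$ by Lemma \ref{lemma 2.4} plus the triangle inequality (both are $L^\infty$-close to $u$ there). Then the Laplacian/tangential estimate \eqref{eqn:lap final prop}, \eqref{eqn:grad final prop} applied to the harmonic function $u_k-v_k$ gives $|T(u_k-v_k)(z)|\lesssim\omega(\tau^k)$ uniformly on a slightly smaller ball, in particular at a point comparable to both $p$ and $q$.

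Next I would assemble the telescoping sum. Write, schematically,
\begin{align*}
Tu(p)-Tu(q) &= \big(Tu(p)-Tu_m(p)\big) + \big(Tu_m(p)-Tv_m(q)\big) + \big(Tv_m(q)-Tu(q)\big),
\end{align*}
where $Tu(p)-Tu_m(p)=\sum_{k\ge m}\big(Tu_k(p)-Tu_{k+1}(p)\big)$ by Lemma \ref{lemma 2.6}, and similarly for $q$. The "far" tail sums are bounded by $\sum_{k\ge m}\omega(\tau^k)\lesssim\int_0^{\tau^{m-1}}\frac{\omega(r)}{r}\,dr\lesssim\int_0^{Cd}\frac{\omega(r)}{r}\,dr$, using $\omega(2r)\le2\omega(r)$ to absorb the constant $C$; this is the second term on the RHS. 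The middle piece $Tu_m(p)-Tv_m(q)$ I would further break as $\big(Tu_m(p)-Tu_m(q)\big)+\big(Tu_m(q)-Tv_m(q)\big)$: the second term is $O(\omega(\tau^m))=O(\omega(d))\lesssim\int_0^{Cd}\frac{\omega(r)}{r}dr$ by the comparison-of-balls argument above, while the first term $Tu_m(p)-Tu_m(q)$ is the increment of a \emph{fixed} harmonic function $u_m$ over a distance $d$, so it is controlled by $d$ times an interior gradient bound on $Tu_m$.

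That gradient bound on $Tu_m$ is the step requiring the bulk of the work, and it is where the "near" range $k<m$ re-enters. One estimates $|\nabla(Tu_m)|$ on the ball of radius $\sim\tau^m$ via the derivative estimates of Proposition \ref{prop:existence} / estimates \eqref{eqn:2.37}--\eqref{eqn:2.39} applied not to $u_m$ directly but to a telescoped comparison: $Tu_m-Tu_0=\sum_{k<m}(Tu_m$-type increments$)$, each of which is a harmonic function on $\hat B_k$ whose gradient on the smaller ball $\frac13\hat B_k$ is $\lesssim\tau^{-k}\omega(\tau^k)$ by the one-derivative-higher version of \eqref{eqn:lap final prop} (i.e. applying the gradient estimate to the harmonic function $T(u_k-u_{k+1})$). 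Summing, $|\nabla(Tu_m)|\lesssim\sum_{k\le m}\tau^{-k}\omega(\tau^k)+O(\|u\|_{L^\infty})\lesssim\int_d^1\frac{\omega(r)}{r^2}\,dr+\|u\|_{L^\infty(B_\bbeta(0,1))}$, where the $\|u\|_{L^\infty}$ term comes from the $k=0$ base term $Tu_0$. Multiplying by $d$ produces exactly the first and third terms on the RHS of the proposition. Finally I would collect all pieces. The main obstacle I anticipate is the bookkeeping of \emph{which} ball each $u_k$ is solved on: because $p$ may be much closer to $\sS_1$ than to $\sS_2$ (three regimes in \eqref{eqn:hat Bk}), one must check that for the relevant range of $k$ the balls centered at $p$, $q$ and their various projections $p_1,q_1,p_{1,2},q_{1,2}$ are genuinely comparable after a bounded dilation, and that the telescoping across the transition scales $k\approx k_{2,p}$ and $k\approx k_{1,p}$ does not lose more than a constant — this is the delicate geometric point that replaces the simpler single-divisor argument of \cite{GS}.
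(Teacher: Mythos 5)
Your proposal follows essentially the same strategy as the paper's proof: your splitting into far-tail telescoping at $p$ and at $q$, the increment $Tu_m(p)-Tu_m(q)$ of a fixed approximant, and the ball-comparison piece $Tu_m(q)-Tv_m(q)$ corresponds exactly to the paper's $I_1$, $I_2$, $I_3$, $I_4$ with $\ell$ in place of your $m$, and the near-range telescoping of $Tu_m$ back to $u_2$ with gradient bounds integrated along the $g_\bb$-geodesic is precisely how the paper handles $I_2$. One small inaccuracy to flag: you assert that $u_k-v_k$ is $g_\bb$-harmonic on the common ball, but $\Delta_\bb u_k=f(p)$ while $\Delta_\bb v_k=f(q)$, so $\Delta_\bb(u_k-v_k)=f(p)-f(q)$ is a nonzero constant; the paper repairs this by passing to $U=u_\ell-v_\ell-\frac{f(p)-f(q)}{2(n-2)}|s-s(\tilde q)|^2$, which is genuinely harmonic and whose extra term is $O(\tau^{2\ell}\omega(d))$ and hence harmless. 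With that one-line correction your argument goes through.
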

\begin{proof}We only prove the estimate for $\dsq u$, and the one for $\Delta_i u$ can be dealt with in the same way.

We may assume $r_p = \min(r_p,r_q)$. We fix an integer $\ell$ such that $\tau^\ell$ is comparable to $d$, more precisely, we take $$\tau^{\ell + 4} \le d < \tau^{\ell +3},\quad \text{or} \quad \tau^{\ell + 1} \le 8 d\le \tau^\ell.  $$
We calculate by triangle inequality
\begin{align*}
|(D')^2 u(p) - (D')^2u (q)  |\le & | (D')^2 u(p) - (D')^2 u_\ell (p)  | + |(D')^2 u_\ell(p) - (D')^2 u_\ell(q)|\\
& + | (D')^2 u_\ell(q) -(D')^2 v_\ell (q)| + | (D')^2 v_\ell (q) - (D')^2 u(q) |\\
=:& I_1 + I_2 + I_3 + I_4.
\end{align*}
We will estimate $I_1 $ - $I_4$ one by one. 

\noindent $\bullet$ $I_1$ and $I_4$: by  \eqref{eqn:uk 2nd} and \eqref{eqn:uk limit} we have
$$I_1 = |(D')^2 u(p) - (D')^2 u_\ell (p)|\le C(n) \sum_{k=\ell}^\infty \omega(\tau^k),$$
and similar estimate holds for $I_4$ as well
$$I_4 = |(D')^2 u(q) - (D')^2 v_\ell (q)|\le C(n) \sum_{k=\ell}^\infty \omega(\tau^k).$$
\noindent $\bullet$ $I_3$: by the choice of $\ell$, it is not hard to see that $\frac 23 \tilde B_\ell(q) \subset \hat B_\ell(p)$. In particular $u_\ell$ and $v_\ell$ are both defined on $\frac 2 3 \tilde B_\ell(q)$ and satisfy the equations
$$\Delta_\bb u_\ell = f(p),\quad \Delta_\bb v_\ell = f(q)$$
respectively on this ball. From \eqref{eqn:c0 1} for $u_\ell$ and similar estimate for $v_\ell$ we get 
$$\| u_\ell - v_\ell \| _{L^\infty( \frac 2 3 \tilde B_\ell(q)  )} \le C \tau^{2\ell} \omega(\tau^\ell).$$
Consider the function \begin{equation}\label{eqn:defn of U}U := u_\ell - v_\ell - \frac{f(p) - f(q)}{2(n-2)} |s - s(\tilde q)|^2\end{equation}
where $\tilde q$ is the center of the ball $\tilde B_\ell(q)$.  $U$ is $g_{\bbeta}$-harmonic in $\frac 23 \tilde B_\ell(q)$ and satisfies the estimate:
$$\| U\|_{L^\infty(\frac 23 \tilde B_\ell(q))} \le C \tau^{2\ell}\omega(\tau^\ell) + C \tau^{2\ell} \omega(d)\le C(n) \tau^{2 \ell} \omega(\tau^\ell).$$
The derivatives estimates imply that $$|(D')^2 U(q)|\le C \tau^{-2\ell}\| U\|_{L^\infty(\frac 23 \tilde B_\ell(q))} \le C(n) \omega(\tau^\ell). $$ Hence
$$I_3 = |(D')^2 u_\ell (q) - (D')^2 v_\ell(q)| \le C(n)\omega(\tau^\ell).$$
\noindent $\bullet$ $I_2$: this is a little more complicated than the previous estimates. We define $h_k = u_{k-1} - u_k$ for $k\le \ell$. $h_k$ is $g_{\bbeta}$-harmonic on $\hat B_k(p)$ and by \eqref{eqn:c0 1} $h_k$ satisfies the $L^\infty$-estimate $\| h_k\|_{\hat B_k(p)}\le C \tau^{2k} \omega(\tau^k)$ and the derivative estimates $\| (D')^2 h_k\|_{L^\infty(\frac 23 \hat B_k(p))}\le C \omega(\tau^k)$. On the other hand, the function $(D')^2 h_k$ is also $g_{\bbeta}$-harmonic on $\frac 23 \hat B_k(p)$ so the gradient estimate implies that
\begin{equation}\label{eqn:grad hk}\| \nabla_{g_{\bbeta}} (D')^2 h_k  \|_{L^\infty( \frac 1 2 \hat B_k(p)\backslash \sS  )} \le C \tau^{-k} \omega(\tau^k).\end{equation}
Integrating this along the minimal $g_{\bbeta}$-geodesic $\gamma$ connecting $p$ and $q$ and noting that $\gamma$ avoids $\sS$ since $(\mathbb C^n\backslash \sS,g_{\bbeta})$ is strictly geodesically convex, we get
$$| (D')^2 h_k(p) - \dsq h_k(q)   |\le d\cdot \| \nabla_{g_{\bbeta}} (D')^2 h_k  \|_{L^\infty( \frac 1 2 \hat B_k(p)\backslash \sS  )} \le d C \tau^{-k}\omega(\tau^k).$$ By triangle inequality for each $k\le \ell$
\begin{equation}\label{eqn:I2 1} I_2= | \dsq u_\ell (p) - \dsq u_\ell(q)  |\le |\dsq u_2(p) - \dsq u_2(q)  | + d C\sum_{k=2}^\ell \tau^{-k}\omega(\tau^k). \end{equation}
Observe that $p,q\in \hat B_2(p)$ and the function $\dsq u_2$ is $g_{\bbeta}$-harmonic on $\hat B_2(p)$. From \eqref{eqn:c0 1} and derivative estimates we have
$$\| \dsq u_2 \|_{L^\infty(\frac 23 \hat B_2(p))} \le C \| u_2\|_{L^\infty(\hat B_2(p))}\le C ( \| u\|_{L^\infty} + \omega(\tau^2)   ).$$ Again by gradient estimate we have $$\| \nabla_{g_{\bbeta}} \dsq u_2\|_{L^\infty (\frac 1 2 \hat B_2(p))}\le C ( \| u\|_{L^\infty} + \omega(\tau^2)   ).$$ Integrating along the minimal geodesic $\gamma$ we arrive at
$$|\dsq u_2(p) - \dsq u_2 (q)  |\le d C ( \| u\|_{L^\infty} + \omega(\tau^2)   ). $$
Combining \eqref{eqn:I2 1}, we obtain that 
$$I_2\le C d \bk{ \| u\|_{L^\infty(B_{\bbeta}(0,1))} + \sum_{k=2}^\ell \tau^{-k}\omega(\tau^k)   }.$$
Combing this with the estimates for $I_1, I_2, I_3, I_4$, we get
\begin{equation*}
 | (D')^2 u(p) - (D')^2 u(q) | \le  C \bk{ d \xk{ \| u\|_{L^\infty(B_{\bbeta}(0,1))} + \sum_{k=2}^\ell \tau^{-k}\omega(\tau^k)   }   + \sum_{k=\ell}^\infty \omega(\tau^k)   }.
\end{equation*}
Proposition \ref{prop 2.2} now follows from this and the fact that $\omega(r)$ is monotonically increasing.
\end{proof}

\subsection{Mixed normal-tangential estimates along the directions $\sS$}\label{section 3.3}
Throughout this section, we fix two points $p,q\in B_{\bbeta}(0,1/2)\backslash \sS$ and assume $r_p \le r_q$. Recall that we introduce the weighted ``polar coordinates'' $(r_i, \theta_i)$ for $(z_1,z_2)$ as 
$$\rho_i = |z_i|, \, r_i  = \rho_i^{\beta_i},\, \theta_i = \arg z_i,\quad i = 1, 2.$$
Under these coordinates it holds that
\begin{equation}\label{eqn:cone Lap}
\Delta_i u = |z_i|^{2(1-\beta_i)} \frac{\partial^2 u}{\partial z_i \partial \bar z_i} = \frac{\partial^2 u}{\partial r_i^2} + \frac{1}{r_i } \frac{\partial u}{\partial r_i } + \frac{1}{\beta_i^2 r_i ^2 }\frac{\partial^2 u}{\partial \theta_i ^2}.
\end{equation}
Let $u_k$ (resp. $v_k$) be the solutions to equations \eqref{eqn:uk 1}, \eqref{eqn:uk 2} and \eqref{eqn:uk 3} on $\hat B_k(p)$ (resp. $\tilde B_k(q)$). Recall $u_k - u_{k+1}$ satisfies \eqref{eqn:uk comp} and apply gradient estimates to the $g_{\bbeta}$-harmonic function $u_k- u_{k+1}$, we get the bound of $\| \nabla_{g_{\bbeta}} (u_k - u_{k+1})\|_{L^\infty(\frac 13 \hat B_k(p))}$, which in particular implies that for $i = 1 $ or $2$
\begin{equation}\label{eqn:normal grad}
\big\| |z_i|^{1-\beta_i}( \frac{\partial u_k}{\partial z_i} - \frac{\partial u_{k+1}}{\partial z_i}   )\big\|_{L^\infty\xk{ \frac 1 3 \hat B_k(p) }} \le C \tau^k \omega(\tau^k).
\end{equation} 
Similarly $D'u_k - D'u_{k+1}$ is also $g_{\bbeta}$-harmonic on $\frac 1 2 \hat B_k(p)$ and apply gradient estimates to this function we get for $i = 1,2$
\begin{equation}\label{eqn:normal grad 1}
\big\| |z_i|^{1- \beta_i} ( \frac{\partial D'u_k}{\partial z_i} - \frac{\partial D'u_{k+1}}{\partial z_i}  )\big\|_{L^\infty\xk{\frac 1 3 \hat B_k(p)  )}}\le C \omega(\tau^k).
\end{equation}
The following lemma can be proved by the same way as in Lemma 2.10 of \cite{GS} since $p\not\in\sS$, so we omit the proof.
\begin{lemma}\label{lemma 2.7}
The following limits hold: for $i = 1$ or $2$
\begin{equation*}
\lim_{k\to \infty}\frac{\partial u_k}{\partial r_i}(p) =\frac{\partial u}{\partial r_i}(p),\quad \lim_{k\to\infty}\frac{\partial u_k}{r_i \partial \theta_i} (p) = \frac{\partial u}{r_i \partial \theta_i}(p)
\end{equation*}
and 
\begin{equation}\label{eqn:limit for J}
\lim_{k\to\infty}\frac{\partial D'u_k}{\partial r_i} (p) = \frac{\partial D'u}{\partial r_i}(p),\quad \lim_{k\to\infty}\frac{\partial D'u_k}{r_i \partial \theta_i}(p) = \frac{\partial D' u}{ r_i \partial\theta_i}(p).
\end{equation}
Similar formulas also hold for $v_k$ at the point $q$.
\end{lemma}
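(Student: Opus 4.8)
The plan is to use that $p\notin\sS$, so that—just as in Lemma~2.10 of \cite{GS}—everything takes place where the background is a \emph{fixed smooth uniformly elliptic} operator. Fix a neighbourhood $U\ni p$ with $\overline U\Subset B_{\bbeta}(0,1)\backslash\sS$; on $U$ the coefficients $|z_i|^{2(1-\beta_i)}$ of $\Delta_{\bbeta}$ are smooth and bounded above and below, $\Delta_{\bbeta}$ is uniformly elliptic, the $g_{\bbeta}$-distance is comparable to the Euclidean one, and $(r_1,\theta_1,r_2,\theta_2,s_5,\dots,s_{2n})$ is a smooth coordinate system in which $\partial/\partial r_i$, $\tfrac{1}{\beta_i r_i}\,\partial/\partial\theta_i$ $(i=1,2)$ and $D'=\partial/\partial s_j$ form a $g_{\bbeta}$-orthogonal frame, unit up to the nonzero smooth factors $\beta_i r_i$. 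Since $p\notin\sS$ the integers $k_p,k_{1,p},k_{2,p}$ are finite, so for $k\ge k_p$ one has $\hat B_k(p)=B_{\bbeta}(p,\tau^k)\subset U$ and, by interior regularity for the smooth equation $\Delta_{\bbeta}u_k=f(p)$, $u_k\in C^\infty(B_{\bbeta}(p,\tau^k))$; all derivatives at $p$ appearing in the statement are then well defined, as is $u\in C^2(U)$. The same remarks hold at $q$, with $\tilde B_k(q),k_q$ in place of $\hat B_k(p),k_p$.

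First I would set $w_k:=u_k-u$ for $k\ge k_p$, which on $B_{\bbeta}(p,\tau^k)$ solves $\Delta_{\bbeta}w_k=f(p)-f$ with $w_k=0$ on the boundary; here $\|f(p)-f\|_{L^\infty(B_{\bbeta}(p,\tau^k))}\le\omega(\tau^k)$ and $f(p)-f$ has the same (Dini) modulus $\omega$ as $f$, while \eqref{eqn:c0 1} gives $\|w_k\|_{L^\infty(B_{\bbeta}(p,\tau^k))}\le C\tau^{2k}\omega(\tau^k)$. Applying the classical interior estimates for the smooth operator $\Delta_{\bbeta}$ on the ball of radius $\tau^k$ (with the standard scaling in the radius), the gradient estimate gives
\[
\|\nabla_{g_{\bbeta}}w_k\|_{L^\infty(B_{\bbeta}(p,\tau^k/2))}\le C\bigl(\tau^{-k}\|w_k\|_{L^\infty}+\tau^k\|f(p)-f\|_{L^\infty}\bigr)\le C\tau^k\omega(\tau^k),
\]
and the interior $C^2$ estimate for a Dini-continuous right-hand side gives
\[
\max_{a,b}\bigl\|\partial^2 w_k/(\partial x^a\partial x^b)\bigr\|_{L^\infty(B_{\bbeta}(p,\tau^k/2))}\le C\Bigl(\tau^{-2k}\|w_k\|_{L^\infty}+\|f(p)-f\|_{L^\infty}+\int_0^{\tau^k}\tfrac{\omega(r)}{r}\,dr\Bigr),
\]
where the $x^a$ run over the coordinates above; both right-hand sides tend to $0$ as $k\to\infty$ because $f$ is Dini. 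Since $\tfrac{\partial u_k}{\partial r_i}(p)-\tfrac{\partial u}{\partial r_i}(p)=\tfrac{\partial w_k}{\partial r_i}(p)$ and $\tfrac{1}{r_i}\tfrac{\partial u_k}{\partial\theta_i}(p)-\tfrac{1}{r_i}\tfrac{\partial u}{\partial\theta_i}(p)=\tfrac{1}{r_i}\tfrac{\partial w_k}{\partial\theta_i}(p)$ are components of $\nabla_{g_{\bbeta}}w_k(p)$ up to the bounded factors $\beta_i$, and $N_jD'u_k(p)-N_jD'u(p)=N_jD'w_k(p)$ equals one of the coordinate second derivatives $\tfrac{\partial^2 w_k}{\partial r_i\,\partial s_l}(p)$ or $\tfrac{1}{\beta_i r_i}\tfrac{\partial^2 w_k}{\partial\theta_i\,\partial s_l}(p)$, all these differences tend to $0$, which is precisely the assertion at $p$. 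The assertion at $q$ follows verbatim with $\tilde w_k:=v_k-u$ on $\tilde B_k(q)$.

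The one substantive ingredient is the interior $C^2$ estimate with merely Dini-continuous right-hand side used above; this is the classical Dini--Schauder estimate for a fixed smooth uniformly elliptic operator (the smooth, $p=0$, special case of what is proved here in the conical setting), obtained by freezing the coefficients of $\Delta_{\bbeta}$ in $U$ and bounding the resulting Calder\'on--Zygmund integral by $C\int_0^{\tau^k}\tfrac{\omega(r)}{r}\,dr$. I expect this, together with the routine bookkeeping converting between $\partial_{r_i},\partial_{\theta_i},D'$ and the $g_{\bbeta}$-gradient and Hessian near $p$, to be the only point needing care. Alternatively one can bypass the $C^2$ estimate: the first-line limits follow from the classical interior gradient estimate alone, and for the $N_jD'$ limits one may sum the telescoping bounds \eqref{eqn:uk grad} and \eqref{eqn:normal grad 1}—summable precisely because $f$ is Dini—to obtain convergence of $N_jD'u_k(p)$, and then identify the limit through the Taylor expansion of $u_k$ at $p$ for $k\gg1$, exactly as in Lemma~2.10 of \cite{GS}.
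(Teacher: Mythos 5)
Your argument is correct; it fills in the proof the paper omits by reference to Lemma~2.10 of \cite{GS}. Your primary route applies the classical interior gradient and Dini--Schauder pointwise $C^2$ estimates to $w_k=u_k-u$ on the shrinking balls $\hat B_k(p)$; this works because for $k\ge k_p$ these balls avoid $\sS$, so $\Delta_\bb$ is smooth and uniformly elliptic with constants depending only on the fixed point $p$, and the only non-trivial imported ingredient is the pointwise $C^2$ estimate for $\Delta w=g$ with Dini $g$, which you correctly flag. The route the paper intends (and which you mention only in passing at the end) is slightly lighter and avoids that ingredient: for $k>k_p$ pass to the smooth coordinates $w_j=z_j^{\beta_j}$ in which $\Delta_\bb$ is the Euclidean Laplacian, let $P$ be the second-order Taylor polynomial of $u$ at $p$ in these coordinates (well defined since $u\in C^2$ near $p$), and observe that $\Delta_\bb P$ is the constant $\Delta_\bb u(p)=f(p)$, so $u_k-P$ is $\Delta_\bb$-harmonic on $\hat B_k(p)$ with
\[
\|u_k-P\|_{L^\infty(\hat B_k(p))}\le\|u_k-u\|_{L^\infty(\hat B_k(p))}+\|u-P\|_{L^\infty(\hat B_k(p))}\le C\tau^{2k}\omega(\tau^k)+o(\tau^{2k});
\]
the classical interior derivative estimates for \emph{harmonic} functions then give $|D^{\ell}(u_k-P)(p)|\le C\tau^{-\ell k}\|u_k-P\|_{L^\infty(\hat B_k(p))}\to0$ for $\ell=1,2$, which is the claim since $D^\ell P(p)=D^\ell u(p)$ and the operators $N_i$, $D'$, $N_iD'$ are, near $p$, bounded combinations of the $w$-coordinate derivatives. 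Note this second route needs only continuity of $f$ (so that $\omega(\tau^k)\to0$), not Dini continuity, and so your telescoping remark, while correct, is not actually needed. Both routes are valid and reach the same conclusion.
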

We are going to estimate the quantities
$$J: =\big |\frac{\partial D' u}{\partial r_i}(p) -  \frac{\partial D' u}{\partial r_i}(q) \big |, \text{ and } K: = \big|\frac{\partial D' u }{r_i \partial \theta_i} (p) - \frac{\partial D' u }{r_i \partial \theta_i} (q)\big|,\quad i = 1,\, 2.  $$ Note that $J,K$ correspond to $|N_j D' u(p) - N_j D'u(q)|$ in Theorem \ref{thm:main 1}. 
We will estimate the case for $i=1$ and $J$, since the other cases are completely the same. By triangle inequality we have
\begin{align*}
J\le &  \big |\frac{\partial D' u}{\partial r_i}(p) -  \frac{\partial D' u_\ell}{\partial r_i}(p) \big | + \big |\frac{\partial D' u_\ell}{\partial r_i}(p) -  \frac{\partial D' u_\ell}{\partial r_i}(q) \big |\\
& + \big |\frac{\partial D' u_\ell}{\partial r_i}(q) -  \frac{\partial D' v_\ell}{\partial r_i}(q) \big | + \big |\frac{\partial D' v_\ell}{\partial r_i}(q) -  \frac{\partial D' u}{\partial r_i}(q) \big |\\
 =: & J_1 + J_2 + J_3+J_4.
\end{align*}
\begin{lemma}
There exists a constant $C(n)>0$ such that $J_1$, $J_3$ and $J_4$ satisfy
\begin{equation*}%\label{eqn:J1+J4}
J_1 + J_4\le C \sum_{k=\ell}^\infty \omega(\tau^k), \quad J_3 \le C \omega(\tau^\ell).
\end{equation*}
%\begin{equation}\label{eqn:J3}
%\end{equation}
\end{lemma}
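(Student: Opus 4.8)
The plan is to run the same argument used for the terms $I_1, I_3, I_4$ in the proof of Proposition \ref{prop 2.2}, with the tangential second--order estimate \eqref{eqn:uk 2nd} replaced by the mixed normal--tangential bounds \eqref{eqn:normal grad}, \eqref{eqn:normal grad 1} and Lemma \ref{lemma 2.6} replaced by Lemma \ref{lemma 2.7}. I keep the reductions already in force ($f(p)=0$, $r_p\le r_q$, and $\tau^{\ell+4}\le d<\tau^{\ell+3}$), and I use throughout that each $N_i\in\{\partial_{r_i},\ \tfrac{1}{\beta_i r_i}\partial_{\theta_i}\}$ is a $g_\bb$--vector field of unit length, together with the elementary identity $|z_i|^{1-\beta_i}\partial_{z_i}=\tfrac{\beta_i}{2}e^{-\sqrt{-1}\theta_i}\bigl(\partial_{r_i}-\tfrac{\sqrt{-1}}{\beta_i r_i}\partial_{\theta_i}\bigr)$, so that $|N_i w|\le \tfrac{2}{\beta_i}\bigl||z_i|^{1-\beta_i}\partial_{z_i}w\bigr|$ and $|N_i w|\le |\nabla_{g_\bb}w|_{g_\bb}$ for any real function $w$. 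This lets me pass freely between the $|z_i|^{1-\beta_i}\partial_{z_i}$--bounds in \eqref{eqn:normal grad}, \eqref{eqn:normal grad 1} and pointwise bounds on $N_iD'$. As in the excerpt it suffices to treat $i=1$ and the quantity $J$; $K$ and the case $i=2$ are identical. I also use that $D'$ commutes with $\Delta_\bb$ (the coefficients of $\Delta_\bb$ do not depend on $s_5,\dots,s_{2n}$), which is what allows $D'$--derivatives of $g_\bb$--harmonic functions to again be treated as $g_\bb$--harmonic.

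For $J_1$ (and symmetrically $J_4$) I would telescope using \eqref{eqn:limit for J}: $\frac{\partial D'u}{\partial r_1}(p)-\frac{\partial D'u_\ell}{\partial r_1}(p)=\sum_{k\ge\ell}\bigl(\frac{\partial D'u_k}{\partial r_1}(p)-\frac{\partial D'u_{k+1}}{\partial r_1}(p)\bigr)$. Each summand is $N_1$ applied at $p$ to the $g_\bb$--harmonic function $D'(u_k-u_{k+1})$, and since $p\in\tfrac13\hat B_k(p)$, the bound \eqref{eqn:normal grad 1} (equivalently, the gradient estimate \eqref{eqn:grad final prop} applied to $D'(u_k-u_{k+1})$, whose $L^\infty$--norm on $\tfrac13\hat B_k(p)$ is $\le C\tau^k\omega(\tau^k)$ by \eqref{eqn:uk grad}) gives $|N_1D'(u_k-u_{k+1})(p)|\le C\omega(\tau^k)$. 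Summing over $k\ge\ell$ yields $J_1\le C\sum_{k\ge\ell}\omega(\tau^k)$; repeating with $(u_k,p,\hat B_k(p))$ replaced by $(v_k,q,\tilde B_k(q))$ gives the same bound for $J_4$.

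For $J_3$ I would reuse the geometric input from the $I_3$ step of Proposition \ref{prop 2.2}: $\tfrac23\tilde B_\ell(q)\subset\hat B_\ell(p)$, both $u_\ell,v_\ell$ are defined on $\tfrac23\tilde B_\ell(q)$, and the auxiliary function $U$ of \eqref{eqn:defn of U} is $g_\bb$--harmonic on $\tfrac23\tilde B_\ell(q)$ with $\|U\|_{L^\infty(\frac23\tilde B_\ell(q))}\le C\tau^{2\ell}\omega(\tau^\ell)$. Then $D'U$ is $g_\bb$--harmonic on $\tfrac23\tilde B_\ell(q)$ as well; one application of the gradient estimate to $U$ bounds $\|D'U\|_{L^\infty(\frac12\tilde B_\ell(q))}$ by $C\tau^{-\ell}\|U\|_{L^\infty}\le C\tau^{\ell}\omega(\tau^\ell)$, and a second application to $D'U$ gives $|\nabla_{g_\bb}(D'U)|(q)\le C\omega(\tau^\ell)$, hence $|N_1D'U(q)|\le C\omega(\tau^\ell)$. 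Finally $N_1=\partial_{r_1}$ annihilates the correction term $\frac{f(p)-f(q)}{2(n-2)}|s-s(\tilde q)|^2$, which is a function of $s_5,\dots,s_{2n}$ only, so $N_1D'(u_\ell-v_\ell)(q)=N_1D'U(q)$ and thus $J_3\le C\omega(\tau^\ell)$.

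I do not expect a real difficulty in these three estimates: they are direct adaptations of Proposition \ref{prop 2.2}, and the only points requiring attention are the commutation $D'\Delta_\bb=\Delta_\bb D'$ and the bookkeeping for the rescaled balls $\tfrac13\hat B_k(p)$ and $\tfrac12\tilde B_\ell(q)$, handled exactly as before. The genuinely delicate term of this subsection --- and the one where the conical exponent $1/\beta_j$ in \eqref{eqn:1.3} is produced --- is $J_2=\bigl|\frac{\partial D'u_\ell}{\partial r_1}(p)-\frac{\partial D'u_\ell}{\partial r_1}(q)\bigr|$, which is not covered by this lemma and must be treated separately by integrating $N_1D'(u_{k-1}-u_k)$ along a $g_\bb$--geodesic from $p$ to $q$ and summing the resulting series; there the weight $\tau^{k(1/\beta_1-1)}$ enters through the normal--tangential gradient bound rather than the plain factor $\tau^{-k}$ available in the tangential case.
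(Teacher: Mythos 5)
Your proposal is correct and follows exactly the approach the paper sketches (the paper's own proof is a two-line reference to the $I_1,I_3,I_4$ arguments plus \eqref{eqn:normal grad 1} and \eqref{eqn:limit for J}, with details omitted). You've filled in the details properly: telescoping via Lemma \ref{lemma 2.7} with the per-step bound $|N_1D'(u_k-u_{k+1})(p)|\le C\omega(\tau^k)$ coming from \eqref{eqn:normal grad 1} for $J_1,J_4$, and for $J_3$ the auxiliary $g_\bb$-harmonic function $U$ of \eqref{eqn:defn of U} combined with two applications of the gradient estimate (equivalently, the mixed second-order estimate of Lemma \ref{lemma 2.2}), together with the correct observation that $N_1D'$ annihilates the $s$-dependent correction term in $U$.
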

\begin{proof}
The estimates for $J_1$ and $J_4$ can be proved similarly as in proving those of $I_1$ and $I_4$ as in Section \ref{section 3.2}, using \eqref{eqn:normal grad 1} and \eqref{eqn:limit for J}. $J_3$ can be estimated similar to that of $I_3$ as in Section \ref{section 3.2}, using \eqref{eqn:normal grad 1}. So we omit the details.
\end{proof}
To estimate $J_2$, as in Section \ref{section 3.2} we denote $h_k : = u_{k-1} - u_k$ for $2\le k\le \ell$ which is $g_{\bbeta}$-harmonic on $\hat B_k(p)$ and satisfies the $L^\infty$-estimate $\| h_k\|_{L^\infty(\hat B_{k}(p))}\le C \tau^{2k} \omega(\tau^k)$ by \eqref{eqn:normal grad 1}. We  rewrite \eqref{eqn:grad hk} as% (for $i=1,\, 2$)
\begin{equation}\label{eqn:grad hk 1}
\| (D')^3 h_k \|_{L^\infty\xk{ \frac 12 \hat B_k(p) \backslash \sS }} +\sum_{i=1}^2 \big\| |z_i|^{1-\beta_i}\frac{\partial }{\partial z_i} (D')^2 h_k\big\|_{L^\infty\xk{ \frac 1 2 \hat B_k(p) \backslash \sS }} \le C \tau^{-k}\omega(\tau^k).
\end{equation}
\begin{lemma}\label{lemma 2.9}
There exists a constant $C=C(n,\bbeta)>0$ such that for any $z\in \frac 1 4 \hat B_k(p)\backslash \sS$, the following pointwise estimate holds for all $k\le \min(\ell, k_p)$
\begin{equation*}
\big| \frac{\partial D' h_k}{\partial r_1}(z) \big| + \big| \frac{\partial D'h_k}{ r_1 \partial \theta_1} (z) \big| \le C r_1(z) ^{\frac{1}{\beta_1} - 1} \tau^{- k (\frac 1{\beta_1} - 1)} \omega(\tau^k).
\end{equation*}
\end{lemma}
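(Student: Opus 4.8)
The plan is to prove the pointwise estimate of Lemma \ref{lemma 2.9} by reducing to a one-dimensional ODE-type estimate for the harmonic function $D'h_k$, exploiting the product structure of $g_\bbeta$ and the behavior of harmonic functions in the cone direction $z_1$. First I would fix $k \le \min(\ell, k_p)$ and write $w := D' h_k$, which is $g_\bbeta$-harmonic on $\frac{1}{2}\hat B_k(p)$; note from \eqref{eqn:grad hk 1} that $|z_1|^{1-\beta_1}\frac{\partial}{\partial z_1}(D'w) $ (more precisely the full expression $(D')^2 w$ and its singular-direction derivatives) is bounded by $C\tau^{-k}\omega(\tau^k)$, and from the $L^\infty$-bound on $h_k$ together with the derivative estimates that $\|D'w\|_{L^\infty} \le C\tau^{-k}\omega(\tau^k)\cdot \tau^k = C\omega(\tau^k)$ on a slightly smaller ball. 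The key point is that $v := D' h_k$ satisfies, in the $z_1$-variable, an equation of the form $|z_1|^{2(1-\beta_1)}\frac{\partial^2 v}{\partial z_1 \partial \bar z_1} = F_1$ where $F_1 = \Delta_1 (D'h_k)$; since $h_k$ is harmonic, $\Delta_1 h_k = -\Delta_2 h_k - (D')^2 h_k$, so differentiating, $F_1 = -\Delta_2(D'h_k) - (D')^3 h_k$, which by \eqref{eqn:grad hk 1} is bounded by $C\tau^{-k}\omega(\tau^k)$ on $\frac{1}{2}\hat B_k(p)$.

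Next I would apply Lemma 2.2 of \cite{GS} (the one-dimensional estimate \eqref{eqn:2.5}) in the $z_1$-slice through the point $z$, with radius $r \sim \tau^k$ (since $k \le k_p$ means the ball $\hat B_k(p)$ does not touch $\sS_1$... actually here care is needed: when $k < k_p$ the relevant radius should be taken adapted to $d_\bbeta(z,\sS_1)$). The estimate \eqref{eqn:2.5} with $\beta_1 \in (1/2,1)$ gives, for the holomorphic derivative, $|\partial_{z_1} v(z)| \le C\|v\|_{L^\infty}/r + C\|F_1\|_{L^\infty} r^{2\beta_1 - 1}$. To upgrade this into the sharp $r_1(z)^{\frac{1}{\beta_1}-1}$-type decay claimed, the trick (as in \cite{GS}) is to iterate or rescale: on the ball centered at the projection of $z$ to $\sS_1$ with radius $\rho_1(z) = |z_1(z)|$, the function $v$ is harmonic with the $F_1$-bound, so \eqref{eqn:2.5} applied with $r = \rho_1(z)$ gives a bound involving $\rho_1(z)^{2\beta_1-1}\|F_1\|_\infty$ plus $\rho_1(z)^{-1}\|v\|_\infty$; but I need to feed in the improved sup-bound of $v$ on that small ball, which itself decays. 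Converting powers of $\rho_1$ into powers of $r_1 = \rho_1^{\beta_1}$ and combining with $\|F_1\|_\infty \le C\tau^{-k}\omega(\tau^k)$ and the normalization $\|v\|_{L^\infty(\frac{1}{2}\hat B_k)} \le C\omega(\tau^k)$ should produce $r_1(z)^{\frac{1}{\beta_1}-1}\tau^{-k(\frac{1}{\beta_1}-1)}\omega(\tau^k)$ after dividing and multiplying by appropriate powers of $\tau^k$; the factor $\frac{1}{\beta_1}-1 = \frac{2\beta_1-1}{\beta_1}$ arises precisely because $r^{2\beta_1-1}$ in the $|z_1|$-variable becomes $r_1^{(2\beta_1-1)/\beta_1}$ in the $r_1$-variable. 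The angular derivative $\frac{\partial D'h_k}{r_1 \partial\theta_1}$ is handled identically since $\frac{\partial}{\beta_1 r_1 \partial\theta_1}$ is, like $\frac{\partial}{\partial r_1}$, a combination of $\mathrm{Re}$ and $\mathrm{Im}$ parts of $|z_1|^{1-\beta_1}\partial_{z_1}$ up to bounded factors.

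The main obstacle, which I expect to require the most care, is the bookkeeping of scales when $k < k_p$: in that regime $\hat B_k(p)$ is a ball centered at $p_1 \in \sS_1$ (or at $p_{1,2}$) of radius $2\tau^k$, and the point $z \in \frac14\hat B_k(p)$ can be at distance anywhere from $0$ up to $\sim \tau^k$ from $\sS_1$, so the radius $r$ in \eqref{eqn:2.5} must be chosen as (a fixed fraction of) $d_\bbeta(z,\sS_1)$ rather than $\tau^k$, and one must check that the ball $B_{\mathbb C}(z_1, r)$ in the $z_1$-slice stays inside the region where the $F_1$-bound holds. One then needs the self-improving bound on $\|v\|_{L^\infty(B(z, r))}$ — obtained by applying the gradient/Laplacian estimates at the scale of each dyadic annulus between $z$ and $\sS_1$ and summing a geometric series in the exponent $\frac{1}{\beta_1}-1 > 0$ (convergence uses exactly $\beta_1 > 1/2$) — which is the analogue of the summation argument already used for $I_2$ in Proposition \ref{prop 2.2}. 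Once these scale choices are pinned down, plugging into \eqref{eqn:2.5} and collecting the powers of $r_1(z)$ and $\tau^k$ gives the claimed estimate; the remaining steps are routine. This is essentially the argument of Lemma 2.11 in \cite{GS}, adapted to the presence of the second singular direction $\sS_2$, which enters only through the harmless extra term $\Delta_2(D'h_k)$ in $F_1$ that is bounded by the same quantity $C\tau^{-k}\omega(\tau^k)$.
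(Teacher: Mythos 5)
Your setup is right---identifying that $v:=D'h_k$ solves $|z_1|^{2(1-\beta_1)}\partial_{z_1}\partial_{\bar z_1} v = F$ with $F = -\Delta_2 v - (D')^2 v$, and bounding $\|F\|_{L^\infty(\frac12\hat B_k(p))}\le C\tau^{-k}\omega(\tau^k)$ from \eqref{eqn:grad hk 1} and the Laplacian estimate---but the mechanism you propose for extracting the $r_1^{1/\beta_1-1}$ factor is wrong, and the paper's proof is far simpler than what you expect.

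There is no dyadic iteration, no ``self-improving sup-bound on $\|v\|_{L^\infty(B(z,r))}$,'' and no geometric-series argument. The factor $r_1^{\frac{1}{\beta_1}-1}$ is nothing more than the Jacobian of the change of variables: since $r_1 = |z_1|^{\beta_1}$, one has $|z_1|^{1-\beta_1} = r_1^{\frac{1-\beta_1}{\beta_1}} = r_1^{\frac{1}{\beta_1}-1}$, and both $\frac{\partial}{\partial r_1}$ and $\frac{1}{\beta_1 r_1}\frac{\partial}{\partial\theta_1}$ are (up to constants) $|z_1|^{1-\beta_1}$ times $\mathrm{Re}\,\partial_{z_1}$ or $\mathrm{Im}\,\partial_{z_1}$. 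So the bound
\[
\Big|\frac{\partial D'h_k}{\partial r_1}\Big| + \Big|\frac{\partial D'h_k}{r_1\partial\theta_1}\Big| \le C\, r_1^{\frac{1}{\beta_1}-1}\Big|\frac{\partial D'h_k}{\partial z_1}\Big|
\]
is immediate, and what remains is to bound the \emph{Euclidean} derivative $|\partial_{z_1} D'h_k|$ uniformly (no vanishing as $z_1\to 0$). The point of the hypothesis $\beta_1\in(1/2,1)$ in \eqref{eqn:2.5} is precisely that the $F$-contribution there is $r^{2\beta_1-1}$ --- a \emph{uniform} quantity on the punctured ball --- rather than $|z|^{2\beta_1-1}$. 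The paper applies \eqref{eqn:2.5} once, at the single scale $r = (\tau^k)^{1/\beta_1}$ (the Euclidean radius of the slice $B_{\mathbb C}(x,(\tau^k)^{1/\beta_1}) = B_{\hat g_{\beta_1}}(x,2^{-\beta_1}\tau^k)$ through $x\in\mathcal S_1\cap\frac14\hat B_k(p)$), and gets $|\partial_{z_1} D'h_k|\le C\tau^{k(1-1/\beta_1)}\omega(\tau^k)$ from plugging in $\|D'h_k\|_\infty\le C\tau^k\omega(\tau^k)$ and $\|F\|_\infty\le C\tau^{-k}\omega(\tau^k)$; the two contributions of \eqref{eqn:2.5} happen to be of the same order. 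The inclusion \eqref{eqn:inclusion} then covers all of $\frac14\hat B_k(p)$. Choosing $r\sim |z_1(z)|$ as you suggest is not only unnecessary, it would be circular: bounding $\|v\|_{L^\infty}$ on a ball of radius $|z_1|$ by something that decays in $|z_1|$ is essentially the H\"older estimate you are trying to prove.

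Finally, your identity ``$\frac{1}{\beta_1}-1 = \frac{2\beta_1-1}{\beta_1}$'' is false (it holds only for $\beta_1=2/3$; the correct identity is $\frac{1}{\beta_1}-1 = \frac{1-\beta_1}{\beta_1}$), which is a symptom of the confusion: the exponent $\frac{1}{\beta_1}-1$ does not come from the $r^{2\beta_1-1}$ term in \eqref{eqn:2.5} but from the Jacobian $|z_1|^{1-\beta_1}$.
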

\begin{proof}We define a function $F$ as
\begin{equation}\label{eqn:F1}
|z_1|^{2(1-\beta_1)} \frac{\partial^2 D'h_k}{\partial z_1 \partial\bar z_1} = - |z_2|^{2(1-\beta_2)} \frac{\partial^2 D'h_k}{\partial z_2 \partial \bar z_2} - \sum_{j=5}^{2n} \frac{\partial^2 D'h_k}{\partial s_j^2}=: F.
\end{equation}
The Laplacian estimates \eqref{eqn:lap final prop} and derivative estimates applied to the $g_{\bbeta}$-harmonic function $D'h_k$ implies that $F$ satisfies \begin{equation}\label{eqn:F}\| F\|_{L^\infty\xk{\frac 1 2 \hat B_k(p)}}\le C(n) \tau^{-k}\omega(\tau^k).\end{equation}

%\noindent $\bullet$ {\bf The case $k < k_p$:} 
\noindent For any $k\le \min(\ell, k_p)$ and  $x\in \sS_1 \cap \frac 1 4 \hat B_k(p)$,   $B_{\bbeta}(x, \tau^k)\subset \frac 1 3 \hat B_k (p)$. The intersection of $B_{\bbeta}(x,\tau^k)$ with complex plane $\mathbb C$ passing through $x$ and orthogonal to the hyperplane $\sS_1$ lies in a metric ball of radius $\tau^{k}$ under the standard cone metric $\hat g_{\beta_1}$ on $\mathbb C$.  We view the equation \eqref{eqn:F1} as defined  on the ball $\hat B: = B_{\mathbb C} (x, (\tau^k)^{1/\beta_1})\subset \mathbb C$. The estimate \eqref{eqn:2.5} applied to the function $D'h_k$ gives rise to 
\begin{equation*}
\sup_{B_{\mathbb C}(x,(\tau^k)^{1/\beta_1}/2)\backslash\{x\}} \big| \frac{\partial D'h_k}{\partial z_1}   \big| \le C \frac{\| D'h_k\|_{L^\infty(\hat B)}}{(\tau^k)^{1/\beta_1}} + C \| F\|_{L^\infty(\hat B)} (\tau^k)^{2- \frac{1}{\beta}}.
\end{equation*}
Therefore on $B_{\mathbb C}(x, (\tau^k)^{1/\beta_1}/2)\backslash\{x\}$ the following holds
\begin{equation}\label{eqn:my 1}
\begin{split}
&\big| \frac{\partial D'h_k}{\partial r_1}  \big| + \big|\frac{\partial D'h_k}{r_1 \partial \theta_1}  \big|
\le    r_1^{\frac{1}{\beta_1} - 1}  \big| \frac{\partial D'h_k}{\partial z_1}   \big|
\le  C r_1 ^{\frac 1{\beta_1} - 1} \tau^{k(1-\frac{1}{\beta_1})} \omega(\tau^k)% + C r_1 ^{\frac{1}{\beta_1} - 1} \tau^{k(1 -\frac{1}{\beta_1})} \omega(\tau^k)
\end{split}\end{equation}
On other hand,  since $B_{\mathbb C} (x, (\tau^k)^{1/\beta_1}/2) = B_{\hat g_{\beta_1}} (x, 2^{-\beta_1} \tau^{k})$
\begin{equation}\label{eqn:inclusion}
\frac 1 4 \hat B_k(p) \subset \cup_{x\in \sS_1 \cap \frac{1}{4} \hat B_k} B_{\mathbb C }(x, (\tau^k)^{1/\beta_1}/2).  \end{equation}
equation \eqref{eqn:my 1} implies the desired estimate on the balls $\frac 1 4 \hat B_k(p)$.

%\medskip

%\noindent $\bullet$ {\bf The case $k\ge k_p$:} this case is simpler. We may use the smooth complex coordinates $w_1 = z_1^{\beta_1}$ and $w_2 = z_2^{\beta_2}$ on the balls $\hat B_k(p)$. Note that $w_1$ and $w_2$ is well-defined since the balls $\hat B_k(p)$ are away from $\sS = \sS_1\cup \sS_2$ when $k\ge k_p$. The equations \eqref{eqn:uk 1} become the standard Laplacian equation under the coordinates $\{w_1,w_2,z_3,\ldots, z_n\}$, $\Delta_{\mathbb C^n} u_k = f(p)$ and $\Delta_{\mathbb C^n} h_k = 0$.

\end{proof}

\begin{remark}
By similar arguments we can get the following estimates as well for any $k\le \min(\ell,k_p)$ and $z\in \frac 1 4 \hat B_k(p)\ms_1$
\begin{equation}\label{eqn:our 2}
\big| \frac{\partial (D')^2 h_k}{\partial r_1}(z) \big| + \big| \frac{\partial (D')^2h_k}{ r_1 \partial \theta_1} (z) \big| \le C r_1(z) ^{\frac{1}{\beta_1} - 1} \tau^{-  \frac k{\beta_1} } \omega(\tau^k).
\end{equation}

\end{remark}

\begin{lemma}\label{lemma 2.10}
There exists a constant $C=C(n,\bbeta)>0$ such that for all $k\le \min(k_p, \ell)$ and $z\in \frac 1 4 \hat B_k(p)\backslash\sS$ the following pointwise estimates hold
\begin{equation}\label{eqn:my 2}
\big| \frac{\partial^2 D'h_k}{r_1 ^2 \partial \theta_1 ^2} (z)  \big| + \big|\frac{\partial^2 D'h_k}{r_1 \partial r_1 \partial \theta_1} (z)  \big| \le C r_1(z)^{\frac{1}{\beta_1} - 2} \tau^{-k (\frac{1}{\beta_1} - 1)} \omega(\tau^k),
\end{equation}
\begin{equation}\label{eqn:my 3}
\big| \frac{\partial^2 D'h_k}{\partial r_1^2 }(z)  \big| \le C r_1(z)^{\frac{1}{\beta_1} - 2} \tau^{-k (\frac 1{\beta_1} - 1)} \omega(\tau^k).
\end{equation}% where $r_1$ on the RHS denotes the $r_1$-coordinate of $z$.
\end{lemma}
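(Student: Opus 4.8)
The plan is to build on Lemma 2.9 and the associated Remark (equation \eqref{eqn:our 2}) by differentiating the defining relation \eqref{eqn:F1} once more in the singular variable $z_1$ and applying a second-order version of the elementary one-variable estimate \eqref{eqn:2.5}. Concretely, for fixed values of the remaining coordinates, $D'h_k$ restricted to the complex line through a point $x\in \sS_1\cap\tfrac14\hat B_k(p)$ orthogonal to $\sS_1$ satisfies $|z_1|^{2(1-\beta_1)}\partial_{z_1}\partial_{\bar z_1}(D'h_k) = F$, where $F$ is controlled by $\|F\|_{L^\infty}\le C\tau^{-k}\omega(\tau^k)$ as recorded in \eqref{eqn:F}. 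Passing to the weighted polar coordinates $(r_1,\theta_1)$, the operator $\Delta_1$ takes the form \eqref{eqn:cone Lap}, i.e. $\partial_{r_1}^2 + r_1^{-1}\partial_{r_1} + \beta_1^{-2}r_1^{-2}\partial_{\theta_1}^2$, so establishing bounds on $\partial_{r_1}^2(D'h_k)$, $r_1^{-1}\partial_{r_1}\partial_{\theta_1}(D'h_k)$ and $r_1^{-2}\partial_{\theta_1}^2(D'h_k)$ is equivalent (via this elliptic equation together with bounds on the lower-order terms) to bounding the full Hessian of $D'h_k$ in the $(r_1,\theta_1)$ variables, of which the most delicate pieces are the pure $r_1$ and pure $\theta_1$ second derivatives.

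The key steps, in order, are as follows. First, I would note that $\partial_{\theta_1}(D'h_k)$ is again a $g_\bbeta$-harmonic function on a slightly smaller ball (differentiation in $\theta_1$ commutes with $\Delta_\bbeta$ since the coefficients are rotationally symmetric in $z_1$), and more importantly that it satisfies an equation of exactly the same type as \eqref{eqn:F1} with right-hand side $\partial_{\theta_1}F$, which is controlled by the gradient estimate \eqref{eqn:2.39}-type bound applied to the harmonic function $D'h_k$. Applying Lemma 2.9 (or rather \eqref{eqn:our 2}) to $\partial_{\theta_1}(D'h_k)$ in place of $D'h_k$ produces the bound $r_1^{1/\beta_1-1}\tau^{-k/\beta_1}\omega(\tau^k)$ for $\partial_{r_1}\partial_{\theta_1}(D'h_k)$ and $r_1^{-1}\partial_{\theta_1}^2(D'h_k)$; dividing by $r_1$ converts these to $r_1^{1/\beta_1-2}\tau^{-k(1/\beta_1-1)}\omega(\tau^k)$ after absorbing the extra $\tau^{-k}$ (which is legitimate since $r_1\le C\tau^{k\beta_1}$ on $\tfrac14\hat B_k$, hence $r_1^{-1}\le C\tau^{-k\beta_1}$ and the exponent bookkeeping works out). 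This handles \eqref{eqn:my 2}. Second, for \eqref{eqn:my 3} I would use the equation \eqref{eqn:cone Lap} itself, i.e. $\partial_{r_1}^2(D'h_k) = \Delta_1(D'h_k) - r_1^{-1}\partial_{r_1}(D'h_k) - \beta_1^{-2}r_1^{-2}\partial_{\theta_1}^2(D'h_k)$, and bound the three terms on the right: the first via \eqref{eqn:F} rewritten as a bound on $\Delta_1(D'h_k)$, the second via Lemma 2.9 combined with $r_1^{-1}\le C\tau^{-k\beta_1}$, and the third by the estimate for $r_1^{-2}\partial_{\theta_1}^2(D'h_k)$ just obtained. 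A short computation reconciling all the exponents $\tfrac1{\beta_1}-2$ finishes the proof.

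The main obstacle I anticipate is the bookkeeping needed to verify that $r_1^{-1}\partial_{r_1}(D'h_k)$ and $\Delta_1(D'h_k)$, whose naive bounds carry the weight $\tau^{-k}$ rather than $\tau^{-k(1/\beta_1-1)}$, are in fact dominated by $C r_1^{1/\beta_1-2}\tau^{-k(1/\beta_1-1)}\omega(\tau^k)$ on the region $z\in\tfrac14\hat B_k(p)\setminus\sS$. This requires using $r_1(z)\le C\tau^{k\beta_1}$ there (so that $r_1^{1/\beta_1-2} \ge c\,\tau^{k(1-2\beta_1)}$) and checking the inequality $\tau^{-k}\le C\,\tau^{k(1-2\beta_1)}\tau^{-k(1/\beta_1-1)} = C\,\tau^{k(3-2\beta_1-1/\beta_1)}$, i.e. $3-2\beta_1-1/\beta_1\le 0$ for $\beta_1\in(1/2,1)$, which holds since $2\beta_1+1/\beta_1\ge 2\sqrt2>3$ on that range — one must be a little careful here because this is a genuine constraint that uses $\beta_1>1/2$. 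A second, more routine subtlety is justifying that $\partial_{\theta_1}$ commutes with taking restrictions to the orthogonal complex line and that the one-variable lemma applies verbatim to $\partial_{\theta_1}(D'h_k)$; this follows from the product structure of $g_\bbeta$ exactly as in Lemma 2.9, so I would simply refer to that argument rather than repeat it.
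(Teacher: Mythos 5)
There is a genuine gap in your derivation of \eqref{eqn:my 2}, and it is the central point of the lemma.

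You propose to apply the \eqref{eqn:our 2}-type estimate to $\partial_{\theta_1}(D'h_k)$ in place of $D'h_k$, asserting that this yields the intermediate bound $r_1^{1/\beta_1-1}\tau^{-k/\beta_1}\omega(\tau^k)$ for $\partial_{r_1}\partial_{\theta_1}(D'h_k)$. That would be correct if $\|\partial_{\theta_1}D'h_k\|_{L^\infty}\le C\omega(\tau^k)$, as is the case for $(D')^2 h_k$. But $\partial_{\theta_1}$ is not $D'$: it carries a built-in $r_1$-weight, since $\partial_{\theta_1} = \beta_1 r_1\cdot\frac{1}{\beta_1 r_1}\partial_{\theta_1}$, and the gradient estimate on the $g_\bb$-harmonic function $D'h_k$ gives $\big\|\frac{1}{\beta_1 r_1}\partial_{\theta_1}D'h_k\big\|_{L^\infty}\le\|\nabla_{g_\bb}D'h_k\|_{L^\infty}\le C\omega(\tau^k)$, hence $\|\partial_{\theta_1}D'h_k\|_{L^\infty}\le C r_1\,\omega(\tau^k)\le C\tau^k\omega(\tau^k)$ (because $r_1\le C\tau^k$ on $\hat B_k(p)$, as $r_1$ already \emph{is} the $g_\bb$-distance to $\sS_1$ --- not $r_1\le C\tau^{k\beta_1}$, which is the bound for $|z_1|^{\beta_1}$ that you seem to be confusing with a bound on $|z_1|$). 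Feeding $\|v\|_{L^\infty}\le C\tau^k\omega(\tau^k)$ and $\|F_1\|_{L^\infty}\le C\tau^{-k}\omega(\tau^k)$ into \eqref{eqn:2.5} gives $|\partial_{z_1}(\partial_{\theta_1}D'h_k)|\le C\tau^{k(1-1/\beta_1)}\omega(\tau^k)$, hence the intermediate bound $r_1^{1/\beta_1-1}\tau^{-k(1/\beta_1-1)}\omega(\tau^k)$ for $\partial_{r_1}\partial_{\theta_1}(D'h_k)$ --- one full power of $\tau^k$ better than your claim --- and dividing by $r_1$ gives \eqref{eqn:my 2} directly, with nothing left over to absorb. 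This extra $\tau^k$ gain from the $r_1$-weight in $\partial_{\theta_1}$ is exactly the point of the paper's argument, and it is what you have missed.

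Your proposed repair, ``absorbing the extra $\tau^{-k}$,'' cannot work. After dividing both your bound and the target by $r_1^{1/\beta_1-2}\omega(\tau^k)$ the $r_1$-factors cancel identically, and what remains is $\tau^{-k/\beta_1}\le C\tau^{-k(1/\beta_1-1)}$, i.e.\ $\tau^{-k}\le C$, which is false as $k\to\infty$ no matter how you bound $r_1$. (Your statement ``$r_1\le C\tau^{k\beta_1}$ hence $r_1^{-1}\le C\tau^{-k\beta_1}$'' is also a sign reversal: an upper bound on $r_1$ gives a \emph{lower} bound on $r_1^{-1}$, and indeed $r_1^{-1}$ is unbounded near $\sS_1$.) Finally, the numerical check $2\beta_1+1/\beta_1\ge 2\sqrt{2}>3$ is wrong: $2\sqrt{2}\approx 2.83<3$, and for $\beta_1\in(1/2,1)$ the minimum of $2\beta_1+1/\beta_1$ is $2\sqrt{2}$, attained at $\beta_1=1/\sqrt{2}$, which is strictly below $3$. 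Your sketch for \eqref{eqn:my 3} (using $\Delta_1(D'h_k)=F$ via \eqref{eqn:cone Lap} and bounding the three terms) matches the paper's argument, but it depends on having \eqref{eqn:my 2} correctly in hand, and the bookkeeping you perform for the $|F|\le C\tau^{-k}\omega(\tau^k)$ term should instead use $r_1\le C\tau^k$, under which the inequality $\tau^{-k}\le C r_1^{1/\beta_1-2}\tau^{-k(1/\beta_1-1)}$ reduces to the trivial $\tau^{-k}\le C\tau^{-k}$ with no constraint on $\beta_1$.
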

\begin{proof}
Applying the gradient estimate to the $g_{\bbeta}$-harmonic function $D'h_k$, we get
$$\| \frac{\partial D'h_k}{r_1 \partial \theta_1}\|_{L^\infty(\frac 12 \hat B_k(p))}\le \| \nabla_{g_{\bbeta}} D'h_k\|_{L^\infty(\frac 12 \hat B_k(p))}\le C\omega(\tau^k). $$
The function $\partial_{\theta_1} D'h_k$ is also a continuous $g_{\bbeta}$-harmonic function so the derivatives estimates implies on $\frac 1 3 \hat B_k(p)\ms$
$$|F_1| \le \Big|  |z_2|^{2(1-\beta_2)} \frac{\partial^2 (\partial_{\theta_1} D'h_k)}{\partial z_2 \partial \bar z_2}\Big| + \Big| \frac{\partial^2 (\partial_{\theta_1}D'h_k)}{\partial s_j^2}   \Big|\le C \tau^{-k}\omega(\tau^k),$$where $F_1$ is defined below
\begin{equation}\label{eqn:F2}
|z_1|^{2(1-\beta_1)} \frac{\partial^2 (\partial_{\theta_1} D'h_k)}{\partial z_1 \partial \bar z_1} = - |z_2|^{2(1-\beta_2)} \frac{\partial^2 (\partial_{\theta_1} D'h_k)}{\partial z_2 \partial \bar z_2} - \sum_{j=5}^{2n} \frac{\partial^2 (\partial_{\theta_1}D'h_k)}{\partial s_j^2}=:F_1.
\end{equation}
We apply similar arguments as in the proof of Lemma \ref{lemma 2.9}. For any $x\in \sS_1\cap \frac{1}{4}\hat B_k(p)$, we view the equation \eqref{eqn:F2} as defined on the $\mathbb C$-ball $B_{\mathbb C}(x, (\tau^k)^{1/\beta_1})$, and by the estimate \eqref{eqn:2.5} we have on $B_{\mathbb C} (x, (\tau^k)^{1/\beta_1}/2)\backslash \{x\}$ 
$$\Big| \frac{\partial (\partial_{\theta_1}D'h_k)}{\partial z_1}  \Big| \le C \frac { \| \partial_{\theta_1} D'h_k\|_{L^\infty(\hat B_{\mathbb C})}   }{(\tau^k)^{1/\beta_1}} + C \| F_1\|_{L^\infty(\hat B_{\mathbb C})} (\tau^k)^{2- \frac{1}{\beta_1}}.  $$
Equivalently, this means that on $B_{\mathbb C} (x, (\tau^k)^{1/\beta_1}/2)\backslash \{x\}$ 
$$ \Ba{\frac{\partial^2 D'h_k}{\partial r_1 \partial \theta_1}} + \Ba{\frac{\partial^2 D'h_k}{ r_1 \partial \theta_1^2}} \le r_1^{\frac{1}{\beta_1} - 1} \Ba{ \frac{\partial (\partial_{\theta_1} D'h_k)}{\partial z_1}  }\le C r_1 ^{\frac{1}{\beta_1} - 1} \tau^{k (1-\frac{1}{\beta_1})} \omega(\tau^k). $$
Again by the inclusion \eqref{eqn:inclusion}, we get  \eqref{eqn:my 2}. The estimate \eqref{eqn:my 3} follows from Lemma \ref{lemma 2.9}, \eqref{eqn:my 2}, \eqref{eqn:F} and the equation (from \eqref{eqn:F1}) below
$$\frac{\partial^2 D'h_k}{\partial r_1^2} =  \frac{1}{r_1}\frac{\partial D'h_k}{\partial r_1} - \frac{1}{\beta_1^2 r_1^2}\frac{\partial^2 D'h_k}{\partial \theta_1^2} + F.$$

\end{proof}

\begin{lemma}\label{lemma 2.11}
There exists a constant $C(n,\bbeta)>0$ for $k\le \min(k_{2,p},\ell)$, the following pointwise estimates hold for any $z\in \frac 1 4 \hat B_k(p)\backslash \sS$
\begin{equation}\label{eqn:my 8}
\Ba{ \frac{\partial }{\partial r_2}\bk{ \frac{\partial D'h_k}{\partial r_1}  }(z)    } + \Ba{ \frac{\partial}{r_2\partial \theta_2} \bk{ \frac{\partial D'h_k}{\partial r_1}  }(z)   } \le C(n,\bbeta) r_1^{\frac{1}{\beta_1} - 1} r_2^{\frac{1}{\beta_2} - 1} \tau^{-k (-1+\frac 1{\beta_1} + \frac{1}{\beta_2})} \omega(\tau^k).
\end{equation}

\end{lemma}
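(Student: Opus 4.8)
The plan is to follow the strategy of Lemmas \ref{lemma 2.9} and \ref{lemma 2.10}, this time applying the one-dimensional estimate \eqref{eqn:2.5} in the $z_2$-variable to $G:=\frac{\partial D'h_k}{\partial r_1}$. Fix $z\in\frac14\hat B_k(p)\ms$, freeze $z_1$ and $s$, and view $G$ as a function of $z_2$ alone. Since $\frac{\partial}{\partial r_1}$ acts only in the $z_1$-variable it commutes with $\Delta_2=|z_2|^{2(1-\beta_2)}\frac{\partial^2}{\partial z_2\partial\bar z_2}$ and with the operators $\frac{\partial^2}{\partial s_j^2}$, so $G$ solves the one-dimensional conical Laplace equation
\begin{equation*}
|z_2|^{2(1-\beta_2)}\frac{\partial^2 G}{\partial z_2\partial\bar z_2}=\frac{\partial}{\partial r_1}\bk{\,|z_2|^{2(1-\beta_2)}\frac{\partial^2 D'h_k}{\partial z_2\partial\bar z_2}}=\frac{\partial\psi}{\partial r_1}=:F_2,\qquad \psi:=\Delta_2 D'h_k .
\end{equation*}
I would then bound $\|G\|_{L^\infty}$ and $\|F_2\|_{L^\infty}$ on a complex disk of radius $(\tau^k)^{1/\beta_2}$ in the $z_2$-variable centered at the $g_\bbeta$-projection $x_2\in\sS_2$ of $z$, apply \eqref{eqn:2.5} with exponent $\beta=\beta_2\in(1/2,1)$, and convert the resulting bound on $\frac{\partial G}{\partial z_2}$ into one on $N_2 G$ using the elementary inequality $|\frac{\partial\,\cdot\,}{\partial r_2}|+|\frac{1}{r_2}\frac{\partial\,\cdot\,}{\partial\theta_2}|\le C\,r_2^{\frac1{\beta_2}-1}|\frac{\partial\,\cdot\,}{\partial z_2}|$.

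The bound on $\|G\|$ is immediate from Lemma \ref{lemma 2.9} (applicable since $k\le\min(k_{2,p},\ell)$ and $k_{2,p}\le k_p$), with $r_1$ held at the constant value $r_1(z)$ on the frozen disk: $\|G\|_{L^\infty}\le C\,r_1(z)^{\frac1{\beta_1}-1}\tau^{-k(\frac1{\beta_1}-1)}\omega(\tau^k)$. The bound on $\|F_2\|$ is the crux, and rests on the observation that $\psi=\Delta_2 D'h_k$ is itself $g_\bbeta$-harmonic — $\Delta_2$ commutes with each of the three mutually variable-disjoint summands of $\Delta_\bbeta$ — and satisfies $\|\psi\|_{L^\infty(\frac13\hat B_k)}\le C\tau^{-k}\omega(\tau^k)$, by the Laplacian estimate \eqref{eqn:lap final prop} (valid for the limiting $g_\bbeta$-harmonic function by Proposition \ref{prop:existence}) together with $\|D'h_k\|_{L^\infty(\frac12\hat B_k)}\le C\tau^k\omega(\tau^k)$. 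Thus $\psi$ is a $g_\bbeta$-harmonic function occupying exactly the position of ``$D'h_k$'' in Lemma \ref{lemma 2.9}, but with $L^\infty$-norm larger by the factor $\tau^{-2k}$; since in the proof of that lemma both the function and its forcing term (via the Laplacian estimate) have $L^\infty$-norm proportional to $\|D'h_k\|_{L^\infty}$, running that proof verbatim on $\psi$ yields $|F_2(z)|=|\frac{\partial\psi}{\partial r_1}(z)|\le C\,r_1(z)^{\frac1{\beta_1}-1}\tau^{-k(1+\frac1{\beta_1})}\omega(\tau^k)$. Feeding the two bounds into \eqref{eqn:2.5} with radius $(\tau^k)^{1/\beta_2}$, the two contributions $\frac{\|G\|}{(\tau^k)^{1/\beta_2}}$ and $\|F_2\|(\tau^k)^{2-1/\beta_2}$ both collapse to $C\,r_1(z)^{\frac1{\beta_1}-1}\tau^{-k(\frac1{\beta_1}+\frac1{\beta_2}-1)}\omega(\tau^k)$, and multiplying by $r_2(z)^{\frac1{\beta_2}-1}$ from the conversion gives the asserted estimate (note $\frac1{\beta_1}+\frac1{\beta_2}-1=-1+\frac1{\beta_1}+\frac1{\beta_2}$). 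The argument is insensitive to whether one takes $\frac{\partial}{\partial r_1}$ or $\frac{1}{\beta_1 r_1}\frac{\partial}{\partial\theta_1}$, so the $\theta_1$-variant is covered as well.

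I do not anticipate a conceptual obstacle beyond isolating the decoupling $F_2=\frac{\partial}{\partial r_1}\xk{\Delta_2 D'h_k}$ with $\Delta_2 D'h_k$ still $g_\bbeta$-harmonic, which is what allows Lemma \ref{lemma 2.9} to be re-used for the forcing term. The remaining points are routine: tracking the powers of $\tau$ through the two successive applications of \eqref{eqn:2.5}; checking that the $z_2$-disks on which the second application is carried out lie inside the region where Lemma \ref{lemma 2.9} and the estimate on $\psi$ are valid — this is the inclusion \eqref{eqn:inclusion} with the roles of $\sS_1$ and $\sS_2$ interchanged, and is where the hypothesis $k\le\min(k_{2,p},\ell)$, forcing $\hat B_k(p)$ to be centered on $\sS_1\cap\sS_2$, enters; and the $C^0$-continuity of $G$ and of $\psi$ up to $\sS$ that is needed to invoke \eqref{eqn:2.5}, which follows from the derivatives estimates of Proposition \ref{prop:existence} exactly as in \cite{GS}.
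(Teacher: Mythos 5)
Your proof is correct and follows essentially the same two-step $\mathbb{C}$-ball strategy as the paper: first apply the one-dimensional estimate \eqref{eqn:2.5} in the $z_1$-direction to a $g_\bbeta$-harmonic second-order derivative of $D'h_k$ to control the forcing term, then apply \eqref{eqn:2.5} in the $z_2$-direction to $G=\frac{\partial D'h_k}{\partial r_1}$. The only difference is cosmetic: you bundle the paper's forcing term $F_3 = -\frac{\partial}{\partial r_1}\Delta_1 D'h_k - (D')^2\frac{\partial D'h_k}{\partial r_1}$ into the single quantity $\frac{\partial}{\partial r_1}\Delta_2 D'h_k$ (equal to it by the harmonic equation) and bound it by one rerun of Lemma \ref{lemma 2.9} on $\psi=\Delta_2 D'h_k$, whereas the paper reruns that argument on $\Delta_1 D'h_k$ and absorbs the $(D')^2$ piece separately; the resulting powers of $\tau$ and $r_1$ are identical.
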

\begin{proof}By the Laplacian estimate in \eqref{eqn:lap final prop} for the harmonic function $D'h_k$ on $\frac 1 2 \hat B_k(p)$, we have
\begin{equation}\label{eqn:lap estimate}
\sup_{\frac{1}{2.2}\hat B_k(p)} \bk{ \ba{\Delta_1 D'h_k} + \ba{ \Delta_2 D'h_k }   } \le C(n)\tau^{-2k} \osc_{\frac 12 \hat B_k(p)} (D'h_k)\le C(n)\tau^{-k}\omega(\tau^k).
\end{equation}
Since $\Delta_1 (D' h_k)$ is also $g_{\bbeta}$-harmonic, the Laplacian estimates \eqref{eqn:lap final prop} imply that
%Taking $\frac{\partial}{\partial r_1}$ on both sides of $\Delta_\bb (D' h_k) = 0$, we get
\begin{equation}\label{eqn:my 9}
\sup_{\frac{1}{2.4} \hat B_k(p)} \bk{ \ba{\Delta_1 \Delta_1 D'h_k}  + \ba{ \Delta_2 \Delta_1 D'h_k } } \le C(n,\bbeta) \tau^{-2k} \osc_{\frac{1}{2.2} \hat B_k(p)} \Delta_1 D'h_k \le C \tau^{-3k} \omega(\tau^k).
\end{equation} 
Now from the equation $\Delta_\bb ( \Delta_1 D'h_k )= 0$ 
\begin{equation}\label{eqn:F4}
|z_1|^{2(1-\beta_1)}\frac{\partial^2}{\partial z_1 \partial \bar z_1} \Delta_1 D'h_k = - \Delta_2 \Delta_1 D'h_k - \sum_j \frac{\partial^2}{\partial s_j^2} \Delta_1 D'h_k =: F_2.
\end{equation}
From \eqref{eqn:my 9} and the Laplacian estimates \eqref{eqn:lap final prop}, we see that $\sup_{\frac{1}{2.4} \hat B_k(p)} | F_2 |\le C \tau^{-3k} \omega(\tau^k)$. By similar argument by considering $x\in \frac{1}{3} \hat B_k (p)\cap \sS_1$, we obtain from \eqref{eqn:F4} that on $\hat B :=B_{\mathbb C}(x, (\tau^k)^{1/\beta_1}/2)\backslash \{x\}$
\begin{equation*}
\ba{ \frac{\partial}{\partial z_1} \Delta_1 D'h_k  }\le C \frac{\| \Delta_1 D'h_k\|_{L^\infty(\hat B)}}{(\tau^k)^{1/\beta_1}} + C \| F_2\|_{L^\infty(\hat B)} (\tau^k)^{2- \frac{1}{\beta_1}}\le C \tau^{-k (1+ \frac{1}{\beta_1})} \omega(\tau^k).
\end{equation*}
This implies that for any $z\in \frac{1}{3}\hat B_k(p)\backslash \sS$
\begin{equation}\label{eqn:my 10}
\ba{ \frac{\partial}{\partial r_1}   \Delta_1 D'h_k  (z)} + \ba{ \frac{\partial}{r_1 \partial \theta_1}  \Delta_1 D'h_k (z)  } \le C r_1 ^{\frac{1}{\beta_1} - 1} \tau^{-k (1+ \frac{1}{\beta_1})}\omega(\tau^k).
\end{equation}
Now taking $\frac{\partial}{\partial r_1}$ on both sides of $\Delta_\bb D'h_k = 0$, we get 
\begin{equation}\label{eqn:F5}
|z_2|^{2(1-\beta_2)} \frac{\partial^2}{\partial z_2 \partial \bar z_2} \bk{ \frac{\partial D'h_k}{\partial r_1}  } = - \frac{\partial}{\partial r_1} (\Delta_1 D'h_k) - \sum_j \frac{\partial^2}{\partial s_j^2}\bk{ \frac{\partial D'h_k}{\partial r_1}}= : F_3.
\end{equation}
From \eqref{eqn:my 10}, for any $z\in \frac 13 \hat B_k\backslash \sS$, $|F_3|(z)\le C r_1^{\frac{1}{\beta_1} - 1} \tau^{-k(1+\frac{1}{\beta_1})} \omega(\tau^k)$. By similar argument for any $y\in \frac{1}{3.2}\hat B_k(p)\cap \sS_2$, we apply the estimate \eqref{eqn:2.5} to $\frac{\partial D'h_k}{\partial r_1}$, we get on $A_1:=B_{\mathbb C} (y, (\tau^k)^{1/\beta_2}/2)\backslash \{y\}$, the punctured ball in the complex plane $\mathbb C$ of (Euclidean) radius $(\tau^k)^{1/\beta_2}/2$ and orthogonal to $\sS_2$ passing through $y$, 
$$\ba{ \frac{\partial}{\partial z_2} \bk{\frac{\partial D'h_k}{\partial r_1}}  } \le C \frac{\| \frac{\partial D'h_k}{\partial r_1}\|_{L^\infty(A_1)}}{(\tau^k)^{1/\beta_2}} + C \| F_3\|_{L^\infty(A_1)} (\tau^k)^{2- \frac{1}{\beta_2}} \le C r_1^{\frac{1}{\beta_1} - 1} \tau^{-k ( \frac{1}{\beta_1} + \frac{1}{\beta_2} - 1  )} \omega(\tau^k).$$ Varying $y\in \frac{1}{3.2} \hat B_k(p)\cap \sS_2$ we get for any $z\in \frac{1}{4}\hat B_k \backslash \sS$, that the following pointwise estimate holds
\begin{equation}\label{eqn:my 11}
\ba{ \frac{\partial}{\partial r_2} \bk{ \frac{\partial D'h_k}{\partial r_1}  } (z) } + \ba{ \frac{\partial}{r_2 \partial \theta_2} \bk{ \frac{\partial D'h_k}{\partial r_1}  }(z)  } \le C r_1^{\frac{1}{\beta_1} - 1} r_2 ^{\frac{1}{\beta_2} - 1} \tau^{-k ( \frac{1}{\beta_1} + \frac{1}{\beta_2} - 1)} \omega(\tau^k).
\end{equation}

\end{proof}

\begin{lemma}\label{lemma 2.12}
Let $d=d_{\bbeta}(p,q)$. There exists a constant $C(n,\bbeta)$ such that for all $k\le \ell$
\begin{equation}\label{eqn:my 4}
\Ba{ \frac{\partial D'h_k}{\partial r_1}(p) - \frac{\partial D'h_k}{\partial r_1} (q)    } \le C d^{\frac{1}{\beta_1} - 1} \tau^{-k (\frac{1}{\beta_1} - 1)} \omega(\tau^k),
\end{equation}and 
\begin{equation}\label{eqn:my 5}
\Ba{\frac{\partial D'h_k}{r_1 \partial \theta_1}(p) - \frac{\partial D'h_k}{r_1\partial \theta_1} (q)   }\le C d^{\frac{1}{\beta_1} - 1} \tau^{-k (\frac{1}{\beta_1} - 1)} \omega(\tau^k).
\end{equation}

\end{lemma}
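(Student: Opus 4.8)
The plan is to reduce \eqref{eqn:my 4} and \eqref{eqn:my 5} to the pointwise bounds of Lemmas~\ref{lemma 2.9}, \ref{lemma 2.10}, \ref{lemma 2.11} by integrating along a minimal $g_\bbeta$-geodesic, organizing the argument into cases according to the size of $d$ relative to $\rho:=d_\bbeta(p,\sS_1)=r_1(p)$. Throughout, by the conventions of this section $d_\bbeta(p,\sS_1)\le d_\bbeta(p,\sS_2)$, so $k_p=k_{1,p}$, and $\tau^{\ell+1}\le 8d\le\tau^\ell$, whence $d\le\tau^k/8$ for every $k\le\ell$, so $p$, $q$ and the geodesic joining them lie well inside $\hat B_k(p)$. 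I would first dispose of the case $d\ge\rho/10$: comparing $\tau^\ell\approx d$ with $\tau^{k_{1,p}}\approx\rho$ shows $\ell\le k_{1,p}=k_p$, so Lemma~\ref{lemma 2.9} applies for every $k\le\ell$, and since $r_1(p)=\rho\le 10d$ while $r_1=d_\bbeta(\cdot,\sS_1)$ is $1$-Lipschitz, also $r_1(q)\le 11d$; as $\tfrac1{\beta_1}-1>0$ this gives $\big|\tfrac{\partial D'h_k}{\partial r_1}(p)\big|+\big|\tfrac{\partial D'h_k}{\partial r_1}(q)\big|\le Cd^{1/\beta_1-1}\tau^{-k(1/\beta_1-1)}\omega(\tau^k)$, and likewise for the $\theta_1$-quantity, so \eqref{eqn:my 4} and \eqref{eqn:my 5} follow by the triangle inequality.

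In the case $d<\rho/10$ I would take the minimal $g_\bbeta$-geodesic $\gamma$ from $p$ to $q$, which is disjoint from $\sS$, has length $d$, and satisfies $r_i(z)=d_\bbeta(z,\sS_i)\in[d_\bbeta(p,\sS_i)-d,\,d_\bbeta(p,\sS_i)+d]$ on $\gamma$, so $r_1\approx\rho$ and $r_2\gtrsim\rho\gtrsim d$ along $\gamma$. Then $\big|\tfrac{\partial D'h_k}{\partial r_1}(p)-\tfrac{\partial D'h_k}{\partial r_1}(q)\big|\le d\sup_\gamma\big|\nabla_{g_\bbeta}\tfrac{\partial D'h_k}{\partial r_1}\big|$, and I would bound the gradient on $\gamma$ by decomposing it in the $g_\bbeta$-orthonormal frame $\{\partial_{r_1},\tfrac1{\beta_1 r_1}\partial_{\theta_1},\partial_{r_2},\tfrac1{\beta_2 r_2}\partial_{\theta_2},\partial_{s_5},\dots,\partial_{s_{2n}}\}$. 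For $k\le\min(k_{2,p},\ell)$: the $\partial_{s_j}$-components equal $\partial_{r_1}$ of a $(D')^2$-derivative of $h_k$, hence are $\le Cr_1^{1/\beta_1-1}\tau^{-k/\beta_1}\omega(\tau^k)$ by \eqref{eqn:our 2}; the $\partial_{r_1}$- and $\tfrac1{\beta_1 r_1}\partial_{\theta_1}$-components are $\le Cr_1^{1/\beta_1-2}\tau^{-k(1/\beta_1-1)}\omega(\tau^k)$ by \eqref{eqn:my 2}, \eqref{eqn:my 3} (with Lemma~\ref{lemma 2.9} for the term $r_1^{-2}\partial_{\theta_1}D'h_k$); and the $\partial_{r_2}$-, $\tfrac1{\beta_2 r_2}\partial_{\theta_2}$-components are $\le Cr_1^{1/\beta_1-1}r_2^{1/\beta_2-1}\tau^{-k(1/\beta_1+1/\beta_2-1)}\omega(\tau^k)$ by Lemma~\ref{lemma 2.11}. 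Using $d\lesssim r_1\lesssim\tau^k$ and $d\lesssim r_2\lesssim\tau^k$ for $k\le\min(k_{1,p},k_{2,p})$, together with $\tfrac1{\beta_i}-1>0$ and $2-\tfrac1{\beta_i}>0$, multiplying each of these three bounds by $d$ gives $\le Cd^{1/\beta_1-1}\tau^{-k(1/\beta_1-1)}\omega(\tau^k)$, which is \eqref{eqn:my 4}.

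It then remains to treat the remaining $k\le\ell$. When $k_{2,p}<k\le k_{1,p}$ the two-variable estimate of Lemma~\ref{lemma 2.11} is unavailable, but there $d_\bbeta(\cdot,\sS_2)>\tau^k$ along $\gamma$ and on the relevant part of $\hat B_k(p)$, so $|z_2|^{2(1-\beta_2)}$ is comparable to a constant; after the corresponding bounded rescaling of $z_2$ the ball carries the single-cone-divisor geometry of \cite{GS}, with $\partial_{r_2},\tfrac1{\beta_2 r_2}\partial_{\theta_2}$ in the role of regular directions, and the proof of \eqref{eqn:our 2} then bounds the $N_2$-components of $\nabla_{g_\bbeta}\tfrac{\partial D'h_k}{\partial r_1}$ by $Cr_1^{1/\beta_1-1}\tau^{-k/\beta_1}\omega(\tau^k)$, which we have already seen suffices. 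When $k_{1,p}<k\le\ell$ the ball $\hat B_k(p)=B_\bbeta(p,\tau^k)$ is disjoint from $\sS$, and the standard interior estimate for the $g_\bbeta$-harmonic function $h_k$ with $\|h_k\|_{L^\infty(\hat B_k(p))}\le C\tau^{2k}\omega(\tau^k)$ gives $\big|\nabla_{g_\bbeta}\tfrac{\partial D'h_k}{\partial r_1}\big|\le C\tau^{-k}\omega(\tau^k)$ on $\gamma$, hence a contribution $\le Cd\tau^{-k}\omega(\tau^k)\le Cd^{1/\beta_1-1}\tau^{-k(1/\beta_1-1)}\omega(\tau^k)$ since $(d/\tau^k)^{2-1/\beta_1}\le1$. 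Finally, \eqref{eqn:my 5} is obtained verbatim with $\tfrac1{\beta_1 r_1}\partial_{\theta_1}$ in place of $\partial_{r_1}$ throughout: Lemmas~\ref{lemma 2.9} and \ref{lemma 2.10} already contain the $\theta_1$-quantity, and the $\theta_1$-analogue of Lemma~\ref{lemma 2.11} has the same proof.

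The main obstacle is the case analysis itself: keeping track, along $\gamma$, of the relations $d\lesssim r_1,r_2\lesssim\tau^k$ in each subcase, and handling the range $k_{2,p}<k\le k_{1,p}$ where $z_2$ is no longer a genuine cone direction at scale $\tau^k$ and one must fall back on the single-divisor estimate \eqref{eqn:our 2}. Once these points are in place, the matching of the powers of $d$ and $\tau^k$ is routine monotonicity, exactly as in the single-divisor argument of \cite{GS}.
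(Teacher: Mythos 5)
Your proof is correct, and its overall skeleton (integrating pointwise bounds from Lemmas~\ref{lemma 2.9}--\ref{lemma 2.11} along a curve, splitting into ranges $k\le k_{2,p}$, $k_{2,p}<k\le k_{1,p}$, $k_{1,p}<k\le\ell$, and using smooth coordinates $w_j=z_j^{\beta_j}$ where $\hat B_k(p)$ is disjoint from $\sS_j$) matches what the paper does. The one place where you genuinely diverge is the near-singular case (your $d\gtrsim\rho$, the paper's $r_p\le 2d$): the paper there runs the same path-integration scheme but along a coordinate-adapted zigzag $p\to p'\to q'\to q$ (changing $\theta_1$, then $r_1$, then the remaining variables, via Lemma~\ref{lemma 2.10} and the auxiliary points in \eqref{eqn:q q}, \eqref{eqn:qq2}), which costs a page of bookkeeping because the minimal geodesic comes too close to $\sS_1$ for the gradient bounds to survive. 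You instead just note that $r_1(p),r_1(q)\lesssim d$, plug these into the \emph{pointwise} bound of Lemma~\ref{lemma 2.9} at each endpoint, and apply the triangle inequality — the two terms are each already $\lesssim d^{1/\beta_1-1}\tau^{-k(1/\beta_1-1)}\omega(\tau^k)$. This is a real shortcut: it eliminates the zigzag construction and the analysis of $J_1',J_2',J_3'$ and $J_1'',J_2'',J_3''$ in the paper. In the intermediate range $k_{2,p}<k\le k_{1,p}$ you claim a stronger bound $Cr_1^{1/\beta_1-1}\tau^{-k/\beta_1}\omega(\tau^k)$ on the $N_2$-components, where the paper uses the cruder $C\tau^{-k}\omega(\tau^k)$ from \eqref{eqn:our 1}/\eqref{eqn:our 3}; either suffices for the final inequality because $d\tau^{-k}\le(d\tau^{-k})^{1/\beta_1-1}$ when $d\le\tau^k$ and $\beta_1\in(1/2,1)$, so this discrepancy is harmless. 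The comparison-by-powers at the end — confirming $d\cdot(\text{pointwise bound})\lesssim d^{1/\beta_1-1}\tau^{-k(1/\beta_1-1)}\omega(\tau^k)$ using $d\lesssim r_1,r_2\lesssim\tau^k$ — is exactly the monotonicity step the paper performs in each subcase.
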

\begin{proof} We will consider the different cases $r_p = \min(r_p,r_q)\le 2d$ and $r_p = \min(r_p,r_q)>2d$.

\medskip

\noindent $\blacktriangleright$ $r_p\le 2d$. In this case, it is clear by the choice of $\ell$ that $r_p\approx \tau^{k_p} \le 2d \le \tau^{\ell +2}$, so  $k_p\ge \ell+2$. 

From our assumption when solving \eqref{eqn:uk 2}, $r_p = d_{\bbeta}(p,\sS_1)$, i.e. $r_1(p) = r_p\le 2d$. By triangle inequality we have $r_1(q)\le 3d$. We also remark that for $k\le \ell$, $\tau^k\ge \tau^\ell> 8d$, in particular, the geodesics considered below all lie inside the balls $\frac{1}{4} \hat B_k(p)$, and the estimates in Lemma \ref{lemma 2.9} - Lemma \ref{lemma 2.11} holds for points on these geodesics.

\smallskip

Let $p=(r_1(p),\theta_1(p); r_2(p),\theta_2(p); s(p))$ and $q = (r_1(q),\theta_1(q); r_2 (q),\theta_2(q); s(q))$ be the coordinates of the points $p,q$. Let $\gamma: [0,d]\to B_{\bbeta}(0,q)\backslash \sS$ be the unique $g_{\bbeta}$-geodesic connecting $p$ and $q$. We know the curve $\gamma$ is disjoint with $\sS$, and we denote $\gamma(t) = (r_1(t),\theta_1(t); r_2(t), \theta_2(t); s(t))$ be the coordinates of $\gamma(t)$ for $t\in [0,d]$. By definition we have for $\forall t\in[0,d]$
\begin{equation*}%\label{eqn:gamma}
|\gamma'(t)|_{g_{\bbeta}}^2 = (r_1 '(t))^2 + \beta_1^2 r_1(t)^2 (\theta_1'(t))^2 + (r_2 '(t))^2 + \beta_2^2 r_2(t)^2 (\theta_2'(t))^2 + |s'(t)|^2 = 1. 
\end{equation*}
So $|s(p) - s(q)|\le d$ and $|r_i(p) - r_i(q)|\le d$ for $i = 1, 2$. We denote
\begin{equation}\label{eqn:q q}q':= (r_1(q), \theta_1(q); r_2(p),\theta_2(p);s(p)),\quad p':= ( r_1(p), \theta_1(q) ; r_2(p),\theta_2(p);s(p) )\end{equation}
the points with coordinates related to $p$ and $q$. Let $\gamma_1$ be the $g_{\bbeta}$-geodesic connecting $q$ and $q'$; $\gamma_2$ the $g_{\bbeta}$-geodesic joining $q'$ to $p'$; $\gamma_3$ the $g_{\bbeta}$-geodesic joining $p'$ to $p$.%, as shown in the picture. 

By triangle inequality, we have
\begin{align*}
&\quad  \Ba{ \frac{\partial D'h_k}{\partial r_1}(p) - \frac{\partial D'h_k}{\partial r_1}(q)   }\\ 
\le &\quad  \Ba{ \frac{\partial D'h_k}{\partial r_1}(p) - \frac{\partial D'h_k}{\partial r_1}(p')  } + \Ba{ \frac{\partial D'h_k}{\partial r_1}(p') - \frac{\partial D'h_k}{\partial r_1}(q')  }\\
& \quad + \Ba{ \frac{\partial D'h_k}{\partial r_1}(q') - \frac{\partial D'h_k}{\partial r_1}(q)  } =: J_1' + J_2' + J_3'.
\end{align*}
Integrating along $\gamma_3$ on which the points have fixed $r_1$-coordinate $r_1(p)$, we get by \eqref{eqn:my 2}
\begin{equation}\label{eqn:lemma 2.11 1}
J_1' = \Ba{ \int_{\gamma_3} \frac{\partial}{\partial\theta_1}\bk{ \frac{\partial D'h_k}{\partial r_1}  }d\theta_1  }\le C(n,\bbeta) r_1(p)^{\frac {1}{\beta_1} - 1} \tau^{-k(\frac{1}{\beta_1} - 1)} \omega(\tau^k).
\end{equation}
Integrating along $\gamma_2$ and by \eqref{eqn:my 3} we get
\begin{equation}\label{eqn:lemma 2.11 2}\begin{split}
J_2'  = &~~ \Ba{ \int_{\gamma_2} \frac{\partial}{\partial r_1}\bk{ \frac{\partial D'h_k}{\partial r_1}  } dr_1   }\\
\le & ~~C(n,\bbeta) \tau^{-k (\frac{1}{\beta_1} - 1)} \omega(\tau^k)\Ba{ \int_{r_1(p)} ^{r_1(q)}  t^{\frac{1}{\beta_1} - 2} dt   } \\
= & ~~ C(n,\bbeta) \tau^{-k (\frac{1}{\beta_1} - 1)}\omega(\tau^k) | r_1(p)^{\frac{1}{\beta_1} - 1} - r_1(q)^{\frac{1}{\beta_1} - 1}  |\\
\le & ~~ C(n,\bbeta) \tau^{-k (\frac{1}{\beta_1} - 1)}\omega(\tau^k) | r_1(p) - r_1(q)|^{\frac{1}{\beta_1} - 1}  \\
\le & ~~ C(n,\bbeta)\tau^{-k (\frac{1}{\beta_1} - 1)}\omega(\tau^k) d^{\frac{1}{\beta_1} - 1}.
\end{split}\end{equation}
To deal with $J_3'$, we need to consider different choices of $k\le \ell$.

\medskip

\noindent$\bullet$ If $k_{2,p}+1\le k\le \ell$, the balls $\hat B_k(p)$ are centered at $p_1\in\sS_1$ (recall $p_1$ is the projection of $p$ to $\sS_1$, hence $p$ and $p_1$ have the same $(r_2,\theta_2;s)$-coordinates). $\tau^{-k}\le 8^{-1} d^{-1}$ by the choice of $\ell$. The balls $\hat B_k(p)$ are disjoint with $\sS_2$, so we can introduce the smooth coordinates $w_2 = z_2^{\beta_2}$, and under the coordinates $(r_1,\theta_1; w_2, z_3,\ldots,z_n)$, the metric $g_{\bbeta}$ become the smooth cone metric with conical singularity {\em only} along $\sS_1$ with angle $2\pi \beta_1$. Therefore we can derive the following estimate  as in \eqref{eqn:grad hk 1} that 
\begin{equation}\label{eqn:our 1}
\sup_{\frac 1 2 \hat B_k(p)\backslash \sS_1} \Ba{ \frac{\partial (D')^2 h_k}{\partial r_1}   } + \Ba{ \frac{\partial}{\partial r_1}\bk{\frac {\partial D'h_k}{\partial w_2}}  } \le C \tau^{-k} \omega(\tau^k).
\end{equation}

Since $q$ and $q'$ have the same $(r_1,\theta_1)$-coordinates and  $g_{\bbeta}$ is a product metric, $\gamma_1$ is in fact a straight line segment (under the coordinates $(w_2,z_3,\ldots, z_n)$)  in the hyperplane with fixed $(r_1,\theta_1)$-coordinates. Integrating over $\gamma_1$, we get%  all points in $\gamma_1$  have the same  $(r_1,\theta_1)$-coordinates. Integrating over $\gamma_1$, we get
\begin{align*}
J_3' \le &~  \int_{\gamma_1} \ba{\frac{\partial }{\partial w_2} \bk{\frac{\partial D'h_k}{\partial r_1}}} + \sum_j \ba{ \frac{\partial }{\partial s_j} \bk{\frac{\partial D'h_k}{\partial r_1}  }   }\\
\le &~ C \tau^{-k} \omega(\tau^k) d_{\bbeta}(q,q')
\le C\tau^{-k}\omega(\tau^k) d\\
%=& ~ C \tau^{-k(\frac{1}{\beta_1} -1) } \tau^{-k(2-\frac{1}{\beta_1})}\omega(\tau^k) d\\
\le &~ C(n,\bbeta)\tau^{-k (\frac{1}{\beta_1} - 1)}\omega(\tau^k) d^{\frac{1}{\beta_1} - 1}.
\end{align*}

\noindent $\bullet$ If $k\le k_{2,p}$, the balls $\hat B_k(p)$ are centered at the $p_{1,2}\in\sS_1\cap \sS_2$ and $\tau^k\ge r_2(p)$. By triangle inequality $r_2(q)\le d + r_2(p)\le \frac 9 8 \tau^k$. We choose points as follows%\footnote{Here we can assume $r_2(q) \le r_2(p)$, i.e. $q$ is closer to $\sS_2$ than $\sS_1$. Otherwise, we choose the points $\tilde q = (r_1 (q), \theta_1(q); r_2(q), \theta_2(p); s(p))$ and $\hat q = (r_1(q), \theta_1(q); r_2(q),\theta_2(q); s(p))$, and take the $g_{\bbeta}$-geodesics connecting these points. We can then proceed as above.}
\begin{equation}\label{eqn:qq2}\tilde q = (r_1(q),\theta_1(q); r_2(p),\theta_2(p); s(q)),\quad \hat q = (r_1(q),\theta_1(q); r_2(q),\theta_2(p); s(q)).\end{equation}
Let $\tilde \gamma_1$ be the $g_{\bbeta}$-geodesic joining $q'$ to $\tilde q$; $\tilde\gamma$ the $g_{\bbeta}$-geodesic joining $\tilde q$ to $\hat q$; and $\hat\gamma$ the $g_{\bbeta}$-geodesic joining $\hat q$ to $q$. The curves $\tilde\gamma_1$, $\tilde \gamma$ and $\hat \gamma$ all lie in the hyperplane with constant $(r_1,\theta_1)$-coordinates $(r_1(q),\theta_1(q))$.  Then by triangle inequality we have
\begin{align*}
J_3'\le & \ba{ \frac{\partial D'h_k}{\partial r_1}(q') - \frac{\partial D'h_k}{\partial r_1}(\tilde q)  } + \ba{ \frac{\partial D'h_k}{\partial r_1}(\tilde q) - \frac{\partial D'h_k}{\partial r_1}(\hat q)  }\\
& + \ba{ \frac{\partial D'h_k}{\partial r_1}(\hat q) - \frac{\partial D'h_k}{\partial r_1}(q)  } =: J_1'' + J_2'' + J_3''.
\end{align*} We will use frequently the inequalities that $r_1(q)\le 3d$ and $\max(r_2(q),r_2(p)) \le 2\tau^k$ in the estimates below.
Integrating along $\hat \gamma$ we get by \eqref{eqn:my 8}
\begin{equation*}
J_3''\le \Ba{ \int_{\hat\gamma}\frac{\partial}{\partial \theta_2} \bk{\frac{\partial D'h_k}{\partial r_1}   } d\theta_2  }\le C r_1(q)^{\frac{1}{\beta_1} - 1} r_2(q)^{\frac{1}{\beta_2} } \tau^{-k ( -1 + \frac{1}{\beta_1} + \frac{1}{\beta_2}  )} \omega(\tau^k)\le C d^{\frac{1}{\beta_1} - 1}\tau^{-k(\frac{1}{\beta_1} - 1)} \omega(\tau^k)
\end{equation*}
Integrating along $\tilde \gamma$ we get again by \eqref{eqn:my 8}
\begin{align*}
J_2''\le & \Ba{ \int_{\tilde \gamma} \frac{\partial}{\partial r_2}\bk{ \frac{\partial D'h_k}{\partial r_1}   } dr_2  }\\
\le &  C r_1(q)^{\frac{1}{\beta_1} - 1} \tau^{-k ( -1 + \frac{1}{\beta_1} + \frac{1}{\beta_2}  )} \omega(\tau^k) \Ba{ \int_{r_2(q)}^{r_2(p)} t^{\frac{1}{\beta_2} - 1} dt  }\\
\le & C r_1(q)^{\frac{1}{\beta_1} - 1} \tau^{-k ( -1 + \frac{1}{\beta_1} + \frac{1}{\beta_2}  )} \omega(\tau^k) \max(r_2(q),r_2(p))^{\frac{1}{\beta_2}-1}    d\\
\le &  C d^{\frac{1}{\beta_1} - 1}\tau^{-k(\frac{1}{\beta_1} - 1)} \omega(\tau^k)
\end{align*}
Integrating along $\tilde \gamma_1$ we get by \eqref{eqn:our 2}
\begin{align*}
J_1''\le \Ba{ \int_{\tilde \gamma_1} \frac{\partial}{\partial s_j} \bk{ \frac{\partial D'h_k}{\partial r_1}  }   dt  } \le C r_1(q)^{\frac{1}{\beta_1} -1} \tau^{-  \frac{k}{\beta_1} } \omega(\tau^k) d\le  C d^{\frac{1}{\beta_1} - 1}\tau^{-k(\frac{1}{\beta_1} - 1)} \omega(\tau^k).
\end{align*}
Combining the above three inequalities, we get in the case $k\le k_{2,p}$ that 
$$J_3'\le  C d^{\frac{1}{\beta_1} - 1}\tau^{-k(\frac{1}{\beta_1} - 1)} \omega(\tau^k).$$
Combining the estimates on $J_1', J_2', J_3'$, we finish the proof of \eqref{eqn:my 4} in the case $r_p\le 2d$. 

\medskip

\noindent$\blacktriangleright$ $r_p>2d$ and $\ell \le k_p$. In this case $\tau^{k_p}\approx r_p >2d \ge \tau^{\ell +3}$. From triangle inequality we get $d_{\bbeta}(\gamma(t), \sS)\ge d$, in particular, the $r_1$ and $r_2$ coordinates of $\gamma(t)$ are both bigger than $d$. In this case $k\le \ell \le k_p$, Lemma \ref{lemma 2.9} - Lemma \ref{lemma 2.11} hold for the points in $\gamma$. $r_1(\gamma(t))\le r_1(p) + d\le 2 \tau^k$. We calculate the gradient of $\frac{\partial D'h_k}{\partial r_1}$ along $\gamma$
\begin{align*}
\ba { \nabla_{g_{\bbeta}} \frac{\partial D'h_k}{\partial r_1} }^2 = &~ \Ba{ \frac{\partial}{\partial r_1} \bk{\frac{\partial D'h_k}{\partial r_1} } }^2 + \Ba{ \frac{\partial }{\beta_1 r_1 \partial \theta_1}\bk{ \frac{\partial D'h_k}{\partial r_1}  } }^2 + \Ba{ \frac{\partial}{\partial r_2} \bk{\frac{\partial D'h_k}{\partial r_1}  }}^2 \\
&\quad + \Ba{ \frac{\partial}{\beta_2 r_2 \partial\theta_2} \bk{\frac{\partial D'h_k}{\partial r_1}  } }^2
 + \sum_j \Ba{\frac{\partial}{\partial s_j}\bk{\frac{\partial D'h_k}{\partial r_1}  } }^2.
\end{align*}
\noindent (1). When $k_{2,p}+1 \le k\le \ell$ we have by \eqref{eqn:our 1} that 
\begin{equation}\label{eqn:our 3}
\sup_{\frac 1 2 \hat B_k\backslash\sS_1} \Ba{ \frac{\partial }{\partial r_2} \bk{ \frac{\partial D'h_k}{\partial r_1}  }   } + \Ba{ \frac{\partial}{r_2 \partial \theta_2} \bk{ \frac{\partial D'h_k}{\partial r_1}  }   } \le C \tau^{-k}\omega(\tau^k).
\end{equation}
Thus by Lemma \ref{lemma 2.10}, \eqref{eqn:our 2} and \eqref{eqn:our 3} along $\gamma$ we have 
\begin{equation*}
\ba { \nabla_{g_{\bbeta}} \frac{\partial D'h_k}{\partial r_1} }\le C \omega(\tau^k)\xk{ d^{\ibone - 2} \tau^{-k (\ibone  - 1)} + \tau^{-k}    }
\end{equation*}
Integrating along $\gamma$ we get
\begin{align*}
\Ba{ \frac{\partial D'h_k}{\partial r_1}(p) - \frac{\partial D'h_k}{\partial r_1}(q)  } \le & ~\int_\gamma \ba { \nabla_{g_{\bbeta}} \frac{\partial D'h_k}{\partial r_1} } \le C  \omega(\tau^k)\xk{ d^{\ibone - 1} \tau^{-k (\ibone  - 1)} + d \tau^{-k}    }\\
\le & ~  C  \omega(\tau^k) d^{\ibone - 1} \tau^{-k (\ibone  - 1)}.
\end{align*}

\noindent (2). When $k\le k_{2,p}$, we have $r_2(\gamma(t)) \le r_2(p) + d \le \tau^k + d\le \frac{9}{8}\tau^k$ and similar estimates hold for $r_1(\gamma(t))$ too. Then by Lemma \ref{lemma 2.10}, Lemma \ref{lemma 2.11} and \eqref{eqn:our 2} along $\gamma$ the following estimate holds
\begin{equation*}
\ba { \nabla_{g_{\bbeta}} \frac{\partial D'h_k}{\partial r_1} }\xk{\gamma(t)}\le C \omega(\tau^k)\xk{ d^{\ibone - 2} \tau^{-k (\ibone  - 1)} + \tau^{-k}    }
\end{equation*}
Integrating along $\gamma$ we get 
\begin{align*}
\Ba{ \frac{\partial D'h_k}{\partial r_1}(p) - \frac{\partial D'h_k}{\partial r_1}(q)  } \le & ~\int_\gamma \ba { \nabla_{g_{\bbeta}} \frac{\partial D'h_k}{\partial r_1} } \le C  \omega(\tau^k)\xk{ d^{\ibone - 1} \tau^{-k (\ibone  - 1)} + d \tau^{-k}    }\\
\le & ~  C  \omega(\tau^k) d^{\ibone - 1} \tau^{-k (\ibone  - 1)}.
\end{align*}
This finishes the proof of the lemma in this case.

\medskip

\noindent $\blacktriangleright$ $r_p> 2d$ but $\ell \ge k_p + 1$. When $k\le k_p$, the estimate \eqref{eqn:my 4} follows in the same way as the case above. So it suffices to consider the case when $k_{p}+1 \le k\le \ell$. In this case the balls $\hat B_k(p) = B_{\bbeta}(p, \tau^k)$ and it can be seen by triangle inequality that the geodesic $\gamma\subset \frac{1}{3}\hat B_k(p)\ms$. Since the metric balls $\hat B_k(p)$ are disjoint with $\sS$ we can use the smooth coordinates $w_1 = z_1^{\beta_1}$ and $w_2 = z_2^{\beta_2}$ as before, and everything becomes smooth under these coordinates in $\hat B_k(p)$.

\medskip

The estimate \eqref{eqn:my 5} can be shown by the same argument, so we skip the details.
\end{proof}
Iteratively applying \eqref{eqn:my 4} for $k\le\ell$, we get
\begin{align*}J_2 = & \ba{ \frac{\partial D'u_\ell}{\partial r_1} (p) - \frac{\partial D'u_\ell}{\partial r_1}(q) } \le \ba{\frac{\partial D'u_2}{\partial r_1}(p) - \frac{\partial D'u_2}{\partial r_1}  (q)} + C d^{\frac{1}{\beta_1} - 1} \sum_{k=3}^\ell \tau^{-k (\frac{1}{\beta_1} - 1)} \omega(\tau^k)\\
\le & C d^{\frac{1}{\beta_1} - 1} \bk{  \| u\|_{C^0} +    \sum_{k=2}^\ell \tau^{-k (\frac{1}{\beta_1} - 1)} \omega(\tau^k)  },
\end{align*} where the inequality
%By , we get
$$\ba{\frac{\partial D'u_2}{\partial r_1}(p) - \frac{\partial D'u_2}{\partial r_1}  (q)}  \le C d^{\frac{1}{\beta_1} - 1} \| u\|_{C^0} $$ can be proved by the same argument as in proving \eqref{eqn:my 4}.

Combining the estimates for $J_1,\, J_2,\, J_3, \, J_4$ we finish the proof of \eqref{eqn:1.3}.

\medskip

We remark that in solving \eqref{eqn:uk 2}, we assume $r_1(p)\le r_2(p)$, we need also to deal with the following case, whose proof is more or less parallel to that of  Lemma \ref{lemma 2.12}, so we just point out the differences and sketch the proof.% below.

\begin{lemma}
Let $d=d_{\bbeta}(p,q)>0$. There exists a constant $C(n,\bbeta)>0$ such that for all $k\le \ell$, 
\begin{equation}\label{eqn:r 2 1}
\Ba{ \frac{\partial D'h_k}{\partial r_2}(p)  - \frac{\partial D' h_k}{\partial r_2} (q) } \le C d^{\frac{1}{\beta_2} - 1} \tau^{-k(\frac{1}{\beta_2} - 1)} \omega(\tau^k),
\end{equation}
\begin{equation}\label{eqn:r 2 2}
\Ba{ \frac{\partial D'h_k}{r_2 \partial \theta_2}(p) - \frac{\partial D'h_k}{r_2 \partial \theta_2} (q) } \le C d^{\frac{1}{\beta_2} - 1} \tau^{-k (\frac{1}{\beta_2} - 1) } \omega(\tau^k).
\end{equation}
\end{lemma}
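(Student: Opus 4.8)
The statement to prove is the symmetric counterpart of Lemma~\ref{lemma 2.12}: the H\"older-type bound for $\frac{\partial D'h_k}{\partial r_2}$ and $\frac{\partial D'h_k}{r_2\partial\theta_2}$ at the two points $p$ and $q$, under the normalization used when solving \eqref{eqn:uk 2} that now $r_2(p)\le r_1(p)$. Since the entire machinery of Section~\ref{section 3.3} is symmetric under swapping the indices $1\leftrightarrow 2$, the plan is essentially to run the proof of Lemma~\ref{lemma 2.12} verbatim with all roles of $z_1$ and $z_2$ interchanged, and I will indicate only where the asymmetry in our setup forces a genuine change.

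\smallskip

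The proof proceeds as follows. First I would re-derive the index-swapped pointwise estimates: Lemma~\ref{lemma 2.9}, \eqref{eqn:our 2}, Lemma~\ref{lemma 2.10}, and Lemma~\ref{lemma 2.11} all hold with $1$ and $2$ exchanged, because their proofs use only that $g_\bbeta$ is a product metric, the Laplacian and gradient estimates \eqref{eqn:grad final prop}, \eqref{eqn:lap final prop} for $g_\bbeta$-harmonic functions, and the one-dimensional estimate \eqref{eqn:2.5}, none of which distinguish the two conical directions. Concretely, along $z_2$ we get $\ba{\frac{\partial D'h_k}{\partial r_2}}+\ba{\frac{\partial D'h_k}{r_2\partial\theta_2}}\le C\,r_2^{\frac1{\beta_2}-1}\tau^{-k(\frac1{\beta_2}-1)}\omega(\tau^k)$, the second-order versions with exponent $\frac1{\beta_2}-2$, and the mixed estimate involving both $r_1$ and $r_2$. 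Next, as in Lemma~\ref{lemma 2.12}, I split into the cases $r_p=\min(r_p,r_q)\le 2d$ and $r_p>2d$. In the first case one introduces the intermediate points obtained by interpolating coordinates (the analogues of $q',p',\tilde q,\hat q$ in \eqref{eqn:q q}, \eqref{eqn:qq2}), connects them by $g_\bbeta$-geodesics that lie in coordinate hyperplanes, and integrates the appropriate index-swapped pointwise derivative bound over each segment; the exponent bookkeeping is identical, producing $d^{\frac1{\beta_2}-1}\tau^{-k(\frac1{\beta_2}-1)}\omega(\tau^k)$. In the second case one estimates $\ba{\nabla_{g_\bbeta}\frac{\partial D'h_k}{\partial r_2}}$ along the minimizing geodesic $\gamma$ from $p$ to $q$ using Lemmas~\ref{lemma 2.9}--\ref{lemma 2.11} (index-swapped) and integrates, subdividing further according to whether $k\le k_{1,p}$ or $k_{1,p}+1\le k\le\ell$, and whether $\ell\le k_p$ or $\ell\ge k_p+1$, exactly as before; when the balls miss a component of $\sS$ one uses the smooth coordinates $w_j=z_j^{\beta_j}$ there.

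\smallskip

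The one point requiring attention, rather than a purely mechanical swap, is the \emph{ordering convention}. In \eqref{eqn:uk 2} we normalized $d_\bbeta(p,\sS_1)\le d_\bbeta(p,\sS_2)$, so the balls $\hat B_k(p)$ for $k_{2,p}+2\le k<k_{1,p}$ are centered at $p_1\in\sS_1$ and are disjoint from $\sS_2$. In the present lemma we are differentiating in the $r_2$-direction while the ``outer'' range of scales still has balls centered on $\sS_1$; thus the roles of ``the component whose metric ball we are centered on'' and ``the direction we differentiate in'' are now opposite, which is precisely the configuration addressed by the mixed estimate Lemma~\ref{lemma 2.11} (which already mixes an $r_1$-derivative with $r_2$-transversal derivatives) and by \eqref{eqn:our 1}--\eqref{eqn:our 3}. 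So the estimate for $J_1''+J_2''+J_3''$ type terms in the range $k\le k_{2,p}$ (balls centered on $\sS_1\cap\sS_2$) goes through with $1\leftrightarrow 2$, while in the range $k_{2,p}+1\le k\le \ell$ one uses that $\hat B_k(p)$ is disjoint from $\sS_2$, introduces $w_2=z_2^{\beta_2}$, and obtains the smooth bound \eqref{eqn:our 1}-type $\sup \ba{\frac{\partial}{\partial r_2}(D')^2 h_k}\le C\tau^{-k}\omega(\tau^k)$, which when integrated over a segment of length $\le d$ gives $Cd\,\tau^{-k}\omega(\tau^k)\le C d^{\frac1{\beta_2}-1}\tau^{-k(\frac1{\beta_2}-1)}\omega(\tau^k)$ since $d\le\tau^\ell\le\tau^k$ and $\frac1{\beta_2}-1<1$. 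Both displays \eqref{eqn:r 2 1} and \eqref{eqn:r 2 2} follow by the same argument, the $\theta_2$-case using the estimate for $\frac{\partial D'h_k}{r_2\partial\theta_2}$ in place of $\frac{\partial D'h_k}{\partial r_2}$ throughout.

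\smallskip

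The main obstacle, such as it is, is purely organizational rather than mathematical: making sure that in the mixed-range cases the geometry of $\hat B_k(p)$ — which component(s) of $\sS$ it touches — is correctly tracked after the index swap, so that one invokes the one-variable estimate \eqref{eqn:2.5} on a disk orthogonal to the \emph{correct} component and with the \emph{correct} radius $(\tau^k)^{1/\beta_j}$. Once the cases are set up with the right centers and the right transversal planes, every integral is of the form already computed in Lemma~\ref{lemma 2.12}, and the claimed bounds \eqref{eqn:r 2 1}, \eqref{eqn:r 2 2} drop out. I therefore expect the write-up to be short, essentially ``the proof is parallel to that of Lemma~\ref{lemma 2.12} with the roles of $z_1$ and $z_2$ interchanged; we indicate only the modifications,'' followed by the case analysis sketched above.
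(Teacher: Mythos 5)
Your proposal matches the paper's proof in both structure and substance: the paper itself writes ``the proof is more or less parallel to that of Lemma~\ref{lemma 2.12}'' and then treats exactly your two regimes — for $k\le k_{2,p}$ (balls centered at $p_{1,2}\in\sS_1\cap\sS_2$) it swaps $r_1\leftrightarrow r_2$, $\beta_1\leftrightarrow\beta_2$ in the argument of \eqref{eqn:my 4}, and for $k_{2,p}+1\le k\le \ell$ (balls centered at $p_1$, disjoint from $\sS_2$) it introduces $w_2=z_2^{\beta_2}$, estimates $\partial_{w_2}D'h_k$, $\partial^2_{w_2}D'h_k$ and the $r_1,\theta_1$ mixed derivatives, bounds $r_2$ from below along $\gamma$ to control the $1/r_2$ factor arising from \eqref{eqn:radial complex}, and integrates the gradient of $\partial_{r_2}D'h_k$ over $\gamma$ to obtain $Cd\,\tau^{-k}\omega(\tau^k)\le Cd^{1/\beta_2-1}\tau^{-k(1/\beta_2-1)}\omega(\tau^k)$. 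Your attention to the asymmetric normalization $r_1(p)\le r_2(p)$ and the resulting need to treat the $k_{2,p}+1\le k\le\ell$ range directly rather than by a naive swap is precisely the point the paper flags; the only detail you elide (the $1/r_2$ factor when writing $\partial_{r_2}$ in terms of $\partial_{w_2}$, which forces the lower bound on $r_2(\gamma(t))$) is handled explicitly in the paper but is indeed the kind of bookkeeping you say you would fill in.
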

\begin{proof} We consider two different cases on whether $k\le k_{1,p}$ or $k_{1,p}+1\le k\le \ell$.

\noindent $\bullet$ $k_{2,p}+1\le k\le \ell$. The balls $\hat B_k(p)$ are disjoint with $\sS_2$, so we can introduce the complex coordinate $w_2 = z_2^{\beta_2}$ on these balls as before. Let $t_1, t_2$ be the real and imaginary parts of $w_2$, respectively.  The derivatives estimates imply that
\begin{equation*}
\| \partial_{w_2} D'h_k\|_{L^\infty(\frac1{2} \hat B_k(p))} \le C \omega(\tau^k),\quad \| \partial^2_{w_2} D'h_k\|_{L^\infty(\frac{1}{2} \hat B_k(p))} \le C\tau^{-k } \omega(\tau^k),
\end{equation*}
where $\partial^2_{w_2}$ denotes the full second order derivatives in the $\{t_1, t_2\}$-directions. And
\begin{equation*}
\big\| \frac{\partial }{\partial r_1} \bk{\frac{\partial D'h_k}{\partial w_2} }   \big\|_{L^\infty(\frac 1 2 \hat B_k(p))} + \big\|  \frac{\partial }{r_1\partial \theta_1} \bk{\frac{\partial D'h_k}{\partial w_2} } \big\|_{L^\infty(\frac 1 2 \hat B_k(p))} \le C \tau^{-k}\omega(\tau^k).
\end{equation*}

 Since \begin{equation}\label{eqn:radial complex}\frac{\partial}{\partial r_2} = \frac{w_2}{r_2}\frac{\partial}{\partial w_2} + \frac{\bar w_2}{r_2 } \frac{\partial}{\partial \bar w_2},\end{equation} it holds that
$$\frac{\partial}{\partial w_2} \bk{ \frac{\partial D'h_k}{\partial r_2}  } = \frac{1}{r_2} \frac{\partial D'h_k}{\partial w_2  } - \frac{|w_2|^2}{2 r_2^3}\frac{\partial D'h_k}{\partial w_2} -\frac{\bar w_2 \cdot\bar w_2}{2 r_2^3} \frac{\partial D'h_k}{\partial \bar w_2} +  \frac{w_2}{r_2} \partial^2_{w_2} D'h_k,$$
we have on $\frac 1 2 \hat B_k(p)$ $$\ba{\frac{\partial}{\partial w_2} \bk{ \frac{\partial D'h_k}{\partial r_2}  }   } \le \frac{C}{r_2} \omega(\tau^k) + C \tau^{-k} \omega(\tau^k).   $$And
\begin{equation*}
\big\| \frac{\partial }{\partial r_1} \bk{\frac{\partial D'h_k}{\partial r_2} }   \big\|_{L^\infty(\frac 1 2 \hat B_k(p))} + \big\|  \frac{\partial }{r_1\partial \theta_1} \bk{\frac{\partial D'h_k}{\partial r_2} } \big\|_{L^\infty(\frac 1 2 \hat B_k(p))} \le C \tau^{-k}\omega(\tau^k).
\end{equation*}

Therefore
\begin{align*}
|\nabla_{g_{\bbeta}} \frac{\partial D'h_k}{\partial r_2}|^2 & = \Ba{ \frac{\partial^2 D'h_k}{\partial r_1 \partial r_2}  }^2 + \Ba{ \frac{\partial ^2}{r_1 \partial \theta_1 \partial r_2}  }^2 + \Ba{ \frac{\partial^2 D'h_k}{\partial w_2 \partial r_2}  }^2 + \sum_j \Ba{\frac{\partial^2 D'h_k}{\partial s_j \partial r_2}}^2\\
& \le C \xk { \tau^{-k} \omega(\tau^k)  }^2 + C \frac{1}{r_2^2} \omega(\tau^k)^2.
\end{align*}
In this case we know that $r_1(p)\approx \tau^{k_p}\ge 2 \tau^k\ge \tau^\ell > 8d$, so along $\gamma$
\begin{equation*}
r_2\xk{\gamma(t)} \ge r_2(p) - d\ge r_1(p) - d\ge \frac 7 4 \tau^k.
\end{equation*}
%
%It is not hard to see that there exists a $c=c(\beta_2)>0$ such that along the minimal $g_{\bbeta}$-geodesic $\gamma$ joining $p$ to $q$, $$r_2(\gamma(t))\ge c \min( r_2(p),r_2(q)  )\ge c \tau^k.$$
Integrating along $\gamma$ we get
\begin{align*}
\Ba{ \frac{\partial D'h_k}{\partial r_2} (p) - \frac{\partial D'h_k}{\partial r_2}(q)   }\le  &\int_{\gamma} \ba{ \nabla_{g_{\bbeta}} \frac{\partial D'h_k}{\partial r_2}   } \le C \tau^{-k}\omega(\tau^k) d \le C \tau^{-k (\frac{1}{\beta_2} - 1)} \omega(\tau^k) d^{\frac{1}{\beta_2} - 1}.
\end{align*}
\noindent$\bullet$ $k\le k_{2,p}$.  This case is completely the same as in the proof of \eqref{eqn:my 4}, by replacing $r_1$ by $r_2$, $\beta_1$ by $\beta_2$. So we omit the details. 

\eqref{eqn:r 2 2} can be proved similarly.
\end{proof}

\subsection{Mixed normal directions}\label{section 3.4} In this section, we will deal with the H\"older continuity of the following four mixed derivatives:
\begin{equation}\label{eqn:operators} \frac{\partial^2 u}{\partial r_1 \partial r_2},\, \frac{\partial^2 u}{r_1 \partial \theta_1 \partial r_2}, \, \frac{\partial^2 u}{r_2 \partial r_1 \partial \theta_2},\, \frac{\partial^2 u}{r_1 r_2\partial\theta_1 \partial\theta_2 },  \end{equation} which by our previous notation correspond to $N_1 N_2 u$.
%These are the most complicated estimates
Since the proof for each of them is more or less the same, we will only prove the H\"older continuity for $\frac{\partial^2 u}{\partial r_1 \partial r_2}.$ The following holds at $p$ and $q$ by the same reasoning of Lemma \ref{lemma 2.6}
$$\lim_{k\to\infty} \frac{\partial ^2 u_k}{\partial r_1 \partial r_2}(p) = \frac{\partial^2 u}{\partial r_1 \partial r_2}(p), \quad \lim_{k\to\infty} \frac{\partial ^2 v_k}{\partial r_1 \partial r_2}(q) = \frac{\partial^2 u}{\partial r_1 \partial r_2}(q).$$ By triangle inequality
\begin{align*}
\Ba{\frac{\partial^2 u}{\partial r_1 \partial r_2}(p) - \frac{\partial^2 u}{\partial r_1 \partial r_2}(q)   } \le & \Ba{ \frac{\partial^2 u}{\partial r_1 \partial r_2}(p) - \frac{\partial^2 u_\ell}{\partial r_1 \partial r_2}(p)  } + \Ba{ \frac{\partial^2 u_\ell}{\partial r_1 \partial r_2}(p) - \frac{\partial^2 u_\ell}{\partial r_1 \partial r_2}(q)  }\\
& ~  + \Ba{\frac{\partial^2 u_\ell}{\partial r_1 \partial r_2}(q) - \frac{\partial^2 v_\ell}{\partial r_1 \partial r_2}(q)  } + \Ba{\frac{\partial^2 v_\ell}{\partial r_1 \partial r_2}(q) - \frac{\partial^2 u}{\partial r_1 \partial r_2}(q)}\\
=: & ~L_1 + L_2 + L_3 + L_4. 
\end{align*}

%\subsubsection{The case when $\min(r_p,r_q)\le 2d$.}

%We deal with case when $r_p = \min(r_p,r_q)\le 2d$, and in this case $\ell + 2 \le k_p$. 
\begin{lemma}\label{lemma 2.14}
We have the following estimate
\begin{equation*}
L_1 + L_4\le  \sum_{k = \ell}^\infty \omega(\tau^k)
\end{equation*}
\end{lemma}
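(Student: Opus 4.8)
The plan is to estimate $L_1$ and $L_4$ exactly as we handled $I_1$ and $I_4$ in the proof of Proposition \ref{prop 2.2}, now applied to the operator $N_1N_2=\frac{\partial^2}{\partial r_1\partial r_2}$ rather than to $(D')^2$ or $\Delta_i$. First, recall that each difference $h_k=u_{k-1}-u_k$ (and $v_{k-1}-v_k$) is $g_\bbeta$-harmonic on $\frac12\hat B_k(p)$ (resp. $\frac12\tilde B_k(q)$) and satisfies the $L^\infty$-bound $\|h_k\|_{L^\infty(\hat B_k(p))}\le C\tau^{2k}\omega(\tau^k)$ coming from \eqref{eqn:c0 1}. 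What I need is a pointwise bound on $\frac{\partial^2 h_k}{\partial r_1\partial r_2}$ of order $\omega(\tau^k)$, together with the convergence $\lim_{k\to\infty}\frac{\partial^2 u_k}{\partial r_1\partial r_2}(p)=\frac{\partial^2 u}{\partial r_1\partial r_2}(p)$ that is recorded in the paragraph preceding Lemma \ref{lemma 2.14}.

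The key step is therefore a uniform $\osc$-type estimate $\big\|\frac{\partial^2 h_k}{\partial r_1\partial r_2}\big\|_{L^\infty(\lambda\hat B_k(p)\setminus\sS)}\le C(n,\bbeta)\,\omega(\tau^k)$ for a fixed fraction $\lambda$. I would obtain this from the mixed-derivative estimates of Lemma \ref{lemma 2.2} (applied on each small $\mathbb C$-ball transversal to $\sS_1$ and $\sS_2$, as in Lemmas \ref{lemma 2.9}--\ref{lemma 2.11}): since $h_k$ is harmonic, $\nabla_1\nabla_2 h_k$ is controlled by $\osc_{\hat B_k} h_k/\tau^{2k}\le C\omega(\tau^k)$, and passing from the complex mixed derivative $\frac{\partial^2 h_k}{\partial z_1\partial z_2}$ to $\frac{\partial^2 h_k}{\partial r_1\partial r_2}$ costs the factor $r_1^{1/\beta_1-1}r_2^{1/\beta_2-1}|z_1|^{\beta_1-1}|z_2|^{\beta_2-1}$, which is bounded; more carefully one uses $\frac{\partial}{\partial r_i}=\frac{w_i}{r_i}\frac{\partial}{\partial w_i}+\frac{\bar w_i}{r_i}\frac{\partial}{\partial\bar w_i}$ as in \eqref{eqn:radial complex} to keep the weights in check. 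Granting this bound, the argument is purely a telescoping one: for $z=p$ or $z=q$,
\begin{align*}
L_1=\Ba{\frac{\partial^2 u}{\partial r_1\partial r_2}(p)-\frac{\partial^2 u_\ell}{\partial r_1\partial r_2}(p)}
&=\Ba{\sum_{k=\ell}^\infty\bk{\frac{\partial^2 u_k}{\partial r_1\partial r_2}(p)-\frac{\partial^2 u_{k+1}}{\partial r_1\partial r_2}(p)}}\\
&\le\sum_{k=\ell}^\infty\Ba{\frac{\partial^2 h_{k+1}}{\partial r_1\partial r_2}(p)}\le C\sum_{k=\ell}^\infty\omega(\tau^k),
\end{align*}
and the identical computation with $v_k$ in place of $u_k$ and $q$ in place of $p$ gives $L_4\le C\sum_{k=\ell}^\infty\omega(\tau^k)$; absorbing the constant into the (already constant-free) statement of the lemma yields the claim.

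The main obstacle here is not the telescoping — that is routine — but verifying that the mixed second-order weighted derivative $\frac{\partial^2 h_k}{\partial r_1\partial r_2}$ really is controlled by $\omega(\tau^k)$ uniformly up to (but not on) $\sS$, rather than blowing up like a negative power of $r_1$ or $r_2$. Unlike the $N_jD'$ estimates, here \emph{both} weighted polar derivatives are in singular directions, so one must apply the transversal-ball reduction \eqref{eqn:2.5}/\eqref{eqn:inclusion} successively in the $z_1$- and $z_2$-directions, exactly in the style of Lemma \ref{lemma 2.11}, being careful that the scales $(\tau^k)^{1/\beta_1}$ and $(\tau^k)^{1/\beta_2}$ match the geometry of $\hat B_k(p)$ in each of the regimes $k\ge k_p$, $k_{2,p}+2\le k<k_{1,p}$, and $k\le k_{2,p}+1$. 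Once that uniform bound is in hand (it will be proved in the lemmas that follow, analogous to Lemmas \ref{lemma 2.9}--\ref{lemma 2.11}), the estimate for $L_1+L_4$ follows immediately as above.
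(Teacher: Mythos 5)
Your strategy is sound, and the key observation you make is actually more economical than the paper's argument, so let me compare the two. The paper proves Lemma \ref{lemma 2.14} by a three-way case split on the scale $k$: for $k\ge k_p+1$ it passes to the coordinates $w_j=z_j^{\beta_j}$ and applies Euclidean interior estimates; for $k_{2,p}+1\le k\le k_p$ it uses $w_2$ to smooth out the $\sS_2$ direction and applies the Cheng--Yau gradient estimate to $D_{w_2}h_k$; and for $k\le k_{2,p}$, where the ball is centered at $\sS_1\cap\sS_2$, it runs the transversal $\mathbb C$-ball machine of Lemma \ref{lemma 2.11} on $h_k$, obtaining the weighted pointwise bound $r_1^{\ibone-1}r_2^{\ibtwo-1}\tau^{-k(-2+\ibone+\ibtwo)}\omega(\tau^k)$ and then specializing at $p$ where $r_1(p)\le r_2(p)\le\tau^k$. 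Your route skips all of this: the ``derivatives estimates'' of Proposition \ref{prop:existence} (i.e.\ the $g_\bbeta$-limit of Lemma \ref{lemma 2.2}) apply on every $\hat B_k(p)$, regardless of where its center sits relative to $\sS$, and bound $|\nabla_1\nabla_2 h_k|_{g_\bbeta}$, $|\nabla_1\bar\nabla_2 h_k|_{g_\bbeta}$ (and their conjugates) by $C\omega(\tau^k)$; since $\frac{\partial}{\partial r_j}$ has unit $g_\bbeta$-length, the relation \eqref{eqn:radial complex} converts this directly into $|\frac{\partial^2 h_k}{\partial r_1\partial r_2}|\le C\omega(\tau^k)$ on $\frac12\hat B_k(p)\setminus\sS$, uniformly. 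Telescoping then finishes. This is cleaner and avoids the case analysis. What the paper's longer route buys is the \emph{weighted} version of the estimate (recorded in Lemma \ref{lemma 2.17}), with the explicit factors $r_1^{\ibone-1}r_2^{\ibtwo-1}$ decaying at $\sS$; that refinement is not needed for Lemma \ref{lemma 2.14} but is essential later when Lemma \ref{lemma 2.21} integrates along geodesics that pass close to $\sS$. So the paper is front-loading work for $L_2$; your argument handles $L_1,L_4$ as stated.

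One small caution about the wording of your proposal: the parenthetical ``applied on each small $\mathbb C$-ball transversal to $\sS_1$ and $\sS_2$'' is a red herring if your route is really via Lemma \ref{lemma 2.2}. The $\mathbb C$-ball reduction of Lemmas \ref{lemma 2.9}--\ref{lemma 2.11} is an \emph{alternative} to Lemma \ref{lemma 2.2} here, not a step inside it; conflating them makes it look as though you are about to redo the paper's case 3 while claiming a shortcut. Likewise, the factor you wrote, $r_1^{1/\beta_1-1}r_2^{1/\beta_2-1}|z_1|^{\beta_1-1}|z_2|^{\beta_2-1}$, is not just ``bounded'' — it is identically $1$ since $r_j=|z_j|^{\beta_j}$; stating that cleanly (via \eqref{eqn:radial complex}, which you do cite) removes all ambiguity about whether the passage from $\nabla_1\nabla_2$ to $\frac{\partial^2}{\partial r_1\partial r_2}$ loses anything.
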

\begin{proof}
We consider the different cases that $k\ge k_p+1$ and $\ell \le k\le k_p$.

\noindent $\bullet$ $k\ge k_{p} + 1$. In this case the balls $\hat B_k(p)$ are disjoint with $\sS$ and we can introduce the smooth coordinates $w_1 = z_1^{\beta_1}$ and $w_2 = z_2^{\beta_2}$. Under the coordinates $\{w_1,w_2,z_3,\ldots, z_n\}$, $g_{\bbeta}$ becomes the standard Euclidean metric $g_{\mathbb C^n}$ and the metric balls $\hat B_k(p)$ become the standard Euclidean ball with the same radius and center $p$. Since the $g_{\bbeta}$-harmonic functions $u_k - u_{k+1}$ satisfy \eqref{eqn:uk comp}, by standard gradient estimates for Euclidean harmonic functions we get
\begin{equation*}
\sup_{\frac{1}{2.1} \hat B_k(p)} \Ba{ D_{w_1} D_{w_2} (u_k - u_{k - 1})  } \le C \omega(\tau^k),
\end{equation*}
where we use $D_{w_i}$ to denote either $\frac{\partial}{\partial w_i}$ or $\frac{\partial}{\partial \bar w_i}$ for simplicity. From \eqref{eqn:radial complex} and similar formula for $\frac{\partial}{\partial r_1}$, we get
\begin{equation}\label{eqn:2.80}
\sup_{\frac{1}{2.1}\hat B_k(p)} \Ba{ \frac{\partial^2}{\partial r_1 \partial r_2} (u_k - u_{k - 1})  }\le C \omega(\tau^k). 
\end{equation}

\noindent $\bullet$ If $\ell\ge k_{2,p}+1$ and $\ell \le k_p = k_{1,p}$. For all $\ell \le k$, the balls $\hat B_k(p)$ are disjoint with $\sS_2$ and center at $p_1$. We can still use $w_2 = z_2^{\beta_2}$ as the smooth coordinate. The cone metric $g_{\bbeta}$ becomes smooth in $w_2$-variable and we can apply the standard gradient estimate to the $g_{\bbeta}$-harmonic function $D_{w_2} (u_k - u_{k-1})$ to get 
\begin{equation*}
 \sup_{\frac{1}{2.2} \hat B_k(p)} \Ba{  \frac{\partial  }{\partial r_1} D_{w_2}(u_k - u_{k-1})  } + \Ba{ \frac{\partial}{r_1 \partial\theta_1}  D_{w_2}(u_k - u_{k-1})   } \le C \omega(\tau^k).
\end{equation*}
Again by \eqref{eqn:radial complex}, we get
\begin{equation}\label{eqn:2.81}
 \sup_{\frac{1}{2.2} \hat B_k(p)} \Ba{  \frac{\partial^2  }{\partial r_1\partial r_2} (u_k - u_{k-1})  } + \Ba{ \frac{\partial^2 }{r_1 \partial\theta_1\partial r_2} (u_k - u_{k-1})   } \le C \omega(\tau^k).
\end{equation}

\noindent $\bullet$ If $\ell \le k_{2,p}$, the case when $k\ge k_{2,p}+1$ can be dealt with similarly as above. In the case $\ell \le k\le k_{2,p}$, $r_2(p)\approx \tau^{k_{2,p}}\le \tau^k \le \tau^\ell \approx 8d$.
 Now the balls $\hat B_k(p)$ are centered at $p_{1,2}\in\sS_1\cap \sS_2$. We can proceed as in the proof of Lemma \ref{lemma 2.11} with the harmonic functions $u_k - u_{k-1}$ replacing $D'h_k$ as in that lemma to prove that for any $z\in {\frac{1}{3} \hat B_k(p)}\backslash \Ss$
 \begin{equation*}\begin{split}
&  \Ba{ \frac{\partial^2}{\partial r_1 \partial r_2} (u_k - u_{k-1})    } (z)+ \Ba{ \frac{\partial ^2}{r_2 \partial\theta_2 \partial r_1} (u_k - u_{k-1})  }(z)\\
 \le  & ~C(n,\bbeta) r_1(z)^{\frac{1}{\beta_1} - 1} r_2(z)^{\frac{1}{\beta_2} - 1} \tau^{-k(-2 + \frac{1}{\beta_1} + \frac{1}{\beta_2})} \omega(\tau^k).
 \end{split}\end{equation*}
In particular, the estimate in each case holds at $p$ and from $r_1(p)\le r_2(p)\le \tau^k$, we obtain
\begin{equation}\label{eqn:2.82}
\Ba{ \frac{\partial^2}{\partial r_1 \partial r_2} (u_k - u_{k-1})    } (p)+ \Ba{ \frac{\partial ^2}{r_2 \partial\theta_2 \partial r_1} (u_k - u_{k-1})  }(p)\le C \omega(\tau^k).
\end{equation}
Combining each case above, by \eqref{eqn:2.80}, \eqref{eqn:2.81} and \eqref{eqn:2.82}, we get for all $k\ge \ell$
$$\Ba{ \frac{\partial^2 u}{\partial r_1 \partial r_2}  ( u_k - u_{k-1})  }(p) \le C (n,\bb)\omega(\tau^k).$$
Therefore by triangle inequality
\begin{equation*}
L_1 \le \sum_{k=\ell+1} ^\infty \Ba{ \frac{\partial^2 u}{\partial r_1 \partial r_2}  ( u_k - u_{k-1})  }(p) \le C(n,\bbeta)\sum_{k=\ell+1}^\infty \omega(\tau^k).
\end{equation*}
The estimate for $L_4$ can be dealt with similarly by studying the derivatives of $v_k$ at $q$. 

\end{proof}

\begin{lemma}\label{lemma 2.15}
$L_3$ satisfies
\begin{equation*}
L_3 \le C(n,\bbeta) \omega(\tau^\ell).
\end{equation*}
\end{lemma}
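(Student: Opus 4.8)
The plan is to run the argument for $L_3$ in exact parallel with the estimate of $I_3$ in the proof of Proposition~\ref{prop 2.2} (and of $J_3$ in Section~\ref{section 3.3}), the only genuinely new ingredient being that the operator $N_1N_2=\frac{\partial^2}{\partial r_1\partial r_2}$ must be controlled through the mixed complex Hessian, i.e.\ through $|\nabla_1\nabla_2(\cdot)|_{g_\bbeta}+|\nabla_1\nabla_{\bar 2}(\cdot)|_{g_\bbeta}$, rather than through a single cone factor as was possible for $N_jD'$. Concretely: by the choice of $\ell$ one has $\frac 23\tilde B_\ell(q)\subset \hat B_\ell(p)$ exactly as in the estimate of $I_3$, so that $u_\ell$ and $v_\ell$ are both defined on $\frac 23\tilde B_\ell(q)$ and solve $\Delta_\bbeta u_\ell=f(p)$, $\Delta_\bbeta v_\ell=f(q)$ there. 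With $\tilde q$ the center of $\tilde B_\ell(q)$ and $U$ the $g_\bbeta$-harmonic function defined in \eqref{eqn:defn of U}, the estimate \eqref{eqn:c0 1} for $u_\ell$ and its analogue for $v_\ell$ give, just as for $I_3$,
$$\|U\|_{L^\infty(\frac 23\tilde B_\ell(q))}\le C\tau^{2\ell}\omega(\tau^\ell)+C\tau^{2\ell}\omega(d)\le C(n)\tau^{2\ell}\omega(\tau^\ell),$$
since $d<\tau^\ell$ and $\omega$ is increasing.

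The second step is to apply the derivatives estimates of Proposition~\ref{prop:existence} to the $g_\bbeta$-harmonic function $U$ on a ball of radius comparable to $\tau^\ell$ around the center $\tilde q$, contained in $\frac 23\tilde B_\ell(q)$; since $q$ lies in a fixed fraction of that ball (as in the treatment of $I_3$), Lemma~\ref{lemma 2.2} yields
$$|\nabla_1\nabla_2 U|_{g_\bbeta}(q)+|\nabla_1\nabla_{\bar 2}U|_{g_\bbeta}(q)\le C\tau^{-2\ell}\,\osc_{\frac 23\tilde B_\ell(q)}U\le C\tau^{-2\ell}\|U\|_{L^\infty(\frac 23\tilde B_\ell(q))}\le C(n)\omega(\tau^\ell).$$
Near $q$ (which is disjoint from $\sS$) the coordinates $w_i=z_i^{\beta_i}$, $i=1,2$, are single-valued and turn $g_\bbeta$ into the flat metric, and from $\frac{\partial}{\partial r_i}=\frac{w_i}{r_i}\frac{\partial}{\partial w_i}+\frac{\bar w_i}{r_i}\frac{\partial}{\partial\bar w_i}$ together with $\nabla_{\partial_{r_1}}\partial_{r_2}=0$ (the two factors being flat, orthogonal and each radially geodesic) one gets the pointwise bound $\big|\frac{\partial^2 U}{\partial r_1\partial r_2}\big|\le C\big(|\nabla_1\nabla_2 U|_{g_\bbeta}+|\nabla_1\nabla_{\bar 2}U|_{g_\bbeta}\big)$. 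Finally $|s-s(\tilde q)|^2$ is independent of $z_1,z_2$, hence of $r_1,r_2$, so $\frac{\partial^2 U}{\partial r_1\partial r_2}(q)=\frac{\partial^2 u_\ell}{\partial r_1\partial r_2}(q)-\frac{\partial^2 v_\ell}{\partial r_1\partial r_2}(q)$, and combining the displays gives $L_3\le C(n,\bbeta)\omega(\tau^\ell)$. The remaining three operators in \eqref{eqn:operators} are handled identically.

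The only point needing care---and the main (if modest) obstacle---is the reduction of $\frac{\partial^2}{\partial r_1\partial r_2}$ to the mixed complex Hessian: unlike $N_jD'$, where one differentiation is regular, here both are in singular cone directions, so one must invoke the full mixed-derivative estimate of Lemma~\ref{lemma 2.2} for $g_\bbeta$-harmonic functions, and one must check that $\frac{\partial^2}{\partial r_1\partial r_2}$ is, up to unimodular factors $w_i/r_i$, a genuine component of $\nabla^2 U$ with no Christoffel correction---which is precisely where the product structure and the fact that $\partial_{r_1},\partial_{r_2}$ are radial in their respective flat factors is used. Everything else (ball inclusions, $L^\infty$-control of $U$, location of $q$) is verbatim from the analysis of $I_3$ and $J_3$.
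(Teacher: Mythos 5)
Your proof is correct, and it is genuinely different from (and cleaner than) the paper's. The paper splits into three cases depending on how $\ell$ compares to $k_{1,p}$ and $k_{2,p}$: when $\hat B_\ell(p)$ is disjoint from $\sS$ it uses the global smooth coordinates $w_1=z_1^{\beta_1}$, $w_2=z_2^{\beta_2}$ and bounds $\frac{\partial^2 U}{\partial r_1\partial r_2}$ by $|D_{w_1}D_{w_2}U|$; when disjoint only from $\sS_2$ it uses $w_2$ plus a gradient estimate on $D_{w_2}U$; and when $\ell\le k_{2,p}-1$ it reruns the whole $\mathbb{C}$-ball machinery of Lemma~\ref{lemma 2.11}/Lemma~\ref{lemma 2.17} to get a pointwise bound of the form $C\,r_1^{\frac1{\beta_1}-1}r_2^{\frac1{\beta_2}-1}\tau^{-\ell(-2+\frac1{\beta_1}+\frac1{\beta_2})}\omega(\tau^\ell)$, which is then evaluated at $q$ where $r_1(q),r_2(q)\lesssim\tau^\ell$. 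You bypass all three cases by invoking Lemma~\ref{lemma 2.2} (through Proposition~\ref{prop:existence}) directly at $q$, which gives a uniform bound $|\nabla_1\nabla_2 U|_{g_\bbeta}(q)+|\nabla_1\nabla_{\bar 2}U|_{g_\bbeta}(q)\le C\omega(\tau^\ell)$ with no $r_i$-dependence and no case distinction; the reduction of $\frac{\partial^2}{\partial r_1\partial r_2}$ to the mixed complex Hessian is exactly as you say, since the coefficients $w_i/r_i$, $\bar w_i/r_i$ are unimodular and the flatness of $g_\bbeta$ in $(w_1,w_2,z_3,\ldots)$ kills the Christoffel correction. This is a real shortcut for the present lemma; the likely reason the paper goes the longer way is that the $r_i$-weighted pointwise estimates of Lemmas~\ref{lemma 2.16}--\ref{lemma 2.20} are needed anyway in Lemma~\ref{lemma 2.21}, where one integrates along geodesics approaching $\sS$, and the case-3 argument is just reused here for uniformity.
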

\begin{proof}
%We have known that $\ell + 2 \le k_p$, and the ball $\hat B_\ell (p)$ is centered at $p_1$ of radius $2 \tau^\ell\approx 16 d$. Since $d_{\bbeta}(p_1, q)\le 3 d$, we see that $q\in \frac 1 4 \hat B_\ell (p)$. 

      As in the proof of previous lemma, we need to consider different cases: $\ell \ge k_{1,p}+1$,  $\ell \ge k_{2,p}+1$ or $\ell \le k_{2,p}$. 

\medskip

\noindent$\bullet$ If $\ell \ge k_{1,p}+1$, then the ball $\hat B_\ell(p) = B_{\bbeta}(p,\tau^\ell)$ and the function $U$ defined in \eqref{eqn:defn of U} is $g_{\bbeta}$-harmonic in $\frac{1}{2}\hat B_\ell(p)$, and $\sup_{\frac 1 2 \hat B_\ell(p)} |U|\le C \omega^{2\ell} \omega(\tau^\ell)$. Since $\frac 1 2 \hat B_\ell(p)$ is disjoint with $\sS$, $w_1$ and $w_2$ are well-defined on $\frac {1}{2}\hat B_\ell(p)$ and thus we have the derivative estimate:
\begin{equation*}
\sup_{\frac{1}{3}\hat B_\ell(p)} \Ba{ \frac{\partial^2 U}{\partial r_1\partial r_2}     } \le \sup_{\frac{1}{3}\hat B_\ell(p)} \Ba{ D_{w_1} D_{w_2} U    } \le C(n,\bbeta) \omega(\tau^\ell).
\end{equation*}
In particular, at $q\in\frac 1 3 \hat B_\ell(p)$
$$L_3 = \Ba{  \frac{\partial^2 u_\ell}{\partial r_1\partial r_2}(q)  -  \frac{\partial^2 v_\ell}{\partial r_1\partial r_2}(q)    } = \Ba{ \frac{\partial^2 U}{\partial r_1\partial r_2} (q)  }\le C(n,\bbeta) \omega(\tau^\ell).$$

\medskip

\noindent$\bullet$ If $k_{1,p}\ge \ell \ge k_{2,p}$, then the ball $\hat B_\ell(p) = B_{\bbeta}(p_1, 2\tau^\ell) $ and the function $U$ defined in \eqref{eqn:defn of U} is $g_{\bbeta}$-harmonic and well-defined in a ball $B_q := B_{\bbeta}(q, \tau^\ell/10)\subset \frac{1}{2.2}\hat B_\ell(p)$, and $\sup_{\frac 1 2 \hat B_\ell(p)} |U|\le C \omega^{2\ell} \omega(\tau^\ell)$. Since $\frac 1 2 \hat B_\ell(p)$ is disjoint with $\sS_2$, $w_2$ is well-defined on $\frac {1}{2.2}\hat B_\ell(p)$ and thus we have the derivatives estimates:
\begin{equation*}
\sup_{\frac{1}{2} B_q} \Ba{ \frac{\partial^2 U}{\partial r_1\partial r_2}     } \le \sup_{\frac{1}{2}B_q} \Ba{ \frac{\partial}{\partial r_1} D_{w_2} U    } \le C(n,\bbeta) \omega(\tau^\ell).
\end{equation*}
In particular, at $q\in\frac{1}{2} B_q$, we have
$$L_3 = \Ba{  \frac{\partial^2 u_\ell}{\partial r_1\partial r_2}(q)  -  \frac{\partial^2 v_\ell}{\partial r_1\partial r_2}(q)    } = \Ba{ \frac{\partial^2 U}{\partial r_1\partial r_2} (q)  }\le C(n,\bbeta) \omega(\tau^\ell).$$
 
\noindent $\bullet$ If $\ell \le k_{2,p} - 1$, then $ r_2(p)\approx  \tau^{k_{2,p}} \le \tau^{\ell+1}< 8d$, so $r_2(q) \le r_2(p) + d\le \frac{5}{8}\tau^\ell$  and $r_1(q)\le d + r_1(p)\le d+ r_2(p)\le \frac{5}{8}\tau^\ell$. Therefore the ball $\tilde B_\ell(q)$ is centered at $q_1$, or $q_2$ or $q_{1,2}\in\sS_1\cap \sS_2$ with radius $2\tau^\ell$. It follows from this that the function $U$ defined in \eqref{eqn:defn of U} is well-defined on the ball $\frac{1}{1.8} \hat B_\ell(p)$. %  so $r_2(q)\le 9d$ and $d_{\bbeta}(q, 0)\le 11d    $. In particular $q\in \frac{1}{1.1}\hat B_\ell(p)$ where the ball $\hat B_\ell(p)$ is centered at $0$ with $g_{\bbeta}$-radius $2\tau^\ell > 16 d$. The function $U$ as defined in \eqref{eqn:defn of U} is $g_{\bbeta}$-harmonic in $\hat B_\ell(p)\cap \tilde B_\ell(q)$, where $\tilde B_\ell(q)$ is the metric ball where $v_\ell$ is defined. The intersection $\hat B_\ell(p)\cap \tilde B_\ell(q)$ contains the metric ball $B_{\bbeta}(q, \frac 1 {10} \tau^\ell)$. 

By the same strategy as in the proof of Lemma \ref{lemma 2.11}, with the harmonic function $D'h_k$ in that lemma replaced by $U$ on the metric ball $\frac{1}{1.8} \hat B_\ell(p)  $, we can prove that for any $z\in \frac{1}{2}\hat B_\ell(p)\backslash \sS$ the following holds:
\begin{equation*}
\Ba{ \frac{\partial ^ 2 U}{\partial r_1 \partial r_2} (z)   } \le C(n,\bbeta) r_1^{\frac{1}{\beta_1} - 1} r_2^{\ibtwo -1} \tau^{-\ell ( -2 + \ibone + \ibtwo   )} \omega(\tau^\ell).
\end{equation*}
Applying this inequality at $q$ we get
\begin{equation*}
L_3 = \Ba{ \frac{\partial ^ 2 (u_\ell - v_\ell)}{\partial r_1 \partial r_2} (q)   } \le C(n,\bbeta) r_1(q)^{\frac{1}{\beta_1} - 1} r_2(q)^{\ibtwo -1} \tau^{-\ell ( -2 + \ibone + \ibtwo   )} \omega(\tau^\ell)\le C \omega(\tau^\ell).
\end{equation*}

\medskip

\noindent In sum, in all cases $L_3\le C(n,\bbeta) \omega(\tau^\ell)$.

\end{proof}

%\textcolor{red}{We always assume $r_p = \min(r_p,r_q)\le 2d$.}

\begin{lemma}\label{lemma 2.16}
%Assume $r_p = \min(r_p,r_q)\le 2d$.
There exists a constant $C=C(n,\bbeta)>0$ such that for all $k\le \ell $ and $z\in \frac{1}{3}\hat B_k(p)\backslash \Ss$
{\small
\begin{equation}\label{eqn:2.84}
\Ba{ \frac{\partial}{\partial\theta_1}\bk{ \frac{\partial^2 h_k}{\partial r_1 \partial r_2}  }(z)  } + \Ba{ \bk{ \frac{\partial^3 h_k}{r_1\partial\theta_1^2  \partial r_2 }  }  }\le C\cdot \left\{\begin{aligned}
& r_1^{\ibone - 1} \tau^{-k(-1+ \ibone)} \omega(\tau^k), \text{ if }k\in[k_{2,p} +1,\min(\ell, k_p)]\\
& r_1^{\ibone - 1} r_2^{\ibtwo - 1} \tau^{-k ( - 2 + \ibone + \ibtwo  )} \omega(\tau^k),\text{ if }k\le k_{2,p}.
\end{aligned}\right. 
\end{equation}
}
\end{lemma}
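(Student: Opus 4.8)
The plan is to follow the same three-step template as Lemmas \ref{lemma 2.9}--\ref{lemma 2.11}, now with one extra derivative to dispose of. The key preliminary observation is that $\partial_{\theta_1}$ commutes with $\Delta_\bbeta$ (the coefficients of $\Delta_\bbeta$ depend on $|z_1|,|z_2|$ only), so $v:=\partial_{\theta_1}h_k$ is again $g_\bbeta$-harmonic on $\hat B_k(p)$; moreover, applying the gradient estimate \eqref{eqn:grad final prop} to $h_k$ (which satisfies $\|h_k\|_{L^\infty(\hat B_k(p))}\le C\tau^{2k}\omega(\tau^k)$ by \eqref{eqn:c0 1}) and using $|\partial_{\theta_1}h_k|\le\beta_1 r_1|\nabla_{g_\bbeta}h_k|$ together with $r_1\le C\tau^k$ on $\hat B_k(p)$, one gets $\|v\|_{L^\infty(\frac12\hat B_k(p))}\le C\tau^{2k}\omega(\tau^k)$. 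The two quantities in \eqref{eqn:2.84} equal $\partial_{r_1}(\partial_{r_2}v)$ and $r_1^{-1}\partial_{\theta_1}(\partial_{r_2}v)$, equivalently $\partial_{r_2}(\partial_{r_1}v)$ and $\partial_{r_2}(r_1^{-1}\partial_{\theta_1}v)$; since $|\partial_{r_1}\phi|+r_1^{-1}|\partial_{\theta_1}\phi|\le r_1^{1/\beta_1-1}|\partial_{z_1}\phi|$ and $|\partial_{r_2}\psi|+r_2^{-1}|\partial_{\theta_2}\psi|\le r_2^{1/\beta_2-1}|\partial_{z_2}\psi|$, each estimate is reduced, after one or two steps, to the one-dimensional estimate \eqref{eqn:2.5} on $\mathbb C$-balls transverse to $\sS_1$ or $\sS_2$.

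For $k_{2,p}+1\le k\le\min(\ell,k_p)$ the ball $\hat B_k(p)$ is disjoint from $\sS_2$, so I would introduce the smooth holomorphic coordinate $w_2=z_2^{\beta_2}$, under which $g_\bbeta$ is the cone metric singular only along $\sS_1$ with angle $2\pi\beta_1$ and $r_2\gtrsim\tau^{k_{2,p}}$; writing $\partial_{r_2}$ via \eqref{eqn:radial complex} with coefficients of modulus one, it suffices to estimate $\partial_{r_1}$ and $r_1^{-1}\partial_{\theta_1}$ of the $g_\bbeta$-harmonic functions $\partial_{w_2}v$, $\partial_{\bar w_2}v$. From $\|\partial_{w_2}v\|_{L^\infty(\frac12\hat B_k(p))}\le C\tau^{-k}\|v\|_{L^\infty}\le C\tau^k\omega(\tau^k)$ and the Laplacian/$D'$-estimates \eqref{eqn:lap final prop} for the harmonic function $\partial_{w_2}v$, the function $F$ with $|z_1|^{2(1-\beta_1)}\partial_{z_1}\partial_{\bar z_1}(\partial_{w_2}v)=-\Delta_2(\partial_{w_2}v)-\sum_j\partial_{s_j}^2(\partial_{w_2}v)$ satisfies $\|F\|_{L^\infty(\frac12\hat B_k(p))}\le C\tau^{-k}\omega(\tau^k)$; then \eqref{eqn:2.5} applied to $\partial_{w_2}v$ on $B_{\mathbb C}(x,(\tau^k)^{1/\beta_1}/2)$ for $x\in\sS_1\cap\frac14\hat B_k(p)$, combined with the covering \eqref{eqn:inclusion} and a factor $r_1^{1/\beta_1-1}$, gives the first alternative of \eqref{eqn:2.84}. (If the range $k_p<k\le\ell$ occurs, $\hat B_k(p)$ is disjoint from $\sS$ altogether and the bound follows from standard interior estimates in the smooth coordinates $w_1=z_1^{\beta_1},w_2=z_2^{\beta_2}$, as in Lemma \ref{lemma 2.12}.)

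For $k\le k_{2,p}$ the ball is centered on $\sS_1\cap\sS_2$ with $r_1(p)\le r_2(p)\le C\tau^k$, and I would iterate \eqref{eqn:2.5}, first in $z_1$ and then in $z_2$, applied to $v$ rather than to $D'h_k$. The first application, carried out exactly as in Lemma \ref{lemma 2.9} (and the Remark following it), to the harmonic functions $v$, $\Delta_1 v$ and $\partial_{s_j}^2 v$ (whose $L^\infty$-norms on $\frac12\hat B_k(p)$ are respectively $\le C\tau^{2k}\omega(\tau^k)$, $\le C\omega(\tau^k)$, $\le C\omega(\tau^k)$ by the Laplacian/$D'$-estimates), yields, for $z\in\frac14\hat B_k(p)\backslash\sS$, the bound $|\partial_{r_1}v|+r_1^{-1}|\partial_{\theta_1}v|\le C r_1^{1/\beta_1-1}\tau^{-k(1/\beta_1-2)}\omega(\tau^k)$, together with the same estimate but with $\tau^{-k/\beta_1}$ in place of $\tau^{-k(1/\beta_1-2)}$ for the corresponding derivatives $\partial_{r_1}$, $r_1^{-1}\partial_{\theta_1}$ of $\Delta_1 v$ and of $\partial_{s_j}^2 v$. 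Since $\partial_{r_1}$ and $r_1^{-1}\partial_{\theta_1}$ commute with $\Delta_2$, each of $\phi:=\partial_{r_1}v$ and $\phi:=r_1^{-1}\partial_{\theta_1}v$ satisfies $|z_2|^{2(1-\beta_2)}\partial_{z_2}\partial_{\bar z_2}\phi=\Delta_2\phi=-\partial_{r_1}\Delta_1 v-\sum_j\partial_{r_1}\partial_{s_j}^2 v$ (resp.\ with $r_1^{-1}\partial_{\theta_1}$ in place of $\partial_{r_1}$); on a $\mathbb C$-ball $B_{\mathbb C}(y,(\tau^k)^{1/\beta_2}/2)$ transverse to $\sS_2$, on which $r_1\equiv r_1(y)$, this right-hand side has sup-norm $\le C r_1(y)^{1/\beta_1-1}\tau^{-k/\beta_1}\omega(\tau^k)$ while $\|\phi\|_{L^\infty}\le C r_1(y)^{1/\beta_1-1}\tau^{-k(1/\beta_1-2)}\omega(\tau^k)$. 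A second application of \eqref{eqn:2.5}, now in $z_2$, to $\phi$ on this ball then gives $|\partial_{z_2}\phi|\le C r_1^{1/\beta_1-1}\tau^{-k(1/\beta_1+1/\beta_2-2)}\omega(\tau^k)$ (the two $\tau$-powers from the two terms of \eqref{eqn:2.5} collapsing to the same exponent), and a factor $r_2^{1/\beta_2-1}$ yields the second alternative of \eqref{eqn:2.84} for $\partial_{r_2}(\partial_{r_1}v)=\partial_{\theta_1}(\partial_{r_1}\partial_{r_2}h_k)$ and for $\partial_{r_2}(r_1^{-1}\partial_{\theta_1}v)=r_1^{-1}\partial_{\theta_1}^2\partial_{r_2}h_k$.

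The hard part is bookkeeping rather than conceptual novelty: one must peel the derivatives off in the right order so that no spurious negative power of $r_1$ or $r_2$ ever appears. Concretely, $\partial_{r_2}$ (or $r_2^{-1}\partial_{\theta_2}$) may only be removed from a function commuting with $\Delta_2$, so that the bad term $|z_2|^{2(1-\beta_2)}\partial_{z_2}\partial_{\bar z_2}(\cdot)$ reproduces $\partial_{r_1}$ or $r_1^{-1}\partial_{\theta_1}$ applied to $\Delta_2 v=-\Delta_1 v-\sum_j\partial_{s_j}^2 v$, never the commutator $[\partial_{r_2},\Delta_2]$ with its $r_2^{-2},r_2^{-3}$ coefficients; and then one must check that each $\tau$-exponent produced by the (at most two) uses of \eqref{eqn:2.5} equals $-k(1/\beta_1+1/\beta_2-2)$ in the second case and $-k(1/\beta_1-1)$ in the first. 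Everything else is a direct transcription of the arguments of Lemmas \ref{lemma 2.9}--\ref{lemma 2.11}.
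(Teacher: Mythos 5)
Your proposal is correct and follows essentially the same template as the paper's proof: set $v=\partial_{\theta_1}h_k$, obtain preliminary $L^\infty$ and Laplacian/$D'$ bounds, then apply the one-dimensional estimate \eqref{eqn:2.5} twice (once transverse to each component of $\sS$), covering $\frac14\hat B_k(p)$ with $\mathbb C$-balls as in \eqref{eqn:inclusion}, and splitting into the cases $k_{2,p}+1\le k\le\min(\ell,k_p)$ (where $w_2=z_2^{\beta_2}$ is smooth and gradient estimates replace one use of \eqref{eqn:2.5}) and $k\le k_{2,p}$.

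The only genuine deviation is in the order of the two \eqref{eqn:2.5}-iterations for $k\le k_{2,p}$: the paper first works in the $z_2$-direction (estimating $\partial_{r_2}(\Delta_2\partial_{\theta_1}h_k)$ from the equation $F_5$) and then in $z_1$ (equation $F_6$), whereas you peel off $\partial_{r_1}$ and $r_1^{-1}\partial_{\theta_1}$ from $v$, $\Delta_1 v$, $\partial_{s_j}^2 v$ first and then apply \eqref{eqn:2.5} in $z_2$ to $\phi=\partial_{r_1}v$ and $\phi=r_1^{-1}\partial_{\theta_1}v$. By the $z_1\leftrightarrow z_2$ symmetry of the flat cone metric this is interchangeable, and your exponent bookkeeping is right — both terms of \eqref{eqn:2.5} indeed collapse to $\tau^{-k(1/\beta_1+1/\beta_2-2)}$ in the second case. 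The observation in your final paragraph about only ever removing $\partial_{r_2}$ from an object commuting with $\Delta_2$ (so the commutator $[\partial_{r_2},\Delta_2]$ never appears) correctly identifies the constraint that forces the chosen order, and it is the same reason the paper's order works: the paper first removes a $z_2$-derivative from $\Delta_2\partial_{\theta_1}h_k$ via the \emph{equation}, not via commutation. In short, this is the paper's argument with the indices $1$ and $2$ swapped in one case, not a different method.
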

\begin{proof}
The proof is parallel to that of Lemma \ref{lemma 2.11}. The function $\frac{\partial h_k}{\partial \theta_1}$ is $g_{\bbeta}$-harmonic on $\hat B_k(p)$, and by the Laplacian estimate \eqref{eqn:lap final prop}, we have 
\begin{equation*}
\sup_{\frac{1}{1.2}\hat B_k(p)} \bk{ \ba{ \Delta_1 \frac{\partial h_k}{\partial \theta_1}  } + \ba{\Delta_2 \frac{\partial h_k}{\partial \theta_1}  }   } \le C(n,\bb) \omega(\tau^k).
\end{equation*}
The function $\Delta_2 \frac{\partial h_k}{\partial \theta_1} $ is also $g_{\bbeta}$-harmonic, so the Laplacian estimates \eqref{eqn:lap final prop} imply
\begin{equation*}
\sup_{\frac{1}{1.4} \hat B_k(p)}\bk{ \ba{ \Delta_1 \Delta_2 \frac{\partial h_k}{\partial \theta_1}  } + \ba{\Delta_2 \Delta_2 \frac{\partial h_k}{\partial \theta_1} }+ \ba{  (D')^2 \Delta_2 \frac{\partial h_k}{\partial \theta_1}  }    } \le C \tau^{-2k}\bk{ \osc_{\frac{1}{1.2}\hat B_k(p)} \Delta_2 \frac{\partial h_k}{\partial \theta_1} } \le C \tau^{-2k}\omega(\tau^k).
\end{equation*}We consider  the equation 
\begin{equation}\label{eqn:G5}|z_2|^{2(1-\beta_2)} \frac{\partial^2}{\partial z_2 \partial \bar z_2} \bk{\Delta_2 \frac{\partial h_k}{\partial \theta_1} }= - \Delta_1 \Delta_2  \frac{\partial h_k}{\partial \theta_1} - \sum_j \frac{\partial^2}{\partial s_j^2} \Delta_2 \frac{\partial h_k}{\partial \theta_1}=: F_5, \end{equation}
where the function $F_5$ satisfies $\sup_{\frac{1}{1.4}\hat B_k(p)} |F_5|\le C \tau^{-2k}\omega(\tau^k)$. 

$\bullet$ In case $k_{2,p}+1 \le k\le \min(\ell, k_p)$, we can introduce the smooth coordinate $w_2 = z_2^{\beta_2}$ in the ball $\frac{1}{1.5}\hat B_k(p)$ as before, since this ball is disjoint with $\sS_2$ and under the coordinates $(r_1,\theta_1;w_2,z_3,\ldots, z_n)$ we can use the usual standard gradient estimate to the $g_{\bbeta}$-harmonic function $\Delta_2 \frac{\partial h_k}{\partial \theta_1}$ to obtain that 
\begin{equation}\label{eqn:2.86}\sup_{\frac{1}{2}\hat B_k(p)} \Ba{ \frac{\partial}{\partial r_2}\bk{\Delta_2 \frac{\partial h_k}{\partial \theta_1} } } + \Ba{ \frac{\partial}{r_2 \partial \theta_2} \bk{ \Delta_2 \frac{\partial h_k}{\partial \theta_1}  }  } \le C \tau^{-k}\omega(\tau^k). \end{equation}

\medskip

$\bullet$ In case $k\le k_{2,p}$, the ball $\hat B_k(p)$ is centered at $p_{1,2}$. We apply the usual estimate \eqref{eqn:2.5} to the function $\Delta_2 \frac{\partial h_k}{\partial \theta_1}$, the solution to the equation \eqref{eqn:G5}, on any $\mathbb C$-ball $ A_2: = B_{\mathbb C}(y, (\tau^k)^{1/\beta_2})$ for any $y\in \sS_2 \cap \frac{1}{1.6} \hat B_k(p)$ where $A_2$ denotes the Euclidean ball in the complex plane orthogonal to $\sS_2$ and passing through $y$. Then for any $z\in B_{\mathbb C}(y, (\tau^k)^{1/\beta_2}/2)\backslash \{y\}$
\begin{equation*}
\Ba{ \frac{\partial}{\partial z_2} \bk{\Delta_2 \frac{\partial h_k}{\partial \theta_1}} (z) } \le C \frac{\|\Delta_2 \frac{\partial h_k}{\partial \theta_1} \|_{L^\infty(A_2)}}{\tau^{k/\beta_2}} + C \| F_5\|_{L^\infty(A_2)} (\tau^k)^{2 - \ibtwo}\le C \tau^{-k/\beta_2} \omega(\tau^k).
\end{equation*}
This implies that on $\frac 1 2 \hat B_k(p)\backslash \sS$
\begin{equation}\label{eqn:2.87}
\Ba{ \frac{\partial }{\partial r_2} \bk{ \Delta_2 \frac{\partial h_k}{\partial \theta_1}  }   } + \Ba{ \frac{\partial}{r_2 \partial\theta_2} \bk{\Delta_2 \frac{\partial h_k}{\partial \theta_1}}   } \le C r_2^{\ibtwo - 1} \tau^{-k/\beta_2} \omega(\tau^k).%\le C \tau^{-k}\omega(\tau^k).
\end{equation}

Taking $\frac{\partial}{\partial r_2}$ on both sides of $\Delta_\bb \frac{\partial h_k}{\partial \theta_1} = 0$, we get
\begin{equation}\label{eqn:G6}
|z_1|^{2(1-\beta_1)} \frac{\partial^2 }{\partial z_1 \partial \bar z_1} \bk{ \frac{\partial^2 h_k}{\partial r_2 \partial \theta_1}  } = - \frac{\partial}{\partial r_2}\bk{\Delta_2 \frac{\partial h_k}{\partial \theta_1} }  - \sum_j\frac{\partial^2 }{\partial s_j^2} \bk{ \frac{\partial ^2 h_k}{\partial r_2 \partial \theta_1}  } = : F_6.
\end{equation}
It is not hard to see from \eqref{eqn:2.86} and \eqref{eqn:2.87} and standard derivative estimates that on $\frac{1}{1.8} \hat B_k(p)\backslash \sS$
\begin{enumerate}[label=(\roman*), fullwidth] \item in case $k_{2,p}+1 \le k\le \min(\ell, k_p)$, $ |F_6| \le C \tau^{-k} \omega(\tau^k)$;  \item in case $k\le k_{2,p}$ $|F_6| \le C r_2^{\ibtwo - 1} \tau^{-\frac{k}{\beta_2}}\omega(\tau^k)$.
\end{enumerate} Then by applying estimate \eqref{eqn:2.5} to the function $\frac{\partial^2 h_k}{\partial r_2 \partial \theta_1}$ on any $\mathbb C$-ball $A_3: = B_{\mathbb C} (x, (\tau^k)^{1/\beta_1})$ for any $x\in \frac{1}{1.8}\hat B_k(p)\cap \sS_1$ we get that on $B_{\mathbb C}(x, (\tau^k)^{1/\beta_1}/2)\backslash \{x\}$
{\small
\begin{align*}
\Ba{ \frac{\partial}{\partial r_1} \bk{ \frac{\partial^2 h_k}{\partial r_2 \partial \theta_1}  }  } + & \Ba{ \frac{\partial}{ r_1 \partial \theta_1} \bk{ \frac{\partial^2 h_k}{\partial r_2 \partial \theta_1}  }  } \le  ~C r_1^{\ibone - 1} \frac{\| \frac{\partial^2 h_k}{\partial r_2 \partial \theta_1 }   \|_{L^\infty(A_3)}}{\tau^{k/\beta_1}} + C r_1 ^{\ibone - 1}\| F_6\|_{L^\infty(A_3)} \tau^{k( 2- \ibone  )}\\
\le &~ C\cdot \left\{\begin{aligned}
& r_1^{\ibone - 1} \tau^{-k(-1+ \ibone)} \omega(\tau^k), \text{ if }k\in[k_{2,p} +1,\min(\ell, k_p)]\\
& r_1^{\ibone - 1} r_2^{\ibtwo - 1} \tau^{-k ( -2 +   \ibone + \ibtwo  )} \omega(\tau^k),\text{ if }k\le k_{2,p}.
\end{aligned}\right. 
\end{align*}
}
Therefore this estimate  holds on $\frac{1}{3}\hat B_k(p)\backslash \sS$.

\end{proof}
\begin{lemma}\label{lemma 2.17}%Assume $r_p = \min(r_p,r_q)\le 2d$.
For any $k\le \ell$ and any point $z\in \frac 1 3 \hat B_k(p)\backslash \sS$ the following estimates hold
\begin{equation}\label{eqn:2.84 new}
\Ba{  \frac{\partial^2 h_k}{\partial r_1 \partial r_2}  (z)  }\le C\cdot \left\{\begin{aligned}
& r_1^{\ibone - 1} \tau^{-k(-1+ \ibone)} \omega(\tau^k), \text{ if }k\in[k_{2,p} +1,\min(\ell, k_p)]\\
& r_1^{\ibone - 1} r_2^{\ibtwo - 1} \tau^{-k ( - 2 +   \ibone + \ibtwo  )} \omega(\tau^k),\text{ if }k\le k_{2,p}.
\end{aligned}\right. 
\end{equation}
\begin{equation*}%\label{eqn:2.84 new 1}
\Ba{  \frac{\partial^2 D'h_k}{\partial r_1 \partial r_2}  (z)  }\le C\cdot \left\{\begin{aligned}
& r_1^{\ibone - 1} \tau^{-k/\beta_1} \omega(\tau^k), \text{ if }k\in[k_{2,p} +1,\min(\ell, k_p)]\\
& r_1^{\ibone - 1} r_2^{\ibtwo - 1} \tau^{-k ( - 1 +   \ibone + \ibtwo  )} \omega(\tau^k),\text{ if }k\le k_{2,p}.
\end{aligned}\right. 
\end{equation*}
\end{lemma}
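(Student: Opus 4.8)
The plan is to prove Lemma~\ref{lemma 2.17} by the same mechanism as in Lemmas~\ref{lemma 2.9}--\ref{lemma 2.11} and Lemma~\ref{lemma 2.16}: express the quantity to be bounded through a $g_\bbeta$--harmonic auxiliary function, apply the one--dimensional estimate \eqref{eqn:2.5} in the complex discs orthogonal to $\sS_1$ and to $\sS_2$, and pass from $\partial/\partial z_i$ to $\partial/\partial r_i$ via $\big|\tfrac{\partial}{\partial r_i}\cdot\big|\le C|z_i|^{1-\beta_i}\big|\tfrac{\partial}{\partial z_i}\cdot\big|=C\,r_i^{1/\beta_i-1}\big|\tfrac{\partial}{\partial z_i}\cdot\big|$. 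I would use throughout that $\|h_k\|_{L^\infty(\hat B_k(p))}\le C\tau^{2k}\omega(\tau^k)$, $\|D'h_k\|_{L^\infty(\hat B_k(p))}\le C\tau^k\omega(\tau^k)$, the gradient estimate \eqref{eqn:grad final prop}, the Laplacian estimate \eqref{eqn:lap final prop}, and the interior bounds of Proposition~\ref{prop:existence}. It suffices to treat $\tfrac{\partial^2 h_k}{\partial r_1\partial r_2}$; the bound for $\tfrac{\partial^2 D'h_k}{\partial r_1\partial r_2}$ then follows word for word with $D'h_k$ in place of $h_k$, every estimate acquiring one extra factor $\tau^{-k}$ (since $\|D'h_k\|_{L^\infty}$ exceeds $\|h_k\|_{L^\infty}$ by a factor $\tau^{-k}$ up to a constant), which is exactly the shift of the $\tau$--exponents between the two displayed bounds. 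The common engine is the identity obtained by differentiating $\Delta_\bbeta h_k=0$ in $r_2$: since $\Delta_1$ and $\partial/\partial r_2$ commute (the former acts only in $z_1$, the latter only in $z_2$),
\[
|z_1|^{2(1-\beta_1)}\frac{\partial^2}{\partial z_1\partial\bar z_1}\Big(\frac{\partial h_k}{\partial r_2}\Big)
=\Delta_1\Big(\frac{\partial h_k}{\partial r_2}\Big)
=-\frac{\partial}{\partial r_2}\big(\Delta_2 h_k\big)-\sum_j\frac{\partial}{\partial r_2}\Big(\frac{\partial^2 h_k}{\partial s_j^2}\Big)=:\widetilde F .
\]
After bounding $\|\widetilde F\|$ and $\|\partial h_k/\partial r_2\|$ on a slightly smaller ball, I would feed these into \eqref{eqn:2.5} applied to $\partial h_k/\partial r_2$ on the discs $B_{\mathbb C}(x,(\tau^k)^{1/\beta_1})$, $x\in\sS_1\cap\tfrac14\hat B_k(p)$, and sum over such $x$ using \eqref{eqn:inclusion}.

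In the range $k_{2,p}+1\le k\le\min(\ell,k_p)$ the ball $\hat B_k(p)$ is disjoint from $\sS_2$ for $k\ge k_{2,p}+2$ (for the single value $k=k_{2,p}+1$, where $\hat B_k(p)=B_\bbeta(p_{1,2},2\tau^k)$, the stated bound follows from the second case below, which there is stronger because $r_2\lesssim\tau^k$). When $\hat B_k(p)$ avoids $\sS_2$, the functions $\Delta_2 h_k$ and $\partial_{s_j}^2 h_k$ are bounded $g_\bbeta$--harmonic functions, and \eqref{eqn:grad final prop} gives $|\partial_{r_2}(\Delta_2 h_k)|+\sum_j|\partial_{r_2}(\partial_{s_j}^2 h_k)|\le C\tau^{-k}\omega(\tau^k)$, hence $\|\widetilde F\|\le C\tau^{-k}\omega(\tau^k)$, while $\|\partial_{r_2}h_k\|\le C\tau^k\omega(\tau^k)$. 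Plugging these into \eqref{eqn:2.5} (the case $\beta_1>1/2$) yields $|\partial_{z_1}(\partial_{r_2}h_k)|\le C\tau^{k(1-1/\beta_1)}\omega(\tau^k)$, and converting $\partial_{z_1}\mapsto\partial_{r_1}$ gives the first line of \eqref{eqn:2.84 new}.

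For the second case, $k\le k_{2,p}$, the ball is centred at $p_{1,2}\in\sS_1\cap\sS_2$ and I would first produce the $r_2$--weighted estimates near $\sS_2$, just as in Lemmas~\ref{lemma 2.9}--\ref{lemma 2.11} with the roles of $z_1,z_2$ interchanged: applying \eqref{eqn:2.5} in the $z_2$--discs to $h_k$ (whose $\partial_{z_2\bar z_2}$--equation has right--hand side $\le C\omega(\tau^k)$) and to the harmonic functions $\Delta_2 h_k$, $\partial_{s_j}^2 h_k$ (right--hand sides $\le C\tau^{-2k}\omega(\tau^k)$) gives, on $\tfrac13\hat B_k(p)\backslash\sS$,
\[
\Big|\frac{\partial h_k}{\partial r_2}\Big|\le C\,r_2^{1/\beta_2-1}\tau^{k(2-1/\beta_2)}\omega(\tau^k),\qquad
|\widetilde F|\le C\,r_2^{1/\beta_2-1}\tau^{-k/\beta_2}\omega(\tau^k).
\]
Feeding these into \eqref{eqn:2.5} applied to $\partial h_k/\partial r_2$ on the $z_1$--discs $B_{\mathbb C}(x,(\tau^k)^{1/\beta_1})$ (on which $r_2$ is comparable to $r_2(x)$), the two resulting contributions both collapse to $C\,r_2^{1/\beta_2-1}\tau^{k(2-1/\beta_1-1/\beta_2)}\omega(\tau^k)$, and converting $\partial_{z_1}\mapsto\partial_{r_1}$ together with \eqref{eqn:inclusion} yields the second line of \eqref{eqn:2.84 new} on $\tfrac13\hat B_k(p)\backslash\sS$. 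For $\min(\ell,k_p)<k\le\ell$ the ball is disjoint from $\sS$ and the bound is immediate in the smooth coordinates $w_i=z_i^{\beta_i}$.

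The step I expect to be the main obstacle is the bookkeeping in the second case: one must verify that the weight $r_2^{1/\beta_2-1}$ is genuinely created by the first use of \eqref{eqn:2.5} in the $z_2$--direction — i.e. that $\partial_{r_2}$ of the bounded harmonic functions $\Delta_2 h_k$, $\partial_{s_j}^2 h_k$ decays like $r_2^{1/\beta_2-1}$ rather than merely staying bounded — and that this weight survives unchanged through the second use of \eqref{eqn:2.5} in the $z_1$--direction, with the two terms (coming from $\|\partial_{r_2}h_k\|$ and from $\|\widetilde F\|$) combining to the single sharp power $\tau^{k(2-1/\beta_1-1/\beta_2)}$. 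Once this is in place, the $D'$--version and the localization to $\tfrac13\hat B_k(p)$ are routine.
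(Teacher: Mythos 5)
Your proposal is correct and matches the paper's intent. The paper proves Lemma~\ref{lemma 2.17} by simply pointing to the proof of Lemma~\ref{lemma 2.16} with $h_k$, resp.\ $D'h_k$, in place of $\partial h_k/\partial\theta_1$, and your argument is precisely that argument written out: differentiate $\Delta_\bb h_k=0$ in $r_2$ to get $\Delta_1(\partial_{r_2}h_k)=\widetilde F$, bound $\|\partial_{r_2}h_k\|$ and $\|\widetilde F\|$ via the $z_2$-disc version of \eqref{eqn:2.5} (weighted by $r_2^{1/\beta_2-1}$ when $k\le k_{2,p}$) or the gradient estimate (when $\hat B_k(p)$ avoids $\sS_2$), then apply \eqref{eqn:2.5} in the $z_1$-discs and convert $\partial_{z_1}\mapsto\partial_{r_1}$ using \eqref{eqn:inclusion}; the $D'$-version gains exactly one $\tau^{-k}$ since $\|D'h_k\|_{L^\infty}\lesssim\tau^{-k}\|h_k\|_{L^\infty}$. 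Your treatment of the boundary value $k=k_{2,p}+1$ and of $\min(\ell,k_p)<k\le\ell$ via $r_2\lesssim\tau^k$, respectively the smooth $w_i=z_i^{\beta_i}$ coordinates, is also consistent with what the paper does implicitly.
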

\begin{proof}
This follows from almost the same argument as in  the proof of Lemma \ref{lemma 2.16}, by studying the harmonic functions $h_k$ and $D'h_k$ instead of $\frac{\partial h_k}{\partial \theta_1}$. 
\end{proof}
%\begin{remark}
%By the same argument we can get for all $z\in \frac{1}{3}\hat B_k(p)\backslash \sS$
%\begin{equation}\label{eqn:grad hk r1r2}
%\Ba{  \frac{\partial^2 D' h_k}{\partial r_1 \partial r_2}  (z)  }\le C%\cdot \left\{\begin{aligned}
%& r_1^{\ibone - 1} \tau^{-k( \ibone)} \omega(\tau^k), \text{ if }k\in[k_{2,p} +1,\min(\ell, k_p)]\cap \mathbb N\\
 %r_1^{\ibone - 1} r_2^{\ibtwo - 1} \tau^{-k ( - 1 +   \ibone + \ibtwo  )} \omega(\tau^k),\text{ if }k\le k_{2,p}.
%\end{aligned}\right. 
%\end{equation}
%\end{remark}

\begin{lemma}%Assume $r_p = \min(r_p,r_q)\le 2d$.
The following estimate holds for any $k\le \ell $ and any $z\in\frac{1}{3}\hat B_k(p)\backslash \sS$
{\small
\begin{equation}\label{eqn:r1 r2}
\Ba{ \frac{\partial}{\partial r_1}\bk{\frac{\partial^2 h_k}{\partial r_1 \partial r_2}   }   }(z) \le C\omega(\tau^k)\cdot \left\{\begin{aligned}
& \tau^{-k} + r_1(z)^{\ibone - 2} \tau^{-k (\ibone - 1)}    , \text{ if }k\in[k_{2,p}+1,\min(\ell, k_p)]\\
& r_2(z)^{\ibtwo - 1}\tau^{-\frac{k}{\beta_2}} + r_1(z)^{\ibone - 2} r_2(z)^{\ibtwo -1}\tau^{-k( - 2 +  \ibone + \ibtwo )}, \text{ if } k\le k_{2,p}
\end{aligned}\right.
\end{equation}
}

\end{lemma}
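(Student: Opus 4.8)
The plan is to peel the third radial derivative off in two layers. First I would observe that, since $\Delta_1$ involves only the $z_1$-variable, the identity \eqref{eqn:cone Lap} applied to $w:=\tfrac{\partial h_k}{\partial r_2}$ gives, on $\tfrac13\hat B_k(p)\backslash\sS$,
\[
\frac{\partial}{\partial r_1}\bk{\frac{\partial^2 h_k}{\partial r_1\partial r_2}}=\frac{\partial^2 w}{\partial r_1^2}=\Delta_1 w-\frac{1}{r_1}\frac{\partial^2 h_k}{\partial r_1\partial r_2}-\frac{1}{\beta_1^2 r_1^2}\frac{\partial^3 h_k}{\partial\theta_1^2\partial r_2}.
\]
The last two terms are controlled by the previous lemmas: dividing the estimate \eqref{eqn:2.84 new} of Lemma \ref{lemma 2.17} by $r_1$, and the second estimate in \eqref{eqn:2.84} of Lemma \ref{lemma 2.16} by $r_1$, produces in both regimes exactly the exponent $r_1^{\ibone-2}$ appearing in \eqref{eqn:r1 r2}. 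So the problem reduces to estimating the single new term $\Delta_1 w=\Delta_1\bk{\tfrac{\partial h_k}{\partial r_2}}$.

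For that term I would use that $\Delta_1$ commutes with $\tfrac{\partial}{\partial r_2}$ and that $\Delta_\bb h_k=0$ gives $\Delta_1 h_k=-\Delta_2 h_k-(D')^2 h_k$, so that
\[
\Delta_1\bk{\frac{\partial h_k}{\partial r_2}}=\frac{\partial}{\partial r_2}(\Delta_1 h_k)=-\frac{\partial}{\partial r_2}(\Delta_2 h_k)-\frac{\partial}{\partial r_2}\big((D')^2 h_k\big).
\]
Both $\Delta_2 h_k$ and $(D')^2 h_k$ are $g_\bb$-harmonic on $\hat B_k(p)$ (the product structure makes $\Delta_1,\Delta_2,(D')^2$ mutually commuting) and are $O(\omega(\tau^k))$ in $L^\infty(\tfrac12\hat B_k(p))$ by \eqref{eqn:lap final prop} and the derivative estimates. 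Hence the task is to bound the bad, transversal derivative $\tfrac{\partial}{\partial r_2}$ of a $g_\bb$-harmonic function of size $O(\omega(\tau^k))$, which is precisely the situation of Lemma \ref{lemma 2.11}: writing $\Phi$ for $\Delta_2 h_k$ or $(D')^2 h_k$, the equation $\Delta_\bb\Phi=0$ reads $|z_2|^{2(1-\beta_2)}\tfrac{\partial^2\Phi}{\partial z_2\partial\bar z_2}=-\Delta_1\Phi-(D')^2\Phi=:F$ with $\|F\|_{L^\infty(\frac{1}{2.4}\hat B_k(p))}\le C\tau^{-2k}\omega(\tau^k)$ after one further use of \eqref{eqn:lap final prop} and the derivative estimates.

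When $k\le k_{2,p}$ the ball $\hat B_k(p)$ meets $\sS_2$, and I would apply the pointwise inequality \eqref{eqn:2.5} to $\Phi$ on the transverse $\mathbb C$-disks $B_{\mathbb C}(y,(\tau^k)^{1/\beta_2}/2)$ with $y\in\sS_2\cap\hat B_k(p)$, then pass from $\partial_{z_2}$ to $\partial_{r_2}$ through \eqref{eqn:radial complex}; this yields $\big|\tfrac{\partial\Phi}{\partial r_2}\big|\le C r_2^{\ibtwo-1}\tau^{-k/\beta_2}\omega(\tau^k)$ on $\tfrac13\hat B_k(p)\backslash\sS$, which fed back into the two displays above gives the second case of \eqref{eqn:r1 r2}. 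When $k_{2,p}+1\le k\le\min(\ell,k_p)$ the ball is disjoint from $\sS_2$; there I would use the smooth coordinate $w_2=z_2^{\beta_2}$, under which $\tfrac{\partial}{\partial r_2}$ is a bounded combination of $\partial_{w_2},\partial_{\bar w_2}$, and the ordinary interior gradient estimate for harmonic functions gives $\big|\tfrac{\partial\Phi}{\partial r_2}\big|\le C\tau^{-k}\omega(\tau^k)$, which gives the first case of \eqref{eqn:r1 r2}. (For $k_p<k\le\ell$, when such $k$ occur, $\hat B_k(p)$ is disjoint from all of $\sS$, both $w_1=z_1^{\beta_1}$ and $w_2=z_2^{\beta_2}$ are available, and the same smooth argument yields a bound subsumed in the first case.)

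I expect the main obstacle to be exactly the term $\tfrac{\partial}{\partial r_2}(\Delta_2 h_k)$ in the regime $k\le k_{2,p}$: since $\partial_{r_2}$ is transversal to $\sS_2$ a direct gradient bound is unavailable, and the estimate must be routed through the one-dimensional inequality \eqref{eqn:2.5} on disks of Euclidean radius $(\tau^k)^{1/\beta_2}$, with the exponent of that radius chosen so that the powers of $\tau^k$ and of $r_2$ assemble into the claimed $\tau^{-k(-2+\ibone+\ibtwo)}$. Once this mechanism is in place, the remaining steps — tracking which of $\beta_1,\beta_2$ controls each factor and collecting constants — are routine and entirely parallel to Lemmas \ref{lemma 2.11}, \ref{lemma 2.16} and \ref{lemma 2.17}.
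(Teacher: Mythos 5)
Your proposal is correct and follows essentially the same path as the paper: the same radial-Laplacian identity for $w=\frac{\partial h_k}{\partial r_2}$ peels off the third $r_1$-derivative, the lower-order terms are bounded by Lemmas \ref{lemma 2.16} and \ref{lemma 2.17} divided by $r_1$, and the key term $\frac{\partial}{\partial r_2}\Delta_1 h_k$ is controlled by the one-dimensional estimate \eqref{eqn:2.5} on transverse $\mathbb C$-disks when $k\le k_{2,p}$, and by the smooth gradient estimate in the $w_2$-coordinate otherwise. The only difference is cosmetic: the paper applies \eqref{eqn:2.5} directly to $\Delta_1 h_k$ via the $z_2$-part of $\Delta_\bb(\Delta_1 h_k)=0$, whereas you first substitute $\Delta_1 h_k=-\Delta_2 h_k-(D')^2 h_k$ and treat each summand; the $L^\infty$ data fed into \eqref{eqn:2.5} (bounds of size $\omega(\tau^k)$ on the unknown and $\tau^{-2k}\omega(\tau^k)$ on the right-hand side) and hence the resulting estimate are identical.
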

\begin{proof}
By the Laplacian estimates \eqref{eqn:lap final prop} we have
\begin{equation*}
\sup_{\frac{1}{1.2}\hat B_k(p)\backslash \Ss} \ba{ \Delta_1 h_k  } + \ba{\Delta_2 h_k} \le C(n,\bbeta)\omega(\tau^k).
\end{equation*}
Applying again the Laplacian estimate \eqref{eqn:lap final prop} to the $g_{\bbeta}$-harmonic function $\Delta_1 h_k$, 
\begin{equation*}
\sup_{\frac{1}{1.4} \hat B_k(p)} \bk{ \ba{\Delta_1 \Delta_1 h_k  } + \ba{\Delta_2\Delta_1 h_k}  + |(D')^2 \Delta_1 h_k| }\le C(n,\bbeta)\tau^{-2k}\omega(\tau^k).
\end{equation*}
We consider the equation
\begin{equation}\label{eqn:G7}
|z_2|^{2-2\beta_2}\frac{\partial^2}{\partial z_2 \partial \bar z_2} \Delta_1 h_k = -\Delta_1 \Delta_1 h_k - \sum_j \frac{\partial^2}{\partial s_j^2} \Delta_1 h_k = : F_7.
\end{equation}
From the estimates above, we see $\| F_7\|_{L^\infty(\frac{1}{1.8}\hat B_k(p))}\le C \tau^{-2k}\omega(\tau^k)$.

\noindent $\bullet$ When $k_{2,p}+1 \le k\le \min(\ell, k_p)$, we directly apply gradient estimate to $\Delta_1 h_k$ to get
\begin{equation}\label{eqn:grad lap hk}
\sup_{\frac{1}{1.5}\hat B_k(p)\backslash \Ss} \Ba{ \frac{\partial}{\partial r_2} \Delta_1 h_k  } + \Ba{ \frac{\partial}{r_2 \partial \theta_2} \Delta_1 h_k }\le C \tau^{-k}\omega(\tau^k).
\end{equation}
\noindent$\bullet$ When $k\le k_{2,p}$, the balls $\hat B_k(p)$ are centered at $p_{1,2}$, and we can apply the usual $\mathbb C$-ball type estimate to get that for any $z\in \frac{1}{2}\hat B_k(p)\backslash \sS$
\begin{align*}
\Ba{\frac{\partial}{\partial r_2} \Delta_1h_k   }(z) + \Ba{ \frac{\partial}{r_2\partial \theta_2} \Delta_1 h_k  }\le&  C r_2(z)^{\ibtwo - 1} \frac{\| \Delta_1 h_k\|_{L^\infty}}{\tau^{k/\beta_2}} + C r_2(z)^{\ibtwo - 1} \| F_7\|_{L^\infty} \tau^{k(2-\ibtwo)}  \\
\le &~ C r_2(z)^{\ibtwo - 1} \tau^{-k/\beta_2} \omega(\tau^k).
\end{align*}
Recall  the following equation holds
\begin{align*}
{ \frac{\partial}{\partial r_1} \bk{ \frac{\partial^2 h_k}{\partial r_1 \partial r_2}  }  } = & { \frac{\partial}{\partial r_2}\Delta_1 h_k - \frac{1}{r_1} \frac{\partial^2 h_k}{\partial r_1 \partial r_2} - \frac{1}{\beta_1^2 r_1^2} \frac{\partial^3 h_k}{\partial\theta_1^2 \partial r_2}   }
\end{align*}
from which we derive that for any $z\in\frac 1 2 \hat B_k(p)\backslash \sS$ 
{\small
\begin{align*}
\Ba{ \frac{\partial}{\partial r_1} \bk{ \frac{\partial^2 h_k}{\partial r_1 \partial r_2}  }  }(z) \le C\omega(\tau^k) \left\{\begin{aligned}
& \tau^{-k} + r_1(z)^{\ibone - 2} \tau^{-k (\ibone - 1)}    , \text{ if }k\in[k_{2,p}+1,\min(\ell, k_p)]\\
& r_2(z)^{\ibtwo - 1}\tau^{-\frac{k}{\beta_2}} + r_1(z)^{\ibone - 2} r_2(z)^{\ibtwo -1}\tau^{-k( - 2+ \ibone + \ibtwo )}, \text{ if } k\le k_{2,p}
\end{aligned}\right.
\end{align*}
}
\end{proof}

\begin{lemma}\label{lemma 2.19}%Assume $r_p = \min(r_p,r_q)\le 2d$.
There exists a constant $C=C(n,\bbeta)>0$ such that for all $k\le \ell$ and $z\in \frac{1}{3}\hat B_k(p)\backslash \Ss$
{\small
\begin{equation}\label{eqn:2.93}
\Ba{ \frac{\partial}{\partial\theta_2}\bk{ \frac{\partial^2 h_k}{\partial r_1 \partial r_2}  }(z)  } + \Ba{ \bk{ \frac{\partial^3 h_k}{r_2\partial\theta_2^2  \partial r_1 }  } (z) }\le C\omega(\tau^k) \left\{\begin{aligned}
& r_1^{\ibone - 1} \tau^{-k(-1+ \ibone)} , \text{ if }k\in[k_{2,p} +1,\min(\ell, k_p)]\\
& r_1^{\ibone - 1} r_2^{\ibtwo - 1} \tau^{-k ( - 2 + \ibone + \ibtwo  )},\text{ if }k\le k_{2,p}.
\end{aligned}\right. 
\end{equation}
}
\end{lemma}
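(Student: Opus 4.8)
The plan is to prove Lemma \ref{lemma 2.19} by mimicking the proof of Lemma \ref{lemma 2.16} with the roles of the indices $1$ and $2$ interchanged, and starting from the $r_1$-derivative bound for $\tfrac{\partial^2 h_k}{\partial r_1\partial r_2}$ that was just established in the previous lemma (equation \eqref{eqn:r1 r2}) together with the pointwise bound \eqref{eqn:2.84 new} on $\tfrac{\partial^2 h_k}{\partial r_1\partial r_2}$ itself. First I would observe that $\tfrac{\partial h_k}{\partial\theta_2}$ is again $g_\bbeta$-harmonic on $\hat B_k(p)$ (since $\partial_{\theta_2}$ commutes with $\Delta_\bb$), apply the Laplacian estimate \eqref{eqn:lap final prop} to it, then apply \eqref{eqn:lap final prop} again to the $g_\bbeta$-harmonic function $\Delta_1\tfrac{\partial h_k}{\partial\theta_2}$ to get
\begin{equation*}
\sup_{\frac{1}{1.4}\hat B_k(p)}\bk{\ba{\Delta_1\Delta_1\tfrac{\partial h_k}{\partial\theta_2}}+\ba{\Delta_2\Delta_1\tfrac{\partial h_k}{\partial\theta_2}}+\ba{(D')^2\Delta_1\tfrac{\partial h_k}{\partial\theta_2}}}\le C\tau^{-2k}\omega(\tau^k).
\end{equation*}

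Next I would set up the auxiliary equation, the exact analogue of \eqref{eqn:G5}: writing $F=-\Delta_1\Delta_1\tfrac{\partial h_k}{\partial\theta_2}-\sum_j\tfrac{\partial^2}{\partial s_j^2}\Delta_1\tfrac{\partial h_k}{\partial\theta_2}$, we have $|z_1|^{2(1-\beta_1)}\tfrac{\partial^2}{\partial z_1\partial\bar z_1}\bk{\Delta_1\tfrac{\partial h_k}{\partial\theta_2}}=F$ with $\|F\|_{L^\infty}\le C\tau^{-2k}\omega(\tau^k)$ on $\tfrac{1}{1.4}\hat B_k(p)$. In the regime $k_{2,p}+1\le k\le\min(\ell,k_p)$ the ball $\hat B_k(p)$ is disjoint from $\sS_2$, so I introduce the smooth coordinate $w_2=z_2^{\beta_2}$ and apply the standard gradient estimate to the $g_\bbeta$-harmonic function $\Delta_1\tfrac{\partial h_k}{\partial\theta_2}$; in the regime $k\le k_{2,p}$ I instead apply the one-dimensional estimate \eqref{eqn:2.5} on the $\mathbb C$-balls $B_{\mathbb C}(x,(\tau^k)^{1/\beta_1}/2)$ for $x\in\sS_1\cap\tfrac{1}{1.6}\hat B_k(p)$, exactly as in \eqref{eqn:2.87}, obtaining in the two cases the bounds $C\tau^{-k}\omega(\tau^k)$ and $C\,r_1^{\ibone-1}\tau^{-k/\beta_1}\omega(\tau^k)$ respectively for $\tfrac{\partial}{\partial r_1}\bk{\Delta_1\tfrac{\partial h_k}{\partial\theta_2}}$ and $\tfrac{\partial}{r_1\partial\theta_1}\bk{\Delta_1\tfrac{\partial h_k}{\partial\theta_2}}$. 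Then from the identity $|z_2|^{2(1-\beta_2)}\tfrac{\partial^2}{\partial z_2\partial\bar z_2}\bk{\tfrac{\partial^2 h_k}{\partial r_1\partial\theta_2}}=-\tfrac{\partial}{\partial r_1}\bk{\Delta_1\tfrac{\partial h_k}{\partial\theta_2}}-\sum_j\tfrac{\partial^2}{\partial s_j^2}\bk{\tfrac{\partial^2 h_k}{\partial r_1\partial\theta_2}}=:\tilde F$ I bound $\tilde F$ using the previous step, apply \eqref{eqn:2.5} once more on the $\mathbb C$-balls $B_{\mathbb C}(y,(\tau^k)^{1/\beta_2})$ for $y\in\sS_2\cap\tfrac{1}{1.8}\hat B_k(p)$, and arrive at the stated bound for $\ba{\tfrac{\partial}{\partial r_2}\bk{\tfrac{\partial^2 h_k}{\partial r_1\partial\theta_2}}}+\ba{\tfrac{\partial}{r_2\partial\theta_2}\bk{\tfrac{\partial^2 h_k}{\partial r_1\partial\theta_2}}}$; the first term of the LHS of \eqref{eqn:2.93}, namely $\ba{\tfrac{\partial}{\partial\theta_2}\bk{\tfrac{\partial^2 h_k}{\partial r_1\partial r_2}}}$, then differs from one of these by lower-order terms controlled by \eqref{eqn:r1 r2} and \eqref{eqn:2.84 new}, while the term $\ba{\tfrac{\partial^3 h_k}{r_2\partial\theta_2^2\partial r_1}}$ is obtained from the same data by solving the cone-Laplace identity \eqref{eqn:cone Lap} in the $r_2$-variable (i.e. $\tfrac{\partial^3 h_k}{\partial\theta_2^2\partial r_1}=\beta_2^2 r_2^2\bk{\Delta_2\tfrac{\partial h_k}{\partial r_1}-\tfrac{\partial^3 h_k}{\partial r_2^2\partial r_1}-\tfrac1{r_2}\tfrac{\partial^2 h_k}{\partial r_2\partial r_1}}$) and plugging in the already-established radial bounds.

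The estimate \eqref{eqn:2.93} then holds on $\tfrac{1}{3}\hat B_k(p)\backslash\sS$ after covering this ball by the union of the $\mathbb C$-balls around points of $\sS_1$ (resp. $\sS_2$), using an inclusion of the type \eqref{eqn:inclusion}. The main obstacle I anticipate is purely bookkeeping: carefully tracking which ball $\hat B_k(p)$ is centered at (namely $p$, $p_1$, or $p_{1,2}$) in each of the three ranges of $k$, and making sure the powers of $\tau^{-k}$ and the powers of $r_1,r_2$ combine correctly — in particular confirming that when $k\le k_{2,p}$ the extra factor $r_2^{\ibtwo-1}$ picked up from the $\mathbb C$-ball estimate on $\sS_2$ is matched by the corresponding $\tau$-power and does not degrade the exponent $-k(-2+\ibone+\ibtwo)$. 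There is no genuinely new analytic input beyond what was used in Lemmas \ref{lemma 2.9}–\ref{lemma 2.17}; the lemma is the symmetric counterpart of \eqref{eqn:2.84} needed to control the $\theta_2$-variation of $\tfrac{\partial^2 h_k}{\partial r_1\partial r_2}$ along the geodesic pieces in the forthcoming estimate of $L_2$.
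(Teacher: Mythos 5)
Your plan starts along the right lines (apply \eqref{eqn:lap final prop} twice to $\tfrac{\partial h_k}{\partial\theta_2}$, then to $\Delta_1\tfrac{\partial h_k}{\partial\theta_2}$; set up an $F_8$-type equation in the $z_1$-variable; then an $F_9$-type equation in the $z_2$-variable), but there is a genuine gap in where you place the case split, and this prevents you from obtaining the $r_1^{\ibone-1}$ factor in the regime $k\in[k_{2,p}+1,\min(\ell,k_p)]$.

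The asymmetry of the case split is not preserved under the $1\leftrightarrow 2$ swap of Lemma \ref{lemma 2.16}, because it is governed by $k_{2,p}$ and not by the index in the operator being estimated. In Lemma \ref{lemma 2.16} the case split sits at the $F_5$ step precisely because $F_5$ is a one-dimensional equation in $z_2$, and whether the $\mathbb{C}$-ball argument along $\sS_2$ applies depends on $k$ versus $k_{2,p}$. When you swap $1\leftrightarrow 2$, the first one-dimensional equation (your $F$, the paper's $F_8$) is in the $z_1$-variable, and for \emph{every} $k\le\min(\ell,k_p)$ the ball $\hat B_k(p)$ contains a full neighborhood of $\sS_1$ (it is centered at $p_1$ or $p_{1,2}$), so \eqref{eqn:2.5} along $\sS_1$ applies uniformly — no case split at this step, and this is what produces the factor $r_1^{\ibone-1}$:
\begin{equation*}
\ba{\tfrac{\partial}{\partial r_1}\Delta_1\tfrac{\partial h_k}{\partial\theta_2}}+\ba{\tfrac{\partial}{r_1\partial\theta_1}\Delta_1\tfrac{\partial h_k}{\partial\theta_2}}\le C\,r_1^{\ibone-1}\tau^{-k/\beta_1}\omega(\tau^k)\quad\text{for \emph{all} }k\le\min(\ell,k_p).
\end{equation*}
Your proposal instead splits cases here and, in the regime $k_{2,p}+1\le k\le\min(\ell,k_p)$, replaces this by a plain gradient estimate giving $C\tau^{-k}\omega(\tau^k)$ with no $r_1^{\ibone-1}$. (The $w_2=z_2^{\beta_2}$ coordinate you invoke there is a red herring: it linearizes only the $z_2$-direction and cannot help bound $\partial_{r_1}$.) That factor cannot be recovered later, so your chain would not yield the bound stated in \eqref{eqn:2.93} in that regime.

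The case split belongs at the \emph{second} step, the $F_9$ equation in the $z_2$-variable. There the argument genuinely bifurcates: if $k\le k_{2,p}$ the ball meets $\sS_2$ and \eqref{eqn:2.5} along $\sS_2$ gives the extra $r_2^{\ibtwo-1}$; if $k_{2,p}+1\le k\le\min(\ell,k_p)$ the ball is disjoint from $\sS_2$ and one must switch to the smooth coordinate $w_2$ and a standard interior estimate instead. Your last paragraph applies the $\mathbb{C}$-ball argument along $\sS_2$ unconditionally, which is vacuous in the regime where $\sS_2\cap\hat B_k(p)=\emptyset$.

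Finally, a small typo: in your display $F=-\Delta_1\Delta_1\tfrac{\partial h_k}{\partial\theta_2}-\sum_j\partial^2_{s_j}\Delta_1\tfrac{\partial h_k}{\partial\theta_2}$ the first term should read $-\Delta_2\Delta_1\tfrac{\partial h_k}{\partial\theta_2}$ (from $\Delta_\bb(\Delta_1\partial_{\theta_2}h_k)=0$); the $L^\infty$ bound you quoted is nonetheless correct, so this is cosmetic, unlike the issue with the placement of the case split.
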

\begin{proof}
It follows from the Laplacian estimate \eqref{eqn:lap final prop} that
\begin{equation*}
\sup_{\frac{1}{1.2}\hat B_k(p)}\bk{ \ba{\Delta_1 \frac{\partial h_k}{\partial \theta_2}} + \ba{\Delta_2  \frac{\partial h_k}{\partial \theta_2}} } \le C(n) \omega(\tau^k).
\end{equation*}
And by \eqref{eqn:lap final prop}  again we have
\begin{equation*}
\sup_{\frac{1}{1.4}\hat B_k(p)}\bk{ \ba{\Delta_1 \Delta_1  \frac{\partial h_k}{\partial \theta_2}} + \ba{ \Delta_2 \Delta_1  \frac{\partial h_k}{\partial \theta_2}  } + \ba{ (D')^2 \Delta_1  \frac{\partial h_k}{\partial \theta_2} }    } \le C \tau^{-2k}\omega(\omega^k).
\end{equation*}
We look at the equation
\begin{equation*}
|z_1|^{2(1-\beta_1)} \frac{\partial^2}{\partial z_1 \partial \bar z_1} \bk{ \Delta_1  \frac{\partial h_k}{\partial \theta_2}  }  = - \Delta_2 \Delta_1  \frac{\partial h_k}{\partial \theta_2} - \sum_j \frac{\partial^2 }{\partial s_j^2} \bk{\Delta_1  \frac{\partial h_k}{\partial \theta_2}}=: F_8,
\end{equation*}
and $F_8$ satisfies $\sup_{\frac{1}{1.4} \hat B_k(p)} |F_8|\le C \tau^{-2k}\omega(\tau^k)$. By the estimate \eqref{eqn:2.5} as we did before it follows that for any $z\in \frac{1}{2}\hat B_k(p)\backslash \sS$ (remember here $k\le\min(\ell, k_p)$)
\begin{align*}
\ba{\frac{\partial}{\partial r_1}\Delta_1  \frac{\partial h_k}{\partial \theta_2}   } (z) + \ba{\frac{\partial}{r_1 \partial \theta_1} \Delta_1  \frac{\partial h_k}{\partial \theta_2}   }(z) \le&  Cr_1(z)^{\ibone - 1} \frac{\|\Delta_1  \frac{\partial h_k}{\partial \theta_2} \|_{L^\infty}}{\tau^{k/\beta_1}} + C r_1(z)^{\ibone - 1} \| F_8\|_{L^\infty} \tau^{k (2 - \ibone)}\\
\le &~ C r_1(z)^{\ibone - 1} \tau^{-k/\beta_1} \omega(\tau^k).
\end{align*}
Taking $\frac{\partial }{\partial r_1}$ on both sides of the equation $\Delta_\bb \frac{\partial h_k}{\partial \theta_2} = 0$, we get
\begin{equation}\label{eqn:G9}
|z_2|^{2(1-\beta_2)} \frac{\partial^2 }{\partial z_2 \partial \bar z_2}\bk{ \frac{\partial^2 h_k}{\partial r_1\partial \theta_2} } = - \frac{\partial}{\partial r_1}\bk{\Delta_1 \frac{\partial h_k}{\partial \theta_2}  } - \sum_{j} \frac{\partial}{\partial r_1}\bk{\frac{\partial^2}{\partial s_j^2} \frac{\partial h_k}{\partial \theta_2}   }=: F_9. 
\end{equation}
Here $|F_9(z)|\le C r_1(z)^{\ibone  - 1}\tau^{-k/\beta_1} \omega(\tau^k)$ for any $z\in \frac 1 2 \hat B_k(p)\backslash \sS$. Therefore we get by the usual $\mathbb C$-ball argument that

\noindent $\bullet$ If $k\le k_{2,p}$, then for any $z\in\frac 1 3 \hat B_k(p)\backslash \sS$ 
\begin{align*}
\Ba{\frac{\partial }{\partial r_2} \bk{\frac{\partial^2 h_k}{\partial r_1\partial \theta_2}  } (z)}    + \Ba{\frac{\partial}{r_2\partial \theta_2} \bk{ \frac{\partial^2 h_k}{\partial r_1\partial \theta_2}  } (z)}  \le C r_2(z)^{\ibtwo  -1 } r_1(z)^{\ibone - 1} \tau^{k(2-\ibone - \ibtwo  )}\omega(\tau^k)
\end{align*}
\noindent $\bullet$ If $k_{2,p}+1 \le k\le \min(\ell, k_p)$, then 
\begin{align*}
\Ba{\frac{\partial }{\partial r_2} \bk{\frac{\partial^2 h_k}{\partial r_1\partial \theta_2}  } (z)}    + \Ba{\frac{\partial}{r_2\partial \theta_2} \bk{ \frac{\partial^2 h_k}{\partial r_1\partial \theta_2}  }(z) }  \le C  r_1(z)^{\ibone - 1} \tau^{k(1-\ibone  )} \omega(\tau^k).
\end{align*}

\end{proof}
\begin{lemma}\label{lemma 2.20} %Assume $r_p = \min(r_p,r_q)\le 2d$.
The following estimate holds for any $k\le \ell$ and any $z\in \frac 1 3 \hat B_k(p)\backslash \sS$
{\small
\begin{equation}\label{eqn:you 2}
\Ba{\frac{\partial}{\partial r_2}\bk{ \frac{\partial ^2 h_k}{\partial r_1 \partial r_2}   }    }\le C \omega(\tau^k)\left\{\begin{aligned}
& r_1(z)^{\ibone - 1} \tau^{-k/\beta_1} + r_1^{\ibone - 1} r_2^{-1} \tau^{-k(-1+ \ibone)} , \text{ if }k\in[k_{2,p} +1,\min(\ell, k_p)]\\
& r_1(z)^{\ibone - 1} \tau^{-k/\beta_1} +  r_1^{\ibone - 1} r_2^{\ibtwo - 2} \tau^{-k ( -  2 +  \ibone + \ibtwo  )},\text{ if }k\le k_{2,p}.
\end{aligned}\right. 
\end{equation}
}

\end{lemma}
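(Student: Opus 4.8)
The plan is to imitate the scheme of Lemmas \ref{lemma 2.16}, \ref{lemma 2.17} and \ref{lemma 2.19}: express the missing radial derivative $\frac{\partial}{\partial r_2}\bk{\frac{\partial^2 h_k}{\partial r_1\partial r_2}}$ in terms of quantities already estimated, by using the formula \eqref{eqn:cone Lap} for the conical Laplacian $\Delta_2$ in weighted polar coordinates. Applying $\Delta_2$ to $\frac{\partial h_k}{\partial r_1}$ and solving for the $\partial_{r_2}^2$-term gives, on $\frac{1}{3}\hat B_k(p)\ms$,
\[
\frac{\partial}{\partial r_2}\bk{\frac{\partial^2 h_k}{\partial r_1\partial r_2}} \;=\; \Delta_2\bk{\frac{\partial h_k}{\partial r_1}} \;-\; \frac{1}{r_2}\,\frac{\partial^2 h_k}{\partial r_1\partial r_2} \;-\; \frac{1}{\beta_2^2 r_2^2}\,\frac{\partial^3 h_k}{\partial\theta_2^2\partial r_1}.
\]
The last two terms are immediately controlled: $\frac{1}{r_2}\ba{\frac{\partial^2 h_k}{\partial r_1\partial r_2}}$ by \eqref{eqn:2.84 new} of Lemma \ref{lemma 2.17}, and $\frac{1}{r_2^2}\ba{\frac{\partial^3 h_k}{\partial\theta_2^2\partial r_1}} = \frac{1}{r_2}\ba{\frac{\partial^3 h_k}{r_2\partial\theta_2^2\partial r_1}}$ by \eqref{eqn:2.93} of Lemma \ref{lemma 2.19}. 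In both cases one picks up an extra factor $r_2^{-1}$ relative to those estimates, and inspection of the exponents shows that this reproduces exactly the second term on the right-hand side of \eqref{eqn:you 2} in each of the two ranges of $k$ (namely $r_1^{\ibone-1}r_2^{-1}\tau^{-k(-1+\ibone)}\omega(\tau^k)$ for $k_{2,p}+1\le k\le\min(\ell,k_p)$, and $r_1^{\ibone-1}r_2^{\ibtwo-2}\tau^{-k(-2+\ibone+\ibtwo)}\omega(\tau^k)$ for $k\le k_{2,p}$).

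It remains to bound $\Delta_2\bk{\frac{\partial h_k}{\partial r_1}}$. Since $h_k$ is $g_\bb$-harmonic, differentiating $\Delta_\bb h_k=0$ in $r_1$ and using that $\Delta_2$ and $(D')^2$ commute with $\frac{\partial}{\partial r_1}$ gives
\[
\Delta_2\bk{\frac{\partial h_k}{\partial r_1}} \;=\; -\,\frac{\partial}{\partial r_1}\bk{\Delta_1 h_k} \;-\; \frac{\partial}{\partial r_1}\bk{(D')^2 h_k}.
\]
The second term equals $\frac{\partial (D')^2 h_k}{\partial r_1}$ and is bounded by $C r_1^{\ibone-1}\tau^{-k/\beta_1}\omega(\tau^k)$ by \eqref{eqn:our 2}. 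For the first term I would rerun the $\mathbb C$-ball argument that produced \eqref{eqn:my 10}, now with $h_k$ in place of $D'h_k$: the function $\Delta_1 h_k$ is again $g_\bb$-harmonic with $\|\Delta_1 h_k\|_{L^\infty(\frac{1}{2}\hat B_k(p))}\le C\omega(\tau^k)$ by \eqref{eqn:lap final prop}, hence $\|\Delta_2\Delta_1 h_k\|_{L^\infty}+\|(D')^2\Delta_1 h_k\|_{L^\infty}\le C\tau^{-2k}\omega(\tau^k)$ again by \eqref{eqn:lap final prop}; writing $\Delta_\bb(\Delta_1 h_k)=0$ in the form $|z_1|^{2(1-\beta_1)}\frac{\partial^2(\Delta_1 h_k)}{\partial z_1\partial\bar z_1} = -\Delta_2\Delta_1 h_k - (D')^2\Delta_1 h_k$ and applying \eqref{eqn:2.5} on the $\mathbb C$-ball $B_{\mathbb C}(x,(\tau^k)^{1/\beta_1})$ through any $x\in\sS_1\cap\frac{1}{3}\hat B_k(p)$ yields $\ba{\frac{\partial}{\partial z_1}\Delta_1 h_k}\le C\tau^{-k/\beta_1}\omega(\tau^k)$, and therefore $\ba{\frac{\partial}{\partial r_1}\Delta_1 h_k(z)}\le C r_1(z)^{\ibone-1}\tau^{-k/\beta_1}\omega(\tau^k)$ (using $|\partial_{r_1}\cdot|\le r_1^{\ibone-1}|\partial_{z_1}\cdot|$ and the inclusion \eqref{eqn:inclusion}). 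Combining the two pieces, $\ba{\Delta_2\bk{\frac{\partial h_k}{\partial r_1}}(z)}\le C r_1(z)^{\ibone-1}\tau^{-k/\beta_1}\omega(\tau^k)$, which is the first term of \eqref{eqn:you 2}.

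Adding the three contributions yields \eqref{eqn:you 2} in the two stated ranges of $k$. The remaining possibility $k_p+1\le k\le\ell$ (which only occurs when $r_p>2d$) is easier, since there $\hat B_k(p)=B_\bb(p,\tau^k)$ is disjoint from $\sS$: passing to the smooth coordinates $w_1=z_1^{\beta_1}$, $w_2=z_2^{\beta_2}$ and using standard gradient estimates for the (now Euclidean-harmonic) functions $h_k$ gives $\ba{\frac{\partial}{\partial r_2}\bk{\frac{\partial^2 h_k}{\partial r_1\partial r_2}}}\le C\tau^{-k}\omega(\tau^k)$, which is dominated by $C r_1^{\ibone-1}\tau^{-k/\beta_1}\omega(\tau^k)$ because $r_1\gtrsim\tau^k$ on $\frac{1}{3}\hat B_k(p)$ in that case. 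I expect the only slightly delicate point to be the $\mathbb C$-ball estimate for $\frac{\partial}{\partial r_1}(\Delta_1 h_k)$ near $\sS_1$, together with the usual bookkeeping of the powers of $\tau^k$ and of the successively shrunk radii of the concentric balls on which each intermediate bound is available; but this is entirely parallel to the computations already carried out in Lemma \ref{lemma 2.11}, so no new idea is needed.
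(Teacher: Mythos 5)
Your proof is correct and follows essentially the same approach as the paper. You both start from the same polar-coordinate identity for $\frac{\partial}{\partial r_2}\bk{\frac{\partial^2 h_k}{\partial r_1\partial r_2}}$ and bound the last two terms with Lemmas~\ref{lemma 2.17} and~\ref{lemma 2.19}; the only cosmetic difference is that the paper estimates $\frac{\partial}{\partial r_1}\Delta_2 h_k$ directly via the $\mathbb C$-ball argument applied to $\Delta_2 h_k$, whereas you first split $\Delta_2 h_k = -\Delta_1 h_k - (D')^2 h_k$ using harmonicity and bound the two pieces separately (the second via~\eqref{eqn:our 2}), arriving at the same $r_1^{\frac{1}{\beta_1}-1}\tau^{-k/\beta_1}\omega(\tau^k)$.
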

\begin{proof}
We first observe the following  equation
\begin{equation*}
\frac{\partial}{\partial r_2}\bk{  \frac{\partial ^2 h_k}{\partial r_1 \partial r_2}    } = \frac{\partial}{\partial r_1} \Delta_2 h_k - \frac{1}{r_2}\frac{\partial^2 h_k}{\partial r_1 \partial r_2} - \frac{1}{\beta_2^2 r_2^2} \frac{\partial^2}{\partial \theta_2^2} \bk{\frac{\partial h_k}{\partial r_1}   }.
\end{equation*}
It can be shown that for any $z\in \frac 1 2 \hat B_k(p)\backslash\sS$ by the $\mathbb C$-ball argument that 
\begin{equation*}
\Ba{ \frac{\partial}{\partial r_1} \Delta_2 h_k  (z) } \le C r_1(z)^{\ibone - 1} \tau^{-k/\beta_1} \omega(\tau^k).
\end{equation*}
From Lemma \ref{lemma 2.17}, we have for any $z\in \frac{1}{2} \hat B_k(p)\backslash \sS$ 
\begin{equation*}
\Ba{ \frac{1}{r_2} \frac{\partial^2 h_k}{\partial r_1 \partial r_2} (z)   } \le  C\cdot \left\{\begin{aligned}
& r_1^{\ibone - 1}r_2^{-1} \tau^{-k(-1+ \ibone)} \omega(\tau^k), \text{ if }k\in[k_{2,p} +1,\min(\ell, k_p)]\\
& r_1^{\ibone - 1} r_2^{\ibtwo - 2} \tau^{-k (  - 2 +  \ibone + \ibtwo  )} \omega(\tau^k),\text{ if }k\le k_{2,p}.
\end{aligned}\right. 
\end{equation*}
From Lemma \ref{lemma 2.19}, we have  for any $z\in \frac 1 2 \hat B_k(p)\backslash \sS$
\begin{equation*}
\Ba{ \frac{1}{r_2^2} \frac{\partial^3 h_k}{\partial r_1 \partial \theta_2^2} (z)   } \le C\omega(\tau^k)\cdot \left\{\begin{aligned}
& r_1^{\ibone - 1} r_2^{-1} \tau^{-k(-1+ \ibone)} , \text{ if }k\in[k_{2,p} +1,\min(\ell, k_p)]\\
& r_1^{\ibone - 1} r_2^{\ibtwo - 2} \tau^{-k ( - 2 +  \ibone + \ibtwo  )},\text{ if }k\le k_{2,p}.
\end{aligned}\right. 
\end{equation*}
Therefore for any $z\in \frac 1 3 \hat B_k(p)\backslash \sS$ we have
{\small
\begin{equation*}
\Ba{ \frac{\partial }{\partial r_2}\bk{ \frac{\partial^2 h_k}{\partial r_1 \partial r_2}  } (z) } \le C \omega(\tau^k)\cdot \left\{\begin{aligned}
& r_1(z)^{\ibone - 1} \tau^{-k/\beta_1} + r_1^{\ibone - 1} r_2^{-1} \tau^{-k(-1+ \ibone)} , \text{ if }k\in[k_{2,p} +1,\min(\ell, k_p)]\\
& r_1(z)^{\ibone - 1} \tau^{-k/\beta_1} +  r_1^{\ibone - 1} r_2^{\ibtwo - 2} \tau^{-k ( - 2 +  \ibone + \ibtwo  )},\text{ if }k\le k_{2,p}.
\end{aligned}\right. 
\end{equation*}
}

\end{proof}

It remains to estimate $L_2$. For simplicity, we denote $h_k := -u_{k} + u_{k-1}$ as before, where we take $k\le \ell$. We will denote $\beta_{\max} = \max(\beta_1,\beta_2  )$.
\begin{lemma}\label{lemma 2.21}
Let $d= d_{\bbeta}(p,q)$. There exists a constant $C(n,\bbeta)>0$ such that for all $k\le \ell$
\begin{equation*}%\label{eqn:2.88}
\Ba{ \frac{\partial^2 h_k}{\partial r_1 \partial r_2}(p) - \frac{\partial^2 h_k}{\partial r_1 \partial r_2}(q)  }\le   C \omega(\tau^k) \tau^{- k (\ibone  -1)} d^{\ibone  -1}\le C \omega(\tau^k) \tau^{- k (\frac{1}{\beta_{\max}}  -1)} d^{\frac{1}{\beta_{\max}}  -1}.
\end{equation*}

\end{lemma}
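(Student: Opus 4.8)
The plan is to estimate the difference $\big|\frac{\partial^2 h_k}{\partial r_1\partial r_2}(p)-\frac{\partial^2 h_k}{\partial r_1\partial r_2}(q)\big|$ exactly as in the proof of Lemma \ref{lemma 2.12}, splitting into the cases $r_p=\min(r_p,r_q)\le 2d$ and $r_p>2d$, and in each case integrating a suitable first-order derivative of $\frac{\partial^2 h_k}{\partial r_1\partial r_2}$ along a broken $g_{\bbeta}$-geodesic from $p$ to $q$. The pointwise bounds needed for the integrands are precisely Lemmas \ref{lemma 2.16}--\ref{lemma 2.20}: $\frac{\partial}{\partial r_1}$-derivative from \eqref{eqn:r1 r2}, $\frac{\partial}{\partial r_2}$-derivative from \eqref{eqn:you 2}, the $\theta_1$- and $\theta_2$-angular derivatives from \eqref{eqn:2.84} and \eqref{eqn:2.93}, and the size $\big|\frac{\partial^2 h_k}{\partial r_1\partial r_2}\big|$ itself from \eqref{eqn:2.84 new}.

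First, in the case $r_p\le 2d$: by our convention $r_1(p)\le r_2(p)$ when solving \eqref{eqn:uk 2}--\eqref{eqn:uk 3}, so $r_1(p)=r_p\le 2d$, hence $r_1(q)\le 3d$ by the triangle inequality; also $r_2(q)\le r_2(p)+d$. Introduce the intermediate points $p',q'$ (and, for $k\le k_{2,p}$, further points $\tilde q,\hat q$ as in \eqref{eqn:q q}, \eqref{eqn:qq2}) that interpolate the coordinates, and decompose the difference into segments along which only one of $r_1,\theta_1,r_2,\theta_2$ or $s$ varies. Along each segment integrate the corresponding derivative bound; the dangerous exponents $r_1^{\ibone-2}$ and $r_2^{\ibtwo-2}$ produce, after integration in $r_1$ over $[r_1(p),r_1(q)]$ (resp.\ in $r_2$), a term of the form $|r_1(p)^{\ibone-1}-r_1(q)^{\ibone-1}|\le |r_1(p)-r_1(q)|^{\ibone-1}\le (Cd)^{\ibone-1}$ since $\ibone-1\in(0,1)$ (and similarly for $r_2$, using $\ibtwo-1<1$); the remaining factors $r_1(\cdot),r_2(\cdot)\lesssim\tau^k$ on $\frac14\hat B_k(p)$ and $d\le \tau^\ell\cdot\tau^{-k}\cdot\tau^k$ type bookkeeping convert the $\tau^{-k(\cdots)}$ powers into $\tau^{-k(\ibone-1)}$. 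Combining the segments gives the bound $C\omega(\tau^k)\tau^{-k(\ibone-1)}d^{\ibone-1}$.

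In the case $r_p>2d$, every point on the minimal geodesic $\gamma$ from $p$ to $q$ satisfies $d_{\bbeta}(\gamma(t),\sS)\ge d$, so $r_1(\gamma(t)),r_2(\gamma(t))\ge d$ and $\le 2\tau^k$; one simply bounds $|\nabla_{g_{\bbeta}}\frac{\partial^2 h_k}{\partial r_1\partial r_2}|$ using \eqref{eqn:r1 r2}, \eqref{eqn:you 2}, \eqref{eqn:2.84}, \eqref{eqn:2.93}, obtaining along $\gamma$ a bound $\lesssim \omega(\tau^k)\big(\tau^{-k}+ d^{\ibone-2}\tau^{-k(\ibone-1)}+\cdots\big)$, and integrates over $\gamma$ (length $d$) to get $C\omega(\tau^k)(d\tau^{-k}+d^{\ibone-1}\tau^{-k(\ibone-1)})\le C\omega(\tau^k)d^{\ibone-1}\tau^{-k(\ibone-1)}$, using $d\le\tau^\ell\le\tau^k$ so $d\tau^{-k}\le (d\tau^{-k})^{\ibone-1}$. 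The subcase $\ell\ge k_p+1$, $k\ge k_p+1$ is easier since $\hat B_k(p)$ is then disjoint from $\sS$ and one works in the smooth coordinates $w_i=z_i^{\beta_i}$. Finally the passage to $\beta_{\max}$ is immediate: since $d\le\tau^k\le 1$ and $\ibone-1\ge \frac{1}{\beta_{\max}}-1$, we have $d^{\ibone-1}\tau^{-k(\ibone-1)}=(d\tau^{-k})^{\ibone-1}\le (d\tau^{-k})^{\frac{1}{\beta_{\max}}-1}=d^{\frac1{\beta_{\max}}-1}\tau^{-k(\frac1{\beta_{\max}}-1)}$ because $d\tau^{-k}\le 1$ and $t\mapsto t^a$ is decreasing in $a$ for $t\in(0,1)$.

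The main obstacle is purely the bookkeeping in the case $r_p\le 2d$, $k\le k_{2,p}$: there one must track the combined exponents $r_1^{\ibone-1}r_2^{\ibtwo-1}\tau^{-k(-2+\ibone+\ibtwo)}$ from \eqref{eqn:2.84}, \eqref{eqn:2.84 new}, \eqref{eqn:you 2} through the four-segment broken path, checking at each step that the powers of $\tau^{-k}$ that are \emph{not} converted into $d$-powers are absorbed by $r_1,r_2\lesssim\tau^k$, so that exactly one copy of $(d\tau^{-k})^{\ibone-1}$ survives; this is the same delicate accounting as in Lemma \ref{lemma 2.12} but with one extra singular variable, and it is the only place where a genuinely new verification (rather than a verbatim repetition) is required.
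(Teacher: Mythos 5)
Your proposal matches the paper's proof essentially verbatim: the same case split on $r_p\le 2d$ versus $r_p>2d$ (with the paper further splitting the latter into $\ell\le k_p$ and $\ell\ge k_p+1$, which you also note), the same broken-geodesic decomposition via $p',q',\tilde q,\hat q$ and the segments where only one of $\theta_1,r_1,s,r_2,\theta_2$ varies, the same pointwise inputs from Lemmas \ref{lemma 2.16}--\ref{lemma 2.20}, the same concavity device $|r_i(p)^{1/\beta_i-1}-r_i(q)^{1/\beta_i-1}|\le d^{1/\beta_i-1}$, the same absorption $r_i\lesssim\tau^k$, and the same monotonicity-in-exponent step $(d\tau^{-k})^{1/\beta_1-1}\le (d\tau^{-k})^{1/\beta_{\max}-1}$ using $d\tau^{-k}\le 1$. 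No meaningful deviation from the paper's argument.
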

\begin{proof}
$ \blacktriangleright$ {\bf Case 1:}
First we assume that $r_p\le 2d $ so $r_q\le 3d $ and $\ell + 2 \le k_p$, in particular, the balls $\hat B_k(p)$ are centered at either $p_1\in\sS_1$ or $0$, depending on whether $k\ge k_{2,p}+1$ or $k\le k_{2,p}$. As in the proof of Lemma \ref{lemma 2.12}, let $\gamma:[0,d]\to B_{\bbeta}(0,1)\backslash S$ be the $g_{\bbeta}$-geodesic connecting $p$ and $q$. The two points $q'$ and $p'$ are defined as  in  \eqref{eqn:q q},  $\gamma_1, \gamma_2,\gamma_3$ the $g_{\bbeta}$-geodesics as defined in that lemma. We calculate by triangle inequality that 
\begin{align*}
& \Ba{ \frac{\partial ^2 h_k}{\partial r_1 \partial r_2}(p) - \frac{\partial ^2 h_k}{\partial r_1 \partial r_2}(q)   }\\
\le  & ~ \Ba{\frac{\partial ^2 h_k}{\partial r_1 \partial r_2}(p) - \frac{\partial ^2 h_k}{\partial r_1 \partial r_2}(p')    } + \Ba{ \frac{\partial ^2 h_k}{\partial r_1 \partial r_2}(p') - \frac{\partial ^2 h_k}{\partial r_1 \partial r_2}(q')  }\\
& ~ + \Ba{ \frac{\partial ^2 h_k}{\partial r_1 \partial r_2}(q') - \frac{\partial ^2 h_k}{\partial r_1 \partial r_2}(q)  } = : L_1' + L_2' + L_3'.
\end{align*}
Integrating along $\gamma_3$ on which the coordinates $(r_1; r_2,\theta_2; z_3,\ldots,z_n)$ are the same as $p$,  we get by \eqref{eqn:2.84}
\begin{align*}
L_1' =&  \Ba{ \int_{\gamma_3} \frac{\partial}{\partial \theta_1}\bk{ \frac{\partial ^2 h_k}{\partial r_1 \partial r_2}   }  d\theta_1 } \\
\le & C  \omega(\tau^k)\cdot \left\{\begin{aligned}
& r_1(p)^{\ibone - 1} \tau^{-k(-1+ \ibone)} , \text{ if }k\in[k_{2,p} +1,\ell]\\
& r_1(p)^{\ibone - 1} r_2(p)^{\ibtwo - 1} \tau^{-k ( - 2 +  \ibone + \ibtwo  )},\text{ if }k\le k_{2,p}.
\end{aligned}\right. 
\end{align*}
Integrating along $\gamma_2$ along which the coordinates $(\theta_1;r_2,\theta_2;z_3,\ldots, z_n)$ are the same as $p'$ or $q'$, we get by \eqref{eqn:r1 r2} that 
{\small
\begin{align*}
L_2' = & \Ba{\int_{\gamma_2} \frac{\partial }{\partial r_1}  \bk{ \frac{\partial^2 h_k}{\partial r_1 \partial r_2}   } dr_1  }\\
\le & C\omega(\tau^k)\cdot \left\{\begin{aligned}
& \tau^{-k} d + \tau^{-k (\ibone - 1)} |r_1(p) - r_1(q)|^{\ibone - 1}     , \text{ if }k\in[k_{2,p}+1,\ell]\\
& r_2(p)^{\ibtwo - 1}\tau^{-\frac{k}{\beta_2}} d + r_2(p)^{\ibtwo -1}\tau^{-k( -  2 + \ibone + \ibtwo )} | r_1(p) - r_1(q)  |^{\ibone  - 1}, \text{ if } k\le k_{2,p}
\end{aligned}\right.\\
\le & C\omega(\tau^k)\cdot \left\{\begin{aligned}
& \tau^{-k} d + \tau^{-k (\ibone - 1)} d^{\ibone - 1}     , \text{ if }k\in[k_{2,p}+1,\ell]\\
& r_2(p)^{\ibtwo - 1}\tau^{-\frac{k}{\beta_2}} d + r_2(p)^{\ibtwo -1}\tau^{-k( - 2 + \ibone + \ibtwo )} d^{\ibone  - 1}, \text{ if } k\le k_{2,p}
\end{aligned}\right.
\end{align*}
}

\noindent To deal with the term $L_3'$, we consider different cases of $k$, either $\ell \ge k\ge k_{2,p} + 1$ or $k\le k_{2,p}$.

\medskip

\noindent $\bullet$ If $k_{2,p}+1 \le k\le \ell$, the balls $\hat B_k(p)$ are centered at $p_1\in\sS_1$. Here $\tau^{-k}\le \tau^{-\ell}\le 8^{-1} d^{-1}$ and $\tau^k\le \tau^{k_{2,p}+1}\le\frac 1 2  r_2(p)$, so $r_2(q)\ge - d + r_{2}(p)\ge  \tau^k$. The balls $\hat B_k(p)$ are disjoint with $\sS_2$, we can use the smooth coordinate $w_2 = z_2^{\beta_2}$ as before. The functions $D_{w_2} D' h_k$ are $g_{\bbeta}$-harmonic, hence by gradient estimate we have
\begin{equation*}
\sup_{\frac{1}{1.2}\hat B_k(p)\backslash \sS_1} \ba{ \nabla_{g_{\bbeta}}\xk{ D_{w_2} D'h_k } }\le C(n) \frac{ \| D_{w_2} D'h_k\|_{L^\infty(\frac{1}{1.1} \hat B_k(p))}   }{\tau^{k}}\le C \tau^{-k}\omega(\tau^k).
\end{equation*}
From \eqref{eqn:radial complex}, we get  that
\begin{equation}\label{eqn:you 1}
\sup_{\frac{1}{1.2} \hat B_k(p)\backslash\sS_1} \ba{ \frac{\partial^2}{\partial r_1 \partial r_2} D'h_k   } \le C(n) \tau^{-k}\omega(\tau^k).
\end{equation}
Recall $r_1(p) = r_p\le 2d \le \frac{1}{2}\tau^{k}$, triangle inequality implies that $r_1(q)\le 3d\le \frac{1}{2}\tau^k$. 
The points in $\gamma_1$ have the fixed $(r_1,\theta_1)$-coordinates $(r_1(q),\theta_1(q))$, so integrating along $\gamma_1$ we get by \eqref{eqn:you 2} and \eqref{eqn:you 1} that
\begin{align*}
L_3'\le & \int_{\gamma_1} \Ba{\frac{\partial}{\partial r_2} \bk{ \frac{\partial^2 h_k}{\partial r_1 \partial r_2}  }   } + \Ba{ \frac{\partial}{r_2 \partial\theta_2}  \bk{ \frac{\partial^2 h_k}{\partial r_1 \partial r_2}   }  } + \Ba{ D'\bk{\frac{\partial^2 h_k}{\partial r_1 \partial r_2}   }  }\\
\le &~ C d \omega(\tau^k)\bk{ r_1(q)^{\ibone - 1} \tau^{-k/\beta_1} + \tau_1(q)^{\ibone - 1} \min(r_2(p), r_2(q))^{-1} \tau^{-k (\ibone - 1)} +  \tau^{-k}       }\\
\le & C  \tau^{-k}\omega(\tau^k)\cdot d\le C \tau^{-k(\ibone - 1)} \omega(\tau^k) d^{\ibone - 1}.
\end{align*}

\noindent$\bullet$ If $k\le k_{2,p}$, the $\tau^k\ge \tau^{k_{2,p}}\ge r_2(p)$ and $\tau^k\ge \tau^\ell \ge 8d$. Thus $r_2(q)\le r_2(p) + d \le \frac{3}{2} \tau^k$. We choose points $\tilde q, \hat q$ as in \eqref{eqn:qq2}, and let $\tilde\gamma_1$, $\tilde \gamma$ and $\hat \gamma$ be $g_{\bbeta}$-geodesics defined as in the proof of Lemma \ref{lemma 2.12}. Then we have
\begin{align*}
L'_3 \le &~ \Ba{ \frac{\partial^2 h_k}{\partial r_1 \partial r_2} (q') - \frac{\partial^2 h_k}{\partial r_1 \partial r_2}(\tilde q)   } + \Ba{\frac{\partial^2 h_k}{\partial r_1 \partial r_2}(\tilde q) - \frac{\partial^2 h_k}{\partial r_1 \partial r_2}(\hat q)   }\\
& ~ + \Ba{ \frac{\partial^2 h_k}{\partial r_1 \partial r_2}(\hat q) - \frac{\partial^2 h_k}{\partial r_1 \partial r_2} (q)  } = : L_1'' + L_2'' + L_3''.
\end{align*}
We will estimate $L''_1$, $L''_2$ and $L''_3$ term by term by integrating appropriate functions along the geodesics $\tilde \gamma_1$, $\tilde \gamma$ and $\hat \gamma$ as follows: The points in $\hat \gamma$ have fixed $(r_1,\theta_1; r_2; s)$-coordinates $(r_1(q),\theta_1(q); r_2(q); s(q))$ and by \eqref{eqn:2.93}
\begin{align*}
L''_3  = & \Ba{ \int_{\hat \gamma} \frac{\partial}{\partial \theta_2} \bk{ \frac{\partial^2 h_k}{\partial r_1 \partial r_2}   } d\theta_2   } \le C\omega(\tau^k) r_1(q)^{\ibone  -1 } r_2(q)^{\ibtwo - 1}\tau^{-k ( - 2+ \ibone + \ibtwo)}\\
 \le & C\omega(\tau^k) r_1(q)^{\ibone  -1 } \tau^{-k ( -1+ \ibone )}\le C \tau^{-k (\ibone - 1)} \omega(\tau^k) d^{\ibone  - 1}.
\end{align*}
Integrating along $\tilde \gamma$ on which the points have constant $r_1$-coordinate $r_1(q)$,  we get by \eqref{eqn:you 2}
{\small
\begin{align*}
L_2'' = & \Ba{ \int_{\tilde \gamma} \frac{\partial }{\partial r_2} \xk{\frac{\partial^2 h_k}{\partial r_1 \partial r_2}   } dr_2     }\\
\le & C \omega(\tau^k) \bk{ r_1(q)^{\ibone  -1} \tau^{-\frac{k}{\beta_1}} | r_2(q) - r_2(p)  | + r_1(q)^{\ibone  -1} \tau^{-k (-2 + \ibone + \ibtwo)} \ba{ r_2(q)^{\ibtwo -1} - r_2(p)^{\ibtwo - 1}   }    }\\
\le & C \omega(\tau^k) \bk{ r_1(q)^{\ibone  -1} \tau^{-\frac{k}{\beta_1}} d + r_1(q)^{\ibone  -1} \tau^{-k (-2 + \ibone + \ibtwo)} d^{\ibtwo -1}    }\\
 \le & C\omega(\tau^k) r_1(q)^{\ibone  -1 } \tau^{-k ( -1+ \ibone )}\le C \tau^{-k (\ibone - 1)} \omega(\tau^k) d^{\ibone  - 1}.
\end{align*}
}
Integrating along $\tilde \gamma_1$ on which the points have constant $(r_1,\theta_1; r_2,\theta_2)$-coordinates,  we have by \eqref{eqn:my 8} that
\begin{align*}
L_1'' \le & { \int_{\tilde \gamma_1}\Ba{ D'\xk{ \frac{\partial^2 h_k}{\partial r_1 \partial r_2}  }}   }\le C r_1(q)^{\ibone - 1} r_2(p)^{\ibtwo - 1} \tau^{-k ( -1 + \ibone + \ibtwo  )} d\\
\le & ~ C d \tau^{-k}  \omega(\tau^k) \le Cd^{\ibone - 1} \tau^{-k( \ibone - 1 )} \omega(\tau^k).
% C \tau^{-k}\omega(\tau^k) d\le  C \tau^{-k (\ibone - 1)} \omega(\tau^k) d^{\ibone  - 1}.
\end{align*}

Combining both cases, we conclude that $L_3'\le C \tau^{-k (\ibone - 1)} \omega(\tau^k) d^{\ibone  - 1}$. Then by the estimates above for $L'_1$ and $L'_2$, we finally get for all $k\le \ell$
\begin{equation*}%\label{eqn:you 3}
 \Ba{ \frac{\partial ^2 h_k}{\partial r_1 \partial r_2}(p) - \frac{\partial ^2 h_k}{\partial r_1 \partial r_2}(q)   } \le  C \omega(\tau^k) \tau^{- k (\ibone  -1)} d^{\ibone  -1}\le  C \omega(\tau^k) \tau^{- k (\frac{1}{\beta_{\max}}  -1)} d^{\frac{1}{\beta_{\max}}  -1},
\end{equation*}
where in the last inequality we use the fact that $\tau^{-k} d\le 1/8<1$, when $ k\le \ell$. Hence we finish the proof of Lemma \ref{lemma 2.21} in case $r_p\le 2d$. 

\smallskip

Now we deal with the remaining cases.

\medskip

\noindent $ \blacktriangleright$ {\bf Case 2:} here we assume $\min(r_p,r_q) = r_p\ge 2d$ and $\ell \le k_p$.   In this case $\tau^{k_p}\approx r_p \ge 2d \ge \tau^{\ell +3}$, so $\ell +3\ge k_p$. It follows by triangle inequality that $d_{\bbeta}(\gamma(t), \sS)\ge d$, where $\gamma $ is the $g_{\bbeta}$-geodesic joining $p$ to $q$ as before. In particular this implies that $\min(r_1(\gamma(t)), r_2(\gamma(t))  )\ge d$.

\smallskip

 In this case $\ell\le k_p $, Lemmas \ref{lemma 2.16} - \ref{lemma 2.20} hold for all $k\le \ell$ and $r_1(p)\approx \tau^{k_p}\le \tau^\ell$, so $r_1(\gamma(t))\le d + r_1(p)\le \frac{9}{8}\tau^\ell\le \frac 9 8 \tau^k$. We calculate the gradient of $\frac{\partial^2 h_k}{\partial r_1 \partial 2}$ along the geodesic $\gamma$ as follows 
 {\small
\begin{align*}%\label{eqn:ding 1}\begin{split}
\big|\nabla_{g_{\bbeta}} \frac{\partial^2 h_k}{\partial r_1 \partial r_2}\big|^2\big(\gamma(t)\big)
 = &~~ \Ba{\frac{\partial}{\partial r_1}\bk{ \frac{\partial^2 h_k}{\partial r_1 \partial r_2}}}^2 + \Ba{ \frac{1}{\beta_1 r_1 \partial \theta_1}\bk{\frac{\partial^2 h_k}{\partial r_1 \partial r_2}  }     }^2 + \Ba{ \frac{\partial}{\partial r_2} \bk{\frac{\partial^2 h_k}{\partial r_1 \partial r_2}  }   }^2 \\
 &~~ + \Ba{ \frac{\partial}{\beta_2  r_2 \partial \theta_2} \bk{\frac{\partial^2 h_k}{\partial r_1 \partial r_2}  }   }^2 + \sum_j \Ba{\frac{\partial}{\partial s_j} \bk{\frac{\partial^2 h_k}{\partial r_1 \partial r_2}  }   }^2.
\end{align*}%\end{equation}
}
(1). When $k_{2,p} + 1\le k\le \ell$  we have along $\gamma$
 $$r_2(\gamma(t))\ge r_2(p) - d \ge \tau^k - d \ge \frac{7}{8}\tau^k.  $$ Then by Lemmas \ref{lemma 2.16} - \ref{lemma 2.20} we have along $\gamma$ that 
\begin{align*}
|\nabla_{g_{\bbeta}} \frac{\partial^2 h_k}{\partial r_1 \partial r_2}(\gamma(t))| \le & C\omega(\tau^k)\bk{  \tau^{-k} + d^{\ibone - 2} \tau^{-k( \ibone -1  )}         }
\end{align*}
Integrating this inequality along $\gamma$ we get
\begin{align*}
\Ba{ \frac{\partial^2 h_k}{\partial r_1 \partial r_2}(p) - \frac{\partial^2 h_k}{\partial r_1 \partial r_2}(q)  } \le &  \int_{\gamma} \big |\nabla_{g_{\bbeta}} \frac{\partial^2 h_k}{\partial r_1 \partial r_2}\big|  \le C\omega(\tau^k)\bk{ d \tau^{-k} + d^{\ibone - 1} \tau^{-k( \ibone -1  )}         }\\
\le & C d^{\ibone - 1} \tau^{-k ( \ibone - 1  )}\omega(\tau^k).
\end{align*}

\smallskip

\noindent (2). When $k\le k_{2,p}$, we have along $\gamma$ $$r_2(\gamma(t))\le r_2(p) + d \le \tau^k +d\le \frac{9}{8}\tau^k.   $$
Then by Lemmas \ref{lemma 2.16} - \ref{lemma 2.20} we have along $\gamma$ that 
\begin{align*}
|\nabla_{g_{\bbeta}} \frac{\partial^2 h_k}{\partial r_1 \partial r_2}| \le & C\omega(\tau^k)\bk{  \tau^{-k} + d^{\ibone - 2} \tau^{-k( \ibone -1  )}         }
\end{align*}
Integrating this inequality along $\gamma$ we again get
\begin{align*}
\Ba{ \frac{\partial^2 h_k}{\partial r_1 \partial r_2}(p) - \frac{\partial^2 h_k}{\partial r_1 \partial r_2}(q)  } \le &  \int_{\gamma} \big |\nabla_{g_{\bbeta}} \frac{\partial^2 h_k}{\partial r_1 \partial r_2}\big| 
\le  C d^{\ibone - 1} \tau^{-k ( \ibone - 1  )}\omega(\tau^k).
\end{align*}

\medskip

\noindent $ \blacktriangleright$ {\bf Case 3:} here we assume $\min(r_p,r_q) = r_p\ge 2d$, but $\ell\ge k_{p} + 1$. The case when $k\le k_p$ can be dealt with by the same argument as in {\bf Case 2}, so we omit it and  only consider the cases when $k_{p}+1\le k\le \ell$, so here  $r_2(p)\ge r_1(p)\ge \tau^k\ge \tau^\ell > 8 d$ so $r_1(\gamma(t))\ge \frac{7}{8}\tau^k$ and $r_2(\gamma(t))\ge \frac 78 \tau^k$ for any point $\gamma(t)$ in the geodesic $\gamma$. By triangle inequality it follows that $\gamma\subset \frac 13 \hat B_k(p) = B_{\bbeta}(p, \tau^k/ 3)$.

As before, we can introduce smooth coordinates $w_1 = z_1^{\beta_1}$ and $w_2 = z_2^{\beta_2}$, and $g_{\bbeta}$ becomes the standard smooth Euclidean metric $g_{\mathbb C^n}$ under these coordinates. Moreover, $h_k$ are the usual Euclidean harmonic function $\Delta_{g_{\mathbb C^n}} h_k = 0$ on $\hat B_k(p)$. By the standard derivative estimates we have
\begin{equation*}
\sup_{\frac{1}{2}\hat B_k(p)} \bk{ \ba{ D^3_{w_1,w_2} h_k } + \ba{D' (D^2_{w_1, w_2}) h_k }  }\le C \tau^{-k}\omega(\tau^k).
\end{equation*}
From the equation
\begin{align*}
\frac{\partial^2 h_k}{\partial r_1 \partial r_2} = & ~\frac{w_1 w_2}{r_1 r_2} \frac{\partial^2 h_k}{\partial w_1 \partial w_2}  + \frac{\bar w_1 w_2}{r_1 r_2} \frac{\partial^2 h_k}{\partial \bar w_1 \partial w_2}  + \frac{w_1 \bar w_2}{r_1 r_2} \frac{\partial^2 h_k}{\partial w_1 \partial\bar  w_2}  + \frac{\bar w_1 \bar w_2}{r_1 r_2} \frac{\partial^2 h_k}{\partial\bar  w_1 \partial\bar w_2} 
\end{align*}
we see that for $i=1,2$
\begin{equation*}
\sup_{\frac{1}{2}\hat B_k(p)}\Ba{ \frac{\partial}{\partial w_i } \bk{ \frac{\partial^2 h_k}{\partial r_1 \partial r_2}   }  }\le \frac{C}{r_i}\omega(\tau^k) + C\tau^{-k}\omega(\tau^k),\quad \sup_{\frac{1}{2}\hat B_k(p)} \Ba{ D'\bk{ \frac{\partial^2 h_k}{\partial r_1 \partial r_2}  }  }\le C \tau^{-k}\omega(\tau^k). 
\end{equation*}
From this we see that $$\sup_{\gamma}  \Ba{ \nabla_{g_{\bbeta}} \frac{\partial^2 h_k}{\partial r_1 \partial r_2}  } \le \sup_{\gamma}\bk{ C\tau^{-k}\omega(\tau^k) + \frac{C}{r_1} \omega(\tau^k) + \frac{C}{r_2}\omega(\tau^k)}\le C \tau^{-k}\omega(\tau^k).$$
Integrating along $\gamma$ we see that 
\begin{equation*}
\Ba { \frac{\partial^2 h_k}{\partial r_1 \partial r_2} (p) - \frac{\partial^2 h_k}{\partial r_1 \partial r_2} (q)   }\le \int_{\gamma}  \Ba{ \nabla_{g_{\bbeta}} \frac{\partial^2 h_k}{\partial r_1 \partial r_2}  } \le C d \tau^{-k}\omega(\tau^k)\le C d^{\ibone - 1} \tau^{-k(\ibone - 1)} \omega(\tau^k).
\end{equation*}

Combining the estimates in all three cases, we finish the proof of Lemma \ref{lemma 2.21}.

\end{proof}

By Lemma \ref{lemma 2.21} it follows that
{\small
\begin{equation}\label{eqn:L2 estimate}
L_2 = \Ba{\frac{\partial^2 u_\ell}{\partial r_1 \partial r_2}(p) - \frac{\partial^2 u_\ell}{\partial r_1 \partial r_2}(q)   } \le \Ba{\frac{\partial^2 u_2}{\partial r_1 \partial r_2}(p) - \frac{\partial^2 u_2}{\partial r_1 \partial r_2}(q)   } + C d^{\frac{1}{\beta_{\max}} - 1} \sum_{k=3}^\ell \tau^{-k (\frac{1}{\beta_{\max}} - 1) } \omega(\tau^k). 
\end{equation} }
To finish the proof, it suffices to estimate the first term on the RHS of the above equation. Recall we assume $u_2$ is a $g_{\bbeta}$-harmonic function defined on the ball $\hat B_2(p)$, which is centered at $p_{1,2}\in\sS_1\cap\sS_2$ and has radius $2\tau^2$. $u_2$ satsifies the $L^\infty$ estimate by maximum principle: there exists some $C=C(n)>0$ such that
\begin{equation}\label{eqn:u2 L infty}\| u_2\|_{L^\infty( \hat B_k(p)  )} \le C ( \| u\|_{L^\infty(B_{\bbeta}(0,1))} + \omega(\tau^2)   ). \end{equation}
Recall that the proofs of the estimates in Lemmas \ref{lemma 2.16} - \ref{lemma 2.20} in the case when $k\le k_{2,p}$ work for any $g_{\bbeta}$-harmonic functions defined on suitable balls, and we can repeat the arguments there  replacing the $L^\infty$-estimates of $h_k$ that $\| h_k\|_{L^\infty}\le C\tau^{2k}\omega(\tau^k)$, by the $L^\infty$-estimate of $u_2$ as in \eqref{eqn:u2 L infty}, to get similar estimates as in those lemmas, which we will not repeat here. Given these estimates, we can repeat the proof of Lemma \ref{lemma 2.21} to prove the following estimates
\begin{equation*}
\Ba{\frac{\partial^2 u_2}{\partial r_1 \partial r_2} (p) - \frac{\partial^2 u_2}{\partial r_1 \partial r_2}(q)  } \le C d^{\frac{1}{\beta_{\max} } - 1} ( \| u\|_{L^\infty(B_{\bbeta}(0,1))}+ \omega(\tau^2) )  .
\end{equation*}
This inequality, combined with \eqref{eqn:L2 estimate} give the final estimate of the term $L_2$, that
\begin{equation}\label{eqn:L2 final}
L_2 \le C d^{\frac{1}{\beta_{\max}} -1  }   \| u\|_{L^\infty(B_{\bbeta}(0,1))}  +  C d^{\frac{1}{\beta_{\max}} -1  }  \sum_{k=2}^\ell \tau^{-k( \frac{1}{\beta_{\max}} - 1   )} \omega(\tau^k).
\end{equation}

%\end{proof}

%\subsubsection{The case when $\min(r_p,r_q) = r_p\ge 2d$} In this case $\tau^{k_p}\approx r_p \ge 2d \ge \tau^{\ell +3}$, so $\ell +3\ge k_p$. It follows by triangle inequality that $d_{\bbeta}(\gamma(t), \sS)\ge d$, where $\gamma $ is the $g_{\bbeta}$-geodesic joining $p$ to $q$ as before. 

By Lemma \ref{lemma 2.14}, Lemma \ref{lemma 2.15} and the estimate \eqref{eqn:L2 final} for $L_2$, we are ready to prove the following estimate (see the equation \eqref{eqn:newly added 1})
\begin{prop}\label{prop:2.3}
For the given $p, q\in B_{\bbeta}(0,1/2)\backslash \sS$, there is a constant $C=C(n,\bbeta)>0$ such that 
\begin{equation*}
\Ba{ \frac{\partial ^2 u}{\partial r_1 \partial r_2}(p) - \frac{\partial ^2 u}{\partial r_1 \partial r_2}(q)   } \le C\bk{ d^{\frac{1}{\beta_{\max}} - 1} \| u\|_{L^\infty(B_{\bbeta}(0,1))} + \int_0^d \frac{\omega(r)}{r}dr + d^{ \frac{1}{\beta_{\max}}   - 1}    \int_d^1 \frac{\omega(r)}{r^{1/\beta_{\max}} }dr   }.
\end{equation*}

\end{prop}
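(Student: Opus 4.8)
The plan is to assemble the bound from the four pieces $L_1,L_2,L_3,L_4$ introduced just before the statement and then turn the resulting discrete sums into the integral quantities on the right. By Lemma \ref{lemma 2.14} one has $L_1+L_4\le C\sum_{k=\ell}^\infty\omega(\tau^k)$; by Lemma \ref{lemma 2.15} one has $L_3\le C\omega(\tau^\ell)$; and for $L_2$ the estimate \eqref{eqn:L2 final} gives
$$L_2\le Cd^{\frac{1}{\beta_{\max}}-1}\|u\|_{L^\infty(B_{\bbeta}(0,1))}+Cd^{\frac{1}{\beta_{\max}}-1}\sum_{k=2}^\ell\tau^{-k(\frac{1}{\beta_{\max}}-1)}\omega(\tau^k),$$
which itself rests on the pointwise mixed-derivative estimates of Lemmas \ref{lemma 2.16}--\ref{lemma 2.20} and the geodesic-integration argument of Lemma \ref{lemma 2.21}. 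Adding the three contributions reduces the proposition to two elementary summation comparisons, exactly parallel to the passage in Section \ref{section 3.3} treating $N_jD'u$.

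The first comparison is
$$\sum_{k=\ell}^\infty\omega(\tau^k)+\omega(\tau^\ell)\le C\int_0^d\frac{\omega(r)}{r}\,dr.$$
Since $\omega$ is non-decreasing, $\omega(\tau^k)\le\frac{1}{\log 2}\int_{\tau^k}^{\tau^{k-1}}\frac{\omega(r)}{r}\,dr$; summing over $k\ge\ell$ telescopes to $\frac{1}{\log 2}\int_0^{\tau^{\ell-1}}\frac{\omega(r)}{r}\,dr$, and because $\tau^\ell$ is comparable to $d$ (with $8d<\tau^\ell\le 16d$ from the choice of $\ell$) the doubling property $\omega(2r)\le 2\omega(r)$ lets one replace $\tau^{\ell-1}$ by $d$ at the cost of a constant. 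The same doubling bound gives $\omega(\tau^\ell)\le C\omega(d)\le\frac{C}{\log 2}\int_{d/2}^d\frac{\omega(r)}{r}\,dr$, absorbed into the same integral, which handles $L_1+L_3+L_4$.

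The second comparison handles the tail in $L_2$. Writing $a:=\frac{1}{\beta_{\max}}-1>0$, one computes $\int_{\tau^k}^{\tau^{k-1}}r^{-1/\beta_{\max}}\,dr=\frac{1-\tau^{a}}{a}\,\tau^{-ka}$, so monotonicity of $\omega$ yields $\tau^{-ka}\omega(\tau^k)\le\frac{a}{1-\tau^{a}}\int_{\tau^k}^{\tau^{k-1}}\frac{\omega(r)}{r^{1/\beta_{\max}}}\,dr$. Summing over $2\le k\le\ell$ telescopes to a multiple of $\int_{\tau^\ell}^{1}\frac{\omega(r)}{r^{1/\beta_{\max}}}\,dr$, and since $\tau^\ell>d$ the domain of integration lies inside $[d,1]$; hence $d^{a}\sum_{k=2}^\ell\tau^{-ka}\omega(\tau^k)\le Cd^{a}\int_d^1\frac{\omega(r)}{r^{1/\beta_{\max}}}\,dr$. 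Combining this with the bound for $L_1+L_3+L_4$ and the term $d^{a}\|u\|_{L^\infty}$ from \eqref{eqn:L2 final} gives precisely the asserted inequality.

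I expect the genuine difficulty lies not in this bookkeeping but in the pointwise estimates feeding into it, above all Lemma \ref{lemma 2.21}: the integration of $\nabla_{g_\bbeta}\big(\tfrac{\partial^2 h_k}{\partial r_1\partial r_2}\big)$ along the $g_\bbeta$-geodesic from $p$ to $q$ must be split according to whether that geodesic stays uniformly away from $\sS$ (Cases 2 and 3, where one passes to the smooth coordinates $w_i=z_i^{\beta_i}$ and reduces to Euclidean derivative estimates) or approaches $\sS_1$ and/or $\sS_2$ (Case 1, where the path is broken into coordinate-aligned segments $\gamma_1,\gamma_2,\gamma_3$ and each weight $r_i^{1/\beta_i-1}$ must be tracked), and it is the subcase $k\le k_{2,p}$ near the codimension-two stratum $\sS_1\cap\sS_2$ — which has no analogue in \cite{GS} — that forces the iterated $\mathbb{C}$-ball estimates of Lemmas \ref{lemma 2.16}--\ref{lemma 2.20} and constitutes the real new input.
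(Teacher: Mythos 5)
Your proposal follows exactly the paper's own route: the quantity is decomposed as $L_1+L_2+L_3+L_4$, bounded term by term via Lemma \ref{lemma 2.14}, Lemma \ref{lemma 2.15} and \eqref{eqn:L2 final}, and the resulting Riemann sums are then compared to the Dini-type integrals using monotonicity and doubling of $\omega$. You have simply spelled out the sum-to-integral comparisons that the paper compresses into ``follows from the fact that $\omega(r)$ is monotonically increasing,'' and your identification of Lemma \ref{lemma 2.21} (especially the subcase $k\le k_{2,p}$ near $\sS_1\cap\sS_2$) as the genuinely new input is accurate.
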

\begin{proof}
From  Lemma \ref{lemma 2.14}, Lemma \ref{lemma 2.15} and the estimate \eqref{eqn:L2 final} for $L_2$, we have
\begin{align*}
& ~\Ba{ \frac{\partial ^2 u}{\partial r_1 \partial r_2}(p) - \frac{\partial ^2 u}{\partial r_1 \partial r_2}(q)   }\\
 \le & ~ C \bk{ d^{\frac{1}{\beta_{\max}}-1} \| u\|_{L^\infty(B_{\bbeta}(0,1))} + d^{\frac{1}{\beta_{\max}}-1} \sum_{k=2}^\ell \tau^{-k ( \frac{1}{\beta_{\max}}-1  )} \omega(\tau^k) + \sum_{k=\ell}^\infty \omega(\tau^k)           }\\
\le & ~C\bk{ d^{\frac{1}{\beta_{\max}} - 1} \| u\|_{L^\infty(B_{\bbeta}(0,1))} + \int_0^d \frac{\omega(r)}{r}dr + d^{ \frac{1}{\beta_{\max}}   - 1}    \int_d^1 \frac{\omega(r)}{r^{1/\beta_{\max}} }dr   },
\end{align*}
where the last inequality follows from the fact that $\omega(r)$ is monotonically increasing.

\end{proof}
Finally, we remark that the estimates for the other operators in \eqref{eqn:operators} follows similarly, so we omit the detailed proof and just state that the estimates are the same as the estimates for $\frac{\partial^2 u}{\partial r_1 \partial r_2}$ as in Proposition \ref{prop:2.3}.

\subsection{Non-flat conical K\"ahler metrics}\label{section 3.5}
In this section, we will consider the Schauder estimates for general conical K\"ahler metrics on $B_\bbeta(0,2)\subset \mathbb C^n$ with cone angle $2\pi\bbeta$ along the simple normal crossing hyper-surface $\sS$. Let $\omega$ be such a metric. By definition, there exists a constant $C\ge 1$ such that
\begin{equation}\label{eqn:gen cone}C^{-1}\omega_\bbeta \le \omega \le C \omega_\bbeta,\quad \text{in }B_\bbeta(0,2)\backslash \sS,\end{equation}
where $\omega_\bbeta$ is the standard flat conical metric as before. Since $\omega$ is closed and $B_\bbeta(0,2)$ is simply connected, we can write $\omega = \ddbar \phi$ for some strictly pluri-subharmonic function $\phi$. By elliptic regularity, $\phi$ is Holder continuous under the Euclidean metric on $B_\bbeta(0,2)$. 

We fix an $\alpha\in(0, \min\{ \frac{1}{\beta_{\max}} - 1, 1  \})$.
\begin{defn}\label{defn:3.1}
We say $\omega = g$  a $C^{0,\alpha}_\bbeta$ K\"ahler metric on $B_\bbeta(0,2)$ if it satisfies \eqref{eqn:gen cone} and the K\"ahler potential $\phi$ of $\omega$ belongs to $C^{2,\alpha}_\bbeta(B_\bbeta(0,2))$.
\end{defn}

\medskip

Our interest is to study the Laplacian equation 
\begin{equation}\label{eqn:lap general}
\Delta_g u = f,\quad \text{in } B_\bbeta(0,1),
\end{equation}
where $f\in C^{0,\alpha}_\bbeta(\overline{B_\bb(0,1)})$ and $u\in C^{2,\alpha}_{\bbeta}$. We will prove the following scaling-invariant interior Schauder estimates, and the proof follows closely from that of Theorem 6.6 in \cite{GT}. So we mainly focus on the differences. 

\begin{prop}\label{prop:invariant}
There exists a constant $C=C(n,\bb, \| g\|_{C^{0,\alpha}_\bb}^*)>0$ such that if $u\in C^{2,\alpha}_\bb( B_\bb(0,1)  )$ satisfies the equation \eqref{eqn:lap general}, then
\begin{equation}\label{eqn:scaling invariant}
\| u\|_{C^{2,\alpha}_\bb(B_\bb(0,1))}^*\le C \xk{ \| u\|_{C^0(B_\bb(0,1))} +  \| f\|_{C^{0,\alpha}_\bb (B_\bb(0,1))  }^{(2)}     }.
\end{equation}
\end{prop}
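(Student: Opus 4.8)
The plan is to follow the standard interpolation-plus-perturbation scheme for scaling-invariant Schauder estimates, as in Theorem 6.6 of \cite{GT}, adapting it to the conical setting. First I would freeze the coefficients of the operator $\Delta_g$ at a point and compare with the flat conical Laplacian $\Delta_\bb$: writing $\Delta_g u = g^{j\bar k}\partial_j\partial_{\bar k} u$, we have $\Delta_\bb u = \Delta_g u + (g_\bb^{j\bar k} - g^{j\bar k})\partial_j\partial_{\bar k}u = f + (g_\bb^{j\bar k} - g^{j\bar k})\partial_j\partial_{\bar k}u$. Since $g$ is $C^{0,\alpha}_\bb$ and equivalent to $g_\bb$, the difference $g_\bb^{j\bar k} - g^{j\bar k}$ is small near a fixed point (it vanishes there after an appropriate linear change of coordinates, or at least is controlled by $\|g\|^*_{C^{0,\alpha}_\bb}$ times a small radius to a positive power), and the second-order term $\partial_j\partial_{\bar k}u$ appearing here is, in the weighted norm, one of the operators $T$ from \eqref{eqn:operator T} up to factors of $|z_j|^{2(1-\beta_j)}$ etc. This is precisely the situation where Corollary \ref{corr:1.1} (equivalently the scaling-invariant form \eqref{eqn:rescale Holder}, which is already stated for the flat metric) applies to the equation $\Delta_\bb u = \tilde f$ with $\tilde f = f + (g_\bb^{j\bar k}-g^{j\bar k})\partial_j\partial_{\bar k}u$.

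The key steps, in order, would be: (1) recall the standard interpolation inequalities for the weighted norms $\|\cdot\|^{(\sigma)}$, namely that for any $\varepsilon > 0$ the lower-order weighted seminorms $[u]^{(0)}_{C^1_\bb}$, $[u]^{(0)}_{C^2_\bb}$, $\|u\|^{(0)}_{C^0}$ are bounded by $\varepsilon [u]^*_{C^{2,\alpha}_\bb} + C(\varepsilon)\|u\|_{C^0}$ — these follow mechanically from the definitions exactly as in \cite{GT}, Lemma 6.32, using that the weights $d_x$ are comparable on balls of radius $\sim d_x$; (2) for a fixed point $x_0\in B_\bb(0,1)$ with $d_{x_0} = d_\bb(x_0,\partial B_\bb(0,1))$, rescale the ball $B_\bb(x_0, d_{x_0}/2)$ to unit size, apply the flat scaling-invariant estimate \eqref{eqn:rescale Holder} to $\Delta_\bb u = \tilde f$ on that ball, and track the powers of $d_{x_0}$; (3) estimate $\|\tilde f\|$ in the relevant norm on the small ball: the new term contributes $\|g_\bb - g\|_{C^{0,\alpha}_\bb}$ times the weighted $C^2$-type norm of $u$, which after rescaling produces a term like $C\|g\|^*_{C^{0,\alpha}_\bb}\,(d_{x_0}/2)^\alpha\,[u]^*_{C^{2,\alpha}_\bb(B_\bb(x_0,d_{x_0}/2))}$ plus lower-order terms; (4) take the supremum over $x_0$ and absorb — choosing the radius small relative to $\|g\|^*_{C^{0,\alpha}_\bb}$ (which requires first localizing to balls small enough that $g$ is a small perturbation of a constant-coefficient conical operator, then covering), the small constant in front of $[u]^*_{C^{2,\alpha}_\bb}$ lets us move that term to the left-hand side; (5) clean up the remaining lower-order terms with the interpolation inequalities from step (1), yielding \eqref{eqn:scaling invariant}.

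The main obstacle I expect is step (3)–(4): the absorption argument. Unlike the classical Euclidean case, $g$ is not merely a small perturbation of a \emph{constant}-coefficient elliptic operator uniformly — near the singular set $\sS$ the model operator $\Delta_\bb$ itself has variable (degenerate) coefficients, and a single global change of coordinates will not make $g-g_\bb$ small. One must instead exploit that the difference $g^{j\bar k} - g_\bb^{j\bar k}$ has a Hölder modulus controlled by $\|g\|^*_{C^{0,\alpha}_\bb}$ with respect to $d_\bb$, so that on a ball of $g_\bb$-radius $\rho$ the oscillation of the coefficients is $O(\rho^\alpha)$ — and then the flat Schauder estimate is applied on these small balls where the frozen-coefficient operator differs from a rescaled \emph{standard} conical Laplacian by this small amount. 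Making this rescaling-and-covering bookkeeping precise, and checking that the rescaled metrics still satisfy the hypotheses of Corollary \ref{corr:1.1} with \emph{uniform} constants (which uses the scaling-invariance of the standard cone metric $\omega_\bb$ under $z_j \mapsto \lambda^{1/\beta_j} z_j$ for the conical directions and $z_j\mapsto\lambda z_j$ for the regular ones), is the technical heart of the argument. Everything else is a routine transcription of \cite[\S 6]{GT} into the weighted conical norms defined in Definition \ref{defn:2.3}.
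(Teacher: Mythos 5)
Your proposal is correct and follows essentially the same route as the paper: freeze coefficients, write the equation as $\Delta_{g_\bb}u = f - \eta.\ddb u$, apply the scaling-invariant flat-cone estimate \eqref{eqn:rescale Holder} on a ball of radius $\mu d_{x_0}$, and absorb the $[u]^*_{C^{2,\alpha}_\bb}$ term by taking $\mu$ small, with interpolation cleaning up the lower-order terms. The one concrete detail worth adding to make step (3) watertight is the paper's use of the non-holomorphic coframe $\{\epsilon_j = dr_j + \sqrt{-1}\beta_j r_j d\theta_j, dz_k\}$ and the normalization at the nearest point $\hat x_0\in\sS$ (so that $g_{\epsilon_j\bar\epsilon_j}(\hat x_0)=1$, cross terms vanish there), which is exactly what makes the coefficient difference in $\eta.\ddb u$ pair with the bounded operators $T$ rather than raw, blowing-up second derivatives; you gesture at this but it is where the bookkeeping actually lives.
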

\begin{proof}
Given any points $x_0\neq y_0\in B_\bb(0,1)$, assume $d_{x_0} = \min(d_{x_0}, d_{y_0}  )$ (recall $d_x = d_\bb( x, \partial B_\bb(0,1)  )$). Let $\mu\in (0,1/4)$ be a small number to be determined later. Denote $d=\mu d_{x_0}$ and $B:= B_\bb(x_0, d)$, and $\frac 1 2 B := B_\bb(x_0, d/2)$.

\smallskip

\noindent $\bullet$ {\bf Case 1.} $d_\bb(x_0,y_0)< d/2$. 

\smallskip

{\em Case 1.1.} $B_\bb(x_0, d)\cap \sS = \emptyset$. On $B_\bb(x_0, d)$ we can introduce the smooth complex coordinates $\{w_1 = z_1^{\beta_1}, w_2 = z_2^{\beta_2}, z_3,\ldots, z_n\}$, under which $g_\bb$ becomes the Euclidean one and the components of $g$ become $C^\alpha$ in the usual sense. The equation \eqref{eqn:lap general} has $C^\alpha$ leading coefficients and we can apply Theorem 6.6 in \cite{GT} to conclude that (the following inequality is understood in the new coordinates)
\begin{equation}\label{eqn:GT 1}
[u]^*_{C^{2,\alpha}( B)} \le C\xk{ \| u\|_{C^0(B)} + \| f\|_{C^{0,\alpha} (B)   }^{(2)}    }.
\end{equation}
Recall $T$ denotes  the  second order operators appearing in \eqref{eqn:operator T}.
%$$ \big\{ |z_j|^{2(1-\beta_j)} \frac{\partial^2}{\partial z_j \partial z_{\bar j}}, N_j N_k \,(j\neq k), N_j D', (D')^2  \big\}. $$
Let $D$ denote the ordinary first order operators in $\{w_1,w_2,z_3,\ldots, z_n\}$. Then we calculate
\begin{align*}
|Tu(x_0) - Tu(y_0)   |&  \le | D^2 u (x_0) - D^2 u(y_0)  | + \frac{d_\bb(x_0,y_0)}{d} ( |D^2 u(x_0)| + |D^2 u(y_0)|   ) \\
& \le \frac{4 d_\bb(x_0,y_0)^\alpha}{ d^{2+\alpha}  } [u]^*_{C^{2,\alpha}(B)} + \frac{4 d_\bb(x_0,y_0)}{d^3} [u]^*_{C^2(B)}\\
\text{(by interpolation inequality)}& \le \frac{8 d_\bb(x_0,y_0)^\alpha}{ d^{2+\alpha}  } [u]^*_{C^{2,\alpha}(B)} +C  \frac{d_\bb(x_0,y_0)^\alpha}{ d^{2+\alpha} } \| u\|_{C^0(B)}.
\end{align*}
Then we get
\begin{equation}\label{eqn:case 1.1}
d_{x_0}^{2+\alpha}\frac{ |Tu(x_0) - Tu(y_0)   |  }{d_\bb(x_0,y_0)^\alpha   } \le \frac{C }{ \mu^{2+\alpha}  } \| f\|^{(2)}_{ C^{0,\alpha}_\bb( B  )  } +  \frac{C}{ \mu ^{2+\alpha} } \| u\|_{C^0(B)}.
\end{equation}

\smallskip

{\em Case 1.2:} $B_\bb(x_0, d)\cap \sS \neq \emptyset$. Let $\hat x_0\in \sS$ be the nearest point of $x_0$ to $\sS$. We consider the balls $\hat B: = B_\bb( \hat x_0, 2d  )$  which is contained in $B_\bb(0,1)$ by triangle inequality. As in \cite{D}, we introduce a (non-holomorphic) basis of $T^*_{1,0}(\mathbb C^n\backslash \Ss)$ as 
$$\big\{\epsilon_j: = dr_j + \sqrt{-1} \beta_j r_j d\theta_j, dz_k\big\}_{j=1,2; \,k=3,\ldots, n},$$
and the dual basis of $T_{1,0}( \mathbb C^n\backslash \sS  )$:
$$\big\{\gamma_j := \frac{\partial}{\partial r_j} - \sqrt{-1}\frac{1}{\beta_j r_j} \frac{\partial}{\partial\theta_j}, \frac{\partial}{\partial z_k} \big\}_{j=1,2; \,k=3 ,\ldots, n   }.$$ We can write the $(1,1)$-form $\omega$ in the basis $\{\epsilon_j\wedge \bar \epsilon_k, \epsilon_j\wedge d\bar z_k, dz_k\wedge \bar \epsilon_j, dz_j\wedge d\bar z_k\}$ as
\begin{equation}\label{eqn:expand cone} \omega = g_{\epsilon_j \bar \epsilon_k} \epsilon_j\wedge \bar \epsilon_k + g_{\epsilon_j \bar k} \epsilon_j\wedge d\bar z_k + g_{k\bar \epsilon_j} dz_k\wedge \bar \epsilon_j + g_{j\bar k} dz_j \wedge d\bar z_k,  \end{equation}
where 
\begin{equation}\label{eqn:second operators}
g_{\epsilon_j\bar \epsilon_k} = \ddb\phi(\gamma_j,\bar \gamma_k), \, g_{\epsilon_j \bar k} =\ddb\phi( \gamma_j, \frac{\partial}{\partial \bar z_k}),\, g_{k\bar \epsilon_j} = \ddb(\frac{\partial}{\partial z_k},\bar \gamma_j), \, g_{k\bar j} = \frac{\partial^2}{\partial z_k\partial\bar z_j}\phi.
\end{equation}
We remark that all the second order derivatives of $\phi$ appearing in \eqref{eqn:second operators} are linear combination of $|z_j|^{2-2\beta_j}\frac{\partial^2}{\partial z_j\partial z_{\bar j}}$  $N_jN_k$ ($j\neq k$), $N_j D'$ and $(D')^2$, which are studied in Theorem \ref{thm:main 1}. The standard metric $\omega_\bbeta$ becomes the identity matrix under the basis above for $(1,1)$-forms.
If $\omega$ is $C^{0,\alpha}_\bbeta$, all the coefficients in the expression of $\omega$ in \eqref{eqn:expand cone} are $C^{0,\alpha}_\bbeta$ continuous and the cross terms $g_{\epsilon_j\bar \epsilon_k}$ with $j\neq k$, $g_{\epsilon_j \bar k}$ tend to zero when approaching the corresponding singular sets $\sS_j$ or $\sS_k$. Moreover, the limit of $g_{j\bar k}dz_j\wedge d\bar z_k$ as tending to $\sS_1\cap\ldots\cap \Ss_p$ defines a K\"ahler metric on it. Rescaling or rotating the coordinates if necessary we may assume at $\hat x_0\in \sS$, $g_{\epsilon_j\bar \epsilon_j}(\hat x_0) = 1$, $g_{j\bar k}(\hat x_0)=\delta_{jk}$ and the cross terms vanish at $\hat x_0$. Let $\omega_\bbeta$ be the standard cone metric under these new coordinates near $\hat x_0$, and we can write the  equation \eqref{eqn:lap general} as 
$$\Delta_g u (z)= \Delta_{g_\bbeta} u(z) + \eta(z). i\partial\bar \partial u(z) = f(z), \quad\forall ~ z\not\in \sS$$
for some Hermtian matrix $\eta(z) = (\eta^{j\bar k})_{j,k=1}^n$, $\eta^{j\bar k} = g^{j\bar k}(z) - g_\bbeta^{j\bar k}$. It is not hard to see the term $ \eta(z). i\partial\bar \partial u$ can be written as
\begin{equation}\label{eqn:eta}\sum_{j,k=1}^2 (g^{\epsilon_j\bar \epsilon_k} (z)- \delta_{jk}) u_{\epsilon_j\bar \epsilon_j} + 2 Re\sum_{1\le j\le 2, 3\le k\le n} g^{\epsilon_j \bar k} u_{\epsilon_j \bar k} + \sum_{j,k=3}^n ( g^{j\bar k}(z) - \delta_{jk} ) u_{j\bar k},  \end{equation}
and $g$ with the upper indices denotes the inverse matrix of $g$.  We consider the equivalent form of the equation \eqref{eqn:lap general} on $\hat B$, $$\Delta_{g_\bbeta} u = f - \eta. \ddb u=: \hat f, \quad u\in C^0(\hat B)\cap C^2(\hat B \backslash \sS).$$ Observe that $x_0,y_0\in B_\bb(\hat x_0, 3d/2)$ we can apply the scaled inequality \eqref{eqn:rescale Holder} of Theorem \ref{thm:main 1} to conclude that 
$$d^{2+\alpha} \frac{ |Tu(x_0) - Tu(y_0)  | }{d_\bb(x_0,y_0)^\alpha} \le  C \xk{ \| u\|_{C^0(\hat B)} + \| \hat f\|^{(2)}_{C^{0,\alpha}_\bb( \hat B  )}    } ,$$ thus
\begin{equation}\label{eqn:case 1.2}
d_{x_0}^{2+\alpha} \frac{ |Tu(x_0) - Tu(y_0)  | }{d_\bb(x_0,y_0)^\alpha} \le  \frac{C}{\mu^{2+\alpha}} \xk{ \| u\|_{C^0(\hat B)} + \| \hat f\|^{(2)}_{C^{0,\alpha}_\bb( \hat B  )}    } .
\end{equation}

\medskip

\noindent $\bullet$ {\bf Case 2.} $d_\bb(x_0,y_0)\ge d/2$. 
\begin{equation}\label{eqn:case 2.0}
d_{x_0}^{2+\alpha} \frac{ |Tu(x_0) - Tu(y_0)  | }{d_\bb(x_0,y_0)^\alpha} \le  4 d_{x_0}^{2+\alpha} \frac{ |Tu(x_0)| +| Tu(y_0)  | }{d ^\alpha}\le \frac{8}{\mu^\alpha} [u]_{C^{2}_\bb ( B_\bb(0,1)   )   }^* .
%\le & 4 \frac{d_{x_0}^2 |Tu(x_0)| + d_{y_0}^2 |Tu(y_0)|   }{\mu^\alpha}
\end{equation}
Combining \eqref{eqn:case 1.1}, \eqref{eqn:case 1.2} and \eqref{eqn:case 2.0} we get
\eqsp{\label{eqn:case final}
d_{x_0}^{2+\alpha} \frac{ |Tu(x_0) - Tu(y_0)  | }{d_\bb(x_0,y_0)^\alpha} \le & \frac{8}{\mu^\alpha} [u]_{C^{2}_\bb ( B_\bb(0,1)   )   }^* +  \frac{C}{\mu^{2+\alpha}} \xk{ \| u\|_{C^0(\hat B)} + \| \hat f\|^{(2)}_{C^{0,\alpha}_\bb( \hat B  )}    }\\
& ~~ +  \frac{C }{ \mu^{2+\alpha}  } \| f\|^{(2)}_{ C^{0,\alpha}_\bb( B  )  } +  \frac{C}{ \mu ^{2+\alpha} } \| u\|_{C^0(B)}.
%\le & 4 \frac{d_{x_0}^2 |Tu(x_0)| + d_{y_0}^2 |Tu(y_0)|   }{\mu^\alpha}
}
By definition it is easy to see that (we denote $B_\bb = B_\bb(0,1)$)
\begin{equation*}
\| f\|_{C^{0,\alpha }_\bb( B )}^{(2)} \le C \mu^2 \| f\|^{(2)} _{ C^0(B_\bb)   } + C \mu^{2+\alpha} [ f  ]_{C^{0,\alpha}_\bb(B_\bb)}^{(2)} \le \mu^2 \| f\|_{C^{0,\alpha}_\bb( B_\bb  )} ^{ (2)  }.  
\end{equation*}
We calculate 
\begin{align*}
\| \hat f\|_{C^{0,\alpha}_\bb(\hat B  )  }^{(2)}\le & \| \eta\|_{C^{0,\alpha}_\bb (\hat B)  }^{(0)   } \| Tu\|^{(2)}_{C^{0,\alpha}_\bb( \hat B )  } + \| f\|^{(2)}_{ C^{0,\alpha}_\bb (\hat B)  }\\
\le & C_0 [g]^{*}_{C^{0,\alpha}_\bb( B_\bb  )} \mu^\alpha \xk{ \mu^2 [u]^{*}_{C^{2}_\bb(B_\bb)} + \mu^{2+\alpha} [u]^*_{C^{2,\alpha}_\bb(B_\bb)   }      }   + \mu^2 \| f\|_{C^{0,\alpha}_\bb( B_\bb  )} ^{ (2)  }\\
\le & C_0 [g]^{*}_{C^{0,\alpha}_\bb( B_\bb  )} \mu^\alpha \xk{ C(\mu) \| u\|_{C^0(B_\bb)}+ 2 \mu^{2+\alpha} [u]^*_{C^{2,\alpha}_\bb(B_\bb)   }      }   + \mu^2 \| f\|_{C^{0,\alpha}_\bb( B_\bb  )} ^{ (2)  }.  
\end{align*}
\begin{equation*}
\frac{8}{\mu^\alpha}[u]^*_{C^2_\bb(B_\bb)} \le \mu^\alpha [u]^*_{C^{2,\alpha}_\bb(B_\bb)} + C(\mu) \| u\|_{C^0(B_\bb)}.
\end{equation*}
If we choose $\mu>0$ small such that $\mu^\alpha(2C_0 [g]^*_{C^{0,\alpha}_\bb(B_\bb)} + 1)\le 1/2$, then we get from \eqref{eqn:case final} and the inequalities above that
\begin{equation*}
d_{x_0}^{2+\alpha} \frac{ |Tu(x_0) - Tu(y_0)  | }{d_\bb(x_0,y_0)^\alpha}\le \frac 1 2 [u]^*_{C^{2,\alpha}_\bb(B_\bb)} + C(\mu)\xk{ \| u\|_{C^0(B_\bb)} + \| f\|^{(2)}_{C^{0,\alpha}_\bb (B_\bb)}   }.
\end{equation*}
Taking supremum over $x_0\neq y_0\in B_\bb(0,1)$ we conclude from the inequality above that
$$ [u  ]^*_{C^{2,\alpha}_\bb( B_\bb  )}\le C \xk{ \| u\|_{C^0( B_\bb  )} + \| f\|_{C^{0,\alpha}_\bb( B_\bb  )}^{(2)}    } . $$
Proposition \ref{prop:invariant} then follows from interpolation inequalities.

\end{proof}

%\end{prop}
\begin{remark}
It follows easily from the proof of Proposition \ref{prop:invariant} that the estimate \eqref{eqn:scaling invariant} also holds for metric balls $B_\bb(p,R)\subset B_\bb(0,1)$ whose center $p$ may not lie at $\sS$. 

\end{remark}

An immediate corollary to Proposition \ref{prop:invariant} is the following interior Schauder estimate.

\begin{corr}\label{prop 3.5}
Suppose $u$ satisfies the equation \eqref{eqn:lap general}. For any compact subset $K\Subset B_\bb(0,1)$, there exists a constant $C=C(n,\bb,K, \|g\|_{C^{0,\alpha}_\bb( B_\bb(0,1)   )})>0$ such that 
\begin{equation*}%\label{eqn:general H\"older estimate}
\| u\|_{C^{2,\alpha}_\bb(K)}\le C \xk{ \|u\|_{C^{0}(B_\bb(0,1))} + \| f \|_{C^{0,\alpha}_\bb ( B_\bb(0,1)  )   }  }.
\end{equation*}
\end{corr}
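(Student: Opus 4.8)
The plan is to deduce Corollary \ref{prop 3.5} from the scaling-invariant estimate of Proposition \ref{prop:invariant} by a standard covering and rescaling argument, exactly as one passes from an interior scaling-invariant Schauder estimate to a fixed-compact-subset estimate in the classical theory (cf. the deduction of interior Schauder estimates from Theorem 6.6 in \cite{GT}). First I would observe that for a compact subset $K\Subset B_\bb(0,1)$, there is a radius $R_0 = R_0(K)>0$ with $R_0 < \tfrac12 d_\bb(K,\partial B_\bb(0,1))$ such that for every $p\in K$ the metric ball $B_\bb(p, 2R_0)$ is contained in $B_\bb(0,1)$. By the Remark following Proposition \ref{prop:invariant}, the scaling-invariant estimate \eqref{eqn:scaling invariant} holds on each such ball $B_\bb(p,2R_0)$ (with center not necessarily on $\sS$), so after rescaling $B_\bb(p,2R_0)$ to the unit ball we obtain, for all $p \in K$,
\begin{equation*}
\| u\|_{C^{2,\alpha}_\bb(B_\bb(p,R_0))} \le C(n,\bb,R_0,\|g\|^*_{C^{0,\alpha}_\bb}) \xk{ \| u\|_{C^0(B_\bb(p,2R_0))} + \| f\|_{C^{0,\alpha}_\bb(B_\bb(p,2R_0))} },
\end{equation*}
where now the left side is the \emph{unweighted} $C^{2,\alpha}_\bb$-norm on the smaller ball (the weights $d_x$ in Definition \ref{defn:2.3} are bounded below by a constant multiple of $R_0$ there), and the constant absorbs powers of $R_0$.

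Next I would cover $K$ by finitely many balls $B_\bb(p_1,R_0),\ldots,B_\bb(p_m,R_0)$ with $p_i\in K$; since $K$ is compact this is possible with $m = m(K,R_0)$. Taking the maximum of the above estimate over $i=1,\ldots,m$, and bounding each $\|u\|_{C^0(B_\bb(p_i,2R_0))}\le \|u\|_{C^0(B_\bb(0,1))}$ and $\|f\|_{C^{0,\alpha}_\bb(B_\bb(p_i,2R_0))}\le \|f\|_{C^{0,\alpha}_\bb(B_\bb(0,1))}$ (the latter since $C^{0,\alpha}_\bb$ semi-norms over sub-balls are controlled by the global one), yields
\begin{equation*}
\| u\|_{C^{2,\alpha}_\bb(K)} \le \sum_{i=1}^m \| u\|_{C^{2,\alpha}_\bb(B_\bb(p_i,R_0))} \le C(n,\bb,K,\|g\|^*_{C^{0,\alpha}_\bb}) \xk{ \| u\|_{C^0(B_\bb(0,1))} + \| f\|_{C^{0,\alpha}_\bb(B_\bb(0,1))} },
\end{equation*}
which is the claimed inequality. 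One small point to be careful about is that the $C^{2,\alpha}_\bb$-semi-norm of $u$ over all of $K$ (a Hölder difference quotient between two points $x,y\in K$) must be controlled: if $x,y$ lie in a common ball $B_\bb(p_i,R_0)$ this is immediate, and if not then $d_\bb(x,y)\ge c(K,R_0)>0$ and the difference quotient is bounded by $2c^{-\alpha}\|Tu\|_{C^0(K)}$, which is already controlled by the $C^2_\bb$-part; so the sum over the finite cover does dominate the global semi-norm on $K$ up to a harmless constant.

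I do not expect a serious obstacle here — this is the routine localization step, and all the analytic content (the non-flat perturbation argument, the freezing of coefficients, the use of Theorem \ref{thm:main 1} through the expansion \eqref{eqn:expand cone}) has already been carried out in the proof of Proposition \ref{prop:invariant}. The only mild technical care needed is bookkeeping: tracking how the weighted norms $\|\cdot\|^*$ on $B_\bb(0,1)$ compare with the ordinary $C^{2,\alpha}_\bb$-norm on a compactly contained $K$, and checking that the dependence of the constant on $R_0$ (hence on $K$) and on $\|g\|^*_{C^{0,\alpha}_\bb(B_\bb(0,1))}$ is as stated. This is exactly the classical passage from Theorem 6.6 to the interior Schauder estimate in \cite{GT}, now adapted to the conical weighted norms of Definitions \ref{defn:2.2}–\ref{defn:2.3}.
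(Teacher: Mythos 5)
Your proof is correct and the strategy is sound, but it is more elaborate than needed. The paper states this as ``an immediate corollary'' of Proposition \ref{prop:invariant} and gives no proof, because the weighted-norm framework already does the localization for you: applying \eqref{eqn:scaling invariant} \emph{once} on $B_\bb(0,1)$ and restricting the resulting weighted norm to $K$ is enough. Indeed, for $x\in K$ one has $d_x = d_\bb(x,\partial B_\bb(0,1)) \ge d_\bb(K,\partial B_\bb(0,1)) =: R_0 > 0$, so the weights $d_x,\ d_x^2,\ \min(d_x,d_y)^{2+\alpha}$ appearing in $\|u\|^*_{C^{2,\alpha}_\bb(B_\bb(0,1))}$ are bounded below by powers of $R_0$ on $K$, giving $\|u\|_{C^{2,\alpha}_\bb(K)}\le C(R_0)\,\|u\|^*_{C^{2,\alpha}_\bb(B_\bb(0,1))}$; while on the right-hand side the same weights are bounded above by $1$, giving $\|f\|^{(2)}_{C^{0,\alpha}_\bb(B_\bb(0,1))}\le \|f\|_{C^{0,\alpha}_\bb(B_\bb(0,1))}$. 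No covering, no rescaling of subballs, no invocation of the remark about off-center balls is needed.

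Your covering argument is also valid, and it is the route one would take if one only had a \emph{non}-weighted interior estimate on balls; but there is one small imprecision you should tighten. When you control the Hölder seminorm of $Tu$ between $x,y\in K$, the dichotomy ``$x,y$ lie in a common $B_\bb(p_i,R_0)$, or else $d_\bb(x,y)\ge c(K,R_0)$'' does not hold for a bare finite cover by balls of radius $R_0$: two nearby points can straddle the boundary between two cover balls. The standard fix is to cover $K$ by balls $B_\bb(p_i,R_0/2)$ while keeping the local estimate valid on $B_\bb(p_i,R_0)$; then $d_\bb(x,y)<R_0/2$ forces $x,y$ to lie in a common $B_\bb(p_i,R_0)$, and otherwise the difference quotient is bounded by $2(R_0/2)^{-\alpha}\|Tu\|_{C^0(K)}$. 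With that adjustment your argument is complete, but the direct weighted-norm comparison is both shorter and closer to how the corollary is meant to follow.
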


\medskip

\noindent Next we will show that the equation \eqref{eqn:lap general} admits a unique $C^{2,\alpha}_\bb$-solution for any $f\in C^{0,\alpha}_\bb(\overline{B_\bb(0,1)})$ and boundary value $\varphi\in C^0(\partial B_\bb(0,1))$. We will follow the argument in Section 6.5 in \cite{GT}. In the following we will write $B_\bb= B_\bb(0,1)$ for simplicity.

\begin{lemma}\label{lemma 3.22}
Let $\sigma\in (0,1)$ be a given number. Suppose $u\in C^{2,\alpha}_\bb(B_\bb)$ solves \eqref{eqn:lap general} and $\| u\|^{(-\sigma)}_{C^0(B_\bb)}<\infty$ and $\| f\|_{C^{0,\alpha}_\bb( B_\bb  )   }^{(2-\sigma)}<\infty$. Then there exists a $C=C(n, \bb, \alpha, g ,\sigma  )>0$ such that 
$$ \| u\|_{ C^{2,\alpha}_\bb (B_\bb)  }^{(-\sigma)} \le C \xk{ \| u\|^{(-\sigma)} _{ C^{0}(B_\bb)   }  + \| f\|^{(2 -\sigma)}_{ C^{0,\alpha}_\bb(B_\bb)    }   }.   $$
\end{lemma}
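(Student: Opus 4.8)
\textbf{Proof proposal for Lemma \ref{lemma 3.22}.}

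The plan is to adapt the standard weighted-norm argument from Section 6.1 of \cite{GT} (the proof of Theorem 6.6 there, and the associated interior estimates in the weighted H\"older spaces) to the conical setting, using the scaling-invariant estimate already obtained in Proposition \ref{prop:invariant} as the local input. The key point is that all the weighted seminorms in Definition \ref{defn:2.3} are defined through the distance-to-boundary function $d_x = d_\bb(x,\partial B_\bb)$, so the usual covering-by-interior-balls technique goes through verbatim once one has the scale-invariant estimate on each ball $B_\bb(x_0, d_{x_0}/2)$.

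First I would fix a point $x_0 \in B_\bb$, set $d = d_{x_0}$, and apply Proposition \ref{prop:invariant} (in the form noted in the Remark following it, allowing the ball to be centered away from $\sS$) on the ball $B = B_\bb(x_0, d/2)$ after rescaling it to unit size. This gives
$$
\sup_B \Big( d^{\,2}|Tu| + d\,|\nabla_{g_\bb} u| + |u| \Big) + d^{\,2+\alpha}[Tu]_{C^{0,\alpha}_\bb(B)} \le C\Big( \sup_{B} |u| + \sup_B d^{\,2}|f| + d^{\,2+\alpha}[f]_{C^{0,\alpha}_\bb(B)} \Big),
$$
with $C$ depending only on $n,\bb,\alpha$ and $\|g\|^*_{C^{0,\alpha}_\bb}$. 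Next I would multiply through by $d^{-\sigma}$ and observe that for $x,y \in B$ one has $d_x, d_y$ comparable to $d$ (within a factor $2$), so $\min(d_x,d_y)^{-\sigma+\text{(weight)}}$ is comparable to the corresponding power of $d$; hence each term on the left, after the $d^{-\sigma}$ weighting, is bounded by the corresponding piece of $\|u\|^{(-\sigma)}_{C^{2,\alpha}_\bb(B_\bb)}$ localized to $B$, and each term on the right is bounded by $\|u\|^{(-\sigma)}_{C^0(B_\bb)} + \|f\|^{(2-\sigma)}_{C^{0,\alpha}_\bb(B_\bb)}$. Taking the supremum over $x_0$, and handling the H\"older quotient for pairs $x,y$ that are \emph{not} in a common such ball by the trivial bound $|Tu(x)-Tu(y)| \le |Tu(x)| + |Tu(y)|$ together with $d_\bb(x,y) \gtrsim \min(d_x,d_y)$ (exactly as in Case 2 of the proof of Proposition \ref{prop:invariant}), assembles the full weighted seminorm. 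Finally, the weighted $C^0$, $C^1_\bb$ and $C^2_\bb$ pieces are absorbed by the standard weighted interpolation inequalities (the conical analogues of Lemma 6.32 in \cite{GT}, which hold because these interpolations are purely one-dimensional estimates along $g_\bb$-geodesics and do not see the cone structure), leaving $\|u\|^{(-\sigma)}_{C^{2,\alpha}_\bb(B_\bb)}$ controlled by $\|u\|^{(-\sigma)}_{C^0(B_\bb)} + \|f\|^{(2-\sigma)}_{C^{0,\alpha}_\bb(B_\bb)}$.

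The main obstacle I anticipate is bookkeeping rather than conceptual: one must check that the weighted H\"older quotient $\min(d_x,d_y)^{\sigma+2+\alpha}|Tu(x)-Tu(y)|/d_\bb(x,y)^\alpha$ is genuinely controlled when $x$ and $y$ lie in slightly different local balls, which requires the elementary geometric fact that $d_\bb(x,y) < \tfrac12 d_x$ forces $d_y \in [\tfrac12 d_x, \tfrac32 d_x]$, valid here because $d_\bb(\cdot,\partial B_\bb)$ is $1$-Lipschitz with respect to $d_\bb$ just as in the Euclidean case. One also needs the scaling behavior of the conical H\"older seminorms under dilation of $B_\bb(x_0,d/2)$ to unit ball, which is dictated by the homogeneity of $g_\bb$ under the weighted scaling $z_j \mapsto \lambda^{1/\beta_j} z_j$ ($j\le p$), $z_j \mapsto \lambda z_j$ ($j>p$) — this is the same scaling used to pass from \eqref{eqn:Holder} to \eqref{eqn:rescale Holder}, so it is already available. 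With these two points in hand the argument is a routine transcription of \cite[\S6.1]{GT}.
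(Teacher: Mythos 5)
Your proof is correct and takes essentially the same route as the paper, whose proof of this lemma is simply the one-line remark that, given Proposition \ref{prop:invariant}, the argument is ``identical to that of Lemma 6.20 in [GT]'' --- which is exactly the local-ball covering, weight bookkeeping, and interpolation scheme you lay out. (Two small slips that do not affect the substance: the supremum on the left of your display should be taken over the half-ball $\tfrac12 B$ rather than all of $B$ so that the interior distance to $\partial B$ stays comparable to $d$; and in the weighted H\"older quotient near the end the exponent should read $-\sigma+2+\alpha$, not $\sigma+2+\alpha$, to match the $(-\sigma)$-weighted norm of Definition \ref{defn:2.3}.)
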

\begin{proof}

Given the estimates in Proposition \ref{prop:invariant}, the proof is identical to that of Lemma 6.20 in \cite{GT}. So we omit the details.
\end{proof}

\smallskip

\begin{lemma}\label{lemma 3.23}
Let $u\in C^{2}_\bb(B_\bb   )\cap C^0(\overline{B_\bb})$ solve the equation $\Delta_g u = f$ and $u\equiv 0 $ on $\partial B_\bb$. For any $\sigma\in (0,1)$, there exists a constant $C=C(n,\bb, \sigma, g)>0$ such that 
$$ \| u\|_{C^0(B_\bb)}^{ (-\sigma)  } =     \sup_{x\in B_\bb} d_x^{-\sigma} |u(x)  |\le C \sup_{x\in B_\bb} d_x^{2-\sigma} |f(x)  | = C \| f\|_{C^0(B_\bb)}^{(2-\sigma)}   ,$$
where as before $d_x = d_\bb(x, \partial B_\bb)$.
\end{lemma}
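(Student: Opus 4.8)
The plan is to prove this as a barrier (comparison) argument, exploiting the fact that the conical metric $g$ is uniformly equivalent to $g_\bb$ and that $g$ is the Laplacian associated to a genuine K\"ahler metric (so the maximum principle of Lemma \ref{lemma:MP} applies). First I would fix a point $x_0\in B_\bb$ and set $\rho = d_{x_0} = d_\bb(x_0,\partial B_\bb)$. The goal is to bound $|u(x_0)|$ by $C\rho^{\sigma}\| f\|_{C^0(B_\bb)}^{(2-\sigma)}$. Denote $M := \| f\|_{C^0(B_\bb)}^{(2-\sigma)}$, so that $|f(x)|\le M d_x^{\sigma-2}$ everywhere.

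The key step is to construct, on a suitable ball $B_\bb(x_0, \rho/2)\subset B_\bb$ (on which $d_x\geq \rho/2$, hence $|f|\le M (\rho/2)^{\sigma-2}$ is bounded), a supersolution that dominates $u$ at $x_0$. Working in this ball, where $d_x$ is comparable to $\rho$, the RHS is effectively a constant of size $\sim M\rho^{\sigma-2}$. I would take a barrier of the form $w(x) = A\rho^{\sigma-2}\big(\tfrac{\rho^2}{4} - d_{g_\bb}(x_0,x)^2\big) + (\text{boundary control})$, using that $\Delta_{g_\bb}$ of the squared distance function is bounded below by a positive constant (since $(\mathbb{C}^n\setminus\sS, g_\bb)$ has the structure of a product of cones, $d_{g_\bb}(x_0,\cdot)^2$ behaves like a sum of squared distances; alternatively use $\Delta_{g_\bb}(\sum_{j=5}^{2n}(s_j - s_j(x_0))^2 + |z_1|^{2\beta_1} + \cdots)$ type expressions which one can compute directly and bound below). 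Since $\Delta_g$ is uniformly equivalent to $\Delta_{g_\bb}$ up to bounded coefficients, one gets $\Delta_g w \le -c M\rho^{\sigma-2} < 0$ for $A$ large depending only on $n,\bb$, the ellipticity constant of $g$, and the lower bound for the Laplacian of the distance-type function. On $\partial B_\bb(x_0,\rho/2)$ one has $d_x\le \rho$ roughly (actually $d_x$ ranges in $[\rho/2, 3\rho/2]$), so $|u|\le M d_x^{\sigma-2}\cdot(\text{not useful})$ — instead use $|u(x)|\le \| u\|_{C^0}^{(-\sigma)} d_x^\sigma$, but that quantity is not yet controlled; so one should set up the barrier to also account for the Dirichlet data being zero on $\partial B_\bb$ and iterate, or more simply one replaces the estimate on $\partial B_\bb(x_0,\rho/2)$ by a direct use of $\|u\|_{C^0(B_\bb)}$ and a covering/rescaling argument.

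Let me restructure: the cleanest route is a contradiction-and-rescaling argument following the proof of the analogous Euclidean fact (Gilbarg--Trudinger Lemma 6.20's companion). Suppose the claimed estimate fails; then there exist sequences $u_m, f_m$ with $\Delta_{g_m} u_m = f_m$, $u_m = 0$ on $\partial B_\bb$, $\| u_m\|_{C^0(B_\bb)}^{(-\sigma)} = 1$, but $\| f_m\|_{C^0(B_\bb)}^{(2-\sigma)}\to 0$. Pick $x_m$ with $d_{x_m}^{-\sigma}|u_m(x_m)| \ge 1/2$. If $d_{x_m}$ stays bounded away from $0$, interior estimates (Corollary \ref{prop 3.5}) plus the vanishing boundary data and the maximum principle (Lemma \ref{lemma:MP}, applied to $g$ since $g$ is K\"ahler, together with the $\epsilon\log|z_j|$ trick) force $u_m \to 0$ uniformly, a contradiction. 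If $d_{x_m}\to 0$, rescale: translate $x_m$ to a fixed reference configuration and dilate by $\lambda_m = d_{x_m}$, using that $g_\bb$ is scaling-invariant under the weighted dilations $z_j\mapsto \lambda^{1/\beta_j} z_j$ (and $g$ converges to such a model metric after rescaling by the $C^{0,\alpha}_\bb$ hypothesis). One obtains in the limit a bounded function $u_\infty$ on a half-space-type model, harmonic for a limiting conical metric, with $|u_\infty|\le d(\cdot)^\sigma$ and $u_\infty(x_\infty) \ge 1/2 d(x_\infty)^\sigma$ at a finite point, and zero ``at the boundary at infinity'' — a Liouville/maximum-principle contradiction via the same barrier functions $\pm\epsilon(\log|z_1|+\log|z_2|)$ as in Lemma \ref{lemma:MP}.

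The main obstacle I anticipate is making the rescaling step rigorous: one must check that under the weighted dilations the metrics $g_{g_m}$ converge (locally uniformly away from $\sS$, and in $C^{0,\alpha}_\bb$) to a standard conical model metric $g_{\bb'}$ (possibly with fewer singular components, if $x_m$ stays at bounded weighted distance from only some of the $\sS_j$), and that the limiting function $u_\infty$ indeed satisfies the model Laplace equation and inherits the decay $|u_\infty(x)| \le C\, d(x,\partial)^{\sigma}$ in the rescaled domain, which exhausts the model space. The second subtle point is the maximum principle on the (unbounded) model: one applies Lemma \ref{lemma:MP}'s argument with the logarithmic correction terms to conclude $u_\infty\equiv 0$, contradicting $u_\infty(x_\infty)\ne 0$. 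Everything else — the barrier construction on a single ball, the interior estimate input, the covering — is routine given Corollary \ref{prop 3.5} and Proposition \ref{prop:invariant}.
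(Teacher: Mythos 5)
You abandoned the barrier strategy one step too early, and the route you pivoted to (contradiction-and-rescaling) is left with substantial unfinished business; the paper's proof is a clean global barrier argument. Your local barrier on $B_\bb(x_0,\rho/2)$ fails, as you noticed, because you cannot control $u$ on the boundary of that small ball. The fix is not to iterate or cover, but to use a \emph{single global} supersolution adapted to $\partial B_\bb$ rather than to $x_0$. Take $w_1(x) = (1 - d_\bb(x,0)^2)^\sigma$. This vanishes on $\partial B_\bb$, is comparable to $d_x^\sigma$, and the concavity of $t\mapsto t^\sigma$ for $\sigma\in(0,1)$ is what saves the day: one computes
\begin{equation*}
\Delta_g w_1 = \sigma(1-d_\bb^2)^{\sigma-2}\Big( -(1-d_\bb^2)\,\tr_g g_\bb - (1-\sigma)\,\big|\nabla d_\bb^2\big|_g^2 \Big) \le -c_0\,\sigma\,(1-d_\bb^2)^{\sigma-2},
\end{equation*}
using only the uniform equivalence $C^{-1}g_\bb\le g\le C g_\bb$. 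The two negative contributions cover the two regimes: $(1-d_\bb^2)\tr_g g_\bb$ is bounded below away from the boundary, while $(1-\sigma)|\nabla d_\bb^2|_g^2 \gtrsim d_\bb^2$ is bounded below near the boundary. The resulting exponent $\sigma-2$ on the right exactly matches the weight on $f$, so with $N := \|f\|^{(2-\sigma)}_{C^0(B_\bb)}$ one has $\Delta_g(ANw_1 \pm u)\le 0$ in $B_\bb$ for suitable $A = A(n,\bb,\sigma,g)$, and $ANw_1 \pm u = 0$ on $\partial B_\bb$. The conical maximum principle (Lemma \ref{lemma:MP}, valid for $\Delta_g$ by the same $\pm\epsilon\log|z_j|$ perturbation since $g$ is equivalent to $g_\bb$) then gives $|u|\le ANw_1\le CNd_x^\sigma$ pointwise, which is the claim.

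By contrast, the contradiction-and-rescaling scheme in your second paragraph sketches a plausible but considerably heavier alternative, and the two steps you yourself flag as obstacles --- convergence of the rescaled conical metrics under the anisotropic dilations $z_j\mapsto\lambda^{1/\beta_j}z_j$ (including the case where only some of the $\sS_j$ survive the limit), and a Liouville-type statement on the resulting unbounded conical half-space model for solutions with sublinear growth $|u_\infty|\lesssim d^\sigma$ --- are genuine lemmas that would each need a full proof. Neither is provided, so the argument as written does not close. The barrier computation above replaces all of that with a half-page calculation.
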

\begin{proof}
Consider the function $w_1 = (1 - d_\bb^2)^\sigma$ where $d_\bb(x) = d_\bb(x,0)$. We calculate 
\begin{align*}
\Delta_g w_1 & = \sigma ( 1 - d_\bb^2 )^{\sigma - 2} \xk{ - (1 - d_\bb^2) \tr_g g_\bb - (1-\sigma) \abs{\nabla d_\bb^2  }_{g}     }\\
&\le \sigma ( 1 - d_\bb^2 )^{\sigma - 2} \xk{- C^{-1} (1 - d_\bb^2) - 4 C^{-1}d_\bb^2 (1-\sigma)     }\\
&\le -c_0  \sigma ( 1 - d_\bb^2 )^{\sigma - 2}.% \xk{ 1 - d_\bb^2(1 +   4 C^{-2} (1-\sigma) )    }
\end{align*}
%where $R_0$ is a constant close enough to $1$, and $C$ is the constant in  \eqref{eqn:gen cone}. We define another function $w_2 = e - e^{d_\bb^2}$, and calculate
%\begin{align*}
%\Delta_g w_2 & =  - e^{d_\bb^2} \xk{ \Delta_g d_\bb^2 + \abs{ \nabla d_\bb^2  }_g   }\le - C^{-1} \\
%&\le \left\{\begin{aligned}
%&0,&\text{ if }R_0\le d_\bb\le 1,\\
%&-c_2 (1-d_\bb^2) ^{\sigma - 2},& \text{ if } 0\le d_\bb\le R_0,
%\end{aligned}\right.
%\end{align*} where $c_2>0$ depends also on $R_0$. 
Take a large constant $A>1$ such that for $w = Aw_1$
$$\Delta_g w\le - (1-d_\bb)^{\sigma - 2} \le - |f|/N,\quad\text{in }B_\bb,$$
where $N = \sup_{x\in B_\bb} d_x^{2-\sigma} |f(x)| = \sup_{x\in B_\bb} (1-d_\bb(x))^{2-\sigma} |f(x)|$. Hence $\Delta_g( N w \pm u   )\le 0$ and from the definition of $w$ we also have $w|_{\partial B_\bb} \equiv 0$, by maximum principle we obtain that $| u(x) |\le N w\le C N (1-d_\bb(x))^{\sigma} = CN d_x^\sigma$, hence the lemma is proved.
\end{proof}

\begin{prop} Given any function $f\in C^{0,\alpha}_\bb(\overline{B_\bb})$, 
the Dirichlet problem $\Delta_g u = f$ in $B_\bb$ and $u\equiv 0$ on $\partial B_\bb$ admits a unique solution $u\in C^{2,\alpha}_{\bb}(B_\bb)\cap C^0(\overline{B_\bb})$.
\end{prop}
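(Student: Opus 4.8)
The plan is to solve the Dirichlet problem $\Delta_g u = f$, $u|_{\partial B_\bb}=0$ by the continuity method, deforming from the flat operator $\Delta_{g_\bb}$ (for which existence is already available by Proposition \ref{prop:3.2 new}) to the general operator $\Delta_g$. For $s\in[0,1]$ set $g_s$ to be the metric with inverse $g_s^{j\bar k} = (1-s) g_\bb^{j\bar k} + s\, g^{j\bar k}$; since $g$ and $g_\bb$ are uniformly equivalent by \eqref{eqn:gen cone}, each $g_s$ is again a $C^{0,\alpha}_\bb$-conical K\"ahler metric on $B_\bb$ with norm bounded uniformly in $s$, and $\Delta_{g_s}$ interpolates linearly between $\Delta_{g_\bb}$ and $\Delta_g$. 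Let $\cS$ be the set of $s\in[0,1]$ for which $\Delta_{g_s}u = f$, $u|_{\partial B_\bb}=0$ has a solution $u\in C^{2,\alpha}_\bb(B_\bb)\cap C^0(\overline{B_\bb})$ for every $f\in C^{0,\alpha}_\bb(\overline{B_\bb})$. Then $0\in\cS$ by Proposition \ref{prop:3.2 new}, so $\cS\neq\emptyset$; I will show $\cS$ is open and closed in $[0,1]$, hence $1\in\cS$.

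First I would record the uniform a priori estimate that drives both openness and closedness: if $u\in C^{2,\alpha}_\bb(B_\bb)\cap C^0(\overline{B_\bb})$ solves $\Delta_{g_s}u=f$ with $u|_{\partial B_\bb}=0$, then, fixing some $\sigma\in(0,\alpha)$, Lemma \ref{lemma 3.23} applied to $g_s$ gives $\|u\|^{(-\sigma)}_{C^0(B_\bb)}\le C\|f\|^{(2-\sigma)}_{C^0(B_\bb)}\le C\|f\|_{C^{0,\alpha}_\bb(\overline{B_\bb})}$, and then Lemma \ref{lemma 3.22} applied to $g_s$ upgrades this to
\begin{equation*}
\|u\|^{(-\sigma)}_{C^{2,\alpha}_\bb(B_\bb)}\le C\xk{\|u\|^{(-\sigma)}_{C^0(B_\bb)}+\|f\|^{(2-\sigma)}_{C^{0,\alpha}_\bb(B_\bb)}}\le C\|f\|_{C^{0,\alpha}_\bb(\overline{B_\bb})},
\end{equation*}
with $C$ depending on $n,\bb,\alpha,\sigma$ and the uniform $C^{0,\alpha}_\bb$-bound of the family $\{g_s\}$, hence independent of $s$. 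This is exactly the estimate needed to run the standard continuity-method bootstrap: for openness, if $s_0\in\cS$ one writes $\Delta_{g_s}u - f = \Delta_{g_{s_0}}u - f + (s-s_0)\big(\text{Re}\,\eta(z).\ddb u\big)$ with $\eta^{j\bar k}=g^{j\bar k}-g_\bb^{j\bar k}$ a $C^{0,\alpha}_\bb$ Hermitian matrix, and solves $\Delta_{g_s}u=f$ as a fixed point of the map $u\mapsto \Delta_{g_{s_0}}^{-1}\big(f - (s-s_0)\,\mathrm{Re}\,\eta.\ddb u\big)$ on the Banach space of $C^{2,\alpha}_\bb$-functions vanishing on $\partial B_\bb$; the uniform estimate shows $\Delta_{g_{s_0}}^{-1}$ is bounded, so this is a contraction for $|s-s_0|$ small. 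For closedness, take $s_j\in\cS$, $s_j\to s_*$; the solutions $u_j$ of $\Delta_{g_{s_j}}u_j=f$ satisfy the uniform bound above, so by Arzel\`a–Ascoli a subsequence converges in $C^{2,\alpha'}_{loc}(B_\bb\setminus\sS)$ (any $\alpha'<\alpha$) and in $C^0(\overline{B_\bb})$ to some $u_*$; the weighted $C^{2,\alpha}_\bb$-norm is lower semicontinuous under this convergence, so $u_*\in C^{2,\alpha}_\bb(B_\bb)$, $u_*|_{\partial B_\bb}=0$, and $\Delta_{g_{s_*}}u_*=f$, giving $s_*\in\cS$.

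With $1\in\cS$, existence follows, and uniqueness is immediate from the maximum principle (Lemma \ref{lemma:MP}, whose barrier argument applies verbatim to $g$ since $g$ is uniformly equivalent to $g_\bb$ and $\ric$ plays no role): the difference of two solutions is $\Delta_g$-harmonic and vanishes on $\partial B_\bb$, hence is identically zero. The one point requiring a little care — and the main obstacle — is verifying that the weighted Schauder estimates of Lemmas \ref{lemma 3.22} and \ref{lemma 3.23}, and the openness fixed-point argument, go through with constants uniform over the whole family $\{g_s\}_{s\in[0,1]}$; this reduces to checking that $\|g_s\|^*_{C^{0,\alpha}_\bb(B_\bb)}$ is bounded independently of $s$, which is clear since $g_s^{j\bar k}$ is an affine combination of $g^{j\bar k}$ and $g_\bb^{j\bar k}$ and inversion is smooth on the uniformly-bounded, uniformly-elliptic set of matrices in question, and that Lemma \ref{lemma 3.22} (quoted from \cite[Lemma 6.20]{GT} via Proposition \ref{prop:invariant}) indeed holds for each conical metric in the family — which it does, since Proposition \ref{prop:invariant} was stated for an arbitrary $C^{0,\alpha}_\bb$-conical K\"ahler metric.
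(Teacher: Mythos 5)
Your proposal is correct and follows essentially the same approach as the paper: method of continuity along the interpolation $\Delta_t = t\Delta_g + (1-t)\Delta_{g_\bb}$, with the uniform a priori bound supplied by Lemmas \ref{lemma 3.22} and \ref{lemma 3.23} in the weighted spaces $\mathcal B_1$, $\mathcal B_2$, and uniqueness from the maximum principle. The only cosmetic difference is that the paper invokes Theorem 5.2 of \cite{GT} to package the openness/closedness step, which you unpack by hand; in doing so you should be explicit that the fixed-point/contraction argument for openness is run in the weighted Banach space $\mathcal B_1 = \{u : \|u\|^{(-\sigma)}_{C^{2,\alpha}_\bb(B_\bb)} < \infty\}$ rather than just ``$C^{2,\alpha}_\bb$-functions vanishing on $\partial B_\bb$'', since it is the weighted norm that both controls boundary behavior and makes $\Delta_{g_{s_0}}^{-1}$ bounded.
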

\begin{proof}
The proof of this proposition is almost identical to that of Theorem 6.22 in \cite{GT}.  For completeness, we provide the detailed argument.   Fix a $\sigma\in (0,1)$.  We define a family of operators $\Delta_t = t \Delta_g + (1-t) \Delta_{g_\bb}$ and it is straightforward to see that $\Delta_t$ is associated to some cone metric which also satisfies \eqref{eqn:gen cone}. We    study the Dirichlet problem
\begin{equation}\label{eqn:t 1}\Delta_t u_t = f,\text{ in }B_\bb, \quad u_t\equiv 0 \text{ on }\partial B_\bb. \tag{$*_t$} \end{equation}
Equation ($*_0$) admits a unique solution $u_0\in C^{2,\alpha}_\bb(B_\bb)\cap C^0(\overline{B_\bb})$ by Proposition \ref{prop:3.2 new}. By Theorem 5.2 in \cite{GT}, in order to apply the continuity method to solve ($*_1$), it suffices to show $\Delta_t^{-1}$ defines a bounded linear operator between some Banach spaces. More precisely, define
$$\mathcal B_{1}: = \big\{ u\in C^{2,\alpha}_\bb(B_\bb) ~|~ \| u\|_{C^{2,\alpha}_\bb(B_\bb)}^{(-\sigma)} <\infty \big\},$$
$$\mathcal B_2: = \big\{ f\in C^{0,\alpha}_{\bb}(B_\bb) ~|~ \| f\|^{(2 -\sigma)}_{ C^{0,\alpha}_\bb (B_\bb)  } < \infty   \big\}.$$
By definition any $u\in \mathcal B_1$ is continuous on $\overline{B_\bb}$ and $u = 0$ on $\partial B_\bb$. By Lemmas \ref{lemma 3.22} and \ref{lemma 3.23}, we have
$$\| u\|_{\mathcal B_1} = \| u\|_{C^{2,\alpha}_\bb(B_\bb)}^{(-\sigma)} \le C \| f\|_{ C^{0,\alpha}  _\bb(B_\bb)}^{(2-\sigma)} = C \| \Delta_t u\|_{\mathcal B_2},    $$
for some constant $C$ independent of $t\in [0,1]$. Thus ($*_1$) admits a solution $u\in \mathcal B_1$. 
\end{proof}

\begin{corr}\label{corollary 3.2}
For any given $\varphi\in C^0(\partial  B_\bb)$ and $f\in C^{0,\alpha}_\bb(\overline{B_\bb})$, the Dirichlet problem 
\begin{equation}\label{eqn:cor 3.1}
%\left\{\begin{aligned}
\Delta_g u = f, \text{ in }B_\bb,\text{ and }
u = \varphi, \text{ on }\partial B_\bb,
%\end{aligned}\right.
\end{equation}admits a unique solution $u\in C^{2,\alpha}_\bb(B_\bb)\cap C^0(\overline{B_\bb})$.
\end{corr}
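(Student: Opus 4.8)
The plan is to reduce the Dirichlet problem \eqref{eqn:cor 3.1} to the zero boundary value case already settled in the preceding Proposition, by approximating $\varphi$ uniformly by boundary values of polynomials and then invoking the interior Schauder estimate of Corollary \ref{prop 3.5}. First I would record that Lemma \ref{lemma:MP} holds verbatim with $\Delta_\bb$ replaced by $\Delta_g$: since $\log|z_1|$ and $\log|z_2|$ are pluriharmonic on $\mathbb{C}^n\setminus\sS$ they are $\Delta_g$-harmonic there, so the barrier functions $h\pm\ep(\log|z_1|+\log|z_2|)$ used in the proof of Lemma \ref{lemma:MP} work equally well for $\Delta_g$, giving $\inf_{\partial B_\bb}h\le h\le\sup_{\partial B_\bb}h$ for any $h\in C^2(B_\bb\setminus\sS)\cap C^0(\overline{B_\bb})$ with $\Delta_g h=0$. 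Applying this to the difference of two solutions yields uniqueness in \eqref{eqn:cor 3.1}.

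The key auxiliary fact I would establish is that $\Delta_g P\in C^{0,\alpha}_\bb(\overline{B_\bb})$ for every polynomial $P$ in the real and imaginary parts of $z_1,\ldots,z_n$. Writing $\Delta_g=\Delta_{g_\bb}+\eta\cdot i\partial\bar\partial$ as in \eqref{eqn:eta}, the flat part equals $\sum_{j=1,2}|z_j|^{2(1-\beta_j)}\partial_{z_j}\partial_{\bar z_j}P+\sum_{j\ge3}\partial_{z_j}\partial_{\bar z_j}P$; each $\partial_{z_j}\partial_{\bar z_j}P$ is again a polynomial, hence $g_\bb$-Lipschitz on the bounded set $\overline{B_\bb}$ (the coordinates $z_j=r_j^{1/\beta_j}e^{i\theta_j}$ for $j=1,2$ have bounded $g_\bb$-gradient since $1/\beta_j\ge1$), while $|z_j|^{2(1-\beta_j)}=r_j^{2(1-\beta_j)/\beta_j}$ lies in $C^{0,\alpha}_\bb(\overline{B_\bb})$ precisely because of the standing hypothesis $\alpha<\beta_{\max}^{-1}-1\le\beta_j^{-1}-1<2(1-\beta_j)/\beta_j$; since $C^{0,\alpha}_\bb$ is an algebra on $\overline{B_\bb}$ this gives $\Delta_{g_\bb}P\in C^{0,\alpha}_\bb(\overline{B_\bb})$. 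For the lower order term, \eqref{eqn:eta}--\eqref{eqn:second operators} express $\eta\cdot i\partial\bar\partial P$ as a combination of the coefficients $g^{\epsilon_j\bar\epsilon_k}-\delta_{jk}$, $g^{\epsilon_j\bar k}$, $g^{j\bar k}-\delta_{jk}$ — entries of the inverse of the $C^{0,\alpha}_\bb$, uniformly positive Gram matrix of $\phi$ in the frame of Section \ref{section 3.5}, hence themselves $C^{0,\alpha}_\bb$ — multiplied by the second order quantities $P_{\epsilon_j\bar\epsilon_k}$, $P_{\epsilon_j\bar k}$, $P_{j\bar k}$ of \eqref{eqn:second operators}, which are $C^{0,\alpha}_\bb$ by the same Lipschitz/power argument and whose off-diagonal members vanish along the relevant $\sS_j$. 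Hence $\Delta_g P\in C^{0,\alpha}_\bb(\overline{B_\bb})$.

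For existence, I would choose polynomials $P_m$ with $\varphi_m:=P_m|_{\partial B_\bb}\to\varphi$ uniformly (Stone--Weierstrass on the compact set $\partial B_\bb$). By the preceding Proposition there is a unique $v_m\in C^{2,\alpha}_\bb(B_\bb)\cap C^0(\overline{B_\bb})$ with $\Delta_g v_m=f-\Delta_g P_m\in C^{0,\alpha}_\bb(\overline{B_\bb})$ and $v_m\equiv0$ on $\partial B_\bb$; then $u_m:=v_m+P_m$ solves $\Delta_g u_m=f$ in $B_\bb$ with $u_m=\varphi_m$ on $\partial B_\bb$. Applying the maximum principle above to $u_m-u_{m'}$ gives $\|u_m-u_{m'}\|_{C^0(\overline{B_\bb})}\le\|\varphi_m-\varphi_{m'}\|_{C^0(\partial B_\bb)}$, so $u_m\to u$ uniformly on $\overline{B_\bb}$; in particular $u\in C^0(\overline{B_\bb})$ and $u=\varphi$ on $\partial B_\bb$. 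Finally, for any $K\Subset B_\bb$, Corollary \ref{prop 3.5} yields $\|u_m\|_{C^{2,\alpha}_\bb(K)}\le C\bigl(\|u_m\|_{C^0(B_\bb)}+\|f\|_{C^{0,\alpha}_\bb(B_\bb)}\bigr)$ with $C$ independent of $m$, so a subsequence of $u_m$ converges in $C^{2,\alpha'}_\bb(K)$ for every such $K$ and every $\alpha'<\alpha$, necessarily to $u$; hence $\Delta_g u=f$ in $B_\bb$, and then $u\in C^{2,\alpha}_\bb(B_\bb)$ by one more application of Corollary \ref{prop 3.5}. This is the desired solution.

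The only genuinely new ingredient is the auxiliary fact $\Delta_g P\in C^{0,\alpha}_\bb(\overline{B_\bb})$ up to the boundary; once it is in place the rest is a routine combination of the preceding Proposition, Corollary \ref{prop 3.5} and the maximum principle. That fact rests on two elementary observations — polynomials are $g_\bb$-Lipschitz, and $|z_j|^{2(1-\beta_j)}\in C^{0,\alpha}_\bb$ exactly under the standing assumption $\alpha<\beta_{\max}^{-1}-1$ — together with the frame decomposition of $g$ already set up in Section \ref{section 3.5}, so I do not expect it to require any new estimates, only careful bookkeeping.
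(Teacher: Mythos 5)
Your proof is correct and follows essentially the same route as the paper: approximate the boundary data by functions in $C^{2,\alpha}_\bb(\overline{B_\bb})\cap C^0(\overline{B_\bb})$, reduce to the zero-boundary case, and pass to the limit using the maximum principle for $C^0$-convergence up to the boundary and Corollary \ref{prop 3.5} for interior $C^{2,\alpha}_\bb$-compactness. The one place you do genuinely more than the paper is in justifying the approximation step: the paper simply asserts the existence of $\varphi_k\in C^{2,\alpha}_\bb(\overline{B_\bb})\cap C^0(\overline{B_\bb})$ converging uniformly to $\varphi$, whereas you make an explicit choice — polynomials, via Stone--Weierstrass on the compact set $\partial B_\bb$ — and then verify carefully that $\Delta_g P\in C^{0,\alpha}_\bb(\overline{B_\bb})$, which is exactly what is needed to feed $f-\Delta_g P_m$ into the preceding Proposition. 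This verification (that $z_j$ is $g_\bb$-Lipschitz, that $|z_j|^{2(1-\beta_j)}=r_j^{2(1/\beta_j-1)}\in C^{0,\alpha}_\bb$ under the standing restriction $\alpha<\beta_j^{-1}-1$, and that the inverse metric coefficients in the $\epsilon_j$-frame are $C^{0,\alpha}_\bb$) is correct, and is a small but real gap in the paper's statement that you have filled. Otherwise the two arguments coincide.
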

\begin{proof}
We may extend $\varphi$ continuously to $B_\bb$ and assume $\varphi\in C^0(\overline{B_\bb})$. Take a sequence of functions $\varphi_k\in C^{2,\alpha}_\bb(\overline{B_\bb})\cap C^0({\overline{B_\bb}})$ which converges uniformly to $\varphi$ on $\overline{B_\bb}$. The Dirichlet problem $\Delta_g v_k = f - \Delta_g \varphi_k$ in $B_\bb$ and $v_k = 0$ on $\partial B_\bb$ admits a unique solution $v_k\in C^{2,\alpha}_\bb(B_\bb)\cap C^0(\overline{B_\bb})$. Thus the function $u_k: = v_k + \varphi_k\in C^{2,\alpha}_\bb$ satisfies $\Delta_g u_k = f$ in $B_\bb$ and $u_k = \varphi_k$ on $\partial B_\bb$.  $u_k$ is uniformly bounded in $C^0(\overline{B_\bb})$ by maximum principle. Corollary \ref{prop 3.5} gives uniformly $C^{2,\alpha}_\bb(K)$-bound on any compact subset $K\Subset B_\bb$. Letting $k\to\infty$ and $K\to B_\bb$, by a diagonal argument and up to a subsequence $u_k\to u\in C^{2,\alpha}_{\bb} (B_\bb)  $. On the other hand, from $\Delta_g( u_k - u_l ) = 0$ we see that $\{u_k\}$ is a Cauchy sequence in $C^0(\overline{B_\bb})$ thus $u_k$ converges uniformly to $u$ on $\overline{B_\bb}$. Hence $u\in C^0(\overline{B_\bb})$ and satisfies  the equation \eqref{eqn:cor 3.1}.

\end{proof}

\begin{corr}\label{corollary 3.3}
Given $f\in C^{0,\alpha}_\bb({B_\bb})$, suppose $u$ is a weak solution to the equation $\Delta_g u = f$ in the sense that 
$$\int_{B_\bb} \innpro{ \nabla u,\nabla \varphi  }\omega_g^n = -\int_{B_\bb} f \varphi \omega_g^n,\quad \forall \varphi\in H^1_0(B_\bb),    $$
then $u\in C^{2,\alpha}_\bb(B_\bb  )$.
\end{corr}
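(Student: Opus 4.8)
The plan is to localize and then compare $u$ with the classical solution of an interior Dirichlet problem supplied by Corollary \ref{corollary 3.2}. Fix a point $p\in B_\bb$ and a radius $R>0$ with $B_\bb(p,3R)\Subset B_\bb$; it suffices to prove $u\in C^{2,\alpha}_\bb$ on $B_\bb(p,R)$. The three ingredients I would assemble are: (a) interior continuity of the weak solution $u$; (b) classical regularity of $u$ off $\sS$; (c) a Dirichlet comparison plus a barrier argument to propagate the regularity across $\sS$.

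First I would establish interior continuity of $u$. Since $u$ is an $H^1_{\mathrm{loc}}(B_\bb,g_\bb)$ weak solution and $f\in L^\infty_{\mathrm{loc}}$, I would run the De Giorgi--Nash--Moser iteration using the conical Sobolev inequality \eqref{eqn:SOB} (the same $\ric(g_\epsilon)\ge 0$ argument gives it for balls centered at an arbitrary interior point), exactly as in the proof of Proposition \ref{prop:3.2 new}, to conclude that $u$ is H\"older continuous on $B_\bb(p,2R)$; in particular $u\in C^0(\overline{B_\bb(p,R)})$. Next, on $B_\bb(p,R)\setminus\sS$ the coordinates $w_i=z_i^{\beta_i}$ are smooth, $g$ has $C^{\alpha}$ coefficients there and is uniformly elliptic on compact subsets, so by boundedness (De Giorgi) followed by classical interior Schauder the weak solution $u$ lies in $C^{2,\alpha}_{\mathrm{loc}}(B_\bb(p,R)\setminus\sS)$ and solves $\Delta_g u=f$ there in the classical sense.

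Then I would invoke Corollary \ref{corollary 3.2} on $B:=B_\bb(p,R)$ (valid for interior balls by translation and scaling, cf. the Remark after Proposition \ref{prop:invariant}, the restriction of $g$ still being a $C^{0,\alpha}_\bb$ conical metric) with boundary data $\varphi:=u|_{\partial B}\in C^0(\partial B)$, obtaining a unique $v\in C^{2,\alpha}_\bb(B)\cap C^0(\overline B)$ with $\Delta_g v=f$ in $B$ and $v=u$ on $\partial B$. Since $v\in C^2(B\setminus\sS)$, the difference $w:=u-v$ lies in $C^2(B\setminus\sS)\cap C^0(\overline B)$, is $\Delta_g$-harmonic on $B\setminus\sS$, and vanishes on $\partial B$. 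Finally I would remove $\sS$ using barriers: for \emph{any} K\"ahler metric the pluriharmonic function $\log|z_i|^2$ has identically vanishing complex Hessian on $\{z_i\neq 0\}$, hence is $\Delta_g$-harmonic there, so $w\pm\epsilon(\log|z_1|^2+\log|z_2|^2)$ is $\Delta_g$-harmonic on $B\setminus\sS$ and tends to $\mp\infty$ towards $\sS$; applying the maximum principle on $B\setminus T_\delta(\sS)$, letting $\delta\to 0$ and then $\epsilon\to 0$ (as in Lemma \ref{lemma:MP}) forces $w\equiv 0$, i.e. $u=v\in C^{2,\alpha}_\bb(B)$. Since $p\in B_\bb$ is arbitrary, $u\in C^{2,\alpha}_\bb(B_\bb)$.

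The step I expect to need the most care is the first one: extracting honest pointwise regularity (continuity up to $\partial B$) from the merely weak hypothesis on $u$, since that is the only place where the singular conical geometry enters in an essential way. This is however essentially already carried out in the proof of Proposition \ref{prop:3.2 new}, so the remaining work is bookkeeping --- verifying that Corollary \ref{corollary 3.2} applies verbatim on small interior balls, and that the barrier-based maximum principle of Lemma \ref{lemma:MP} goes through for the general non-flat operator $\Delta_g$, which it does precisely because the barriers $\log|z_i|^2$ are $\Delta_g$-harmonic off $\sS$ regardless of $g$.
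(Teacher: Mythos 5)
Your proof is correct and follows essentially the same route as the paper: De Giorgi--Nash--Moser (via the conical Sobolev inequality) for interior continuity, classical interior Schauder off $\sS$, Dirichlet comparison with the solution supplied by Corollary~\ref{corollary 3.2}, and the maximum principle to identify the two. The only cosmetic difference is that the paper compares on the nested balls $B_\bb(0,r)$, $r\to 1$, rather than localizing at an arbitrary interior point, which lets it cite its Corollary~\ref{corollary 3.2} verbatim and avoid the (true but unwritten) remark that the Dirichlet theory holds on interior balls with off-axis centers; you also usefully spell out the barrier argument behind the maximum principle, which the paper leaves implicit.
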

\begin{proof}
We first observe that the Sobolev inequality \eqref{eqn:SOB} also holds for the metric $g$, since $g$ is equivalent to $g_\bb$. The metric space $(B_\bb, g)$ also has maximal volume growth/decay, so we can apply the same proof of De Giorgi-Nash-Moser theory (\cite{HL}) to conclude that $u$ is continuous in $B_\bb$. The standard elliptic theory implies that $u\in C^{2,\alpha}_{loc}(B_\bb\backslash \sS  )$. For any $r\in (0,1)$, by Corollary \ref{corollary 3.2}, the Dirichlet problem $\Delta_g \tilde u = f$ in $B_\bb(0,r)$, $\tilde u = u$ on $\partial B_\bb(0,r)$ admits a unique solution $\tilde u\in C^{2,\alpha}_\bb( B_\bb(0,r) )\cap C^0(\overline{ B_\bb(0,r)  })$. Then $\Delta_g(u - \tilde u) = 0$ in $B_\bb(0,r)$ and $u-\tilde u = 0$ on $\partial B_\bb(0,r)$. By maximum principle we get $u = \tilde u$ in $B_\bb(0,r)$, so we conclude $u\in C^{2,\alpha}_{\bb}(B_\bb(0,r))$. Since $r\in (0,1)$ is arbitrary, we get $u\in C^{2,\alpha}_\bb(B_\bb)$.

\end{proof}

\begin{corr}\label{corr 3.1}
Let $X$ be a compact K\"ahler manifold and $D=\sum_j D_j$ be a divisor with simple normal crossings. Let $g$ be a conical K\"ahler metric with cone angle $2\pi\bb$ along $D$. Suppose $u\in H^1(g)$ is a weak solution to the equation $\Delta_{g} u = f$ in the sense that
$$\int_X \innpro{\nabla u, \nabla \varphi} \omega_g^n = - \int_X f \varphi \omega_g^n,\quad \forall \varphi\in C^1(X)$$ for some $f\in C^{0,\alpha}_{\bb}(X)$. Then $u\in C^{2,\alpha}_\bb(X)\cap C^0(X)$ and  there exists a constant $C=C( n,\bb, g, \alpha)$ such that
$$\| u\|_{C^{2,\alpha}_\bb (X)  }\le C\xk{ \| u\|_{C^0(X)} + \| f\|_{C^{0,\alpha}_\bb( X )}   }.$$
\end{corr}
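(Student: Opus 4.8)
The plan is to globalize the local interior estimate of Corollary~\ref{prop 3.5} using a finite cover and a partition of unity, combined with the De Giorgi--Nash--Moser continuity already established locally in Corollary~\ref{corollary 3.3}. First I would fix the finite cover $\{U_a, z_{a,j}\}$ of $D$ from Definition~\ref{defn:2.8}, chosen so that on each $U_a$ the metric $g$ is a $C^{0,\alpha}_\bb$-conical metric equivalent to the model $\omega_\bb$, and enlarge it by finitely many coordinate balls $\{V_b\}$ covering the compact set $X\setminus \cup_a U_a$ on which $g$ is an honest smooth (uniformly $C^\alpha$) K\"ahler metric. On $X\setminus D$ the operator $\Delta_g$ is uniformly elliptic with bounded measurable coefficients and the Sobolev inequality holds for $g$ (it is equivalent to $g_\bb$, and \eqref{eqn:SOB} transfers); since $(X,g)$ also has Euclidean-type volume growth near $D$, the De Giorgi--Nash--Moser argument used in Corollary~\ref{corollary 3.3} (following the proof of Corollary~4.18 of \cite{HL}) applies locally at each point of $D$ and shows a weak $H^1(g)$ solution $u$ is continuous on all of $X$, with a uniform bound $\osc_{B_\bb(p,r)} u \le C r^{\alpha''}$; away from $D$ standard interior elliptic theory gives $u\in C^{2,\alpha}_{loc}(X\setminus D)$.

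Next I would upgrade regularity to $C^{2,\alpha}_\bb$ near $D$. Fix a point $p\in D$ lying in some chart $U_a$; shrinking if necessary, choose a smaller conical ball $B_\bb(p,r)\Subset U_a$ (in the $z_{a,j}$-coordinates), and note that by Corollary~\ref{corollary 3.2} (applied to the $C^{0,\alpha}_\bb$-conical metric $g|_{U_a}$ on $B_\bb(p,r)$) there is a unique $\tilde u\in C^{2,\alpha}_\bb(B_\bb(p,r))\cap C^0(\overline{B_\bb(p,r)})$ solving $\Delta_g\tilde u=f$ in $B_\bb(p,r)$ with $\tilde u=u$ on $\partial B_\bb(p,r)$. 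Then $\Delta_g(u-\tilde u)=0$ weakly with zero boundary data, so by maximum principle $u=\tilde u$ on $B_\bb(p,r)$, hence $u\in C^{2,\alpha}_\bb$ on a neighborhood of $p$; this is exactly the localization already carried out in Corollary~\ref{corollary 3.3}. Covering $D$ by finitely many such neighborhoods and combining with $u\in C^{2,\alpha}_{loc}(X\setminus D)$ shows $u\in C^{2,\alpha}_\bb(X)$ in the sense of Definition~\ref{defn:2.8}.

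For the quantitative estimate I would argue by compactness of the cover. On each $V_b$, standard interior Schauder estimates give $\|u\|_{C^{2,\alpha}(V_b')}\le C(\|u\|_{C^0(V_b)}+\|f\|_{C^{0,\alpha}(V_b)})$ for a slightly smaller $V_b'$. On each $U_a$, pick conical balls $B_\bb(p_i,R_i)\subset U_a$ whose smaller concentric balls $B_\bb(p_i,R_i/2)$ still cover $\overline{U_a}\cap(X\setminus\cup_b V_b')$ together with a collar of $D$; Corollary~\ref{prop 3.5} (valid for balls whose center need not lie on $\sS$, by the Remark after Corollary~\ref{prop 3.5}) gives $\|u\|_{C^{2,\alpha}_\bb(B_\bb(p_i,R_i/2))}\le C(n,\bb,R_i,\|g\|_{C^{0,\alpha}_\bb})(\|u\|_{C^0(B_\bb(p_i,R_i))}+\|f\|_{C^{0,\alpha}_\bb(B_\bb(p_i,R_i))})$. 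Summing these finitely many estimates over all $a,b$ and using the definition of the $C^{0,\alpha}_\bb(X,D)$ and $C^{2,\alpha}_\bb(X,D)$ norms as finite sums of the corresponding local norms (Definition~\ref{defn:2.8}) yields $\|u\|_{C^{2,\alpha}_\bb(X)}\le C(\|u\|_{C^0(X)}+\|f\|_{C^{0,\alpha}_\bb(X)})$, where $C$ absorbs the finitely many chart-dependent constants and $\|g\|_{C^{0,\alpha}_\bb(X,D)}$. The main obstacle is the bookkeeping at the overlaps: one must ensure the local balls are chosen so that the union of the smaller balls still covers $X$, so that the interior estimates (each of which loses a definite distance to the boundary of its ball) patch into a genuinely global bound; this is routine but requires the finite cover to be fixed first and all radii chosen uniformly, and it is where the equivalence of different finite covers (stated in Definition~\ref{defn:2.8}) is implicitly used.
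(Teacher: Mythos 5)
Your proof is correct and follows essentially the same route as the paper's: invoke Corollary~\ref{corollary 3.3} (whose inner workings you unpack—De Giorgi--Nash--Moser for continuity, a local Dirichlet problem plus maximum principle for the $C^{2,\alpha}_\bb$ upgrade) to get regularity on conical charts covering $D$, use standard elliptic theory on $X\setminus D$, and then patch the quantitative interior estimates from Corollary~\ref{prop 3.5} and the classical Schauder estimate over a finite cover via Definition~\ref{defn:2.8}. The extra bookkeeping you flag about choosing nested balls $B_a'\Subset B_a$ so the smaller ones still cover is exactly what the paper does implicitly.
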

\begin{proof}
We can choose finite covers of $D$, $\{B_a\}$, $\{B_a'\}$ with $B_a'\Subset B_a$ and centers at $D$. By assumption $u$ is a weak solution to the equation $\Delta_g u = f$ in each $B_a$, then by Corollary \ref{corollary 3.3} we conclude that $u\in C^{2,\alpha}_\bb(B_a)$ for each $B_a$. On $X\backslash \sS$, the metric $g$ is smooth so standard elliptic theory implies that $u\in C^{2,\alpha}_{loc}(X\backslash \sS  )$.  Since $\{B_a\}$ covers $D$, $u\in C^{2,\alpha}_\bb(X)$. 

  We can apply Corollary \ref{prop 3.5} to obtain that for some constant $C>0$
$$ \| u\|_{C^{2,\alpha}_\bb (B_a')} \le C\xk{ \| u\|_{C^0(B_a)} + \| f\|_{C^\alpha_\bb( B_a )}}.  $$
On $X\backslash \cup_a\{B_a'\}$ the metric $g$ is smooth, the usual Schauder estimates apply. We finish the proof of the Corollary by the definition of  $C^{2,\alpha}_\bb(X)$ (c.f. Definition \ref{defn:2.8}).
\end{proof}

\begin{remark}
Let $(X,D,g)$ be as in Corollary \ref{corr 3.1}. It is easy to see by variational method  weak solutions to $\Delta_g u = f$ always exist for any $f\in L^2(X,\omega_g^n)$ satisfying $\int_X f\omega_g^n = 0$. 
\end{remark}

\section{Parabolic estimates}
In this section, we will study the heat equation with background metric $\omega_\bb$ and prove the Schauder estimates for such solution $u\in C^0(\qq_\bb)\cap \C^{2,1}(\qq_\bb^\#)$ to the equation
\begin{equation}\label{eqn:para main}
\frac{\partial u}{\partial t} = \Delta_{g_\bb} u + f,
\end{equation} 
for a function $f\in \C^0(\qq_\bb)$ with some better regularity.

\subsection{Conical heat equations} In this section, we will show that for any $\varphi\in \C^0(\partial_{\pp} \qq_\bb)$, the Dirichlet problem \eqref{eqn:para Diri} admits a unique $\C^{2,1}(\mathcal Q_\bb^\#)\cap \cC ^0(\overline{\qq_\bb})$-solution in $\qq_\bb$. We first observe that a maximum principle argument yields the uniqueness of the solution.

Suppose $u\in\C^{2,1}(\mathcal Q_\bb^\#)\cap \cC ^0(\overline{\qq_\bb})$ solves the Dirichlet problem 
\begin{equation}\label{eqn:para Diri}
\left\{\begin{aligned}
\frac{\partial u}{\partial t} = \Delta_{g_\bb} u,\text{~ in ~} \mathcal Q_\bb\\
u = \varphi,\text{~ on ~} \partial_{\mathcal P} \mathcal Q_\bb,
\end{aligned}
\right.
\end{equation}
for some given continuous function $\varphi \in \C^0(\partial_{\mathcal P} \mathcal Q_\bb) $. It follows from maximum principle as in Lemma \ref{lemma:MP} that  %By a maximum principle argument it follows that if $u$ solves \eqref{eqn:para Diri}, then
\begin{equation}\label{eqn:para mp}\inf_{ \partial_{\mathcal P} {\mathcal Q_\bb} }u\le  \inf_{\mathcal Q_\bb} u\le \sup_{{\mathcal Q_\bb}} u \le \sup_{\partial_{\mathcal P} {\mathcal Q_\bb}} u . \end{equation}
So  the $\C^{2,1}(\mathcal Q_\bb^\#)\cap \cC ^0(\overline{\qq_\bb})$-solution to \eqref{eqn:para Diri} is unique, if exists.

Now we prove the existence of solutions to \eqref{eqn:para Diri}. As before, we will use an approximation argument.  Let $g_\epsilon$ be the smooth approximation metrics in $B_\bb$ as defined in \eqref{eqn:para app}. Let $u_\epsilon$ be the $\C^{2,1}(\qq_\bb)\cap \C^0(\overline{\qq_\bb})$-solution to the equation
\begin{equation}\label{eqn:para eps}
%\left\{\begin{aligned}
\frac{\partial u_\epsilon}{\partial t} = \Delta_{g_\epsilon} u_\epsilon,\text{~ in ~} \mathcal Q_\bb, ~~\text{and }
u_\epsilon = \varphi,\text{~ on ~} \partial_{\mathcal P} \mathcal Q_\bb,
%\end{aligned}\right.
\end{equation}
\subsubsection{Estimates of $u_\ep$}
We first recall  the Li-Yau gradient estimates (\cite{LY, SY}) for  positive solutions to the heat equations.
\begin{lemma}\label{lemma LY}
Let $(M,g)$ be a complete manifold with $\ric(g)\ge 0$, and $B(p,R)$ be the geodesic ball with center $p\in M$ and radius $R>0$. Let $u$ be a positive solution to the heat equation $\partial_t u - \Delta_g u = 0$ on $B(p,R)$, then there exists $C=C(n)>0$ such that for all $t>0$, % 
$$\sup_{B(p,2R/3)} \bk{ \frac{\abs{\nabla u}}{u^2} -  \frac{2\dot u_t}{u}  }\le \frac{C}{R^2} + \frac{2n}{t} ,$$
where $\dot u_t = \frac{\partial u}{\partial t}$.
\end{lemma}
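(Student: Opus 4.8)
The final statement to prove is Lemma~\ref{lemma LY} (the Li--Yau gradient estimate), which is quoted from the literature. Since it is stated as a known result attributed to \cite{LY, SY}, the natural approach is to give the standard Li--Yau argument adapted to the K\"ahler (complex) setting with $\ric(g)\ge 0$, paralleling the presentation already given for the elliptic Cheng--Yau estimate in Subsection~\ref{subsection start}. The plan is to set $h = \log u$ for the positive heat solution $u$, compute the evolution of $|\nabla h|^2$ using the Bochner formula together with $\partial_t h = \Delta h + |\nabla h|^2$, and then run a localized maximum principle argument on the quantity $H := \phi^2 t\,(|\nabla h|^2 - 2\partial_t h)$ with $\phi$ a cutoff function supported in $B(p,R)$.

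First I would record the two basic identities: $\Delta u = \partial_t u$ gives $\Delta h = \frac{\partial_t u}{u} - |\nabla h|^2 = \partial_t h - |\nabla h|^2$, i.e. the quantity $F := |\nabla h|^2 - 2\partial_t h$ can be rewritten as $F = -|\nabla h|^2 - 2\Delta h$. Then I would apply the Bochner formula as in \eqref{eqn:Bochner new}: with $\ric(g)\ge 0$,
\[
\Delta |\nabla h|^2 \ge |\nabla\nabla h|^2 + |\nabla\bar\nabla h|^2 + 2\Re\langle \nabla h, \bar\nabla \Delta h\rangle,
\]
and combine it with $\partial_t|\nabla h|^2 = 2\Re\langle \nabla h, \bar\nabla \partial_t h\rangle$ to obtain a differential inequality for the heat operator $(\partial_t - \Delta)$ applied to $F$. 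The key algebraic point, exactly as in the classical Li--Yau proof, is to use the Cauchy--Schwarz bound $|\nabla\bar\nabla h|^2 \ge \frac1n(\Delta h)^2$ and then to express $(\Delta h)^2$ in terms of $F$ and $|\nabla h|^2$; after regrouping one arrives at an inequality of the schematic form $(\partial_t - \Delta) F \le -2\langle\nabla h,\bar\nabla F\rangle - \frac{2}{n}\,\frac{F^2}{?}$ plus lower-order terms controlled by $|\nabla h|^2$ and $F$ itself.

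Next I would localize: let $\phi = \phi(r(x)/R)$ be the standard cutoff ($\phi|_{[0,2/3]}=1$, $\phi|_{[1,\infty)}=0$) and consider $H = t\,\phi^2 F$. Using Laplacian comparison ($\Delta r \le \frac{2n-1}{r}$, valid since $\ric\ge0$), at a point $(x_0,t_0)$ where $H$ attains its positive maximum on $B(p,R)\times[0,T]$ one has $\nabla H = 0$, $\partial_t H \ge 0$, $\Delta H \le 0$; substituting $\nabla F = -2\phi^{-1}F\nabla\phi$ and $\partial_t(t\phi^2) = \phi^2$ yields a quadratic inequality in $H$ of the form $H^2 \le C\big(\frac{t_0}{R^2} + 1 + \frac{t_0|\nabla h|}{R}\big)H$, which (absorbing the $|\nabla h|$ term, since on the support $\phi^2 t_0|\nabla h|^2 \le H + \text{const}$) gives $H(x_0,t_0) \le C(n)\big(\frac{t_0^2}{R^2} + t_0\big)$. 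Evaluating at an interior point where $\phi=1$ and dividing by $t$ gives the stated bound $\sup_{B(p,2R/3)}\big(\frac{|\nabla u|}{u^2} - \frac{2\dot u_t}{u}\big) \le \frac{C}{R^2} + \frac{2n}{t}$. As in the elliptic case, the positivity hypothesis can be arranged by replacing $u$ with $u + \delta$ and letting $\delta\to0$.

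The main obstacle is purely bookkeeping: tracking the time-dependent cutoff $t\phi^2$ through the Bochner computation and verifying that all error terms (those involving $\phi'$, $\phi''$, and the cross term $\langle\nabla h,\bar\nabla\phi\rangle$) can be absorbed into the good quadratic term $\frac1n H^2$, with the $\frac{2n}{t}$ constant coming out sharp. In the K\"ahler setting one must also be slightly careful about the factors relating real and complex Hessians in $|\nabla\bar\nabla h|^2 \ge \frac1n(\Delta h)^2$ and the precise form of the Bochner identity \eqref{eqn:Bochner new}; but since the product approximating metrics $g_\epsilon$ have $\ric(g_\epsilon)\ge0$ and the statement here is only used qualitatively (for the a priori estimates on $u_\epsilon$ that yield convergence, exactly as in Proposition~\ref{prop:existence}), the sharp constant is not essential and the argument goes through verbatim from \cite{LY, SY}. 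I would therefore present only the sketch, referring to those sources for the routine parts, consistent with how the elliptic Cheng--Yau estimate is treated earlier in the paper.
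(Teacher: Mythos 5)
The paper does not prove Lemma~\ref{lemma LY}; it is quoted directly from \cite{LY, SY} with no argument given, exactly as you observe. Your sketch is a correct outline of the classical Li--Yau argument, adapted to the K\"ahler conventions the paper uses elsewhere (Bochner as in \eqref{eqn:Bochner new}, Kato--Cauchy--Schwarz $|\nabla\bar\nabla h|^2\ge\frac1n(\Delta h)^2$, Laplacian comparison $\Delta r\le(2n-1)/r$, and the time-weighted cutoff $t\phi^2$), and it is consistent in spirit with how the paper merely sketches the elliptic Cheng--Yau estimate in Subsection~\ref{subsection start}.

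One detail in your absorption step is not quite right as stated: from $H=t\phi^2(|\nabla h|^2-2h_t)$ you cannot conclude $\phi^2 t|\nabla h|^2\le H+\mathrm{const}$, since $h_t$ has no sign. The way the classical proof controls $|\nabla h|$ is different: with $\alpha=2$ one writes $\Delta h = -\tfrac12 F - \tfrac12|\nabla h|^2$ (since $F=|\nabla h|^2-2h_t$ and $h_t=\Delta h+|\nabla h|^2$), so that $(\Delta h)^2$ produces a genuinely positive $|\nabla h|^4$ contribution in addition to $F^2$; this quartic-in-$|\nabla h|$ good term is what absorbs the cutoff error $|\phi'||\nabla h|F/R$. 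Since you defer the bookkeeping to \cite{LY, SY} and the paper only uses the lemma qualitatively (the exact constants $C$ and $2n/t$ are immaterial for the $u_\epsilon\to u$ convergence argument), this does not affect the validity of your approach, but it is worth recording the correct mechanism rather than the shortcut you wrote.
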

By considering the functions $u_\epsilon - \inf u_\epsilon$ and $\sup u_\epsilon - u_\epsilon$, from the Lemma \ref{lemma LY}, we see that
there exists a constant $C=C(n)>0$ such that  for any $R\in (0,1)$ and $t\in (0, R^2)$
\begin{equation}\label{eqn:para grad 1}
\sup_{B_{g_\ep}(0, 2R/3)} |\nabla{} u_\epsilon|^2_{g_\epsilon}\le C\bk{ \frac{1}{R^2} + \frac{1}{t}  } (\osc_R u_\epsilon)^2,
\end{equation}
and 
\begin{equation}\label{eqn:para grad 2}
\sup_{B_{g_\ep}(0,2R/3)} | \Delta_{g_\epsilon} u_\epsilon|  = \sup_{B_{g_\ep}(0,2R/3)} \ba{ \frac{\partial u_\ep}{\partial t}  } \le C\bk{ \frac{1}{R^2} + \frac{1}{t}  } \osc_R u_\epsilon,
\end{equation}
where $\osc_R u_\epsilon: = \osc_{B_{g_\ep}(0,R)\times (0,R^2)   } u_\epsilon$ is the oscillation of $u_\epsilon$ in the cylinder $B_{g_\ep}(0,R)\times (0,R^2) $. Replacing $u_\epsilon$ by $u_\epsilon - \inf u_\epsilon$, we may assume $u_\epsilon>0$ and define $f_\epsilon = \log u_\ep$. Then we have
$$\frac{\partial f_\ep}{\partial t} = \Delta_{g_\ep} f_\ep + \abs{\nabla f_\ep}.$$ Let $\varphi(x) = \varphi( \frac{r(x)}{R}  )$ where $\varphi$ is a cut-off function equal to $1$ on $[0,3/5]$, $0$ on $[2/3, \infty)$, and satisfies the inequalities $|\varphi''|\le 10$ and  $(\varphi')^2 \le 10 \varphi$. $r(x)$ is the distance function under $g_\ep$ to the center $0$.  

\begin{lemma}\label{lemma 4.2}
There exists a constant $C=C(n)>0$ such that for any small $\epsilon>0$
$$\sup_{B_{g_\ep}(0, 3R/5)} | \Delta_i u_\ep |\le C \xk{ \frac{1}{t} + \frac{1}{R^2}  }\osc_R u_\ep,\quad \forall ~ t\in (0, R^2),$$
where we denote $\Delta_i u_\ep := (|z_i|^2 + \ep)^{1-\beta_i} \frac{\partial^2u_\ep}{\partial z_i \partial z_{\bar i}}$ for $i=1,\ldots, p$.
\end{lemma}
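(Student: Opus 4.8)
The plan is to mimic the elliptic Laplacian-estimate argument (Lemma \ref{lemma 2.1}), now in the parabolic setting, working with $f_\ep=\log u_\ep$ and applying the parabolic maximum principle on a space-time cylinder. As in Subsection \ref{section 1.2}, we may assume $u_\ep>0$ and it suffices to estimate $\Delta_{1,\ep}u_\ep$; the estimate for the other $\Delta_{i,\ep}u_\ep$ is identical. Since $g_\ep$ is a product metric we still have the commutation identity $\Delta_{1,\ep}\Delta_{g_\ep}=\Delta_{g_\ep}\Delta_{1,\ep}$, hence, writing $\Delta_1 f_\ep=(|z_1|^2+\ep)^{1-\beta_1}\partial_{z_1}\partial_{\bar z_1}f_\ep$ and using $\partial_t f_\ep=\Delta_{g_\ep}f_\ep+|\nabla f_\ep|$ together with the Bochner-type computation \eqref{eqn:useful 1}, one obtains the differential inequality
\begin{equation*}
\Big(\frac{\partial}{\partial t}-\Delta_{g_\ep}\Big)(-\Delta_1 f_\ep)\le -\,(\Delta_1 f_\ep)^2-2\Re\langle\nabla f_\ep,\bar\nabla(-\Delta_1 f_\ep)\rangle,
\end{equation*}
where we have used $R_{1\bar1j\bar m}f_mf_{\bar j}g^{1\bar1}g^{1\bar1}\ge 0$ from the sign of the curvature of $g_\ep$. (The only genuinely new term compared with the elliptic case is the $\partial_t(-\Delta_1 f_\ep)$ term, which is harmless because we apply the parabolic maximum principle.)

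Next I would introduce the barrier quantity $G:=\psi^2\cdot(-\Delta_1 f_\ep)$, where $\psi(x,t)=\varphi(r(x)/R)\cdot\chi(t)$ with $\varphi$ the spatial cut-off used in Lemma \ref{lemma 4.2} ($\varphi\equiv1$ on $[0,3/5]$, supported in $[0,2/3]$, $(\varphi')^2\le 10\varphi$, $|\varphi''|\le 10$) and $\chi(t)$ a time weight; to capture the $1/t$ blow-up one takes $\chi(t)=\min(t,R^2)$ or a smooth version thereof, exactly as in the Li--Yau estimate recalled in Lemma \ref{lemma LY}. Applying $(\partial_t-\Delta_{g_\ep})$ to $G$, using the above inequality, the Laplacian comparison $\Delta r\le (2n-1)/r$ (valid since $\ric(g_\ep)\ge0$), and the standard fact that $|\partial_t\chi|\le 1$, one gets at an interior positive maximum point $(p_{\max},t_{\max})$ of $G$ on $\overline{B_{g_\ep}(0,R)}\times[0,R^2]$ — where $\partial_t G\ge0$, $\Delta_{g_\ep}G\le0$, $\nabla G=0$ — a quadratic inequality of the schematic form
\begin{equation*}
G^2\le C(n)\Big(\frac{1}{R^2}+\frac{1}{R\,}|\nabla f_\ep|+|\partial_t\chi|/\chi\Big)\,G,
\end{equation*}
which together with the gradient bound $\sup_{B_{g_\ep}(0,2R/3)}|\nabla f_\ep|\le C(n)(1/R^2+1/t)^{1/2}\osc_R u_\ep\cdot u_\ep^{-1}$ (from \eqref{eqn:para grad 1}, applied to $f_\ep=\log u_\ep$) yields $G(p_{\max},t_{\max})\le C(n)(1/t_{\max}+1/R^2)$. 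Since $\psi\equiv1$ on $B_{g_\ep}(0,3R/5)\times\{t\}$ for $t$ bounded below, this gives $-\Delta_1 f_\ep\le C(n)(1/t+1/R^2)$ on $B_{g_\ep}(0,3R/5)$.

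Finally, exactly as in the elliptic case, from $-\Delta_1 f_\ep=-\Delta_1 u_\ep/u_\ep+|\nabla_1 f_\ep|$ one deduces $\sup_{B_{g_\ep}(0,3R/5)}(-\Delta_1 u_\ep/u_\ep)\le C(n)(1/t+1/R^2)$, hence $-\Delta_1 u_\ep\le C(n)(1/t+1/R^2)\osc_R u_\ep$; applying the same to $\hat u_\ep:=\sup_{B_{g_\ep}(0,R)\times(0,R^2)}u_\ep-u_\ep$, which is also a positive solution, gives the matching upper bound for $\Delta_1 u_\ep$, and combining the two proves the lemma. I expect the main obstacle to be bookkeeping the time weight $\chi(t)$ correctly so that the $1/t$ term appears with the right constant and the boundary behaviour at $t=0$ is controlled — i.e. making precise the parabolic analogue of ``the maximum of $G$ is attained at an interior point or else $G\le0$'' — whereas the spatial part of the argument is a direct transcription of the proof of Lemma \ref{lemma 2.1}.
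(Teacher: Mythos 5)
Your plan — run the Bochner/maximum-principle argument from Lemma~\ref{lemma 2.1} on $f_\ep=\log u_\ep$ with a space-time cutoff — is the right general idea, and the differential inequality you derive for $(\partial_t-\Delta_{g_\ep})(-\Delta_1 f_\ep)$ is correct. But the auxiliary quantity you choose is wrong, and the gap is exactly at the step you flag as "bookkeeping": you need $|\nabla f_\ep|$ at the interior maximum, and it is \emph{not} controlled.

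In the elliptic case, Cheng--Yau gives a genuine pointwise bound $|\nabla f|^2\le C/R^2$, which is what makes the cross term $\Re\langle\nabla f,\bar\nabla\varphi\rangle$ harmless at the maximum. In the parabolic case Li--Yau does \emph{not} bound $|\nabla f_\ep|^2$; it only bounds the \emph{combination} $|\nabla f_\ep|^2-2\dot f_\ep$. Your substitute bound $|\nabla f_\ep|\le C(1/R^2+1/t)^{1/2}\,\osc_R u_\ep\cdot u_\ep^{-1}$ comes from \eqref{eqn:para grad 1}, which controls $|\nabla u_\ep|$, not $|\nabla f_\ep|=|\nabla u_\ep|/u_\ep$. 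The factor $u_\ep^{-1}$ is uncontrolled: after normalizing $u_\ep>0$ by subtracting its infimum, $u_\ep$ may be arbitrarily small at the interior maximum $(p_{\max},t_{\max})$ of your $G$, so this bound degenerates and does \emph{not} yield $G(p_{\max},t_{\max})\le C(1/t_{\max}+1/R^2)$. Moreover, tracing this through to $-\Delta_1 u_\ep$ at an arbitrary point $(x,t)$ requires comparing $u_\ep(x,t)$ with $u_\ep(p_{\max},t_{\max})$, which is again uncontrolled.

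The paper's proof resolves this with a non-cosmetic modification: it applies the maximum principle to $F=t\varphi\,(-\Delta_1 f_\ep-2\dot f_\ep)$, not to $t\varphi^2(-\Delta_1 f_\ep)$. Writing $\tilde F=-\Delta_1 f_\ep-2\dot f_\ep$ and expanding $(-\Delta_1 f_\ep)^2=(\tilde F+2\dot f_\ep)^2$, the cross term in the maximum-principle computation contributes $2\tilde F(|\nabla f_\ep|^2-2\dot f_\ep)$, which is precisely the Li--Yau quantity and hence is bounded by $C\tilde F(1/t+1/R^2)$ (using $\tilde F>0$ at the maximum). No direct bound on $|\nabla f_\ep|$ is ever needed. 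Afterwards one recovers $-\Delta_1 u_\ep$ from $\tilde F$ by multiplying through by $u_\ep$ and absorbing the $\dot u_\ep$ term via \eqref{eqn:para grad 2}. You should replace $G=\psi^2(-\Delta_1 f_\ep)$ by the paper's $F$, and replace your claimed $|\nabla f_\ep|$ bound with the algebraic step that makes $|\nabla f_\ep|^2-2\dot f_\ep$ appear.
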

\begin{proof}
We only prove the case when $i=1$. 
We denote $F: = t\varphi ( -\Delta_1 f_\ep - 2 \dot f_\ep  )$, and we calculate
\begin{align*}
& ( \frac{\partial}{\partial t} - \Delta_{g_\ep}  ) (  -\Delta_1 f_\ep - 2 \dot f_\ep)\\
= & - \abs{\nabla_2 \nabla f_\ep} - \abs{\nabla_1\bar \nabla f_\ep  } - 2 \Re \innpro{\nabla f_\ep, \bar \nabla ( -\Delta_1 f_\ep - 2 \dot f_\ep  )   } - R_{1\bar 1 j\bar k} f_{\ep,\bar j} f_{\ep, k}\\
\le & - (- \Delta_1 f_\ep  )^2 - 2 \Re \innpro{\nabla f_\ep, \bar \nabla ( -\Delta_1 f_\ep - 2 \dot f_\ep  )   }.
\end{align*}
$F$ achieves its maximum at  a point $(p_0,t_0)$, where we may assume $F(p_0,t_0)>0$, otherwise we are done yet. In particular, $p_0\in B_{g_\ep}(0, 2r/3)$ by the definition of $\varphi$ and $t_0>0$. Then at $(p_0,t_0)$, we have 
\eqsp{\label{eqn:para new 1}
0& \le \hoep F \\
& = \frac{F}{t_0} + t_0 \varphi \hoep ( -\Delta_1 f_\ep - 2 \dot f_\ep  ) - \frac{F}{\varphi} \Delta_{g_\ep} \varphi - 2 t_0\Re\innpro{ \nabla \varphi, \bar \nabla\xk{ \frac{F}{t_0\varphi}}  } \\
&\le \frac{F}{t_0} + t_0\varphi \bk{ - ( -\Delta_1 f_\ep  )^2 - 2 \frac{F}{t_0\varphi^2} \Re \innpro{\nabla f_\ep, \bar\nabla \varphi}  } + C\frac{F}{R^2 \varphi}( \varphi' + \varphi '' )\\
& \quad + 2 \frac{F}{ R^2\varphi^2 } (\varphi')^2
}
where we use the Laplacian comparison and the fact that $\nabla F = 0$ at $(p_0,t_0)$. The second term on the RHS satisfies (we denote $\tilde F: = - \Delta_1 f_\ep - 2 \dot f_\ep$ for notation convenience)
{\small
\begin{align*}
 t_0\varphi \bk{ - ( -\Delta_1 f_\ep  )^2 - 2 \frac{F}{t_0\varphi^2} \Re \innpro{\nabla f_\ep, \bar\nabla \varphi}  }
\le  &~ t_0 \varphi \bk{ -\tilde F^2- 4 \tilde F \dot f_\ep - 4 (\dot f_\ep)^2 +     \frac{2 \tilde F}{  \varphi}  \frac{|\nabla f_\ep| |\varphi'|}{R}  }\\
\le  &~ t_0 \varphi \bk{ -\tilde F^2- 4 \tilde F \dot f_\ep  + 2 \tilde F \abs{\nabla f_\ep} + \frac{\tilde F |\varphi'|^2}{2 R^2 \varphi^2}}\\ %    \frac{2 \tilde F}{  \varphi}  \frac{|\nabla f_\ep| |\varphi'|}{R}  }
\text{(by Lemma \ref{lemma LY})~}\le & ~ t_0 \varphi \bk{ -\tilde F^2 + \frac{\tilde F |\varphi'|^2}{2 R^2 \varphi^2}  +  C \frac{\tilde F}{t_0} + C \frac{\tilde F}{ R^2}   }\\
= & -\frac{F^2}{t_0 \varphi} + C \frac{F }{2 R^2 \varphi} + C \frac{F}{t_0} + C \frac{F}{ R^2}, 
\end{align*}
}
inserting this to \eqref{eqn:para new 1}, we get for some constant $C=C(n)>0$ at $(p_0,t_0)$
$$ - F^2 + C \varphi F + \frac{t_0\varphi F}{R^2} + C t_0 \frac{F}{R^2}\ge 0,   $$
from which we obtain $F(p_0,t_0)\le \frac{Ct_0}{R^2} + C$, and by the choice of $(p_0,t_0)$, we can see that
\begin{equation*}%\label{eqn:para new 2}
\sup_{B_{g_\ep}(0,R/2)} ( - \Delta_1 f_\ep - 2 \dot f_\ep   )\le C\xk{ \frac{1}{R^2} + \frac{1}{t}  },\quad \forall~ t\in (0,R^2),
\end{equation*}
which implies that on $B_{g_\ep}(0,3R/5) \times (0,R^2)$ 
\begin{equation}\label{eqn:para new 2}
-\Delta_1 u_\ep \le \dot u_\ep + C\xk{\frac{1}{t} + \frac{1}{R^2}  } u_\ep.
\end{equation}
Applying \eqref{eqn:para new 2} to the function $\sup u_\ep - u_\ep$, we obtain that on $B_{g_\ep}(0,3R/5)\times (0,R^2)$
$$  |\Delta_1 u_\ep| \le |\dot u_\ep| + C \xk{ \frac{1}{t} + \frac{1}{R^2}  }\osc_R u_\ep \le C \xk{ \frac{1}{t} + \frac{1}{R^2}  }\osc_R u_\ep $$
by equation \eqref{eqn:para grad 2}. Thus we finish the proof of the lemma.
\end{proof}

\begin{lemma}\label{lemma 4.3}
There exists a constant $C=C(n)>0$ such that 
$$\sup_{i\neq j }\sup_{B_{g_\ep}(0, R/2)}(  | \nabla_i\nabla_j u_\ep   | +  |\nabla_i \bar \nabla_j u_\ep|  ) \le C \xk{ \frac{1}{t} + \frac{1}{R^2}  } \osc_R u_\epp, $$
for all $t\in (0, R^2)$. Recall here $|\nabla_i \nabla_j u_\ep|^2 = \nabla_i \nabla_j u_\ep \nabla_{\bar i}\nabla_{\bar j} u_\ep g_\ep^{i\bar i} g^{j\bar j}_\ep $ (no summation over $i, j$ is taken).
\end{lemma}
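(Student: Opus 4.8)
\textbf{Proof proposal for Lemma~\ref{lemma 4.3} (mixed second derivative estimate for the heat equation).}

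The plan is to mimic the strategy of Lemma~\ref{lemma 2.2} from the elliptic setting, but adapted to the parabolic flow via the Li--Yau change of variable $f_\ep = \log u_\ep$, exactly as in the proof of Lemma~\ref{lemma 4.2}. I will prove the estimate for $|\nabla_1\nabla_2 u_\ep|$; the mixed barred derivative $|\nabla_1\nabla_{\bar 2} u_\ep|$ and the cases $|\nabla_1\nabla_k u_\ep|+|\nabla_1\nabla_{\bar k} u_\ep|$ for $k\neq 1$ follow by the identical argument. As before I omit the subscript $\ep$ and work with $g=g_\ep$, $u=u_\ep>0$ (replacing $u$ by $u-\inf u$), $f=\log u$, which satisfies $\partial_t f=\Delta f+|\nabla f|$. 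First I record the key structural fact used in the elliptic proof: since $g$ is a product metric whose components $g_{k\bar k}$ depend only on $z_k$, the curvature tensor $R_{i\bar j k\bar l}$ vanishes unless $i=j=k=l\in\{1,2\}$, and $R_{i\bar i i\bar i}\ge 0$. This makes all the Bochner-type commutator terms involving curvature either vanish or have a favorable sign.

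The computational core is to derive a differential inequality for the quantity
\[
Q_1 := |\nabla_1\nabla_2 f| + 2\bigl(-\Delta_1 f - 2\dot f\bigr) + 2\bigl(-\Delta_2 f - 2\dot f\bigr),
\]
the parabolic analogue of the quantity $|\nabla_1\nabla_2 f|+2(-\Delta_1 f-\Delta_2 f)$ from \eqref{eqn:useful 3}. One applies $\ho_\ep$ to each piece. For the terms $-\Delta_j f - 2\dot f$, the computation in the proof of Lemma~\ref{lemma 4.2} gives
$\ho_\ep(-\Delta_j f - 2\dot f)\le -(\Delta_j f)^2 - \sum_k(|\nabla_j\nabla_k f| + |\nabla_j\nabla_{\bar k}f|) - 2\Re\langle\nabla f,\bar\nabla(-\Delta_j f - 2\dot f)\rangle$ (the full second-order terms, not just $(\Delta_j f)^2$, are retained exactly as in \eqref{eqn:useful 2}, using that the bad curvature term has the right sign). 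For $\ho_\ep|\nabla_1\nabla_2 f|$ one repeats the elliptic computation \eqref{eqn:first step 1}--\eqref{eqn:first step 6}, now with the extra $\partial_t$ contribution producing exactly a $2\Re\langle\nabla f,\bar\nabla\Delta_j f\rangle$-type term after using $\partial_t f = \Delta f + |\nabla f|$ and the commutation $\partial_t\Delta_1 f = \Delta_1\partial_t f$ (valid since $g$ is $t$-independent), together with Kato's inequality. Adding up, and using Cauchy--Schwarz to bound the cross terms $|\nabla_1\nabla_2 f||\nabla_1\nabla_k f||\nabla_2\nabla_{\bar k}f|$ by the good quadratic terms $\sum_k(|\nabla_1\nabla_k f| + \cdots)$, one should land on
\[
\ho_\ep Q_1 \le -2\Re\langle\nabla f,\bar\nabla Q_1\rangle - c(n)\,Q_1^{\,2},
\]
for some $c(n)>0$, in complete parallel with \eqref{eqn:useful 3}.

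With this inequality in hand, the cutoff/maximum-principle argument is the same as in Lemma~\ref{lemma 4.2}: set $F := t\,\eta^2\,Q_1$ where $\eta(x)=\tilde\eta(r(x)/R)$ with $|\tilde\eta'|,|\tilde\eta''|\le 10$ and $(\tilde\eta')^2\le 10\tilde\eta$, $\tilde\eta\equiv 1$ on $[0,3/5]$, supported in $[0,2/3]$. If $\max F\le 0$ we are done; otherwise at the positive maximum $(p_0,t_0)$ one has $\nabla F=0$, $\ho_\ep F\ge 0$, $t_0>0$, $p_0\in B_{g_\ep}(0,2R/3)$; using Laplacian comparison for $\Delta\eta^2$, the bound $\sup|\nabla f|\le C(1/R + 1/\sqrt t)$ from Lemma~\ref{lemma LY}, and absorbing, one gets $F(p_0,t_0)\le C t_0/R^2 + C$, hence $Q_1\le C(1/t + 1/R^2)$ on $B_{g_\ep}(0,3R/5)\times(0,R^2)$. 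Finally one converts back from $f$ to $u$: from $|\nabla_1\nabla_2 f| = |\tfrac{\nabla_1\nabla_2 u}{u} - \tfrac{\nabla_1 u}{u}\tfrac{\nabla_2 u}{u}|$ and the already-established bounds on $|\nabla u|^2/u^2$ (Lemma~\ref{lemma LY}), $|\Delta_j u|$ (Lemma~\ref{lemma 4.2}), and $|\dot u|$ (\eqref{eqn:para grad 2}), exactly as in \eqref{eqn:final estimate 1}, one arrives at $|\nabla_1\nabla_2 u|(x)\le C(1/t + 1/R^2)\,\osc_R u_\ep$. Running the same argument for the remaining mixed operators completes the proof.

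The main obstacle I anticipate is bookkeeping in the Bochner computation for $\ho_\ep Q_1$: one must carefully track that every curvature term that appears is either zero (by the product structure, when indices are mixed) or nonnegative with the correct sign, and that the $\partial_t$-terms combine precisely with the spatial gradient terms to reproduce the $-2\Re\langle\nabla f,\bar\nabla(\cdot)\rangle$ structure so that the maximum-principle step closes. A secondary subtlety is the Kato-inequality step: one needs $\big|\nabla|\nabla_1\nabla_2 f|\big|^2 \le \sum_k(|\nabla_k\nabla_1\nabla_2 f| + |\nabla_{\bar k}\nabla_1\nabla_2 f|)$, which holds pointwise away from the zero set of $\nabla_1\nabla_2 f$ and is handled by the standard argument (or by adding $\delta$ and letting $\delta\to 0$). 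Neither difficulty is conceptually new relative to the elliptic Lemma~\ref{lemma 2.2} and the parabolic Lemma~\ref{lemma 4.2}; it is a matter of combining the two.
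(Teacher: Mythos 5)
Your proposal follows essentially the same approach as the paper's proof: pass to $f_\ep=\log u_\ep$, derive a parabolic Bochner inequality for a quantity built from $|\nabla_1\nabla_2 f_\ep|$ and the conical Laplacians with a $\dot f_\ep$ correction, apply a cut-off and the parabolic maximum principle, invoke Li--Yau and Lemma~\ref{lemma 4.2} to absorb the resulting terms, and convert back to $u_\ep$ at the end.

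One small point worth flagging: the paper's cut-off quantity is $G = t\,\eta\bigl(|\nabla_1\nabla_2 f_\ep| + 2(-\Delta_1 f_\ep - \Delta_2 f_\ep) - 2\dot f_\ep\bigr)$, i.e.\ the $\dot f_\ep$ coefficient is $-2$, whereas you take $-4\dot f_\ep$ from each of $-\Delta_j f_\ep - 2\dot f_\ep$, giving $-8\dot f_\ep$ total. The paper's choice is what makes the Li--Yau cancellation land cleanly: the Bochner quadratic decay arrives as $-cQ_1^2$ with $Q_1 = |\nabla_1\nabla_2 f_\ep| + 2(-\Delta_1 f_\ep-\Delta_2 f_\ep)$ (no $\dot f_\ep$), and only at the maximum does one expand $Q_1^2 = (Q_1-2\dot f_\ep + 2\dot f_\ep)^2$ and pair the resulting linear $\dot f_\ep$ term with the $|\nabla f_\ep|^2$ term coming from $\nabla G = 0$ to form exactly the Li--Yau quantity $|\nabla f_\ep|^2 - 2\dot f_\ep$. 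Your claim that one lands directly on $\ho_\ep Q_1' \le -2\Re\langle\nabla f_\ep,\bar\nabla Q_1'\rangle - c(Q_1')^2$ with $Q_1'$ already containing $-8\dot f_\ep$ slightly misplaces where Li--Yau enters: with the $-8$ coefficient the $\dot f_\ep$ cross terms would not match the $|\nabla f_\ep|^2$ term in the natural $2:1$ ratio and would require a messier absorption. This is a bookkeeping matter (it can be made to work with adjusted constants, or simply by using the paper's $-2\dot f_\ep$), not a gap, but you should be aware the paper's normalization of the $\dot f_\ep$ correction is the one that makes the maximum-principle step close without further effort.
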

\begin{proof}
We will only prove the estimate for $|\nabla_1 \nabla_2 u_\ep|$. The others are similar, so we omit the proof. 
 
By similar calculations as in deriving \eqref{eqn:first step 6}, we have
\begin{equation}\label{eqn:step 61}
\hoep |\nabla_1 \nabla_2 f_\ep| \le 2 \Re\innpro{ \nabla f_\ep, \bar \nabla | \nabla_1 \nabla_2 f_\ep  |    } + \sum_k \bk{ | \na_1 \na_{k} f_\ep |  | \nabla_2 \nabla_{\bar k} f_\ep  | + |\nabla_2 \nabla_k  f_\ep||\na_1 \na_{\bar k} f_\ep|   } ,
\end{equation}
and similar to \eqref{eqn:useful 2}
\eqsp{\label{eqn:useful 21}
 \hoep( -\Delta_1 f_\ep & - \Delta_2 f_\ep  )\le  ~2\Re\innpro{ \nabla f_\ep, \bar \na( -\Delta_1 f_\ep - \Delta_2 f_\ep  )  } \\
 &~  -\sum_k \xk{ \abs{ \na_1 \na_k f_\ep  } + \abs{\nabla_1 \na_{\bar k} f_\ep} + \abs{\na_2 \na_k f_\ep} + \abs{\na_2 \na_{\bar k} f_\ep}   }
.}
Combining \eqref{eqn:useful 21}, \eqref{eqn:step 61} and Cauchy-Schwarz inequality, we get 
{\small
\begin{align*}
&\hoep\big( |\nabla_1 \na_2 f_\ep| +  2( -\Delta_1 f_\ep - \Delta_2 f_\ep  )   \big)\le 2\Re\innpro{ \nabla f_\ep, \bar \na \xk{|\nabla_1 \na_2 f_\ep| +  2( -\Delta_1 f_\ep - \Delta_2 f_\ep  )}  } \\
 &~~   -\sum_k \xk{ \abs{ \na_1 \na_k f_\ep  } + \abs{\nabla_1 \na_{\bar k} f_\ep} + \abs{\na_2 \na_k f_\ep} + \abs{\na_2 \na_{\bar k} f_\ep}} \\
 \le &~~    2\Re\innpro{ \nabla f_\ep, \bar \na \xk{|\nabla_1 \na_2 f_\ep| +  2( -\Delta_1 f_\ep - \Delta_2 f_\ep  )}  }  - \frac{1}{10} \bk{  |\nabla_1 \na_2 f_\ep| +  2( -\Delta_1 f_\ep - \Delta_2 f_\ep  )  }^2.
\end{align*}
}
We define a similar cut-off function $\eta$ as $\varphi$ in the proof of Lemma \ref{lemma 4.2}, such that $\eta = 1$ on $B_{g_\ep}( 0, R/2  )$ and vanishes outside $B_{g_\ep}(0,3R/5)$. We denote
$$G = t \eta \xk{|\nabla_1 \na_2 f_\ep| +  2( -\Delta_1 f_\ep - \Delta_2 f_\ep  ) - 2 \dot f_\ep   }.$$
We can argue similarly as the $F$ in the proof of Lemma \ref{lemma 4.2} that at the maximum point $(p_0,t_0)$ of $G$, for which we assume $G(p_0,t_0)>0$
\begin{align*}
0\le & ~~\hoep G\\
\le &~~ \frac{G}{t_0} - \frac{G^2}{t_0 \eta} + C \frac{G}{R^2 \eta} + C \frac{G}{t_0} + C \frac{G}{R^2} + C \frac{G}{R^2} \frac{\eta'+\eta''}{\eta} + \frac{2 G}{R^2 \eta^2}(\eta')^2\\
\le & ~~ \frac{1}{t_0\eta}\bk{- G^2 + C \eta G + \frac{t_0\eta G}{R^2} + C t_0 \frac{G}{R^2}},
\end{align*}
so it follows that $G(p_0,t_0)\le C\xk{ 1+ \frac{t_0}{R^2}   }$. Therefore by definition of $G$, it holds that on $B_{g_\ep}(0,R/2)\times (0,R^2)$
$$|\nabla_1 \na_2 f_\ep| + 2( -\Delta_1 f_\ep - \Delta_2 f_\ep  ) -2 \dot f_\ep \le C \xk{ \frac{1}{R^2} + \frac{1}{t}  },$$
thus by Lemmas \ref{lemma LY} and \ref{lemma 4.2}, we conclude that on $B_{g_\ep}(0,R/2)\times (0,R^2)$
\begin{align*}%\label{eqn:para new 10}
| \na_1 \na_2 u_\ep  |\le &  ~\dot u_\ep + 2 | \Delta_1 u_\ep  | + 2 |\Delta_2 u_\ep| + \frac{\abs{\na u_\ep}}{ u_\ep  } + C u_\ep\xk{ \frac{1}{R^2} + \frac{1}{t} }\\
\le &~ C  \xk{ \frac{1}{t} + \frac{1}{R^2}   }\osc_R \,u_\ep,
\end{align*}
as desired.
\end{proof}

\subsubsection{Existence of $u$ to \eqref{eqn:para Diri}}We will show the limit function of $u_\ep$ as $\ep\to 0$ solves \eqref{eqn:para Diri}.
\begin{prop}\label{prop 4.1}
Given any $R\in (0,1)$ and  any $\varphi\in \C^0(\partial_{\pp} \qq_\bb(0,R))$, there exists a unique function $u\in \cC^{2,1}(\qq_\bb(0,R)^\#)\cap C^0(\overline{\qq_\bb(0,R)})$ solving the equation \eqref{eqn:para Diri}. Moreover, there exists a constant $C=C(n,\bb)>0$ such that for any $t\in (0,R^2)$ (we denote $B_\bb(r)^\#:= B_\bb(0,r)\backslash \sS$)
\begin{equation}\label{eqn:para new grad 1}
\sup_{B_\bb(R/2)^\# }\bk{ \sum_{j=1}^p |z_j|^{2-2\beta_j} \ba{ \frac{\partial u}{\partial z_j}  }^2 + \ba{D' u}^2      } \le C \xk{ \frac{1}{t} + \frac{1}{R^2}  } ( \osc_R u  )^2,
\end{equation}
\begin{equation}\label{eqn:para lap 1}
\sup_{B_\bb(R/2)^\#} \bk{ \sum_{i\neq j} ( | \nabla_i \nabla_j u  |_{g_\bb}+ |\nabla_i \bar \na_j u|_{g_\bb}) + \big| \frac{\partial u}{\partial t}  \big|     } \le C \xk{ \frac{1}{t} + \frac{1}{R^2}  } \osc_R u,
\end{equation}
and 
\begin{equation}\label{eqn:para new grad 2}
\sup_{B_\bb(R/2)^\#} \bk{ \sum_{j=1}^p |\nabla_{g_\bb} \Delta_j u| + |\nabla_{g_\bb} (D')^2 u   |  + | \nabla_{g_\bb} \frac{\partial u}{\partial t}  |   }  \le C \xk{ \frac{1}{t} + \frac{1}{R^2}  }^{3/2} \osc_R u,  
\end{equation}
where by abusing notation we denote $\osc_R u : = \osc_{B_\bb(0,R)\times (0,R^2)} u$. 
\end{prop}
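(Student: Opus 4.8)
The plan is to follow the same approximation strategy used for the elliptic Dirichlet problem in Proposition \ref{prop:existence}, now applied to the parabolic setting. First I would collect the estimates already established for the approximate solutions $u_\ep$ to \eqref{eqn:para eps}: the Li-Yau gradient estimate \eqref{eqn:para grad 1}, the time-derivative (equivalently $\Delta_{g_\ep}$) estimate \eqref{eqn:para grad 2}, the singular-direction Laplacian estimate from Lemma \ref{lemma 4.2}, and the mixed second-derivative estimate from Lemma \ref{lemma 4.3}. These give uniform control, on every parabolic sub-cylinder $B_{g_\ep}(0,R/2)^\#\times(0,R^2)$ bounded away from $t=0$, of $u_\ep$, of $\nabla_{g_\ep} u_\ep$, of $\partial_t u_\ep$, and of all the operators $T$ in \eqref{eqn:operator T} applied to $u_\ep$. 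To obtain the \emph{gradient}-of-these-quantities bound \eqref{eqn:para new grad 2}, I would observe that $\partial_t u_\ep$ and $\Delta_{j,\ep} u_\ep = (|z_j|^2+\ep)^{1-\beta_j}\partial^2 u_\ep/\partial z_j\partial\bar z_j$ themselves satisfy the heat equation $\hoep(\cdot)=0$ (because $g_\ep$ is a product metric, the operators $\Delta_{j,\ep}$ and $\partial_t$ commute with $\Delta_{g_\ep}$, exactly as in \eqref{eqn:2}), so the Li-Yau/Cheng-Yau gradient estimate applied to these auxiliary caloric functions on a slightly smaller cylinder upgrades the $(\tfrac1t+\tfrac1{R^2})$ bound to a $(\tfrac1t+\tfrac1{R^2})^{3/2}$ bound on their $g_\ep$-gradients; the same argument applies to $(D')^2 u_\ep$ since $D'$ also commutes with $\Delta_{g_\ep}$.

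Next I would run the convergence and diagonal argument. Away from $\sS_\pp$ the metric $g_\ep$ is uniformly strictly parabolic, so on compact subsets of $\qq_\bb^\#$ the functions $u_\ep$ enjoy uniform interior parabolic $\cC^{2+\alpha,\frac{2+\alpha}{2}}$ estimates by the classical Schauder theory; combined with the uniform interior $\cC^{3}$-type bounds in the tangential directions coming from differentiating the equation, a standard diagonal extraction yields a subsequence converging in $\cC^{2,1}_{loc}(\qq_\bb(0,R)^\#)$ to a limit $u$ solving $\partial_t u=\Delta_{g_\bb}u$ in $\qq_\bb(0,R)^\#$. The pointwise estimates \eqref{eqn:para grad 1}, \eqref{eqn:para grad 2}, Lemma \ref{lemma 4.2}, Lemma \ref{lemma 4.3} pass to the limit, giving \eqref{eqn:para new grad 1}, \eqref{eqn:para lap 1}, \eqref{eqn:para new grad 2} for $u$; in particular the gradient bounds force $u$ to extend continuously across $\sS_\pp$, so $u\in\cC^{2,1}(\qq_\bb(0,R)^\#)\cap\C^0$ on the interior and $\Delta_j u$, $(D')^2 u$, $\partial_t u$ all extend continuously. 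Uniqueness is immediate from the maximum principle \eqref{eqn:para mp}.

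The remaining point, and the one I expect to be the main obstacle, is showing $u=\varphi$ on the parabolic boundary $\partial_\pp\qq_\bb(0,R)$. On the lateral boundary $\partial B_\bb(0,R)\times(0,R^2]$ I would reuse the spatial barriers constructed in the proof of Proposition \ref{prop:existence} — the functions built from $d_{\mathbb C^n}(\cdot,q')^2$, from $|z-\tilde q|^{-(2n-2)}$, and, at points of $\sS$, from the geometry of $B_\bb(0,R)\subset B_{g_{\beta_j}}(0,R)$ — noting that a spatial $\Delta_{g_\ep}$-subharmonic (resp. superharmonic) function is automatically a parabolic sub- (resp. super-) solution, so the same comparison-via-maximum-principle argument applies uniformly in $\ep$ and then in $t$. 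For the initial slice $\overline{B_\bb(0,R)}\times\{0\}$ I would add a term linear in $t$ to these barriers (of the form $\varphi(q)\mp\delta\mp At$) to dominate the error near $t=0$; the subtlety is handling initial-boundary corners $\partial B_\bb(0,R)\times\{0\}$ and, separately, interior points of $\sS$ at $t=0$, where I would invoke the uniform De Giorgi-Nash-Moser Hölder continuity of $u_\ep$ (as in the $v\in C^0$ step of Proposition \ref{prop:3.2 new}, using the Sobolev inequality \eqref{eqn:Sobolev} valid since $\ric(g_\ep)\ge0$) to get equicontinuity up to $t=0$ and pass to the limit. Letting $\ep\to0$ and then the barrier parameters to their limits yields $u=\varphi$ on $\partial_\pp\qq_\bb(0,R)$, completing the proof; the estimates for $B_\bb(r)\subset B_\bb(0,1)$ with $r<1$ stated in the theorem follow by the obvious scaling and relabelling $R=1$.
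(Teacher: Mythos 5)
Your treatment of the derivative estimates \eqref{eqn:para new grad 1}--\eqref{eqn:para new grad 2} matches the paper's: pass \eqref{eqn:para grad 1}, \eqref{eqn:para grad 2}, Lemma~\ref{lemma 4.2}, and Lemma~\ref{lemma 4.3} to the limit, and for \eqref{eqn:para new grad 2} exploit that $\Delta_{j,\epsilon}u_\epsilon$, $(D')^2 u_\epsilon$, $\partial_t u_\epsilon$ are again caloric for $\hoep$ because $g_\epsilon$ is a product metric, then apply the Li--Yau gradient estimate to them. The convergence/diagonal argument and the use of \eqref{eqn:para new grad 1} to extend $u$ continuously across $\sS$ are also correct, and uniqueness via \eqref{eqn:para mp} is fine.

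The genuine gap is in the barrier construction for the lateral boundary $\partial B_\bb(0,R)\times(0,R^2]$. A purely spatial barrier $\Psi_{q_0}(z)$ vanishes at $(q_0,t)$ for \emph{every} $t$, not just at $t_0$. The parabolic maximum principle forces you to verify the comparison $\varphi(q_0,t_0)-\delta + A\Psi_{q_0}(z)\le\varphi(z,t)$ on the entire parabolic boundary, and at points $(q_0,t')$ with $t'\ne t_0$ this reduces to $\varphi(q_0,t_0)-\delta\le\varphi(q_0,t')$, which is false for a general continuous $\varphi$ whose oscillation in time exceeds $\delta$. The paper fixes this by appending a time-localizing penalty, $-\delta(t-t_0)^2$, to each lateral barrier (Cases~2 and~3 in the proof); the penalty keeps the barrier subcaloric for $\delta$ small, and makes it strictly negative everywhere on $\partial_\pp\qq_\bb$ except at the single point $(q_0,t_0)$. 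Without such a term the argument simply does not close.

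A secondary issue is the initial slice $t_0=0$. Your proposed barrier ``$\varphi(q)\mp\delta\mp At$'' has no spatial component, and so cannot dominate the initial data $\varphi(\cdot,0)$ at $z\ne q_0$; and for $q_0$ in the \emph{interior} of $B_\bb(0,R)$, the lateral spatial barriers of Proposition~\ref{prop:existence} are not available at all. The paper handles the whole slice $\overline{B_\bb(0,R)}\times\{0\}$ at once with the single barrier $\phi_1(z,t)=e^{-d_{\mathbb C^n}(z,q_0)^2-\lambda t}-1$, which is subcaloric for $\lambda\ge 4n$, vanishes only at $(q_0,0)$, and is negative elsewhere on the closed cylinder. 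Your alternative of invoking a uniform parabolic De~Giorgi--Nash--Moser estimate near $t=0$ for interior points of $\sS$ would require a boundary (initial-time) version of that theory, which you do not supply; the barrier argument avoids this entirely.
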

\begin{proof}Let $u_\epsilon$ be the $\cC^{2,1}$-solution to the equation \eqref{eqn:para eps}.   The $\C^0$-norm of $u_\epsilon$ follows from maximum principle \eqref{eqn:para mp}. 

To prove the higher order estimates, for any fixed compact subset $K\Subset B_\bb(0,R)$ and $\delta>0$,  standard parabolic Schauder theory yields uniform $\C^{4+\alpha,\frac{4+\alpha}{2}}$-estimates of $u_\ep$ on $(K\backslash T_\delta\sS)\times ( \delta, R^2]$, for any $\alpha\in(0,1)$. As $\epsilon\to 0$, $u_\epsilon$ converges in $\C^{4+\alpha,\frac{4+\alpha}{2}}( K\backslash T_\delta \sS \cap (\delta, R^2] )$ to some function $u$ which is also $\C^{4+\alpha,\frac{4+\alpha}{2}}$ in  $(K\backslash T_\delta\sS)\times ( \delta, R^2]$. Let $\delta\to 0$, $K\to B_\bb(0,R)$ and use a diagonal argument, then we can assume that
$$u_\epsilon\xrightarrow{\C^{4+\alpha,\frac{4+\alpha}{2}}_{loc} ( B_\bb(0,R)^\#\times (0, R^2]    )      } u,\quad \text{as ~}\ep\to 0.$$
Letting $\ep\to 0$, the estimate \eqref{eqn:para new grad 1} follows from \eqref{eqn:para grad 1};  \eqref{eqn:para lap 1} is a consequence of  Lemma \ref{lemma 4.3}; and  \eqref{eqn:para new grad 2} follows by applying the gradient estimate \eqref{eqn:para grad 1} to the $\Delta_{g_\ep}$-harmonic functions $\Delta_j u_\ep$ and $(D')^2 u_\ep$, and then letting $\ep\to 0$.

The gradient estimate \eqref{eqn:para new grad 1} implies that for any compact $K\Subset B_\bb(0,R)$
\begin{equation*}
\sup_{K\backslash \sS_j }\ba{\frac{\partial u}{\partial z_j}}\le \frac{C(n, K,\bb) (\osc_R u)^2}{t} |z_j|^{\beta_j - 1},\quad \forall~ t\in (0, R^2).  
\end{equation*}
From this we see that for any $t\in (0,R^2)$,  $u(\cdot, t)$ can be continuously extended to $\sS$ and thus $u\in C^0( B_\bb(0,R)\times (0, R^2)  )$.

\medskip

It only remains to show $u = \varphi$ on $\partial_\pp \qq_\bb(0,R)$. Fix an arbitrary point $(q_0,t_0)\in \partial_\pp\xk{ \qq_\bb(0,R)}$. 

\medskip

\noindent{\bf Case 1.} $t_0 = 0$ and $q_0\in \overline{B_\bb(0, R)}$.  We define a barrier function $\phi_{1}(z, t) = e^{-d_{\mathbb C^n}(z, q_0)^2 - \lambda t} - 1$, where $\lambda >0$ is to be determined. We calculate 
\begin{align*}
\hoep \phi_1 & = -\lambda e^{ -d_{\mathbb C^n}(z, q_0)^2 - \lambda t } - (- \Delta_{g_\ep} d_{\mathbb C^n}^2 + \abs{ \na d^2_{\mathbb C^n}  }_{g_\ep}   ) e^{ -d_{\mathbb C^n}(z, q_0)^2 - \lambda t  }\\
&\le \big( -\lambda + \sum_{j=1}^p (|z_j|^2 +\ep   )^{1-\beta_j} + (n-p)  \big)  e^{ -d_{\mathbb C^n}(z, q_0)^2 - \lambda t  }<0,
\end{align*}
if $\lambda \ge 4n$. On the other hand, $\phi_1(q_0,t_0) = 0$ and $\phi_1(z, t )< 0 $ for any $(z, t)\neq (q_0,t_0)$. For any $\varepsilon>0$, we can find a small neighborhood $V\cap \partial_\pp \xk{\qq_\bb(0,R)}$ of $(q_0,t_0)$ such that on $V$, $ \varphi(q_0,t_0) + \varepsilon  >\varphi(z, t) > \varphi(q_0, t_0) - \varepsilon$ since $\varphi$ is continuous. On $\partial_\pp\xk{\qq_\bb(0,R)}\backslash V$, the function $\phi_1$ is bounded above by a negative constant. Therefore the function $\phi_1^-: = \varphi(q_0, t_0) - \varepsilon + A \phi_1(z,t) \le \varphi(z,t)  $ for any $(z,t)\in\partial_\pp\xk{ \qq_\bb(0,R)}$ if $A>>1$. Therefore by maximum principle $\phi_1^-(z,t)\le u_\ep(z,t)$ for any $(z,t)\in \qq_\bb(0,R)$. Letting $\epsilon\to 0$, $\phi_1^-(z,t)\le u(z,t)$. So letting $(z,t)\to (q_0,t_0)$, $\varphi(q_0,t_0) - \varepsilon\le \liminf_{(z,t)\to (q_0,t_0)}  u(z,t)$. Setting $\varepsilon\to 0$ we conclude that 
$ \varphi(q_0,t_0)\le \liminf_{(z,t)\to (q_0,t_0)}  u(z,t).$ By considering $\phi_1^+(z,t) = \varphi(q_0,t_0) + \varepsilon - A \phi_1(z,t)$ and similar argument as above we can get $\varphi(q_0,t_0) \ge \limsup_{(z,t)\to (q_0,t_0)} u(z,t)$. Thus $u$ coincides with $\varphi$ at $(q_0,t_0)$.

\medskip

\noindent{\bf Case 2.} $t_0>0$ and $q_0\in \partial B_\bb(0,R)\cap(\sS_1\cap \sS_2)$. In this case $z_1(q_0) = z_2(q_0) = 0$. Denote $q_0' = -q_0\in \partial B_\bb(0,R)$ to be the (Euclidean) opposite point of $q_0$. Define for some small $\delta>0$ $$\phi_2(z, t) = d_{\mathbb C^n}(z, q_0')^2 - 4 R^2 - \delta (t-t_0)^2.$$  
$\phi_2(q_0,z_0) = 0$ and $\phi(z,t)<0$ for any $(z,t)\neq (q_0,t_0)$. We calculate that $\partial_t \phi_2 - \Delta_{g_\ep} \phi_2\le 0$. Then by similar argument as in {\bf Case 1} replacing $\phi_1$ by $\phi_2$, we get $\lim_{(z,t)\to (q_0,t_0)} u(z,t) = \varphi(q_0,t_0)$.

\medskip

\noindent{\bf Case 3.} $t_0>0$ and $q_0\in\partial B_\bb(0,R)\backslash  ( \sS_1\cap \sS_2   )$.  As the {\bf Case 2} in the proof of Proposition \ref{prop:existence}, we define a similar function $G$ as there. Define $\phi_3(z, t) = A ( d_\bb(z,0)^2 - R^2  ) + G(z) - \delta (t -t_0)^2$ for $A>>1$ and small $\delta>0$. Then we can calculate that $\partial_t \phi_3 \le \Delta_{g_\ep} \phi_3$ and $\phi_3(q_0,t_0) = 0$ and $\phi_3(z, t) <0$ for any other $(z,t)\neq (q_0,t_0)$. Similar argument as in {\bf Case 1} proves that
$$\lim_{(z,t)\to (q_0,t_0)} u(z, t) = \varphi(q_0,t_0).$$

Combining all the three cases above, we obtain that $u$ coincides with $\varphi$ on $\partial_\pp \qq_\bb(0,R)$. Thus the Dirichlet problem \eqref{eqn:para Diri} admits a unique solution $u\in \C^0(\overline{\qq_\bb(0,R)  })\cap \C^{2,1}( \qq_\bb(0,R)^\#  )$.

\end{proof}

\begin{corr}\label{cor 4.1 new}
Given any $f\in \C^{\alpha, \frac\alpha 2}_\bb( \overline{\qq_\bb }  )$ and $\varphi\in \C^0(\partial_\pp \qq_\bb)$, there exists a unique solution $v\in \C^{2,1}(\qq_\bb^\#)\cap \C^0(\overline{\qq_\bb})$ to the Dirichlet problem
\begin{equation}\label{eqn:general para new}
\frac{\partial v}{\partial t} = \Delta_{g_\bb} v + f,\text{ in }\qq_\bb, \text{ and } v = \varphi, \text{ on }\partial_\pp \qq_\bb.
\end{equation} 

\end{corr}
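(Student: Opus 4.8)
The plan is to establish Corollary~\ref{cor 4.1 new} by the same two-step scheme already used for the homogeneous parabolic Dirichlet problem in Proposition~\ref{prop 4.1} and for the inhomogeneous elliptic problem in Proposition~\ref{prop:3.2 new}: uniqueness from a maximum principle, and existence by regularization on the smooth approximating metrics $g_\ep$ of \eqref{eqn:para app} followed by a limiting argument. Uniqueness is immediate: if $v_1,v_2$ are two solutions in $\C^{2,1}(\qq_\bb^\#)\cap\C^0(\overline{\qq_\bb})$, then $w=v_1-v_2$ solves the homogeneous heat equation $\partial_t w=\Delta_{g_\bb}w$ in $\qq_\bb$ with $w\equiv 0$ on $\partial_\pp\qq_\bb$, so the maximum principle \eqref{eqn:para mp} (applied, as in Lemma~\ref{lemma:MP}, to $w\pm\ep(\log\abs{z_1}+\log\abs{z_2})$ and letting $\ep\to 0$) forces $w\equiv 0$.

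For existence, first approximate $\varphi$ uniformly on $\partial_\pp\qq_\bb$ by functions $\varphi_k$ admitting continuous extensions to $\overline{\qq_\bb}$ that are smooth near $\partial_\pp\qq_\bb$, and for each $\ep,k$ let $v_{\ep,k}\in\C^{2,1}(\qq_\bb)\cap\C^0(\overline{\qq_\bb})$ be the solution of $\partial_t v_{\ep,k}=\Delta_{g_\ep}v_{\ep,k}+f$ with $v_{\ep,k}=\varphi_k$ on $\partial_\pp\qq_\bb$, which exists by standard parabolic theory since $g_\ep$ is a smooth K\"ahler metric. The maximum principle gives the uniform bound $\|v_{\ep,k}\|_{\C^0(\qq_\bb)}\le\|\varphi_k\|_{\C^0(\partial_\pp\qq_\bb)}+\|f\|_{\C^0(\qq_\bb)}$. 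On any compact $K\Subset B_\bb(0,1)$ and any $\delta>0$, on $(K\setminus T_\delta(\sS))\times(\delta,1]$ the operator $\partial_t-\Delta_{g_\ep}$ is uniformly parabolic with smooth coefficients and $f$ restricts to a $\C^{\alpha',\alpha'/2}$ function there for some $\alpha'<\alpha$, so interior parabolic Schauder estimates give uniform $\C^{2+\alpha',(2+\alpha')/2}$-bounds on $(K\setminus T_\delta(\sS))\times(\delta,1]$. Letting $\ep\to 0$, $k\to\infty$, $\delta\to 0$ and $K\to B_\bb(0,1)$ along a diagonal subsequence produces a limit $v\in\C^{2,1}_{loc}(\qq_\bb^\#)$ solving $\partial_t v=\Delta_{g_\bb}v+f$ in $\qq_\bb\setminus\sS_\pp$.

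It then remains to show that $v$ extends to a continuous function on $\qq_\bb$ and attains the boundary data $\varphi$ on $\partial_\pp\qq_\bb$. For continuity across $\sS$ one repeats the argument of Proposition~\ref{prop:3.2 new}: the Sobolev inequality \eqref{eqn:Sobolev} and the volume bounds $c_1(n)r^{2n}\le\vol_{g_\ep}(B_{g_\ep}(p,r))\le C_1(n)r^{2n}$ hold uniformly in $\ep$ because $\ric(g_\ep)\ge 0$, so the parabolic De Giorgi--Nash--Moser theory applies with uniform constants to $\partial_t v_{\ep,k}-\Delta_{g_\ep}v_{\ep,k}=f$ with $f\in L^\infty$, yielding a uniform modulus of continuity of $v_{\ep,k}$ at every point $p\in\sS\cap B_\bb(0,1)$; passing to the limit gives $v\in\C^0(\qq_\bb)$. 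For the boundary values one uses the barriers $\phi_1,\phi_2,\phi_3$ constructed in the proof of Proposition~\ref{prop 4.1} at boundary points with $t_0=0$, with $q_0\in\sS_1\cap\sS_2$, and with $q_0\notin\sS_1\cap\sS_2$ respectively; one only needs to enlarge the coefficient $A$ so that $(\partial_t-\Delta_{g_\ep})(A\phi_i)\le-\|f\|_{\C^0}$, exactly the modification made in the elliptic case where $\varphi_q$ was replaced by a function with $\Delta_{g_\ep}\varphi_q\ge\max f$, and then the same barrier comparison as in Proposition~\ref{prop 4.1} shows $\lim_{(z,t)\to(q_0,t_0)}v(z,t)=\varphi(q_0,t_0)$. (Once $v\in\C^0(\overline{\qq_\bb})\cap\C^{2,1}(\qq_\bb^\#)$ is known, Theorem~\ref{thm:main 2} and the subsequent corollary upgrade $v$ locally to $\C^{2+\alpha,(2+\alpha)/2}_\bb$, though this is not needed here.)

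The main obstacle is the continuity of the limit $v$ across $\sS$: this requires carrying out a parabolic Moser iteration adapted to the singular background, relying crucially on the $\ep$-uniform Sobolev inequality and Bishop volume comparison for $g_\ep$, and then combining it carefully with the joint limit in the metric parameter $\ep$ and the boundary-data approximation parameter $k$. By contrast, the boundary attainment is routine given the barriers already built in Propositions~\ref{prop 4.1} and~\ref{prop:3.2 new}, and uniqueness is immediate from the maximum principle.
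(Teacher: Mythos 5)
Your proposal is essentially correct and matches the paper's argument: uniqueness by the maximum principle, existence by solving with the smooth approximating metrics $g_\ep$, interior parabolic Schauder bounds on compact sets away from $\sS_\pp$ for the diagonal limit, uniform De Giorgi--Nash--Moser Hölder estimates from the $\ep$-uniform Sobolev inequality and volume bounds to get continuity across $\sS$, and the barriers of Proposition~\ref{prop 4.1} adjusted by a larger constant $A$ to absorb the inhomogeneity $f$ for the boundary attainment. The one deviation from the paper is your extra approximation of the boundary data $\varphi$ by smooth $\varphi_k$; this is superfluous (the paper solves the $g_\ep$-Dirichlet problem directly with the continuous datum $\varphi$, which standard uniformly parabolic theory permits since $B_\bb(0,1)$ has a Lipschitz boundary with an exterior cone), and it forces a somewhat more delicate double-limit in $(\ep,k)$ that you handle only implicitly; but it does not introduce a genuine gap.
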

\begin{proof}
Let $v_\ep\in \C^{2+\alpha, \frac{2+\alpha}{2}}(\qq_\bb)\cap \C^0(\qq_\bb) $ be the unique solution to the equations
$$\frac{\partial v_\ep}{\partial t} = \Delta_{g_\ep} v_\ep + f,\text{ in }\qq_\bb,\text{ and } v_\ep = \varphi, \text{ on }\partial_\pp \qq_\bb.$$
For any compact subset $K\Subset B_\bb(0,1)$ and $\delta\in (0,1)$, the standard Schauder estimates for parabolic equations provide uniform $\C^{2+\alpha,\frac{2+\alpha}{2}}$-estimates for $v_\ep$ on $K\backslash T_\delta \sS\times (\delta^2, 1)$. Then $v_\ep \to v$ for some $v\in \C^{2+\alpha,\frac{2+\alpha}{2}}(K\backslash T_\delta\sS \times (\delta^2, 1))$. Taking $\delta\to 0$ and $K\to B_\bb(0,1)$ and by a diagonal argument, we can take $v_\ep$ converges in $\C^{2+\alpha,\frac{2+\alpha}{2}}_{loc}( B_\bb\backslash \sS \times (0,1)  )$ to $v$, and $v$ satisfies the equation $\frac{\partial v}{\partial t} = \Delta_{g_\bb} v + f$ on $B_\bb\backslash\sS \times (0,1)$. 

It only remains to show $v\in \C^0(\qq_\bb)$ and $v = \varphi$ on $\partial_\pp\qq_\bb$. The same proof as in {\bf Cases 1, 2, 3} in Proposition \ref{prop 4.1} yields that $v$ must coincide with $\varphi$ on $\partial_\pp\qq_\bb$, since we can always choose $A>1$ large enough such that (for example in {\bf Case 1}) $\frac{\partial \phi_1^-}{\partial t} - \Delta_{g_\ep} \phi_1^-\le \inf_{\qq_\bb} f \le \frac{\partial v_\ep}{\partial t} - \Delta_{g_\ep} v_\ep$. To see the continuity of $v$ in $\qq_\bb$, because of the Sobolev inequality \eqref{eqn:Sobolev} for metric spaces $(B_\bb, g_\ep)$, by the proof of the standard De Giorgi-Nash-Moser theory for parabolic equations, we conclude that for any $p\in \sS$, $t_0\in (0,1)$, there exists a small number $R_0 = R_0(p,t_0)$ such that on the cylinder $\tilde \qq_{R_0}:=B_\bb(p, R_0)\times (t_0 - R_0^2, t_0  )$, $\osc_{\tilde \qq_{r}} v_\ep \le C r^{\alpha'}$ for any $r\in (0,R_0)$ and some $\alpha'\in (0,1)$. Therefore $\osc_{\tilde \qq_{r}}v\le C r^{\alpha'}$ and $v$ is continuous at $(p, t_0)$, as desired.

The uniqueness of the solution to \eqref{eqn:general para new} follows from maximum principle. %The corollary is thus proved.
\end{proof}
\begin{remark}\label{remark 4.1}
Corollary \ref{cor 4.1 new} is not needed in the proof of Theorem \ref{thm:main 2}. So by Theorem \ref{thm:main 2}, the solution $u$ to \eqref{eqn:general para new} is in $\C^{2+\alpha, \frac{2+\alpha}{2}}_\bb( \qq_\bb )\cap \C^0(\overline{\qq_\bb})$.

\end{remark}

\subsection{Sketched proof of Theorem \ref{thm:main 2}}
With Proposition \ref{prop 4.1}, we can prove the Schauder estimates for the solution $u\in \C^0(\overline{\qq_\bb})\cap \cC^{2,1}(\qq_\bb^\#)$ to the equation \eqref{eqn:para main} for a Dini-continuous function $f$, by making use of almost the same arguments as in the proof of Theorem \ref{thm:main 1}. So we will not provide the full details, and only point out the main differences. For any given points $Q_p = (p,t_p)$, $Q_q = (q,t_q)\in (B_\bb(0,1/2)\backslash \sS)\times (\hat t,1)$. To define the approximating functions $u_k$ as in \eqref{eqn:uk 1}, we define $u_k$ in this case as the solution to the heat equation
$$ \frac{\partial u_k}{\partial t} = \Delta_{g_\bb} u_k + f(Q_p), \quad\text{in ~} \hat B_k(p)\times (t_p - \hat t\cdot \tau^{2k}, t_p  ]   $$
and $u_k = u$ on $\partial_\pp\xk{  \hat B_k(p)\times (t_p -\hat t\cdot \tau^{2k}, t_p  ]    }$, where $\hat B_k(p)$ is defined in \eqref{eqn:hat Bk}. We can now apply the estimates in Proposition \ref{prop 4.1} to the functions $u_k$ or $u_k - u_{k-1}$, instead of the ones in Lemmas \ref{lemma 2.1} and \ref{lemma 2.2} as we did in Sections \ref{section 3.2}, \ref{section 3.3} and \ref{section 3.4} to prove the Schauder estimates for $u$. Thus we finish the proof of Theorem \ref{thm:main 2}. \qed

\subsection{Interior Schauder estimate for non-flat conical K\"ahler metrics}
Let $g=\sqrt{-1} g_{j\bar k}(z,t) dz_j\wedge dz_{\bar k}$ be a $\cC^{\alpha,\frac{\alpha}{2}}_\bb$ conical K\"ahler metric on $\qq_\bb$ with conical singularity along $\sS$, that is,  for any $t\in [0,1]$, $g(\cdot, t)$ is a $C^{0,\alpha}_\bb$ conical K\"ahler metric as in Section \ref{section 3.5} and 
the coefficients of $g$ in the basis $\{\epsilon_j\wedge \bar \epsilon_k$, $\cdots\}$, are $\frac{\alpha}{2}$-H\"older continuous in $t\in [0,1]$. Suppose $u\in \C^{2+\alpha, \frac{2+\alpha}{2}}_\bb( \qq_\bb  )$ satisfies the equation
\begin{equation}\label{eqn:para general}
\frac{\partial u}{\partial t} = \Delta_{g} u + f,\quad \text{in ~ }\qq_\bb,
\end{equation} 
for some $f\in \C^{\alpha, \frac{\alpha}{2}}(\overline{\qq_\bb})$.
\begin{prop}\label{prop 4.2 new}
There exists a constant $C=C(n,\bb, \alpha, g)$  such that 
\begin{equation*}
\| u\|_{\C^{2+\alpha,\frac{2+\alpha}{2}}_\bb(\qq_\bb)    }^* \le C\xk{ \| u\|_{\C^0( \qq_\bb  )} + \| f\|_{ \C^{\alpha, \frac{\alpha}{2}}_\bb(\qq_\bb)  }^{(2)}    }.
\end{equation*}

\end{prop}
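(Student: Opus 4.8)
The plan is to mirror the elliptic argument of Proposition \ref{prop:invariant} in the parabolic setting, replacing the flat parabolic Schauder estimate of Theorem \ref{thm:main 2} (in its rescaled form) for the freezing of coefficients at singular points, and the standard (Euclidean) parabolic Schauder estimates away from $\sS$. First I would fix two space-time points $X_0 = (x_0,t_0)\neq Y_0=(y_0,s_0)$ in $\qq_\bb$, let $d_{X_0} = \min(d_{\pp,X_0}, d_{\pp,Y_0})$ be the smaller parabolic distance to the parabolic boundary, choose a small parameter $\mu\in(0,1/4)$ to be fixed at the end, set $d=\mu\,d_{X_0}$, and work in the parabolic cylinder $\qq := B_\bb(x_0,d)\times (t_0 - d^2, t_0]$. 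As in the elliptic case there are three cases: (Case 1a) $d_{\pp,\bb}(X_0,Y_0)< d/2$ and $B_\bb(x_0,d)\cap\sS=\emptyset$, where I introduce the smooth coordinates $w_j = z_j^{\beta_j}$ so that $g$ has $\C^{\alpha,\alpha/2}$ coefficients in the usual sense and apply the classical interior parabolic Schauder estimate; (Case 1b) $d_{\pp,\bb}(X_0,Y_0)<d/2$ but $B_\bb(x_0,d)\cap\sS\neq\emptyset$, where I pick the nearest point $\hat x_0\in\sS$, work on $\hat\qq := B_\bb(\hat x_0,2d)\times(t_0-(2d)^2,t_0]$, expand $g$ in the non-holomorphic coframe $\{\epsilon_j = dr_j + \sqrt{-1}\beta_j r_j d\theta_j,\, dz_k\}$ exactly as in \eqref{eqn:expand cone}–\eqref{eqn:second operators}, normalize $g$ at $\hat x_0$ to agree with $g_\bb$, rewrite \eqref{eqn:para general} as $\partial_t u = \Delta_{g_\bb} u + \hat f$ with $\hat f = f - \eta.\ddb u$ where $\eta = g^{-1} - g_\bb^{-1}$ vanishes at $\hat x_0$, and apply the rescaled version of Theorem \ref{thm:main 2}; (Case 2) $d_{\pp,\bb}(X_0,Y_0)\ge d/2$, which is handled trivially by the $[\cdot]^{(\sigma)}_{\C^{2,1}_\bb}$ seminorm.

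The key estimates to assemble, paralleling \eqref{eqn:case 1.1}, \eqref{eqn:case 1.2}, \eqref{eqn:case 2.0} and \eqref{eqn:case final}, are
\begin{equation*}
d_{X_0}^{2+\alpha}\,\frac{|\newT u(X_0) - \newT u(Y_0)|}{d_{\pp,\bb}(X_0,Y_0)^\alpha} \le \frac{C}{\mu^\alpha}\,[u]^*_{\C^{2,1}_\bb(\qq_\bb)} + \frac{C}{\mu^{2+\alpha}}\xk{ \|u\|_{\C^0(\hat\qq)} + \|\hat f\|^{(2)}_{\C^{\alpha,\alpha/2}_\bb(\hat\qq)} + \|f\|^{(2)}_{\C^{\alpha,\alpha/2}_\bb(\qq)} + \|u\|_{\C^0(\qq)} },
\end{equation*}
where $\newT$ runs over the operators in Definition \ref{defn:2.5} (the second-order operators in \eqref{eqn:operator T} together with $\partial/\partial t$), and in Case 1b one must additionally note that, from the equation, $\partial_t u - f = \Delta_{g_\bb}u + \eta.\ddb u$, so control of the second-order $\newT$-operators and of $f$ automatically controls the time derivative $\partial_t u$; in Case 1a this is part of the classical estimate. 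Then I would estimate $\|\hat f\|^{(2)}_{\C^{\alpha,\alpha/2}_\bb(\hat\qq)}$ using the product rule for weighted H\"older norms and the fact that $\eta$ has small $\C^0$-norm on $\hat\qq$ (proportional to $\mu^\alpha$, since $\eta(\hat x_0)=0$ and $g\in \C^{\alpha,\alpha/2}_\bb$), obtaining a bound of the shape
\begin{equation*}
\|\hat f\|^{(2)}_{\C^{\alpha,\alpha/2}_\bb(\hat\qq)} \le C_0\,[g]^*_{\C^{\alpha,\alpha/2}_\bb(\qq_\bb)}\,\mu^\alpha\xk{ C(\mu)\,\|u\|_{\C^0(\qq_\bb)} + 2\mu^{2+\alpha}\,[u]^*_{\C^{2+\alpha,\frac{2+\alpha}{2}}_\bb(\qq_\bb)} } + \mu^2\,\|f\|^{(2)}_{\C^{\alpha,\alpha/2}_\bb(\qq_\bb)},
\end{equation*}
together with the interpolation $\frac{C}{\mu^\alpha}[u]^*_{\C^{2,1}_\bb} \le \mu^\alpha [u]^*_{\C^{2+\alpha,\frac{2+\alpha}{2}}_\bb} + C(\mu)\|u\|_{\C^0}$. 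Choosing $\mu$ small so that $\mu^\alpha(2C_0[g]^*_{\C^{\alpha,\alpha/2}_\bb}+1)\le 1/2$ absorbs the $[u]^*_{\C^{2+\alpha,\frac{2+\alpha}{2}}_\bb}$ term into the left side after taking the supremum over $X_0\neq Y_0$, and a final application of parabolic interpolation inequalities upgrades the seminorm bound to the full norm bound claimed in the Proposition.

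The one genuinely new ingredient, compared to the elliptic Proposition \ref{prop:invariant}, is that the classical interior parabolic Schauder estimate used in Case 1a, and the rescaled Theorem \ref{thm:main 2} used in Case 1b, are estimates on backward parabolic cylinders rather than balls, so I must be careful that when $d_{\pp,X_0}$ is the space direction the cylinder $\qq=B_\bb(x_0,d)\times(t_0-d^2,t_0]$ is still compactly inside $\qq_\bb$ (and similarly $\hat\qq\subset\qq_\bb$), and that the comparison point $Y_0$ lies in the slightly larger concentric cylinder; this is where the choice $d=\mu d_{X_0}$ with the parabolic distance $d_{\pp,\bb}$ (which takes the max of $|t-s|^{1/2}$ and the spatial distance) does the bookkeeping. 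The main obstacle I anticipate is purely technical: verifying the product/composition estimate for the weighted parabolic H\"older norm $\|\hat f\|^{(2)}_{\C^{\alpha,\alpha/2}_\bb}$ and tracking the $\mu$-powers so that the bad term is genuinely absorbable — the structure is identical to the elliptic case but the time variable adds extra terms (the $\alpha/2$-H\"older-in-time part of $\eta$ and of $\newT u$) that must each be shown to carry the same $\mu^{2+\alpha}$ or better. Since all these are routine once the framework above is in place, I would state them as in \cite{GT} / Section \ref{section 3.5} and not reproduce the computations. This completes the proof of Proposition \ref{prop 4.2 new}.
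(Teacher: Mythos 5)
Your proposal follows essentially the same route as the paper's own proof of Proposition \ref{prop 4.2 new}: the same three-case decomposition keyed to the parabolic distance $d=\mu d_{P_x}$ (classical parabolic Schauder in smooth coordinates away from $\sS$, a rescaled Theorem~\ref{thm:main 2} on $\hat\qq$ after freezing $g$ at $\hat x_0\in\sS$, and the trivial bound for far-apart points), followed by the identical product-rule estimate for $\|\hat f\|^{(2)}_{\C^{\alpha,\alpha/2}_\bb}$, interpolation, and absorption for small $\mu$. The one minor stylistic difference is that you recover the H\"older continuity of $\partial_t u$ from the equation, whereas the paper's Theorem~\ref{thm:main 2} controls $\partial_t u$ directly; both are valid.
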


\begin{proof}
The proof is parallel to that of Proposition \ref{prop:invariant}. Given any two points $P_{x} = (x,t_x)$, $P_y = (y, t_x)\in \qq_\bb$, we may assume $d_{P_x} = \min\{d_{P_x}, d_{P_y}\}>0$ where $d_{P_x} := d_{\pp, \bb}(P_x,\partial_\pp \qq_\bb)$ is the parabolic distance of  
$P_x$ to the parabolic boundary $\partial_\pp \qq_\bb$. Let $\mu\in (0,1/4)$ be a positive number to be determined later. Denote $d := \mu d_{P_x}$, $\qq:= B_{\bb}(x, d)\times ( t_x - d^2, t_x  ]$ the ``parabolic ball'' centered at $P_x$, and $\frac 12 \qq := B_\bb(x, d/2)\times (t_x - d^2/4, t_x]$. 

\medskip

\noindent $\bullet$ {\bf Case 1.} $d_{\pp,\bb} ( P_x, P_y  )< d/2$. In this case we always have $P_y\in\frac 12 \qq$.

\smallskip

{\em Case 1.1}. $B_\bb(x, d)\cap \sS = \emptyset$. As in the proof of Proposition \ref{prop:invariant}, we can introduce smooth complex coordinates $\{ w_1,w_2,z_3,\ldots, z_n  \}$ on $B_\bb(x, d)$, under which $g_\bb$ becomes the standard Euclidean metric and the components of $g$ are $\C^{\alpha,\frac \alpha 2}$ in the usual sense on $\qq$. The leading coefficients and constant term $f$ in \eqref{eqn:para general} are both $\C^{\alpha,\frac \alpha 2}$ in the usual sense, so we can apply the standard parabolic Schauder estimates (see Theorem 4.9 in \cite{Lie}) to get that there exists some constant $C = C(n,\bb, \alpha, g)$ which is independent of $\qq$
\begin{equation}\label{eqn:lemma Lie}
[  u ]^*_{\C^{2+\alpha, \frac{2+\alpha}{2}} (\qq)   } \le C\xk{ \| u\|_{\C^0(\qq)  } + \| f\|^{(2)} _{ \C^{\alpha, \frac\alpha 2} (\qq)  }   }.
\end{equation}
%Let $\newT $ be one of the following second/first order operators:
%$$\big\{ |z_j|^{2-2\beta_j} \frac{\partial^2}{\partial z_j \partial z_{\bar j}}, N_jN_k(~j\neq k),\, N_j D', (D')^2, \frac{\partial}{\partial t}    \big\}.$$
Let $D$ denote the ordinary first order operators in the coordinates $\{w_1,w_2,z_3,\ldots, z_n\}$. We calculate
\begin{align*}
| Tu(P_x)  - Tu(P_y)  | & \le ~ | D^2 u(P_x) - D^2 u(P_y)   | + \frac{ d_{\pp, \bb}(P_x, P_y)  }{d} \xk{ |D^2 u(P_x)| + |D^2 u(P_y)|   }\\
& \le ~ \frac{4 d_{\pp,\bb}(P_x, P_y)^\alpha}{d^{2+\alpha}} [u  ]^*_{ \C^{2+\alpha, \frac{2+\alpha}{2}}(\qq)  } + \frac{4 d_{\pp,\bb} (P_x, P_y)   }{d^3} [u]^*_{\C^{2,1}(\qq)}\\
& \le ~ \frac{8 d_{\pp,\bb}  (P_x, P_y)^\alpha }{d^{2+\alpha}} [u]^*_{\C^{2+\alpha, \frac{2+\alpha}{2}} (\qq)  } + C \frac{d_{\pp,\bb}(P_x,P_y)^\alpha}{d^{2+\alpha}}\| u\|_{\C^0(\qq)}.
\end{align*}
\begin{align*}
\big|\frac{\partial u}{\partial t}(P_x) - \frac{\partial u}{\partial t}(P_y)\big   |\le  \frac{ 4 d_{\pp,\bb}( P_x,P_y  )^\alpha}{ d^{2+\alpha} } [u]^*_{\C^{ 2+\alpha, \frac{2+\alpha}{2}  } (\qq) }. 
\end{align*}
Recall $\newT $ denotes the operators in $T$ and $\frac{\partial}{\partial t}$, then  by \eqref{eqn:lemma Lie} it follows that
\begin{equation}\label{eqn:para case 1.1}
d_{P_x}^{2+\alpha} \frac{|\newT u(P_x) - \newT u(P_y)|}{d_{\pp,\bb}(P_x, P_y)^\alpha} \le \frac{C}{\mu^{2+\alpha}} \|  f\|^{(2)}_{ \C^{\alpha, \frac{\alpha}{2}}_\bb(\qq)  } + \frac{C}{\mu^{2+\alpha}} \| u\|_{\C^0(\qq)}.
\end{equation}

\smallskip

{\em Case 1.2}. $B_\bb(x, d)\cap \sS\neq \emptyset$. Let $\hat x\in \sS$ be the projection of $x$ to $\sS$ and $\hat P_x = ( \hat x, t_x )$ be the corresponding space-time point. Denote $\hat \qq := B_\bb(\hat x, 2d) \times ( t_x - 4d^2, t_x]  $.  As the {\em Case 1.2} in the proof of Proposition \ref{prop:invariant}, we may choose suitable complex coordinates so that $g_{\ep_j \bar \ep_k}(\hat P_x  ) = \delta_{jk}  $ and for $j,k\ge p+1$ $g_{j\bar k}(\hat P_x) = \delta_{jk}$, and the cross terms in the expansion of $g$ in \eqref{eqn:expand cone} vanish at $\hat P_x$. Thus the equation \eqref{eqn:para general} can be re-written as
\begin{equation*}
\frac{\partial u}{\partial t} = \Delta_{g_\bb} u + \eta.\ddb u+ f =: \Delta_{g_\bb} u + \tilde f,\quad u\in \C^0(\hat \qq)\cap \C^{2,1}(\hat \qq^\#),
\end{equation*}
for some $(1,1)$-form $\eta$ as in the proof of Proposition \ref{prop:invariant}. From the rescaled version of Theorem \ref{thm:main 2} we conclude that 
\begin{equation*}
d^{2+\alpha}\frac{ |\newT u(P_x)  - \newT u(P_y)  |   } { d_{\pp, \bb}(P_x, P_y)^\alpha    } \le C \xk{ \| u\|_{\C^0(\hat \qq)} + \|\tilde f\|^{(2)}_{ \C^{\alpha, \frac\alpha 2}_\bb( \hat \qq  )  }     },
\end{equation*}
hence 
\begin{equation}\label{eqn:para case 1.2}
d_{P_x}^{2+\alpha}\frac{ |\newT u(P_x)  - \newT u(P_y)  |   } { d_{\pp, \bb}(P_x, P_y)^\alpha    } \le \frac{C}{\mu^{2+\alpha}} \xk{ \| u\|_{\C^0(\hat \qq)} + \|\tilde f\|^{(2)}_{ \C^{\alpha, \frac\alpha 2}_\bb( \hat \qq  )  }     }.
\end{equation}

\smallskip

\noindent $\bullet$ {\bf Case 2.} $d_{\pp,\bb}(P_x,P_y) \ge d/2$. Then we calculate (recall $\qq_\bb := B_\bb(0,1)\times (0,1]$)
\begin{equation}\label{eqn:para case 2}
d_{P_x}^{2+\alpha}\frac{ |\newT u(P_x)  - \newT u(P_y)  |   } { d_{\pp, \bb}(P_x, P_y)^\alpha    } \le 4 d_{P_x} ^{2+\alpha} \frac{|\newT  u|(P_x) + |\newT  u|(P_y)  }{ d^\alpha  }\le \frac{8}{\mu^\alpha} [ u  ]^*_{ \C^{2,1}_\bb (\qq_\bb)   }.  
\end{equation}

Combining \eqref{eqn:para case 1.1}, \eqref{eqn:para case 1.2} and \eqref{eqn:para case 2}, we obtain
\begin{align*}
d_{P_x}^{2+\alpha}\frac{ |\newT u(P_x)  - \newT u(P_y)  |   } { d_{\pp, \bb}(P_x, P_y)^\alpha    } \le &  \frac{8}{\mu^\alpha} [ u  ]^*_{ \C^{2,1}_\bb (\qq_\bb)   } + \frac{C}{\mu^{2+\alpha}} \xk{ \| u\|_{\C^0(\hat \qq)} + \|\tilde f\|^{(2)}_{ \C^{\alpha, \frac\alpha 2}_\bb( \hat \qq  )  }     }\\
&+  \frac{C}{\mu^{2+\alpha}} \|  f\|^{(2)}_{ \C^{\alpha, \frac{\alpha}{2}}_\bb(\qq)  } + \frac{C}{\mu^{2+\alpha}} \| u\|_{\C^0(\qq)}.
\end{align*}

Observe that for any $P\in \qq$ or $\in \hat \qq $, $d_{\pp,\bb}(P,\partial_\pp\qq_\bb)\ge (1-2\mu)d_{P_x}$. Then it follows from definition that 
\begin{equation*}
\| f\|_{\C^{\alpha, \frac\alpha 2}_\bb( \qq)    }^{(2)}\le C \mu^2 \| f \|^{(2)}_{\C^0( \qq_\bb  )} + C \mu^{2+\alpha} [ f  ]^{(2)}_{ \C^{\alpha,\frac\alpha 2}_\bb(\qq_\bb)   } \le C \mu^2 \| f\|^{(2)}_{\C^{\alpha,\frac\alpha 2}_\bb(\qq_\bb)}.
\end{equation*}
We calculate
\begin{align*}
\| \tilde f\|^{(2)}_{ \C^{\alpha,\frac\alpha 2}_\bb(\hat \qq  )  } & \le \| \eta\|_{ \C^{\alpha, \frac\alpha 2}_\bb(\hat\qq)  }^{(0)} \| Tu\|^{(2)}_{ \C^{\alpha, \frac\alpha 2}_\bb(\hat \qq)   } + \| f\|^{(2)}_{\C^{\alpha,\frac\alpha 2}_\bb( \hat \qq  )  }\\
& \le~ C_1 [g]^*_{\C^{\alpha, \frac\alpha 2}_\bb( \qq_\bb  )  }  \mu^\alpha \xk{ \mu^2 [u]^*_{\C^{2,1}_\bb( \qq_\bb  )} + \mu^{2+\alpha} [u]^*_{\C^{2+\alpha,\frac{2+\alpha}{2}}_\bb(\qq_\bb)   }      }  + \mu^2 \| f\|^{(2)}_{ \C^{\alpha,\frac\alpha 2}_\bb(\qq_\bb)  }  \\
&  \le~ C_1 [g]^*_{\C^{\alpha, \frac\alpha 2}_\bb( \qq_\bb  )  }  \mu^\alpha \xk{C(\mu) [u]^*_{\C^{2,1}_\bb( \qq_\bb  )} +  2 \mu^{2+\alpha} [u]^*_{\C^{2+\alpha,\frac{2+\alpha}{2}}_\bb(\qq_\bb)   }      }  + \mu^2 \| f\|^{(2)}_{ \C^{\alpha,\frac\alpha 2}_\bb(\qq_\bb)  },  
\end{align*}
where in the last inequality we use the interpolation inequality, by which we also have
\begin{equation*}
\frac{8}{\mu^\alpha} [ u  ]^*_{ \C^{2,1}_\bb(\qq_\bb)  } \le \mu^\alpha [u]^*_{\C^{2+\alpha,\frac{2+\alpha}{2}}_\bb( \qq_\bb )   } + C(\mu)\| u\|_{\C^0(\qq_\bb)}.
\end{equation*}
If $\mu$ is chosen small such that $\mu^\alpha( 2 C_1 [g]^*_{\C^{\alpha,\frac\alpha 2}_\bb  (\qq_\bb)  } + 1   )< 1/2$, combining the above inequalities we get
\begin{equation*}
d_{P_x}^{2+\alpha}\frac{ |\newT u(P_x)  - \newT u(P_y)  |   } { d_{\pp, \bb}(P_x, P_y)^\alpha    } \le \frac1 2[ u  ]^*_{ \C^{2+\alpha,\frac{2+\alpha}{2}}_\bb(\qq_\bb)  } + C(\mu)\xk{ \| u\|_{\C^0(\qq_\bb)}  + \| f\|^{(2)}_{ \C^{\alpha, \frac\alpha 2}_\bb(\qq_\bb)  }   }.
\end{equation*}
Taking supremum over all $P_x\neq P_y\in \qq_\bb$, we obtain that 
$$[u]^*_{\C^{2+\alpha, \frac{2+\alpha}{2}  } _\bb(\qq_\bb)   }\le C \xk{ \| u\|_{\C^0(\qq_\bb)} + \| f\|^{(2)}_{\C^{\alpha,\frac\alpha 2}_\bb(\qq_\bb)}   }.$$
The proposition is proved by invoking the interpolation inequalities.

\end{proof}
\begin{remark}
It follows from the proof that the estimates in Proposition \ref{prop 4.2 new} also hold on $\qq_\bb(p,R):= B_\bb(p, R)\times (0,R^2)\subset \qq_\bb$, i.e. the cylinder whose spatial center $p$ may not lie in $\sS$. 

\end{remark}

It is easy to derive the following local Schauder estimate for $\C^{2+\alpha,\frac{2+\alpha}{2}}_\bb$-solutions to \eqref{eqn:para general} from Proposition \ref{prop 4.2 new}.

\begin{corr}\label{cor 4.2 new}
Let $K\Subset B_\bb(0,1)$ be a compact subset and $\varepsilon_0\in (0,1)$ be a given number. Assumptions as in Proposition \ref{prop 4.2 new}, there exists a constant $C=C(n,\bb,\alpha, g, K,\varepsilon_0)>0$ such that 
$$ \| u\|_{\C^{2+\alpha,\frac{2+\alpha}{2}} (K\times [\varepsilon_0, 1]  )  }\le C \xk{ \| u\|_{\C^0(\qq_\bb)} + \| f\|_{\C^{\alpha,\frac\alpha 2}_\bb(\qq_\bb)}  }.   $$
\end{corr}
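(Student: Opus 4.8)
The plan is to deduce the interior estimate from the scaling-invariant estimate of Proposition \ref{prop 4.2 new} by a standard covering and rescaling argument, exactly analogous to how Corollary \ref{prop 3.5} follows from Proposition \ref{prop:invariant} in the elliptic case. First I would fix the compact set $K \Subset B_\bb(0,1)$ and $\varepsilon_0 \in (0,1)$, and set $\delta_0 := \min\{ d_\bb(K, \partial B_\bb(0,1)), \sqrt{\varepsilon_0} \}/4 > 0$. For every spatial point $p \in K$ consider the parabolic cylinder $\qq_\bb(p, 2\delta_0) = B_\bb(p, 2\delta_0) \times (t - 4\delta_0^2, t]$; by the choice of $\delta_0$ this cylinder is contained in $\qq_\bb$ whenever $t \ge \varepsilon_0$, and moreover every space-time point of the sub-cylinder $\qq_\bb(p,\delta_0)$ with $t \ge \varepsilon_0$ has parabolic distance at least $\delta_0$ to $\partial_\pp \qq_\bb$.

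Next I would invoke the remark following Proposition \ref{prop 4.2 new}, which states that the scaling-invariant estimate
$$
\| u\|^*_{\C^{2+\alpha, \frac{2+\alpha}{2}}_\bb(\qq_\bb(p, 2\delta_0))} \le C\bk{ \| u\|_{\C^0(\qq_\bb(p,2\delta_0))} + \| f\|^{(2)}_{\C^{\alpha, \frac{\alpha}{2}}_\bb(\qq_\bb(p,2\delta_0))} }
$$
holds on such cylinders even when the spatial center $p$ does not lie on $\sS$, with $C$ depending only on $n, \bb, \alpha$ and $\| g\|^*_{\C^{\alpha,\alpha/2}_\bb}$ (hence on $g$). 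Since on the weighted norms the relevant distance-to-boundary weights $d_{\pp, Q}$ are bounded below by $\delta_0$ on $\qq_\bb(p,\delta_0) \cap \{t \ge \varepsilon_0\}$, the weighted semi-norms control the un-weighted ones there: for instance $[u]_{\C^{2+\alpha, \frac{2+\alpha}{2}}_\bb(\qq_\bb(p,\delta_0)\cap\{t\ge\varepsilon_0\})} \le \delta_0^{-(2+\alpha)} [u]^{*(0)}_{\C^{2+\alpha,\frac{2+\alpha}{2}}_\bb(\qq_\bb(p,2\delta_0))}$, and similarly for the lower-order pieces. The $\C^0$ and $\C^{\alpha,\alpha/2}$ norms of $f$ on $\qq_\bb(p,2\delta_0)$ are in turn bounded by $\| f\|_{\C^{\alpha,\alpha/2}_\bb(\qq_\bb)}$, and likewise $\| u\|_{\C^0(\qq_\bb(p,2\delta_0))} \le \| u\|_{\C^0(\qq_\bb)}$.

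Finally, by compactness of $K$ I would cover $K$ by finitely many balls $B_\bb(p_i, \delta_0)$, $i = 1, \dots, N$, with $N = N(K, \delta_0)$; then $K \times [\varepsilon_0, 1] \subset \bigcup_i \bigl(\qq_\bb(p_i, \delta_0) \cap \{t \ge \varepsilon_0\}\bigr)$. Summing the local estimates over this finite cover (controlling the H\"older semi-norm over the union by the sum of the semi-norms over the pieces plus the finitely many overlaps, which are in turn dominated by the already-estimated $\C^{2,1}_\bb$-norms, exactly as in Definition \ref{defn:2.8}) yields
$$
\| u\|_{\C^{2+\alpha,\frac{2+\alpha}{2}}_\bb(K\times[\varepsilon_0,1])} \le C\bk{ \| u\|_{\C^0(\qq_\bb)} + \| f\|_{\C^{\alpha,\alpha/2}_\bb(\qq_\bb)} },
$$
with $C = C(n,\bb,\alpha,g,K,\varepsilon_0)$. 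The only mild subtlety — and the step I would be most careful about — is patching the H\"older semi-norm across the cover: for two space-time points lying in different cover elements one uses the triangle inequality through an intermediate point when their parabolic distance is small (so they lie in a common enlarged cylinder), and when their distance is bounded below one simply bounds the difference quotient by the (already-controlled) sup of $|\newT u|$ divided by that fixed distance; this is routine but should be spelled out rather than left to the reader.
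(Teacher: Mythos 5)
Your covering-and-patching scheme is a valid route, but it is considerably more machinery than this corollary requires. Proposition~\ref{prop 4.2 new} is already stated with domain the full cylinder $\qq_\bb = B_\bb(0,1)\times(0,1]$, so you can apply it exactly once, with no covering, no rescaling, and no patching of H\"older semi-norms across a cover. Indeed, for any $Q=(x,t)\in K\times[\varepsilon_0,1]$ one has
$$
d_{\pp,Q}=d_{\pp,\bb}(Q,\partial_\pp\qq_\bb)=\min\bigl(1-d_\bb(x,0),\ \sqrt{t}\bigr)\ge \delta_0:=\min\bigl(d_\bb(K,\partial B_\bb(0,1)),\ \sqrt{\varepsilon_0}\bigr)>0,
$$
so each weighted quantity $d_{\pp,Q}|\nabla u|$, $d_{\pp,Q}^2|\newT u|$, $\min(d_{\pp,Q_1},d_{\pp,Q_2})^{2+\alpha}\,\tfrac{|\newT u(Q_1)-\newT u(Q_2)|}{d_{\pp,\bb}(Q_1,Q_2)^\alpha}$ dominates the corresponding unweighted quantity on $K\times[\varepsilon_0,1]$ up to a factor $\delta_0^{-(2+\alpha)}$; and since $d_{\pp,Q}\le 1$ for all $Q\in\qq_\bb$, $\|f\|^{(2)}_{\C^{\alpha,\alpha/2}_\bb(\qq_\bb)}\le\|f\|_{\C^{\alpha,\alpha/2}_\bb(\qq_\bb)}$. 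Substituting into the conclusion of Proposition~\ref{prop 4.2 new} gives the stated estimate with $C=C(n,\bb,\alpha,g)\delta_0^{-(2+\alpha)}$. This is the parabolic analogue of how Corollary~\ref{prop 3.5} is ``an immediate corollary'' of Proposition~\ref{prop:invariant}. Finally, a small notational slip worth flagging: with the paper's convention $\qq_\bb(p,R)=B_\bb(p,R)\times(0,R^2]$, the set $\qq_\bb(p_i,\delta_0)\cap\{t\ge\varepsilon_0\}$ in your last step is empty whenever $\delta_0^2<\varepsilon_0$; what you need there are time-shifted cylinders $B_\bb(p_i,\delta_0)\times(s_j-\delta_0^2,s_j]$ over a finite net of top times $s_j$. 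But once you notice the direct argument above, none of that bookkeeping is needed.
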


With the interior Schauder estimates in Proposition \ref{prop 4.2 new}, we can show the existence of $\C^{2+\alpha, \frac{2+\alpha}{2}}_\bb(\qq_\bb)$-solutions to the Dirichlet problem:
\begin{equation}\label{eqn:Diri para general}
\frac{\partial u}{\partial t} = \Delta_g u + f,\text{ in } \qq_\bb,\text{ and } u = \varphi \text{ on }\partial_\pp \qq_\bb,
\end{equation}
for any given $f\in \C^{\alpha,\frac\alpha 2}_\bb(\overline{\qq_\bb})$ and $\varphi \in \C^0(\partial_\pp \qq_\bb)$. We first show the existence of solutions to \eqref{eqn:Diri para general} in case $\varphi\equiv 0 $. 

\begin{lemma}\label{lemma 4.4}
Let $\sigma\in (0,1)$ be a given number and $u\in \cC^{2+\alpha, \frac{2+\alpha}{2}}_\bb( \qq_\bb )$ solves \eqref{eqn:Diri para general} with $\| u\|^{(-\sigma)}_{\C^0(\qq_\bb)  }< \infty$ and $\| f\|^{(2-\sigma)}_{\C^{\alpha, \frac\alpha 2} _\bb(\qq_\bb) }<\infty$. Then there is a constant $C=C(n,\alpha, \bb, g, \sigma)>0$ such that 
$$\| u\|^{(-\sigma)}_{ \C^{ 2+\alpha, \frac{2+\alpha}{2}  }_\bb(\qq_\bb  )   }\le  C \xk{  \| u\|^{(-\sigma)}_{\C^0(\qq_\bb)} + \| f\|^{(2-\sigma)}_{ \C^{ \alpha, \frac\alpha 2   }_\bb(\qq_\bb)  }        }.$$
\end{lemma}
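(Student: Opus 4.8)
The plan is to follow verbatim the scheme of the elliptic Lemma \ref{lemma 3.22} (which in turn is Lemma 6.20 of \cite{GT}), substituting the parabolic scale-invariant interior estimate of Proposition \ref{prop 4.2 new} for the elliptic one of Proposition \ref{prop:invariant}. The only genuinely parabolic modifications are bookkeeping: all Euclidean balls become \emph{backward} parabolic cylinders, all boundary distances become parabolic distances to $\partial_\pp\qq_\bb$, and the anisotropic scaling (radius $d$ in space, $d^2$ in time) is already encoded in $d_{\pp,\bb}$, so no extra powers of $d$ appear beyond those in the elliptic case.

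Concretely, I would fix $Q_1\neq Q_2\in\qq_\bb$, say with $d_1:=d_{\pp,\bb}(Q_1,\partial_\pp\qq_\bb)\le d_2:=d_{\pp,\bb}(Q_2,\partial_\pp\qq_\bb)$, set $d=\tfrac12 d_1$, and consider the backward cylinder $\qq'=B_\bb(x_1,d)\times(t_1-d^2,t_1]$; since $d_1=\min\{\sqrt{t_1},\,d_\bb(x_1,\partial B_\bb(0,1))\}$, one has $\qq'\Subset\qq_\bb$, and moreover $d_{\pp,\bb}(Q_1,\partial_\pp\qq')=d$ while $d_{\pp,\bb}(Q_2,\partial_\pp\qq')\ge d/2$ whenever $d_{\pp,\bb}(Q_1,Q_2)<d/2$. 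In the case $d_{\pp,\bb}(Q_1,Q_2)<d/2$ I would apply Proposition \ref{prop 4.2 new} on $\qq'$ (in the localized form valid on any backward cylinder $\Subset\qq_\bb$; see below), obtaining $[u]^*_{\C^{2+\alpha,\frac{2+\alpha}{2}}_\bb(\qq')}\le C(\|u\|_{\C^0(\qq')}+\|f\|^{(2)}_{\C^{\alpha,\alpha/2}_\bb(\qq')})$. Every $P\in\qq'$ satisfies $d_{\pp,\bb}(P,\partial_\pp\qq_\bb)\in[d_1/2,\,3d_1/2]$, so $\|u\|_{\C^0(\qq')}\le C d_1^{\sigma}\|u\|^{(-\sigma)}_{\C^0(\qq_\bb)}$ and, using $|f(P)|\le d_{\pp,\bb}(P,\partial_\pp\qq_\bb)^{\sigma-2}\|f\|^{(2-\sigma)}_{\C^{\alpha,\alpha/2}_\bb(\qq_\bb)}$ together with $d_{\pp,\bb}(P,\partial_\pp\qq')\le d=d_1/2$, one gets $\|f\|^{(2)}_{\C^{\alpha,\alpha/2}_\bb(\qq')}\le C d_1^{\sigma}\|f\|^{(2-\sigma)}_{\C^{\alpha,\alpha/2}_\bb(\qq_\bb)}$; inserting $P_i=Q_i$ into $[u]^*$ and using $\min_i d_{\pp,\bb}(Q_i,\partial_\pp\qq')\ge d_1/4$ extracts exactly $d_1^{2+\alpha-\sigma}\,|\newT u(Q_1)-\newT u(Q_2)|\,d_{\pp,\bb}(Q_1,Q_2)^{-\alpha}\le C(\|u\|^{(-\sigma)}_{\C^0(\qq_\bb)}+\|f\|^{(2-\sigma)}_{\C^{\alpha,\alpha/2}_\bb(\qq_\bb)})$. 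In the remaining case $d_{\pp,\bb}(Q_1,Q_2)\ge d/2$ the same quantity is bounded directly by $C[u]^{(-\sigma)}_{\C^{2,1}_\bb(\qq_\bb)}$. Taking the supremum over $Q_1,Q_2$ controls $[u]^{(-\sigma)}_{\C^{2+\alpha,\frac{2+\alpha}{2}}_\bb(\qq_\bb)}$ up to the term $C[u]^{(-\sigma)}_{\C^{2,1}_\bb(\qq_\bb)}$; the lower-order weighted seminorms $[u]^{(-\sigma)}_{\C^1_\bb}$ and $[u]^{(-\sigma)}_{\C^{2,1}_\bb}$ are then absorbed by the weighted interpolation inequality (as in \cite{GT}), namely $[u]^{(-\sigma)}_{\C^{2,1}_\bb(\qq_\bb)}\le\varepsilon\,[u]^{(-\sigma)}_{\C^{2+\alpha,\frac{2+\alpha}{2}}_\bb(\qq_\bb)}+C_\varepsilon\|u\|^{(-\sigma)}_{\C^0(\qq_\bb)}$, and choosing $\varepsilon$ small completes the proof.

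Since each step is a transcription of the elliptic argument, I do not expect a serious analytic obstacle; the one point needing a moment of care is that Proposition \ref{prop 4.2 new} is stated for $\qq_\bb$ and for cylinders $\qq_\bb(p,R)=B_\bb(p,R)\times(0,R^2)$ anchored at time $0$, whereas the localization above uses the backward cylinder $\qq'=B_\bb(x_1,d)\times(t_1-d^2,t_1]$. I would therefore first record that the proof of Proposition \ref{prop 4.2 new} is entirely local — it invokes only the classical parabolic Schauder estimate (Theorem 4.9 of \cite{Lie}) and the rescaled form of Theorem \ref{thm:main 2}, both stated on backward parabolic cylinders — so that the scale-invariant bound $[u]^*_{\C^{2+\alpha,\frac{2+\alpha}{2}}_\bb(\qq')}\le C(\|u\|_{\C^0(\qq')}+\|f\|^{(2)}_{\C^{\alpha,\alpha/2}_\bb(\qq')})$ holds for any backward cylinder $\qq'\Subset\qq_\bb$, with $C=C(n,\bb,\alpha,g)$ independent of $\qq'$; this localized form is exactly what the comparison argument above requires.
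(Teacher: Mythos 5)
Your proposal is correct and is exactly the argument the paper has in mind: the paper's proof of this lemma is a single sentence pointing to the definitions of the weighted norms, Proposition \ref{prop 4.2 new}, and (via Lemma \ref{lemma 3.22}) the GT Lemma 6.20 template, which is precisely what you have carried out in the parabolic setting. Your observation that Proposition \ref{prop 4.2 new} must be read as a local estimate valid on arbitrary backward cylinders $\qq'\Subset\qq_\bb$, and your justification of it, are both correct and consistent with the remark following that proposition.
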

\begin{proof}
The lemma follows from definitions of the norms and the estimates in Proposition \ref{prop 4.2 new}.
\end{proof}

\medskip

\begin{lemma}\label{lemma 4.5}
Suppose $u\in \C^{2,1}_\bb( \qq_\bb )\cap \C^0 ( \overline{\qq_\bb}  )$ satisfies $\frac{\partial u}{\partial t} = \Delta_g u + f$ and $u \equiv 0 $ on $\partial_\pp \qq_\bb$. For any $\sigma\in (0,1)$, there exists a constant $C=C(n,\bb, g, \sigma)>0$ such that
$$ \| u\|^{ (-\sigma) }_{\C^0(\qq_\bb)} =  \sup_{P_x\in \qq_\bb} d_{P_x}^{-\sigma} |u(P_x)|\le C \sup_{ P_x\in \qq_\bb} d_{P_x}^{2-\sigma} |f(P_x)  | = C \| f\|^{(2-\sigma)  }_{ \C^0( \qq_\bb )   },    $$
where $d_{P_x} = d_{\pp,\bb}(P_x,\partial_\pp \qq_\bb)$ is the parabolic distance of $P_x$ to the parabolic boundary $\partial_\pp \qq_\bb$.
\end{lemma}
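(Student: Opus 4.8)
The plan is to follow the elliptic model (Lemma \ref{lemma 3.23}) and construct an explicit barrier function adapted to the parabolic cylinder $\qq_\bb$, then apply the maximum principle. First I would recall the key geometric fact already used in Corollary \ref{corollary 3.3} and the remark following Proposition \ref{prop:3.2 new}: the conical metric $g$ is uniformly equivalent to $g_\bb$, and $\mathrm{tr}_g g_\bb$, $|\nabla d_\bb^2|_g$ are bounded above and below by positive constants depending only on $n,\bb,g$. This is what makes the elliptic computation
$$\Delta_g\xk{(1-d_\bb^2)^\sigma}\le -c_0\,\sigma\,(1-d_\bb^2)^{\sigma-2}$$
in the proof of Lemma \ref{lemma 3.23} go through, and the same bound holds here pointwise in $t$ since $g(\cdot,t)$ is a $C^{0,\alpha}_\bb$-conical K\"ahler metric for each fixed $t\in[0,1]$ with uniform constants.

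The barrier for the parabolic problem should account for both the spatial boundary $\partial B_\bb(0,1)\times(0,1]$ and the initial slice $\overline{B_\bb(0,1)}\times\{0\}$. I would take $w = A\,w_1$ where $w_1 = (1-d_\bb^2)^\sigma + \lambda t$ for suitable constants $A,\lambda>0$ depending on $n,\bb,g,\sigma$ and on $N := \| f\|_{\C^0(\qq_\bb)}^{(2-\sigma)} = \sup_{P_x\in\qq_\bb} d_{P_x}^{2-\sigma}|f(P_x)|$. Then $w$ vanishes on the spatial boundary and is nonnegative on the initial slice, and the heat operator applied to $w$ satisfies, using the elliptic estimate above,
$$\xk{\frac{\partial}{\partial t} - \Delta_g}w = A\lambda - A\,\Delta_g\xk{(1-d_\bb^2)^\sigma}\ge A\lambda + A c_0\sigma(1-d_\bb^2)^{\sigma-2}\ge \frac{|f|}{\,} $$
once $A$ is large relative to $N$ (here one uses $d_{P_x}\le 1$ so that $d_{P_x}^{\sigma-2}\le (1-d_\bb^2)^{\sigma-2}$ up to the usual comparability of $d_{P_x}$ with $1-d_\bb(x)$ away from corners of the cylinder, together with $d_{P_x}^{2-\sigma}|f|\le N$). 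Consequently $\xk{\frac{\partial}{\partial t}-\Delta_g}(Nw\pm u)\le 0$ in $\qq_\bb$, while on $\partial_\pp\qq_\bb$ we have $Nw\pm u = Nw\ge 0$ since $u\equiv 0$ there and $w\ge 0$ on all of $\partial_\pp\qq_\bb$. Applying the parabolic maximum principle (the same one invoked in \eqref{eqn:para mp}) gives $|u(P_x)|\le Nw(P_x)\le CN\,(1-d_\bb(x))^\sigma\le CN\,d_{P_x}^\sigma$, i.e. $\| u\|^{(-\sigma)}_{\C^0(\qq_\bb)}\le CN$, which is the claim.

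The main technical point to be careful about—rather than a deep obstacle—is the relationship between the parabolic distance-to-boundary $d_{P_x} = d_{\pp,\bb}(P_x,\partial_\pp\qq_\bb) = \max\{\sqrt{t_x},\,d_\bb(x,0),\dots\}$ weighting and the purely spatial quantity $1-d_\bb(x)$ appearing in the barrier: one must check that $d_{P_x}^{\sigma-2}$ is controlled by a constant multiple of $(1-d_\bb(x))^{\sigma-2} + t^{-1}$-type terms on the cylinder, which is where the linear-in-$t$ term $\lambda t$ in $w_1$ earns its keep (it handles points close to the initial slice where $d_{P_x}\approx\sqrt{t}$ is small but $1-d_\bb(x)$ is not). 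Since $\sigma<1$ and the metric comparability constants are uniform in $t$, all the inequalities are elementary once the barrier is set up correctly; no regularity theory beyond the maximum principle and the elliptic computation already recorded in Lemma \ref{lemma 3.23} is needed. I would then note that this lemma, combined with Lemma \ref{lemma 4.4} and the continuity-method scheme of Theorem 5.2 in \cite{GT} exactly as in the elliptic case, yields existence of $\C^{2+\alpha,\frac{2+\alpha}{2}}_\bb$-solutions to \eqref{eqn:Diri para general}.
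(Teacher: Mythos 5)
Your barrier does not work, and there are two separate problems.

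\textbf{First, the supersolution inequality fails near $t=0$.} The hypothesis $\| f\|^{(2-\sigma)}_{\C^0(\qq_\bb)}=N<\infty$ allows $|f(P_x)|$ to blow up like $N\, t_x^{(\sigma-2)/2}$ as $t_x\to 0$ (since $d_{P_x}=\min\{1-d_\bb(x),\sqrt{t_x}\}$ and $\sigma-2<0$). Your $w_1=(1-d_\bb^2)^\sigma+\lambda t$ has time derivative equal to the \emph{constant} $\lambda$, and the spatial Laplacian of $(1-d_\bb^2)^\sigma$ only produces the spatial singularity $(1-d_\bb)^{\sigma-2}$. Near the initial slice at an interior point $x$ (so $1-d_\bb(x)$ bounded below), $(\partial_t-\Delta_g)(Aw_1)$ is bounded, while $|f|$ can be of size $Nt_x^{(\sigma-2)/2}\to\infty$. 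No choice of $A,\lambda$ restores the inequality $(\partial_t-\Delta_g)(Aw_1)\ge |f|/N$ there. The correct choice for the temporal singularity is a barrier like $t^{\sigma/2}$, whose time derivative $\tfrac{\sigma}{2}t^{(\sigma-2)/2}$ has exactly the right blow-up rate.

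\textbf{Second, even granting the supersolution property, the concluding chain of inequalities runs backwards.} You write $|u(P_x)|\le Nw(P_x)\le CN(1-d_\bb(x))^\sigma\le CN\,d_{P_x}^\sigma$. Since $d_{P_x}=\min\{1-d_\bb(x),\sqrt{t_x}\}\le 1-d_\bb(x)$ and $\sigma>0$, the last inequality $(1-d_\bb(x))^\sigma\le d_{P_x}^\sigma$ is false. Moreover, with the $\lambda t$ term the middle inequality $w(P_x)\le C(1-d_\bb(x))^\sigma$ also fails (take $x$ near $\partial B_\bb$ and $t$ of order $1$). Concretely, at $x=0$, $t_x$ small, your barrier is $\approx A$ while $d_{P_x}^\sigma=t_x^{\sigma/2}\to 0$; the barrier is far too large to yield the conclusion.

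The paper's argument uses \emph{two} barriers $w_1=(1-d_\bb^2)^\sigma$ and $w_2=t^{\sigma/2}$, each a supersolution with the correct singular rate in its own variable. One first derives the intermediate bound $|u(P_x)|\le N c_0^{-1}(w_1(P_x)+w_2(P_x))$ on all of $\qq_\bb$, and then partitions $\qq_\bb=\Omega_1\cup\Omega_2$ according to whether $\sqrt{t_x}>1-d_\bb(x)$ or not. On $\Omega_1$ the parabolic distance is $d_{P_x}=1-d_\bb(x)$, and on the interface between $\Omega_1$ and $\Omega_2$ one has $w_1=w_2$, so the intermediate bound controls $|u|$ by a constant times $w_1$ on $\partial_\pp\Omega_1$; the maximum principle then gives $|u|\le 2Nc_0^{-1}w_1=2Nc_0^{-1}d_{P_x}^\sigma$ throughout $\Omega_1$. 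The symmetric argument with $w_2$ handles $\Omega_2$. Both the second barrier and the domain decomposition are essential, not bookkeeping: your ``linear-in-$t$ term earns its keep'' heuristic does not hold up because a linear term cannot absorb a $t^{(\sigma-2)/2}$ singularity.
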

\begin{proof}
We denote $N: = \| f\|^{(2-\sigma)  }_{ C^0( \qq_\bb )   }<\infty$ and $P_x = (x,t_x)$. Define functions $$w_1(P_x) =\big (1 - d_\bb(x)^2 \big)^\sigma, \text{ and } w_2(P_x) = t_x^{\sigma/2},$$
where $d_\bb(x) = d_\bb(x, 0)$ is the $g_\bb$-distance between $x$ and $0$. Observe that by definition $d_{P_x} = \min\{ 1- d_\bb(x), t_x^{1/2}   \}$. By a straightforward calculation there is a constant $c_0>0$ such that 
$$ ( \frac {\partial}{\partial t} - \Delta_g )w_1\ge c_0 ( 1-d_\bb(x) )^{\sigma - 2}  ,\text{ and } ( \frac {\partial}{\partial t} - \Delta_g   ) w_2\ge c_0 (t_x^{1/2})^{\sigma - 2}.   $$
By maximum principle we  get \begin{equation}\label{eqn:lemma 4.5 1} | u(P_x) |\le N c_0^{-1} \xk{w_1(P_x)+ w_2(P_x)},\quad \forall P_x\in \qq_\bb.   \end{equation}
We decompose $\qq_\bb$ into different regions, $\qq_\bb = \Omega_1\cup\Omega_2$, where 
\begin{align*}
\Omega_1: = & ~ \big\{ P_x\in\qq_\bb~|~ t_x^{1/2} > 1- d_\bb(x)  \big\},\\
\Omega_2: = &~ \big\{ P_x\in \qq_\bb~|~ t_x^{1/2} \le 1- d_\bb(x)   \big\}.
%\Omega_3: = &~ \big\{ P_x\in \qq_\bb~|~ t_x^{1/2} > 1 - d_\bb(x),\, d_\bb(x)>1/2    \big\}.
\end{align*}
\eqref{eqn:lemma 4.5 1} implies that on the parabolic boundaries $\partial_\pp \Omega_1$, $\partial_\pp\Omega_2$, $| u(P_x) |\le 2 N c_0^{-1} d_{P_x}^\sigma$. On $\Omega_1$ we have $  ( \frac {\partial}{\partial t} - \Delta_g ) ( 2Nc_0^{-1} w_1 \pm u  )\ge 0 $ and $2Nc_0^{-1} w_1 \pm u \ge 0$ on $\partial_\pp \Omega_1$, then maximum principle implies that $2Nc_0^{-1} w_1 \pm u \ge 0$ in $\Omega_1$, i.e. $|u(P_x)|\le 2 N c_0^{-1} d_{P_x}^\sigma$ in $\Omega_1$. Similarly we also have $2Nc_0^{-1} w_2 \pm u \ge 0$ in $\Omega_2$ and thus $|u(P_x)|\le 2N c_0^{-1} d_{P_x}^\sigma$ in $\Omega_2$.  In conclusion, we get
$$| u(P_x) |\le 2 c_0^{-1} N d_{P_x}^{\sigma},\quad \forall P_x\in\qq_\bb, $$
and the lemma is proved.

\end{proof}

\begin{prop}\label{prop:4.3}
If $\varphi\equiv 0 $, the equation \eqref{eqn:Diri para general} admits a unique solution $u\in \C^{2+\alpha,\frac{2+\alpha}{2}}_\bb( \qq_\bb  )\cap \C^0(\overline{\qq_\bb})$ for any $f\in \C^{\alpha,\frac\alpha 2}_\bb(\overline{\qq_\bb})$.
\end{prop}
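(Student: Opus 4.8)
The plan is to follow the continuity method, exactly paralleling the elliptic argument in Proposition~3.something (the one proving existence for $\Delta_g u = f$ with zero boundary data). First I would set up the family of operators interpolating between the flat model and the general metric: define $\Delta_t = t\Delta_g + (1-t)\Delta_{g_\bb}$, together with the parabolic operators $L_t := \frac{\partial}{\partial t} - \Delta_t$. Each $\Delta_t$ is the Laplacian of a conical K\"ahler metric still satisfying the equivalence \eqref{eqn:gen cone}, and the metric depends on $t$ (the continuity parameter, not time) in a $\C^{\alpha,\alpha/2}_\bb$-controlled way, so the interior Schauder estimate of Proposition~\ref{prop 4.2 new} and the weighted estimate of Lemma~\ref{lemma 4.4} apply uniformly in $t$. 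Then I would consider the Dirichlet problems $(\ast_t)$: $L_t u_t = f$ in $\qq_\bb$, $u_t\equiv 0$ on $\partial_\pp\qq_\bb$.

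The base case $t=0$ is solvable: $L_0 u_0 = f$ with zero parabolic boundary data is precisely the problem handled by Corollary~\ref{cor 4.1 new}, which produces a solution $u_0 \in \C^{2,1}(\qq_\bb^\#)\cap\C^0(\overline{\qq_\bb})$, and by Remark~\ref{remark 4.1} (invoking Theorem~\ref{thm:main 2}) this solution is in fact $\C^{2+\alpha,\frac{2+\alpha}{2}}_\bb(\qq_\bb)$. To run the method of continuity (Theorem~5.2 in \cite{GT}), I would fix $\sigma\in(0,1)$ and work in the Banach spaces
\[
\mathcal B_1 := \big\{ u\in \C^{2+\alpha,\frac{2+\alpha}{2}}_\bb(\qq_\bb)\ \big|\ \| u\|^{(-\sigma)}_{\C^{2+\alpha,\frac{2+\alpha}{2}}_\bb(\qq_\bb)} < \infty \big\},\qquad
\mathcal B_2 := \big\{ f\in \C^{\alpha,\frac{\alpha}{2}}_\bb(\qq_\bb)\ \big|\ \| f\|^{(2-\sigma)}_{\C^{\alpha,\frac{\alpha}{2}}_\bb(\qq_\bb)} < \infty \big\}.
\]
Every $u\in\mathcal B_1$ vanishes continuously on $\partial_\pp\qq_\bb$, so $L_t:\mathcal B_1\to\mathcal B_2$ is well defined. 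The key \emph{a priori} estimate is: if $L_t u = f$ with $u\in\mathcal B_1$, then by Lemma~\ref{lemma 4.5} applied to the metric $\Delta_t$ we control $\| u\|^{(-\sigma)}_{\C^0(\qq_\bb)}\le C\|f\|^{(2-\sigma)}_{\C^0(\qq_\bb)}$, and then Lemma~\ref{lemma 4.4} upgrades this to $\|u\|_{\mathcal B_1}\le C\|f\|_{\mathcal B_2} = C\|L_t u\|_{\mathcal B_2}$ with $C$ independent of $t$. This uniform bound, together with the solvability at $t=0$, lets the continuity method close and produces a solution $u\in\mathcal B_1$ of $(\ast_1)$; uniqueness follows from the maximum principle \eqref{eqn:para mp} applied to $\Delta_g$ (which holds for any conical metric by Lemma~\ref{lemma:MP}, whose proof only used the log barriers).

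I do not expect any serious obstacle here — this step is essentially bookkeeping once Propositions~\ref{prop 4.1}, \ref{prop 4.2 new}, Corollary~\ref{cor 4.1 new}, and Lemmas~\ref{lemma 4.4}, \ref{lemma 4.5} are in hand. The only point requiring a little care is checking that the interpolated metrics $\Delta_t$ genuinely have $\C^{\alpha,\alpha/2}_\bb$-norms bounded uniformly in $t\in[0,1]$ (they do, being convex combinations with the flat model whose norm is trivial) and that the constants in the cited estimates depend on $g$ only through $\|g\|^*_{\C^{\alpha,\alpha/2}_\bb}$, so that they are uniform along the path. One should also verify $\mathcal B_1$ is closed under the parabolic boundary condition, i.e. that finiteness of the weighted $\C^{2+\alpha,\frac{2+\alpha}{2}}_\bb$-norm forces continuous vanishing on $\partial_\pp\qq_\bb$, which is immediate from the $\sigma>0$ weight. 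Thus the proof reduces to citing Theorem~5.2 of \cite{GT} with the two-step \emph{a priori} estimate above, exactly as in the elliptic analogue.
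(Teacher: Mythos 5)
Your proposal is correct and follows essentially the same route as the paper: the same interpolating family $L_s = \partial_t - \Delta_{g_s}$, the same weighted Banach spaces $\mathcal B_1,\mathcal B_2$, the same two-step \emph{a priori} estimate combining Lemma~\ref{lemma 4.5} (weighted $\C^0$ bound) with Lemma~\ref{lemma 4.4} (weighted Schauder bound), solvability at the flat endpoint via Corollary~\ref{cor 4.1 new} and Remark~\ref{remark 4.1}, and the method of continuity from Theorem~5.2 of \cite{GT}. No gap.
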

\begin{proof}
The uniqueness follows from maximum principle, so it suffices to show the existence. We will use the continuity method. Define a continuous family of linear operators: for $s\in [0,1]$, $L_s := s ( \frac{\partial }{\partial t} - \Delta_g  ) + (1-s) ( \frac{\partial }{\partial t} - \Delta_{g_\bb}  )$. It can been seen that $L_s = \frac{\partial}{\partial t} - \Delta_{g_s}$ for some conical K\"ahler metric $g_s$ which uniformly equivalent to $g_\bb$ and has uniform $\C^{\alpha,\frac\alpha 2}_\bb$-estimate. So the interior Schauder estimates holds also for $L_s$. Fix a $\sigma\in (0,1)$. Define
\begin{equation*}
\mathcal B_1: = \big\{ u\in \C^{2+\alpha,\frac{2+\alpha}{2}}_\bb( \qq_\bb)|~ \| u\|^{(-\sigma)}_{\C^{2+\alpha, \frac{2+\alpha}{2}}_\bb(\qq_\bb)    }< \infty\big  \},
\end{equation*}
\begin{equation*}
\mathcal B_2: = \big\{ f\in \C^{\alpha,\frac\alpha 2}_\bb(\qq_\bb)|~ \| f\|^{(2-\sigma)}_{\C^{\alpha, \frac\alpha 2}_\bb(\qq_\bb)   }<\infty  \big\}.
\end{equation*}
Observe that any $u\in \mathcal B_1$ is continuous in $\overline{\qq_\bb}$ and vanishes on $\partial_\pp \qq_\bb$. $L_s$ defines a continuous family of linear operators from $\mathcal B_1$ to $\mathcal B_2$.  By Lemmas \ref{lemma 4.4} and \ref{lemma 4.5} we have
\begin{equation*}
\| u \|_{\mathcal B_1} \le C\xk{ \| u\|_{\C^0(\qq_\bb)}^{(-\sigma)}  + \| L_s u\|_{\mathcal B_2}   }\le C \| L_s u\|_{\mathcal B_2},\quad \forall s\in [0,1],\text{ and } \forall u\in\mathcal B_1.
\end{equation*}
By Corollary \ref{cor 4.1 new} and Remark \ref{remark 4.1}, $L_0$ is invertible, thus by Theorem 5.2 in \cite{GT}, $L_1$ is also invertible. 

\end{proof}
\begin{corr}\label{cor 4.3}
For any $\varphi\in \C^0(\partial_\pp \qq_\bb)$ and $f\in \C^{\alpha,\frac\alpha 2}_\bb(\overline{\qq_\bb})$, the equation \eqref{eqn:Diri para general} admits a unique solution $u\in \C^{2+\alpha, \frac{2+\alpha}{2}}_\bb(\qq_\bb)\cap \C^0(\overline{\qq_\bb})$.

\end{corr}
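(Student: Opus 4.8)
\textbf{Proof proposal for Corollary \ref{cor 4.3}.}
The plan is to reduce the general Dirichlet problem \eqref{eqn:Diri para general} with arbitrary continuous boundary data $\varphi$ to the case $\varphi\equiv 0$ already settled in Proposition \ref{prop:4.3}, exactly along the lines of Corollary \ref{corollary 3.2} in the elliptic setting. First I would observe that $\varphi\in\C^0(\partial_\pp\qq_\bb)$ extends continuously to $\overline{\qq_\bb}$ (e.g.\ by Tietze), so we may assume $\varphi\in\C^0(\overline{\qq_\bb})$. Then choose a sequence $\varphi_k\in \C^{2+\alpha,\frac{2+\alpha}{2}}_\bb(\overline{\qq_\bb})$ with $\varphi_k\to\varphi$ uniformly on $\overline{\qq_\bb}$ (such $\varphi_k$ exist by mollifying in the smooth coordinates $w_j=z_j^{\beta_j}$ near $\sS$ and in the usual way away from $\sS$). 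For each $k$, the function $f-\Delta_g\varphi_k$ lies in $\C^{\alpha,\frac{\alpha}{2}}_\bb(\overline{\qq_\bb})$ and $\partial_t\varphi_k\in\C^{\alpha,\frac\alpha2}_\bb$, so Proposition \ref{prop:4.3} furnishes a unique $v_k\in\C^{2+\alpha,\frac{2+\alpha}{2}}_\bb(\qq_\bb)\cap\C^0(\overline{\qq_\bb})$ solving $\partial_t v_k=\Delta_g v_k+f-\partial_t\varphi_k+\Delta_g\varphi_k$ with $v_k\equiv0$ on $\partial_\pp\qq_\bb$. Setting $u_k:=v_k+\varphi_k$ gives $\partial_t u_k=\Delta_g u_k+f$ in $\qq_\bb$ and $u_k=\varphi_k$ on $\partial_\pp\qq_\bb$.

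Next I would establish the needed uniform bounds and pass to the limit. The maximum principle \eqref{eqn:para mp} (valid for $\Delta_g$ since $g$ is equivalent to $g_\bb$, so the barrier argument of Lemma \ref{lemma:MP} applies) gives $\|u_k-u_l\|_{\C^0(\overline{\qq_\bb})}\le\|\varphi_k-\varphi_l\|_{\C^0(\partial_\pp\qq_\bb)}$, hence $\{u_k\}$ is Cauchy in $\C^0(\overline{\qq_\bb})$ and converges uniformly to some $u\in\C^0(\overline{\qq_\bb})$ with $u=\varphi$ on $\partial_\pp\qq_\bb$. In particular $\{u_k\}$ is uniformly bounded in $\C^0(\qq_\bb)$. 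Then the interior Schauder estimate of Corollary \ref{cor 4.2 new}, applied on an exhausting sequence of compact sets $K\Subset B_\bb(0,1)$ and times $[\varepsilon_0,1]$, yields uniform $\C^{2+\alpha,\frac{2+\alpha}{2}}_\bb(K\times[\varepsilon_0,1])$ bounds on $u_k$; by a diagonal argument a subsequence converges in $\C^{2+\alpha,\frac{2+\alpha}{2}}_{\bb,\mathrm{loc}}(B_\bb(0,1)\times(0,1])$ to a limit which must agree with $u$. Thus $u\in\C^{2+\alpha,\frac{2+\alpha}{2}}_\bb(\qq_\bb)\cap\C^0(\overline{\qq_\bb})$ and satisfies $\partial_t u=\Delta_g u+f$ in $\qq_\bb$. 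Uniqueness follows once more from \eqref{eqn:para mp}: the difference of two solutions is $\Delta_g$-caloric with zero parabolic boundary data, hence vanishes.

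The only genuinely delicate point is whether the limit $u$ genuinely attains the boundary value $\varphi$ on all of $\partial_\pp\qq_\bb$, including the corner $\partial B_\bb(0,1)\times\{0\}$ and the singular locus $\sS\cap\partial_\pp\qq_\bb$. This is handled by the uniform $\C^0$ convergence $u_k\to u$ combined with $u_k=\varphi_k\to\varphi$ on $\partial_\pp\qq_\bb$: since $u_k\to u$ uniformly on $\overline{\qq_\bb}$ and each $u_k$ is continuous up to the boundary with prescribed trace $\varphi_k$, the limit $u$ is continuous up to $\partial_\pp\qq_\bb$ with trace $\varphi$, so no separate barrier construction is required at this stage — the barriers were already built into Proposition \ref{prop:4.3} and Corollary \ref{cor 4.1 new}. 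Hence I expect the proof to be short, the main (routine) obstacle being the verification that the mollified boundary data $\varphi_k$ can be taken in $\C^{2+\alpha,\frac{2+\alpha}{2}}_\bb$ with $\varphi_k\to\varphi$ uniformly, which follows from the definition of the conical H\"older spaces in Definitions \ref{defn:2.5}–\ref{defn:2.8}.
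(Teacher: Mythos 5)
Your proposal follows essentially the same route as the paper: reduce to the zero-boundary case via Proposition \ref{prop:4.3}, approximate $\varphi$ by regular $\varphi_k$, set $u_k = v_k + \varphi_k$, get $\C^0$-convergence on $\overline{\qq_\bb}$ from the maximum principle, and upgrade via the interior Schauder estimates of Corollary \ref{cor 4.2 new}. In fact your version is slightly more careful than the paper's: the paper writes the auxiliary equation as $\partial_t v_k = \Delta_g v_k + f - \Delta_g\varphi_k$ (mirroring the elliptic Corollary \ref{corollary 3.2}), but that is a slip — one must subtract the full parabolic operator applied to $\varphi_k$, i.e. take the right-hand side $f - \partial_t\varphi_k + \Delta_g\varphi_k$, exactly as you do, so that $u_k = v_k + \varphi_k$ satisfies $\partial_t u_k = \Delta_g u_k + f$.
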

\begin{proof}
The proof is identical to that of Corollary \ref{corollary 3.2} by an approximation argument. We may assume $\varphi\in \C^0(\overline{\qq_\bb})$ and choose a sequence of $\varphi_k\in \C^{2+\alpha,\frac{2+\alpha}{2}}_\bb(\overline{\qq_\bb})$ which converges uniformly to $\varphi$ on $\overline{\qq_\bb}$. The equations $\frac{\partial v_k}{\partial t}  =  \Delta_g v_k + f - \Delta_g \varphi_k$, $v_k\equiv 0 $ on $\partial_\pp \qq_\bb$ admits a unique $\C^{2+\alpha, \frac{2+\alpha}{2}}_\bb$-solution by Proposition \ref{prop:4.3}. The interior Schauder estimates in Corollary \ref{cor 4.2 new} imply that $u_k: = v_k + \varphi_k$ converges in $\C^{2+\alpha, \frac{\alpha +2}{2}}_{\bb,loc}$ to some function $u$ in $\C^{2+\alpha, \frac{2+\alpha}{2}}_\bb(\qq_\bb)$ which solves the equation \eqref{eqn:Diri para general}. The $\C^0$-convergence $u_k\to u$ is uniform on $\overline{\qq_\bb}$ by maximum principle so $u = \varphi$ on $\partial_\pp\qq_\bb$, as desired.

\end{proof}
We recall the definition of weak solutions and refer to Section 7.1 in \cite{Ev} for the notations.
\begin{defn}
We say a function $u$ on $\qq_\bb$ is a weak solution to the equation $\frac{\partial u}{\partial t} = \Delta_g u + f$, if 
\begin{enumerate}[label=(\arabic*)]
\item  $u\in L^2\xk{0,1; H^1(B_\bb)   }$ and $\frac{\partial u}{\partial t} \in L^2\xk{ 0,1; H^{-1}(B_\bb)  }$; %$ \int_0^1 \| u(\cdot, t) \|_{H^1(B_\bb)}^2 dt <\infty     $, $\int_0^1\| \frac{\partial u}{\partial t}(\cdot, t)\|_{H^{-1}(B_\bb)}^2 dt <\infty$.
\item For any $v\in H^1_0(B_\bb)$ and $t\in (0,1)$
$$\int_{B_\bb} \frac{\partial u(x,t)}{\partial t} v (x)\omega_g^n = - \int_{B_\bb} \innpro{\nabla u(x,t), \nabla v(x)}_g\; \omega_g^n + \int_{B_\bb} f(x,t) v(x) \omega_g^n.    $$
\end{enumerate}

\end{defn}
%One can verify that integration by parts formula holds for functions $u\in H^1_0(B_\bb, \omega_\bb^n)$, i.e. for any $v\in H^1_0(B_\bb)$, 
On can use the classical {\em Galerkin approximations} to construct weak solution  to the equation $\frac{\partial u}{\partial t} = \Delta_g u + f$ (see Section 7.1.2 in \cite{Ev}). If $f$ has better regularity, so does the weak solution $u$. 
\begin{lemma}\label{lemma 4.6}
If $f\in \C^{\alpha,\frac\alpha 2}_\bb(\qq_\bb)$, then any weak solution to $\frac{\partial u}{\partial t} = \Delta_g u + f$ belongs to $\C^{2+\alpha, \frac{\alpha+2}{2}}_\bb(\qq_\bb)$.
\end{lemma}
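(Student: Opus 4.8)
\textbf{Proof plan for Lemma \ref{lemma 4.6}.}

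The plan is to upgrade a weak solution to a classical $\C^{2+\alpha,\frac{\alpha+2}{2}}_\bb$-solution by comparing it, on interior subcylinders, with the solution produced by the Dirichlet theory of Corollary \ref{cor 4.3}, exactly in the spirit of the elliptic argument in Corollary \ref{corollary 3.3}. First I would establish interior continuity of $u$. Since $g$ is uniformly equivalent to $g_\bb$, the Sobolev inequality \eqref{eqn:SOB} holds for $g$, and the metric space $(B_\bb, g)$ still has maximal volume growth/decay; hence the parabolic De Giorgi--Nash--Moser theory (as used in the proof of Corollary \ref{cor 4.1 new}) applies to the weak solution, yielding that $u$ is locally H\"older continuous in $\qq_\bb$ in the parabolic sense, in particular $u\in \C^0$ up to and across $\sS_\pp$ on any interior cylinder. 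Away from $\sS$ the equation $\partial_t u = \Delta_g u + f$ is uniformly parabolic with $\C^{\alpha,\alpha/2}$ coefficients, so standard interior parabolic Schauder theory gives $u\in \C^{2+\alpha,\frac{\alpha+2}{2}}_{loc}(\qq_\bb\backslash\sS_\pp)$.

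Next, fix an interior ``parabolic ball'' $\qq_\bb(p,R) = B_\bb(p,R)\times(t_0-R^2, t_0]$ with $\overline{\qq_\bb(p,R)}\subset \qq_\bb$. By Corollary \ref{cor 4.3} (in the rescaled/translated form, valid since the estimates of Proposition \ref{prop 4.2 new} hold on cylinders whose spatial center need not lie on $\sS$), the Dirichlet problem $\partial_t \tilde u = \Delta_g \tilde u + f$ in $\qq_\bb(p,R)$ with $\tilde u = u$ on $\partial_\pp \qq_\bb(p,R)$ admits a unique solution $\tilde u\in \C^{2+\alpha,\frac{\alpha+2}{2}}_\bb(\qq_\bb(p,R))\cap \C^0(\overline{\qq_\bb(p,R)})$. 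The function $w := u - \tilde u$ is then a weak solution of the homogeneous equation $\partial_t w = \Delta_g w$ in $\qq_\bb(p,R)$ with $w\equiv 0$ on the parabolic boundary. One shows $w\equiv 0$: testing the weak formulation with $w$ itself (legitimate since $w\in L^2(0,1;H^1_0)$ with $\partial_t w\in L^2(0,1;H^{-1})$, using the continuity of $w$ up to $\partial_\pp$ to control the initial slice) gives $\frac{d}{dt}\int |w|^2\omega_g^n = -2\int |\nabla w|_g^2 \omega_g^n \le 0$ together with $\int|w|^2\omega_g^n = 0$ at $t = t_0-R^2$, hence $w\equiv 0$ on $\qq_\bb(p,R)$. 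Therefore $u = \tilde u$ there, so $u\in \C^{2+\alpha,\frac{\alpha+2}{2}}_\bb(\qq_\bb(p,R))$. Since such interior cylinders exhaust $\qq_\bb$ and the $\C^{2+\alpha,\frac{\alpha+2}{2}}_\bb$-property is local, we conclude $u\in \C^{2+\alpha,\frac{\alpha+2}{2}}_\bb(\qq_\bb)$.

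The main obstacle is the first step: verifying that the parabolic De Giorgi--Nash--Moser machinery genuinely applies in this singular setting, i.e. that the conical weight does not obstruct the interior H\"older estimate for weak solutions. This is handled exactly as in Corollary \ref{cor 4.1 new}: the only structural inputs to the Moser iteration are the Sobolev inequality \eqref{eqn:SOB} (available for $g$) and two-sided volume bounds of parabolic cylinders (which follow from $\tfrac12 g_{\mathbb C^n}\le g_\ep \le g_\bb$ and Bishop comparison as in the proof of Proposition \ref{prop:3.2 new}), so no new difficulty arises. A secondary technical point is the justification of testing with $w$ across $\sS_\pp$; this is routine since $\sS$ has zero $\omega_g^n$-measure and capacity, so $H^1_0(B_\bb)$ functions need not vanish on $\sS$ and the integration-by-parts identity is unaffected.
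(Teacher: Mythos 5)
Your proof follows essentially the same route as the paper: De Giorgi--Nash--Moser (using the Sobolev inequality for $g$) for interior continuity, classical parabolic Schauder away from $\sS$, and comparison with the Dirichlet solution from Corollary \ref{cor 4.3} on interior cylinders centered on $\sS_\pp$. The only divergence is the final uniqueness step, where you derive $u=\tilde u$ by testing the weak formulation with $w=u-\tilde u$ to get an $L^2$-energy identity, whereas the paper invokes the maximum principle; both are standard and interchangeable here.
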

\begin{proof}
Sobolev inequality holds for the metric $g$ so by the proof of the standard De Giorgi-Nash-Moser theory for parabolic equations implies that $u$ is in fact continuous on $\qq_\bb$. 
Since the metric $g$ is smooth on $\qq_\bb^\#$, the weak solution $u$ is also a weak solution in $\qq_\bb^\#$ with the smooth background metric, so $u\in \C^{2+\alpha, \frac{2+\alpha}{2}}_{loc}(\qq_\bb^\#)$ in the usual sense by the classical Schauder estimates. 
Thus it suffices to consider points at $\sS$. We choose the worst such points $0\in\sS$ only, since the case when centers are in other components of $\sS$ is even simpler. We fix the point $P_0=(0, t_0)\in\qq_\bb$ with $t_0>0$. Fix an $r\in (0,\sqrt{t_0})$. By Corollary \ref{cor 4.3} the equation 
$$\frac{\partial v}{\partial t} = \Delta v + f,\text{ in }\qq_\bb(P_0,r): = B_\bb(0,r)\times (t_0 - r^2, t_0],$$
with boundary value $v = u$ on $\partial_\pp \qq_\bb(P_0, r)$ admits a unique solution $v\in \C^{2+\alpha,\frac{\alpha+2}{2}}_\bb(\qq_\bb(P_0, r))$. Then by maximum principle $u = v$ in $\qq_\bb(P_0, r)$. Thus $u\in \C^{2+\alpha,\frac{\alpha +2}{2}}_\bb(\qq_\bb(P_0, r))$ too. The argument also works at other space-time points in $\sS_\pp$, we see that $u\in \C^{2+\alpha,\frac{2+\alpha}{2}}_\bb( \qq_\bb )$, as desired.
\end{proof}

\begin{corr}\label{cor 4.4}
Let $(X,g,D)$ be as in Corollary \ref{corr 3.1}, $u_0\in C^0(X)$ and  $f\in \C^{\alpha,\frac\alpha 2}_\bb(X\times(0,1])$ be  given functions. The weak solution $u$ to the equation  $$\frac{\partial u}{\partial t} = \Delta_g u + f,\text{ in } X\times(0,1], \, u|_{t= 0} = u_0$$ always exists. Moreover, $u\in \C^{2+\alpha, \frac{2+\alpha}{2}}_{\bb}(X\times(0,1])$ and there exists a constant $C=C(n, g, \bb,\alpha)>0$ such that 
%
%Suppose $u\in C^{2+\alpha,\frac{2+\alpha}{2}}_\bb( X\times [0,1]  )$ solves the equation
%$$\frac{\partial u}{\partial t} = \Delta_g u + f,\text{ in } X\times(0,1],$$ for some $f\in C^{\alpha,\frac{\alpha}{2}}_\bb( X\times [0,1]  )$. Then there exists a constant $C=C(n, g, \bb)>0$ such that 
$$\| u\|_{\C^{2+\alpha,\frac{2+\alpha}{2}}_\bb (X\times (1/2,1])   }\le C \xk{ \| u_0\|_{C^0( X  )} + \| f\|_{ \C^{\alpha,\frac\alpha 2}_\bb (X\times (0,1])  }   }.   $$
\end{corr}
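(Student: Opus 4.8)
\textbf{Proof proposal for Corollary \ref{cor 4.4}.}

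The plan is to reduce the statement on the compact manifold $X$ to the local results already established on cylinders $\qq_\bb(p,R)$, exactly as Corollary \ref{corr 3.1} reduces the elliptic global statement to Corollary \ref{prop 3.5}. First I would address existence of the weak solution: since $g$ is equivalent to $g_\bb$ on $X$ and the divisor $D$ has $\vol_g$-measure zero, the Sobolev inequality (see \eqref{eqn:SOB} and the remark following Corollary \ref{corollary 3.3}) holds on $(X,g)$, so one may run the classical Galerkin approximation scheme (Section 7.1.2 in \cite{Ev}) with the orthonormal basis of $L^2(X,\omega_g^n)$ given by eigenfunctions of $\Delta_g$, producing a weak solution $u\in L^2(0,1;H^1(X))$ with $\partial_t u\in L^2(0,1;H^{-1}(X))$ and $u|_{t=0}=u_0$ in the appropriate trace sense. (If $u_0$ is merely $C^0$ one first mollifies $u_0$ in $H^1$ and passes to the limit using the maximum principle to control $\|u\|_{C^0}$.)

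Next I would upgrade regularity. On $X\setminus D$ the metric $g$ is smooth, so by the standard interior parabolic Schauder theory $u\in\C^{2+\alpha,\frac{2+\alpha}{2}}_{loc}(X\setminus D\times(0,1])$. For points on $D$, fix a finite cover $\{U_a,z_{a,j}\}$ of $D$ and a shrinking $\{U_a'\}$ with $U_a'\Subset U_a$; on each $U_a$ the metric $g$ is a $\C^{\alpha,\alpha/2}_\bb$-conical metric in the model coordinates, and $u$ restricted to $U_a\times(0,1]$ is a weak solution of $\partial_t u=\Delta_g u+f$ there. Applying Lemma \ref{lemma 4.6} (whose proof is local: it solves a Dirichlet problem on a small cylinder $\qq_\bb(P_0,r)$ via Corollary \ref{cor 4.3} and identifies it with $u$ by the maximum principle) gives $u\in\C^{2+\alpha,\frac{2+\alpha}{2}}_\bb(U_a\times(0,1])$. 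Since $\{U_a\}$ covers $D$, this yields $u\in\C^{2+\alpha,\frac{2+\alpha}{2}}_\bb(X\times(0,1])$ by Definition \ref{defn:2.8}.

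For the quantitative interior estimate I would localize the time interval to $(1/4,1]$ and apply, on each $U_a$, the interior Schauder estimate of Corollary \ref{cor 4.2 new} (with $K=U_a'$ and $\varepsilon_0=1/4$, using also the remark after Proposition \ref{prop 4.2 new} allowing cylinders whose spatial center need not lie on $\sS$), obtaining
\begin{equation*}
\|u\|_{\C^{2+\alpha,\frac{2+\alpha}{2}}_\bb(U_a'\times[1/2,1])}\le C\xk{\|u\|_{\C^0(U_a\times(1/4,1])}+\|f\|_{\C^{\alpha,\alpha/2}_\bb(U_a\times(0,1])}}.
\end{equation*}
On $X\setminus\cup_a U_a'$ the metric is smooth and the ordinary parabolic Schauder estimate applies. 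Summing over the finite cover and using the maximum principle bound $\|u\|_{\C^0(X\times(0,1])}\le \|u_0\|_{C^0(X)}+\|f\|_{\C^0}$ gives the asserted estimate with $C=C(n,g,\bb,\alpha)$. I expect the main obstacle to be the existence and the continuity up to $t=0$ of the weak solution with only $C^0$ initial data and with the singular background metric: one must check carefully that the Galerkin scheme converges in the correct function spaces on $(X,g)$ and that the energy estimates are uniform, which relies on the Sobolev inequality for $g$ and on a De Giorgi--Nash--Moser argument (as in Lemma \ref{lemma 4.6}) to get the initial continuity; the remaining steps are routine given the local theory already developed.
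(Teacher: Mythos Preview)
Your proposal is correct and follows essentially the same route as the paper: existence by Galerkin approximation, regularity by localizing and invoking Lemma \ref{lemma 4.6}, and the interior estimate by the maximum principle together with a finite-cover argument feeding into Corollary \ref{cor 4.2 new} (mirroring the elliptic covering in Corollary \ref{corr 3.1}). The paper's own proof is a terse four-line sketch of exactly these steps, so your more detailed write-up, including the care about $C^0$ initial data, is a faithful expansion of it.
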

\begin{proof}
The weak equation can be constructed using the {\em Galerkin approximations} (\cite{Ev}). The uniqueness is an easy consequence of maximum principle. The regularity of $u$ follows from the local results in  Lemma \ref{lemma 4.6}. 
The estimate follows from maximum principle, a covering argument as in Corollary \ref{corr 3.1} and the local estimates in  Corollary \ref{cor 4.2 new}.
\end{proof}
The {\em interior} estimate in Corollary \ref{cor 4.4} is not good enough to show the existence of solutions to non-linear partial differential equations since the estimate becomes worse as $t$ approaches $t= 0$. We need some global estimates in the whole time interval $t\in [0,1]$ if the initial $u_0$ has better regularity. 

%%%%%%%%%%%%%%%%%%%%%%%%%%%%%%%%%%%

\subsection{Schauder estimate near $t= 0$}

In this subsection, we will prove a Schauder estimate in the whole time interval for the solutions to the heat equation when the initial value is $0$ or has better regularity. We consider the model case with the background metric $g_\bb$ first, then we generalize the estimate to general non-flat conical K\"ahler metrics.

\subsubsection{The model case}
In this subsection, we will assume the background metric is $g_\bb$. Let $u$ be the solution to the equation
\begin{equation}\label{eqn:para t0}
\frac{\partial u}{\partial t} = \Delta_{g_\bb} u + f,\text{ in }\qq_\bb,\quad u|_{t= 0}\equiv 0,
\end{equation}
and $u = \varphi\in \C^0$ on $\partial B_\bb\times (0,1]$, where $f\in \C^{\alpha,\alpha/2}_\bb(\overline{\qq_\bb})$. 
%In this section, we will prove a in-time global Schauder estimate for the solutions to the equation
%\begin{equation}\label{eqn:para t0}
%\frac{\partial u}{\partial t} = \Delta_{g_\bb} u + f,\text{ in }\qq_\bb,\quad u|_{t= 0}\equiv 0,
%\end{equation}
%and $u = \varphi\in C^0$ on $\partial B_\bb\times (0,1]$, where $f\in C^{\alpha,\alpha/2}_\bb(\overline{\qq_\bb})$. 
%
In the calculations below, we should have used the smooth approximating solutions $u_\ep$, where $\partial_t u_\ep = \Delta_{g_\ep} u_\ep + f$ and $u_\ep = u$ on $\partial_\pp \qq_\bb$. But by letting $\ep\to 0$, the corresponding estimates also hold for $u$. So for simplicity, below we will work directly on $u$. 

We fix $0<\rho< R\le 1$ and denote $B_R : = B_\bb(0,R)$ and $Q_R: = B_R\times [0,R^2]$ in this section. Let $u$ be the solution to \eqref{eqn:para t0}. We first have the following Caccioppoli inequalities.
\begin{lemma}
There exists a constant $C=C(n)>0$ such that 
\begin{equation}\label{eqn:lemma 4.7 1}
\sup_{t\in [0,\rho^2]}\int_{B_\rho} u^2 \omega_\bb^n + \iint_{\qq_\rho} \abs{\nabla u}_{g_\bb}\;\omega_\bb^n dt
\le C\bk{ \frac{1}{(R-\rho)^2} \iint_{Q_R} u^2 \omega_\bb^n dt    + (R-\rho)^2 \iint_{Q_R} f^2 \omega_\bb^n dt   },
\end{equation}
and
\eqsp{\label{eqn:lemma 4.7 2}
& \sup_{t\in[0,\rho^2]}\int_{B_\rho} \abs{\nabla u}_{g_\bb}\omega_\bb^n + \iint_{Q_\rho} ( \abs{\nabla \nabla u}_{g_\bb} + \abs{\nabla \bar \na u}_{g_\bb}    )\omega_\bb^n dt\\
\le & ~ C \bk{ \frac{1}{(R-\rho)^2} \iint_{Q_R} \abs{\na u}_{g_\bb} \omega_\bb^n dt + \iint_{Q_R} (f-f_R)^2 \omega_\bb^n dt     }
}
where $f_R: = \frac{1}{|Q_R|_{\omega_\bb}} \iint_{Q_R} f \omega_\bb^n dt$ is the average of $f$ over the cylinder $Q_R$.
\end{lemma}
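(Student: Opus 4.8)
The plan is to establish the two Caccioppoli-type inequalities by the standard energy method, testing the heat equation against suitable cutoffs, exactly as in the classical parabolic De Giorgi--Nash--Moser theory (cf. \cite{Ev}), with the only modification that all integrations by parts are performed with respect to the conical metric $g_\bb$. The key point to record upfront is that the divergence theorem and integration by parts are valid for $g_\bb$ on $B_\bb(0,R)$: since $u$ (or rather the smooth approximants $u_\ep$) is $\C^{2,1}$ away from $\sS$ and the conical volume form $\omega_\bb^n$ together with the singular metric coefficients $g_\bb^{j\bar k}$ are locally integrable with the right weights, the boundary contributions near $\sS$ vanish in the limit; this is the same mechanism used repeatedly earlier (e.g. in the proofs of Proposition \ref{prop:existence} and the Sobolev inequality \eqref{eqn:SOB}). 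As already noted in the paragraph preceding the lemma, one should really argue on the approximants $u_\ep$ solving $\partial_t u_\ep = \Delta_{g_\ep} u_\ep + f$ where $g_\ep$ is smooth, prove the inequalities with uniform constants, and then let $\ep \to 0$; I will suppress this and write the argument directly for $u$.

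For \eqref{eqn:lemma 4.7 1}, first I would fix a Lipschitz cutoff $\zeta$ with $\zeta \equiv 1$ on $B_\rho$, $\operatorname{supp}\zeta \subset B_R$, $|\nabla_{g_\bb}\zeta| \le C/(R-\rho)$, and multiply the equation by $\zeta^2 u$, integrating over $B_\bb(0,R)$. Integration by parts gives
\begin{align*}
\frac{1}{2}\frac{d}{dt}\int_{B_R} \zeta^2 u^2\,\omega_\bb^n + \int_{B_R} \zeta^2 |\nabla u|_{g_\bb}^2\,\omega_\bb^n
= -2\int_{B_R} \zeta u\, \langle \nabla\zeta, \nabla u\rangle_{g_\bb}\,\omega_\bb^n + \int_{B_R} \zeta^2 u f\,\omega_\bb^n,
\end{align*}
and then absorbing the gradient cross term with Cauchy--Schwarz, $2\zeta |u| |\nabla\zeta| |\nabla u| \le \frac{1}{2}\zeta^2|\nabla u|^2 + 2 u^2 |\nabla\zeta|^2$, and estimating $\int \zeta^2 uf \le \frac{1}{2(R-\rho)^2}\int \zeta^2 u^2 + \tfrac{(R-\rho)^2}{2}\int_{B_R} f^2$, one gets a differential inequality of the form $\frac{d}{dt} E(t) + \tfrac12 \int \zeta^2|\nabla u|^2 \le \tfrac{C}{(R-\rho)^2}\int_{B_R} u^2 + C(R-\rho)^2\int_{B_R} f^2$ with $E(t) = \int\zeta^2 u^2$. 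Using $u|_{t=0} = 0$ so that $E(0) = 0$, integrating in $t$ over $[0,\rho^2]$ and taking the supremum over $t$ yields \eqref{eqn:lemma 4.7 1}. For \eqref{eqn:lemma 4.7 2} the analogous computation uses the test function (formally) $-\operatorname{div}_{g_\bb}(\zeta^2 \nabla u)$, i.e. one differentiates the equation, or equivalently multiplies by $\zeta^2 \Delta_{g_\bb} u$ and integrates by parts in space; since $g_\bb$ is flat K\"ahler away from $\sS$, the Bochner/commutation identity gives $\int \zeta^2 |\Delta_{g_\bb} u|^2$ controlling $\int \zeta^2 (|\nabla\nabla u|^2 + |\nabla\bar\nabla u|^2)$ up to lower-order terms (no curvature term, since $\ric(g_\bb) = 0$ on $\mathbb C^n\backslash\sS$), and the constant term $f$ enters only through $f - f_R$ because a constant is annihilated by $\Delta_{g_\bb}$. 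Again using $\nabla u|_{t=0} = 0$ and integrating in time produces \eqref{eqn:lemma 4.7 2}.

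The main obstacle I anticipate is not the algebra of the energy estimates, which is routine, but the justification that the conical singular set $\sS$ contributes nothing to the integrations by parts — in particular that the second inequality \eqref{eqn:lemma 4.7 2}, which involves full second derivatives $|\nabla\nabla u|_{g_\bb}$ and $|\nabla\bar\nabla u|_{g_\bb}$, is legitimately obtained by integrating the equation against a second-order test object near $\sS$. The clean way to handle this is to work throughout with $u_\ep$ and $g_\ep$ (which are genuinely smooth on $B_\bb(0,R)$, so all manipulations are classical), obtain the two inequalities with constants independent of $\ep$ using $\ric(g_\ep)\ge 0$ and $\tfrac12 g_{\mathbb C^n}\le g_\ep\le g_\bb$ together with the uniform volume comparison $c_1(n) r^{2n}\le \vol_{g_\ep}(B_{g_\ep}(p,r))\le C_1(n) r^{2n}$ established earlier, and then pass to the limit using the $\C^{2+\alpha,\frac{2+\alpha}{2}}_{loc}(\qq_\bb^\#)$-convergence $u_\ep\to u$ from Proposition \ref{prop 4.1} together with monotone/dominated convergence on the integrals (all integrands being nonnegative or dominated by the uniformly bounded $f^2$ and $(f-f_R)^2$). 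The appearance of $\ric(g_\ep)\ge 0$ rather than $=0$ only helps: in the Bochner term for \eqref{eqn:lemma 4.7 2} it has the favorable sign and can simply be dropped. With that scheme in place the proof is a direct transcription of the Euclidean argument, so in the write-up I would state the reduction to $u_\ep$, carry out the two test-function computations, and conclude by letting $\ep\to 0$.
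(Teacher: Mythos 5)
Your proposal is correct and takes essentially the same approach as the paper: test the equation against $\zeta^2 u$ for the first inequality, and for the second apply the Bochner formula to $\abs{\nabla u}$ (your ``differentiate the equation'' route), multiply by the cutoff squared, and integrate by parts, using $u|_{t=0}=0$ to kill the time-boundary term, $\ric(g_\ep)\ge 0$ so the curvature contribution has the good sign, and $\nabla f_R = 0$ to replace $f$ by $f-f_R$. The paper also makes, immediately before the lemma, exactly the reduction you describe---work on the smooth approximants $u_\ep$, $g_\ep$ and let $\ep\to 0$---though it then writes the computation directly for $u$ for brevity.
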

\begin{proof}
We fix a cut-off function $\eta$ such that $\mathrm{supp}\; \eta\subset B_R$ and $\eta = 1$ on $B_\rho$, $|\nabla\eta|_{g_\bb}\le \frac{2}{R-\rho}$. Multiplying both sides of the equation \eqref{eqn:para t0} by $\eta^2 u$, and applying integration by parts we get
\begin{align*}
\frac{d}{dt}\int_{B_R} \eta^2 u^2& = \int_{B_R} 2 \eta^2 u \Delta_{g_\bb} u + 2 \eta^2 u f\\
 & = \int_{B_R} - 2 \eta^2 \abs{\nabla u}_{g_\bb} - 4 u\eta \innpro{\nabla u,\nabla \eta}_{g_\bb} + 2 \eta^2 u f\\
 & \le \int_{B_R} -\eta^2 \abs{\nabla u}_{g_\bb} + 4 u^2 \abs{\nabla \eta}_{g_\bb} + \eta^2\frac{u}{(R-\rho)^2} + \eta^2 (R-\rho)^2 f^2,
\end{align*}
\eqref{eqn:lemma 4.7 1} follows from this inequality by integrating over $t\in [0,s^2]$ for all $s\le \rho$.  To see \eqref{eqn:lemma 4.7 2}, observe that the Bochner formula yields that
\begin{equation*}
\frac{\partial }{\partial t} \abs{\nabla u}\le \Delta_{g_\bb} \abs{\nabla u} - \abs{\nabla \nabla u}_{g_\bb} - \abs{\nabla \bar \na u}_{g_\bb} - 2 \innpro{\nabla u,\nabla f  }_{g_\bb}.
\end{equation*}
Multiplying both sides of this inequality by $\eta^2 $ and applying IBP, we get
\begin{align*}
\frac{d}{dt}\int_{B_R} \eta^2 \abs{\nabla u} & \le \int_{B_R} -2\eta \innpro{\nabla \eta, \nabla \abs{\na u}  }_{g_\bb} -\eta^2 \abs{\na\na u} - \eta^2\abs{\na\bar \na u} - 2 \eta^2 \innpro{\nabla u,\na f}\\
&\le \int_{B_R} 4\eta |\nabla u| |\na\eta| \big| \na |\na u| \big|  -\eta^2 \abs{\na\na u} - \eta^2\abs{\na\bar \na u}  \\
& ~~~\qquad + 4 \eta |f-f_R| |\na \eta| |\na u| + 2 \eta^2 |f-f_R| |\Delta_{g_\bb} u|\\
&\le \int_{B_R} -\frac {\eta^2} 2( \abs{\na\na u} + \abs{\na\bar \na u}   ) + 10 \eta^2 \abs{\nabla u} \abs{\na \eta} + 20 \eta^2 (f-f_R)^2,
\end{align*}
then \eqref{eqn:lemma 4.7 2} follows from this inequality by integrating over $t\in [0,s^2]$ for any $s\in [0,\rho]$. Thus we finish the proof of the lemma.
\end{proof}
Combining \eqref{eqn:lemma 4.7 1} and \eqref{eqn:lemma 4.7 2} we conclude that 
\eqsp{\label{eqn:lemma 4.7 3}
& \sup_{t\in [0,R^2/4]} \int_{B_{R/2}} \abs{\nabla u} + \iint_{Q_{R/2}} |\Delta_{g_\bb} u|^2\\
\le & ~ \frac{C}{R^4}\iint_{Q_R} u^2 + C R^{2n+2} \| f\|^2_{\C^0(Q_R)} + C R^{2n+2 + 2\alpha} \xk{ [f  ]_{\C^{\alpha, \alpha/2}_\bb(Q_R)   }     }^2.
}
By a standard Moser iteration argument we get the following sub-mean value inequality.
\begin{lemma}\label{lemma Moser iteration}
If in addition $f\equiv 0$, then there exists a constant $C=C(n,\bb)>0$ such that 
$$\sup_{Q_\rho} | u | \le C \bk{\frac{1}{(R-\rho)^{2n+2}} \iint_{Q_R} u^2 \omega_\bb^n dt   }^{1/2}.$$
\end{lemma}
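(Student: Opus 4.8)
\textbf{Proof proposal for Lemma \ref{lemma Moser iteration}.}

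The plan is to run the standard parabolic Moser iteration for the equation $\partial_t u = \Delta_{g_\bb} u$ in $Q_R$, using the Caccioppoli-type energy inequality \eqref{eqn:lemma 4.7 1} (with $f\equiv 0$) together with the Sobolev inequality \eqref{eqn:SOB} for the conical metric $\omega_\bb$. First I would show, by the usual argument, that a nonnegative subsolution's powers $v = |u|^{p}$ (for $p\ge 1$) are again subsolutions up to the factor coming from differentiating $x\mapsto |x|^{p}$, so that for each pair of radii $\rho' < R'$ inside $(\rho, R)$ one obtains a reverse-H\"older inequality of the form
\begin{equation*}
\bk{ \iint_{Q_{\rho'}} |u|^{2\theta p} \,\omega_\bb^n\, dt }^{1/\theta} \le \frac{C(n,\bb)\, p^{2}}{(R' - \rho')^{2n+2}} \iint_{Q_{R'}} |u|^{2p}\,\omega_\bb^n\, dt,
\end{equation*}
where $\theta = 1 + \tfrac{1}{n} > 1$ is the Sobolev exponent gain. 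The inequality \eqref{eqn:SOB} is exactly what is needed to convert the energy bound $\sup_t \int |u|^{2p} + \iint |\nabla |u|^{p}|^{2}$ into an $L^{2\theta}$-bound in space-time (combining the $L^\infty_t L^2_x$ and $L^2_t L^{2\theta}_x$ norms by H\"older interpolation in the standard way), and \eqref{eqn:lemma 4.7 1} applied to $|u|^p$ (which is legitimate since $|u|^p$ is a subsolution and the cut-off estimates only used the equation, not the sign of $u$) supplies the energy bound with the factor $p^2$ from the nonlinearity.

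Next I would iterate this estimate over the sequence of exponents $p_k = \theta^{k}$ and a geometrically shrinking sequence of radii $R_k = \rho + (R-\rho)2^{-k}$, so that $R_{k-1} - R_k = (R-\rho)2^{-k}$. Taking the $2\theta^{k}$-th root at stage $k$ and multiplying the resulting inequalities, the constants accumulate as $\prod_k \big( C p_k^2 \, 2^{(2n+2)k} (R-\rho)^{-(2n+2)} \big)^{1/(2\theta^k)}$; since $\sum_k k\,\theta^{-k}$ and $\sum_k \theta^{-k}$ converge, this product is bounded by $C(n,\bb)\,(R-\rho)^{-(n+1)}$, which after squaring the final normalization gives the $(R-\rho)^{-(2n+2)}$ weight in the statement. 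Letting $k\to\infty$, the left-hand side converges to $\sup_{Q_\rho}|u|$ (using that $u$ is continuous up to $t=0$, which holds by Proposition \ref{prop 4.1} or by the De Giorgi--Nash--Moser continuity already invoked in Corollary \ref{cor 4.1 new}), yielding
\begin{equation*}
\sup_{Q_\rho}|u| \le C(n,\bb)\bk{ \frac{1}{(R-\rho)^{2n+2}} \iint_{Q_R} u^{2}\,\omega_\bb^n\, dt }^{1/2}.
\end{equation*}

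As in the elliptic arguments of Section \ref{section 3.5}, I would carry out these manipulations first for the smooth approximating solutions $u_\ep$ of $\partial_t u_\ep = \Delta_{g_\ep} u_\ep$ on $Q_R$ — for which \eqref{eqn:Sobolev} and the Bishop volume comparison give the Sobolev inequality uniformly in $\ep$ — and then let $\ep\to 0$, using the local convergence $u_\ep \to u$ from Proposition \ref{prop 4.1} together with the uniform $L^2$ bounds to pass to the limit in both sides. The main technical point to be careful about is not any single estimate but the bookkeeping of the constant through the iteration: one must track the $p^2$ factor and the $2^{(2n+2)k}$ factor simultaneously and check that their contributions to the infinite product remain finite and produce exactly the claimed power of $(R-\rho)$; everything else (the Caccioppoli step, the Sobolev-to-reverse-H\"older conversion, the $\ep$-limit) is routine given the tools already established in the paper. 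A minor subtlety is the legitimacy of using $|u|^p$ as a test-function power near the singular set $\sS$, which is handled exactly as before by working on $Q_R \setminus T_\delta(\sS)$, noting the cut-offs can be taken to vanish near $\sS$ as well, and letting $\delta\to 0$ using the uniform interior regularity of $u_\ep$.
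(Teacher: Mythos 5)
Your overall strategy matches the paper exactly: a parabolic Moser iteration using the Caccioppoli energy inequality and the conical Sobolev inequality, iterated over exponents $p_k\sim\theta^k$ and radii $R_k=\rho+(R-\rho)2^{-k}$, and implemented first on the smooth approximants $u_\ep$. However, the displayed one-step reverse-H\"older inequality contains an error that makes the bookkeeping inconsistent with your own conclusion.

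The correct one-step coming from the Caccioppoli inequality combined with \eqref{eqn:SOB} is
\[
\Bigl(\iint_{Q_{\rho'}} |u|^{2\theta p}\,\omega_\bb^n\,dt\Bigr)^{1/\theta}\ \le\ \frac{C(n,\bb)}{(R'-\rho')^{2}}\ \iint_{Q_{R'}} |u|^{2p}\,\omega_\bb^n\,dt,
\]
with power $(R'-\rho')^{-2}$, not $(R'-\rho')^{-(2n+2)}$. To see why, with $A:=\sup_t\int\eta^2 u_+^{p+1}+\iint|\nabla(\eta u_+^{(p+1)/2})|^2\le\frac{C}{(R'-\rho')^2}\iint u_+^{p+1}$, the combination of H\"older in time and \eqref{eqn:SOB} in space gives $\iint(\eta^2 u_+^{p+1})^\theta\le C A^\theta$, and taking the $\theta$-th root yields exactly the $(R'-\rho')^{-2}$ power. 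Taking $(p+1)$-th roots and iterating, the $(R-\rho)$-dependence accumulates as $\sum_k 2/(p_k+1)=\sum_k\xi^{-k}=n+1$, giving $(R-\rho)^{-(n+1)}$ and hence, after squaring, the $(R-\rho)^{-(2n+2)}$ in the Lemma. If instead one believed the $(R'-\rho')^{-(2n+2)}$ one-step you wrote, the same accumulation would give $(R-\rho)^{-(n+1)^2}$, which contradicts your own claim that ``this product is bounded by $C(n,\bb)(R-\rho)^{-(n+1)}$.'' So either correct the exponent $2n+2\to 2$ in the one-step, or re-do the iteration honestly with the exponent you wrote and discover the mismatch. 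Two minor points: (i) the $p^2$ factor in your one-step is unnecessary --- testing the equation with $\eta^2 u_+^p$ and using $p\ge 1$ gives a $p$-uniform constant in the Caccioppoli step, as in the paper --- though including $p^2$ does no harm since $\sum_k k\theta^{-k}<\infty$; (ii) the paper works with $u_+$ and $u_-$ separately rather than with $|u|^p$, which sidesteps the lack of smoothness of $|u|$ across the zero set; your use of $|u|^p$ is standard but should be phrased as deriving the Caccioppoli for the subsolution $u_+^{p}$ directly from the equation rather than by citing \eqref{eqn:lemma 4.7 1}, which is stated for the solution $u$ itself.
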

\begin{proof}
For any $p\ge 1$, multiplying both sides of the equation by $\eta^2 u_+^p$ where $u_+ = \max\{u, 0\}$ and applying IBP, we get
$$ \frac{d}{dt}\int_{B_R} \frac{\eta^2}{p+1} u_+^{p+1}  = \int_{B_R}  - p \eta^2 u_+^{p-1} \abs{\nabla u_+} - 2\eta u_+^p \innpro{\na u_+, \na \eta}.   $$
By Cauchy-Schwarz inequality and integrating over $t\in [0,R^2]$, we conclude that
\begin{align*}
\sup_{s\in [0,R^2]} \int_{B_R} \eta^2 u_+^{p+1}\Big|_{t=s} + \iint_{Q_R} \abs{ \na (\eta u_+^{\frac{p+1}{2}}  )  }\le \frac{C}{(R-\rho)^2}\iint_{Q_R} u^{p+1}_+\omega_\bb^n dt=:A.
\end{align*}
By Sobolev inequality we get
\begin{align*}
\int_0^{R^2} \int_{B_R} \xk{ \eta^2 u^{p+1}_+   }^{1+\frac 1 n} \le & \int_0^{R^2} \bk{ \int_{B_R} \eta^2 u^{p+1}_+   }^{1/n} \bk{\int_{B_R} \xk{\eta u_+^{\frac{p+1}{2}}}^{\frac{2n}{n-1}}  }^{\frac{n-1}{n}}\\
\le & A^{1/n} C \int_0^{R^2} \int_{B_R} \abs{\nabla (\eta u_+^{\frac{p+1}{2}})  }\\
\le & C A^{(n+1)/n}.
\end{align*}
If we denote $H(p, \rho) = \bk{ \int_0^{\rho^2} \int_{B_\rho} u_+^p   }^{1/p}$, the inequality above implies that
$$H((p+1)\xi, \rho  )\le \frac{C^{1/(p+1)}}{(R-\rho)^{2/(p+1)}} H(p+1, R),$$
where $\xi = \frac{n+1}{n}>1$. Denote $p_k+1 = 2 \xi^k$ and $\rho_k = \rho + (R-\rho) 2^{-k}$, then $H(p_{k+1} + 1, \rho_{k+1})\le H(p_k+1, \rho_k)$. Iterating this inequality we get
$$H(\infty, \rho) = \sup_{Q_\rho} u_+ \le \frac{C}{(R-\rho)^{n+1}} \bk{ \iint_{Q_R} u_+^2  }^{1/2}.$$ 
Similarly we get the same inequality for $u_- = \max\{-u, 0\}$.

\end{proof}
\begin{corr}
If in addition $f\equiv 0$, then there is a $C=C(n,\bb)>0$ such that
\begin{equation}\label{eqn:liu liu} \iint_{Q_\rho} u^2 \omega_\bb^n dt \le C \bk{ \frac{\rho}{R} }^{2+2n} \iint_{Q_R} u^2 \omega_\bb^n dt.  \end{equation}
\end{corr}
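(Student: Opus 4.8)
The plan is to deduce the Campanato-type decay \eqref{eqn:liu liu} directly from the sub-mean value inequality of Lemma \ref{lemma Moser iteration}, using the hypothesis $f \equiv 0$. First I would observe that it suffices to treat the case $\rho \le R/2$, since when $R/2 < \rho < R$ the inequality holds trivially with $C = 2^{2+2n}$ because the domain of integration only enlarges and $(\rho/R)^{2+2n} \ge 2^{-(2+2n)}$. So assume $\rho \le R/2$.

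Next I would apply Lemma \ref{lemma Moser iteration} with the outer radius taken to be $R/2$ (or any fixed fraction of $R$ that dominates $\rho$); concretely, for $\rho \le R/2$ one has $Q_\rho \subset Q_{R/2}$, and the pair $(\rho, R/2)$ satisfies $R/2 - \rho \ge R/4$ whenever $\rho \le R/4$. To handle the full range $\rho \le R/2$ cleanly, I would instead invoke Lemma \ref{lemma Moser iteration} on the pair $(R/2, R)$, getting
\[
\sup_{Q_{R/2}} |u|^2 \le \frac{C}{(R/2)^{2n+2}} \iint_{Q_R} u^2\, \omega_\bb^n\, dt.
\]
Then, since $\rho \le R/2$ implies $Q_\rho \subset Q_{R/2}$, I estimate
\[
\iint_{Q_\rho} u^2\, \omega_\bb^n\, dt \le |Q_\rho|_{\omega_\bb} \cdot \sup_{Q_{R/2}} |u|^2 \le C \rho^{2n+2} \cdot \frac{C}{R^{2n+2}} \iint_{Q_R} u^2\, \omega_\bb^n\, dt,
\]
where I used that the $g_\bb$-volume of the parabolic cylinder $Q_\rho = B_\bb(0,\rho) \times [0,\rho^2]$ is comparable to $\rho^{2n} \cdot \rho^2 = \rho^{2n+2}$ by the volume estimate $\vol_{g_\bb}(B_\bb(0,\rho)) \approx \rho^{2n}$ (which follows from the Bishop-type bounds recorded in the proof of Proposition \ref{prop:3.2 new}, valid for $g_\bb$ as well). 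Relabeling the constant gives \eqref{eqn:liu liu} for $\rho \le R/2$, and combined with the trivial case this finishes the proof.

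I do not anticipate a genuine obstacle here: the only point requiring a little care is making sure the two radii fed into Lemma \ref{lemma Moser iteration} are separated by a definite fraction of $R$ (so that $(R-\rho)^{-(2n+2)}$ is controlled by $R^{-(2n+2)}$ up to a constant), which is why I split into the ranges $\rho \le R/2$ and $R/2 < \rho < R$. The volume comparison $\vol_{g_\bb}(B_\bb(0,\rho)) \approx \rho^{2n}$ is elementary for the explicit flat cone metric $g_\bb$ and has effectively already been used in the elliptic section, so it may simply be cited. The proof is therefore a two-line application of Moser iteration plus a volume bound.
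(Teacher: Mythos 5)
Your proposal is correct and matches the paper's (terse) proof: the paper likewise splits into the trivial range $\rho\in[R/2,R]$ and the range $\rho<R/2$ handled by Lemma \ref{lemma Moser iteration}. Your fleshing out of the second case — applying the sub-mean value inequality with the pair $(R/2,R)$, then bounding $\iint_{Q_\rho}u^2$ by $|Q_\rho|_{\omega_\bb}\cdot\sup_{Q_{R/2}}u^2$ with $\vol_{g_\bb}(B_\bb(0,\rho))\approx\rho^{2n}$ — is exactly what the paper intends.
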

\begin{proof}
When $\rho\in [\frac R 2, R]$, the inequality is trivial; when $\rho\in [0,R/2)$, it follows from Lemma \ref{lemma Moser iteration}.

\end{proof}

\begin{lemma}\label{lemma 4.9}
If in addition $f\equiv 0$, then there is a $C=C(n,\bb)>0$ such that for any $\rho\in (0,R)$
$$ \iint_{Q_\rho} u^2 \omega_\bb^n dt \le C \bk{\frac{\rho}{R}   }^{2n+4}\iint_{Q_R} u^2 \omega_\bb^n dt.   $$

\end{lemma}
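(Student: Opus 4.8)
\noindent The strategy is to upgrade the exponent $2n+2$ appearing in \eqref{eqn:liu liu} to $2n+4$ by exploiting that $u$ vanishes at $t=0$. As throughout this subsection, I will run the argument on the smooth approximations $u_\ep$ (which solve $\partial_t u_\ep=\Delta_{g_\ep}u_\ep$ and also satisfy $u_\ep|_{t=0}\equiv 0$) and let $\ep\to 0$ at the end; below I suppress $\ep$ and write $u$, $\Delta_{g_\bb}$, $\omega_\bb$. The plan rests on three observations. First, $u|_{t=0}=0$ gives the representation $u(\cdot,t)=\int_0^t\Delta_{g_\bb}u(\cdot,s)\,ds$, whence by Cauchy--Schwarz, for $(x,t)\in Q_\rho$, $u(x,t)^2\le t\int_0^t|\Delta_{g_\bb}u(x,s)|^2\,ds\le\rho^2\int_0^{\rho^2}|\Delta_{g_\bb}u(x,s)|^2\,ds$, and integrating in $x$ and $t$ yields $\iint_{Q_\rho}u^2\,\omega_\bb^n\,dt\le\rho^4\iint_{Q_\rho}|\Delta_{g_\bb}u|^2\,\omega_\bb^n\,dt$. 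Second, $v:=\partial_t u=\Delta_{g_\bb}u$ is again a solution of the homogeneous heat equation and has zero initial data (since $u|_{t=0}=0$), so it enjoys the sub-mean value inequality \eqref{eqn:liu liu}. Third, the Caccioppoli estimate \eqref{eqn:lemma 4.7 3} with $f\equiv 0$ bounds $\iint_{Q_{R/2}}|\Delta_{g_\bb}u|^2$ by $CR^{-4}\iint_{Q_R}u^2$.

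\noindent With these in hand the proof is a short chain, carried out first for $\rho\le R/2$. I will apply \eqref{eqn:liu liu} to $v=\Delta_{g_\bb}u$ at the scales $\rho$ and $R/2$ to get $\iint_{Q_\rho}|\Delta_{g_\bb}u|^2\le C(\rho/R)^{2n+2}\iint_{Q_{R/2}}|\Delta_{g_\bb}u|^2$, then insert the Caccioppoli bound, and finally the representation inequality; the powers combine as $\rho^4\cdot(\rho/R)^{2n+2}\cdot R^{-4}=(\rho/R)^{2n+6}$, so one in fact obtains $\iint_{Q_\rho}u^2\,\omega_\bb^n\,dt\le C(\rho/R)^{2n+6}\iint_{Q_R}u^2\,\omega_\bb^n\,dt$, which is more than enough since $\rho/R<1$. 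The remaining range $R/2<\rho<R$ is trivial by monotonicity of $r\mapsto\iint_{Q_r}u^2$, as in the previous corollary.

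\noindent The only delicate point — a matter of care rather than of idea — is the justification near $t=0$: a priori $\Delta_{g_\bb}u$ is merely continuous off $\sS$, and under just $\C^0$-compatible data $|\partial_t u|$ may blow up like $s^{-1}$ as $s\to 0^+$, so neither the fundamental theorem of calculus identity nor the use of \eqref{eqn:liu liu} and \eqref{eqn:lemma 4.7 3} for $\Delta_{g_\bb}u$ is literally available for $u$ itself. This is precisely why one works with $u_\ep$: there $\Delta_{g_\ep}u_\ep$ is smooth, solves $\partial_t(\Delta_{g_\ep}u_\ep)=\Delta_{g_\ep}(\Delta_{g_\ep}u_\ep)$ with zero initial value, and the Caccioppoli inequality forces $\int_0^{\rho^2}|\Delta_{g_\ep}u_\ep(x,s)|^2\,ds<\infty$ for a.e.\ $x\in B_\rho$, so the integration in $s$ and Cauchy--Schwarz are legitimate; letting $\ep\to 0$ (the $g_\ep$-estimates being uniform and $g_\ep\to g_\bb$) gives the claim. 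If one prefers to avoid this, the first observation may be replaced by the purely integral energy identity $\frac{d}{dt}\int_{B_R}\eta^2 u^2=2\int_{B_R}\eta^2 u\,\Delta_{g_\bb}u$ integrated from $t=0$, which never touches pointwise bounds near $t=0$ and leads to the same conclusion after combining with \eqref{eqn:liu liu}.
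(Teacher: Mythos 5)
Your proposal is correct and follows essentially the same route as the paper: write $u(x,t)=\int_0^t\partial_s u\,ds$ and apply Cauchy--Schwarz to extract the factor $\rho^4$, invoke the sub-mean-value estimate \eqref{eqn:liu liu} for $\Delta_\bb u$ (which solves the homogeneous heat equation with zero initial value), and close with the Caccioppoli bound \eqref{eqn:lemma 4.7 3}; like you, the paper ends up with the exponent $2n+6$, which trivially implies the stated $2n+4$. Your remark about working with the smooth approximants $u_\ep$ to justify the calculus near $t=0$ echoes the paper's own blanket caveat at the start of this subsection, so nothing new is needed there.
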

\begin{proof}
The inequality is trivial in case $\rho\in [R/2,R]$ so we assume $\rho< R/2$. First we observe that $\Delta_\bb u$ also satisfies the equation $\partial_t (\Delta_\bb u) = \Delta_\bb ( \Delta_\bb u )$ and $(\Delta_\bb u) |_{t= 0} \equiv 0$, so \eqref{eqn:liu liu} holds with $u^2$ replaced by $(\Delta_\bb u)^2$, i.e.
\begin{equation*}%\label{eqn:liu liu}
 \iint_{Q_\rho}( \Delta_\bb u)^2 \omega_\bb^n dt \le C \bk{ \frac{\rho}{R} }^{2+2n} \iint_{Q_R} (\Delta_\bb u)^2 \omega_\bb^n dt. 
\end{equation*}
Since $u|_{t=0} = 0$, $u(x,t) = \int_{0}^t \partial_su(x, s) ds$,  we calculate
\begin{align*}
\iint_{Q_\rho} u^2 & \le \rho^4 \iint_{Q_\rho} \ba{ \frac{\partial u}{\partial t}  }^2  = \rho^4 \iint_{Q_\rho} ( \Delta_\bb u  )^2\\
& \le C \rho^4 \bk{ \frac{\rho}{R}  }^{2n+2} \iint_{Q_{R/2}} (\Delta_\bb u)^2\\
\text{by \eqref{eqn:lemma 4.7 3}  }& \le C \bk{\frac{\rho}{R}}^{2n+6} \iint_{Q_R} u^2 \omega_\bb^n dt.
\end{align*}

\end{proof}

\begin{lemma}\label{lemma 4.10}
Let $u$ be a solution to \eqref{eqn:para t0}. There exists a constant $C=C(n,\bb,\alpha)>0$ such that
\begin{equation*}
\frac{1}{\rho^{ 2n + 2 + 2\alpha  }} \iint_{Q_\rho} (\Delta_\bb u)^2 \le \frac{C}{R^{2n+ 2 + 2\alpha}} \iint_{Q_R} (\Delta_\bb u)^2\omega_\bb^n dt  + C \xk{  [f  ]_{ \C^{\alpha, \alpha/2}_\bb( Q_R )  }   }^2.
\end{equation*}

\end{lemma}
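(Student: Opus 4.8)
The statement is a Campanato-type decay estimate for $\Delta_\bb u$ where $u$ solves the \emph{inhomogeneous} heat equation \eqref{eqn:para t0}. The natural plan is the classical freezing-the-coefficients (here, freezing the right-hand side) comparison argument. First I would split $u$ into a homogeneous part and a correction: on the cylinder $Q_R = B_R\times[0,R^2]$, let $v$ solve $\frac{\partial v}{\partial t} = \Delta_{g_\bb} v$ in $Q_R$ with $v = u$ on the parabolic boundary $\partial_\pp Q_R$ (this $v$ exists and is continuous up to the boundary by Proposition \ref{prop 4.1}), and set $w = u - v$. Then $w$ solves $\frac{\partial w}{\partial t} = \Delta_{g_\bb} w + f$ with $w\equiv 0$ on $\partial_\pp Q_R$, and by linearity $w$ also solves the same equation with $f$ replaced by $f - f_R$ (the correction constant is absorbed by adding a multiple of a bounded quadratic as in Remark \ref{rem:existence}), since $\Delta_\bb$ of $\frac{f_R}{2(n-2)}|s - s_0|^2$ is the constant $f_R$.

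Next, for the homogeneous piece $v$, note that $\Delta_\bb v$ again solves the homogeneous heat equation with zero initial value (because $v=u=0$ at $t=0$), so Lemma \ref{lemma 4.9} applies directly to $\Delta_\bb v$, giving
\begin{equation*}
\iint_{Q_\rho} (\Delta_\bb v)^2 \omega_\bb^n dt \le C\Big(\frac{\rho}{R}\Big)^{2n+4} \iint_{Q_R} (\Delta_\bb v)^2 \omega_\bb^n dt.
\end{equation*}
For the correction piece $w$, I would use an energy estimate of the type \eqref{eqn:lemma 4.7 3} applied on $Q_{R/2}$, or more simply the fact that $w$ has zero parabolic boundary data: multiplying $\partial_t w = \Delta_\bb w + (f-f_R)$ by $\Delta_\bb w$ (or by $w$ and iterating) and integrating over $Q_R$ yields $\iint_{Q_R}(\Delta_\bb w)^2 \le C R^{2n+2+2\alpha}\big([f]_{\C^{\alpha,\alpha/2}_\bb(Q_R)}\big)^2$, since $\|f - f_R\|_{\C^0(Q_R)} \le C R^\alpha [f]_{\C^{\alpha,\alpha/2}_\bb(Q_R)}$ and $|Q_R|_{\omega_\bb}\le C R^{2n+2}$. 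Combining, with $\Delta_\bb u = \Delta_\bb v + \Delta_\bb w$ and the triangle inequality in $L^2(Q_\rho)$,
\begin{align*}
\iint_{Q_\rho} (\Delta_\bb u)^2
&\le 2\iint_{Q_\rho}(\Delta_\bb v)^2 + 2\iint_{Q_\rho}(\Delta_\bb w)^2 \\
&\le C\Big(\frac{\rho}{R}\Big)^{2n+4}\iint_{Q_R}(\Delta_\bb v)^2 + C R^{2n+2+2\alpha}\big([f]_{\C^{\alpha,\alpha/2}_\bb(Q_R)}\big)^2 \\
&\le C\Big(\frac{\rho}{R}\Big)^{2n+4}\iint_{Q_R}(\Delta_\bb u)^2 + C\,\rho^{2n+2+2\alpha}\big([f]_{\C^{\alpha,\alpha/2}_\bb(Q_R)}\big)^2 + C R^{2n+2+2\alpha}\big([f]\big)^2,
\end{align*}
where in the last line $\iint_{Q_R}(\Delta_\bb v)^2 \le 2\iint_{Q_R}(\Delta_\bb u)^2 + 2\iint_{Q_R}(\Delta_\bb w)^2$ and the $w$-term is again bounded by $C R^{2n+2+2\alpha}([f])^2$. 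Since $\rho\le R$ this is exactly the claimed inequality after dividing by $\rho^{2n+2+2\alpha}$ (the term $R^{2n+2+2\alpha}([f])^2/\rho^{2n+2+2\alpha}$ is dominated once one runs the standard iteration lemma, but even as stated it matches the right-hand side up to replacing $R$ by a comparable scale).

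The main obstacle I anticipate is making the energy estimate for $w$ fully rigorous near the singular set $\sS$: one must justify the integration by parts in $\iint_{Q_R} \Delta_\bb w\cdot \partial_t w$ and $\iint \Delta_\bb w \cdot \Delta_\bb w$ on the conical background, which is handled (as elsewhere in the paper) by first working with the smooth approximations $u_\epsilon$, $v_\epsilon$ relative to $g_\epsilon$, proving the estimates with $\epsilon$-uniform constants using the Sobolev inequality \eqref{eqn:Sobolev} and the Caccioppoli inequalities \eqref{eqn:lemma 4.7 1}--\eqref{eqn:lemma 4.7 3}, and then letting $\epsilon\to 0$. A secondary point is the bookkeeping that $\Delta_\bb v$ indeed has vanishing initial trace in the sense needed for Lemma \ref{lemma 4.9}; this follows because $v\in\C^0(\overline{Q_R})$ with $v|_{t=0}=0$ together with the interior smoothing estimate \eqref{eqn:para new grad 2}, which forces $\Delta_\bb v(\cdot,t)\to 0$ appropriately as $t\to0$. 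Once these two technical points are in place, the decay estimate follows by assembling the three ingredients above.
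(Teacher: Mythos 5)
The decomposition is set up so that the constant mode $f_R$ lands in the wrong piece, and the proposed fix does not restore it. You take $v$ to solve the \emph{truly homogeneous} equation $\partial_t v = \Delta_\bb v$ with $v = u$ on $\partial_\pp Q_R$, so that $w = u - v$ has zero parabolic boundary data but satisfies $\partial_t w = \Delta_\bb w + f$ with the \emph{full} forcing $f$, not $f - f_R$. The energy estimate (multiply by $\partial_t w$, integrate, and drop the favorable term $-\int_{B_R}|\nabla w|^2\big|_{t=R^2}$) then only gives $\iint_{Q_R}(\Delta_\bb w)^2 \lesssim \iint_{Q_R} f^2 \lesssim R^{2n+2}\|f\|^2_{\C^0(Q_R)}$, which is of order $R^{2n+2}$, not $R^{2n+2+2\alpha}$; the crucial $R^{2\alpha}$ gain requires the forcing to be $f - f_R$. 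Your remedy --- replacing $w$ by $\tilde w := w + \psi$ with $\Delta_\bb\psi = f_R$ so that the forcing becomes $f - f_R$ --- destroys the zero initial data: now $\tilde w(\cdot,0) = \psi$, and the same energy identity produces the term $\int_{B_R}|\nabla\psi|^2\big|_{t=0}\sim f_R^2 R^{2n+2}$, which is precisely the size of the term you were trying to eliminate. So the bad $R^{2n+2}$ scaling reappears and the estimate fails.

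The correct split places the constant $f_R$ into the boundary-carrying piece: let $u_1$ solve the \emph{inhomogeneous} problem $\partial_t u_1 = \Delta_\bb u_1 + f_R$ in $Q_R$ with $u_1 = u$ on $\partial_\pp Q_R$, and set $u_2 = u - u_1$, which then satisfies $\partial_t u_2 = \Delta_\bb u_2 + (f - f_R)$ with $u_2 \equiv 0$ on $\partial_\pp Q_R$. The key observation you missed is that $\Delta_\bb u_1$ \emph{still} solves the homogeneous heat equation (apply $\Delta_\bb$ to the equation for $u_1$ and use $\Delta_\bb f_R = 0$) and has zero initial value, since $u_1(\cdot,0) = u(\cdot,0) = 0$; so Lemma \ref{lemma 4.9} applies to $\Delta_\bb u_1$ exactly as you wished to apply it to $\Delta_\bb v$. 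Meanwhile $u_2$ carries forcing $f - f_R$, of size $\lesssim R^\alpha[f]_{\C^{\alpha,\alpha/2}_\bb(Q_R)}$, with genuinely vanishing parabolic boundary data (both lateral and initial), and the energy estimate now gives $\iint_{Q_R}(\Delta_\bb u_2)^2 \lesssim R^{2n+2+2\alpha}\big([f]_{\C^{\alpha,\alpha/2}_\bb(Q_R)}\big)^2$. The rest of your argument then goes through, but note the final step genuinely requires the iteration lemma (Lemma 3.4 in \cite{HL}): the combined inequality $\iint_{Q_\rho}(\Delta_\bb u)^2 \lesssim (\rho/R)^{2n+4}\iint_{Q_R}(\Delta_\bb u)^2 + R^{2n+2+2\alpha}[f]^2$ does not become the stated one simply by dividing through by $\rho^{2n+2+2\alpha}$, since the last term would then blow up as $\rho\to 0$.
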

\begin{proof}
Let $u = u_1 + u_2$, where 
\begin{equation*}
\frac{\partial u_1}{\partial t} = \Delta_\bb u_1 + f_R,\text{ in }Q_R,\quad u_1 = u\text{ on }\partial_\pp Q_R,
\end{equation*}
and 
$$ \frac{\partial u_2}{\partial t} = \Delta_\bb u_2 + f - f_R,\text{ in }Q_R,\quad u_2 = 0 \text{ on }\partial_\pp Q_R.   $$
The function $(\Delta_\bb u_1)$ satisfies the assumptions of Lemma \ref{lemma 4.9}. Thus 
$$ \iint_{Q_\rho} (\Delta_\bb u_1)^2 \omega_\bb^n dt \le C \bk{\frac{\rho}{R}   }^{2n+4}\iint_{Q_R} (\Delta_\bb u_1)^2 \omega_\bb^n dt.   $$
Multiplying $\dot u_2 = \frac{\partial u_2}{\partial t}$ on both sides of the equation for $u_2$ and noting that $\dot u_2 = 0$ on $\partial B_R \times (0,R^2)$, we get
\begin{align*} \int_{B_R} (\dot u_2)^2  = & \int_{B_R} \dot u_2 \Delta_\bb u_2 +  \dot u_2 (f-f_R)  = \int_{B_R} - 2 \innpro{\nabla \dot u_2, \na u_2} + \dot u (f-f_R) \\
\le & \int_{B_R} -\frac{\partial}{\partial t} \abs{\na u_2} + \frac 12(\dot u_2)^2 + 2 (f-f_R  )^2.
\end{align*}
Integrating over $t\in [0,R^2]$, we obtain
\begin{equation*}
\iint_{Q_R} (\dot u_2)^2\le  - 2 \int_{B_R} \abs{\nabla u_2}\Big|_{t= R^2} + 4 \iint_{Q_R} (f-f_R)^2,
\end{equation*}
therefore
$$\iint_{Q_R} (\Delta_\bb u_2)^2\le 2 \iint_{Q_R} (\dot u_2)^2 + 2 \iint_{Q_R} (f-f_R)^2 \le C R^{2n+2 + 2\alpha} \big( [ f ]_{\C^{\alpha, \alpha/2}_\bb(Q_R)}  \big )^2.  $$
Then for $\rho < R$ we have
\begin{align*}
\iint_{Q_\rho} ( \Delta_\bb u )^2 & \le 2 \iint_{Q_\rho} ( \Delta_\bb u_1 )^2 + 2 \iint_{Q_\rho} ( \Delta_\bb u_2 )^2 \\
& \le  C \bk{\frac{\rho}{R}   }^{2n+4}\iint_{Q_R} (\Delta_\bb u_1)^2 \omega_\bb^n dt + C R^{2n+2 + 2\alpha} \big( [ f ]_{\C^{\alpha, \alpha/2}_\bb(Q_R)}  \big )^2.  
\end{align*}
The estimate is proved by an iteration lemma (see Lemma 3.4 in \cite{HL}).

\end{proof}

\begin{lemma}
Suppose $u$ satisfies the equations \eqref{eqn:para t0}. There exists a constant $C=C(n,\bb, \alpha)>0$ such that for any $\rho \in (0,R/2)$ 
$$\iint_{Q_\rho} \xk{ \Delta_\bb u - (\Delta_\bb u)_\rho    }^2\omega_\bb^n dt \le C M_R \rho^{2n + 2 + 2\alpha},  $$
where $$ M_R : = \frac{1}{R^{4+2\alpha}} \| u\|_{\C^0(Q_R)}^2 + \frac{1}{R^{2\alpha}} \| f\|^2_{\C^0( Q_R )} + \xk{[ f ]_{\C^{\alpha,\alpha/2}_\bb(Q_R)}}^2. $$
\end{lemma}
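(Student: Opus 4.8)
The plan is to prove a Campanato-type decay estimate for $\Delta_\bb u$ on parabolic cylinders centered at the singular locus, which combined with the standard integral characterization of H\"older spaces will eventually yield the $\C^{2+\alpha,\frac{2+\alpha}{2}}_\bb$-regularity near $t=0$. The key point is that $\Delta_\bb u$ solves the \emph{same} homogeneous heat equation with a controlled inhomogeneity, so decay for the oscillation of $\Delta_\bb u$ can be bootstrapped from the $L^2$-decay of $\Delta_\bb u$ itself (Lemma \ref{lemma 4.10}) together with the decay of solutions vanishing at $t=0$ (Lemma \ref{lemma 4.9}).

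First I would freeze the coefficient by writing $u = u_1 + u_2$ on $Q_R$, exactly as in the proof of Lemma \ref{lemma 4.10}: let $u_1$ solve $\partial_t u_1 = \Delta_\bb u_1 + f_R$ in $Q_R$ with $u_1 = u$ on $\partial_\pp Q_R$, and let $u_2$ solve $\partial_t u_2 = \Delta_\bb u_2 + (f - f_R)$ in $Q_R$ with $u_2 \equiv 0$ on $\partial_\pp Q_R$. Then $\Delta_\bb u_1 - f_R$ solves the homogeneous heat equation and, crucially, vanishes at $t=0$ because $u|_{t=0}\equiv 0$ forces $u_1|_{t=0}\equiv 0$ (after subtracting the linear-in-$t$ correction $t f_R$, whose Laplacian is zero); so Lemma \ref{lemma 4.9} applies to $w:=\Delta_\bb u_1 - f_R$, giving $\iint_{Q_\rho} w^2 \le C(\rho/R)^{2n+4}\iint_{Q_R} w^2$. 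For the term $u_2$, the energy estimate carried out in Lemma \ref{lemma 4.10} gives $\iint_{Q_R}(\Delta_\bb u_2)^2 \le C R^{2n+2+2\alpha}([f]_{\C^{\alpha,\alpha/2}_\bb(Q_R)})^2$.

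Next I would estimate the average oscillation. For $\rho < R/2$, using that $(\Delta_\bb u)_\rho$ is the minimizer of $c\mapsto \iint_{Q_\rho}(\Delta_\bb u - c)^2$, I bound
\begin{align*}
\iint_{Q_\rho}\big(\Delta_\bb u - (\Delta_\bb u)_\rho\big)^2
&\le \iint_{Q_\rho}\big(\Delta_\bb u_1 - (\Delta_\bb u_1)_\rho\big)^2 + C\iint_{Q_\rho}(\Delta_\bb u_2)^2 \\
&\le \iint_{Q_\rho} w^2 + C\iint_{Q_\rho}(\Delta_\bb u_2)^2,
\end{align*}
since $\Delta_\bb u_1 - (\Delta_\bb u_1)_\rho = w - w_\rho$ and the oscillation of $w$ is controlled by $\iint w^2$. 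The first term is $\le C(\rho/R)^{2n+4}\iint_{Q_R} w^2 \le C(\rho/R)^{2n+4}\iint_{Q_R}(\Delta_\bb u)^2 + C(\rho/R)^{2n+4} R^{2n+2}\|f\|^2_{\C^0(Q_R)}$; the second is $\le C R^{2n+2+2\alpha}([f]_{\C^{\alpha,\alpha/2}_\bb(Q_R)})^2$. Now I feed in Lemma \ref{lemma 4.10} to replace $\iint_{Q_R}(\Delta_\bb u)^2$ by something scaling like $R^{2n+2}$ times $M_R$: precisely, applying Lemma \ref{lemma 4.10} on a slightly larger cylinder $Q_{2R}$ (or directly at scale $R$) gives $\iint_{Q_R}(\Delta_\bb u)^2 \le C R^{2n+2+2\alpha}\big(\frac{1}{R^{2\alpha}}\iint_{Q_{2R}}(\Delta_\bb u)^2/R^{2n+2} + ([f]_{\C^{\alpha,\alpha/2}})^2\big)$, and iterating this decay (an application of the standard iteration lemma, Lemma 3.4 in \cite{HL}, together with the $\C^0$-bound to absorb the lowest-order term via $\iint_{Q_R}(\Delta_\bb u)^2 \le C R^{-4}\iint_{Q_R}u^2 \le CR^{2n-4}\|u\|^2_{\C^0(Q_R)}$ from \eqref{eqn:lemma 4.7 3}) yields $\iint_{Q_R}(\Delta_\bb u)^2 \le C R^{2n+2}M_R$ with $M_R$ as defined. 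Substituting back and collecting powers of $\rho$ and $R$ gives the claimed bound $\iint_{Q_\rho}(\Delta_\bb u - (\Delta_\bb u)_\rho)^2 \le C M_R \rho^{2n+2+2\alpha}$ for $\rho < R/2$.

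The main obstacle I anticipate is bookkeeping the scaling weights correctly through the two nested iterations: one must be careful that the exponent $2n+2+2\alpha$ is exactly what survives, that the $\C^0$-term of $u$ gets the weight $R^{-4-2\alpha}$ and not something worse, and that the $f$-terms separate cleanly into a $\C^0$-part and a seminorm part. A secondary technical point is justifying the decomposition $u=u_1+u_2$ and the application of Lemmas \ref{lemma 4.9}--\ref{lemma 4.10} rigorously in the conical setting: one should work with the smooth approximations $u_\ep$ (for which everything is classical) and pass to the limit $\ep\to 0$, exactly as flagged in the text; since all the estimates invoked are uniform in $\ep$ (they rely only on $\ric(g_\ep)\ge 0$ and the uniform Sobolev inequality \eqref{eqn:Sobolev}), this limiting procedure is routine. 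Finally, the case $\rho \in [R/2, R)$ is trivial as usual, absorbed into the constant.
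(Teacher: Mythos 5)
Your proposal has two genuine problems, one of which is a bug and the other of which is a real gap that the paper's actual proof sidesteps.

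\medskip\noindent\emph{The initial condition of $w$.} You set $w := \Delta_\bb u_1 - f_R$ and claim it vanishes at $t=0$ so that Lemma~\ref{lemma 4.9} applies. It does not. Since $u_1|_{t=0} = u|_{t=0} = 0$, we have $\Delta_\bb u_1(\cdot,0) = 0$, and therefore $w(\cdot,0) = -f_R \neq 0$. Subtracting the constant $f_R$ destroys the vanishing rather than creating it; the parenthetical about ``subtracting the linear-in-$t$ correction $t f_R$'' does not help, because $\Delta_\bb(tf_R) = 0$, so $\Delta_\bb u_1$ is unchanged. The fix is simply to apply Lemma~\ref{lemma 4.9} to $\Delta_\bb u_1$ itself (which does satisfy the homogeneous heat equation with zero initial data, exactly as noted in the proof of Lemma~\ref{lemma 4.10}), and to estimate $\iint_{Q_\rho}(\Delta_\bb u_1 - (\Delta_\bb u_1)_\rho)^2 \le \iint_{Q_\rho}(\Delta_\bb u_1)^2$ directly.

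\medskip\noindent\emph{The $\Delta_\bb u_2$ term does not decay in $\rho$.} After the decomposition you bound $\iint_{Q_\rho}(\Delta_\bb u_2)^2 \le \iint_{Q_R}(\Delta_\bb u_2)^2 \le C R^{2n+2+2\alpha}[f]^2_{\C^{\alpha,\alpha/2}_\bb}$, which is a quantity depending only on $R$. This cannot be absorbed into the target $C M_R \rho^{2n+2+2\alpha}$ once $\rho \ll R$. The reason the paper's argument closes is that it \emph{does not} redo the decomposition: it invokes the \emph{conclusion} of Lemma~\ref{lemma 4.10}, which has already been upgraded via the iteration lemma (Lemma~3.4 of \cite{HL}) to the correct decay in $\rho$,
$$\iint_{Q_\rho}(\Delta_\bb u)^2 \le C\,\rho^{2n+2+2\alpha}\Big(\tfrac{1}{R^{2n+2+2\alpha}}\iint_{Q_{2R/3}}(\Delta_\bb u)^2 + [f]^2_{\C^{\alpha,\alpha/2}_\bb(Q_{2R/3})}\Big),$$
and then uses \eqref{eqn:lemma 4.7 3} to trade $\iint_{Q_{2R/3}}(\Delta_\bb u)^2$ for $\iint_{Q_R}u^2$ and $f$-terms, yielding $\iint_{Q_\rho}(\Delta_\bb u)^2 \le C M_R \rho^{2n+2+2\alpha}$. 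The rest is trivial: $\iint_{Q_\rho}(\Delta_\bb u - (\Delta_\bb u)_\rho)^2 \le 2\iint_{Q_\rho}(\Delta_\bb u)^2 + 2|Q_\rho|(\Delta_\bb u)_\rho^2$, and the Cauchy--Schwarz bound $(\Delta_\bb u)_\rho^2 \le C\rho^{-(2n+2)}\iint_{Q_\rho}(\Delta_\bb u)^2$ closes the loop. Your proposal should invoke Lemma~\ref{lemma 4.10} in this direct way rather than redecomposing $u=u_1+u_2$ and then appealing to a second, unspecified ``iteration'' --- as written, the iteration you gesture at is both circular (Lemma~\ref{lemma 4.10} \emph{is} the result of that iteration) and insufficient (your final estimate $\iint_{Q_R}(\Delta_\bb u)^2 \le C R^{2n+2}M_R$ drops a factor of $R^{2\alpha}$ that is needed to absorb the $(\rho/R)^{2n+4}$ factor when you feed it back through Lemma~\ref{lemma 4.9}). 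Finally, two small points: $Q_{2R}$ may exit the domain, which is why the paper shrinks to $Q_{2R/3}$; and $R^{-4}\iint_{Q_R}u^2 \le C R^{2n-2}\|u\|^2_{\C^0(Q_R)}$, not $C R^{2n-4}\|u\|^2_{\C^0(Q_R)}$.
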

\begin{proof}
From Lemma \ref{lemma 4.10}, we get
\begin{align*}
\iint_{Q_\rho} (\Delta_\bb u )^2 & \le C \rho^{2+2n+2\alpha} \bk{ \frac{1}{R^{2n+2 + 2\alpha}} \iint_{Q_{2R/3}} (\Delta_\bb u )^2 + \xk{[f]_{ \C^{\alpha,\alpha/2} _\bb(Q_{2R/3}) }   }^2    }\\
\text{ by \eqref{eqn:lemma 4.7 3} }& \le C \rho^{2+2n+2\alpha} \bk{ \frac{1}{R^{2n+6 + 2\alpha}} \iint_{Q_R} u^2 + \frac{1}{R^{2\alpha}} \| f\|^2_{ C^0(Q_R)  } + \xk{[f]_{ \C^{\alpha,\alpha/2} _\bb(Q_R) }   }^2    }\\
& \le C \rho^{2+2n+2\alpha} M_R.
\end{align*}
On the other hand by H\"older inequality
\begin{align*}
(\Delta_\bb u)_\rho^2 & = \frac{1}{|Q_\rho|_{g_\bb}^2} \bk{\iint_{Q_\rho} ( \Delta_\bb u) \omega_\bb^n dt  }^2 \le \frac{C}{\rho^{2+2n}} \iint_{Q_\rho} (\Delta_\bb u)^2 \le CM_R \rho^{2\alpha}. 
\end{align*}
The lemma is proved by combining the two inequalities above.

\end{proof}

By Campanato's lemma (see Theorem 3.1 in \cite{HL}), we get

\begin{corr}There is a constant $C=C(n,\bb, \alpha)>0$
%$$ [\Delta_{g_\ep} u_\ep]_{\C^{ \alpha, \alpha/2 }_{g_\ep}\big( Q_{R/2}  \big)  } \le C\bk{ \frac{1}{R^{2+\alpha}} \| u_\ep\|_{\C^0(Q_R)} + \frac{1}{R^\alpha} \| f\|_{\C^0( Q_R )} +  {[ f ]_{\C^{\alpha,\alpha/2}_{g_\ep}(Q_R)}}}.  $$
such that for any $x\in B_\bb(0, 3/4)$ and $R<1/10$
\eqsp{\label{eqn:Lap Holder 1}
& [ \Delta_\bb u  ]_{ \C^{\alpha, \alpha/2}_\bb \xk{ B_\bb(x, R/2)\times [0, R^2/4]   } } \\
 \le & C\bk{\frac{1}{R^{2+\alpha}}  \| u\|_{\C^0\xk{ B_\bb(x, R)\times [0, R^2]   }} + \frac{1}{R^\alpha} \| f\|_{\C^0\xk{ B_\bb(x, R)\times [0, R^2]    }} +  {[ f ]_{\C^{\alpha,\alpha/2}_{\bb}\xk{B_\bb(x, R)\times [0, R^2]    }}}}.
}
\end{corr}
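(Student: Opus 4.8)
The plan is to deduce \eqref{eqn:Lap Holder 1} from the interior $L^2$ Campanato-type decay estimate that was just proved, namely the inequality
$$\iint_{Q_\rho} \xk{ \Delta_\bb u - (\Delta_\bb u)_\rho    }^2\omega_\bb^n dt \le C M_R \rho^{2n + 2 + 2\alpha},$$
valid for cylinders centered at the origin (the ``worst'' point of $\sS$). First I would observe that nothing in the derivation of that decay estimate used the fact that the cylinder was centered at $0\in\sS$ rather than at an arbitrary point $x\in B_\bb(0,3/4)$: the Caccioppoli inequalities \eqref{eqn:lemma 4.7 1}--\eqref{eqn:lemma 4.7 3}, the Moser iteration of Lemma \ref{lemma Moser iteration}, and the comparison argument of Lemma \ref{lemma 4.10} all rely only on the Sobolev inequality \eqref{eqn:SOB} (which holds uniformly for $\omega_\bb$ on all such balls, with center possibly off $\sS$) and on maximum principle / energy estimates. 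Hence for any $x\in B_\bb(0,3/4)$ and any $0<\rho<R<1/10$ with $B_\bb(x,R)\subset B_\bb(0,1)$ we have
$$\iint_{B_\bb(x,\rho)\times[0,\rho^2]} \xk{ \Delta_\bb u - (\Delta_\bb u)_{x,\rho}    }^2\omega_\bb^n dt \le C M_R \rho^{2n + 2 + 2\alpha},$$
where $(\Delta_\bb u)_{x,\rho}$ is the average over that sub-cylinder and $M_R$ is the quantity defined just above, with $Q_R$ replaced by $B_\bb(x,R)\times[0,R^2]$.

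The second step is to invoke the parabolic Campanato embedding theorem (Theorem 3.1 in \cite{HL}, applied in the metric-measure setting $(B_\bb,\omega_\bb)$, whose volume growth is comparable to Euclidean by the Bishop-type bounds already recorded in the proof of Proposition \ref{prop:3.2 new}). The parabolic scaling here is the standard one, $\rho\mapsto\rho^2$ in time, and the homogeneous dimension of the space-time cylinder is $2n+2$; the decay exponent $2n+2+2\alpha$ then translates, via Campanato's lemma, into $\C^{\alpha,\alpha/2}_\bb$-Hölder continuity of $\Delta_\bb u$ on a half-sized cylinder, with seminorm controlled by $\sup_{x}M_{R}^{1/2}$. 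Writing out $M_R^{1/2}$ gives exactly the three terms
$$\frac{1}{R^{2+\alpha}}\|u\|_{\C^0(B_\bb(x,R)\times[0,R^2])}+\frac1{R^\alpha}\|f\|_{\C^0(B_\bb(x,R)\times[0,R^2])}+[f]_{\C^{\alpha,\alpha/2}_\bb(B_\bb(x,R)\times[0,R^2])}$$
appearing on the right-hand side of \eqref{eqn:Lap Holder 1}. One must be slightly careful: the Campanato estimate as stated controls the seminorm over sub-cylinders reaching down to $t=0$, which is why the initial condition $u|_{t=0}\equiv 0$ was essential in Lemmas \ref{lemma 4.9} and \ref{lemma 4.10}; without it the $u(x,t)=\int_0^t\partial_s u\,ds$ step fails and one only gets the interior estimate of Corollary \ref{cor 4.2 new}.

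The main obstacle, and the only genuinely non-routine point, is the passage from the $L^2$-mean-oscillation decay to pointwise Hölder continuity in the presence of the conical singularity: one must check that the Campanato characterization of $\C^{\alpha,\alpha/2}_\bb$ is valid on the metric space $(B_\bb\times[0,1],d_{\pp,\bb})$, i.e. that balls $B_\bb(x,\rho)$ behave, as far as volume and covering are concerned, like Euclidean balls uniformly in whether $x$ lies on $\sS$ or not. This is exactly the content of the volume bounds $c_1 r^{2n}\le \vol_{g_\bb}(B_\bb(x,r))\le C_1 r^{2n}$ established earlier, so the abstract Campanato lemma of \cite{HL} applies verbatim; I would simply cite it. The remaining work is bookkeeping of scaling powers, which I would not spell out in detail. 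Finally, a covering argument over $B_\bb(0,3/4)$ is not even needed for \eqref{eqn:Lap Holder 1} since the statement is already localized at a single center $x$; it will be needed only later when globalizing to $X$, as in Corollary \ref{cor 4.4}.
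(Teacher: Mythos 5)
Your proposal matches the paper's own (one-line) proof: the paper simply invokes Campanato's lemma (Theorem 3.1 in \cite{HL}) applied to the preceding $L^2$ mean-oscillation decay estimate, and you correctly identify this as the mechanism, along with the two supporting facts that make it work — the Ahlfors-regular volume bounds $c_1 r^{2n}\le\vol_{g_\bb}(B_\bb(x,r))\le C_1 r^{2n}$ (so the Campanato characterization of $\C^{\alpha,\alpha/2}_\bb$ holds on the metric-measure space, uniformly in $x$), and the observation that the Caccioppoli/Moser/comparison chain never used that the spatial center lay on $\sS$. Your remark that the initial condition $u|_{t=0}=0$ is what permits the decay to be taken in cylinders touching $t=0$ is also the right point to flag.
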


\begin{lemma}
There exists a constant $C = C(n,\bb,\alpha)>0$ such that for any $x\in B_\bb(0,3/4)$ and $R< 1/10$
{\small
\eqsp{\label{eqn:Lap Holder 2}
& [T u  ]_{ \C^{\alpha, \frac{\alpha}2}_\bb \xk{ B_\bb(x, R/2)\times [0, R^2/4]   } } +\big [\frac{\partial u}{\partial t}  \big]_{ \C^{\alpha, \frac{\alpha}2}_\bb \xk{ B_\bb(x, R/2)\times [0, R^2/4]   } } \\
 \le & C\bk{\frac{1}{R^{2+\alpha}}  \| u\|_{\C^0\xk{ B_\bb(x, R)\times [0, R^2]   }} + \frac{1}{R^\alpha} \| f\|_{\C^0\xk{ B_\bb(x, R)\times [0, R^2]    }} +  {[ f ]_{\C^{\alpha,\alpha/2}_{\bb}\xk{B_\bb(x, R)\times [0, R^2]    }}}}.
}
}

\end{lemma}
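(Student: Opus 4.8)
The plan is to upgrade the H\"older estimate for $\Delta_\bb u$ in \eqref{eqn:Lap Holder 1} to the full list of second-order operators $T$ from \eqref{eqn:operator T} together with $\frac{\partial u}{\partial t}$, by reducing to the \emph{elliptic} interior estimates already established in Theorem \ref{thm:main 1} (Propositions \ref{prop 2.2}, \ref{prop:2.3}, and the mixed normal-tangential estimates of Section \ref{section 3.3}), applied frozen in time. The key observation is that on each time slice $\{t= t_0\}$, the function $x \mapsto u(x,t_0)$ satisfies the \emph{elliptic} equation $\Delta_{g_\bb} u(\cdot, t_0) = \frac{\partial u}{\partial t}(\cdot,t_0) - f(\cdot, t_0) =: \tilde f(\cdot, t_0)$, and by \eqref{eqn:Lap Holder 1} we now control $\big[\Delta_\bb u\big]_{\C^{\alpha,\alpha/2}_\bb}$, hence in particular the spatial H\"older seminorm of $\tilde f(\cdot,t_0)$ is bounded by the RHS of \eqref{eqn:Lap Holder 2} (combining \eqref{eqn:Lap Holder 1} with the hypothesis $f\in \C^{\alpha,\alpha/2}_\bb$). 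Thus the right-hand side $\tilde f$ of the frozen elliptic equation lies in $C^{0,\alpha}_\bb$ uniformly in $t_0$, with the correct scale-invariant bound.

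\textbf{Main steps.} First I would fix $x\in B_\bb(0,3/4)$, $R<1/10$, and for each $t_0\in[0,R^2/4]$ apply the rescaled elliptic Schauder estimate (the scaling-invariant form \eqref{eqn:rescale Holder} of Corollary \ref{corr:1.1}, equivalently Theorem \ref{thm:main 1}) to $u(\cdot, t_0)$ on $B_\bb(x, 3R/4)$, solving $\Delta_{g_\bb} u(\cdot, t_0) = \tilde f(\cdot, t_0)$. This yields, for every fixed $t_0$,
\begin{equation*}
R^{2+\alpha}\,[T u(\cdot, t_0)]_{C^{0,\alpha}_\bb(B_\bb(x, R/2))} \le C\Big( \| u\|_{\C^0} + R^2 \| \tilde f\|_{\C^0} + R^{2+\alpha}[\tilde f(\cdot,t_0)]_{C^{0,\alpha}_\bb}\Big),
\end{equation*}
where the norms are over the parabolic cylinder, and the spatial H\"older seminorm $[\tilde f(\cdot,t_0)]_{C^{0,\alpha}_\bb}$ is bounded uniformly in $t_0$ using \eqref{eqn:Lap Holder 1} and $f\in\C^{\alpha,\alpha/2}_\bb$. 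This controls the \emph{spatial} H\"older seminorm of $Tu$ (and of $\frac{\partial u}{\partial t} = \Delta_\bb u + f$, the latter already from \eqref{eqn:Lap Holder 1}). Second, I would control the \emph{time} H\"older seminorm of $Tu$ and of $\partial_t u$: for two points $(x_1,t_1),(x_2,t_2)$ with $|t_1-t_2|^{1/2}\ge d_\bb(x_1,x_2)$, one needs $|Tu(x_1,t_1) - Tu(x_1,t_2)| \le C |t_1-t_2|^{\alpha/2}(\cdots)$. This follows by writing the difference quotient in $t$ in terms of $\partial_t(Tu)$ along the interpolation; since $Tu$ commutes with $\partial_t$ on $\qq_\bb^\#$ (the operators in $T$ have constant-in-$t$ coefficients), $\partial_t(Tu) = T(\partial_t u) = T(\Delta_\bb u + f)$, and one estimates $T(\Delta_\bb u)$ on a slightly smaller cylinder using the gradient/Laplacian estimates of Proposition \ref{prop 4.1} applied to the approximating solutions $u_\ep$ (as in the sketch of Theorem \ref{thm:main 2}). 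Concretely, for $|t_1 - t_2| \le \rho^2$ one uses $|Tu(x_1,t_1) - Tu(x_1,t_2)| \le \sup|\partial_t Tu|\cdot|t_1-t_2|$ together with the scale-invariant interpolation $\sup_{\frac12 Q}|\partial_t Tu|\le C\rho^{-2}(\text{osc}\, Tu) + C[Tu]_{\C^{\alpha,\alpha/2}_\bb}\rho^{\alpha-2}$ — the oscillation term being absorbed by the already-controlled spatial seminorm — choosing $\rho \sim |t_1-t_2|^{1/2}$. Finally, combine the spatial and temporal pieces: the parabolic H\"older seminorm is comparable to the sup of the two, so \eqref{eqn:Lap Holder 2} follows, with the constant and scaling as in \eqref{eqn:Lap Holder 1}.

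\textbf{Main obstacle.} The delicate point is the temporal H\"older continuity of the \emph{singular} second-order quantities $N_jN_k u$, $N_j D' u$, and $|z_j|^{2(1-\beta_j)}\partial_j\bar\partial_j u$ near $\sS$, where the naive ``integrate $\partial_t$ over a time interval'' argument must be run at the correct $r_j$-weighted scale — exactly the difficulty that made Sections \ref{section 3.3}--\ref{section 3.4} of the elliptic case so involved. The resolution is to re-run those same weighted $\mathbb C$-ball arguments (Lemmas \ref{lemma 2.9}--\ref{lemma 2.21}) but now applied to the $g_\bb$-\emph{caloric} functions $\partial_t h_k$ (differences of the parabolic approximants $u_k$) on parabolic cylinders $\hat B_k(p)\times(t_p - \hat t\,\tau^{2k}, t_p]$, using the parabolic gradient/Laplacian estimates \eqref{eqn:para new grad 1}--\eqref{eqn:para new grad 2} of Proposition \ref{prop 4.1} in place of their elliptic counterparts \eqref{eqn:grad final prop}, \eqref{eqn:lap final prop}. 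Since the excerpt has already carried out the elliptic version in full and noted (in the ``Sketched proof of Theorem \ref{thm:main 2}'') that these carry over verbatim, I would invoke that machinery and only spell out the modification needed to pass from the pointwise bounds of Theorem \ref{thm:main 2} to the Campanato-type iteration giving \eqref{eqn:Lap Holder 2}; the elliptic slice argument above handles the spatial direction cleanly, so the only genuinely new bookkeeping is matching the two scalings.
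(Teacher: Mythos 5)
Your first step — controlling the spatial H\"older seminorm of $Tu$ by freezing $t=t_0$ and applying the rescaled elliptic estimate (Proposition \ref{prop:invariant}, equivalently Theorem \ref{thm:main 1}) to the equation $\Delta_{g_\bb}u(\cdot,t_0)=\dot u(\cdot,t_0)-f(\cdot,t_0)$, with the right-hand side controlled in $C^{0,\alpha}_\bb$ by \eqref{eqn:Lap Holder 1} — is exactly what the paper does. The gap is in your time-direction argument.

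You propose to bound $\big|Tu(x_1,t_1)-Tu(x_1,t_2)\big|$ via $\sup|\partial_t Tu|\cdot|t_1-t_2|$ and then ``interpolate'' to get $\sup|\partial_t Tu|\lesssim \rho^{-2}\,\mathrm{osc}(Tu)+\rho^{\alpha-2}[Tu]_{\C^{\alpha,\alpha/2}_\bb}$. That inequality is false: $\partial_t Tu$ is a quantity of order strictly higher than what the $\C^{\alpha,\alpha/2}_\bb$-norm of $Tu$ (let alone its oscillation) can control. Already for a one-variable function $g(t)$, $\alpha/2$-H\"older continuity of $g$ gives no $L^\infty$ bound on $g'$. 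Moreover, $\partial_t Tu = T(\Delta_\bb u)+Tf$ applies $T$ to $f$, which is only $\C^{\alpha,\alpha/2}_\bb$ and has no second spatial derivatives; and $T\Delta_\bb u$ is a fourth-order quantity not reached by Proposition \ref{prop 4.1}, whose bounds in any case blow up like $t^{-1}$ and so cannot give an estimate uniform down to $t=0$, which is the entire point of this lemma. Your fallback (``re-run the weighted $\mathbb{C}$-ball machinery for $\partial_t h_k$, then match scalings'') has the same problem: that machinery reproduces Theorem \ref{thm:main 2}, whose constants degenerate as $\hat t\to 0$, so the mismatch is not merely bookkeeping.

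The paper avoids differentiating $Tu$ in $t$ entirely. Fix $t_1<t_2$, set $r=\sqrt{t_2-t_1}/2$, and apply \eqref{eqn:Lap Holder 1} to $\dot u$ (which satisfies the same kind of equation) to get $|\dot u(y,t)-\dot u(y,t_1)|\le A_R|t-t_1|^{\alpha/2}$. Integrating in $t$ gives the Taylor remainder bound $|u(y,t_2)-u(y,t_1)-\dot u(y,t_1)(t_2-t_1)|\le C A_R(t_2-t_1)^{1+\alpha/2}$. Freezing the linear coefficient at a reference point $x_0$ and defining $\tilde u(y)=u(y,t_2)-u(y,t_1)-\dot u(x_0,t_1)(t_2-t_1)$, one checks that $\Delta_\bb\tilde u=\Delta_\bb u(\cdot,t_2)-\Delta_\bb u(\cdot,t_1)=:\tilde f$ has $C^0$ and $C^{0,\alpha}_\bb$ norms on $B_\bb(x_0,r)$ controlled by $A_R$ and appropriate powers of $(t_2-t_1)$, and that $\|\tilde u\|_{C^0(B_\bb(x_0,r))}\lesssim A_R(t_2-t_1)^{1+\alpha/2}$. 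Applying the scale-invariant \emph{elliptic} estimate of Proposition \ref{prop:invariant} at scale $r$ yields $|T\tilde u(x_0)|\le CA_R(t_2-t_1)^{\alpha/2}$, and since $T$ kills the constant $\dot u(x_0,t_1)(t_2-t_1)$, this is exactly $|Tu(x_0,t_2)-Tu(x_0,t_1)|$. Combining with your spatial estimate by the triangle inequality gives \eqref{eqn:Lap Holder 2}. This is the step missing from your proposal: the Taylor expansion in $t$ plus an elliptic estimate on the time-difference function at scale $\sqrt{t_2-t_1}$, in place of a direct bound on $\partial_t Tu$.
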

\begin{proof}
%It suffices to show the inequality for $[ Tu  ]_{C^{\alpha,\alpha/2}_\bb\xk{ B_\bb(x,R/2) \times [0, R62/4]   }}$. The Lemma then follows from interpolation inequalities and the fact that $(Tu)|_{t= 0} = 0$.

It follows from \eqref{eqn:Lap Holder 1} and the elliptic Schauder estimates in Theorem \ref{thm:main 1}  by adjusting $R$ slightly that  for any $t\in [0,R^2/4]$
\begin{align*}%\label{eqn:Lap Holder 1}
& [ T u(\cdot, t)  ]_{ C^{0, \alpha}_\bb \xk{ B_\bb(x, R/2)   } } \\
 \le & C\bk{\frac{1}{R^{2+\alpha}}  \| u\|_{\C^0\xk{ B_\bb(x, R)\times [0, R^2]   }} + \frac{1}{R^\alpha} \| f\|_{\C^0\xk{ B_\bb(x, R)\times [0, R^2]    }} +  {[ f ]_{\C^{\alpha,\alpha/2}_{\bb}\xk{B_\bb(x, R)\times [0, R^2]    }}}},
\end{align*}
that is,  in the spatial variables the estimate \eqref{eqn:Lap Holder 2} holds. % with the LHS replaced by the semi-norm $[\cdot]$.
 It only remains to show the H\"older continuity of $Tu$ in the time-variable. For this, we fix any two times $0\le t_1 < t_2 \le R^2/4$ and denote $ r : = \sqrt{t_2 - t_1}/2$. For any $x_0\in B_\bb(x,R/4)$, $B_\bb(x_0,r)\subset B_\bb(x,R/2)$. By \eqref{eqn:Lap Holder 1} and the equation for $u$, it is not hard to see that the inequality \eqref{eqn:Lap Holder 1} holds when the $\Delta_\bb u$ on LHS is replaced by $\dot u = \frac{\partial u}{\partial t}$. In particular
$$\frac { | \dot u(y,t) - \dot u (y, t_1)     } {|t - t_1|^{\alpha/2}} \le A_R, \forall y\in B_\bb(x,R/2) $$
where $A_R:=$ the constant on the RHS of \eqref{eqn:Lap Holder 1}. Integrating over $t\in [t_1,t_2]$ we get
$$ \ba{ u(y,t_2) - u(y,t_1) - \dot u(y, t_1) (t_2 - t_1)     }\le C A_R (t_2 - t_1)^{1+\frac\alpha 2},  $$
thus for any $y\in B_\bb(x_0, r)$
\begin{align*}
&  \ba{ u(y,t_2) - u(y,t_1) - \dot u(x_0, t_1) (t_2 - t_1)     }\\
\le &   \ba{ u(y,t_2) - u(y,t_1) - \dot u(y, t_1) (t_2 - t_1)     } + \ba{ \dot u(x_0, t_1) - \dot u(y, t_1)   } (t_2 - t_1)\\
\le  & C A_R (t_2 - t_1)^{1+\frac\alpha 2} + A_R r^{\alpha}(t_2 - t_1).
\end{align*}
Denote $\tilde u(y): = u(y,t_2 ) - u(y,t_1) - \dot u(x_0, t_1) (t_2 - t_1)$, which is a function on $B_\bb(x_0, r)$ and $\tilde f: = \Delta_\bb \tilde u = \Delta_\bb u(\cdot, t_2) - \Delta_\bb u(\cdot, t_1)$ satisfies $\| \tilde  f\|_{C^0(B_\bb(x_0,r))  } \le A_R (t_2 - t_1)^\alpha   $ and $[\tilde f  ]_{C^{0,\alpha}_\bb(B_\bb(x_0,r))}\le A_R$ by \eqref{eqn:Lap Holder 1}. It follows from the rescaled version of Proposition \ref{prop:invariant} that 
\begin{align*} \ba{T\tilde u  }_{C^0(B_\bb(x_0, r/2)  )}& \le C(n,\bb,\alpha) \xk{ \frac{ \| \tilde u\|_{C^0(B_\bb(x_0,r))}  }{ r^2  } + \| \tilde f\|_{C^{0}( B_\bb(x_0,r)  )} + r^\alpha [ \tilde f  ] _{C^{0,\alpha}(B_\bb(x_0, r))  }        }  \\
& \le C (t_2 - t_1)^{\alpha/2} A_R.
\end{align*} 
Therefore for any $x_0\in B_\bb(x, R/4)$
\begin{equation*}
\frac{ \ba{ Tu(x_0, t_2)  - Tu(x_0, t_1)  }   }{|t_2 - t_1|^{\alpha/2}} \le C A_R.
\end{equation*}
It is then elementary to see by triangle inequality that (by adjusting $R$ slightly if necessary)
\begin{equation*}
[Tu  ]_{ \C^{\alpha, \alpha/2}_{\bb} \xk{ B_\bb(x,R/2) \times [0,R^2/4]   }  }\le C A_R,
\end{equation*}
as desired. The estimate for $\dot u$ follows from the equation $\dot u = \Delta_g u + f$.

\end{proof}

%For notation simplicity we denote $B_\bb(x,R)\times [0, R^2] = : \tilde Q_R$

\begin{remark}
By a simple parabolic rescaling of the metric and time, we see from \eqref{eqn:Lap Holder 2} that for any $0<r<R<1/10$ that 
\begin{equation}\label{eqn:Lap Holder new}
[Tu]_{ \C^{\alpha, \alpha /2}_\bb( Q_{r})  } \le C \bk{ \frac{\| u\|_{\C^0(  Q_R  )}}{(R- r)^{2+\alpha}}  + \frac{\| f\|_{ \C^0( Q_R) }}{(R-r)^\alpha}     + [ f  ]_{ \C^{\alpha,\alpha/2}_\bb ( Q_R)  }}.
\end{equation}
\end{remark}

\subsubsection{the non-flat metric case}
In this subsection, we will consider the case when the background metrics are general non-flat $\C^{\alpha,\alpha/2}_\bb$-conical K\"ahler metrics $g=g(z,t)$. Suppose $u\in\C^{2+\alpha,\frac{2+\alpha}{2}}_\bb(\qq_\bb)$ satisfies the equation 
\begin{equation}\label{eqn:para gen t0}
\frac{\partial u}{\partial t} = \Delta_g u + f,\text{ in }\qq_\bb,\quad u|_{t= 0} = 0,
\end{equation} 
and $u\in \C^0(\partial_\pp \qq_\bb)$.

%\begin{equation*}
%[ Tu(\cdot, t)  ]_{C^{0,\alpha}_\bb \big(B_\bb(0,\frac 1 2)\big)   } \le C \xk{ \| u\|_{C^0( \qq_\bb  )  } + \| f\|_{C^0( \qq_\bb )} +  {[ f ]_{C^{\alpha,\alpha/2}_{\bb}(\qq_\bb)}}      }
%\end{equation*}

\begin{prop}\label{prop:4.4}
There exists a constant $C=C(n,\bb,\alpha, g)>0$ such that
\begin{equation*}
\| u\|_{\C^{2+\alpha, \frac{2+\alpha}{2}}_\bb\xk{ B_\bb(0,1/2)\times [0,1/4]  } }\le C\bk{ \| u\|_{\C^0(\qq_\bb)}  + \| f\|_{ \C^{\alpha,\alpha/2}_\bb(\qq_\bb)  }    }.
\end{equation*}
\end{prop}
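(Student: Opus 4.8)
Proposition \ref{prop:4.4} is the parabolic analogue of Proposition \ref{prop 4.2 new} combined with the near-$t=0$ estimate \eqref{eqn:Lap Holder new}, and the plan is to mimic the freezing-coefficient argument of Proposition \ref{prop:invariant} (and its parabolic version Proposition \ref{prop 4.2 new}), but now carrying the estimates all the way down to the initial time slice $t=0$ by using the model-case estimate \eqref{eqn:Lap Holder new} instead of the interior estimate of Proposition \ref{prop 4.2 new}. The first step is to observe that, since $u|_{t=0}=0$, we cannot expect the fully scale-invariant weighted estimate of Proposition \ref{prop 4.2 new}; instead we should work with the cylinders $Q_R = B_\bb(x,R)\times[0,R^2]$ (anchored at $t=0$) and, for interior space-time points, with ordinary parabolic balls. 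So I would split the H\"older seminorm computation for $\newT u$ over a pair of points $P_x=(x,t_x)$, $P_y=(y,t_y)$ into cases according to whether $t_x$ (the smaller time, say) is comparable to the squared spatial distance to $\partial_\pp\qq_\bb$ or not: when the points are ``deep'' inside in the time direction, i.e. $t_x \gtrsim d_\bb(x,\partial B_\bb)^2$, the relevant cylinder does not touch $\{t=0\}$ and the already-proven interior Schauder estimate (Corollary \ref{cor 4.2 new}, or the remark after Proposition \ref{prop 4.2 new}) applies verbatim; when $t_x$ is small relative to the spatial distance, the cylinder $B_\bb(x, c\sqrt{t_x})\times[0, c^2 t_x]$ (or the shifted cylinder around the spatial projection $\hat x\in\sS$) is anchored at $t=0$, and here one invokes \eqref{eqn:Lap Holder new} in the flat case or, after freezing the metric at a boundary point $\hat P_x=(\hat x, t_x)$, the non-flat perturbation version obtained by writing $\Delta_g u = \Delta_{g_\bb} u + \eta.\ddb u$ with $\eta$ small near $\hat P_x$.

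Concretely: choose a small $\mu\in(0,1/4)$, set $d=\mu\, d_{P_x}$ with $d_{P_x}=d_{\pp,\bb}(P_x,\partial_\pp\qq_\bb)$, and let $\qq = B_\bb(x,d)\times(\max\{0,t_x-d^2\}, t_x]$. In the main new case where $t_x \le d^2$ (so the cylinder reaches $t=0$) and $B_\bb(x,d)$ meets $\sS$, one picks coordinates centered at the spatial projection $\hat x\in\sS$ normalizing $g$ at $\hat P_x$ as in the proof of Proposition \ref{prop:invariant}, rewrites the equation as $\partial_t u = \Delta_{g_\bb}u + \eta.\ddb u + f =: \Delta_{g_\bb}u + \tilde f$ on the larger cylinder $\hat\qq = B_\bb(\hat x, 2d)\times[0, t_x]$ (note $t_x\le d^2 \le 4d^2$, so this is a legitimate initial-value cylinder and $u|_{t=0}=0$ there), and applies the scaled form of \eqref{eqn:Lap Holder new} — which holds precisely because the model estimate was proven with initial data $0$. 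The case $B_\bb(x,d)\cap\sS=\emptyset$ with the cylinder anchored at $t=0$ is handled by introducing the smooth coordinates $w_j=z_j^{\beta_j}$ and invoking the standard parabolic Schauder estimate near $t=0$ for equations with $\C^{\alpha,\alpha/2}$ coefficients and vanishing initial data (Theorem 4.9 in \cite{Lie}); the case where the cylinder does not reach $t=0$ is covered by Corollary \ref{cor 4.2 new}. As in Proposition \ref{prop 4.2 new}, the term $\| \tilde f\|^{(2)}$ (or its anchored analogue) is estimated by $\| \eta\|_{\C^{\alpha,\alpha/2}_\bb}\,\|\newT u\| + \|f\|$, the factor $\|\eta\|_{\C^{\alpha,\alpha/2}_\bb}$ on the relevant small cylinder is $\le C_1[g]^*\mu^\alpha$, and one absorbs $C_1[g]^*\mu^\alpha\,[\newT u]$ into the left side after choosing $\mu$ small (using interpolation $\|\newT u\|_{\C^0}\le \tfrac12 (\text{scale})^\alpha[\newT u]_{\C^\alpha} + C\|u\|_{\C^0}$). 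Taking the supremum over $P_x\ne P_y$ in $B_\bb(0,1/2)\times[0,1/4]$ yields the seminorm bound, and then interpolation gives the full $\C^{2+\alpha,\frac{2+\alpha}{2}}_\bb$-norm bound.

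The bookkeeping subtlety — and the place I expect the main work to sit — is making the ``anchored'' weighted norms consistent across the two regimes. In Proposition \ref{prop 4.2 new} the weights $d_{P_x}^{\sigma+\cdots}$ built into $[\cdot]^{(\sigma)}$ measure distance to the \emph{full} parabolic boundary $\partial_\pp\qq_\bb$, which includes $\{t=0\}$; but here $u$ is honestly defined and $C^{2+\alpha}$ up to $\{t=0\}$ (because $u|_{t=0}=0$ and $f$ is H\"older), so what we want is an \emph{unweighted} estimate on $B_\bb(0,1/2)\times[0,1/4]$ with the degeneracy only at the lateral/spatial boundary of the larger cylinder $\qq_\bb$. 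So one should run the freezing argument with $d_{P_x}$ replaced by the distance to $\partial B_\bb(0,1)\times[0,1]$ alone (equivalently, work on a slightly larger base cylinder and localize spatially), and verify that all the invocations — \eqref{eqn:Lap Holder new}, Corollary \ref{cor 4.2 new}, Theorem 4.9 of \cite{Lie} — are available on cylinders that either are anchored at $t=0$ or lie strictly inside $t>0$; both are fine, so no honest obstruction arises, but the case analysis must be set up carefully so that every cylinder appearing is of one of those two admissible types. Once that is arranged the proof is word-for-word parallel to Proposition \ref{prop 4.2 new}, so I would write it as ``the proof is identical to that of Proposition \ref{prop 4.2 new}, except that in the case when the relevant cylinder reaches $t=0$ we use \eqref{eqn:Lap Holder new} (resp.\ Theorem 4.9 of \cite{Lie}) in place of the interior estimate,'' and then give the short paragraph spelling out that replacement.
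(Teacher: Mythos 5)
Your proposal is correct in spirit, and both the key ingredients (freezing coefficients, writing the operator as $\Delta_{g_\bb}$ plus a small perturbation $\eta.\ddb$, and invoking the anchored model estimate \eqref{eqn:Lap Holder new} on cylinders reaching $t=0$) match the paper's. However, the organizational scheme you propose is genuinely different from the one the paper uses, and your description of the write-up as ``word-for-word parallel to Proposition \ref{prop 4.2 new} with one invocation swapped'' would misdescribe the paper's proof. You run the pointwise weighted-norm machinery of Proposition \ref{prop 4.2 new}: at each pair of space-time points you pick a scale $d=\mu d_{P_x}$, choose the right freezing point and cylinder, and absorb the $\eta.\ddb u$ term by interpolation and by taking $\mu$ small — with the additional wrinkle (which you correctly flag) that the weight must be taken to be \emph{spatial} distance to the lateral boundary rather than full parabolic distance, so that the weights do not degenerate as $t\to 0$ and the cylinders are allowed to touch $\{t=0\}$. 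The paper instead avoids this weighted bookkeeping entirely: it fixes a \emph{single} worst-case spatial center (the origin in $\sS_1\cap\sS_2$), freezes $g$ there, applies \eqref{eqn:Lap Holder new} on nested anchored cylinders $\tilde Q_r\subset\tilde Q_R$ (all with the same center), absorbs the error by taking the outer radius $R_0$ small, and then closes the estimate via the abstract iteration Lemma \ref{lemma HL} with $\phi(r)=[Tu]_{\C^{\alpha,\alpha/2}_\bb(\tilde Q_r)}$ rather than by a pointwise $\mu$-absorption. It then covers $B_\bb(0,1/2)$ by finitely many such balls, passing to the smooth coordinates $w_j=z_j^{\beta_j}$ whenever a ball misses $\sS_j$, and pastes in the interior estimate Corollary \ref{cor 4.2 new} for times bounded away from $0$. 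Your route would likely also close, but the ``bookkeeping subtlety'' you acknowledge — making the spatial-only weight coherent across the Case-1/Case-2 split and the interpolation step, and recasting \eqref{eqn:Lap Holder new} in the appropriately weighted form — is genuine work that you leave unaddressed, whereas the paper's fixed-center nested-cylinder iteration sidesteps it and is substantially shorter. Either approach ends with the same interpolation argument to upgrade the seminorm bound to the full norm bound.
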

\begin{proof}
%Let the center $x$ be $0$ in \eqref{eqn:Lap Holder new}. 
Choose suitable complex coordinates at the origin $x= 0$, we may assume the components of $g$ in the basis $\{\ep_j\wedge \bar \ep_k,\cdots\}$ satisfies $g_{\ep_j\bar \ep_k}(\cdot, 0) = \delta_{jk}$ and $g_{j\bar k} (\cdot, 0)= \delta_{jk}$ at the origin $0$. As in the proof of Proposition \ref{prop 4.2 new}, we can write the equation \eqref{eqn:para gen t0} as
\begin{equation*}
\frac{\partial u}{\partial t} = \Delta_\bb u + \eta.\ddb u + f =: \Delta_\bb u + \hat f,
\end{equation*}
where $\eta$ is given in the proof of Proposition \ref{prop:invariant}. By \eqref{eqn:Lap Holder new} we get
\begin{align*}
[Tu  ]_{ \C^{\alpha,\alpha/2}_\bb(\tilde Q_r  )  } \le C\bk{ \frac{\| u\|_{\C^0(\tilde Q_R)}}{(R-r)^{2+\alpha}} +\frac{1}{(R-r)^\alpha} \| \hat f\|_{\C^0(\tilde Q_R  )} + [ \hat f ]_{\C^{ \alpha,\alpha/2  } _\bb(\tilde Q_R)  }      },
\end{align*}
where $\tilde Q_R := B_\bb(0, R)\times[0,R^2]$. Observe that 
{\small
\begin{align*}
\frac{1}{(R-r)^\alpha} \| \hat f\|_{\C^0(\tilde Q_R  )} & \le \frac{1}{(R-r)^\alpha} \| f\|_{\C^0(\tilde Q_R  )} + \frac{1}{(R-r)^\alpha} \| \eta\|_{\C^0(\tilde Q_R)}  \| Tu\|_{C^0(\tilde Q_R)}\\
& \le \frac{1}{(R-r)^\alpha} \| f\|_{\C^0(\tilde Q_R  )} + \frac{[\eta]_{\C_\bb^{\alpha,\alpha/2} (\tilde Q_R)  } R^\alpha }{(R-r)^\alpha} \xk{ \varepsilon [Tu]_{\C^{\alpha,\alpha/2}_\bb(\tilde Q_R)  } + C(\varepsilon) \| u\|_{\C^0(\tilde Q_R)}  }%  \| Tu\|_{C^0(\tilde Q_R)}
 \end{align*}
 }
 and 
\begin{align*}
 [ \hat f ]_{\C^{ \alpha,\alpha/2  } _\bb(\tilde Q_R)  } & \le  [f ]_{\C^{ \alpha,\alpha/2  } _\bb(\tilde Q_R)  } + \| \eta\|_{\C^{0}(\tilde Q_R)  } [Tu  ]_{\C^{\alpha,\alpha/2}_\bb(\tilde Q_R)  } + \| Tu\|_{\C^0(\tilde Q_R)} [ \eta  ]_{\C^{\alpha,\alpha/2}_\bb(\tilde Q_R)}\\
 & \le  [f ]_{\C^{ \alpha,\alpha/2  } _\bb(\tilde Q_R)  } +  [\eta]_{\C^{\alpha,\alpha/2}_\bb(\tilde Q_R)  } R^\alpha [Tu  ]_{ \C^{\alpha,\alpha/2}_\bb(\tilde Q_R)  } \\
 & \qquad + [ \eta  ]_{\C^{\alpha,\alpha/2}_\bb(\tilde Q_R)} \xk{ \varepsilon [ Tu]_{\C^{\alpha,\alpha/2}_\bb (\tilde Q_R)}  + C(\varepsilon) \| u\|_{\C^0(\tilde Q_R)}  }.
\end{align*}
By choosing $R_0 = R_0(n,\bb, \alpha, g)>0$ small enough and suitable $\varepsilon>0$, for any $0<r<R<R_0<1/10$, the combination of the above inequalities yields that
\begin{align*}
[Tu  ]_{ \C^{\alpha,\alpha/2}_\bb(\tilde Q_r  )  } \le \frac 1 2 [Tu]_{ \C^{\alpha,\alpha/2}_\bb(\tilde Q_R)  } +  C\bk{ \frac{\| u\|_{\C^0(\tilde Q_R)}}{(R-r)^{2+\alpha}} +\frac{1}{(R-r)^\alpha} \| f\|_{\C^0(\tilde Q_R  )} + [f ]_{\C^{ \alpha,\alpha/2  } _\bb(\tilde Q_R)  }      },
\end{align*}
By Lemma \ref{lemma HL} below (setting $\phi(r) = [Tu]_{\C^{\alpha,\alpha/2}_\bb(\tilde Q_r)}$), we conclude that
$$[ Tu ]_{ \C^{\alpha,\alpha/2}_\bb\xk{ B_\bb(0,R_0/2) \times [0,R_0^2/4]  }  } \le C \bk{ \| u\|_{\C^0(\qq_\bb)} + \| f\|_{\C^{\alpha, \alpha/2}_\bb(\qq_\bb)}    }. $$ 
This is the desired estimate when the center of the ball is the worst possible. For the other balls $B_\bb(x,r)$ with center $x\in B_\bb(0,1/2)$, we can repeat the above procedures and use the smooth coordinates $w_j = z_j^{\beta_j}$ in case the ball is disjoint with $\sS_j$. Finitely many such balls cover $B_\bb(0,1/2)$ so we get the  
$$[ Tu ]_{ \C^{\alpha,\alpha/2}_\bb\xk{ B_\bb(0,1/2) \times [0,1/100]  }  } \le C \bk{ \| u\|_{\C^0(\qq_\bb)} + \| f\|_{\C^{\alpha, \alpha/2}_\bb(\qq_\bb)}    }. $$ 
The proposition is proved by combining this inequality, the equation for $u$, interpolation inequalities, and the interior Schauder estimates in Corollary \ref{cor 4.2 new}.
\end{proof}

\begin{lemma} [Lemma 4.3 in \cite{HL}]\label{lemma HL}
Let $\phi(t)\ge 0$ be bounded in $[0,T]$. Suppose for any $0< t< s\le T$ we have
$$\phi(t)\le \frac 1 2 \phi(s) + \frac{A}{(s-t)^a} + B$$
for some $a >0$, $A,B>0$. Then it holds that for any $0<t<s\le T$ 
$$\phi(t)\le c(a) \bk{ \frac{A}{(s-t)^{a}}  + B}.$$

\end{lemma}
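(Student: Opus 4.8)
\textbf{Proof plan for Lemma \ref{lemma HL}.} The statement is the classical absorption/hole-filling iteration lemma (Lemma 4.3 in \cite{HL}), so the plan is to reproduce its short self-contained proof. The idea is to iterate the given inequality on a geometrically shrinking sequence of radii and use the factor $\tfrac12$ to absorb the leading term, summing a convergent geometric series. Fix $0<t<s\le T$ once and for all; all radii will lie in $[t,s]$.

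First I would set up the iteration. For a parameter $\lambda\in(0,1)$ to be chosen, define $t_0=t$ and $t_{i+1}=t_i+(1-\lambda)\lambda^i(s-t)$, so that $t_i\uparrow s$ and $t_{i+1}-t_i=(1-\lambda)\lambda^i(s-t)$. Applying the hypothesis with the pair $(t_i,t_{i+1})$ gives
\[
\phi(t_i)\le \tfrac12\,\phi(t_{i+1})+\frac{A}{(1-\lambda)^a\lambda^{ia}(s-t)^a}+B.
\]
Iterating this bound $k$ times yields
\[
\phi(t_0)\le 2^{-k}\phi(t_k)+\frac{A}{(1-\lambda)^a(s-t)^a}\sum_{i=0}^{k-1}\frac{1}{2^i\lambda^{ia}}+B\sum_{i=0}^{k-1}2^{-i}.
\]
Now choose $\lambda$ close enough to $1$ that $\lambda^{a}>\tfrac12$, i.e. $2^{-1}\lambda^{-a}<1$; then the first sum is bounded by the convergent geometric series $\sum_{i\ge0}(2^{-1}\lambda^{-a})^i=:c_1(a)<\infty$, and the second sum is bounded by $2$. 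Since $\phi$ is bounded on $[0,T]$, the term $2^{-k}\phi(t_k)\to 0$ as $k\to\infty$. Letting $k\to\infty$ gives
\[
\phi(t)\le \frac{c_1(a)}{(1-\lambda)^a}\cdot\frac{A}{(s-t)^a}+2B\le c(a)\Big(\frac{A}{(s-t)^a}+B\Big),
\]
with $c(a)=\max\{c_1(a)(1-\lambda)^{-a},2\}$ depending only on $a$ (through the fixed choice of $\lambda=\lambda(a)$). This is exactly the claimed inequality.

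There is essentially no obstacle here: the only point requiring a moment's care is the choice of $\lambda$, which must simultaneously keep $t_i$ inside $[t,s]$ (automatic, since $\sum(1-\lambda)\lambda^i=1$) and make $2^{-1}\lambda^{-a}<1$ so the geometric series converges; both are achieved by taking $\lambda$ sufficiently near $1$ depending only on $a$. The boundedness hypothesis on $\phi$ is used exactly once, to kill the remainder term $2^{-k}\phi(t_k)$ in the limit. Since this lemma is only invoked with $\phi(r)=[Tu]_{\mathcal C^{\alpha,\alpha/2}_\bb(\tilde Q_r)}$ (a finite quantity because $u\in\mathcal C^{2+\alpha,\frac{2+\alpha}{2}}_\bb$) and with explicit finite constants $A,B$, all hypotheses are met in the applications above.
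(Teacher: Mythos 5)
Your proof is correct and is exactly the standard absorption argument that appears in the cited reference (Lemma 4.3 of Han--Lin); the paper itself does not reprove the lemma but simply invokes it, and your iteration along the geometric sequence $t_i$ with $\lambda$ chosen so that $\lambda^a>1/2$ is precisely how the reference proceeds.
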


\begin{corr}\label{corr:4.7}Suppose $u$ satisfies the equation
$$\frac{\partial u}{\partial t} = \Delta_g u + f,\text{ in }\qq_\bb,\quad u|_{t= 0} = u_0\in C^{2,\alpha}_{\bb} (B_\bb(0,1)),$$
then 
\begin{equation*}
\| u\|_{ \C^{2+\alpha, \frac{\alpha+2}{2}}_\bb( B_\bb(0,1/2)\times [0,1] )  }\le C \xk{ \| u\|_{\C^0(\qq_\bb)} + \| f\|_{\C^{\alpha, \alpha/2}_\bb( \qq_\bb )}  + \| u_0\|_{C^{2,\alpha}_\bb(B_\bb(0,1))}   },
\end{equation*}
for some constant $C=C(n,\bb,\alpha, g)>0$.
\end{corr}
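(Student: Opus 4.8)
\textbf{Proof proposal for Corollary \ref{corr:4.7}.}

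The plan is to reduce the inhomogeneous-initial-data case to the zero-initial-data case handled in Proposition \ref{prop:4.4} by subtracting off a suitable extension of $u_0$. First I would fix an extension $\tilde u_0\in C^{2,\alpha}_\bb(B_\bb(0,1))$ of $u_0$ (indeed $u_0$ itself is already defined on all of $B_\bb(0,1)$, so no extension is needed here; I simply regard $u_0$ as a $t$-independent function on $\qq_\bb$). Set $v := u - u_0$, which satisfies $v|_{t=0} = 0$ and
\begin{equation*}
\frac{\partial v}{\partial t} = \Delta_g v + \big( f + \Delta_g u_0 \big) =: \Delta_g v + \hat f, \quad \text{in } \qq_\bb,
\end{equation*}
with $v = u - u_0 \in \C^0(\partial_\pp \qq_\bb)$. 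The point is that $\hat f = f + \Delta_g u_0 \in \C^{\alpha,\alpha/2}_\bb(\qq_\bb)$: the term $\Delta_g u_0$ is $t$-independent, and since $g$ is a $\C^{\alpha,\alpha/2}_\bb$-conical metric and $u_0\in C^{2,\alpha}_\bb$, the function $\Delta_g u_0 = g^{j\bar k} \partial_j\partial_{\bar k} u_0$ lies in $C^{0,\alpha}_\bb(B_\bb(0,1))$ (a product of a $C^{0,\alpha}_\bb$ function and the $C^{0,\alpha}_\bb$ second-order quantities $Tu_0$ from Definition \ref{defn:2.5}), hence trivially in $\C^{\alpha,\alpha/2}_\bb$ as a function of $(z,t)$, with
\begin{equation*}
\| \hat f \|_{\C^{\alpha,\alpha/2}_\bb(\qq_\bb)} \le \| f\|_{\C^{\alpha,\alpha/2}_\bb(\qq_\bb)} + C(n,\bb,\alpha,g)\, \| u_0\|_{C^{2,\alpha}_\bb(B_\bb(0,1))}.
\end{equation*}

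Next I would apply Proposition \ref{prop:4.4} to $v$ with the source term $\hat f$, obtaining
\begin{equation*}
\| v\|_{\C^{2+\alpha,\frac{2+\alpha}{2}}_\bb(B_\bb(0,1/2)\times[0,1/4])} \le C\Big( \| v\|_{\C^0(\qq_\bb)} + \| \hat f\|_{\C^{\alpha,\alpha/2}_\bb(\qq_\bb)}\Big).
\end{equation*}
Since $u = v + u_0$ and $u_0$ is $t$-independent, $\| u\|_{\C^{2+\alpha,\frac{2+\alpha}{2}}_\bb(\cdot)} \le \|v\|_{\C^{2+\alpha,\frac{2+\alpha}{2}}_\bb(\cdot)} + \| u_0\|_{C^{2,\alpha}_\bb(B_\bb(0,1))}$, and $\|v\|_{\C^0(\qq_\bb)}\le \|u\|_{\C^0(\qq_\bb)} + \|u_0\|_{C^0}$; combining with the bound on $\|\hat f\|$ gives the estimate on the slab $B_\bb(0,1/2)\times[0,1/4]$. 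To upgrade from $[0,1/4]$ to $[0,1]$ I would combine this with the interior estimate of Corollary \ref{cor 4.2 new}: cover $[1/4,1]$ by the interior region $K\times[\varepsilon_0,1]$ with $\varepsilon_0 = 1/8$, say, and note that on $[1/8,1]$ the interior Schauder estimate gives $\|u\|_{\C^{2+\alpha,\frac{2+\alpha}{2}}_\bb(K\times[1/8,1])}\le C(\|u\|_{\C^0(\qq_\bb)} + \|f\|_{\C^{\alpha,\alpha/2}_\bb(\qq_\bb)})$; gluing the two overlapping time-intervals $[0,1/4]$ and $[1/8,1]$ yields the estimate on $B_\bb(0,1/2)\times[0,1]$. (Alternatively, and perhaps cleaner, I would directly observe that the proof of Proposition \ref{prop:4.4} already produces the estimate on a full slab up to $t=1$ once one has the interior bound for $t$ bounded away from $0$, so the same argument applied to $v$ gives $B_\bb(0,1/2)\times[0,1]$ in one step.)

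The only genuinely non-routine point is verifying that $\hat f = f + \Delta_g u_0$ has the claimed $\C^{\alpha,\alpha/2}_\bb$-regularity with the right quantitative bound, and in particular that the $t$-Hölder seminorm of $\Delta_g u_0$ is controlled — but this is immediate because $u_0$ does not depend on $t$ and $g$ is $\C^{\alpha,\alpha/2}_\bb$ in $t$, so $[\Delta_g u_0]_{\C^{\alpha,\alpha/2}_\bb}$ reduces to $[g^{j\bar k}]_{\C^{\alpha,\alpha/2}_\bb}\cdot \|Tu_0\|_{C^0}$ plus $\|g^{j\bar k}\|_{C^0}\cdot[Tu_0]_{C^{0,\alpha}_\bb}$, both finite. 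Thus the corollary is a direct consequence of Proposition \ref{prop:4.4}, Corollary \ref{cor 4.2 new}, and the algebra of conical Hölder norms, with no new estimates required.
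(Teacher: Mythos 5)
Your proposal is correct and takes essentially the same route as the paper: subtract the ($t$-independent) initial datum $u_0$, observe that $\hat u = u - u_0$ has zero initial data and satisfies the heat equation with source $\hat f = f + \Delta_g u_0$, verify that $\Delta_g u_0\in\C^{\alpha,\alpha/2}_\bb$ with norm controlled by $\|u_0\|_{C^{2,\alpha}_\bb}$, and then invoke Proposition~\ref{prop:4.4} together with the interior estimate of Corollary~\ref{cor 4.2 new} to cover the full time interval. (You even have the correct sign in $\hat f = f + \Delta_g u_0$, whereas the paper's proof writes $\hat f = f - \Delta_g u_0$, an evident sign typo that does not affect the estimate.) Your extra paragraph making the gluing of $[0,1/4]$ and $[1/8,1]$ explicit is a detail the paper leaves implicit, but it fills in exactly the step the paper is appealing to when it invokes Corollary~\ref{cor 4.2 new} at the end of the proof of Proposition~\ref{prop:4.4}.
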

\begin{proof}
We set $\hat u = u - u_0$ and $\hat f = f- \Delta_{g} u_0$. $\hat u$ satisfies the conditions in Proposition \ref{prop:4.4}, so the corollary follows from Proposition \ref{prop:4.4} applied to $\hat u$ and triangle inequalities.
\end{proof}

\begin{corr}\label{cor:existence lemma}
Let the assumptions be as in Corollary \ref{cor 4.4} except that in addition we assume $u_0\in C^{2,\alpha}_\bb(X)$. Then the  weak solution to $\frac{\partial u}{\partial t} = \Delta_g u + f$ with $u|_{t= 0} = u_0$ exists and is in $\C^{2+\alpha, \frac{2+\alpha}{2}}_\bb(X,\times [0,1])$. Moreover  there is a $C=C(n,\bb,\alpha, g)>0$ such that
\begin{equation}\label{eqn:main goal} \| u\|_{ \C^{2+\alpha, \frac{2+\alpha}{2}}_\bb(X\times [0,1]) }\le C\xk{ \| f\|_{\C^{\alpha,\alpha/2}_\bb(X\times [0,1])  } + \| u_0\|_{C^{2,\alpha}_{\bb}(X)}  }.
\end{equation}
\end{corr}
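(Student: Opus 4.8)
# Proof Proposal for Corollary \ref{cor:existence lemma}

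\textbf{Overall strategy.} The plan is to combine the existence of weak solutions (already available via Galerkin approximation, as recalled just before Lemma \ref{lemma 4.6} and used in Corollary \ref{cor 4.4}) with the global-in-time interior Schauder estimate of Corollary \ref{corr:4.7}, together with a covering argument over the compact manifold $X$ exactly parallel to the one used in the proof of Corollary \ref{corr 3.1} and Corollary \ref{cor 4.4}. The only genuinely new ingredient compared to Corollary \ref{cor 4.4} is that the initial datum $u_0$ now lies in $C^{2,\alpha}_\bb(X)$ rather than merely $C^0(X)$, which is precisely what upgrades the estimate from the interior form (with constants blowing up as $t\to 0$) to the global form \eqref{eqn:main goal} valid on the whole interval $[0,1]$.

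\textbf{Step 1: existence and basic regularity of the weak solution.} First I would invoke the Galerkin approximation scheme (Section 7.1.2 in \cite{Ev}, as cited in the discussion preceding Lemma \ref{lemma 4.6}) to produce a weak solution $u$ to $\frac{\partial u}{\partial t} = \Delta_g u + f$ with $u|_{t=0} = u_0$; uniqueness follows from the maximum principle. Since $g$ is smooth away from $D$, interior parabolic Schauder theory gives $u \in \cC^{2+\alpha, \frac{\alpha+2}{2}}_{loc}(X\setminus D \times (0,1])$. The continuity of $u$ up to $D$ and the membership $u \in \cC^{2+\alpha,\frac{2+\alpha}{2}}_\bb(X\times[0,1])$ (away from $t=0$) is obtained exactly as in Lemma \ref{lemma 4.6} by comparing $u$, on small parabolic cylinders $\qq_\bb(P_0,r)$ centered at points of $\sS_\pp$, with the solution of the Dirichlet problem provided by Corollary \ref{cor 4.3}, and using the maximum principle. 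The regularity up to $t=0$ near $D$ is the content of the next step.

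\textbf{Step 2: global estimate via the covering argument.} Cover $D$ by finitely many coordinate balls $\{B_a\}$, $\{B_a'\}$ with $B_a' \Subset B_a$ as in Definition \ref{defn:2.8} and Corollary \ref{corr 3.1}. On each $B_a$, the metric $g$ is a $\cC^{\alpha,\alpha/2}_\bb$-conical K\"ahler metric and $u$ solves \eqref{eqn:para gen t0}-type equation with initial value $u_0|_{B_a} \in C^{2,\alpha}_\bb(B_a)$, so Corollary \ref{corr:4.7} applies (after rescaling the ball to unit size) and yields
\[
\| u\|_{\cC^{2+\alpha,\frac{2+\alpha}{2}}_\bb(B_a'\times[0,1])} \le C\bk{ \|u\|_{\cC^0(X\times[0,1])} + \|f\|_{\cC^{\alpha,\alpha/2}_\bb(B_a\times[0,1])} + \|u_0\|_{C^{2,\alpha}_\bb(B_a)} }.
\]
On $X\setminus \cup_a B_a'$ the metric $g$ is smooth with uniform Hölder bounds, so the classical parabolic Schauder estimate up to $t=0$ (using $u_0$ smooth there) gives the corresponding bound in the ordinary $\cC^{2+\alpha,\frac{\alpha+2}{2}}$-norm. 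Summing over the finite cover and using the maximum principle bound $\|u\|_{\cC^0} \le \|u_0\|_{C^0} + \|f\|_{\cC^0}$ to absorb the $\cC^0$-term, together with the definition of the global norms on $(X,D)$ (Definition \ref{defn:2.8}, extended to the parabolic setting), produces \eqref{eqn:main goal}.

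\textbf{Main obstacle.} The routine parts are the Galerkin construction and the patching; the delicate point is ensuring that Corollary \ref{corr:4.7} is genuinely applicable on each chart $B_a$, i.e.\ that the localization of $u$ still satisfies a parabolic equation of the required form with a $\cC^{\alpha,\alpha/2}_\bb$ right-hand side and a $C^{2,\alpha}_\bb$ initial datum, and that the constant produced by Corollary \ref{corr:4.7} depends only on $(n,\bb,\alpha,g)$ uniformly over the finite cover. This is handled by the fact that $\{B_a\}$ is finite and $g$ has a uniform $\cC^{\alpha,\alpha/2}_\bb$-bound on each chart; the substitution $\hat u = u - u_0$, $\hat f = f - \Delta_g u_0$ as in Corollary \ref{corr:4.7} reduces everything to the zero-initial-value case of Proposition \ref{prop:4.4}, whose proof is local in nature and transplants verbatim to each coordinate chart. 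I expect no further difficulty beyond carefully bookkeeping the norms across the cover.
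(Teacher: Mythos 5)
Your proposal is correct and follows essentially the same route as the paper: the $\C^0$ bound comes from the maximum principle, and the global estimate \eqref{eqn:main goal} then follows by applying Corollary \ref{corr:4.7} (after the $\hat u = u - u_0$, $\hat f = f - \Delta_g u_0$ substitution) on each chart of a finite cover of $D$ as in Corollary \ref{corr 3.1}. The additional detail you supply in Step 1 about Galerkin existence and regularity via Lemma \ref{lemma 4.6} is already implicit in the paper's appeal to Corollary \ref{cor 4.4}, so there is no substantive difference.
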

\begin{proof}
Observe that by maximum principle we have 
$$\| u\|_{\C^0(X\times [0,1])}\le \| f\|_{\C^0(X\times [0,1])} + \| u_0\|_{C^0(X)}.  $$
Then the estimate \eqref{eqn:main goal} follows from Corollary \ref{corr:4.7} and a covering argument as in the proof of Corollary \ref{corr 3.1}.
\end{proof}

\section{Conical K\"ahler-Ricci flow}
Let $X$ be a compact K\"ahler manifold and $D=\sum_j D_j$ be a divisor with simple normal crossings. Let $\omega_0$ be a fixed $C^{0,\alpha'}_\bb(X)$ conical K\"ahler metric with cone angle $2\pi \bb$ along $D$ and $\hat \omega_t$ be a family of $\C^{\alpha',\frac{\alpha'}{2}}_\bb$ conical metrics which are uniformly equivalent to $\omega_0$, $\hat \omega_0 = \omega_0$ and  $\| \hat \omega\|_{\C^{\alpha',\alpha'/2}_\bb( X\times [0,1] )}\le C_0$. We consider the complex Monge-Amp\`ere equation:
\begin{equation}\label{eqn:MA}
\left\{\begin{aligned}
&\frac{\partial \varphi}{\partial t} = \log\bk{ \frac{(\hat \omega_t + \ddb \varphi)^n}{\omega_0^n}  } + f\\
&\varphi|_{t= 0 } = 0,
\end{aligned}\right.
\end{equation}
where $f\in \C^{\alpha',\alpha'/2}_\bb(X\times [0,1])$ is a given function. We will use an inverse function theorem argument  in \cite{CL} which was  outlined in \cite{H} to show the short time existence of the flow \eqref{eqn:MA}.

\begin{theorem}
There exists a small $T=T(n,\bb, \omega_0, f,\alpha,\alpha')>0$ such that the equation \eqref{eqn:MA} admits a unique solution $\varphi\in \C^{2+\alpha, \frac{2+\alpha}{2}}_\bb(X\times [0,T])$, for any $\alpha<\alpha'$.
\end{theorem}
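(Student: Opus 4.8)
The plan is to recast \eqref{eqn:MA} as an implicit function theorem problem near $t=0$, following the scheme of \cite{CL} (see also \cite{H}), with the conical parabolic Schauder theory of Corollary \ref{cor:existence lemma} providing the needed linear estimates. Fix $\alpha\in(0,\alpha')$. First I would choose a suitable ``approximate solution'' $\varphi_0$ to kill the incompatibility of the initial data: since $\hat\omega_0=\omega_0$, at $t=0$ the right-hand side of \eqref{eqn:MA} equals $f(\cdot,0)\in C^{\alpha'}_\bb(X)$, so we cannot simply take $\varphi_0\equiv 0$ and expect $\C^{2+\alpha,\frac{2+\alpha}{2}}_\bb$ regularity up to $t=0$. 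Instead set $\varphi_0(x,t)=t\,\psi(x)$ (or a short Taylor polynomial in $t$) where $\psi\in C^{2,\alpha}_\bb(X)$ solves an auxiliary elliptic problem built from Corollary \ref{corr 3.1} so that $\partial_t\varphi_0 - \log\big((\hat\omega_t+\ddb\varphi_0)^n/\omega_0^n\big) - f$ vanishes to the appropriate order at $t=0$; then $\hat\omega_0+\ddb\varphi_0(\cdot,0)=\omega_0$ is still a genuine $C^{0,\alpha'}_\bb$-conical metric. Writing $\varphi=\varphi_0+u$, the equation for $u$ becomes $\partial_t u = \log\big((\hat\omega_t+\ddb\varphi_0+\ddb u)^n/(\hat\omega_t+\ddb\varphi_0)^n\big) + g$ with $u|_{t=0}=0$ and $g\in\C^{\alpha,\alpha/2}_\bb$ small in a sense made precise by the choice of $\varphi_0$ and by shrinking $T$.

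Next I would set up the Banach spaces: let $\mathcal{E}_T=\{u\in\C^{2+\alpha,\frac{2+\alpha}{2}}_\bb(X\times[0,T]) : u|_{t=0}=0\}$ and $\mathcal{F}_T=\C^{\alpha,\alpha/2}_\bb(X\times[0,T])$, and define $\Phi_T:\mathcal{E}_T\to\mathcal{F}_T$ by $\Phi_T(u)=\partial_t u - \log\big((\omega_0+\ddb\varphi_0-\hat\omega_0+\hat\omega_t+\ddb u)^n/(\hat\omega_t+\ddb\varphi_0)^n\big)$. The linearization of $\Phi_T$ at $u=0$ is $v\mapsto \partial_t v - \Delta_{g(t)} v$ where $g(t)=\hat\omega_t+\ddb\varphi_0$ is, for $T$ small, a $\C^{\alpha,\alpha/2}_\bb$-conical K\"ahler metric uniformly equivalent to $\omega_0$; hence by Corollary \ref{cor:existence lemma} this linearized operator is an isomorphism $\mathcal{E}_T\to\mathcal{F}_T$ with inverse bounded by a constant $C$ independent of $T\le 1$. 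One then checks that $\Phi_T$ is $C^1$ (indeed smooth) as a map between these Banach spaces near $0$ — this is where the multiplicativity and composition properties of the $\C^{\alpha,\alpha/2}_\bb$-norm under $\log\det$ are used, exactly as the H\"older algebra structure was exploited in the proof of Proposition \ref{prop 4.2 new}. The implicit/inverse function theorem then gives a solution $u\in\mathcal{E}_T$ for $T$ sufficiently small, provided $\|\Phi_T(0)\|_{\mathcal{F}_T}\to 0$ as $T\to 0$, which holds because $\Phi_T(0)=-g$ and $g$ was arranged to vanish at $t=0$ and lie in $\C^{\alpha,\alpha/2}_\bb$; thus $\|g\|_{\C^{\alpha,\alpha/2}_\bb(X\times[0,T])}\le C T^{(\alpha'-\alpha)/2}\to 0$. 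Uniqueness follows from the maximum principle applied to the difference of two solutions (the flow \eqref{eqn:MA} is parabolic and the $\C^0$-comparison argument underlying \eqref{eqn:para mp} applies after subtracting).

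The main obstacle I expect is twofold. The first and more technical point is the construction and regularity of the corrector $\varphi_0$: one must ensure simultaneously that (i) $\hat\omega_t+\ddb\varphi_0$ stays a conical K\"ahler metric uniformly equivalent to $\omega_0$ for small $t$, (ii) $\varphi_0\in\C^{2+\alpha,\frac{2+\alpha}{2}}_\bb$, and (iii) the defect $\Phi_T(0)$ vanishes to high enough order at $t=0$ so that its $\C^{\alpha,\alpha/2}_\bb$-norm on $[0,T]$ decays as a positive power of $T$; this requires the elliptic existence result Corollary \ref{corr 3.1} and a careful accounting of which $t$-derivatives of the data are available (only $\alpha'/2$-H\"older in $t$), which is why one works with $\alpha<\alpha'$. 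The second, softer, point is verifying that $u\mapsto \log\big((\cdot+\ddb u)^n/(\cdot)^n\big)$ is a $C^1$ map of Banach spaces with the derivative depending continuously on the base point — this is standard once one knows $\C^{\alpha,\alpha/2}_\bb$ is a Banach algebra stable under the relevant nonlinear operations and that the metrics involved stay uniformly bounded and positive, but it must be checked with the conical norms rather than the classical ones. Everything else — the isomorphism property of the linearized operator, the $T$-independence of its inverse, and the final quantitative statement — is supplied directly by Corollary \ref{cor:existence lemma} and the interpolation inequalities.
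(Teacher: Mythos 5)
Your overall strategy — recast \eqref{eqn:MA} as a perturbation problem near $t=0$, use the conical parabolic Schauder theory (Corollary \ref{cor:existence lemma}) to invert the linearization, and close with an inverse-function-theorem argument in the spirit of \cite{CL, H} — is the same as the paper's. But the specific corrector you propose does not exist, and this is a genuine gap.

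You want $\varphi_0(x,t)=t\psi(x)$ with $\psi\in C^{2,\alpha}_\bb(X)$ so that the defect $\partial_t\varphi_0-\log\big((\hat\omega_t+\ddb\varphi_0)^n/\omega_0^n\big)-f$ vanishes at $t=0$. Evaluating at $t=0$ (using $\hat\omega_0=\omega_0$ and $\ddb\varphi_0(\cdot,0)=0$) gives the defect $\psi-f(\cdot,0)$, so vanishing forces $\psi=f(\cdot,0)$. But $f(\cdot,0)$ is only $C^{0,\alpha'}_\bb(X)$, not $C^{2,\alpha}_\bb(X)$; it is not obtained by solving any elliptic problem, it is an algebraic constraint, and with $\psi=f(\cdot,0)$ the form $\hat\omega_t+\ddb\varphi_0=\hat\omega_t+t\,\ddb f(\cdot,0)$ is not even a conical metric. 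Conversely, if you take $\psi\in C^{2,\alpha}_\bb$ with $\psi\neq f(\cdot,0)$, the defect has a fixed nonzero value at $t=0$ and its $\C^{\alpha,\alpha/2}_\bb$-norm on $X\times[0,T]$ does \emph{not} decay as $T\to0$, so the IFT around $\varphi_0$ fails. (A related slip: your motivation for introducing $\varphi_0$, that $\varphi\equiv0$ ``cannot have $\C^{2+\alpha,(2+\alpha)/2}_\bb$ regularity up to $t=0$'', is not the obstruction — there is no boundary corner on a closed manifold, and Corollary \ref{cor:existence lemma} gives regularity up to $t=0$ without compatibility conditions. The actual obstruction is that $\Phi_T(0)=-\log(\hat\omega_t^n/\omega_0^n)-f$ satisfies $\Phi_T(0)(\cdot,0)=-f(\cdot,0)\ne0$, so $\|\Phi_T(0)\|_{\C^{\alpha,\alpha/2}_\bb}$ does not shrink with $T$.)

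The paper's resolution is different and works: instead of a Taylor-type corrector, take $u$ to be the solution of the \emph{linear} heat equation $\partial_t u=\Delta_{g_0}u+f$ with $u(\cdot,0)=0$, which Corollary \ref{cor:existence lemma} places in $\C^{2+\alpha',(2+\alpha')/2}_\bb(X\times[0,1])$. A direct computation then shows the defect $w=\Psi(u)=\partial_t u-\log\big((\hat\omega_t+\ddb u)^n/\omega_0^n\big)-f$ satisfies $w(\cdot,0)=0$ and $w\in\C^{\alpha',\alpha'/2}_\bb$, which is exactly what you needed from $\varphi_0$. The paper then linearizes $\Psi$ \emph{at $u$} (not at $0$), obtains a local diffeomorphism, and closes with Hamilton's time-shift trick: the function $\tilde w$ obtained from $w$ by shifting time by a small $T_2$ (and setting $\tilde w\equiv0$ on $[0,T_2]$) lies within the range of the local diffeomorphism because $\|w-\tilde w\|_{\C^{\alpha,\alpha/2}_\bb}\lesssim T_2^{(\alpha'-\alpha)/2}$, and the preimage restricted to $[0,T_2]$ solves \eqref{eqn:MA}. (Shrinking the time interval directly, as you intended, can also be made to work once the corrector is chosen as the linear heat solution, since then $\|\Psi(u)\|_{\C^{\alpha,\alpha/2}_\bb(X\times[0,T])}\lesssim T^{(\alpha'-\alpha)/2}$; the time-shift is just a clean way to keep the interval fixed.) Your remaining ingredients — uniform invertibility of the linearized operator, $C^1$-dependence of $\Psi$ on $\phi$, maximum-principle uniqueness — match the paper.
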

\begin{proof} The uniqueness of the solution follows from maximum principle. 
We will break the proof of short-time existence into three steps.

\smallskip

\noindent{\bf Step 1.}
Let $u\in \C^{2+\alpha', \frac{2+\alpha'}{2}}_\bb(X\times [0,1])$ be the solution to the equation
\begin{equation*}
\left\{\begin{aligned}
&\frac{\partial u}{\partial t} = \Delta_{g_0} u + f,\quad \text{in }X\times[0,1]\\
& u|_{t= 0} = 0.
\end{aligned}
\right.
\end{equation*}
Thanks to Corollary \ref{cor:existence lemma} such $u$ exists and satisfies the estimate \eqref{eqn:main goal}. We fix an $\varepsilon>0$ so that as long as $\| \phi\|_{C^{2,\alpha}_\bb(X)}\le \varepsilon$, $\hat \omega_{t,\phi}:=\hat \omega_t+\ddb\phi$ is equivalent to $\omega_0$, i.e. $C_0^{-1} \omega_0 \le \omega_{0,\phi}\le C_0 \omega_0$, and $\|\hat\omega_{t,\phi}\|_{\C_\bb^{\alpha,\alpha/2}}\le C_0$.

\smallskip

\noindent We {\bf claim} that for $T_1>0$ small enough, $\| u\|_{\C^{2+\alpha, (2+\alpha)/2}_\bb( X\times [0,T_1] )}\le \varepsilon$. We firs observe that by \eqref{eqn:main goal} that $$N: = \| u\|_{\C^{2+\alpha',\frac{\alpha'+2}{2}} _\bb( X\times [0,1] )   }\le C \| f\|_{\C^{\alpha',\alpha'/2}_\bb(X\times [0,1])}.$$ It suffices to show that $[u]_{\C^{2+\alpha',\frac{\alpha'+2}{2}} _\bb(X\times [0,T_1])}$ is small since the lower order derivatives are small since $u|_{t= 0 } = 0$. We calculate for any $t_1,t_2\in [0,T_1]$
$$ \frac{|Tu(x,t_1) - Tu(x,t_2)   |}{|t_1 - t_2  |^{\alpha/2}} +  \frac{|\dot u(x,t_1) - \dot u(x,t_2)   |}{|t_1 - t_2  |^{\alpha/2}}\le N |t_1 - t_2  |^{(\alpha' - \alpha)/2} \le \varepsilon/4,$$ if $N T_1^{(\alpha' - \alpha)/2}< \varepsilon/4$.  For any $x,y\in X$ and $t\in [0, T_1]$
$$ \frac{| Tu(x,t)  - Tu(y,t) |}{d_{g_0}(x,y)^\alpha  }\le N \min\Big\{ \frac{ 2 T_1^{\alpha'/2}}{d_{g_0}(x,y)^{\alpha}} , d_{g_0}(x,y)^{\alpha' - \alpha}      \Big\} \le \frac\varepsilon 2.  $$
The {\bf claim} then follows from triangle inequality.

\smallskip

% In particular since $u|_{t= 0} = 0$, it holds that
%$$|\ddb u|_{\omega_0} (\cdot, t)\le C t^{\alpha/2} \| f\|_{ C^{\alpha,\alpha/2}_\bb(X\times [0,1])  },\quad \forall t\in (0,1).$$
%Thus if $T_1>0$ is small enough then $\omega_0+\ddb u$ is equivalent to $\omega_0$, i.e. there exists a $C_0>1$ such that 
%$$C_0^{-1} \omega_0\le \omega_0 + \ddb u\le C_0 \omega_0,\quad \forall t\in [0,T_1].$$
%
We define a function $$w(x,t) :=\frac{\partial u}{\partial t}(x,t)  - \log \bk{ \frac{(\hat \omega_t + \ddb u)^n}{\omega_0^n}   }(x,t) - f(x,t),~\forall~ (x,t)\in X\times [0, T_1].   $$
It is clear that $w(x,0)\equiv 0$. 

\medskip

\noindent{\bf Step 2.}
We consider the small ball $$\mathcal B = \big\{ \phi\in \C^{2+\alpha,\frac{2+\alpha}{2}}_\bb( X\times [0, T_1]  ) ~|~ \| \phi\|_{ \C^{2+\alpha,\frac{\alpha+2}{2}}_\bb  }\le \varepsilon,\,    \phi(\cdot, 0) =0  \big\}$$ in the space $\C^{2+\alpha,\frac{2+\alpha}{2}}_\bb(X\times [0,T_1])$. $u|_{t\in [0,T_1]}\in \mathcal B$ by the discussion in {\bf Step 1}.%If $\varepsilon>0$ is small enough then $$C_0^{-1} \omega_0\le \omega_0 + \ddb \phi \le C_0 \omega_0,\, \text{and } \| \omega_{0,\phi}\|_{C^{\alpha,\alpha/2}_\bb}\le C_0,\,   \forall \phi\in \mathcal B.$$

  Define the differential map $\Psi:  \mathcal B \to \C^{\alpha,\alpha/2}_\bb( X\times [0,T_1]  )$  by
$$\Psi(\phi) = \frac{\partial \phi}{\partial t} - \log\bk{ \frac{(\hat\omega_t + \ddb \phi)^n}{\omega_0^n}   } - f.   $$
The map $\Psi$ is well-defined and $C^1$ with the differential $D\Psi_\phi$ at any $\phi\in\mathcal B$ is given by 
$$D\Psi_\phi( v  ) = \frac{\partial v}{\partial t} - (\hat g_{\phi})^{i\bar j} v_{i\bar j} = \frac{\partial v}{\partial t} - \Delta_{\hat \omega_{t,\phi}} v,$$
for any $v\in T_\phi \mathcal B = \big\{ v\in \C^{2+\alpha,\frac{2+\alpha}{2}}_\bb(X\times [0,T_1]  )~|~ v(\cdot, 0) = 0       \big\}$, where $(\hat g_\phi)^{i\bar j}$ denotes the inverse of the metric $\hat\omega_t+\ddb \phi$. As a linear map, $D\Psi_\phi: T_\phi \mathcal B \to \C^{\alpha,\alpha/2}_\bb(X\times [0,T_1]  )$ is injective by maximum principle; is surjective by Corollary \ref{cor:existence lemma}. Thus $D\Psi_\phi$ is invertible at any $\phi\in\mathcal B$. In particular $D\Psi_u$ is invertible and by inverse function theorem $\Psi: \mathcal B\to \C^{\alpha,\alpha/2}_\bb(X\times [0,T_1])$ defines a local diffeomorphism from a small neighborhood of $u\in\mathcal B $ to an open neighborhood of $w = \Psi(u)$ in $\C^{\alpha,\alpha/2}_\bb(X\times [0,T_1]  )$. This implies that for any $\tilde w\in \C^{\alpha,\alpha/2}_\bb(X\times [0,T_1]  )$ with $\| w - \tilde w \|_{\C^{\alpha,\alpha/2}_\bb(X\times [0,T_1]  )}< \delta$ for some small $\delta>0$, there exists a unique $\varphi\in \mathcal B$ such that $\Psi(\varphi) = \tilde w$.

\medskip

\noindent{\bf Step 3.} For a small $T_2< T_1$ to be determined, we define a function
$$ \tilde w (x,t) = \left\{\begin{aligned}
& 0,\quad t\in [0,T_2]\\
& w(x, t - T_2),\quad t\in [T_2, T_1].
\end{aligned}\right.  $$ Since $u\in \C^{2+\alpha',\frac{2+\alpha'}{2}}_\bb$, we see that $w\in \C^{\alpha',\alpha'/2}_\bb(X\times [0,T_1]  )$ with $M: = \| w\|_{  \C^{\alpha',\alpha'/2}_\bb(X\times [0,T_1]  )} <\infty$. We {\bf claim} that if $T_2$ is small enough, then $\| w-\tilde w\|_{ \C^{\alpha,\alpha/2}_\bb(X\times [0,T_1]  )} < \delta$. We denote $\eta = w - \tilde w$. It is clear from the fact that $w(\cdot, 0) = 0$ that $\| \eta\|_{\C^0}\le \delta/2$ if $T_2$ is small enough.

\smallskip

\noindent{\bf Spatial directions:}
If $t<T_2$ then
\begin{align*}
\frac{ | \eta(x,t) - \eta(y,t)  |    }{d_{g_0}( x,y  )^\alpha}= \frac{ | w(x,t) - w(y,t)  |    }{d_{g_0}( x,y  )^\alpha} \le M \min\Big\{ \frac{2T_2^{\alpha'/2}}{ d_{g_0}(x,y)^\alpha  }, d_{g_0}(x,y)^{\alpha' - \alpha}   \Big\}\le 2 M T_2^{(\alpha' - \alpha)/2},
\end{align*}
if $t\in [T_2,T_1]$ then
\begin{align*}
\frac{ | \eta(x,t) - \eta(y,t)  |    }{d_{g_0}( x,y  )^\alpha} & = \frac{ | w(x,t) - w(y,t) - w(x,t-T_2) + w(y, t - T_2)  |    }{d_{g_0}( x,y  )^\alpha}\\
&\le  2 M \min\Big\{ \frac{T_2^{\alpha'/2}}{ d_{g_0}(x,y)^\alpha  }, d_{g_0}(x,y)^{\alpha' - \alpha}   \Big\}\le 2 M T_2^{(\alpha' - \alpha)/2}.
\end{align*}

\smallskip

\noindent{\bf Time direction:}
If $t ,t '< T_2$, then
$$\frac{ | \eta(x,t) - \eta(x,t')  |    }{|t-t'|^{\alpha/2}}  = \frac{ | w(x,t) - w(x,t' )  |    }{|t-t'|^{\alpha/2}} \le M |t- t'  |^{(\alpha' - \alpha)/2}\le M T_2^{(\alpha' - \alpha)/2};  $$
If $t,t'\in [T_2,T_1]$, then
\begin{align*}
\frac{ | \eta(x,t) - \eta(x,t')  |    }{|t-t'|^{\alpha/2}}  & = \frac{ | w(x,t) - w(x,t') - w(x,t-T_2) + w(x, t'-T_2)  |    }{|t-t'|^{\alpha/2}}  \le 2M T_2^{(\alpha' - \alpha)/2};
\end{align*}
If $t< T_2 \le t' \le T_1$, then
\begin{align*}
\frac{ | \eta(x,t) - \eta(x,t')  |    }{|t-t'|^{\alpha/2}}   = \frac{ | w(x,t) - w(x,t') + w(x, t' - T_2)  |    }{|t-t'|^{\alpha/2}} \le  2M T_2^{(\alpha' - \alpha)/2}.
\end{align*}

Therefore if we choose $T_2>0$ small so that $2M T_2^{(\alpha' - \alpha)/2}< \delta/4$, then we have 
$$  \frac{ | \eta(x,t) - \eta(x,t')  |    }{|t-t'|^{\alpha/2}} + \frac{ | \eta(x,t) - \eta(y,t)  |    }{d_{g_0}( x,y  )^\alpha} \le \frac\delta 2,\forall x\in X, \, t,t'\in [0,T_1].  $$ It then follows from triangle inequality that
\begin{align*}
 | \eta(x,t) - \eta(y,t')  | & \le | \eta(x,t) - \eta(y,t)  | + | \eta(y,t) - \eta(y,t')|\\
 & \le \frac{\delta}{2}\xk{ d_{g_0}(x,y)^\alpha + | t-t'  |^{\alpha/2}       }  \\
 & \le \delta/2 d_{\pp,g_0}\xk{ (x,t), (y,t')   }^{\alpha}.
\end{align*}

In conclusion, $\| \tilde w - w\|_{ \C^{\alpha,\alpha/2}_\bb(X\times [0,T_1])  }< \delta$ so by {\bf Step 2}, we conclude that  there exists a $\varphi\in\mathcal B$ such that $\Psi(\varphi) = \tilde w$. Since $\tilde w|_{t\in [0,T_2]}\equiv 0$ by definition,  $\varphi|_{t\in [0,T_2]}$ satisfies the equation \eqref{eqn:MA} for $t\in [0,T]$, where $T:=T_2$. This shows the short-time existence of the flow \eqref{eqn:MA}.
 
\end{proof}

\begin{proof}[Proof of Corollary \ref{corr:1.3}]
Recall in \eqref{eqn:omega 0 assumption} we write
$\omega_0^n = \frac{\Omega}{\prod_j (|s_j|_{h_j}^{2})^{1-\beta_j}}$ where $\Omega$ is a smooth volume form, $s_j$ and $h_j$ are holomorphic sections and hermtian metrics of the line bundle associated to the component  $D_j$, respectively.
Choose a smooth reference form $\chi =\ddb \log \Omega - \sum_j (1-\beta_j)\ddb\log h_j.$
Define the reference metrics $\hat \omega_t = \omega_0 + t \chi$ which are $\C^{\alpha',\alpha'/2}_\bb$-conical and K\"ahler for small $t>0$. 
Let $\varphi$ be the $\C^{2+\alpha,\frac{2+\alpha}{2}}_\bb$-solution to the equation \eqref{eqn:MA section1.1} with $f\equiv 0$. Then it is straightforward to check that $\omega_t = \hat\omega_t + \ddb\varphi$ satisfies the conical K\"ahler-Ricci flow equation \eqref{eqn:KRF} and $\omega\in \C^{\alpha,\alpha/2}_\bb(X\times [0,T])$ for some small $T>0$.

The smoothness of $\omega$ in $X\backslash D\times (0, T]$ follows from the general smoothing properties of parabolic equations (see \cite{ST2}). Taking $\frac{\partial }{\partial t}$ on both sides of \eqref{eqn:MA section1.1} we get
$$\frac{\partial \dot\varphi}{\partial t} = \Delta_{\omega_t} \dot \varphi + \tr_{\omega_t} \chi,\text{ ~ and ~} \dot\varphi|_{t= 0} = 0.$$
By Corollary \ref{cor:existence lemma}, $\dot \varphi\in \C^{2+\alpha,\frac{2+\alpha}{2}}_\bb(X\times [0,T]  )$ since $\tr_{\omega_t}\chi \in \C^{\alpha,\alpha/2}_\bb(X\times [0,T])$. Therefore the normalized Ricci potential $\log\xk{\frac{\omega_t^n}{\omega_0^n}}\in \C^{2+\alpha,\frac{2+\alpha}{2}}_\bb( X\times [0,T]  )$.

\end{proof}

\bigskip

\noindent {\bf{Acknowledgements:}} Both authors thank Duong H. Phong and Ved Datar for many insightful discussions.

%\newpage

%\begin{comment}
%%%%%%%%%%%%%%%%%%%%%%%%%%%%%%%%%

%\input{newsection.tex}

%\input{thebib1.tex}

%\end{comment}

\end{document}